\documentclass{scrartcl}
\usepackage{mathtools}
\usepackage{tikz-cd}
\usepackage[backend=biber,style=alphabetic,maxbibnames=9,backref=true,maxalphanames=4]{biblatex}
\usepackage{amsmath}
\usepackage{amsthm}
\usepackage{thmtools}
\usepackage{amssymb}
\usepackage{enumitem}
\usepackage[scr=boondox,  
            cal=esstix]   
           {mathalpha}
\usepackage{quiver}
\usepackage{csquotes}
\usepackage{xurl}
\usepackage{adjustbox}
\usepackage{microtype}
\usepackage{xspace}

\usepackage{hyperref}
\usepackage[capitalise]{cleveref}

\hypersetup{hypertexnames=false} 

\title{Normed idempotence of spectra and distributivity in equivariant higher algebra}
\date{}
\author{
    \textsc{Kaif Hilman}
    \thanks{
        Mathematisches Institut, Universität Bonn, Germany, \;\texttt{kaif@math.uni-bonn.de}
    } 
\and 
    \textsc{Branko Juran}
    \thanks{
        Mathematisches Institut, Universität Bonn, Germany, \;\texttt{juran@math.uni-bonn.de}
    } 
\and 
    \textsc{Natalie Stewart}
    \thanks{
        Max-Planck-Institut für Mathematik, Bonn, Germany, \;\texttt{stewart@mpim-bonn.mpg.de}
    }
}

\def\emptyset{\varnothing}

\def\bMap{\mathbf{Map}}

\newcommand{\Acc}[1]{\mathrm{Acc}_{\kappa}}
\newcommand{\GAcc}[1]{\udl{\mathrm{Acc}}_{\kappa}}

\newcommand{\Arrow}{\udl{\mathrm{Ar}}}

\newcommand{\WCat}{\underline{\mathrm{FSub}\Cat}}
\newcommand{\wWCat}{\underline{\mathrm{FSub}\wCat}}

\newcommand{\GCatterminal}{\GCat[\ast]}

\newcommand{\Cocone}[1]{\underline{\mathrm{Cocone}}\left( #1 \right)}

\theoremstyle{plain}
\newtheorem{lemma}{Lemma}[section]
\newtheorem{corollary}[lemma]{Corollary}
\newtheorem{theorem}[lemma]{Theorem}
\newtheorem{proposition}[lemma]{Proposition}

\def\sm{\mathrm{small}}

\def\cocart{\mathrm{cocart}}
\def\XX{\mathcal{X}}

\newtheorem*{mainthm}{Main Theorem}

\newtheorem{alphThm}{Theorem}

\newcommand{\neutralize}[1]{\expandafter\let\csname c@#1\endcsname\count@}
\makeatother

\makeatletter

\makeatletter

\crefname{Condition}{Condition}{Condition}

\theoremstyle{definition}
\newtheorem{definition}[lemma]{Definition}
\newtheorem{remark}[lemma]{Remark}
\newtheorem{notation}[lemma]{Notation}
\newtheorem{construction}[lemma]{Construction}
\newtheorem{example}[lemma]{Example}

\newtheorem{nota}[lemma]{Notation}

\crefname{proposition}{Proposition}{Propositions}
\Crefname{lemma}{Lemma}{Lemmas}
\Crefname{cor}{Corollary}{Corollaries}
\Crefname{thm}{Theorem}{Theorems}
\Crefname{alphThm}{Theorem}{Theorems}
\Crefname{alphCor}{Corollary}{Corollaries}
\Crefname{defn}{Definition}{Definitions}
\Crefname{nota}{Notation}{Notations}
\Crefname{notation}{Notation}{Notations}
\Crefname{cons}{Construction}{Constructions}
\Crefname{rmk}{Remark}{Remarks}
\Crefname{obs}{Observation}{Observations}
\Crefname{warning}{Warning}{Warnings}
\Crefname{conj}{Conjecture}{Conjectures}
\Crefname{recollect}{Recollection}{Recollections}
\Crefname{terminology}{Terminology}{Terminologies}
\Crefname{setting}{Setting}{Settings}
\Crefname{example}{Example}{Examples}

\newcommand{\Card}[1][]
{ \ifthenelse{\equal{#1}{}}%
{\mathrm{Card}}
{\mathrm{Card}^{#1\mathrm{-norm}}}
}

\newcommand{\eval}{\mathrm{ev}}
\newcommand{\RR}{\mathcal R}
\renewcommand{\AA}{\underline{\mathcal A}}
\newcommand{\Alg}[3][]
{ \ifthenelse{\equal{#1}{}}%
{\myuline{\mathrm{Alg}}_{#2} \left( #3 \right) }
{\myuline{\mathrm{Alg}}_{#2} \left( #3 \right)^{#1}}
}
\newcommand{\BB}{\underline{\mathcal{B}}}           
\newcommand{\BSpaces}[1][]{\Spaces[#1]_\ast}
\newcommand{\GBSpaces}[1][]{\underline{\mathrm{An}}_{\ast}^{#1}}
\newcommand{\can}{\mathrm{can}}
\newcommand{\Cat}[1][]{\mathrm{Cat}^{#1}}           
\newcommand{\wCat}[1][]{\mathrm{CAT}^{#1}}         

\newcommand{\cbr}[1]{{\left\{#1\right\}}}      
\newcommand{\C}{\mathcal C}    
\newcommand{\CC}{\underline{\mathcal C}}    
\newcommand{\cllstab}[2]{{\cite[\href{https://arxiv.org/pdf/2301.08240v2\#theorem.#2}{#1~#2}]{cll23}}}              
\DeclareMathOperator*{\colim}{colim}            
\newcommand{\DD}{\underline{\mathcal D}}    
\DeclareMathOperator{\Fun}{Fun}                 
\newcommand{\EEE}[2][]{
    \ifthenelse{\equal{#1}{}}%
    {\mathbb{E}_{#2}}{\mathbb{E}_{#2,#1}}
}
\newcommand{\EE}{\underline{\mathcal E}}
\newcommand{\ev}{\mathrm{ev}}                   
\newcommand{\FinTN}[2][]{\mathbb{F}^{#1}_{#2}}
\newcommand{\GCat}[1][]{\underline{\Cat}^{#1}}
\newcommand{\wGCat}[1][]{{\udl{\mathrm{CAT}}}^{#1}}

\newcommand{\GPr}[1][]{\underline{\Pr}^{#1}}            
\newcommand{\GSpaces}[1][]{\underline{\mathrm{An}}^{#1}}     
\newcommand{\wGSpaces}[1][]{\underline{\mathrm{AN}}^{#1}}  
\DeclareMathOperator{\id}{id}
\newcommand{\KK}{\underline{\mathcal{K}}}           
\newcommand{\JJ}{\underline{\mathcal{J}}}           
\newcommand{\op}{^{\mathrm{op}}}                       
\newcommand{\mdef}[1]{\textcolor{violet}{#1}}           
\newcommand{\MM}{\underline{\mathcal{M}}}
\newcommand{\Mod}[2]{\underline{\operatorname{Mod}}_{#1}\prn{#2}}

\newcommand{\NAlg}[2][]
{ \ifthenelse{\equal{#1}{}}%
{\myuline{\mathrm{NAlg}}_{\mathcal{N}} \left( #2 \right) }
{\myuline{\mathrm{NAlg}}_{\mathcal{N}} \left( #2 \right)^{#1}}
}
\newcommand{\NPNAlg}[2][]
{ \ifthenelse{\equal{#1}{}}%
{\mathrm{NAlg}_{\mathcal{N}} \left( #2 \right) }
{\mathrm{NAlg}_{\mathcal{N}} \left( #2 \right)^{#1}}
}
\newcommand{\NAlgidem}[2][]
{ \ifthenelse{\equal{#1}{}}%
{\myuline{\mathrm{NAlg}}^{\mathrm{idem}}_{\mathcal{N}} \left( #2 \right) }
{\mathrm{NAlg}^{\mathrm{idem}}_{\mathcal{N}} \left( #2 \right)^{#1}}
}
\newcommand{\NN}{\mathcal{N}}                       
\newcommand{\Orbit}[1]{\mathrm{Orb}_{#1}}         
\newcommand{\overslice}[2]{{#1}_{/#2}}             
\newcommand{\prn}[1]{{\left(#1 \right)}}            
\newcommand{\brk}[1]{{\left[#1 \right]}}            
\newcommand{\Spaces}[1][]{\mathrm{An}^{#1}}                  
\DeclareMathOperator{\Span}{Span} %
\newcommand{\TotFP}{\int}                 
\newcommand{\TT}{\mathcal{T}}        
\newcommand{\underslice}[2]{#1_{#2 / }}           

\DeclareFontFamily{U}{dmjhira}{}
\DeclareFontShape{U}{dmjhira}{m}{n}{ <-> dmjhira }{}
\newcommand{\yoneda}[1][]{\text{\usefont{U}{dmjhira}{m}{n}\symbol{"48}}_{#1}}                 

\newcommand{\PP}{\udl{\mathcal{P}}}

\newcommand{\SFTN}[2]{{\Span_{#2}\prn{\FinTN[#1]{}}}}

\newcommand{\CAlg}{\mathrm{CAlg}}
\newcommand{\CMon}{\mathrm{CMon}}

\newcommand{\tCat}[1][]{{\mathbb{C}\mathrm{at}}^{#1}}
\newcommand{\tGCat}[1][]{{\udl{\mathbb{C}\mathrm{at}}}^{#1}}
\newcommand{\Catcocone}{\wGCat_{\triangleright}}
\newcommand{\Catcoconebig}{\wGCat[\mathrm{big}]_{\triangleright}}
\newcommand{\GCatcoco}[1]{\wGCat\left(#1\right)}

\newcommand{\Catcoconeproj}{\wGCat[\mathrm{proj}]_{\triangleright}}
\newcommand{\smallCatcoconeproj}{\GCat[\mathrm{proj}]_{\triangleright}}

\def\animae{{animæ}}

\newcommand{\stoptocwriting}{%
  \addtocontents{toc}{\protect\setcounter{tocdepth}{-5}}}
\newcommand{\resumetocwriting}{%
  \addtocontents{toc}{\protect\setcounter{tocdepth}{\arabic{tocdepth}}}}

\usetikzlibrary{spath3}
\tikzset{between/.style n args={2}{/tikz/spath/at end path construction={
    \tikzset{spath/split at keep middle={current}{#1}{#2}}
}}}

\newcommand{\filt}{\mathrm{filt}}
\newcommand{\allcolim}{\mathrm{all}}
\newcommand{\cofree}{\underline{\mathrm{Coeff}}}
\newcommand{\fgt}{\mathrm{frgt}}

\newcommand{\constant}{\mathrm{const}}
\newcommand{\repsphere}{\mathrm{RepSph}}
\newcommand{\hosphere}{\mathrm{HoSph}}
\newcommand{\finind}{\NN_{\mathrm{fin}}}

\newcommand{\Map}{\mathrm{Map}}
\newcommand{\invObj}{\udl{\mathcal{I}}}
\newcommand{\finsmall}{\mathrm{fin}}

\newcommand{\irex}{^{\mathrm{irex}}}
\newcommand{\cat}{\mathrm{Cat}}
\newcommand{\udl}[1]{\underline{#1}}

\newcommand{\perfect}{^{\mathrm{perf}}}
\newcommand{\A}{\mathcal{A}}
\newcommand{\B}{\mathcal{B}}
\newcommand{\sC}{\mathcal{C}}
\newcommand{\D}{\mathcal{D}}
\newcommand{\E}{\mathcal{E}}
\newcommand{\R}{\mathcal{R}}
\newcommand{\K}{\mathcal{K}}
\newcommand{\canonical}{\mathrm{can}}
\newcommand{\tcocone}{^{\triangleright}}
\newcommand{\func}{\mathrm{Fun}}

\newcommand{\distributiveLocalisation}[3]{_{{#3}}^{{#2}-{\delta}_{#1}}}

\newcommand{\distributeAll}[1]{^{\delta_{#1}}}
\newcommand{\distributeSmall}[1]{^{\mathrm{fin}-\delta_{#1}}}
\newcommand{\presheaf}{\mathrm{PSh}}
\newcommand{\wpresheaf}{\mathrm{PSH}}
\newcommand{\presentable}{\mathrm{Pr}}
\newcommand{\baseCat}{\mathcal{T}}
\newcommand{\rexact}{^{{\mathrm{rex}}}}
\newcommand{\nattrans}{\mathrm{Nat}}

\newcommand{\spectra}{\mathrm{Sp}}

\newcommand{\module}{\mathrm{Mod}}
\newcommand{\calg}{\mathrm{CAlg}}
\newcommand{\unit}{\mathbf{1}}
\newcommand{\cofib}{\mathrm{cofib}}

\newcommand{\ind}{\mathrm{Ind}}
\newcommand{\topcat}{\mathrm{Top}}

\def\LargeCat{\wCat}

\usepackage{contour}
\usepackage{ulem}

\contourlength{0.5pt}

\newcommand{\myuline}[1]{%
  \uline{\phantom{#1}}%
  \llap{\contour{white}{#1}}%
}

\renewcommand{\udl}[1]{
  \myuline{#1}
}

\makeatletter
\newcommand*{\saved@myuline}{}
\let\saved@myuline\myuline

\newcommand*{\mathuline}{%
  \mathpalette{\math@myuline\saved@myuline}%
}
\newcommand*{\math@myuline}[3]{%
  \mbox{#1{$#2#3\m@th$}}%
}

\renewcommand*{\myuline}{%
  \relax  
  \ifmmode
    \expandafter\mathuline
  \else
    \expandafter\saved@myuline
  \fi
}
\makeatother

\newcommand{\llp}[2]{{\cite[\href{https://arxiv.org/pdf/2503.02839v1\#theorem.#2}{#1~#2}]{llp25}}}
\newcommand{\Shaone}[2]{{\cite[\href{https://arxiv.org/pdf/1809.05892v3\#nul.#2}{#1~#2}]{Sha23}}}
\newcommand{\Shatwo}[2]{{\cite[\href{https://arxiv.org/pdf/2109.11954v2\#nul.#2}{#1~#2}]{shahPaperII}}}
\newcommand{\Crefenum}[2]{\Cref{#1}.\ref{#2}}

\newcommand{\ha}[2]{{\cite[\href{https://people.math.harvard.edu/~lurie/papers/HA.pdf\#theorem.#2}{#1~#2}]{lur17}}}

\newcommand{\kerodon}[1]{{\cite[\href{https://kerodon.net/tag/#1}{Tag #1}]{kerodon}}}

\newcommand{\infl}[2]{\mathrm{infl}^{#2}_{#1}}
\newcommand{\coind}[2]{\mathrm{coind}_{#1}^{#2}}
\newcommand{\res}[2]{\mathrm{res}^{#1}_{#2}}

\newcommand{\Orb}{\mathrm{Orb}}
\newcommand{\LL}{\underline{\mathcal{L}}}

\newcommand{\bres}[2]{\mathbf{res}^{#1}_{#2}}
\newcommand{\yo}{\text{\usefont{U}{min}{m}{n}\symbol{'107}}}
\DeclareFontFamily{U}{min}{}
\DeclareFontShape{U}{min}{m}{n}{<-> dmjhira}{}
\newcommand{\yonedaMap}{\yo}

\newcommand*\cocolon{%
        \nobreak
        \mskip6mu plus1mu
        \mathpunct{}%
        \nonscript
        \mkern-\thinmuskip
        {:}%
        \mskip2mu
        \relax
}

\newcommand{\introheader}[1]{ 
  \vspace{\topsep} 
  \noindent \textbf{#1}
}

\let\originalleft\left
\let\originalright\right
\renewcommand{\left}{\mathopen{}\mathclose\bgroup\originalleft}
\renewcommand{\right}{\aftergroup\egroup\originalright}

\bibliography{references.bib}

\begin{document}
  
\maketitle

\vspace{-40pt}

\begin{abstract}
    We introduce and develop the theory of distributivity for base categories  equipped with so-called atomic orbital subcategories, with an eye towards applications in equivariant homotopy theory. This extends the important framework in Nardin's thesis and supplies several crucial details on distributivity missing from the treatment therein. As our main technical result, we provide a parametrised distributive version of the universal property of ``partial cocompletions with relations''. We then employ this to construct a distributively normed symmetric monoidal structure on presentable categories. In turn, by specialising to the case of equivariant homotopy theory for compact Lie groups $G$, we show that the category of genuine $G$-spectra is normed idempotent, so that we obtain the structure of multiplicative norms on $G$-spectra along inclusions of finite index subgroups.
\end{abstract}
\tableofcontents

\section{Introduction}  
Since landmark work of Hill-Hopkins-Ravenel \cite{HHR} famously leveraged multiplicative norms in finite group-equivariant homotopy theory to resolve long-standing questions about the Adams spectral sequence, such norm operations have received intense attention. 
In fact, analogous structures are of interest in the more general context of compact Lie groups.
For instance, in \cite{greenleesMayMU} (which preceded even \cite{HHR}) Greenlees-May used norm operations along finite-index inclusions of compact Lie groups to prove a completion theorem for equivariant bordism.

However, the norms are a tricky structure to construct, due in no small part to the confluence of their nonlinear and equivariant natures.
Indeed,
as these are  power operations incorporating the symmetry of the multiplied objects, standard inductive procedures to deal with each tensor factor ``one at a time'' do not apply, and one is forced to deal with ``distributivity'' in a different and coordinate-free way;
norms are thereby rendered doubly hard to treat coherently in the $\infty$-categorical setting.

To this end, Nardin introduced an elegant formalism of parametrised distributivity and symmetric monoidal structures in his thesis \cite{nardinThesis}, capturing the norm structure for finite groups in the $\infty$-categorical setting;
this provided a crucial part of the (only partially completed) Barwick-Dotto-Nardin-Glasman-Shah program concerning parametrised higher algebra \cite{Barwick-intro}.
Since then, this perspective has proved to be a powerful one and serves as one of the standard paradigms for applications, cf. e.g. \cite{carrickSlice,horevKlangZou,levyThomSpectra,quigleyShahParametrisedTate,QuinnZhu}. Unfortunately, Nardin's setup does not cover the case of compact Lie groups;
even in the finite group case, they only consider multiplicative structures which have norms along arbitrary inclusions. Recently, a lot of attention has been paid to (incomplete) transfer systems \cite{Blumberg-op,Hill_chromatic} which only incorporate norms along a distinguished class of inclusions of subgroups.
Moreover, some of the subtler details about distributivity crucial to their theory are missing, leading to what we believe are potentially genuine mistakes. 

The aims of the present work are threefold:
\begin{enumerate}[label=(\alph*)]
   \item fill in the aforementioned crucial gaps on distributivity by invoking the cubical analyses from the first author's work \cite{kaifNoncommMotives},
    \item extend Nardin's framework to one rich enough to capture finite-index norms for compact Lie groups and incomplete transfer systems, and
    \item establish a universal property and the structure of  finite-index norms on $G$-spectra in the distributive context for compact Lie groups $G$.
\end{enumerate}
Additionally, apart from supplying the details missing from the literature needed for the first author's work on equivariant algebraic K-theory for finite groups \cite{kaifNoncommMotives,hilmanRamzi} , the last two authors will use these norms for finite-index inclusions as an input to construct norms along arbitrary inclusions of compact Lie groups in future work, incarnating the Blumberg-Hill-Mandell conjectures \cite{bhm22} in the $\infty$-categorical context \cite{norms}.

To focus the rest of the introduction, let us now state our main result:

\begin{mainthm}
    Let $G$ be a compact Lie group. The category of presentable $G$-categories $\udl{\presentable}^L_G$ admits the structure of distributive finite-index norms. Moreover, the category of genuine $G$-spectra $\myuline{\spectra}_G$ is a normed idempotent in $\udl{\presentable}^L_G$;
    thus it uniquely admits the structure of coherently-compatible and distributive finite-index norms.
\end{mainthm}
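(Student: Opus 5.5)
The plan has two halves: first build the distributive normed structure on $\udl{\presentable}^L_G$, then show that $\myuline{\spectra}_G$ is a normed idempotent and deduce uniqueness formally. Throughout, $\TT$ is the orbit category $\Orbit{G}$ of a compact Lie group $G$, and $\NN$ is the atomic orbital subcategory of finite-index maps $G/H\to G/K$, i.e.\ those with $H\le K$ of finite index.

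For the first half I would start from the $\NN$-normed structure on small $G$-categories given by indexed products; for a finite-index map $f\colon G/H\to G/K$ this is the functor $\prod_f=f_*$. Via the universal property of partial cocompletions with relations, this would then be upgraded to $\udl{\presentable}^L_G$. Concretely, I would present a presentable $G$-category as a localisation of $\udl{\presheaf}(\CC)$ at a set of maps, and define the norm $N_f$ by $N_f\udl{\presheaf}(\CC)=\udl{\presheaf}(\prod_f\CC)$, localised at the images of the given relations. The key input is the paper's main technical result, the parametrised distributive version of the cocompletion universal property. It says that $\NN$-distributive functors out of $\prod_f\CC$ correspond to colimit-preserving ones out of the cocompletion. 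This makes the $N_f$ functorial and coherent, so that they assemble into an $\NN$-symmetric monoidal structure on $\udl{\presentable}^L_G$. Distributivity of these norms should then follow from the distributivity of $\prod_f$ over $f$-indexed colimits; this is the cubical analysis the paper imports from the first author's earlier work.

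For the second half I would write $\myuline{\spectra}_G$ as the stabilisation of $\udl{\mathrm{An}}_{G,\ast}$ that inverts representation spheres, and proceed in three steps.
\begin{enumerate}
\item The unit is $\GSpaces$, the unit of the structure, and it is normed idempotent. This is essentially the observation that $\prod_f$ of the terminal category is terminal.
\item Pointing, i.e.\ tensoring with $\GBSpaces$, is a normed idempotent. The reason is that $N_f$ of the pointed spaces is again pointed, since the indexed product of pointed objects is pointed.
\item Inverting representation spheres is a normed idempotent operation. Since norms are symmetric monoidal, it suffices to check that $N_f$ of an invertible object is invertible.
\end{enumerate}
For finite-index $H\le K$, the norm $N_H^K S^V$ is the sphere of the induced representation $\mathrm{Ind}_H^K V$. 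This is a genuine $K$-representation sphere and hence already invertible, so the localisation inverting $\repsphere$ is closed under the norms. I would then use a general lemma of the form \enquote{a smashing localisation of the unit, closed under norms, is normed idempotent}. This gives $\myuline{\spectra}_G$ as a normed idempotent, that is, an idempotent $\NN$-normed algebra in $\udl{\presentable}^L_G$. Uniqueness is then formal. The space of normed algebra structures on an idempotent object is contractible, because $\NAlgidem{\udl{\presentable}^L_G}$ embeds fully faithfully into the category of objects under the unit.

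I expect the main obstacle to be the coherence in the first half. Two distributivity questions need care: interchanging the indexed product $\prod_f$ with parametrised colimits, and checking that the normed structure descends along localisations. The latter requires that $N_f$ of a local equivalence is again a local equivalence, which follows from distributivity together with closure under pushout-products. To handle this I would first try to verify it cube-by-cube, reducing to generators of the form $f_!$ of a representable. In the compact Lie setting I also need that the maps in $\NN$ are finite coproducts of orbits after base change; this is exactly where atomicity and orbitality are required.
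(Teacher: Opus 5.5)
Your first half is broadly in the spirit of the paper: cocompletion with relations, plus a saturation argument so that norms of local equivalences stay local. The second half, however, has a genuine gap. Steps~2 and~3 check closure properties at the level of \emph{objects}, whereas $\NN$-idempotence is a statement about the norm \emph{on} $\udl{\presentable}^L$. For a candidate idempotent $Y$ and finite-index $H\le K$, the canonical functor $Y_K\to Y_K\otimes N_H^K Y_H$ must be an equivalence, where $N_H^K$ is the $\udl{\presentable}^L$-norm. Equivalently, $Y_K$ must be a module over $N_H^K Y_H$ (\Cref{prop:idem-Nalg}), so you have to actually compute $N_H^K Y_H$.

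In Step~2, pointedness of the norm only shows that $N_H^K\GBSpaces$ is a $\GBSpaces$-module, which is the opposite direction. What is needed amounts to the canonical identification $N_H^K\GBSpaces\simeq\GBSpaces$. The paper proves this by presenting $\GBSpaces$ as $\udl{\presheaf}(\udl{\Delta}^1,\udl{\RR})$ and determining exactly which functors on $w_*W^*\udl{\Delta}^1$ distribute the normed cocones (\Cref{lem:distributivity_pointed_cocompletionB}).

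In Step~3, invertibility of $N_H^K S^V$ is only the easy step $F_1$ in the proof of \Cref{thm:invert_object_idempotent}. Symmetric monoidality of $N_H^K$ at best yields a comparison functor $(N_H^K\GBSpaces)[(N_H^K\myuline{\repsphere})^{-1}]\to N_H^K\prn{\GBSpaces\left[\myuline{\repsphere}^{-1}\right]}$. Proving that this comparison is an equivalence (step $F_3$) is the crux, and it needs two ingredients:
\begin{itemize}
\item a presentation of the formal inversion as a filtered colimit in $\udl{\presentable}^L$, which requires the inverted objects to satisfy the 3-cyclic condition and be generated in top degree (\Cref{lem:abstract_formal_inversion});
\item the fact that $\udl{\presentable}^L$-norms preserve sifted colimits, which is precisely what the distributivity of \Cref{thm:prL-colims} is needed for (via \Cref{prop:dist-lemmas}).
\end{itemize}
Neither ingredient appears in your plan. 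Moreover, the ``general lemma'' you invoke holds only when the \emph{local equivalences} of the smashing localisation are norm-closed, which is exactly what must be shown.

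There are also some smaller points. Your identification $N_H^K S^V\simeq S^{\mathrm{Ind}_H^K V}$ is plausible. However, it requires comparing the abstract norm on $\GBSpaces$ (a total cofibre of a parametrised cube, \Cref{prop:formula_for_norms_in_pointed_anima}) with a point-set smash power of $\mathrm{Map}_H(K,S^V)$. The paper sidesteps this by inverting the larger class $\myuline{\hosphere}$ of compact, 3-cyclic objects with sphere fixed points. Its norm-closure follows from \Cref{cor:fixed-of-norm} and the geometric finiteness result \Cref{cor:norms-finite-cats}, and the two inversions are then identified using \Cref{lem:inv-vs-hsphere}.

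In the first half, the paper handles the coherence you leave open by building the normed category of cocones $\Catcocone$. The norms on cocomplete categories are then obtained as a normed Bousfield localisation (\Cref{thm:catK_symMon}). Distributivity of the $\udl{\presentable}^L$-norms does not come from the cubical analysis, which instead feeds the saturation result \Cref{lem:saturated-class}. It comes from showing that $\udl{\presentable}^L\subset\wGCat(\allcolim)$ is closed under colimits and norms, and transferring distributivity from the cartesian structure on $\wGCat\times\wGCat$ via \Cref{prop:transf_examples}.
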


We now give a more in-depth overview of the contents of the article. To begin the discussion, let us touch upon the notions of distributivity and  symmetric monoidal structures in the generality we  develop in this work. Since everything will be  $\infty$-categorical, we shall henceforth drop the prefix ``$\infty$-'' wherever applicable.

\introheader{Distributivity and symmetric monoidal structures.}
Let $\baseCat$ be a small category which will serve as the ``base'' category over which we will parametrise. Following the language of parametrised category theory as introduced by Barwick, Dotto, Glasman, Nardin and Shah in \cite{bdgns16}, we will write $\cat^{\baseCat}$ for the category $\func(\baseCat\op,\cat)$ of \textit{$\baseCat$-categories} and objects therein will be denoted with an underline. In his thesis \cite{nardinThesis}, Nardin introduced the notions of \textit{$\baseCat$-distributivity} and \textit{$\baseCat$-normed categories} for so-called \textit{atomic orbital} base categories $\baseCat$ (cf. \Cref{subsection:atomic_orbital}). By setting $\baseCat$ to be the orbit category $\Orb_G$ of a finite group $G$, these are meant to capture the distributive multiplicative norms of Hill-Hopkins-Ravenel in a  fully coherent and natively $\infty$-categorical manner. 

Unfortunately, atomic orbital categories are not rich enough to encapsulate our motivating example, the equivariant homotopy theory of compact Lie groups.
Nevertheless, a straightforward extension of Nardin's theory suffices for our purposes, namely the framework of a base category $\baseCat$ equipped with a distinguished \textit{atomic orbital subcategory $\NN\subset \baseCat$} (see \Cref{subsection:atomic_orbital}). The main example to keep in mind is that of the orbit category $\Orb_G$ for a compact Lie group $G$ and the wide subcategory $\finind\subset \Orb_G$ of finite-index injections;
further atomic orbital subcategories  $\NN \subset \Orb_G$ give a notion of \textit{incomplete systems of finite-index transfers}, specialising to the program of Blumberg-Hill et. al. \cite{Blumberg-op} when $G$ is finite.

Associated to the data $\NN\subset \baseCat$ are then the notions of \textit{$\NN$-normed structures} and \textit{$\NN$-distributivity}, generalising Nardin's aformentioned framework.
For the former notion, we say that a map of disjoint unions of elements $W = \coprod_i W_i \rightarrow \coprod_j V_j = V$ in $\baseCat$ is \textit{$\NN$-admissible} if the constituent maps $W_i \rightarrow V_j$ lie in $\NN$;
$\NN$-normed $\baseCat$-categories $\CC$ have ``$w$-tensor functors'' 
\[
    w_{\otimes}\colon \CC^W \coloneqq \prod_{i} \CC^{W_i} \longrightarrow \prod_i\CC^{V_i} = \CC^V.
\]
For $w$ a fold map, these give traditional tensor functors, and for $w$ a map lying directly in $\NN$, these are a categorical context intended for finite-index Hill-Hopkins-Ravenel's norms.

Let us now explain the notion of distributivity.  Vaguely speaking, distributivity is a general term which refers to how certain tensor products ``distribute'' across arbitrary colimits, categorifying  the axiom that multiplications distribute  sums in the definition of a classical ring.

Now, as amply highlighted in \cite[\textsection A.3.3 and A.3.4]{HHR}, the multiplicative norms 
-- being an equivariant version of tensor products -- 
also satisfy a certain \textit{equivariant} distributivity of fundamental importance. 
However, this distributivity is much more complicated to even state. 
Notwithstanding, Nardin has found a compact formulation which directly generalises to our setting. 

As our first contribution, we introduce the notion of \textit{categories of cocones} to organise this theory in an efficient and conceptual manner;
the specialisation of this to non-parametrised  higher category theory already presents a more conceptual framework for distributivity and the symmetric monoidal structure on presentable categories than the standard approach.

\introheader{Categories of cocones.}
We define a cocone in a $\baseCat$-category $\CC$ simply as a functor $\udl{K}^{\triangleright} \to \CC$.
Let $\Catcocone$ be the $\baseCat$-category of  $\baseCat$-categories whose objects are a $\baseCat$-category $\CC$ together with a $\baseCat$-class $\udl{\R}$ of cocones in $\CC$ and morphisms are functors of categories which send cocones to cocones. 
Now, $\Catcocone$ comes with an $\NN$-normed structure: given an $\NN$-admissible map $w \colon W \to V$, a $W$-category (i.e. $\baseCat_{/W}$-category) $\CC$, and a cocone $\udl{K}^{\triangleright} \to \CC$, we obtain a cocone in $w_\ast \CC$: 
\[ (w_\ast \udl{K})^{\triangleright} \longrightarrow w_\ast(\udl{K}^{\triangleright}) \longrightarrow w_\ast \CC; \]
here, the first arrow adjoins the appropriate cocone point to the inclusion $w_* \iota\colon w_*\udl{K} \hookrightarrow w_* \prn{\udl{K}^{\triangleright}}$.
Applying this procedure to every cocone in a class of cocones $\udl{\R}$ in $\CC$ yields a class of cocones $w_\ast \udl{\R}$ in $w_\ast \CC$.
In the special case of non-parametrised category theory $(\TT=\ast)$ and $w$ being the fold map, this specialises to the fact that, given cocones $K^{\triangleright} \to \mathcal C$ and $L^{\triangleright} \to \mathcal D$, we obtain a cocone in the product $(K \times L)^{\triangleright} \hookrightarrow K^{\triangleright} \times L^{\triangleright} \to \mathcal C \times \mathcal D$.

Let $\KK \subset \GCat$ be a $\baseCat$-subcategory of indexing diagrams, closed under $\NN$-indexed products. 
The $\baseCat$-category $\GCatcoco{\KK}$ of $\KK$-cocomplete $\baseCat$-categories and $\KK$-colimit preserving $\baseCat$-functors is a full subcategory of $\Catcocone$: the inclusion $\GCatcoco{\KK} \subset \Catcocone$ equips a $\KK$-cocomplete category $\CC$ with the class of $\KK$-colimit cocones $\udl{\RR}_{\KK}(\CC)$, i.e. the classes of colimit cocones $\udl{K}^\triangleright \to \CC$ with $\udl{K} \in \KK$. 

In this framework it is easy to state what a $\KK$-distributive functor is: given $\NN$-admissible $w \colon W \to V$, $\KK$-cocomplete categories $\CC$ in the context of $W$ and $\DD$ in the context of $V$, a $V$-functor $w_\ast \CC \to \DD$ is said to be $\KK$-distributive if it sends the class of cocones $w_\ast \udl{\RR}_{\KK}(\CC)$ to $\udl{\RR}_{\KK}(\DD)$. 
In the special case $\baseCat = \ast$ and $w$ the fold map, this specialises to the classical notion of functors commuting with $\mathcal K$-colimits in each variable separately. 

With this in hand, we are able to formulate and construct the $\NN$-normed structure on $\GCatcoco{\KK}$: we show that the inclusion $\GCatcoco{\KK} \subset \Catcocone$ admits a partial left adjoint. 
In this work, we show that this partial left adjoint is compatible with the $\NN$-normed structure, inducing an $\NN$-normed structure. 

\begin{alphThm}[Precise and full version in \Cref{thm:catK_symMon}]\label{alphThm:CATK_Symmon}
    Let $\KK \subset \GCat$ be a collection of categories, closed under $\NN$-indexed products. 
    The inclusion $\GCatcoco{\KK} \subset \Catcocone$ of the category of $\KK$-cocomplete categories into the category of cocones factors through an $\NN$-normed subcategory $\Catcoconeproj(\KK) \subset \Catcocone$ such that the inclusion $\GCatcoco{\KK} \subset \Catcoconeproj(\KK)$ admits an $\NN$-norm compatible left adjoint, inducing a unique $\NN$-normed structure on $\GCatcoco{\KK}$. The norm functors in $\GCatcoco{\KK}$ corepresent $\KK$-distributive functors.

    Moreover, the norm functors in $\GCatcoco{\KK}$ are themselves distributive.
\end{alphThm}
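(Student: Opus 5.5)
The plan is to realise the normed structure on $\GCatcoco{\KK}$ as a reflective localisation of an $\NN$-normed category whose norms are compatible with the reflector, and then invoke the general principle that a norm-compatible localisation of an $\NN$-normed category inherits a unique $\NN$-normed structure with $w_\otimes^{\mathrm{loc}} = L\circ w_\otimes$. Concretely, I would define $\Catcoconeproj(\KK)\subset\Catcocone$ as the full $\baseCat$-subcategory of pairs $(\CC,\udl{\RR})$ where every cocone in $\udl{\RR}$ is indexed by some $\udl{K}\in\KK$ (the ``projective'' or $\KK$-shaped cocones), with $\CC$ small up to the relevant size bound. Because $\KK$ is closed under $\NN$-indexed products, the construction $(\CC,\udl{\RR})\mapsto (w_*\CC, w_*\udl{\RR})$ recalled before the theorem preserves this subcategory: a cocone $(w_*\udl K)^\triangleright\to w_*\CC$ is again $\KK$-shaped. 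So $\Catcoconeproj(\KK)$ is an $\NN$-normed subcategory, and $\GCatcoco{\KK}$ sits inside it via $\CC\mapsto(\CC,\udl{\RR}_\KK(\CC))$.

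Next I would construct the left adjoint $L\colon\Catcoconeproj(\KK)\to\GCatcoco{\KK}$ as the partial $\KK$-cocompletion with relations: $L(\CC,\udl{\RR})$ is the full subcategory of $\udl{\presheaf}(\CC)$ generated under $\KK$-colimits by the Yoneda image, localised at the maps $\colim_{\udl K}\yo\circ f\to \yo(f(\infty))$ for $f\in\udl{\RR}$. Its universal property, that $\KK$-colimit preserving functors out of $L(\CC,\udl{\RR})$ are exactly functors out of $\CC$ sending $\udl{\RR}$ to colimit cocones, is the earlier partial cocompletion result, which gives the adjunction fibrewise and compatibly with restriction, hence parametrised.

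The main obstacle is norm-compatibility: showing that for $\NN$-admissible $w\colon W\to V$ and a morphism $\phi$ inverted by $L$, the map $w_*\phi$ is also inverted by $L$; equivalently that $L(w_*\CC,w_*\udl\RR)\simeq L(w_*L(\CC,\udl\RR), w_*\udl{\RR}_\KK(L(\CC,\udl\RR)))$. Using the universal property, this reduces to the \emph{distributive} universal property of partial cocompletions: a $V$-functor $w_*\CC\to\DD$ sending $w_*\udl\RR$ to colimits extends uniquely to a $\KK$-distributive functor $w_*L(\CC,\udl\RR)\to\DD$. I would prove this by factoring $w$ into a fold map and a map in $\NN$, and for the latter extend one ``coordinate'' at a time using the cubical analysis from \cite{kaifNoncommMotives}: write the norm of a $\KK$-colimit as a colimit over $w_*\udl K$ via the distributivity formula, and check by induction on the $\KK$-colimit generation of $L(\CC,\udl\RR)$ that the class of objects where the extension is distributive is closed under $\KK$-colimits. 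This inductive closure, and making it coherent in the parametrised setting, is where I expect the real work.

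Finally, corepresentability follows formally: maps $L(w_*\CC,w_*\udl\RR_\KK(\CC))\to\DD$ in $\GCatcoco{\KK}$ are maps $(w_*\CC,w_*\udl\RR_\KK(\CC))\to(\DD,\udl\RR_\KK(\DD))$, i.e.\ $\KK$-distributive functors by definition. Distributivity of the norm functors themselves is then the statement that $w_\otimes$ on $\GCatcoco{\KK}$ sends $w_*$ of $\KK$-colimit cocones to $\KK$-colimits; since $w_\otimes = L\circ w_*$ and $L$ is a left adjoint, this reduces to $w_*$ on $\Catcoconeproj(\KK)$ preserving the relevant colimits computed via $L$, which I would check using the same distributivity formula for $w_*$ on diagram categories.
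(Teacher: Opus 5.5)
Your overall architecture matches the paper's: a reflective localisation of a category of cocones, with corepresentability following formally from the universal property. There is, however, a genuine gap, located exactly at the step you flag as the main obstacle.

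The paper's $\Catcoconeproj(\KK)$ is not merely the classes of $\KK$-indexed cocones. It consists of \emph{projectable} classes (\Cref{defn:projectable_classes}), i.e.\ those containing the constant cocone $\constant(X)\colon \udl{\ast}\tcocone\to U^*\CC$ for every orbit $U$ and every $X\in\CC^U$. Without this condition, the norm-compatibility you need is false.

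Take the fold map $w=\nabla\colon V\sqcup V\to V$, which is $\NN$-admissible and has $w_*W^*\CC\simeq\CC\times\CC$. Let $\CC$ be constant on the discrete category $\{c,x\}$, and (assuming $\udl{\emptyset}\in\KK$) let $\udl{\RR}$ be generated by the empty cocone at $c$. Then $w_*\udl{\RR}$ contains only the empty cocone at $(c,c)$, so $F\colon \CC\times\CC\to\DD$ distributes $w_*\udl{\RR}$ if and only if $F(c,c)$ is initial. But $c$ is initial in $L(\CC,\udl{\RR})$, and $\udl{\RR}_{\KK}(L(\CC,\udl{\RR}))$ contains the constant cocone at $x$. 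Hence every $\KK$-distributive functor out of $L(\CC,\udl{\RR})\times L(\CC,\udl{\RR})$ sends $(c,x)$ to an initial object. So a functor with $F(c,c)$ initial and $F(c,x)$ not initial has no distributive extension. Equivalently, $(\CC,\udl{\RR})\to(\CC,\udl{\RR}\cup\mathrm{const})$ is inverted by $L$ but its $w$-norm is not. On your subcategory, $L$ is therefore not an $\NN$-normed Bousfield localisation.

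This is not cosmetic. Norming a class of cocones only produces cocones all of whose coordinates are $\udl{\RR}$-cocones, and the constant cocones are what let you hold the other variables fixed. In the paper this enters through the saturation problem, which your sketch does not address.

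The maps inverted by $\udl{\presheaf}(\CC)\to\udl{\presheaf}(\CC,\udl{\RR})$ form the \emph{strong saturation} $\overline{\udl S}$ of the relation maps $\colim\yonedaMap\partial\to\yonedaMap(\partial_\infty)$. One needs that a distributive functor inverting $w_*\udl S$ also inverts $w_*\overline{\udl S}$. This is \Cref{lem:saturated-class}: the $F$-equivalences of \Cref{defn:F-equivalences}, defined by padding with identities over complementary orbits of $U\times_V W$, are strongly saturated, with pushout-closure coming from the cubical decomposition of $w_*\udl{\Lambda}^2_0$. Via \Cref{lem:saturation_distribution}, it applies only when $\udl S$ contains identities on the generators, which is precisely projectability. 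Combined with the distributive universal property of $\udl{\presheaf}$ (\Cref{prop:presheaf-dist}, via the indexed product multifunctor $\mu$) and a final passage to the $\KK$-colimit closure, this yields \Cref{thm:distributivity_localisation-cocompletion}.

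Your proposed ``extend one coordinate at a time'' induction cannot replace this as stated. For $w\colon W\to V$ in $\NN$ with $W$ an orbit, $(w_*\CC)^V\simeq\CC^W$ has a single coordinate, and coordinates only separate after restricting along $U\to V$. There is no global coordinatewise induction to run.

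Finally, the distributivity claim concerns arbitrary $\baseCat$-colimits in $\GCatcoco{\KK}$, not just $\KK$-shaped ones, and you still owe distributivity of $\Catcoconeproj(\KK)$. The paper obtains this from three ingredients: the conservative cocontinuous normed functor $\Catcoconebig\to\wGCat\times\wGCat$, the normed localisation $\Catcoconebig\to\Catcocone$, and coreflectivity (hence colimit-closure) of $\Catcoconeproj(\KK)\subset\Catcocone$. It then transfers distributivity along the localisation by \Cref{prop:transf_examples}.
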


This makes the existence of the $\NN$-normed structure on $\GCatcoco{\KK}$, whose $\NN$-norms corepresent $\KK$-distributive functors, a property of its inclusion into $\Catcocone$.
The left adjoint can be thought of as ``$\KK$-cocompletion with relations;'' our crucial technical contribution is to establish a \textit{distributive} universal property of this relative cocompletion.

As in the non-parametrised situation, some work is still needed to obtain from this an $\NN$-normed structure on $\baseCat$-presentable categories: we perform a set-theoretic argument to ensure that the norm of a presentable category is still presentable.
For arguments in algebraic $K$-theory, we must furthermore know that these norm functors on $\udl{\Pr}^L$ are themselves distributive, but this is significantly different from the non-parametrised setting;
whereas distributivity of tensor products is implied by existence of a tensor-hom adjunction, no such argument works for norms.
To establish distributivity of $\udl{\Pr}^L$ norms, we instead use $\wGCat$ to embed it \textit{nicely} into $\wGCat\prn{\allcolim}$, establishing the following.
\begin{alphThm}[{\Cref{thm:prL-colims}}]\label{alphThm:symmon_PrL}
  The presentable $\baseCat$-categories $\udl{\presentable}^L \subset \wGCat(\udl{\allcolim})$ form an $
  \NN$-normed subcategory with distributive norm functors. 
\end{alphThm}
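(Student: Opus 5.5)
We prove \Cref{alphThm:symmon_PrL} in two parts: closure of $\udl{\presentable}^L$ under the norms of $\wGCat(\udl{\allcolim})$ from \Cref{alphThm:CATK_Symmon}, and distributivity of the norm functors. By \Cref{alphThm:CATK_Symmon} with $\KK = \udl{\allcolim}$ (closed under $\NN$-indexed products), $\wGCat(\udl{\allcolim})$ carries an $\NN$-normed structure. Its norm $w_\otimes \CC$ along an $\NN$-admissible $w\colon W\to V$ corepresents $\udl{\allcolim}$-distributive $V$-functors $w_\ast\CC \to \DD$. Being an $\NN$-normed subcategory is a property: the full subcategory must contain the unit and be closed under the norm functors $w_\otimes$. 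The unit is $\udl{\spectra}$-free, namely $\GSpaces$, which is presentable.

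For closure, let $\CC \simeq \udl{\presentable}_W(\udl{I})$ be a localisation of $W$-presheaves along a set $S$ of maps, with $\udl I$ small.
\begin{enumerate}
\item Identify $w_\otimes \udl{\presentable}_W(\udl I) \simeq \udl{\presentable}_V(w_\ast \udl I)$. A distributive functor out of $w_\ast\udl{\presentable}(\udl I)$ is, by the parametrised universal property of presheaves applied factorwise, the same as an arbitrary $V$-functor $w_\ast \udl I \to \DD$. Here we use that $w_\ast$ of a Yoneda embedding followed by $w_\ast$ of colimit cocones generates every object under the cocones $w_\ast\udl{\RR}(\udl{\presentable}(\udl I))$.
\item Show that $w_\otimes$ of the localisation $L\colon \udl{\presentable}(\udl I) \to \CC$ is the localisation of $\udl{\presentable}_V(w_\ast\udl I)$ at the set $w_\ast S$. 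This set consists of maps obtained by $w$-inducing a map of $S$ in one coordinate and taking arbitrary representables elsewhere, together with the induced norm-indexed maps. Both sides corepresent the same distributive functors.
\end{enumerate}
The point of step 2 is a set-theoretic argument. After choosing a regular cardinal $\kappa$ for which $S$ consists of maps between $\kappa$-compacts, everything stays bounded, so the result is an accessible localisation and hence presentable.

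For distributivity, we must show that $w_\otimes\colon w_\ast \udl{\presentable}^L \to \udl{\presentable}^L$ sends $w_\ast$ of colimit cocones in $\udl{\presentable}^L$ to colimit cocones. We plan to use the embedding into $\wGCat(\udl\allcolim)$, which by \Cref{alphThm:CATK_Symmon} already has distributive norms. It therefore suffices to show that the inclusion $\udl{\presentable}^L \subset \wGCat(\udl\allcolim)$ preserves the relevant colimits, namely small parametrised colimits, including $\NN$-indexed coproducts. Colimits in $\udl\presentable^L$ are computed as limits along right adjoints in large categories. We would compare these with colimits in $\wGCat(\udl\allcolim)$ by identifying both with the corresponding limits of $\udl\presentable^R$-diagrams, using that the inclusion of cocomplete categories also computes colimits via adjoint limits when the transition functors have right adjoints.

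The main obstacle is this comparison of colimits. Colimits in $\wGCat(\udl\allcolim)$ of cocomplete categories need not be given by right-adjoint limits in general, because right adjoints may fail to exist. We would therefore restrict to diagrams of presentable categories, where right adjoints exist by the adjoint functor theorem, and verify that the right-adjoint limit satisfies the colimit universal property in $\wGCat(\udl\allcolim)$ against arbitrary cocomplete targets. We would first check this on parametrised coproducts (indexed products of categories) and on geometric realisations separately.
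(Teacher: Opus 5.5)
Your architecture matches the paper's endgame: norm-closure of $\udl{\Pr}^L\subset\wGCat\prn{\allcolim}$ (full, by the adjoint functor theorem), then distributivity via \Cref{prop:transf_examples} applied to the conservative, cocontinuous, normed inclusion into the distributive $\wGCat\prn{\allcolim}$ (\Cref{prop:CatKK-distributive}). Your closure route via presentations $L_S\udl{\presheaf}(\udl{I})$ is genuinely different from the paper's. The paper realises $\udl{\Pr}^L_\kappa$ as the image of the normed functor $\udl{\ind}^\kappa\colon\udl{\Cat}^{\mathrm{rex}(\kappa)}\to\wGCat\prn{\allcolim}$ (\Cref{cor:ind-of-compact,prop:nested-indexing}) and takes the union over $\NN$-normable $\kappa$ (\Cref{prop:choose-kappa}).

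However, Step~2 asserts the crux without proof. For ``both sides corepresent the same distributive functors'' you need the following: a distributive $w$-multifunctor $F\colon w_*\udl{\presheaf}(\udl{I})\to\DD$ that inverts only the padded maps $w_*(\udl{S}\cup\udl{I}^{\simeq})$ must invert all of $w_*\overline{\udl{S}}$. These are the $w_*L$-equivalences (\Cref{lem:local-object-is-loc,obs:bousfield_equivalences_in_indexed_products}), and inverting them is what lets $F$ factor through $w_*L$. This does not follow from the non-parametrised argument of freezing the other variables, because a norm couples coordinates. For $w\colon C_2/e\to C_2/C_2$, the norm at level $C_2/C_2$ is a functor of a single variable with underlying functor $X\mapsto X\otimes X$, so there is no variable to hold fixed.

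This is exactly the saturation issue the paper calls its technical heart, and the gap it identifies in Nardin--Shah. It is resolved by \Cref{lem:saturated-class}: the $F$-equivalences form a strongly saturated class. The proof uses the cubical decomposition of $w_*\udl{\Lambda}^2_0$ from \Cref{prop:norming_pushouts} and a Kan-extension argument, and it feeds \Cref{cor:distributivity_bousfield_localisations} and \Cref{lem:yoneda_image_generation_distributivity}. Step~1 is also not a factorwise statement: it is \Cref{cor:distributive_yoneda_universal_property}, which goes through the distributive indexed product $\mu$.

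With these results you do get $w_\otimes L_S\udl{\presheaf}(\udl{I})\simeq L_{S'}\udl{\presheaf}(w_*\udl{I})$ for the small set $S'=\mu\prn{w_*(\udl{S}\cup\udl{I}^{\simeq})}$, and this is presentable by definition. So the difficulty is not set-theoretic and no choice of $\kappa$ is needed. In fact your completed route gives norm-closure without any normable-cardinal arithmetic, which is a real simplification over the paper.

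The distributivity half is left open at the point you flag, and it is not formal. The usual proof that $\lim_i\CC_i^R$ has the colimit universal property passes to right adjoints of functors into the target. These are only guaranteed for presentable, or at least locally small, targets, while $\wGCat\prn{\allcolim}$ quantifies over arbitrary large cocomplete $\EE$. Ordinary coproducts are easy, since $(x_i)\simeq\coprod_i\iota_i x_i$ in $\prod_i\CC_i$. Geometric realisations against such targets, however, need a presentation of the colimit.

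The paper gets cocontinuity of $\udl{\Pr}^L\subset\wGCat\prn{\allcolim}$ from the $\kappa$-accessible picture instead:
\begin{itemize}
\item $\udl{\Pr}^L_\kappa\simeq\udl{\Cat}^{\mathrm{irex}(\kappa)}$ via $\udl{\ind}^\kappa$.
\item $\udl{\ind}^\kappa$ is the composite of the cocontinuous inclusion $\GCat\prn{\udl{\sm}(\kappa)}\subset\wGCat\prn{\udl{\sm}(\kappa)}$ with the left adjoint of \Cref{prop:nested-indexing}.
\item Hence a colimit in $\wGCat\prn{\allcolim}$ of a $\kappa$-accessible diagram is $\udl{\ind}^\kappa$ of a colimit in $\udl{\Cat}^{\mathrm{rex}(\kappa)}$, and so stays presentable (cf.\ \Cref{rmk:reg-brink}).
\item Every small diagram in $\udl{\Pr}^L$ is $\kappa$-accessible for some $\NN$-normable $\kappa$, by \Cref{prop:choose-kappa}.
\end{itemize}
You need to import this, or an equivalent argument, to finish.
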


This distributivity is fundamental to the work of the first author, cf. \cite{kaifNoncommMotives,hilmanRamzi}, but we shall also already see an application in the next point. 

\introheader{Idempotents and modes.} 
With the distributively multiplicative theory of presentably parametrised categories set up, we would like to follow the strategy of \cite{lur17} in constructing the $\NN$-normed structure on genuine equivariant spectra by exhibiting it as an appropriate form of idempotent algebra, or \textit{mode}, following the terminology of \cite{csy20}. 

To this end, we extend the theory of idempotence sketched by Nardin in \cite{nardinThesis} to our setting in \Cref{subsection:modes};
that is, we establish theories of \textit{$\NN$-normed smashing $\TT$-localisations} and \textit{idempotent $\NN$-normed algebras} and show them to be in correspondence, generalising the non-parametrised case.
Moreover, in \Cref{prop:idem-Nalg} we show that idempotence of $\NN$-normed algebras is determined by its underlying $\EEE{0}$-algebra;
that is, to construct an idempotent $\NN$-normed algebra, we may first construct an idempotent $\EEE{0}$-algebra, then exhibit it as the unit of an $\NN$-normed algebra.

We then present pointed parametrised anim\ae{} as a relative cocompletion of $\Delta^1$, and we use the combinatorics of the related cocones to prove that the $\TT$-category of pointed anim\ae{} $\GBSpaces$ is an $\NN$-mode, hence constructing a $\NN$-normed structure on that category.

Next, recall that Robalo \cite{robaloKTheoryBridge} has developed a theory of formal inversions where we can universally invert a collection of objects in a symmetric monoidal category, and under some assumptions on the object being inverted, this universal construction may be computed in terms of a filtered colimit in presentable categories. 
This theory has been extended to the parametrised (non-normed) context by Cnossen in \cite{Cnossen2023}. In particular, this allows us to consider the formal inversion $\MM[\invObj^{-1}]$ for an arbitrary small collection of objects $\invObj\subseteq \MM$. 
Under some hypotheses on $\invObj$, we prove the following:

\begin{alphThm}[{\Cref{thm:invert_object_idempotent}}]\label{alphThm:invert_object_idempotent}
    Suppose $\invObj \subset \MM$ is an $\NN$-good (cf. \Cref{defn:multiplicative_invertibles}) full $\baseCat$-subcategory of an $\NN$-mode $\MM\in\presentable^L_{\baseCat}$. 
  Then the $\invObj$-inversion map $ \MM \rightarrow \MM\brk{\invObj^{-1}}$ 
  exhibits the codomain as an $\NN$-mode
  such that an $\MM$-module $\CC$ is an $\MM\brk{\invObj^{-1}}$-module if and only if  $\invObj$ acts invertibly on $\CC$.
\end{alphThm}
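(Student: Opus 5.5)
The plan is to reduce to the non-normed idempotence criterion and then handle the norms through the underlying $\EEE{0}$-algebra, as \Cref{prop:idem-Nalg} allows. By the parametrised formal inversion of Cnossen, $\MM[\invObj^{-1}]$ is the localisation of $\MM$-modules at the objects on which $\invObj$ acts invertibly. Under the smallness and symmetry hypotheses in $\NN$-goodness, this localisation is computed as the filtered colimit in $\presentable^L_{\baseCat}$ of $\MM$ along multiplication by objects of $\invObj$. The resulting map $\MM \to \MM[\invObj^{-1}]$ is therefore a smashing localisation of $\MM$-modules, given by $\CC \mapsto \CC \otimes_{\MM} \MM[\invObj^{-1}]$. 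Its local objects are exactly the modules on which $\invObj$ acts invertibly. This gives the module characterisation, and it shows that $\MM \to \MM[\invObj^{-1}]$ is an idempotent $\EEE{0}$-algebra in $\MM$-modules, hence in $\presentable^L_{\baseCat}$, since $\MM$ is itself a mode.

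It remains to show that this idempotent $\EEE{0}$-algebra extends to an $\NN$-normed algebra. Equivalently, via the correspondence between $\NN$-normed smashing localisations and idempotent $\NN$-normed algebras of \Cref{subsection:modes}, I would show that the localisation is compatible with the norms. For every $\NN$-admissible $w\colon W\to V$, the norm $w_\otimes$ on $\MM$-modules should send $\invObj$-local modules to $\invObj$-local modules, in the appropriate sense of a norm-compatible localisation. Concretely, I would show that the norm of the localising map $w_\otimes(\MM_W \to \MM[\invObj^{-1}]_W)$ becomes an equivalence after applying the localisation over $V$. It suffices to check that $\invObj$ acts invertibly on $w_\otimes \MM[\invObj^{-1}]_W$. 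This is where $\NN$-goodness enters: I expect it to say that $\invObj$ is closed under $\NN$-norms, or at least that the norms $w_\otimes$ of objects of $\invObj$ are invertible in $\MM[\invObj^{-1}]$.

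The argument then goes as follows. The $V$-action of $I\in\invObj_V$ on $w_\otimes \CC$ is computed through the distributive norm functors of \Cref{alphThm:symmon_PrL}, and it factors through the action of the restriction $w^\ast I$ on $\CC$. Since $w^\ast I$ lies in $\invObj_W$ by closure under restriction, it acts invertibly on $\CC = \MM[\invObj^{-1}]_W$. Hence $I$ acts invertibly on $w_\otimes\CC$.

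The main obstacle is this last compatibility: making precise that the action of $I$ on a norm of a module is controlled by the restriction $w^\ast I$. This needs the projection formula for norms, $I \otimes w_\otimes \CC \simeq w_\otimes(w^\ast I \otimes \CC)$ in the module setting, together with distributivity so that the filtered colimit defining the inversion commutes with $w_\otimes$. I would first try to deduce the projection formula from the cocone-theoretic description of norms in \Cref{alphThm:CATK_Symmon} applied to the module structure. Once norm-compatibility is established, \Cref{prop:idem-Nalg} upgrades the idempotent $\EEE{0}$-algebra to an idempotent $\NN$-normed algebra, so $\MM[\invObj^{-1}]$ is an $\NN$-mode.
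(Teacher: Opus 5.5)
Your treatment of the non-normed part (filtered-colimit formula, identification of $\MM\brk{\invObj^{-1}}$-modules, plain idempotence) is in line with the paper. The normed step, however, has a genuine gap: the condition you reduce to points the wrong way.

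Write $Y=\MM\brk{\invObj^{-1}}$. What \Cref{prop:idem-Nalg} requires is that $Y\to Y\otimes w_{\otimes}W^*Y$ be an equivalence, i.e.\ that $Y$ be a $w_{\otimes}W^*Y$-module. Equivalently, the $L$-equivalences (not the $L$-local objects) must be closed under norms, as in \Cref{prop:monoidal-structure-Bousfield-loc}.

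Your criterion ``$\invObj$ acts invertibly on $w_{\otimes}W^*Y$'' says instead that, after tensoring with $\MM$, $w_{\otimes}W^*Y$ is a $Y$-module. That is the opposite inequality in the order \eqref{eq:relation-mod}, and it fails in two ways.
\begin{itemize}
\item It does not suffice. It yields $Y\otimes w_{\otimes}W^*Y\simeq \MM\otimes w_{\otimes}W^*Y$, and you would still need the latter to be $Y$.
\item It is false in general. For $W=\emptyset$ the norm is the unit $\GSpaces$, and $\invObj$ does not act invertibly on $\MM\otimes\GSpaces\simeq\MM$ unless the inversion is trivial. Yet the required equivalence is automatic in that case.
\end{itemize}

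The argument you sketch for your criterion also fails. There is no projection formula $I\otimes w_{\otimes}\CC\simeq w_{\otimes}(w^*I\otimes\CC)$ for norms. Indeed $w_{\otimes}(w^*I\otimes-)$ is the action of $w_{\otimes}w^*I$ (e.g.\ $I\otimes I$ for a fold map), not of $I$. Tellingly, your argument uses only restriction-closure of $\invObj$. Its closure under $\NN$-norms, which you correctly flag as the relevant part of $\NN$-goodness, never enters, and that is precisely the hypothesis doing the work.

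The repair uses ingredients you already listed, assembled in the other direction.
\begin{enumerate}
\item By \Cref{lem:abstract_formal_inversion}, $W^*Y$ is a filtered colimit of $W^*\MM$ along multiplication by objects of $W^*\invObj$.
\item The norm $w_{\otimes}$ on $\udl{\Pr}^L$ preserves such colimits by \Cref{thm:prL-colims} and \Cref{prop:dist-lemmas}. Hence $w_{\otimes}W^*Y\simeq (w_{\otimes}W^*\MM)\brk{(w_{\otimes}W^*\invObj)^{-1}}$.
\item Tensor with $Y$, using that formal inversion commutes with $Y\otimes-$. Then use the mode equivalence $\MM\otimes w_{\otimes}W^*\MM\simeq\MM$, under which $w_{\otimes}W^*\invObj$ lands in $\invObj$ by norm-closure.
\item This gives $Y\otimes w_{\otimes}W^*Y\simeq Y\brk{(w_{\otimes}W^*\invObj)^{-1}}\simeq Y$, because those objects are already invertible in $Y$.
\end{enumerate}
This is exactly the paper's factorisation of the map as $F_3\circ F_2\circ F_1$.
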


Before explaining our application in the equivariant setting, we briefly comment on the proof of this: the hypothesis of $\NN$-goodness ensures that the formal inversion $\MM[\invObj^{-1}]$ may be written as a filtered colimit. Exploiting the distributivity from \Cref{alphThm:symmon_PrL}, we thus write the norm $w_{\otimes}(\MM[\invObj^{-1}])$ as a certain filtered colimit, and this allows us to show that it is an $\NN$-mode given that $\MM$ is. 

\introheader{Finite-index norms for compact Lie groups.} 
As application of all the abstract theory, we  turn to the main motivating case of equivariant homotopy theory for compact Lie groups, which necessitated atomic orbital subcategories in the first place. 
In this case, we consider $\baseCat=\Orb_G$, the orbit category for a compact Lie group $G$. We then have:

\begin{alphThm}[{\Cref{thm:compact_lie_norms}}]\label{alphThm:compact_lie_norms}
    Let $G$ be a compact Lie group and $\finind \subset \Orb_G$ the quotient maps $G/K \rightarrow G/H$ along finite-index subgroup inclusions. Then $\myuline{\spectra}_G$ is $\finind$-idempotent. Hence it admits a unique $\finind$-normed structure compatible with the map $\Sigma^{\infty}_+\colon \udl{\Spaces}_G\rightarrow \myuline{\spectra}_G$  whose norm functors are distributive.
\end{alphThm}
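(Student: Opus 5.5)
The plan is to present $\myuline{\spectra}_G$ as a formal inversion of the pointed $G$-anim\ae{} and then apply \Cref{alphThm:invert_object_idempotent}. First, by the results recalled above, the $\Orb_G$-category $\GBSpaces$ of pointed $G$-anim\ae{} is an $\NN$-mode for every atomic orbital subcategory $\NN\subset\Orb_G$, in particular for $\finind$. Next, I would use the standard description of genuine $G$-spectra for compact Lie $G$: at level $G/H$, $\myuline{\spectra}_G$ is obtained from pointed $H$-anim\ae{} by inverting the representation spheres $S^V$, $V$ ranging over finite-dimensional orthogonal $H$-representations. Parametrically, this is $\myuline{\spectra}_G \simeq \GBSpaces[\invObj^{-1}]$ with $\invObj=\repsphere$ the full $\Orb_G$-subcategory spanned by representation spheres. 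This identification, in the parametrised form needed, is due to Cnossen \cite{Cnossen2023} (building on Robalo and Gepner--Meier style comparisons), and I would cite it. The functor $\Sigma^\infty_+$ is then the composite of $(-)_+\colon \udl{\Spaces}_G\to\GBSpaces$ with the inversion map.

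The main work is to check that $\repsphere\subset\GBSpaces$ is $\finind$-good in the sense of \Cref{defn:multiplicative_invertibles}. This requires closure under the $\finind$-norms, and the symmetry condition making the filtered-colimit formula for $\MM[\invObj^{-1}]$ valid. For closure under norms: for a finite-index inclusion $K\le H$, the norm $N_K^H$ on pointed anim\ae{} sends $S^V$ to $S^{\mathrm{Ind}_K^H V}$. This is because the norm of pointed anim\ae{} is the coinduced smash product, and $\mathrm{Map}_K(H,S^V)\simeq S^{\mathrm{Ind}_K^HV}$ using finiteness of $H/K$; the inductions stay finite-dimensional precisely because the index is finite. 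This explains why only $\finind$, not all of $\Orb_G$, is used. Closure under finite smash products and restrictions is clear, since $S^V\wedge S^W=S^{V\oplus W}$. For the symmetry condition I would use the classical fact that the cyclic permutation on $S^V\wedge S^V\wedge S^V$ is homotopic to the identity $H$-equivariantly, via a path in $O(V^{\oplus 3})^H$ connecting the permutation matrix to the identity.

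With goodness established, \Cref{alphThm:invert_object_idempotent} shows that $\GBSpaces\to\myuline{\spectra}_G$ exhibits $\myuline{\spectra}_G$ as a $\finind$-mode, i.e.\ a $\finind$-idempotent. By the correspondence between idempotent normed algebras and normed smashing localisations from \Cref{subsection:modes}, it then carries a unique $\finind$-normed algebra structure in $\udl{\presentable}^L_G$ under $\GBSpaces$, hence under $\udl{\Spaces}_G$ via $(-)_+$. Its norm functors are distributive because it is a normed algebra in $\udl{\presentable}^L$, whose norms corepresent distributive functors (\Cref{alphThm:CATK_Symmon}, \Cref{alphThm:symmon_PrL}).

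I expect the main obstacle to be the goodness verification, particularly making the symmetry condition and the identification of norms of representation spheres coherent parametrically, i.e.\ naturally over $\Orb_G$, rather than only objectwise.
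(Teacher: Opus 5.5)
Your overall architecture is the paper's: present $\myuline{\spectra}$ as $\udl{\Spaces}_*[\myuline{\repsphere}^{-1}]$, apply \Cref{thm:invert_object_idempotent} to the mode $\GBSpaces$, and conclude with \Cref{cor:contractibility-monoidal-structures}. You handle the key step differently, though. You prove $\finind$-goodness of $\myuline{\repsphere}$ itself via $N_K^HS^V\simeq S^{\mathrm{Ind}_K^HV}$.

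The paper never computes the norm of a representation sphere. It passes to the larger class $\myuline{\hosphere}$ of compact pointed $G$-anim\ae{} all of whose fixed points are spheres and which are 3-cyclic. It checks norm-closure of that class using only internal tools: compactness of norms (\Cref{cor:norms-finite-cats}), the fixed-point formula $(w_\otimes X)^V\simeq\bigwedge_U X^U$ (\Cref{cor:fixed-of-norm}), and symmetric monoidality of $w_\otimes$ for 3-cyclicity. It then identifies $\udl{\Spaces}_*[\myuline{\repsphere}^{-1}]\simeq\udl{\Spaces}_*[\myuline{\hosphere}^{-1}]$, because homotopy spheres are already invertible in $\myuline{\spectra}$ (\Cref{lem:inv-vs-hsphere,lem:identify-inversions}).

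Your route would give an explicit formula for norms of representation spheres. It would also make \Cref{cor:norms-finite-cats} and \Cref{lem:inv-vs-hsphere} unnecessary for this theorem, and top-degree generation becomes transparent. The paper's route buys independence from any point-set model of the norm, since it only needs properties of $w_\otimes S^V$ that are visible on fixed points.

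The price is that $N_K^HS^V\simeq S^{\mathrm{Ind}_K^HV}$ is not available in this framework, and your justification is not right as written. $\mathrm{Map}_K(H,S^V)$ is the coinduction, a twisted product of $[H:K]$ copies of $S^V$, not a sphere; for $K=e$, $H=C_2$, $V=\mathbb{R}$ it is a torus. The sphere is its quotient by the fat wedge.

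Since $w_\otimes$ is defined abstractly, what you actually have is \Cref{prop:formula_for_norms_in_pointed_anima}: $w_\otimes S^V$ is the total cofibre of the parametrised cube $w_*(\ast\to S^V)$. To conclude, you must model this point-set:
\begin{enumerate}
\item the top vertex by $\mathrm{Map}_K(H,S^V)$ (\Cref{lem:coind-model});
\item the parametrised colimit over the punctured cube by the fat wedge;
\item the fat-wedge inclusion as an $H$-cofibration, an indexed version of the NDR argument of \Cref{rmk:point-set-smash}.
\end{enumerate}
Only then does $\mathrm{Map}_K(H,S^V)/(\text{fat wedge})\cong(\mathrm{Ind}_K^HV)^+$ finish the job. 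This is doable, but it is exactly the comparison the detour through $\myuline{\hosphere}$ is designed to avoid.

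There are three smaller points. First, you omit top-degree generation from \Cref{defn:multiplicative_invertibles}. It is easy here: every $K$-representation $V$ is a summand of the restriction of some $H$-representation $W$, with complement $V'$, giving $S^V\wedge S^{V'}\simeq\res{H}{K}S^W$. Second, your equivariant-isometry argument for 3-cyclicity still has to be transported to the $\infty$-categorical smash product; the paper does this via \Cref{rmk:point-set-smash}. Third, your coherence worry is misplaced: norm-closure, 3-cyclicity and top-degree generation are objectwise conditions on a full subcategory.
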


Unsurprisingly, this will be a consequence of \Cref{alphThm:invert_object_idempotent}, where we formally invert the so-called compact homotopy spheres. 

Lastly, having worked out the theory for presentable categories, we then treat the case of small categories. We write $\udl{\presentable}^L_{G,\aleph_0} \subset \udl{\presentable}^L_G$ for the nonfull $G$-subcategory of compactly generated presentable $G$-categories and left adjoint $G$-functors which preserve compact objects, and we write $\udl{\cat}\irex_G $ (resp. $\udl{\cat}\perfect_G$) for the $G$-categories of small, idempotent-complete, and $G$–finite-cocomplete (resp. and additionally $G$-stable) categories. In this direction, we offer the following result:

\begin{alphThm}[{\Cref{prop:catperf_G}}] \label{alphThm:GSymmon_small_cats}
    Let $G$ be a compact Lie group. Then $\udl{\presentable}^L_{G,\aleph_0}$ inherits the distributive $\finind$-normed structure from $\udl{\presentable}^L_G$.
    Moreover, idempotent completion $\prn{-}^{\natural}\colon \udl{\cat}^{\mathrm{rex}}_G \rightarrow \udl{\cat}\irex_G$ is an $\finind$-norm compatible localisation, and under the resulting normed structure on $\udl{\cat}\irex_G$, the equivalence $\udl{\ind}\colon \udl{\cat}\irex_G\rightleftharpoons \udl{\presentable}^L_{G,\aleph_0}\cocolon (-)^{\aleph_0}$ enhances to an $\NN$-normed one.

    Additionally, the Bousfield localisation $\myuline{\spectra}_G^{\aleph_0} \otimes-\colon \udl{\cat}\irex_G\rightleftharpoons \udl{\cat}\perfect_G \cocolon \mathrm{incl}$ refines to an $\NN$-smashing localisation. In particular, the equivalence $\udl{\ind}\colon \udl{\cat}\perfect_G\rightleftharpoons \udl{\presentable}^L_{G,\aleph_0,\mathrm{st}}\cocolon (-)^{\aleph_0}$ enhances to an $\NN$-normed one.
\end{alphThm}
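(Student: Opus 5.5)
We prove the four assertions of \Cref{alphThm:GSymmon_small_cats} in order, each time reducing to a closure property of the distributive $\finind$-normed structure on $\udl{\presentable}^L_G$ from \Cref{alphThm:symmon_PrL}.

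\textbf{Compactly generated categories.} The goal is to show that $\udl{\presentable}^L_{G,\aleph_0}$ is an $\finind$-normed subcategory. Concretely, for $w\colon W\to V$ an $\finind$-admissible map, we must show two things: that $w_\otimes$ of a compactly generated $W$-category is compactly generated, and that $w_\otimes$ of a compact-preserving left adjoint again preserves compacts. For this we identify $w_\otimes \udl{\presentable}$-norms of presheaf categories. Every compactly generated $\CC$ is $\udl{\ind}$ of its compacts, and $\udl{\ind}(\CC_0)$ is the relative $\udl{\filt}$-cocompletion of $\CC_0$ with the finite colimit relations. By the universal property in \Cref{alphThm:CATK_Symmon}, $w_\otimes\udl{\ind}(\CC_0)\simeq \udl{\ind}\big((w_\ast\CC_0)^{\mathrm{rex}\text{-}\delta}\big)$, i.e.\ the Ind-category of the finite-cocompletion of $w_\ast\CC_0$ with distributive relations. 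Distributivity of the norm functors from \Cref{alphThm:symmon_PrL} is then used to show this agrees with an Ind of a small category. The key input is that $w_\ast$ along finite-index maps preserves finite (i.e.\ $\aleph_0$-small parametrised) diagrams: $w_\ast$ of a finite $G$-set is finite because the index is finite. Consequently the generators $w_\otimes(x_i)$ are compact, and the norm of a compact-preserving functor preserves the generators and hence compacts.

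\textbf{Small categories.} We first check that $\udl{\cat}^{\mathrm{rex}}_G$ carries the normed structure from \Cref{alphThm:CATK_Symmon}, with $\KK$ the $G$-finite diagrams, which are closed under $\finind$-indexed products as just noted. Next we show that idempotent completion is a norm-compatible localisation. This amounts to checking that $w_\otimes$ sends maps $\CC\to\CC^\natural$ to equivalences after completion. That holds because $\udl{\ind}$ inverts them and, by the previous paragraph, $\udl{\ind}$ intertwines the norms. Equivalently, we can transport the structure directly along $\udl{\ind}\colon \udl{\cat}\irex_G\simeq\udl{\presentable}^L_{G,\aleph_0}$, and then verify that the transported norm agrees with $(w_\otimes^{\mathrm{rex}}-)^\natural$. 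That comparison comes from the same corepresentability of distributive functors on both sides.

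\textbf{Stable case.} $\myuline{\spectra}_G$ is an $\finind$-idempotent by \Cref{alphThm:compact_lie_norms}. Its compacts $\myuline{\spectra}_G^{\aleph_0}$ form an idempotent normed algebra in $\udl{\cat}\irex_G$, since the equivalence $\udl{\ind}$ is normed and idempotence is detected on underlying $\EEE{0}$-algebras by \Cref{prop:idem-Nalg}. By the correspondence of \Cref{subsection:modes} between idempotent normed algebras and smashing localisations, $\myuline{\spectra}_G^{\aleph_0}\otimes-$ is an $\finind$-smashing localisation. Its local objects are the modules, which are exactly the $G$-stable ones, and the final equivalence follows by restricting the previous one.

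\textbf{Main obstacle.} I expect the main difficulty to be the first step: showing that norms preserve compact generation and compact objects. This requires the explicit colimit description of $w_\otimes\udl{\ind}(\CC_0)$ via the distributive universal property, together with the finiteness of $w_\ast$ on finite-index maps.
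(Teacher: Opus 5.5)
The overall architecture of your proposal matches the paper. You put a normed structure on $\udl{\cat}^{\mathrm{rex}}_G$ via the relative-cocompletion universal property. You use $\udl{\ind}$ as a normed functor whose inverted maps are exactly the $(-)^\natural$-equivalences. Finally you transport the mode $\myuline{\spectra}$ along $\udl{\ind}$. The gap is in the one step that carries all the non-formal content.

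Every step needs $\udl{\sm}(\aleph_0)$ to be $\finind$-compatible, i.e.\ $w_*$ must send finite $G$-categories to finite ones for $\finind$-admissible $w$. Without this, \Cref{thm:catK_symMon} does not even produce a normed structure on $\udl{\cat}^{\mathrm{rex}}_G$. Your compactness claim for the generators of $w_\otimes\udl{\ind}(\CC_0)$ rests on the same input, since the localising relations there are colimit-comparison maps indexed by the $G$-categories $w_*\udl{K}$.

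Your justification ("$w_*$ of a finite $G$-set is finite because the index is finite") only covers discrete orbits. For a compact Lie group, the generators of $\udl{\sm}(\aleph_0)$ are all homogeneous spaces $G/K$ with $K\leq G$ closed. For $w\colon G/H\to G/G$ one has $w_*(H/K)\simeq\bMap_H(G,H/K)$, which is non-equivariantly $(H/K)^{[G:H]}$ and is generally not a finite union of orbits. For instance, with $G=O(2)$, $H=SO(2)$, $K=e$ it is a $2$-torus on which the $1$-dimensional group $O(2)$ acts with infinitely many orbits.

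The paper proves finiteness of these coinduced homogeneous spaces geometrically in \Cref{prop:fin-norms-homogeneous}: they are closed smooth $G$-manifolds, hence admit finite $G$-CW structures by Illman's equivariant triangulation theorem. It then passes from orbits to all finite $G$-categories by closure under pushouts. That step needs the finiteness of the cube $G$-categories $w_*\udl{\Lambda}^2_0$ from \cite[Prop.~C.0.5]{hilmanRamzi}, together with distributivity of the cartesian norms on $\GCat$. This is \Cref{cor:norms-finite-cats}, which yields $\finind$-normability of $\aleph_0$ (\Cref{example:Lie-group}), and it is the missing idea in your argument.

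There are also smaller omissions.
\begin{itemize}
\item Theorem A cannot be applied with $\KK$ the inflated filtered categories, since they are not $\finind$-compatible ($w_*$ of a constant $G$-category is not constant). The paper instead writes $\udl{\ind}\simeq\udl{\presheaf}^{\allcolim}_{\udl{\sm}(\aleph_0)}$ and uses the normed relative cocompletion of \Cref{prop:nested-indexing} for $\udl{\sm}(\aleph_0)\subset\GCat$.
\item You never address the word ``distributive'' in the first claim. It needs the inclusion $\udl{\presentable}^L_{G,\aleph_0}\subset\udl{\presentable}^L_G$ to be cocontinuous (part of \Cref{thm:prL-colims}), so that \Cref{prop:transf_examples} applies.
\item In the stable step, before transporting along $\udl{\ind}$, you need $\myuline{\spectra}$ to be a normed algebra in the non-full subcategory $\udl{\presentable}^L_{\aleph_0}$. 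That is, its restriction and norm-multiplication maps must preserve compact objects. The paper gets this from compact $H$-spectra being exactly the dualisable ones.
\item Identifying $\myuline{\spectra}^{\aleph_0}$-modules in $\udl{\cat}^{\mathrm{irex}}_G$ with $G$-stable categories uses \Cref{rmk:compact_mode} (via \Cref{thm:invert_object_idempotent}), not just the general smashing-localisation correspondence.
\end{itemize}
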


The key non-formal input  is the equivariant finiteness of equivariant cubes from \cite[Prop. C.0.5]{hilmanRamzi}, and \Cref{alphThm:GSymmon_small_cats} forms the basic substrate for the first author's work on obtaining norms on algebraic K-theory for finite groups in \textit{loc. cit.} and in \cite{kaifNoncommMotives}.

\subsection*{Relation to other work}
While one goal of this paper is to extend the constructions of normed structures on cocomplete and presentable categories from the atomic orbital setting to the one allowing for atomic orbital subcategories, we also believe that the constructions in the atomic orbital category currently available had gaps which are filled in this paper.

Indeed, a construction of the normed structure on presentable categories can be found in the unpublished thesis of Nardin \cite{nardinThesis}, while a normed structure on cocomplete categories is discussed in Nardin and Shah's unpublished preprint \cite{NS}.
Let us briefly discuss why we believe that both writeups were not fully sufficient for the construction of normed structures: 
\begin{itemize}
    \item The article \cite{NS} follows the same Lurie-inspired strategy we follow in this article. However, we believe that there is a gap in \cite[Cor.~6.0.9]{NS} (proved in our seeting as \Cref{prop:distributivity_inverted_maps_presheaves}: in the proof they state that \enquote{by definition $g_\ast\udl{\mathbb{P}}_{f_\ast\mathcal{All}}(f_\ast\CC)$ is a localisation of $g_\ast \udl{\mathbb{P}}_{\mathcal{T}^{/V}}(f_\ast\CC)$ at the relevant class of morphisms}. We could not verify that this is true by definition: the relevant categories were defined as the class of local objects of a class of morphism, i.e. as a \textit{Bousfield localisation}. Any Bousfield localisation is the Dwyer-Kan localisation at the \textit{strong saturation} of the class of morphisms one started with. This issue of saturation is not mentioned at all in the paper. It needs to be explained why saturation behaves well with respect to indexed products. This is verified in our \Cref{cor:distributivity_bousfield_localisations}.
    While this might sound like a very minor problem, it is actually the technical heart of the paper: it is the one step in the construction where non-parametrised constructions and proofs do not  carry over to the parametrised setting, leading to a delicate treatment of the interaction of distributive functors and saturated classes of morphisms (\Cref{lem:saturated-class}).
    \item In his thesis \cite{nardinThesis}, Nardin presents a construction of the normed structure on presentable categories. He does pay attention to the issue mentioned above by presenting an analog of \Cref{lem:saturated-class} (\cite[Lem.~3.22]{nardinThesis}). Our proof closely follows his approach. However, there seem to be some gaps in his proof of \textit{loc. cit.}: it is wrongly claimed that a map is right marked anodyne. Moreover, the deduction of the analogue of \Cref{cor:distributivity_bousfield_localisations} (\cite[Prop.~3.23]{nardinThesis}), stating that Bousfield localisations behave well with respect to distributive functors, has several gaps in the proof, mostly not explaining why certain functors are distributive. We also filled the gaps in this paper. 

    Nardin also claims to provide a normed structure on a parametrised category of spectra, therein defined universally with respect to fibrewise-stability and parametrised semi-additivity. 
    We could not verify his proof. Concretely, we could not verify that \enquote{the explicit description of this Bousfield localisation tells us that this is
just the category of distributive functors $\prod_f \mathbf{Top}_\ast^{\mathrm{fin},U} \to \mathbf{Sp}^{U^\prime}$, that is equivalent to $\mathbf{Sp}^U$ by 3.19.} On one hand, \cite[Prop.~3.19]{nardinThesis} does not seem to apply here. It only makes sense if we consider a distributive universal property of \textit{finite} cocompletion, which requires such an extension of \cite[Prop.~3.19]{nardinThesis}. On the other, we were not even able to confirm the first part, that is the explicit description of the Bousfield localisation does not seem to enable us to make the conclusion they claim;
for instance, without compatibility of cartesian norms with finiteness, it is unclear that the local classes even stay within the realm of \textit{finite colimits}.
\end{itemize}
In the general context of an atomic orbital category $\baseCat$, our paper provides us with a construction of a stable parametrised category of spectra $\myuline{\spectra}^{-1}$ which comes equipped with norms: We can simply invert norms of $S^1$ in the category of pointed anim\ae{} $\GBSpaces$. 
Under some conditions on $\baseCat$, \cite[Thm. B]{kaifSil} implies that there is a unique colimit-preserving $\NN$-normed $\baseCat$-functor from Nardin's category of genuine $\baseCat$-spectra $\myuline{\spectra}\rightarrow \myuline{\spectra}^{-1}$. 
This is an equivalence in the finite-group equivariant situation, but we do not know if this is an equivalence in general or whether Nardin's category admits norms, although our expectation is that this is not so. 

When $G$ is  finite group, the $G$-normed structures on pointed $G$-\animae{} $\udl{\Spaces}_*$ and genuine $G$-spectra $\myuline{\spectra}$ have also previously been constructed by  \cite{cnossennorms} with different methods: they constructed these by formally extending the one on pointed finite $G$-sets, and then argued that these structures are unique in \cite[Thm. 5.4.10]{cnossennorms} by showing that these are idempotent (in the usual sense) in $G$-presentable categories. In contrast, our approach follows the general strategy sketched by Nardin \cite{nardinThesis}. Namely, we show that $\udl{\Spaces}_*$ and $\myuline{\spectra}$  have the stronger property of being $G$-idempotent, and then \textit{deduce} that there must exist a unique distributive $G$-normed structure on them.

Finally, in \cite{mwpresentable} Martini and Wolf construct a symmetric monoidal structure on parametrised presentable categories, which recovers the underlying symmetric monoidal structure of our normed structure by \Cref{rmk:mw-vs-us}. Their treatment works in the generality of internal higher algebra over a topos, but does not include nontrivial norms, whose construction is one of the main technical results of this paper.

Some of the results herein have also  appeared within the third author's thesis \cite{nat-thesis}, sometimes via different proofs.

\subsection*{Organisation of the paper}
We begin in \Cref{sec:preliminaries} with recollections and preliminary results which do not depend on distributivity.
Namely, after recollections and conventions in \Cref{subsection:recollections}, in \Cref{subsection:atomic_orbital} we establish the language of \textit{atomic orbital subcategories and admissible maps}, on which the rest of the paper is founded. 
Following this, in \Cref{subsec:parametrised_higher_algebra} we set up a modicum of parametrised higher algebra in our setting. In \Cref{subsection:bousfield-localisations}, we discuss parametrised Bousfield localisations.

In \Cref{section:distributivity}, we establish the theory of distributivity in the generality we require. We first provide the definition of the category of cocones and use it to define the notion of distributivity in \Cref{subsec:definition_distributivity}. In \Cref{subsec:distributivity_cartesian}, we provide two examples of distributive categories: we show that the normed structure on the category of cocones is distributive and that the cartesian $\NN$-normed structure on any distributively cartesian  category is distributive. Next, we record some technical lemmas in \Cref{subsection:cubicalDecompositions} which will serve as the combinatorial basis for our analysis of distributivity throughout the article. We then generalise Nardin's distributive universal property of presheaf categories in \Cref{subsec:distributivity_free_cocompletions}.  Following that, we come  to \Cref{subsec:distributivity_general_cocompletions} where we generalise the universal property from \Cref{subsec:distributivity_free_cocompletions} to construct \textit{free cocompletions for an arbitrary class, modulo relations}.  With this in hand, we prove \Cref{alphThm:CATK_Symmon} in \Cref{subsec:sym_mon_structure_cocomplete}, and  \Cref{alphThm:symmon_PrL} in \Cref{subsec:norms_presentable}.

With all these foundations in place, we turn to our main applications in  \Cref{section:presentable_categories}. To begin with, we record the theory of idempotent normed algebras in \Cref{subsection:modes}.  In \Cref{subsec:pointed_mode}, we prove that pointed anima is an idempotent normed algebra; and in \Cref{subsec:formal_inversions}, we prove \Cref{alphThm:invert_object_idempotent}. In the last \Cref{subsection:the_equivariant_case}, we combine all the foregoing elements  to prove \Cref{alphThm:compact_lie_norms,alphThm:GSymmon_small_cats}.

To close the article, we complement the body of the document with four appendices. We prove the monoidal parametrised straightening-unstraightening equivalence in \Cref{sec:straightening_unstraightening}; we then show that our notion of distributivity agrees with that of Elmanto--Haugseng in \Cref{sec:unicity}; in \Cref{sec:size}, we deal with some set-theoretic matters to support our arguments about presentable categories; and in \Cref{appendix:point-set_norms}, we show that norms preserve equivariant finiteness and finite equivariant \animae, which is needed for our conclusions in \Cref{subsection:the_equivariant_case}.

\subsection*{Acknowledgments} 
We are grateful to  Emma Brink and Tobias Lenz for helpful conversations about model categories and for sharing the perspectives of their work on some of the points treated here. We would like to thank Bastiaan Cnossen for helpful discussions regarding parametrised Bousfield localisations. KH and BJ are supported by the European Research Council (ERC) under Horizon Europe (BorSym, ID: 101163408), and thank the University of Bonn for its conducive working environment. KH is  supported additionally by the European Research Council (ERC) under Horizon Europe (GeoCats, ID: 101042990), and he would also like to thank the Isaac Newton Institute for Mathematical Sciences, Cambridge, under the programme ‘Equivariant homotopy theory in context’, during which parts of this work began. BJ received additional support from the Danish National Research Foundation (DNRF)
through the Copenhagen Center for Geometry and Topology (DNRF151).  KH and NS thank the Max Planck Institute for Mathematics in Bonn for support and hospitality.

\subsection*{AI disclosure}
No generative AI was used in the conception, generation, and writing of this paper.

\section{Preliminaries}\label{sec:preliminaries}

\subsection{Recollections and conventions}\label{subsection:recollections}
\begin{itemize}
    \item We work using the theory of $\infty$-categories, which we will simply refer to as \textit{categories} as developed by Lurie in \cite{lur09} and \cite{lur17}.
    Following Clausen and Scholze, we refer to groupoids in this setting as \textit{\animae{}} (formerly known as spaces).\footnote{Past and future work of the third author continues to refer to these as spaces.}
    \item We fix three nested Grothendieck universes. Categories defined in the smallest universe are called \textit{small}, in the middle  \textit{large} (sometimes omitted), and the ones in the third universe are called \textit{very large}.
    \item We denote the large category of small categories by $\Cat$ and the large category of small anima by $\Spaces$. Moreover, we write $\LargeCat$ for the very large category of large categories.  
    \item We will moreover use the theory of \textit{parametrised higher category theory} as developed by Barwick, Dotto, Glasman, Nardin and Shah \cite{bdgns16}, \cite{Sha23}, \cite{shahPaperII} and \cite{NS}.
    When relevant, we use the synonymous theory of \textit{internal higher category theory} of \cite{MartiniWolf2024,mwpresentable}, internal to presheaf toposes.
    \item We will refer to monomorphisms of $\baseCat$-categories as \textit{$\baseCat$-subcategory inclusions};
    equivalently, these are fibrewise faithful and core-fully faithful $\baseCat$-functors, i.e. fibrewise replete subcategory inclusions into the homotopy category.
    We refer to the $\baseCat$-subcategory classified by the fibrewise-replete homotopy image of a $\baseCat$-functor simply as its \textit{image} and its fibrewise-essential image as its \textit{essential image}. A subcategory containing all equivalences will be called a \textit{wide subcategory}.
\end{itemize}

\subsection{Atomic orbital subcategories and admissible maps}\label{subsection:atomic_orbital}
\begin{definition}
  Given a small category $\TT$,
  the large category of small \mdef{$\TT$-categories} is given by the diagram category
  \[ \Cat[\TT] \coloneqq \Fun(\TT\op,\Cat) \]
  and the category of \mdef{$\TT$-\animae{}} the presheaf category
  \[
    \Spaces[\TT] \coloneqq \Fun(\TT\op,\Spaces) \,.
  \]
  \end{definition}
Given a $\TT$-category $\CC \in  \Cat[\TT]$ and an element $V \in \TT$, we write $\CC^V$ for the value of $\CC$ at $V$.
Given a map $f\colon U \rightarrow V$ in $\TT$, we will write $f^*$ for the corresponding functor $\CC^V \rightarrow \CC^U$.
Objects of $\CC^V$ will be called \mdef{$V$-objects of $\CC$}.
Moreover, objects of
\[
    \CC^{\baseCat} \coloneqq \Fun^{\mathrm{cocart}}_{/\baseCat{\op}} \prn{\baseCat{\op}, \int \CC} \simeq \lim_{V \in \baseCat{\op}} \CC^V
\]
will be referred to as \mdef{$\baseCat$-objects of $\CC$}.
\begin{remark}
    We also consider the \mdef{large $\TT$-categories}, i.e. $\LargeCat[\baseCat]\coloneqq\func(\TT\op, \LargeCat)$.
\end{remark}

\begin{example}\label{ex:coeff}
  Given a category $\C$, we define the \mdef{$\TT$-category of coefficient systems} via diagram categories indexed by slices:
  \[
    \cofree(\C) \colon \baseCat\op \xrightarrow{\;\;\;\;\; (\TT_{/\bullet})\op \;\;\;\;\;} \Cat[\mathrm{op}] \xrightarrow{\;\;\;\;\; \Fun\prn{-,\C} \;\;\;\;\;} \Cat.
  \]
  Informally, the $V$-objects of $\cofree(\C)$ are given by the ``$V$-equivariant coefficient systems'' $(\TT_{/V})\op \rightarrow \C$, with restrictions functors given by precomposition along the ``induction functor'' $f_!\colon (\TT_{/U})\op \rightarrow (\TT_{/V})\op$ induced by a map $f\colon U \rightarrow V$ in $\baseCat$.

  Correspondingly, we write $\GCat \coloneqq \cofree(\Cat)$ and $\GSpaces \coloneqq \cofree(\Spaces)$ for the $\TT$-categories of (small) $\TT$-categories and $\TT$-\animae{}.
\end{example}

\Cref{ex:coeff} has generally been referred to as the cofree parametrisation $\underline{\mathrm{Cofree}}(\C)$ in the first author's work 
\cite{kaifPresentable,kaifNoncommMotives,hilmanRamzi} and as the category of $\TT$-objects $\CC_{\baseCat}$ in Nardin and Shah's work \cite{nardinThesis,NS};
we follow the third author's convention in order to highlight its place in equivariant algebraic topology.

\begin{definition}
    We refer to the objects in the image of the Yoneda embedding $\TT \to \Spaces[\TT]$ as \mdef{orbits}.
  The category of \mdef{orbit-finite $\TT$-\animae{}} is defined via finite-coproduct-cocompletion
  \[
    \FinTN[\TT]{} \coloneqq \cbr{\coprod_{i = 0}^n V_i \;\; \middle| \;\; V_i \in \TT, n \in \mathbb{N}} \subset \Spaces[\baseCat].
  \]
\end{definition}
    By the universal property of the finite-coproduct-cocompletion, a $\TT$-category uniquely extends to a finite-product-preserving functor 
    $\left( \FinTN[\TT]{} \right)\op \longrightarrow \Cat$.
    Given $W \in \FinTN[\TT]{}$, we will thus write $\CC^W$ for its value on $W$.
    
    Given an orbit-finite $\TT$-anima $V$, we will call a $ \overslice{\TT}{V}$-category (sliced in $\TT$-\animae{}) simply a $V$-category and write $\Cat[V]$ for the category of $V$-categories. 
   Given a map $f \colon U \to V$ of orbit-finite $\TT$-anima, restriction along $\left(\overslice{\TT}{U} \right)\op \longrightarrow \left(\overslice{\TT}{V} \right)\op$ gives rise to a functor
    \[ f^\ast \colon \Cat[V] \longrightarrow \Cat[U] \]
    which admits a right adjoint 
    \[ f_\ast \colon \Cat[U] \longrightarrow \Cat[V] \,. \]

    Given a $\TT$-category $\CC$, we write $V^\ast \CC$ for the $V$-category obtained by restricting along the forgetful map $\overslice{\TT}{V} \to \TT$.

    Barwick, Dotto, Glasman, Nardin and Shah \cite{bdgns16}, \cite{Sha23}, \cite{shahPaperII} and \cite{NS}, as well as Martini and Wolf \cite{mwpresentable,MartiniWolf2024} have developed the theory of parametrised functor categories, (co)limits, Kan extensions which we will use freely in this paper. 
    
  Given a wide subcategory $\NN \subset \TT$, we define the wide subcategory $\FinTN[\TT]{\NN} \subset \FinTN[\TT]{}$ spanned by the morphisms of the form 
  \[
      \cbr{\coprod_{i \in I} \prn{\coprod_{j \in J_i} U_j \rightarrow V_i} \mid \forall i, j \in J_i, \;\;\;\; U_j \rightarrow V_i \in \NN};
  \]
  that is, all maps of orbit-finite $\TT$-\animae{} are induced under disjunctiveness and the universal property of coproducts by maps between orbits, and $\FinTN[\TT]{\NN}$ collects those whose corresponding orbit maps lie in $\NN$.
\begin{definition}
  We say that a morphism of $\TT$-\animae{} $f\colon X \rightarrow Y$ is \mdef{$\NN$-admissible} if, for each fibre square
  \[
      \begin{tikzcd}[ampersand replacement=\&]
        f^{-1}(V) \& Y \\
        V \& X
        \arrow[from=1-1, to=1-2]
        \arrow["{f_V}"', from=1-1, to=2-1]
        \arrow["\lrcorner"{anchor=center, pos=0.125}, draw=none, from=1-1, to=2-2]
        \arrow["f"', from=1-2, to=2-2]
        \arrow[from=2-1, to=2-2, "\iota"]
      \end{tikzcd}
  \]
  such that $V$ is an orbit-finite $\TT$-anima, $f_V$ lies in $\FinTN[\TT]{\NN}$.

  A wide subcategory $\NN \subset \TT$ is said to be an \mdef{orbital subcategory} if $\FinTN[\TT]{\NN} \subset \FinTN[\TT]{}$ consists precisely of the $\NN$-admissible maps;
  $\NN$ is additionally \mdef{atomic} if all maps in $\NN$ which admit a section in $\TT$ are equivalences.
\end{definition}
\begin{remark}\label{rmk:N-adm-F}
  Choosing $\iota$ to be the identity, it follows that all $\NN$-admissible maps between orbit-finite $\TT$-\animae{} lie in $\FinTN[\TT]{\NN}$ regardless of orbitality;
  in particular, our definition recovers \cllstab{Def.}{4.2.2} by \cllstab{Lem.}{4.3.2}.
\end{remark}
\begin{example}\label{example:orb_G}
    For $G$ a compact Lie group, we can choose $\TT$ to be the orbit category $\Orb_{G}$ of $G$ -- the category of transitive $G$-\animae{} -- and $\NN\coloneqq \finind$ to be the class of maps with finite fibres. 
    It follows by the double coset formula that this is an orbital subcategory;
    all endomorphisms are isomorphisms in $\Orb_{G}$, so it is moreover atomic.
\end{example}
\begin{example}\label{ex:transf}
    In the case that $\baseCat$ is already atomic orbital, \cite[Rmk.~I.1.99]{nat-thesis} shows that atomic orbital subcategories $\NN \subset \baseCat$ specialise to Blumbereg--Hill-Rubin's \textit{transfer systems} \cite{Blumberg-op,Rubin};
    in particular, under \cite[Thm.~I.4.15,Prop.~I.1.97]{nat-thesis}, they enumerate a poset of \textit{sub-terminal  $\baseCat$-operads admitting a map from $\EEE{\infty}$}.
\end{example}
\begin{example}
    Similarly to \Cref{ex:transf}, atomic orbital subcategories of the \textit{Global indexing category} for compact Lie groups have been studied under the name \textit{global transfer systems} \cite[Def.~4.3]{Barrero_2023,Barrero_ab}.  
\end{example}

We specify orbitality precisely for the following reason.
\begin{remark}\label{lem:fibrewise_product}
    Let $w\colon W \rightarrow V$ be an $\NN$-admissible map between orbit-finite $\TT$-\animae{}, $u\colon U \rightarrow V$ an arbitrary map between orbit-finite $\TT$-\animae{}, and $\udl{\sC}$ a $W$-category. Then $(w_*\udl{\sC})^U$ is a finite product of various evaluations of $\udl{\sC}$. More precisely, we have a pullback 
\begin{equation}\label{eqn:standard_basecat_pullback}
\begin{tikzcd}
    Z=\coprod_{i=1}^nZ_i \ar[rr,"\overline{w}=\sqcup_i\overline{w}_i"]\dar["\overline{u}=\sqcup_i\overline{u}_i"']\ar[drr,phantom,"\lrcorner"]&&U\dar["u"]\\
    W \ar[rr,"w"] && V    
\end{tikzcd}
\end{equation} in $\FinTN[\TT]{}$, where $\overline{w}$ and each $\overline{w}_i$ is $\NN$-admissible and each $Z_i$ is an orbit.
    
    Observe that the top left corner in the pullback is a finite coproduct, and so we get $(u^*w_*\udl{\sC})^U\simeq (\overline{w}_*\overline{u}^*\udl{\sC})^U\simeq \prod_{i=1}^n\udl{\sC}^{Z_i}\in\cat$, as asserted.
\end{remark}

\subsection{Some parametrised higher algebra}\label{subsec:parametrised_higher_algebra}
Given $\NN \subset \TT$ an atomic orbital subcategory, the wide subcategory inclusion $\FinTN[\TT]{\NN} \subset \FinTN[\TT]{}$ is easily seen to be a span pair in the sense of \cite[Def.~2.2.1]{elmantoHaugseng}. We write
\[ 
\SFTN{\TT}{\NN} \coloneqq \Span_{\FinTN[\TT]{\NN}}\prn{\FinTN[\TT]{}}
\] 
for the associated span category. 
Informally speaking, $\SFTN{\TT}{\NN}$ has the same objects as $\FinTN[\TT]{}$, a map $U \rightarrow V$ is given by a $\FinTN[\TT]{}$-span
\[ \begin{tikzcd}[row sep=tiny]
    & W \ar[dr,"w"] \ar[dl,swap,"f"] \\
    U & & V
\end{tikzcd} \]
such that $w$ is $\NN$-admissible, and 
composition uses pullbacks to create a new span from two spans. 
We call morphisms in which $f$ is an equivalence \mdef{forwards morphisms} and morphisms in which $w$ is an equivalence \mdef{backwards morphisms}.

This admits finite products:
the product of two objects $U,V$ is given by the disjoint union $U \sqcup V$ interpreted in the backwards wide subcategory $(\FinTN[\TT]{})\op \subset \SFTN{\TT}{\NN}$.
\begin{definition}
    A (small) \mdef{$\NN$-normed $\TT$-category} is a finite product-preserving functor $\SFTN{\TT}{\NN} \rightarrow \Cat.$
    We write
    \[
        \Cat[\TT]_{\NN-\otimes} \subseteq \Fun\prn{\SFTN{\TT}{\NN}, \Cat}
    \]
    for the full subcategory of $\NN$-normed $\TT$-categories.
\end{definition}
Restriction along the inclusion $\TT\op \hookrightarrow (\FinTN[\TT]{})\op \hookrightarrow \SFTN{\TT}{\NN}$ yields a functor
    \begin{equation}\label{eq:underlying-cat}
         \Cat[\TT]_{\NN-\otimes} \longrightarrow \Cat[\TT] 
    \end{equation}       
    which sends an $\NN$-normed $\TT$-category to its underlying $\TT$-category. 
     
    For $\CC$ an $\NN$-normed $\TT$-category and $w \colon W \rightarrow V$ an $\NN$-admissible map of orbit-finite $\TT$-\animae{}, we denote the covariant functoriality of $\CC$ in $w$ by 
    \[
        w_{\otimes} \colon \CC^W \longrightarrow \CC^V \,. 
      \]
    we call this functor the \mdef{$w$-norm functor} of $\CC$.
    Choosing $W$ to be the initial $\TT$-anima yields a functor $\ast \to \CC^V$, i.e. an object in $\CC^V$;
    we call this the \mdef{unit} of $\CC^V$ and usually denote by $\unit_V \in \CC$. 
    We will sometimes also omit the $V$;
    statements made with implicit $V$ will be true for any $V$.
    
    \begin{remark}\label{rmk:monoidality-of-fp}
      For all $V \in \TT$, there exists an essentially unique product-preserving functor $\iota_V\colon \Span(\mathbb{F}) \hookrightarrow \SFTN{\TT}{\NN}$ sending $* \mapsto V$;
      we acquire an evaluation functor 
      \[
      (-)^V = \iota_V^*\colon \Cat[\TT]_{\NN-\otimes} \rightarrow \Cat_\otimes \, ,
      \] endowing $\CC^V$ with a natural symmetric monoidal structure with unit $\unit_V \in \CC$ and binary multiplication $\nabla_{\otimes}$, for $\nabla\colon 2 \cdot V \rightarrow V$ the fold map.

      This structure is canonical;
      indeed, $\SFTN{\TT}{\NN}$ is semiadditive by \Cite[Lem.~C.3]{bh21}, so the universal property of commutative monoids yields an equivalence
      \begin{equation}\label{eq:cmon-lift}
        \Cat[\TT]_{\NN-\otimes} \simeq \Fun^\times\prn{\SFTN{\TT}{\NN}, \mathrm{CMon}(\Cat)},
      \end{equation}
      so that $w_{\otimes}$ and $w^*$ are naturally symmetric monoidal.
      The Eckmann-Hilton argument $\CMon(-) \simeq \CMon\CMon(-)$ (which is itself \eqref{eq:cmon-lift} for $\TT \simeq *$) identifies the value of \eqref{eq:cmon-lift} on $V$ with the above symmetric monoidal structure on $\CC^V$.
    \end{remark}

    Next, we would like to view this within a $\baseCat$-category, for which we need a few definitions.

\begin{definition}
   A $\TT$-category \mdef{admits $\NN$-indexed products} if its associated functor  $\prn{\FinTN[\TT]{}}\op \rightarrow \Cat$
    is \mdef{right $\FinTN[\TT]{\NN}$-adjointable};
    that is, for all cospans $u\colon U \rightarrow V \leftarrow W\cocolon w$ with $w$ $\NN$-admissible, $w^*$ admits a right adjoint $w_*$ whose Beck-Chevalley transformation
    \[
        u^* w_* \implies \overline{w}_* \overline{u}^*
    \]
    is an equivalence.
    We say that a functor $F \colon \CC \to \DD$ \mdef{preserves $\NN$-indexed products} if the Beck-Chevalley transformation
    \[ F w_\ast^{\CC} \Longrightarrow w_\ast^{\DD} F \colon \CC^V \longrightarrow \DD^W \] is an equivalence.
    We write $\Fun^{\NN-\times}(\CC,\DD) \subset \Fun(\CC,\DD)$ for the subcategory of $\NN$-indexed product preserving functors and we write $\Cat[\TT]_{\NN-\times} \subset \Cat[\TT]$ for the non-full subcategory of $\baseCat$-categories with $\NN$-indexed products and $\baseCat$-functors preserving them.  
\end{definition}
\begin{construction}
    Stefanich \cite[Thm.~1.1.1]{stefanich} provides the universal property for the $(\infty,2)$-category of spans which, after restricting to core categories, yields a map
    \[
        \Map_{\FinTN[\TT]{\NN}-\mathrm{Radj}}\prn{\FinTN[\TT]{}, \Cat} \longrightarrow \Map\prn{\SFTN{\TT}{\NN}, \Cat};
    \]
    since $\prn{\FinTN[\TT]{}}\op \hookrightarrow \SFTN{\TT}{\NN}$ creates products, this restricts to a map of \animae{}
    \[
        \prn{-}^\times\colon \prn{\Cat[\TT]_{\NN-\times}}^\simeq \longrightarrow \prn{\Cat[\TT]_{\NN-\otimes}}^\simeq.
    \]
\end{construction}
\begin{remark}
    \cite[Thm.~II.2.1]{nat-thesis} confirms that the above cartesian structure generalises \cite[Ex.~2.4.1]{NS}.
\end{remark}
\begin{construction}
    We now explain how to assemble $\Cat[V]_{\NN-\otimes}$ into a $\baseCat$-category. Note that there exists an equivalence $\FinTN[\TT_{/V}]{} \simeq \FinTN[\TT]{/V}$;
    thus postcomposition yields a functor
    \[
        \widetilde{\mathbb{F}}\colon  \baseCat^{\triangleright} 
        \longrightarrow \Cat,
    \]
    with cocone value $\FinTN[\TT]{}$ and $\widetilde{\mathbb{F}}(V) \simeq \mathbb{F}^V$.
    Writing $\FinTN[V]{\NN} \subset \FinTN[V]{}$ for the preimages of $\NN$-admissible maps, these are the $\NN_V$-admissible maps for the atomic orbital subcategory $\NN_V \rightarrow \baseCat_{/V}$ of preimages of $\NN$-maps (see \cite[Prop.~I.1.97,Prop.~I.1.113]{nat-thesis}). 
    In particular, applying this functoriality yields a $\baseCat$-category
    \[
        \GCat_{\NN-\otimes}\colon \baseCat\op \xrightarrow{\;\;\;\; \widetilde{\mathbb{F}}_{\NN} \subset \widetilde{\mathbb{F}} \;\;\;\;} \mathrm{SpanPair}\op \xrightarrow{\;\;\;\; \mathrm{Span}_{(-)}(-) \;\;\;\;}  \Cat[\mathrm{op}] \xrightarrow{\;\;\;\; \Fun^\times \prn{-,\Cat}  \;\;\;\;} \wCat,
    \]
    such that $\GCat[\TT]_{\NN-\otimes} \simeq \Cat[\TT]_{\NN-\otimes}$ and $\GCat[V]_{\NN-\otimes} \simeq \Cat[\TT_{/V}]_{\NN_V-\otimes}$.
\end{construction}

We now turn to the definition of normed algebras:
\begin{definition}
    Let $\CC , \DD \colon \SFTN{\TT}{\NN} \longrightarrow \Cat$ be $\NN$-normed $\TT$-categories. A \mdef{lax $\NN$-normed $\baseCat$-functor} is a functor of their cocartesian unstraightenings
    \[ \begin{tikzcd}[row sep=small]
        \TotFP \CC \ar[rr] \ar[dr] & & \TotFP \DD \ar[dl] \\
        & \SFTN{\TT}{\NN}
    \end{tikzcd} \] 
    which  preserves cocartesian lifts of backwards maps. 

    Note that $\NN$-normed $\baseCat$-categories have a \mdef{fibrewise opposite} involution 
    \[
        \prn{-}\op \colon \GCat_{\NN-\otimes} \longrightarrow \GCat_{\NN-\otimes}
    \]
    gotten by applying $(-)\op$ fibrewise.
    We define an \mdef{oplax $\NN$-normed $\baseCat$-functor} $\CC \rightarrow  \DD$ to simply be a lax $\NN$-normed functor $\baseCat$-functor $\CC{\op} \to \DD{\op}$.
    
    In the case where $\CC$ is the terminal $\NN$-normed $\TT$-category, we call a lax $\NN$-normed $\baseCat$-functor $\CC \rightarrow \DD$ a \mdef{normed algebra in $\DD$}; 
    that is, normed algebras are defined to be sections of the cocartesian unstraightening which are cocartesian on backwards morphisms.
\end{definition}
These themselves naturally fit within a $\baseCat$-category:
\begin{construction}
    Our next goal will be to construct a $\TT$-category of normed algebras.

    Let $\mathrm{WCat} \subset \Fun\prn{\Delta^1, \Cat}$ be the full subcategory of wide subcategory inclusions and let $\wCat_2$ be the very large category of large $(\infty,2)$-categories.
    Consider the functor sending a category $\CC$ together with a wide subcategory $\DD$ to the locally full subcategory of the $2$-category $\overslice{\wGCat}{\CC}$ spanned by the functors admitting cocartesian lifts over $\DD$ and functors preserving such lifts 
    \[
        \tCat[\mathrm{(-)-cocart}]_{/(-)}\colon \mathrm{WCat}\op \longrightarrow \wCat_{2}  \quad :: \quad (\DD\subset \CC) \mapsto \tCat[\mathrm{\DD-cocart}]_{/\CC}.
    \]
    Since the backwards wide subcategory is functorial in span pairs, we may thus define a parametrised $(\infty,2)$-category
    \[
        \tGCat[\mathrm{back-cocart}]_{/\widetilde \Span_{\NN}\prn{\FinTN[\TT]{}}}\colon \baseCat\op \xrightarrow{\;\; \widetilde{\mathbb{F}}_{\NN} \subset \widetilde{\mathbb{F}} \;\;} \mathrm{SpanPair}\op \xrightarrow{\;\; \mathrm{Span}_{(-)}(-) \;\;}  \mathrm{WCat}\op \xrightarrow{\;\; \tCat[\mathrm{(-)-cocart}]_{/(-)}  \;\;} \wCat_2.
    \]
    Given a pair of $\baseCat$-objects 
    \[
    \CC,\DD \in \prn{\tGCat[\mathrm{back-cocart}]_{/\widetilde \Span_{\NN}\prn{\FinTN[\TT]{}}}}^{\baseCat},
    \]
    a standard procedure using naturality of the Yoneda embedding then yields a $\baseCat$-category $\udl{\Fun}^{\mathrm{Back-cocart}}_{/\widetilde \Span_{\NN}\prn{\FinTN[\TT]{}}}\prn{\CC,\DD}$ such that
    \[
        \udl{\Fun}^{\mathrm{Back-cocart}}_{/\widetilde \Span_{\NN}\prn{\FinTN[\TT]{}}}\prn{\CC,\DD}^{V} \simeq \Fun^{\mathrm{Back-cocart}}_{/\SFTN{V}{\NN_V}}\prn{V^*\CC,V^*\DD}
    \]
    for all $V$, naturally in $\CC$, $\DD$, and $V$ (see \cite[\S~I.1.2]{nat-thesis}).
    Given $\DD$ an $\NN$-normed $\baseCat$-category, we use this to define the $\baseCat$-category of normed algebras
    \[
        \NAlg{\DD} \coloneqq \udl{\Fun}^{\mathrm{Back-cocart}}_{/\widetilde \Span_{\NN}\prn{\FinTN[\TT]{}}}\prn{\SFTN{\TT}{\NN},\DD},
    \]
   natural in lax $\NN$-normed $\TT$-functors, with $V$-objects the $\NN_V$-normed algebras in $V^* \DD$.
\end{construction}

\cite[Thm.~II.2.1]{nat-thesis} gives a \textit{parametrised Segal object} description for normed algebras in cartesian structures;
notably, this simplifies greatly in the case of coefficient systems.
\begin{proposition}[{\cite[Thm.~II.2.1, Thm.~I.4.1,Prop.~I.3.4]{nat-thesis}}]\label{prop:cart}
    If a category $\D$ has finite products, then $\cofree(\D)$ has $\NN$-indexed products;
    moreover, there is a functor
    \[
        F\colon \int \cofree(\D)^\times \rightarrow \D,
    \]
    natural in product-preserving functors in $\D$
    whose restriction 
    \[
        \Fun\prn{\baseCat_{/V}, \D} \simeq \cofree(\D)^V \hookrightarrow \int \cofree(\D)^\times \rightarrow \D
    \]
    is evaluation at $V$, and such that
    the composition
    \[
        \NAlg{\cofree\prn{\D}^\times}^{\baseCat} \hookrightarrow \Fun\prn{\SFTN{\TT}{\NN}, \int \cofree(\D)^\times} \xrightarrow{F_*} \Fun\prn{\SFTN{\TT}{\NN}, \D}
    \]
    is fully faithful with image the product-preserving functors. This yields an equivalence
    \[
        \NAlg{\GCat[\times]} \xrightarrow{\;\;\;\; \sim \;\;\;\;} \GCat_{\NN-\otimes}.
    \]
\end{proposition}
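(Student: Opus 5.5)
The plan is to reduce the statement to the cited results of \cite{nat-thesis}, which are proved there for general $\NN\subset\TT$ of the type considered here, and to check that the three parts fit together. First, I would show that $\cofree(\D)$ has $\NN$-indexed products. For an $\NN$-admissible $w\colon W\to V$, the functor $w^*\colon \Fun((\TT_{/V})\op,\D)\to\Fun((\TT_{/W})\op,\D)$ is precomposition with $w_!$. Its right adjoint is right Kan extension along $w_!$. Pointwise, this Kan extension at an object $U\to V$ is a limit over the comma category of $W$-objects over $U$. By orbitality (\Cref{lem:fibrewise_product}), this comma category has a terminal part given by the finite set of orbits $Z_i$ in the pullback $U\times_V W$. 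Hence the limit is the finite product $\prod_i F(Z_i)$, which exists since $\D$ has finite products. The same pullback description gives the Beck--Chevalley condition, because pullbacks compose.

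Next, I would construct $F$. Via Stefanich's universal property (the Construction above), $\cofree(\D)^\times$ is a functor $\SFTN{\TT}{\NN}\to\Cat$. On an object $V$ it sends the fibre $\Fun((\TT_{/V})\op,\D)$ to $\D$ by evaluation at $\id_V$. On a forward map $w$, the norm is the right Kan extension, and evaluating it at $\id_V$ gives $\prod_i X(W_i)$. This is exactly the product structure, so $F$ sends cocartesian lifts of forward maps to product projections, and cocartesian lifts of backward maps to restriction maps in $\D$. I would present $F$ as the lax natural transformation from $\cofree(\D)^\times$ to the constant functor at $\D$, corresponding under unstraightening to a functor $\int\cofree(\D)^\times\to\D$. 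This is the content of \cite[Prop.~I.3.4]{nat-thesis}. Naturality in product-preserving functors $\D\to\D'$ is clear, since Kan extensions along finite-product diagrams are preserved.

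For full faithfulness, a normed algebra is a section $A$ that is cocartesian over backward maps. Cocartesianness over backward maps forces $A_V\in\Fun((\TT_{/V})\op,\D)$ to be determined by its values $A_V(U\to V)=A_U(\id_U)$, that is, by $F_*A$ restricted along $(\FinTN[\TT]{})\op$. The lax structure over a forward $w$ amounts to a map $w_\otimes A_W\to A_V$, which after evaluation is a map $\prod_i A(W_i)\to A(V)$ in $\D$, that is, the forward functoriality of $F_*A$. Conversely, a product-preserving $B\colon\SFTN{\TT}{\NN}\to\D$ reconstructs such a section. I expect the main obstacle to be making this bijection coherent rather than objectwise. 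I would handle it as in \cite[Thm.~II.2.1]{nat-thesis}, by identifying both sides with sections of a right fibration and applying relative right Kan extension along $\SFTN{\TT}{\NN}\hookrightarrow\int\cofree(\D)^\times$. The product-preservation condition matches the cocartesian-on-backward-maps condition because $F$ restricted to $\prod$-decompositions is computed by the finite products above.

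Finally, I would take $\D=\Cat$ to obtain $\NAlg{\GCat[\times]}^{\TT}\simeq\Fun^\times(\SFTN{\TT}{\NN},\Cat)=\Cat[\TT]_{\NN-\otimes}$. Applying the same argument to $V$, with $\NN_V\subset\TT_{/V}$, gives the statement on $V$-objects. Naturality in $V$ follows from the naturality of all constructions under restriction $\TT_{/U}\to\TT_{/V}$, which upgrades the equivalence to one of $\TT$-categories.
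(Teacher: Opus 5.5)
Your proposal takes the same route as the paper, which gives no independent proof and imports the statement directly from \cite[Thm.~II.2.1, Thm.~I.4.1, Prop.~I.3.4]{nat-thesis}; your direct check of $\NN$-indexed products via the pointwise right Kan extension and \Cref{lem:fibrewise_product}, together with your objectwise description of normed algebras, is a fair unpacking of those cited results. One slip to correct: over a forward map $w\colon W\to V$ with $V$ an orbit, $F$ sends the cocartesian lift to an \emph{equivalence} $F(W,X)=\prod_i X(W_i)\simeq (w_*X)(\id_V)$, not to a product projection --- the projections come from backward maps along summand inclusions --- and your full-faithfulness paragraph in fact already uses this correct version.
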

\begin{remark}
    The proof of \Cref{prop:cart} uses atomicity in an integral way, hence our ongoing assumption that we are normed along an atomic orbital subcategory.
\end{remark}

\begin{example}[{\cite[Cor.~II.1.9,Cor.~II.1.39]{nat-thesis}}]
    We may form the \mdef{pointwise $\NN$-normed structure}, as a $\baseCat$-functor
    \[
        \udl{\Fun}\prn{-,-}\colon \GCat[\mathrm{op}] \times \GCat_{\NN-\otimes} \longrightarrow \GCat_{\NN-\otimes}.
    \]
    Lifting the formation of functor $\baseCat$-categories, and satisfying the formula $\udl{\Fun}\prn{\udl{*}, \CC} \simeq \CC$.
    In particular, evaluation acquires an $\NN$-normed lift $\ev_D\colon \udl{\Fun}\prn{\DD,\CC} \rightarrow \CC$ for all $D \in \DD^{\baseCat}$.
\end{example}

\subsection{Bousfield localisations and presentability} \label{subsection:bousfield-localisations}
In this section, we will collect material about parametrised Bousfield localisation which are used to define the notion of parametrised presentability.
\begin{definition}
    A \mdef{Bousfield localisation} is a $\baseCat$-functor $L \colon \CC \to \BB$ with fully faithful right adjoint;
    a map in $\CC^V$ is an \mdef{$L$-equivalence} if it is sent to an equivalence by $L^V$.
\end{definition}
\begin{lemma}[{\cite[Prop.~6.3.2]{kaifPresentable}}] \label{lem:Bousfield-loc-is-loc}
    Let $L \colon \CC \to \BB$ be a Bousfield localisation. Let $\DD$ be another $\TT$-category. Then precomposition with $L$ induces a fully faithful functor 
    $L^\ast \colon \udl{\Fun}(\BB,\DD) \longrightarrow \udl{\Fun}(\CC,\DD)$
    with essential image the functors sending $L$-equivalences in $\CC$ to equivalences in $\DD$.
\end{lemma}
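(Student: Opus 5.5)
The plan is to reduce to the unparametrised statement, applied fibrewise, and then to check naturality in the base. A Bousfield localisation $L\colon \CC \to \BB$ with fully faithful right adjoint $i$ restricts to a Bousfield localisation on each slice. Precisely, for every $V \in \TT$, the restriction $V^*L\colon V^*\CC \to V^*\BB$ is again a $\TT_{/V}$-Bousfield localisation, with fully faithful right adjoint $V^*i$. Since $\udl{\Fun}(\BB,\DD)^V \simeq \Fun_{\TT_{/V}}(V^*\BB, V^*\DD)$, it suffices to prove the following for every $V$: precomposition with $V^*L$ is fully faithful on $\TT_{/V}$-functor categories, with essential image the $\TT_{/V}$-functors inverting $L$-equivalences. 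These $V$-level statements are compatible with restriction along maps in $\TT$. Hence they assemble to the claim that $L^*$ is fibrewise fully faithful with the stated fibrewise image, and that this image is a $\TT$-subcategory. This last point holds because $L$-equivalences are stable under restriction, as $L$ is a $\TT$-functor.

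So fix the base to be $\TT$ itself. I would exhibit $L^*$ as a coreflective-type embedding using the $\TT$-adjunction $L \dashv i$ with counit $\epsilon\colon Li \xrightarrow{\sim} \id$. Precomposition is $2$-functorial, so it turns $L \dashv i$ into an adjunction $i^* \dashv L^*$ on $\Fun_\TT(\BB,\DD)$ and $\Fun_\TT(\CC,\DD)$; indeed $(-)\circ L$ and $(-)\circ i$ inherit unit and counit by whiskering, with the directions reversed appropriately. The counit of this induced adjunction at $G \colon \BB \to \DD$ is $G\epsilon\colon GLi \to G$, which is an equivalence. Hence $L^*$ is fully faithful.

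For the essential image, note first that any $GL$ inverts $L$-equivalences. Conversely, let $F\colon \CC \to \DD$ send $L$-equivalences in each fibre $\CC^U$ to equivalences. The unit $\eta\colon \id \to iL$ has components $\eta_X$ that are $L$-equivalences, since $L\eta_X$ is inverse to $\epsilon_{LX}$ by the triangle identity. Therefore $F\eta\colon F \to FiL = L^*(i^*F)$ is an equivalence, and $F$ lies in the essential image.

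The main obstacle is making the $2$-functoriality of precomposition rigorous in the parametrised setting, i.e. checking that $\udl{\Fun}(-,\DD)$ sends $\TT$-adjunctions to adjunctions, compatibly with the fibrewise identifications. I would handle this by working in the $(\infty,2)$-category $\Cat[\TT]$, which is a functor category into $\Cat$. Internal adjunctions there are exactly fibrewise adjunctions whose Beck--Chevalley maps are equivalences, and the representable $\Fun_\TT(-,\DD)$ is a $2$-functor, so it preserves adjunctions and their unit and counit. Alternatively, one could invoke the cited result \cite[Prop.~6.3.2]{kaifPresentable} directly.
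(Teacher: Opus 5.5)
Your proof is correct. The paper does not prove this lemma itself but imports it from \cite[Prop.~6.3.2]{kaifPresentable}, so your proposal supplies a self-contained argument where the paper has only a reference.

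Each step of your argument checks out:
\begin{itemize}
\item Restriction to $\TT_{/V}$ preserves the adjunction $L \dashv i$ and the invertibility of its counit.
\item Full faithfulness and full $\TT$-subcategories are levelwise notions, so the reduction to each $V$ is harmless.
\item The representable $\Fun_{\TT}(-,\DD)$ reverses $1$-cells, so it sends $L \dashv i$ to $i^{\ast} \dashv L^{\ast}$ with counit $G\epsilon$ and unit $F\eta$.
\item The components of $\eta$ are $L$-equivalences, because $L\eta_X$ is inverse to the invertible $\epsilon_{LX}$.
\item Equivalences of $\TT$-natural transformations are detected on components.
\end{itemize}
Your aside about Beck--Chevalley maps is not needed. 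You only use that a $\TT$-adjunction is an adjunction in the $(\infty,2)$-category $\Cat[\TT]$. One can even verify the triangle identities in its homotopy $2$-category, where whiskering is strictly functorial, so there is no coherence issue.

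Compared with the citation, your route makes two things explicit that the paper later relies on without comment.
\begin{itemize}
\item The statement is purely formal, so it holds for $\TT$-categories of any size and any target $\DD$. This matters because the lemma feeds into \Cref{cor:distributivity_bousfield_localisations}, which is then applied to very large presheaf categories in \Cref{cor:distributivity_localised_presheaves}.
\item By uniqueness of adjoints, $i^{\ast} \dashv L^{\ast}$ identifies left Kan extension along $L$ with precomposition by the right adjoint, $L_{!} \simeq i^{\ast}$, which exists for arbitrary $\DD$. This is exactly the description of the inverse invoked in \Cref{cor:distributivity_bousfield_localisations} and \Cref{lem:localisation_extension_is_kan_extension}.
\end{itemize}
The citation buys brevity and consistency with the conventions of \cite{kaifPresentable}.
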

We will need often need normed lifts of Bousfield localisations:
\begin{proposition}[{\cite[Cor.~II.2.56]{nat-thesis}}] \label{prop:monoidal-structure-Bousfield-loc}
    Let $\CC$ be an $\NN$-normed $\baseCat$-category, and $L\colon \CC \rightarrow \BB$ a Bousfield localisation such that the $\baseCat$-class of $L$-equivalences is closed under $\NN$-norms.
    Then, $\BB$ and $L$ jointly uniquely lift to an $\NN$-normed $\baseCat$-category and $\NN$-normed functor.
\end{proposition}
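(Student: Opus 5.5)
The plan is to use the general machinery for normed lifts of localisations, applied to the cocartesian unstraightening $\TotFP \CC \to \SFTN{\TT}{\NN}$. First, form the full subcategory $\TotFP \BB \subset \TotFP \CC$ spanned over each object $W$ of $\SFTN{\TT}{\NN}$ by the $L$-local objects of $\CC^W \simeq \prod_i \CC^{W_i}$; since $\BB \subset \CC$ is a $\baseCat$-category via the fully faithful right adjoint, restriction functors $f^*$ preserve local objects and $W \mapsto \BB^W$ is product preserving. We then apply the relative localisation criterion (in the style of \cite[Prop.~2.2.1.9]{lur17}): a full subcategory of a cocartesian fibration, equipped with fibrewise left adjoints $L^W$, is a cocartesian fibration whose cocartesian lifts are computed as $L^V \circ w_\otimes$, and the inclusion has a relative left adjoint preserving cocartesian edges, exactly when every pushforward functor $w_\otimes$ sends $L^W$-equivalences to $L^V$-equivalences.

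The key step is to check this hypothesis. A morphism of $\SFTN{\TT}{\NN}$ factors as a backward map followed by a forward map. Backward maps are restrictions $f^*$, and these preserve $L$-equivalences because $L$ is a $\baseCat$-functor, i.e.\ $L$ commutes with restriction. Forward maps are norms $w_\otimes$ along $\NN$-admissible $w\colon W\to V$. Here we use that the $\baseCat$-class of $L$-equivalences is closed under $\NN$-norms. The $L$-equivalences in $\CC^W = \prod_i \CC^{W_i}$ are tuples of $L$-equivalences. The assumption gives that norms of a single $L$-equivalence are $L$-equivalences. To pass from this to norms of tuples, we factor the map in each variable separately and use that the binary products $\nabla_\otimes$ are themselves instances of norms (along the fold map).

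Granting the hypothesis, the criterion gives the following. $\TotFP\BB \to \SFTN{\TT}{\NN}$ is a cocartesian fibration, and its straightening is product preserving since its fibres are $\prod_i \BB^{W_i}$; this is the normed structure on $\BB$, whose underlying $\TT$-category is $\BB$. The relative left adjoint $\TotFP\CC \to \TotFP\BB$ preserves cocartesian edges, so it gives an $\NN$-normed functor lifting $L$.

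For uniqueness, we argue that any $\NN$-normed lift $\BB'$ with normed $L'$ is forced. Since $L'$ preserves norms and $L$ is essentially surjective, $w_\otimes^{\BB'}(LX) \simeq L w_\otimes^{\CC}(X)$. Using \Cref{lem:Bousfield-loc-is-loc}, maps out of $\BB$ are determined by precomposition with $L$, and this applies parametrised fibrewise over the span category. This identifies the structure up to a contractible space of choices, so the space of lifts is contractible: the lifts are the unique localisation of $\TotFP\CC$ at the fibrewise $L$-equivalences.

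The main obstacle is the tuple step above. Closure under norms is stated for single $\NN$-norms, whereas $w_\otimes$ on a disjoint union $W=\coprod W_j \to V$ is a multivariable functor. We reduce to one variable at a time by the factorisation just described, replacing one component at a time, applying a single-variable norm, and using 2-out-of-3 for $L$-equivalences.
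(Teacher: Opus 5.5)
The paper gives no argument of its own here; it only cites \cite[Cor.~II.2.56]{nat-thesis}. Your approach is the standard proof of such statements, and it is correct. You localise the unstraightening $\int\CC\to\SFTN{\TT}{\NN}$ fibrewise at the $L$-equivalences and apply the relative-localisation criterion of \cite[Prop.~2.2.1.9]{lur17}. The checks all go through:
restrictions preserve $L$-equivalences because $L$ is a $\TT$-functor, and norms preserve them by hypothesis. The fibre of $\int\BB$ over $W=\coprod_i W_i$ is $\prod_i\BB^{W_i}$, so the straightening is product preserving. The relative left adjoint preserves cocartesian edges, so it is a strong $\NN$-normed lift of $L$. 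As a bonus, the inclusion $\int\BB\subset\int\CC$ preserves cocartesian lifts of backwards maps, so it is lax $\NN$-normed, which is how the paper later uses the result.

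The step you call the main obstacle is not one. A $\TT$-class of morphisms evaluated at an orbit-finite $W=\coprod_i W_i$ is by definition the product of its values at the orbits $W_i$. So closure under $\NN$-norms already says that $w_\otimes\colon\CC^W\to\CC^V$ carries tuples of $L$-equivalences to $L$-equivalences. It has to be read this way: for $\TT=\NN=\ast$, closure under norms along maps with orbit source is vacuous, and the statement would then fail for non-monoidal localisations of a symmetric monoidal category. Your reduction in fact applies the hypothesis to the tuple $(g,\id_z)$ along the fold map, so it uses tuples after all. It merely re-derives the hypothesis from its special case of tuples with a single non-identity entry, which is harmless but unnecessary.

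Uniqueness, on the other hand, is asserted rather than proved. Here is one way to finish it.
\begin{itemize}
\item Your relative adjunction makes $\int L$ a Bousfield localisation of total categories. Hence it is a Dwyer--Kan localisation at the fibrewise $L$-equivalences, together with equivalences.
\item For another lift $(\BB',L')$, the functor $\int L'$ inverts these morphisms, so it factors uniquely through $\int\BB$ via a functor over $\SFTN{\TT}{\NN}$.
\item This functor is fibrewise an equivalence. It also preserves cocartesian edges, since every cocartesian edge of $\int\BB$ is the image of one in $\int\CC$ and $L'$ is strong. Hence it is an equivalence.
\end{itemize}
The universal property of the localisation then upgrades this to contractibility of the space of lifts.

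Alternatively, a normed lift is exactly a transformation in $\Fun^{\times}(\SFTN{\TT}{\NN},\Cat)$ out of $\CC$ that is pointwise a Dwyer--Kan localisation at the $L$-equivalences. Such a transformation is a pointwise unit of the localisation adjunction between relative categories and categories. This yields existence, since Dwyer--Kan localisation preserves finite products, and uniqueness at the same time.
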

We call these \mdef{$\NN$-normed Bousfield localisations}. 
Now, an important class of Bousfield localisation arises from \textit{local objects}:
\begin{definition}
    Let $\udl{S} \subset \udl{\Fun}\prn{\udl{\Delta}^1, \CC}$ be a small set of morphisms in a $\TT$-category $\CC$.
    Given $V$ an orbit,
    we call an $V$-object $C \in \CC^V$ \mdef{$\udl{S}$-local} if, for all maps of orbits $u \colon U \to V$ and all $U$-morphisms $s \colon A \to B \in \udl{S}^U$,  the induced map
    \[
        s^\ast \colon \Map_{\CC^U}\prn{B
    ,u^\ast C} \longrightarrow \Map_{\CC^U}\prn{A,u^\ast C}
    \] 
    is an equivalence.
    We write $L_{\udl{S}} \CC \subset \CC$ for the full $\baseCat$-subcategory of local objects.
\end{definition}
We may now recall a definition of presentability:
\begin{definition}
  A $\baseCat$-category $\CC$ is called \mdef{presentable} if there exists a small $\baseCat$-category $\AA$, small set of morphisms $\udl{S}$ in $\udl{\presheaf}(\AA)$, and an equivalence
  \[
    L_{\udl{S}} \udl{\presheaf}(\AA) \xrightarrow{\;\;\; \sim \;\;\;} \CC.
  \]
  We write $\udl{\Pr}^L \subset \wGCat$ for the non-full $\baseCat$-subcategory whose $V$-objects are presentable $V$-categories and $V$-morphisms are left adjoint $V$-functors.
\end{definition}
We direct the reader to \cite{kaifPresentable} or \cite{mwpresentable} for a thorough treatment of these.

Next, we recall the notion of strong saturations from \cite[Def. 6.3.5]{kaifPresentable}. 
The relevance of this for us is that it gives a manipulable characterisation of $L$-equivalences.

\begin{definition}\label{defn:GSaturationAxioms}
Let $\udl{{S}}$ be a $\baseCat$--collection of morphisms in a ${\baseCat}$--category $\underline{\sC}$. We say that it is \mdef{strongly saturated} if the following conditions are satisfied:
\begin{enumerate}
    \item (Pushout closure) For all orbits $V$ and spans $C \xleftarrow{c} A\xrightarrow{a} B$  in $\CC^V$ such that $a$ lies in $\udl{S}^V$, the map $a' \colon C \rightarrow C\cup_AB$ lies in $S^V$;
    \item (${\baseCat}$-colimit closure) The full ${\baseCat}$-subcategory $\udl{{S}} \subseteq \underline{\func}(\udl{\Delta}^1, \underline{\sC})$ is closed under ${\baseCat}$-colimits; and
    \item (2-out-of-3) for all orbits $V$ and composable sequences $A \xrightarrow{a} B \xrightarrow{b} C$ in $\CC^V$, whenever any two of the three morphisms $\{a, b, b\circ a\}$ are in $\udl{{S}}^V$, the third is as well.
\end{enumerate}
For any $\baseCat$--collection of morphisms $\udl{{S}}$, we define its \mdef{strong saturation} $\udl{\overline{S}}$ to be the smallest ${\baseCat}$--collection containing $\udl{S}$ which is strongly saturated.
\end{definition}
\begin{proposition} \label{lem:local-object-is-loc}
    Let $\CC$ be a presentable $\TT$-category and $\udl{S}$ a small $\TT$-set of morphisms. Then the inclusion $L_{\udl{S}}\CC \subset \CC$ of the $\udl{S}$-local objects admits a left adjoint $L \colon \CC \to L_{\udl{S}}\CC$, and the $L$-equivalences are precisely the strong-saturation $\overline{\udl{S}}$.
\end{proposition}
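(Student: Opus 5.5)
Since $\CC$ is presentable, we fix a presentation $\CC \simeq L_{\udl{T}}\udl{\presheaf}(\AA)$. The plan is to reduce both assertions to the corresponding statements for presheaf $\TT$-categories, which are available in \cite{kaifPresentable} and \cite{mwpresentable}. First, the $\udl{S}$-local objects of $\CC$ are exactly the $(\udl{S}\cup \udl{T})$-local objects of $\udl{\presheaf}(\AA)$, where $\udl{S}$ is transported along the inclusion. Indeed, mapping anima into objects of $L_{\udl{T}}\udl{\presheaf}(\AA)$ agree whether computed in $\CC$ or in $\udl{\presheaf}(\AA)$, and restriction $u^*$ commutes with the inclusion. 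Hence $L_{\udl{S}}\CC = L_{\udl{S}\cup\udl{T}}\udl{\presheaf}(\AA)$. It therefore suffices to treat the presheaf case with a small $\TT$-set $\udl{S}$, and then pass back along the Bousfield localisation $\udl{\presheaf}(\AA) \to \CC$.

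For existence of the left adjoint, we argue fibrewise and then check parametrised compatibility. For each orbit $V$, the fibre $(L_{\udl{S}}\CC)^V$ is the full subcategory of $\CC^V$ local with respect to the small set $\{u_! s\}$, where $u\colon U\to V$ ranges over maps of orbits and $s$ over $\udl{S}^U$. Here $u_!$ exists because presentable $\TT$-categories admit $\TT$-colimits, and adjunction gives $\Map(u_!B, C)\simeq \Map(B,u^*C)$. So $(L_{\udl{S}}\CC)^V$ is an ordinary accessible localisation of the presentable category $\CC^V$, and \cite[5.5.4.15]{lur09} provides a fibrewise left adjoint $L^V$. The local subcategory is manifestly stable under $f^*$. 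To get a $\TT$-left adjoint, we must check the Beck–Chevalley condition $L^U f^* \simeq f^* L^V$. This is equivalent to $f^*$ preserving $L$-equivalences, and that follows because $f^*$ has a left adjoint $f_!$ preserving local objects' test maps. Concretely, $f_!$ sends $u_!s$ to $(fu)_!s$, so $f^*$ preserves local objects and is compatible with the right adjoint structure.

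For the identification of the $L$-equivalences with $\overline{\udl{S}}$, there are two inclusions. The class of $L$-equivalences is strongly saturated: it is closed under pushouts and $\TT$-colimits in $\udl{\Fun}(\udl{\Delta}^1,\CC)$ because $L$ is a $\TT$-left adjoint, and it satisfies 2-out-of-3 trivially. It also contains $\udl{S}$ by definition of locality, so $\overline{\udl{S}}$ is contained in the $L$-equivalences. For the converse, the main obstacle, we follow the Lurie small object argument. Each unit map $X\to LX$ is built as a transfinite composite of pushouts of $\TT$-colimits (coproducts indexed over orbits, i.e. $u_!$'s) of maps in $\udl{S}$ and their codiagonals $B\sqcup_A B\to B$. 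The codiagonals lie in $\overline{\udl{S}}$ by pushout closure and 2-out-of-3. Consequently, every unit $X\to LX$ lies in $\overline{\udl{S}}$. Now, given an $L$-equivalence $f\colon X\to Y$, naturality of the unit gives a square whose two unit legs and the bottom equivalence $Lf$ lie in $\overline{\udl{S}}$, so 2-out-of-3 puts $f$ in $\overline{\udl{S}}$. The delicate point is that the saturation is parametrised: we must use $\TT$-colimit closure (rather than only fibrewise colimits) to absorb the maps $u_!s$ that are needed at each orbit $V$. We also need this construction to be compatible with restriction. This is exactly where the argument of \cite[Prop.~6.3.6]{kaifPresentable} is invoked.
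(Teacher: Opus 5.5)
Your overall route is the paper's. The paper cites \cite[Prop.~2.4.1.7]{mwpresentable} for the existence of $L$ as a $\TT$-left adjoint. It then notes that the units $X\to LX$ lie in the fibrewise saturation of parametrised colimits (indexed by $\TT$-anima) of maps in $\udl{S}$, hence in $\overline{\udl{S}}$, and finishes by 2-out-of-3. Your small-object paragraph and your easy inclusion of $\overline{\udl{S}}$ into the $L$-equivalences are exactly this, and they are fine. Your opening reduction to presheaves is never used afterwards and can be dropped.

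The gap is in the one step you must supply yourself because you build $L$ fibrewise: the Beck--Chevalley condition $L^Uf^*\simeq f^*L^V$.

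\begin{itemize}
\item You correctly reduce it to $f^*$ preserving $L$-equivalences, but your justification does not prove this. That $f_!(u_!s)\simeq(fu)_!s$ is again a test map only shows that $f^*$ preserves \emph{local objects}, which you had already observed. A functor preserving local objects need not preserve local equivalences.
\item What is needed is the other adjoint. $f^*$ preserves $L$-equivalences iff its right adjoint $f_*$ preserves local objects. By adjunction, this holds iff $f^*(u_!s)$ is an $L^U$-equivalence for every orbit map $u\colon U'\to V$ and every $s\in\udl{S}^{U'}$.
\item This last condition requires the base-change equivalence $f^*u_!\simeq\bar u_!\bar f^*$ for the pullback $P=U\times_V U'$. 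This is the Beck--Chevalley condition built into $\TT$-cocompleteness of $\CC$. You also need restriction-stability of $\udl{S}$.
\item Writing $P$ as the colimit of the orbits $z\colon Z\to P$ over it, $\bar u_!\bar f^*s$ is the colimit in $\CC^U$ of the test maps $(\bar u z)_!(\bar f z)^*s$. It is therefore an $L^U$-equivalence.
\item Note that $P$ need not be orbit-finite here, e.g.\ for $\TT=\Orb_G$ with $G$ a positive-dimensional compact Lie group. So you cannot simply appeal to a double coset decomposition.
\end{itemize}

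Once this is inserted, $L$ is a $\TT$-left adjoint. You also use this implicitly when you assert that the $L$-equivalences form a restriction-stable class closed under $\TT$-colimits in $\udl{\Fun}(\udl{\Delta}^1,\CC)$. The rest of your argument then goes through. Alternatively, you can simply cite \cite[Prop.~2.4.1.7]{mwpresentable} for this part, as the paper does.
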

\begin{proof}
    The existence of a Bousfield localisation is \cite[Prop.~2.4.1.7.]{mwpresentable}.
    In the proof it is also shown that the unit morphisms $X \to LX$ of the adjunction are morphisms which are contained in the fibrewise saturation of $\GSpaces$-colimits of morphisms in $\udl{S}$, and hence in the parametrised saturation of $\udl{S}$. Using a $2$-out-of-$3$ argument this shows that any $L$-equivalences are contained in parametrised saturation of $\udl{S}$  
\end{proof}

\section{Distributivity and norms of cocomplete categories}\label{section:distributivity}
The notion of distributivity, originally due to Roos \cite{roos}, generalises colimit-preserving functors to the setting of multivariable functors. 
This was extended by Nardin \cite{nardinThesis} to parametrisation over atomic orbital categories, which we now further extend.

For convenience, we will henceforth freely use the notation of \Cref{lem:fibrewise_product}.
This section will crucially construct a diagram
\[\begin{tikzcd}[ampersand replacement=\&,row sep=tiny]
	{\GCatcoco{\allcolim}} \& {\Catcoconeproj\prn{\allcolim}} \& \Catcocone \& \Catcoconebig \& {\wGCat \times \wGCat} \\
	{\udl{\Cat}^{\mathrm{irex}(\kappa)}} \& {\udl{\Cat}^{\mathrm{rex}(\kappa)}} \& {\udl{\Cat}^{\mathrm{irex}(\kappa)}} \& {\udl{\Pr}^L} \& {\GCatcoco{\allcolim}}
	\arrow["{\udl{\wpresheaf}}"', two heads, from=1-2, to=1-1]
	\arrow[hook, from=1-2, to=1-3]
	\arrow[two heads, from=1-4, to=1-3]
	\arrow["{\prn{\ev_0,\ev_1}}", from=1-4, to=1-5]
	\arrow["\natural"', two heads, from=2-2, to=2-1]
	\arrow["{\udl{\ind}^{\kappa}}", hook, from=2-3, to=2-4]
	\arrow[hook, from=2-4, to=2-5]
\end{tikzcd}\]
where $\wGCat \times \wGCat$ has the cartesian structure, every two-headed arrow is an \textit{$\NN$-normed Bousfield localisation}, and every hooked arrow is an \textit{$\NN$-normed cocontinuous $\baseCat$-subcategory inclusion}.
In particular, we will lift this diagram uniquely from the underlying category of $\baseCat$-categories together with an $\NN$-normed lift of its subdiagram involving $\Catcocone$, $\Catcoconebig$, and $\wGCat \times \wGCat$ and the object $\udl{\Cat}^{\mathrm{rex}(\kappa)}$.
Notably, we show in \Cref{prop:transf_examples} that distributivity of norm multifunctors is transported leftward along each arrow. 

Indeed, in \Cref{subsec:definition_distributivity} we will construct $\Catcocone$, and use it to lay out the fundamentals of distributivity of parametrised multi-functors.
Then, in \Cref{subsec:distributivity_cartesian} we study distributivity of \textit{norm} multifunctors, constructing the two upper right arrows and proving that $\wGCat \times \wGCat$ (along with many other cartesian structures) is distributive.
Following this, in \Cref{subsection:cubicalDecompositions,subsec:distributivity_free_cocompletions,subsec:distributivity_general_cocompletions}, we explicitly construct \textit{parametrised relative cocompletions}, constructing left adjoint-objects to the inclusion $\GCatcoco{\allcolim} \hookrightarrow \Catcoconeproj\prn{\allcolim}$.
In \Cref{subsec:sym_mon_structure_cocomplete}, we finally provide the desired lifts for the two upper left arrows, as well as a distributive normed structure on $\udl{\Cat}^{\mathrm{rex}(\kappa)}$.
Lastly, in \Cref{subsec:norms_presentable}, we lift the bottom diagram.

\subsection{Distributivity and the category of cocones} \label{subsec:definition_distributivity}

The goal of this subsection is to define an $\NN$-normed $\baseCat$-category of classes of cocones $\Catcocone$: 
its $V$-objects are $V$-categories together with a $V$-class of cocones $\udl{K}^{\triangleright} \to U^*\CC$ and its $V$-morphisms are functors preserving the $V$-class of cocones.
Crucially, the $\baseCat$-category of cocomplete $\baseCat$-categories will be a \textit{full} $\baseCat$-subcategory of $\Catcocone$. 

We begin by constructing a variant with exactly one cocone.
\begin{construction} \label{const:lax-structure-cocones}
Given a large $\baseCat$-category $\CC$, we define the \mdef{$\baseCat$-category of cocones in $\CC$} as the pullback
\[ \begin{tikzcd} \Cocone{\CC} \ar[r] \ar[d] \arrow[rd, "\lrcorner" very near start,phantom] & \GCat
\ar[d,"{(-)^{\triangleright}}"] \\
\overslice{\GCat}{\CC} \ar[r] & \GCat \end{tikzcd} \]
where $\GCat_{/\CC} \coloneqq \wGCat_{/\CC} \times_{\wGCat} \GCat$.
Note that $\GCat_{/\CC}$ itself is only large, not very large, as the fibres of the forgetful functor $\GCat_{/\CC} \rightarrow \GCat$  to a large $\baseCat$-category are themselves only large;
hence $\Cocone{\CC}$ is also only large.
Given an oplax $\NN$-normed structure on $(-)^{\triangleright}$, 
\Cref{con:free-normed-cocart} will provide a lax $\NN$-normed structure on the $\baseCat$-functor 
\begin{align*}
    \wGCat &\longrightarrow \udl{\LargeCat} \\
    \CC & \mapsto \Cocone{\CC}
\end{align*}

Indeed, letting $\GCatterminal \subset \GCat$ denote the (non-full) $\baseCat$-subcategory spanned by $V$-categories with a terminal object and $V$-functors preserving them, 
this $\GCatterminal$ is closed under indexed products, so that the above inclusion enhances to a cartesian $\NN$-normed functor. 
It admits $(-)^{\triangleright} \colon \GCat \longrightarrow \GCatterminal$ as a left adjoint, which is given an oplax $\NN$-normed structure by the doctrinal adjunction \cite[Prop.~I.3.6]{nat-thesis}\footnote{This proposition statement has a typo; in its notation, and in the proof, $F$ is assumed to be \textit{lax}, not oplax.}. The oplax $\NN$-normed $\baseCat$-functor in question now arises as the composite of this left adjoint and the inclusion.
\end{construction}
Using the description of the norm functor from \Cref{con:free-normed-cocart}, we learn that the norm of a $W$-cocone $m \colon \udl{K}^{\triangleright} \to \CC$ along an $\NN$-admissible map $w \colon W \to V$ is the cocone 
\[ w_\ast m \colon (w_\ast \udl{K})^{\triangleright} \xrightarrow{\;\;\;\;\; \canonical \;\;\;\;\;} w_\ast (\udl{K}^{\triangleright}) \xrightarrow{\;\;\;\;\; w_* m \;\;\;\;} w_\ast \CC \]
where $\canonical$ is mate to the $\prn{w^* \dashv w_*}$-counit $w^* (w_\ast \udl{K})^{\triangleright} \simeq (w^\ast w_\ast \udl{K})^{\triangleright} \to \udl{K}^{\triangleright}$. 
\begin{construction} \label{const:lax-structure-subsets}
    Endow $\Arrow(\wGCat)$ with the pointwise $\NN$-normed structure.
    The (very large) \mdef{$\NN$-normed $\TT$-category of full subcategories} is the norm-closed full $\TT$-subcategory $\wWCat \subset \Arrow\prn{\wGCat}$ spanned by full subcategory inclusions;
    indeed norm-closure may be seen as $\Arrow\prn{\wGCat}$-norms are fibrewise and pointwise computed as products of various evaluations by \Cref{lem:fibrewise_product}. 
\end{construction}

Note that the $w$-norm of a subcategory $\DD \subset \CC$ simply applies $w_\ast$ to the inclusion.
We will show that the composite $\NN$-normed $\baseCat$-functor $t \colon \wWCat \rightarrow \wGCat$  is a cocartesian fibration, compatible with the norms in the following sense:
\begin{definition}\label{def:norm-cocart-fib}
    An $\NN$-normed $\baseCat$-functor $\varphi\colon \DD \to \CC$ is called an \mdef{$\NN$-normed cocartesian fibration} if $\DD^V \to \CC^V$ is a cocartesian fibration for all orbits $V$ and $\varphi$-cocartesian arrows are closed under restrictions and $\NN$-norms.
\end{definition}
\begin{lemma} \label{lem:SubCat-Cat-cocartesian}
    The target map $t \colon \wWCat \to \wGCat$ is an $\NN$-normed cocartesian fibration.
\end{lemma}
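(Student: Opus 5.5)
We must show three things: fibrewise, $t^V\colon \wWCat^V \to \wGCat^V$ is a cocartesian fibration; $t$-cocartesian arrows are stable under restriction $u^*$; and $t$-cocartesian arrows are stable under the norms $w_\otimes$ for $\NN$-admissible $w$. Throughout we use that the norms on $\wWCat$ are computed by applying $w_\ast$ to the inclusion, i.e.\ pointwise in $\Arrow(\wGCat)$.

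\textbf{Fibrewise cocartesian fibration.} Fix an orbit $V$. An object of $\wWCat^V$ is a full subcategory inclusion $\DD\subset\CC$ of $V$-categories. Given a $V$-functor $F\colon \CC\to\CC'$, the candidate cocartesian lift starting at $(\DD\subset\CC)$ is $(\DD\subset\CC)\to(F(\DD)\subset\CC')$. Here $F(\DD)$ denotes the full $V$-subcategory of $\CC'$ spanned fibrewise by the essential image of $\DD$; this is closed under restriction because $F$ is a $V$-functor. The claim reduces to the following: for any $(\DD''\subset\CC'')$ and any $G\colon\CC'\to\CC''$, the space of lifts of $GF$ to a map $(\DD\subset\CC)\to(\DD''\subset\CC'')$ is equivalent to the space of lifts of $G$ from $(F(\DD)\subset\CC')$. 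Since full subcategory inclusions are monomorphisms, both spaces are $(-1)$-truncated. The first is inhabited iff $GF$ sends $\DD$ into $\DD''$ objectwise, and the second iff $G$ sends $F(\DD)$ into $\DD''$. These conditions agree because $\DD''$ is replete. Equivalently, $t$ is the cocartesian fibration associated to the functor sending $\CC$ to its poset of full subcategories, with pushforward given by essential image.

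\textbf{Closure under restriction.} For $u\colon U\to V$, the functor $u^*$ is computed objectwise on underlying fibres. Essential images are computed fibrewise in $\TT$-categories, so $u^*F(\DD) = (u^*F)(u^*\DD)$, and cocartesian arrows are preserved.

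\textbf{Closure under norms.} This is the main obstacle. Let $w\colon W\to V$ be $\NN$-admissible and $F\colon \CC\to\CC'$ a $W$-functor. We must show that $w_\ast(F(\DD))$ equals the essential image $(w_\ast F)(w_\ast\DD)$ as full subcategories of $w_\ast\CC'$. Since both are full and replete, it suffices to compare objects fibrewise over each orbit $U\to V$. By \Cref{lem:fibrewise_product}, $(w_\ast\CC)^U\simeq\prod_i\CC^{Z_i}$ naturally in $\CC$. Under this identification, $w_\ast\DD$ is $\prod_i\DD^{Z_i}$ and $w_\ast F$ is $\prod_i F^{Z_i}$. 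The essential image of a finite product of functors is the product of the essential images, which gives the equality. The real care needed is in checking that the identification of \Cref{lem:fibrewise_product} is natural in full subcategory inclusions, so that the norm on $\wWCat$ (restricted from the pointwise structure on $\Arrow(\wGCat)$) is indeed computed by these products on both source and target.

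Finally, the norm of a cocartesian arrow of $\wWCat$ is the arrow $(w_\ast\DD\subset w_\ast\CC)\to(w_\ast F(\DD)\subset w_\ast\CC')$. By the preceding identification this is the canonical cocartesian lift of $w_\ast F$, which completes the argument.
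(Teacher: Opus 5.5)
Your proof is correct, but it takes a more hands-on route than the paper's. The paper handles the fibrewise statement in one step. It observes that fully faithful $V$-functors form the right class of the (essentially surjective, fully faithful) factorisation system on $\wGCat^V$. It then cites a general result of Linskens--Nardin--Pol: for any factorisation system, the target map on the full subcategory of right-class arrows is a cocartesian fibration, and its cocartesian arrows are the squares whose top edge lies in the left class. Closure under restrictions and norms is then read off from the bare assertion that this factorisation system is compatible with $u^*$ and $w_*$.

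You instead verify the cocartesian universal property by hand. You use the special feature that here the right class consists of monomorphisms. This makes the relevant fibres of mapping spaces $(-1)$-truncated, so the universal property collapses to the logical equivalence ``$G$ sends $F(\DD)$ into $\DD''$ if and only if $GF$ sends $\DD$ into $\DD''$''. For norms, you actually justify the compatibility the paper only asserts: via \Cref{lem:fibrewise_product}, you reduce to the fact that finite products of essentially surjective (resp. fully faithful) functors are again essentially surjective (resp. fully faithful).

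The paper's route is shorter and works verbatim for any factorisation system compatible with restrictions and norms, with no monomorphism hypothesis; in that generality the fibres are not propositions and one needs orthogonality rather than a mere existence check. Your route is self-contained. It also makes explicit the essential-image description of the cocartesian lifts, which is exactly what the paper later uses to describe the norms on the category of cocones.
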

\begin{proof}
    For any $V$, the subcategory inclusions in $\wCat[V]$ are the right part of a factorisation system (together with the essentially surjective functors) and these factorisation systems are compatible with restrictions and norms. It follows from \cite[Prop.~6.7]{lnp25} that the functor $t \colon \wWCat^V \to \Cat[V]$ is a cocartesian fibration. It also follows from the description of the cocartesian arrows in \textit{loc. cit.} that the cocartesian arrows are closed under restriction and under norms.
\end{proof}

We may use this to construct the category of cocones:
\begin{construction}\label{cons:catcocone}
    \Cref{lem:SubCat-Cat-cocartesian} and \Cref{const:lax-structure-cocones} give us the necessary requirements to apply
    \Cref{lem:pullback-partially-lax} to describe the following pullback square in the category of $\NN$-normed $\baseCat$-categories and lax $\NN$-normed functors:
    \begin{equation}\label{eq:catcocone}
     \begin{tikzcd}[column sep=huge]
        \Catcocone \ar[r] \ar[d, "t"'] \arrow[rd, "\lrcorner" very near start, phantom] & \wWCat \ar[d,"t"] \\
        \wGCat \ar[r,"{\Cocone{-}}"] & \wGCat
    \end{tikzcd}
    \end{equation}
    We call a $V$-object in $\Catcocone$ a \mdef{class of cocones} and a $V$-morphism a \mdef{functor preserving the class of cocones}.

    Moreover, \Cref{lem:pullback-partially-lax} also describes the $w$-norm functor on $\Catcocone$:
    $w_{\otimes}\prn{\CC,\udl{\RR}}$ is the class of cocones $w_\ast \udl{\R}$ on $w_\ast \CC$ of the form $w_\ast m \colon (w_\ast \udl{K})^{\triangleright} \rightarrow w_*(\udl{K}^{\triangleright}) \to w_\ast \CC$ for some $W$-cocone $m \colon \udl{K}^{\triangleright} \to \CC$ in $\udl{\RR}$. In particular, it is easy to see that the lax $\NN$-normed functor $t\colon \Catcocone\rightarrow \wGCat$ is in fact an $\NN$-normed functor.

\end{construction}
By \Cref{lem:pullback-partially-lax}, taking underlying $\baseCat$-categories takes \eqref{eq:catcocone} to a pullback square;
that is, a $W$-class of cocones is simply a $W$-category $\CC$ and a full $W$-subcategory $\udl{\RR} \subset \Cocone{\CC}$,  i.e. a restriction-stable choice of distinguished $U$-cocones $\udl{K}^{\triangleright} \to U^*\CC$ for all $U \rightarrow W$.
The mapping \animae{} are similar:
\begin{proposition} \label{Catcocone-subcat}
    Let $(\CC,\udl{\R})$ and $(\DD,\udl{\mathcal{S}})$ be two classes of cocones. The map 
    \[ t\colon \Map_{\Catcocone} \left((\CC,\udl{\R}),(\DD,\udl{\mathcal{S}}) \right) \longrightarrow \Map_{\GCat}(\CC,\DD)  \] induced by $t$ on mapping \animae{}
    is an inclusion onto those path components $\CC \to \DD$ which send cocones in $\udl{\R}$ to cocones in $\udl{\mathcal{S}}$.
\end{proposition}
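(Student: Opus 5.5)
The plan is to compute the mapping anima in $\Catcocone$ directly from the pullback square \eqref{eq:catcocone}. By \Cref{lem:pullback-partially-lax}, underlying $\baseCat$-categories turn \eqref{eq:catcocone} into a pullback, and mapping \animae{} in a pullback of $\baseCat$-categories are pullbacks of mapping \animae{}. So
\[
  \Map_{\Catcocone}\left((\CC,\udl{\R}),(\DD,\udl{\mathcal S})\right)
  \simeq \Map_{\wGCat}(\CC,\DD) \times_{\Map_{\wGCat}(\Cocone{\CC},\Cocone{\DD})} \Map_{\wWCat}\left((\udl{\R}\subset\Cocone{\CC}),(\udl{\mathcal S}\subset\Cocone{\DD})\right).
\]
Here $\Map_{\GCat}(\CC,\DD)\subset \Map_{\wGCat}(\CC,\DD)$ is a union of components, so it makes no difference which of the two we use.

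It therefore suffices to understand the right-hand vertical map
\[
  \Map_{\wWCat}\left((\udl{\R}\subset\Cocone{\CC}),(\udl{\mathcal S}\subset\Cocone{\DD})\right) \longrightarrow \Map_{\wGCat}(\Cocone{\CC},\Cocone{\DD}).
\]
Since $\wWCat\subset\Arrow(\wGCat)$ is full, the source is the mapping anima of commutative squares. That is, it is the pullback
\[
  \Map(\udl{\R},\udl{\mathcal S})\times_{\Map(\udl{\R},\Cocone{\DD})}\Map(\Cocone{\CC},\Cocone{\DD}).
\]
Because $\udl{\mathcal S}\hookrightarrow\Cocone{\DD}$ is a monomorphism of $\baseCat$-categories (it is fully faithful), the map $\Map(\udl{\R},\udl{\mathcal S})\to\Map(\udl{\R},\Cocone{\DD})$ is an inclusion of components. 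Its image consists of the functors that land fibrewise in $\udl{\mathcal S}$; this uses that full subcategories are replete, so lying in them is a condition on objects. Inclusions of components are stable under pullback, so the vertical map above is an inclusion of components onto those $G\colon\Cocone{\CC}\to\Cocone{\DD}$ with $G(\udl{\R})\subset\udl{\mathcal S}$ objectwise. Pulling back once more along $\Cocone{-}$, the map $t$ is an inclusion of components onto those $F\colon\CC\to\DD$ for which $\Cocone{F}$ carries $\udl{\R}^U$ into $\udl{\mathcal S}^U$ for all $U\to V$.

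The remaining step is to identify $\Cocone{F}$ on objects. From the defining pullback in \Cref{const:lax-structure-cocones}, $\Cocone{-}$ acts by postcomposition: a $U$-cocone $m\colon\udl{K}^{\triangleright}\to U^*\CC$ is sent to $U^*F\circ m$. Hence the condition says exactly that $F$ sends cocones in $\udl{\R}$ to cocones in $\udl{\mathcal S}$, with restriction-stability of the classes making the condition over all $U$ coherent.

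I expect the main obstacle to be bookkeeping rather than conceptual. The square \eqref{eq:catcocone} lives in normed categories with lax functors, so I first need to confirm, via \Cref{lem:pullback-partially-lax}, that forgetting to underlying $\baseCat$-categories really yields a pullback. I also need to handle the size shift between $\GCat$ and $\wGCat$. Once that is settled, the argument is formal: it uses only that monomorphisms are stable under pullback and that mapping \animae{} of a full subcategory inclusion form a union of components.
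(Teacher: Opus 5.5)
Your argument is correct, but it is organised differently from the paper's. The paper uses that $t\colon \Catcocone\to\wGCat$ is a cocartesian fibration, obtained by pulling back \Cref{lem:SubCat-Cat-cocartesian}. The fibre of the map on mapping anima over $F\colon\CC\to\DD$ is then a mapping anima in the fibre category over $\Cocone{\DD}$, namely $\Map(F_*\udl{\R},\udl{\mathcal S})$. That fibre is the poset of full subcategories of $\Cocone{\DD}$, so each fibre is empty or contractible. The image is read off by identifying the cocartesian pushforward $F_*\udl{\R}$ with the essential image of $\udl{\R}$ under $\Cocone{F}$.

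You never use cocartesianness. Instead you write the mapping anima as an iterated pullback, first via the underlying square of \eqref{eq:catcocone} and then via the mapping-anima formula in the arrow category. You then use that postcomposition with a fully faithful functor is a monomorphism, and that monomorphisms of anima are pullback-stable.

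Both routes rest on the same fact: full subcategory inclusions are monomorphisms. Yours is more elementary. It yields the image directly as ``those $F$ with $\Cocone{F}$ carrying $\udl{\R}$ into $\udl{\mathcal S}$'', with no need to describe cocartesian lifts.

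The paper's route is shorter in context, since the cocartesian fibration property was already needed to construct $\Catcocone$ via \Cref{lem:pullback-partially-lax}. It also identifies each fibre explicitly as a hom-anima in a poset.
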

\begin{proof}
    We want to show that \eqref{eq:inverting-cocones} has $0$-truncated fibres. 
    As $\prn{\Catcocone}^{\baseCat} \to \GCat[\baseCat]$ is a cocartesian fibration, the fibre over a $F \colon \CC \to \DD$ is given by
    \begin{equation} \label{eq:inverting-cocones} \Map_{(\Catcocone)}(F_\ast(\CC,\udl{\R}),(\DD,\udl{\mathcal{S}})) \, \simeq \Map_{\overslice{\wWCat}{\DD}}(F_\ast \udl{\R},\udl{\mathcal{S}}) \end{equation}
    The category $\overslice{\WCat}{\DD}$ is a poset as subcategory inclusions are monomorphisms by \cite[Prop.~I.2.65, Cor.~I.2.67]{nat-thesis}. So the mapping anima actually is either empty of contractible. The description of the essential image follows by unwinding the functoriality of $\Catcocone$ and $\wWCat \to \wGCat$.
\end{proof}

Next, we may use \Cref{Catcocone-subcat} to embed in the $\KK$-cocomplete $\baseCat$-categories:
\begin{construction} \label{defn:K-cocompl-cat}
    Given a $V$-diagram $\udl{K} \to V^*\CC$ which admits a colimit, its colimit cocone yields a $V$-object in $\Cocone{\CC}$.

    For $\KK \subset \GCat$ a full $\baseCat$-subcategory and $\CC$ a $\KK$-cocomplete $\baseCat$-category, we write $\udl{\RR}_{\KK}(\CC) \subset \Cocone{\CC}$ for the full $\baseCat$-subcategory of colimit $V$-cocones over diagrams with domain lying in $\KK$.
    We write $\GCatcoco{\KK} \subset \Catcocone$ for the full $\baseCat$-subcategory spanned by $V$-classes of cocones $(\CC,\udl{\RR}_{\KK}(\CC))$.
\end{construction}

\begin{corollary}\label{cor:catK_subcat}
    The composite $\iota\colon \GCatcoco{\KK} \subset \Catcocone \to \wGCat$ is the inclusion of the $\baseCat$-subcategory of $\KK$-cocomplete $V$-categories and $\KK$-cocontinuous $V$-functors.
\end{corollary}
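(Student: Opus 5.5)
The plan is to deduce the statement directly from \Cref{Catcocone-subcat}, working one $V$ at a time; both $\iota$ and $t$ are compatible with restriction along maps $V\to V'$ in $\baseCat$, so it suffices to check that the fibre of $\iota$ at $V$ is the subcategory inclusion claimed. For the \emph{objects}: by \Cref{defn:K-cocompl-cat}, the $V$-objects of $\GCatcoco{\KK}$ are pairs $(\CC,\udl{\RR}_{\KK}(\CC))$ with $\CC$ a $\KK$-cocomplete $V$-category. The class $\udl{\RR}_{\KK}(\CC)$ is determined by $\CC$ alone, so the map on cores $(\GCatcoco{\KK}^V)^{\simeq}\to (\wCat[V])^{\simeq}$ is a monomorphism with image the $\KK$-cocomplete $V$-categories. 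One small point to verify is that $\udl{\RR}_{\KK}(\CC)$ really is a $V$-class, i.e.\ stable under restriction. This holds because parametrised colimit cocones restrict to colimit cocones, and $\KK$ is a full $\baseCat$-subcategory, hence closed under restriction.

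For the \emph{morphisms}: since $\GCatcoco{\KK}\subset\Catcocone$ is full, \Cref{Catcocone-subcat} identifies
\[
\Map\prn{(\CC,\udl{\RR}_{\KK}(\CC)),(\DD,\udl{\RR}_{\KK}(\DD))}
\]
with the union of those path components of $\Map(\CC,\DD)$ consisting of $V$-functors $F$ that send every colimit $U$-cocone $\udl{K}^{\triangleright}\to U^*\CC$ with $\udl{K}\in\KK^U$, for every $U\to V$, to a colimit cocone in $U^*\DD$. It remains to identify this condition with $\KK$-cocontinuity of $F$. In the parametrised setting, preserving $\KK$-colimits means exactly that for each $U\to V$ and each $\udl{K}\in \KK^U$, the functor $U^*F$ carries colimit cocones of shape $\udl{K}$ to colimit cocones. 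This is precisely the condition above, so the two notions agree.

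Finally, being an inclusion on cores and on mapping \animae{} (fibrewise faithful and core-fully faithful) is exactly the definition of a $\baseCat$-subcategory inclusion adopted in the conventions. This gives the claim.

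I expect the only real subtlety to be the bookkeeping in the morphism step. \Cref{Catcocone-subcat} is phrased for $\baseCat$-objects, so one has to apply it in the $V$-fibre, i.e.\ to the $\baseCat_{/V}$-version of the construction. This is legitimate because $\Catcocone$ is built naturally in $V$ via \eqref{eq:catcocone}. One also has to make sure that the cocone condition quantifies over all $U\to V$, and this is guaranteed by $\udl{\RR}_{\KK}$ being a $\baseCat$-subcategory.
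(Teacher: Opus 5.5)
Your proposal is correct and follows essentially the same route as the paper: faithfulness of $\iota$ comes from \Cref{Catcocone-subcat}, and the core map is an inclusion because $\udl{\RR}_{\KK}(\CC)$ is intrinsic to $\CC$. (The paper phrases this second point as ``every equivalence preserves $\KK$-colimits'', which is the precise content of your ``determined by $\CC$ alone''.) The identification of cocone-preservation with $\KK$-cocontinuity is then by unwinding definitions, and your extra checks (restriction-stability of $\udl{\RR}_{\KK}(\CC)$, working in the $V$-fibre) are bookkeeping the paper leaves implicit.
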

\begin{proof}
    By \Cref{Catcocone-subcat}, $\iota$ is faithful.
    Moreover, since every equivalence preserves $\KK$-colimits, the map of $\baseCat$-\animae{} $\iota^{\simeq}$ is an inclusion, i.e. $\iota$ is a $\baseCat$-subcategory inclusion.
    \Cref{Catcocone-subcat} and the definition of $\GCatcoco{\KK}$ identify it as the desired one.
\end{proof}

\begin{definition}
    Let $\CC$ be a $\baseCat$-category together with a $\baseCat$-class of cocones $\udl{\RR}$.
    We say that a $\baseCat$-functor $\CC \to \DD$ \mdef{sends $\udl{\RR}$-cocones to colimit cocones} if it lies in the image of 
    \[
    \Map_{\Catcocone} \left(\prn{\CC,\udl{\R}},\prn{\DD,\udl{\RR}_{\GCat}\prn{\DD}} \right) \hookrightarrow \Map_{\wGCat}(\CC,\DD),
    \]
    and we denote the full $\baseCat$-subcategory of these as 
    \[ 
        \udl{\Fun}^{\udl{\RR}}(\CC,\DD) \subset \udl{\Fun}(\CC,\DD).
    \]

    Let  $w\colon W\rightarrow V$ be an $\NN$-admissible map of orbit-finite $\TT$-\animae{}. 
    Given $\CC$  a $W$-category and $\DD$  a $V$-category,
    a \mdef{$w$-multifunctor} from $\CC$ to $\DD$ is a $V$-functor $F\colon w_* \CC \rightarrow \DD$.
    We say that such a multifunctor \mdef{distributes $\RR$-cocones} if it sends $w_\ast \RR$-cocones in $w_\ast \CC$ to colimit cocones.

    In the case where $\udl{\RR} = \udl{\RR}_{\KK}(\CC)$ is the $\baseCat$-class of $\KK$-colimit cocones in $\CC$, we simply say that $F$ \mdef{distributes $\KK$-colimits}; we write \[
\udl{\func}\distributiveLocalisation{w}{\KK}{}(w_*\CC,\DD)  \coloneqq
\udl{\func}^{w_*\udl{\RR}_{\KK}\prn{\CC}}(w_*\CC,\DD).
\]
    If $\KK=\underline{\Cat}$, we simply say that the multifunctor is distributive, and
    we write 
    \[
    \udl{\func}\distributeAll{w}(w_*\udl{\sC},\udl{\D})\coloneqq \udl{\func}\distributiveLocalisation{w}{\udl{\RR}_{{\GCat}}\prn{\CC}}{}(w_*\CC,\DD).
    \]
    We call this the \mdef{the $\baseCat$-category of distributive $w$-multifunctors.}
\end{definition}

\begin{remark}\label{rmk:assembly-map}
    Assume $\CC$ and $\DD$  admit colimits for $\KK$-indexed and $w_*{\KK}$-indexed diagrams, respectively.
    We can describe the special case of  $\KK$-distributivity as the stipulation that the colimit assembly map
    \[  
        \beta_{F,m}\colon \colim\prn{\overline{w}_* \udl{K}\xrightarrow{\;\;\; w_\ast m \;\;\;}\overline{w}_*\overline{u}^\ast \CC \simeq u^\ast w_\ast \CC \xlongrightarrow{u^*F} u^\ast \DD}         \longrightarrow F \left( \colim(\udl{K} \xrightarrow{\;\;\; m \;\;\;} \overline{u}^*\CC) \right)
    \]
   is an equivalence for any map $u\colon U \rightarrow V$ between orbit-finite $\baseCat$-\animae{} and any $Z$-functor $m \colon \udl{K} \to \overline{u}^*\CC$ with $\udl{K} \in \KK^Z$, where $Z$ is as in \eqref{eqn:standard_basecat_pullback}.
    Concretely, $\beta$ is the value at $m$ of the vertically left-adjointed Beck-Chevalley  map of the following square in $\GCat[U]$
    \begin{equation}\label{eq:BC_pulled}
    \begin{tikzcd}
        \overline{w}_*\overline{u}^*\udl{\sC} \ar[rr, "u^*F"] \dar["\overline{w}_*\prn{\udl{K}^*}"] && u^*\udl{\D}\dar["(\overline{w}_*\udl{K})^*"]\\
        \overline{w}_*\udl{\func}({\udl{K}},\overline{u}^*\udl{\sC}) \rar["\canonical"]& \udl{\func}(\overline{w}_*{\udl{K}},\overline{w}_*\overline{u}^*\udl{\sC}) \rar["u^*F"]& \udl{\func}(\overline{w}_*{\udl{K}},u^*\udl{\D}).
    \end{tikzcd}
  \end{equation}
    This may be reframed as the simultaneous condition that
    \begin{enumerate}[label={(\alph*)}]
      \item \label[Condition]{cond:a} \eqref{eq:BC_pulled} is vertically left-adjointable when $u$ is an equivalence (hence $\overline{w} = w$ and $\overline{u}$ is an equivalence), and
      \item \Cref{cond:a} holds for the $\overline{w}$-multifunctor $u^* F$ for all $u\colon U \rightarrow V$.
    \end{enumerate}
\end{remark}

The property of being distributive enjoys a closure property under compositions.

\begin{proposition}\label{prop:dist-compose} \label{lem:postcomposing_distributive_with_colimit_pres}
  Given a subcategory $\KK \subset U^\ast \GCat$ and a composable sequence of $\NN$-admissible maps $U \xrightarrow{u} W \xrightarrow{w} V$ and a $\KK$-cocomplete $U$-category $\CC$, a $u_\ast \KK$-cocomplete $W$-category $\DD$ and $(wu)_\ast \KK$-cocomplete $V$-category $\EE$. 
  Let $F \colon w_\ast \DD \to \EE$ and $G \colon u_\ast \CC \to \DD$ be $u_*\KK$- and $\KK$-distributive multifunctors. Then $F \circ w_\ast G \colon (wu)_\ast \CC \to \EE$ is $\KK$-distributive.
\end{proposition}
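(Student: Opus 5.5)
We work in $\Catcocone$ and reduce the claim to functoriality of the cocone-norm construction. By definition, $G$ being $\KK$-distributive means that $G$ defines a morphism $(u_\ast\CC, u_\ast\udl{\RR}_{\KK}(\CC)) \to (\DD, \udl{\RR}_{\GCat}(\DD))$ in $\Catcocone$. Since $\DD$ is $u_\ast\KK$-cocomplete and the cocones in $u_\ast\udl{\RR}_{\KK}(\CC)$ are indexed by $u_\ast\udl{K}$ with $\udl{K}\in\KK$, this map in fact lands in $\udl{\RR}_{u_\ast\KK}(\DD)$. Up to the restriction-stability built into the definition, it is therefore a morphism
\[
(u_\ast\CC, u_\ast\udl{\RR}_{\KK}(\CC)) \longrightarrow (\DD, \udl{\RR}_{u_\ast\KK}(\DD)).
\]
Similarly, $F$ is a morphism $(w_\ast\DD, w_\ast\udl{\RR}_{u_\ast\KK}(\DD)) \to (\EE, \udl{\RR}_{\GCat}(\EE))$. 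Applying the $w$-norm functor $w_\otimes$ of $\Catcocone$ from \Cref{cons:catcocone} to the first morphism gives a morphism
\[
w_\ast G\colon (w_\ast u_\ast\CC, w_\ast u_\ast\udl{\RR}_{\KK}(\CC)) \longrightarrow (w_\ast\DD, w_\ast\udl{\RR}_{u_\ast\KK}(\DD)),
\]
and composing with $F$ gives a morphism out of $((wu)_\ast\CC, w_\ast u_\ast \udl{\RR}_{\KK}(\CC))$ into the colimit cocones of $\EE$.

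It remains to identify $w_\ast u_\ast\udl{\RR}_{\KK}(\CC)$ with $(wu)_\ast\udl{\RR}_{\KK}(\CC)$ as classes of cocones on $(wu)_\ast\CC \simeq w_\ast u_\ast\CC$. Since $\Catcocone$ is an $\NN$-normed $\baseCat$-category, its norms are functorial in spans, so $w_\otimes u_\otimes \simeq (wu)_\otimes$. Concretely, for a cocone $m\colon\udl{K}^\triangleright\to\CC$, one checks that the composite canonical map
\[
(w_\ast u_\ast\udl{K})^\triangleright \to w_\ast\big((u_\ast\udl{K})^\triangleright\big) \to w_\ast u_\ast(\udl{K}^\triangleright)
\]
agrees with the canonical map for $wu$. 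This is a mate computation, using that the counit of $(wu)^\ast\dashv(wu)_\ast$ is the composite of the counits. Since $\Catcocone\to\wGCat$ is a normed functor and $\wWCat$ is a poset fibrewise, equality of classes only needs to be checked on objects, and it follows from this formula.

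The main obstacle is the restriction-stability bookkeeping. Being a morphism in $\Catcocone$ is checked after every base change $v\colon V'\to V$, and there the relevant pullbacks of $wu$ decompose as in \Cref{lem:fibrewise_product} into coproducts of orbits. We must ensure that the restricted cocones $v^\ast w_\ast u_\ast m$ are again norms, along the pulled-back maps, of restricted cocones in $\udl{\RR}_{\KK}(\CC)$. This holds by the Beck–Chevalley equivalences for $\NN$-indexed products in $\GCat$ and restriction-stability of $\udl{\RR}_{\KK}(\CC)$. It also uses that $\KK$ is a $\baseCat$-subcategory, so that restrictions of $\KK$-diagrams stay in $\KK$ and $\udl{\RR}_{u_\ast\KK}(\DD)$ contains the pulled-back cocones needed. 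Here we can use the assembly-map reformulation of \Cref{rmk:assembly-map}: \Cref{cond:a} for $F\circ w_\ast G$ factors as the assembly map for $w_\ast G$, namely $w_\ast$ of the one for $G$, followed by the one for $F$ on the diagram $u_\ast\udl{K}$. Both are equivalences, and condition (b) reduces to the same statement for the pulled-back multifunctors $v^\ast F$ and $\overline{v}^\ast G$.
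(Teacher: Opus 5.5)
Your proposal is correct and is essentially the paper's proof: you lift $G$ and $F$ to morphisms of $\Catcocone$, apply the norm $w_\otimes$ to the lift of $G$, and compose, using $w_\otimes u_\otimes \simeq (wu)_\otimes$ from functoriality of the $\NN$-normed structure (which the paper leaves implicit). Your final paragraph on restriction-stability and the assembly-map check is not needed, since $V$-morphisms in $\Catcocone$ already encode the restriction-stable classes of cocones.
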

\begin{proof}
     By definition, there is a morphism in $\Catcocone$ lifting $G$ from $u_{\otimes}(\CC,\udl{\RR}_{\KK}(\CC))$ into $(\DD,\udl{\RR}_{u_\ast \KK}(\DD))$. As the forgetful functor to $\Catcocone \rightarrow \wGCat$ is $\NN$-normed, we can apply $w_{\otimes}$ to obtain a morphism from $w_{\otimes} u_{\otimes} (\CC,\udl{\RR}_{\KK}(\CC))$ to $w_{\otimes}(\DD,\udl{\RR}_{u_\ast \KK}(\DD))$. Moreover, there is a morphism in $\Catcocone$ lifting $F$ from $w_{\otimes}(\DD,\udl{\RR}_{u_\ast \KK}(\DD))$ to $(\EE,\udl{\RR}_{(wu)_\ast \KK}(\EE))$. Composing these two morphism witnesses that the composite $F \circ w_\ast G$ is $\KK$-distributive.
\end{proof}
\subsection{Distributive normed categories}\label{subsec:distributivity_cartesian}
We  now define distributive normed categories, generalising the notion of categories for which the tensor product commutes with colimit in each variable seperately.
\begin{definition} \label{def:distributive-category}
    For $\KK \subset \GCat$, an $\NN$-normed $\baseCat$-category $\CC$ is called \mdef{$\KK$-distributive} if for all $\NN$-admissible maps of orbit-finite $\TT$-\animae{} $w \colon W \to V$, the $w$-norm $w$-multifunctor 
    \[ w_{\otimes} \colon w_{\ast} W^\ast \CC \longrightarrow V^\ast \CC \] is $\KK$-distributive. 
    In the case where $\KK=\GCat$, we say $\CC$ is \mdef{distributive}.
\end{definition}
Before we move on, we point out a few techniques for transferring this property.
\begin{proposition}\label{prop:transf_examples}
    Let $\varphi\colon \CC \rightarrow \DD$ be a $\KK$-cocontinuous $\NN$-normed $\baseCat$-functor between $\KK$-cocomplete $\NN$-normed $\baseCat$-categories.
    \begin{enumerate}[label = (\arabic*)]
        \item If $\varphi$ is conservative and $\DD$ is $\KK$-distributive, then $\CC$ is $\KK$-distributive.
        \item If $\varphi$ is a Bousfield localisation and $\CC$ is $\KK$-distributive, then $\DD$ is $\KK$-distributive.
    \end{enumerate}
\end{proposition}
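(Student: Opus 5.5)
We use the colimit-assembly description of \Cref{rmk:assembly-map}. Fix an $\NN$-admissible $w\colon W\to V$, a map $u\colon U\to V$ of orbit-finite $\baseCat$-\animae{} with pullback \eqref{eqn:standard_basecat_pullback}, and a $Z$-diagram $m\colon \udl{K}\to \overline{u}^*W^*\CC$ with $\udl{K}\in\KK^Z$. Distributivity of $\CC$ (resp. $\DD$) is the statement that the assembly map
\[
\beta^{\CC}_{m}\colon \colim\prn{u^*w_\otimes\circ \overline{w}_*m}\longrightarrow u^*w_\otimes\prn{\colim m}
\]
is an equivalence for all such data. Since $\varphi$ is $\NN$-normed, there are natural equivalences $\varphi\circ w_\otimes\simeq w_\otimes\circ w_*\varphi$, compatible with restriction along $u$. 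Since $\varphi$ is $\KK$-cocontinuous and $\KK$ is closed under $\NN$-indexed products, $\varphi$ also preserves colimits of $\overline{w}_*\udl{K}$-shaped diagrams. The first step is to check that, under these identifications, $\varphi(\beta^{\CC}_m)$ is identified with $\beta^{\DD}_{\varphi m}$, where $\varphi m=\overline{w}_*$-applied composite $\udl{K}\to\overline{u}^*W^*\DD$. This is a compatibility of Beck--Chevalley mates in the square \eqref{eq:BC_pulled}: $\varphi$ commutes with the vertical left adjoints (colimits) and with the horizontal maps (norms). I would verify it by pasting the mate squares.

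For (1), the equivalence $\varphi(\beta^{\CC}_m)\simeq\beta^{\DD}_{\varphi m}$ together with the $\KK$-distributivity of $\DD$ shows that $\varphi(\beta^{\CC}_m)$ is an equivalence. Conservativity of $\varphi$, applied fibrewise at $U$, then shows that $\beta^{\CC}_m$ is an equivalence.

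For (2), let $\varphi=L$ with fully faithful right adjoint $\iota$. Every $Z$-diagram $n\colon\udl{K}\to\overline{u}^*W^*\DD$ is equivalent to $L\iota n$, so it has the form $\varphi m$ with $m=\iota n$. By the first step, $\beta^{\DD}_{n}\simeq L(\beta^{\CC}_{\iota n})$, which is the image under $L$ of an equivalence, because $\CC$ is $\KK$-distributive. Hence $\beta^{\DD}_n$ is an equivalence. Here we also use that $\DD$ is $\KK$-cocomplete and that $L$ preserves the relevant colimits, which holds because $L$ is a left adjoint. Alternatively, (2) can be phrased inside $\Catcocone$: since $L$ is essentially surjective on diagrams, $w_*\udl{\RR}_\KK(\DD)$ is the image of $w_*\udl{\RR}_\KK(\CC)$ under $w_*L$, and $L$ sends colimit cocones to colimit cocones.

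The main obstacle is the first step, the coherent identification of the two assembly maps. Concretely, one must check that the normed structure equivalence $\varphi w_\otimes\simeq w_\otimes w_*\varphi$ intertwines the canonical map $\overline{w}_*\udl{\Fun}(\udl{K},-)\to\udl{\Fun}(\overline{w}_*\udl{K},\overline{w}_*-)$ of \eqref{eq:BC_pulled}. This canonical map is natural in the target category, so the check should reduce to naturality of mates. Before starting the argument I would first reduce to the case $u=\id$ using \Cref{rmk:assembly-map}(b).
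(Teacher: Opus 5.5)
Your proposal is correct and follows essentially the same route as the paper. The paper likewise identifies $\varphi(\beta^{\CC}_m)$ with $\beta^{\DD}_{\varphi m}$ using cocontinuity and normedness, deduces (1) from conservativity, and deduces (2) by taking $m=\iota n$ and using $L\iota\simeq\id$. Your explicit appeal to closure of $\KK$ under $\NN$-indexed products, which ensures that the $\overline{w}_*\udl{K}$-shaped colimits exist and are preserved, is the hypothesis the paper uses implicitly via \Cref{rmk:assembly-map}.
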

\begin{proof}
    For Claim 1,
    let $m\colon \udl{K} \rightarrow \overline{u}^* \CC$ be a $\KK$-diagram.
    Cocontinuity and $\NN$-normedness produce an equivalence of  \Cref{rmk:assembly-map}'s assembly maps $\varphi\prn{\beta_{\overline{w}^{\CC}_{\otimes}, m}} \simeq \beta_{\overline{w}^{\DD}_{\otimes}, \varphi \circ m}$.
    Distributivity of $\DD$ implies $\beta_{\overline{w}^{\DD}_{\otimes}, \varphi \circ m}$ is an equivalence, so conservativity implies that $\beta_{\overline{w}^{\CC}_{\otimes}, m}$ is an equivalence, yielding distributivity of $\CC$ under \Cref{rmk:assembly-map}.

    Claim 2 is similar: 
    if instead $m\colon \udl{K} \rightarrow \overline{u}^*\DD$ and $\varphi \dashv \iota$, then we have equivalences $\varphi\prn{\beta_{\overline{w}^{\CC}, \iota \circ m}} \simeq \beta_{\overline{w}^{\DD}, \varphi \circ \iota \circ m} \simeq  \beta_{\overline{w}^{\DD}, m}$; since $\beta_{\overline{w}^{\CC}, \iota \circ m}$ is an equivalence, $\beta_{\overline{w}^{\DD}, m}$ is, too. 
\end{proof}

A fundamental example of distributive $\NN$-normed structures out of which we will build others is the cartesian structure on coefficient systems.
The following generalises \cite[Prop. 3.2.5]{NS}.

\begin{proposition}\label{prop:distributivity_of_cartesian_symmon}
    Let $\D$ be a category which admits finite products. 
    Then $\cofree(\D)\in \LargeCat[\TT]$  admits finite $\NN$-indexed products. Moreover, if  $\D$ is additionally a cocomplete category and the product bifunctor commutes with colimits separately in each variable, then $\cofree(\D)\in \LargeCat[\TT]$ is also cocomplete and its cartesian $\NN$-normed structure is distributive. 
\end{proposition}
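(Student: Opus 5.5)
The plan is to reduce everything to explicit formulas for $\cofree(\D)$ and then check distributivity through the assembly maps of \Cref{rmk:assembly-map}.

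\textbf{Step 1: indexed products.} The $V$-objects of $\cofree(\D)$ are functors $(\TT_{/V})\op \to \D$, and restriction along $w\colon W\to V$ is precomposition with $w_!\colon (\TT_{/W})\op\to(\TT_{/V})\op$. Hence $w_*$ is right Kan extension along $w_!$. For $\NN$-admissible $w$ and an orbit $U\to V$, the value at $U$ is a limit over the comma category. Using orbitality and the pullback \eqref{eqn:standard_basecat_pullback}, this comma category has a terminal-type description: it is computed by the finitely many lifts $Z_i\to W$. So the Kan extension is
\[
(w_*X)(U)\simeq \prod_i X(Z_i),
\]
a finite product, and it exists because $\D$ has finite products. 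The Beck--Chevalley condition follows because pullbacks compose. This is essentially \Cref{prop:cart} (\cite[Thm.~I.4.1]{nat-thesis}), which I would simply invoke for the first assertion.

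\textbf{Step 2: cocompleteness.} Fibrewise colimits exist and are computed pointwise in $\Fun((\TT_{/V})\op,\D)$, and they are preserved by restriction. Left adjoints $f_!$ to restriction along maps of orbits exist by left Kan extension along $f_!$; since $f_!$ is a right fibration with discrete fibres, this is computed by coproducts over fibres. The Beck--Chevalley condition again follows from pullback stability. Together these give $\TT$-cocompleteness.

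\textbf{Step 3: distributivity.} By \Cref{rmk:assembly-map}, applied with $u$ ranging over all maps, it suffices to show the following. For each $\NN$-admissible $w\colon W\to V$ with $V$ an orbit (after base change), and each $W$-diagram $m\colon \udl K\to \CC$, the assembly map $\colim_{w_*\udl K}(w_\otimes\circ w_*m)\to w_\otimes\colim m$ is an equivalence, where $w_\otimes=w_*$ is the cartesian norm. Evaluate at an orbit $U\to V$; evaluation at $U$ commutes with $\TT$-colimits, which we compute pointwise via Step 2. Both sides then become formulas in $\D$ built from products $\prod_i$ over the finite set of components $Z_i$ and colimits over the categories $\udl K^{Z_i}$.

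The left side is a colimit indexed by $(w_*\udl K)^U\simeq\prod_i \udl K^{Z_i}$, but only after accounting for the parametrised (fibrewise plus induction) nature of the colimit. This reduces the claim to the statement that a finite product of colimits $\prod_i \colim_{K_i} X_i$ is the colimit over $\prod_i K_i$ of $\prod_i X_i$, which holds because $\times$ commutes with colimits in each variable separately.

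\textbf{Main obstacle.} The hardest part is the bookkeeping in Step 3. One must identify the parametrised colimit over $w_*\udl K$, which involves left Kan extensions along maps of orbits as well as fibrewise colimits, with the product of parametrised colimits. The induction (coproduct) parts must distribute over the product via the same separate-variable hypothesis, using that coproducts are colimits. My first approach is to decompose a $\TT$-colimit into fibrewise colimits and indexed coproducts $f_!$, and to check the two cases separately. The fibrewise case is the classical one. The indexed coproduct case uses the double coset decomposition of $w_*f_!$ coming from orbitality and then distributivity of $\times$ over $\coprod$.
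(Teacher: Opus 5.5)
This is essentially the paper's proof: reduce to $V$ an orbit, evaluate at orbits $U\to V$ via $u^*w_*\simeq\prod_i\overline{w}_{i*}\overline{u}_i^*$ and $(w_*\udl{K})^U\simeq\prod_i\udl{K}^{Z_i}$, and finish with the separate-variable hypothesis; the bookkeeping you flag as the main obstacle does not arise, because \Cref{lem:cofree-colims} (which the paper invokes) says a $\TT$-cocone in $\cofree(\D)$ is a colimit iff for each orbit $U$ its $U$-value on $(\udl{K}^U)^{\triangleright}$ is an ordinary colimit in $\D$, so the induction contributions are already encoded in the fibre $(w_*\udl{K})^U$ and your fallback treatment of fibrewise colimits and indexed coproducts separately (more delicate, since you would have to track $w_*$ through a decomposition of $\udl{K}$) is unnecessary. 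One small correction to Step 2: the fibres of $\TT_{/U}\to\TT_{/V}$ are in general animae rather than sets (e.g.\ for $\Orb_G$ with $G$ positive-dimensional), so $f_!$ is computed by colimits over these animae rather than by coproducts, which is harmless since $\D$ is cocomplete.
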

For instance, these hypotheses hold when $\D$ is cartesian-closed.
Now, to prove this proposition, we will use properties of the coefficient system functor.
Indeed, $\cofree$ has the universal property as a right adjoint to the cocartesian unstraightening functor $\int\colon \wCat[\baseCat] \rightarrow \wCat$, under which we have a fibrewise expressions for (co)limits.
\begin{lemma}[{\cite[Prop.~5.5-5.6]{Sha23}}]\label{lem:cofree-colims}
    Fix $\D$ a category and $\udl{K}$ a $\baseCat$-category. A $\baseCat$-cocone $\overline{p}\colon \udl{K}^{\triangleright} \rightarrow \cofree(\D)$ is a colimit if and only if its restricted mate
        \[
            (\udl{K}^{V})^{\triangleright} \hookrightarrow \int\udl{K}^{\triangleright}
            \xrightarrow{\;\;\;\;\; p' \;\;\;\;\;} \D
        \]
        is a colimit diagram for all orbits $V \in \baseCat$.
\end{lemma}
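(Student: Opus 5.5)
The plan is to translate the parametrised colimit in $\cofree(\D)$ into an ordinary left Kan extension in $\D$, using the adjunction $\int \dashv \cofree$, and then evaluate that Kan extension pointwise.

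First I will set up the dictionary. For an orbit $V$, the adjunction (applied to $V^*$ and $\baseCat_{/V}$) gives natural equivalences
\[
\Fun_{V}\prn{V^*\udl{K}, V^*\cofree(\D)} \simeq \Fun\prn{\textstyle\int V^*\udl{K}, \D}, \qquad V^*\cofree(\D)^{V} \simeq \Fun\prn{(\baseCat_{/V})\op, \D}.
\]
Under these equivalences, the constant-diagram functor $\cofree(\D)^V \to \Fun_V(V^*\udl{K}, V^*\cofree(\D))$ becomes precomposition along the structure map $\pi\colon \int V^*\udl{K} \to (\baseCat_{/V})\op$. This $\pi$ is a cocartesian fibration whose fibre over $(u\colon U\to V)$ is $\udl{K}^U$. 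Consequently the $V$-colimit functor, which is the left adjoint of the constant functor, is identified with left Kan extension $\pi_!$. The identification is compatible with restriction along maps of orbits, so the parametrised colimit is computed levelwise by $\pi_!$.

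Next I will compute $\pi_!$ pointwise. Its value at $u$ is the colimit over the comma category $\int V^*\udl{K} \times_{(\baseCat_{/V})\op} \prn{(\baseCat_{/V})\op}_{/u}$. Since $\pi$ is a cocartesian fibration, the inclusion of the fibre $\udl{K}^U$ into this comma category is cofinal: its left adjoint pushes an object forward along a cocartesian lift. Hence $(\pi_! p')(u) \simeq \colim_{\udl{K}^U} p'|_{\udl{K}^U}$. The mate of the cocone $\overline{p}$ is a functor $p'\colon \int \udl{K}^{\triangleright}\to \D$. The cocone point of $\udl{K}^{\triangleright}$ contributes the section $(\baseCat_{/V})\op \to \int V^*\udl{K}^{\triangleright}$, and the fibre of $\int\udl{K}^{\triangleright}$ over $U$ is $(\udl{K}^U)^{\triangleright}$.

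Finally, I will conclude as follows. The comparison map $\pi_! (p'|_{\int \udl{K}}) \to p'|_{\text{cone points}}$ induced by $\overline{p}$ is, after evaluating at $u$, exactly the cocone map of the restricted mate $(\udl{K}^U)^{\triangleright}\to \D$. Therefore $\overline{p}$ is a $\baseCat$-colimit, equivalently a $V$-colimit after restriction to each orbit $V$, precisely when each restricted mate $(\udl{K}^{U})^{\triangleright}\to\D$ is a colimit diagram for every orbit $U$. Taking $u=\id_V$ shows that it suffices to check orbits $V$ themselves, which gives the statement.

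The main obstacle is the bookkeeping in the second step. One must check that the comparison map produced by the cocone structure really agrees with the fibrewise cocone map under both the adjunction and the cofinality equivalence, including the identification of the cone point with a section of $\int \udl{K}^{\triangleright}$. I will handle this by working with the cocartesian unstraightening of $\udl{K}^{\triangleright}$ over $(\baseCat_{/V})\op$ directly. There I will apply relative left Kan extension along the inclusion $\int \udl{K} \subset \int\udl{K}^{\triangleright}$, whose restriction to each fibre is $\udl{K}^U\subset (\udl{K}^U)^{\triangleright}$, and use the criterion that a functor is left Kan extended from a full subcategory iff it is so pointwise. This criterion reduces everything to the fibrewise statement.
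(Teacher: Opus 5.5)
The paper gives no argument for this lemma; it only cites Shah. Your route is the natural one and proves the implication ``all restricted mates are colimits $\Rightarrow$ $\overline{p}$ is a $\baseCat$-colimit'' with no hypothesis on $\D$, since pointwise left Kan extensions have the universal property. That route is: pass to mates under $\int\dashv\cofree$, so the constant $U$-functor becomes $\pi_U^*$ for $\pi_U\colon \int U^*\udl{K}\to(\baseCat_{/U})\op$ and the colimit condition at each orbit becomes a Kan-extension condition along $\pi_U$; then the fibre $\udl{K}^U$ is cofinal in the comma category because its inclusion is a right adjoint.

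\textbf{The gap is in the converse.} Being a $\baseCat$-colimit only says that, for each orbit $U$, the transformation $p'_U\to\pi_U^*c_U$ is a universal arrow, i.e.\ a left Kan extension in the adjoint sense. Your Step 2 replaces this by the pointwise formula $(\pi_!p')(u)\simeq\colim_{\udl{K}^U}p'$, and Step 1 already speaks of \emph{the} left adjoint of the constant functor. Both presuppose that the colimits $\colim_{\udl{K}^U}p'$ exist in $\D$, and for general $\D$ the implication is actually false.

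Here is a counterexample. Take $\baseCat=\{a\to b\}$, let $\udl{K}$ be constant on the discrete category $\{1,2\}$, and let $\D$ be the five-element poset with $x_1,x_2<m<t$, $x_1,x_2<m'$, and no other relations.
\begin{itemize}
\item Then $\cofree(\D)^b=\Fun(\Delta^1,\D)$, and restriction to $a$ is $\ev_1$.
\item Consider the cocone $(x_i\le t)\to(m\le t)$. At level $b$ it is a coproduct: since $t$ is maximal and $m'\not\le t$, the only common upper bounds of $(x_1\le t)$ and $(x_2\le t)$ are $(m\le t)$ and $(t\le t)$.
\item At level $a$ the cocone is $t\sqcup t=t$. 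So it is a $\baseCat$-colimit.
\item Yet the restricted mate over $b$ is $x_1\to m\leftarrow x_2$. This is not a coproduct in $\D$, because $m'$ is an incomparable upper bound.
\end{itemize}
This is just the familiar failure of colimits in $\Fun(\Delta^1,\D)$ to be pointwise when $\D$ lacks them.

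So the statement needs the hypothesis that $\D$ admits $\udl{K}^U$-indexed colimits for every orbit $U$. This holds wherever the paper uses the lemma, e.g.\ with $\D$ cocomplete in \Cref{prop:distributivity_of_cartesian_symmon}. Under that hypothesis your argument closes with one sentence. The pointwise extension $\pi_{U!}p'_U$ exists and has the universal property, so the canonical comparison from it to $c_U$ is an equivalence. Evaluating at $\id_U$, using your cofinality step, shows that the restricted mate over $U$ is a colimit. The relative-Kan-extension bookkeeping in your last paragraph is fine, but it does not by itself remove the need for this existence hypothesis.
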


\begin{proof}[Proof of \cref{prop:distributivity_of_cartesian_symmon}] 
    We give a proof similar to \cite[Lem.~3.18]{nardinThesis}.

    By restriction-stability of our assumptions, we need only show for all $\NN$-admissible maps $w\colon W \rightarrow V$ with $V \in \baseCat$ a terminal object that
    \[
        w_*\colon w_*w^*\cofree(\D) \longrightarrow \cofree(\D)
    \]
    is distributive.

    Let $u\colon U \rightarrow V$ be the unique map from some orbit $U$. 
    Recall from \eqref{eqn:standard_basecat_pullback} that we have an equivalence $u^*w_*\simeq \overline{w}_*\overline{u}^*\simeq \prod_{i=1}^n\overline{w}_{i*}\overline{u}_i^*$.
    This time, we have a mate correspondence
   \[
    \adjustbox{max width=\textwidth}{
     \begin{tikzcd}[ampersand replacement=\&, column sep=small]
	{ (u^*w_*\udl{K})^{\triangleright}} \& {u^*w_* \prn{\udl{K}^{\triangleright}}} \& {u^*w_*w^*\cofree(\D)} \& { u^*\cofree(\D)} \\
	{ \int\prn{u^*w_*\udl{K}}^{\triangleright}} \& {\int\prn{u^*w_*\prn{\udl{K}^{\triangleright}}}} \& {\int\prn{u^*w_* w^*\cofree(\D)}} \rar["u^*w_*"]\& \int\prn{u^*\cofree(\D)} \rar["\epsilon"]\& \D \dar[equal]\\
	\left(\prod_{i=1}^n{\udl{K}^{Z_i}}\right)^{\triangleright} \& \& \prod_{i=1}^n\func((\baseCat_{/Z_i})\op,\D)\rar["\prod_i\overline{w}_{i*}"]\uar[hook]\&\func((\baseCat_{/U})\op,\D)\rar["\eval_{U}"]\uar[hook]\& \D 
	\arrow["\can", from=1-1, to=1-2]
	\arrow["{u^*w_* \overline{p}}", from=1-2, to=1-3]
	\arrow["{u^*w_*}", from=1-3, to=1-4]
	\arrow[between={0.3}{0.7}, squiggly, tail reversed, from=1-3, to=2-3]
	\arrow["\can", from=2-1, to=2-2]
	\arrow[from=2-2, to=2-3]
	\arrow[from=3-1, to=2-1, hook]
	\arrow[from=3-1, to=3-3]
\end{tikzcd}
}
\]
    Here the bottom right square involving $\epsilon$ and $\eval_U$ is obtained by \cite[Obs. A.0.2]{hilmanRamzi}. On the other hand, for each $i$, the composite $\func((\baseCat_{/Z_i})\op,\D)\xrightarrow{\overline{w}_{i*}} \func((\baseCat_{/U})\op,\D)\xrightarrow{\eval_U}\D$  is given by the evaluation functor $\func((\baseCat_{/Z_i})\op,\D)\xrightarrow{\eval_{Z_i}}\D$. Therefore, the bottom horizontal composite is given by the product across $i$ of the colimit diagrams $(\udl{K}^{Z_i})^{\triangleright}\rightarrow \D$, and hence is a colimit diagram by distributivity of $\D$.
    Quantifying this over all $U$ demonstrates the condition of \Cref{lem:cofree-colims} for the top horizontal composite, so \Cref{lem:cofree-colims} shows that it is a colimit diagram,
    completing the proof.
\end{proof}

We will use this to show that $\Catcocone$ itself is distributive.
Consider the pullback diagram of lax $\NN$-normed $\baseCat$-functors
\[ \begin{tikzcd} 
    \Catcoconebig \ar[r] \ar[d]  \arrow[rd, "\lrcorner" very near start, phantom] & \Arrow(\wGCat) \ar[d, "t"] \\ 
    \wGCat \ar[r,"{\Cocone{-}}"] & \wGCat 
\end{tikzcd} 
\]
which exists under \Cref{lem:pullback-partially-lax}, since $\Arrow\prn{\wGCat}  \rightarrow \wGCat$ is an $\NN$-normed cocartesian fibration.

As $\wWCat \subset \Arrow(\wGCat)$ was defined to be the full subcategory on the fully faithful functors, which is the right class of a factorisation system, the inclusion admits a left adjoint $L\colon \Arrow(\wGCat) \to \wWCat$ sending an arbitrary functor to the inclusion of the essential image. Moreover, $L$ is an $\NN$-normed Bousfield localisation, as the classes of fully faithful functors and essentially surjective functors are closed under indexed products, so that the full image inclusion of an indexed product is the indexed product of the full image inclusion.
Pulling this back along $\Cocone{-}$ yields an $\NN$-normed Bousfield localisation $\Catcoconebig \to \Catcocone$ over $\wGCat$.
We use this to prove the following.
\begin{corollary} \label{prop:Catcocone-distributive}
    The $\NN$-normed $\baseCat$-categories $\Catcoconebig$ and $\Catcocone$ are distributive.
\end{corollary}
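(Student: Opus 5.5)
The plan is to first establish distributivity of $\Catcoconebig$ and then transfer it to $\Catcocone$ along the $\NN$-normed Bousfield localisation $L\colon \Catcoconebig \to \Catcocone$ constructed just above, using \Cref{prop:transf_examples}(2). For this transfer I need $\Catcocone$ to be cocomplete and $L$ to be cocontinuous. Cocompleteness of $\Catcocone$ follows because it is a reflective full $\baseCat$-subcategory of a cocomplete one. Cocontinuity of $L$ is automatic since $L$ is a parametrised left adjoint. So everything reduces to $\Catcoconebig$.

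For $\Catcoconebig$, the natural input is \Cref{prop:distributivity_of_cartesian_symmon}: it shows that $\wGCat \simeq \cofree(\wCat)$ with its cartesian structure is distributive, since $\wCat$ is cartesian closed. The pointwise structure on $\Arrow(\wGCat) = \udl{\Fun}(\udl{\Delta}^1,\wGCat)$ is then distributive as well. Indeed, colimits in functor categories are computed pointwise, norms in the pointwise structure are computed pointwise, and the assembly map of \Cref{rmk:assembly-map} can therefore be checked after evaluating at $0$ and $1$. This is \Cref{prop:transf_examples}(1) applied to the jointly conservative cocontinuous normed functor $(\ev_0,\ev_1)$, or a direct product-of-two-copies version of it.

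The key step is to analyse the pullback $\Catcoconebig = \wGCat \times_{\wGCat} \Arrow(\wGCat)$ along $\Cocone{-}$ and $t$. Since $t$ is a cocartesian fibration, colimits in the pullback should be computable as follows. An object is a pair $(\CC, \udl{\RR}\to \Cocone{\CC})$. To take the colimit of a diagram $(\CC_k,\udl{\RR}_k)$, I would form $\CC=\colim \CC_k$ in $\wGCat$, push forward each $\udl{\RR}_k \to \Cocone{\CC_k} \to \Cocone{\CC}$, and take the colimit of the $\udl{\RR}_k$ in $\wGCat_{/\Cocone{\CC}}$, i.e.\ in $\wGCat$. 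Equivalently, the projection $(t, \mathrm{source})\colon \Catcoconebig \to \wGCat \times \wGCat$ preserves colimits and is conservative. It is also $\NN$-normed, because the norm sends $(\CC,\udl{\RR})$ to $(w_*\CC, w_*\udl{\RR})$, with structure map $w_*\udl{\RR}\to w_*\Cocone{\CC}\to \Cocone{w_*\CC}$ given by the lax structure. Given these facts, \Cref{prop:transf_examples}(1) with target $\wGCat\times\wGCat$, which is distributive by \Cref{prop:distributivity_of_cartesian_symmon} applied componentwise, yields distributivity of $\Catcoconebig$.

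The main obstacle is the claim that $(t,\mathrm{source})$ preserves colimits, or equivalently a description of colimits in the lax pullback $\Catcoconebig$ that is compatible with the norms. I would derive this from \Cref{lem:pullback-partially-lax} together with the fact that $t\colon \Arrow(\wGCat)\to\wGCat$ is an $\NN$-normed cocartesian fibration whose fibres $\wGCat_{/\DD}$ have colimits created by the forgetful functor. I would then check conservativity directly: a map of pairs inducing equivalences on $\CC$ and on $\udl{\RR}$ is an equivalence of arrows, hence of pairs. For the norm-compatibility of the assembly maps I would rely on the explicit norm formula from \Cref{cons:catcocone}.
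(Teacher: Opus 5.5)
Your proposal is correct and follows the paper's proof: you reduce to $\Catcoconebig$ using the $\NN$-normed Bousfield localisation and \Cref{prop:transf_examples}(2), then apply \Cref{prop:transf_examples}(1) to the conservative, cocontinuous $\NN$-normed functor $(t',s)\colon \Catcoconebig \to \wGCat\times\wGCat$, whose target is distributive by \Cref{prop:distributivity_of_cartesian_symmon}. The paper packages the cocontinuity step you flag as the main obstacle as \Cref{lem:colimits-in-slice}, and your detour through distributivity of $\Arrow(\wGCat)$ is not needed.
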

We will need the following lemma:
\begin{lemma} \label{lem:colimits-in-slice}
    Let $\CC$ and $\DD$ be cocomplete $\TT$-categories and $F \colon \CC \to \DD$ be a functor. 
    The pullback $\baseCat$-category
    \[ \begin{tikzcd}
        \PP \ar[r, "F'"] \ar[d,"t'"] \arrow[rd, "\lrcorner" very near start, phantom] & \Arrow(\DD) \ar[d,"t"] \\
        \CC \ar[r,"F"] & \DD
    \end{tikzcd} \]
    is cocomplete and the $\baseCat$-functor  $t' \times s \circ F'\colon \PP \rightarrow \CC \times \DD$ is cocontinuous.
\end{lemma}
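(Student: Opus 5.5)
We will argue that $\PP$ is the comma $\baseCat$-category $\CC \times_{\DD} \Arrow(\DD)$, whose $V$-objects are triples $(C, D, \alpha\colon D \to F(C))$. The plan is to compute colimits in it componentwise and reduce everything to colimits in the product $\CC \times \DD$. Here $\Arrow(\DD) = \udl{\Fun}(\udl{\Delta}^1,\DD)$, with $s$ and $t$ evaluation at $0$ and $1$. The map $t' \times s\circ F'\colon \PP \to \CC\times\DD$ records $(C,D)$, and we will identify $\PP$ with a suitable lax limit.

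\textbf{Step 1: a cocartesian description.} The functor $(t, s)\colon \Arrow(\DD) \to \DD\times\DD$ is a two-sided fibration, cartesian in the source variable. Pulling back along $F$ shows that $q\colon \PP \to \CC\times\DD$ is, fibrewise over each $V$ and compatibly with restrictions, a bifibration. Its fibre over $(C,D)$ is the anima $\Map_{\DD^V}(D, F C)$.

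Equivalently, $\PP \simeq \CC \times_{\DD} \DD_{/F(-)}$: for fixed $C$, the fibre of $t'$ over $C$ is $(\DD_{/FC})$, and these slices are functorial in $C$ via postcomposition with $F$.

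\textbf{Step 2: construct colimits.} Take a $V$-diagram $\udl{K} \to V^*\PP$, written $k\mapsto (C_k, D_k, \alpha_k)$. We form $C = \colim C_k$ in $\CC$ and $D=\colim D_k$ in $\DD$. The comparison $\alpha\colon D \to F(C)$ is the composite
\[
\colim D_k \xrightarrow{\colim \alpha_k} \colim F C_k \to F(\colim C_k),
\]
where the second map is the assembly map. We then claim that the cocone $(C_k,D_k,\alpha_k)\to (C,D,\alpha)$ is a colimit in $\PP$.

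To check this, compute mapping anima. For any $(C',D',\alpha')$, the mapping anima out of an object of $\PP$ form the pullback
\[
\Map(C,C')\times_{\Map(D,FC')}\Map(D,D').
\]
This description is functorial in the source. It therefore suffices that each of the three terms turns our candidate into a limit of the corresponding terms for the $k$'s.
\begin{itemize}
\item $\Map(C,C') \simeq \lim \Map(C_k,C')$.
\item $\Map(D,D') \simeq \lim \Map(D_k,D')$.
\item $\Map(D, FC') \simeq \lim \Map(D_k,FC')$.
\end{itemize}
Each holds by the colimit property in $\CC$ or $\DD$; the third uses that $D$ is a colimit in $\DD$ and holds regardless of whether $F$ preserves colimits. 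Since limits commute with pullbacks, the claim follows.

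This argument must be done parametrically. Fibrewise colimits in $\CC^V$ and $\DD^V$ are handled as above. Indexed coproducts along maps $f\colon U\to V$ of orbits, i.e. left adjoints $f_!$ to restriction, need the same treatment: set $f_!(C,D,\alpha) = (f_!C, f_!D, f_!D \to f_!F C \to F f_!C)$, again using a Beck–Chevalley type assembly map. Cocompleteness then follows from the decomposition of $\baseCat$-colimits into fibrewise colimits and indexed coproducts, as in \cite{Sha23}.

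\textbf{Step 3: cocontinuity.} By construction, $q$ sends these colimits to $(\colim C_k, \colim D_k)$, which are the colimits in $\CC\times\DD$, computed componentwise. It also commutes with restriction: the formulas are restriction-stable because $F$ is a $\baseCat$-functor and the Beck–Chevalley conditions hold in $\CC$ and $\DD$.

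\textbf{Main obstacle.} The delicate point is making the indexed-coproduct part of Step 2 coherent, not just objectwise. I would try to avoid hand-checking by expressing $\PP$ as the pullback of cocomplete categories with cocontinuous legs:
\[
\PP \simeq (\CC\times\DD)\times_{\DD\times\DD}\Arrow(\DD),
\]
taken along $F\times\id$ and $(t,s)$. The legs $(t,s)$ and the projections are cocontinuous, but $F\times\id$ is not assumed cocontinuous, which is exactly why the naive "pullback of cocomplete categories" result does not apply. The remedy is to use that $(t,s)$ is a cartesian fibration in the source variable. This lets us compute colimits in the pullback by first taking colimits in the base $\CC\times\DD$ and then pushing forward along the fibration, so the lemma reduces to a parametrised version of Lurie's criterion for colimits in total categories of fibrations with cocomplete fibres.
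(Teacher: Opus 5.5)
Your proposal is correct and is essentially the paper's proof. The paper simply defers to the Nikolaus--Scholze lax-equaliser argument, and $\PP$ is the lax equaliser of $(C,D)\mapsto D$ and $(C,D)\mapsto F(C)$ on $\CC\times\DD$. Your Step~2 is exactly that argument run parametrically: colimits are formed in $\CC\times\DD$ and verified through the pullback formula for mapping anima, whose middle term $\Map(D,FC')$ depends only on $D$, so no cocontinuity of $F$ is needed.

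The coherence issue in your last paragraph is not a real obstacle. Pointwise left adjoint objects $f_!$ and pointwise colimit cocones assemble into functors automatically, and conservativity of $\PP\to\CC\times\DD$ transfers the Beck--Chevalley conditions, so the fibration-theoretic detour can be dropped. One small correction: since $\TT$ is an arbitrary small category here, invoke a decomposition of $\TT$-colimits valid without orbitality (e.g.\ Martini--Wolf's, into constant and $\TT$-anima-indexed colimits) rather than an orbital one.
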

\begin{proof}
    The non-parametrised proof in \cite[Prop.~II.1.5(iv)]{nikolausScholze} works here similarly.
\end{proof}

\begin{proof}[Proof of \Cref{prop:Catcocone-distributive}]
    By \Cref{prop:transf_examples}, it is enough to verify just that $\Catcoconebig$ is distributive.
    Indeed, \Cref{lem:colimits-in-slice} yield a conservative cocontinuous $\baseCat$-functor
    \[
        \begin{tikzcd}[ampersand replacement=\&,column sep=large,row sep=tiny]
        	\Catcoconebig \& {\wGCat \times  \wGCat} \\
        	{\prn{\CC, \DD \rightarrow \Cocone{\CC}}} \& {\prn{\CC,\DD}}.
        	\arrow["t^\prime \times s", from=1-1, to=1-2]
        	\arrow["\in"{marking, allow upside down}, draw=none, from=2-1, to=1-1]
        	\arrow[maps to, from=2-1, to=2-2]
        	\arrow["\in"{marking, allow upside down}, draw=none, from=2-2, to=1-2]
        \end{tikzcd}
    \]
    Here, $t^\prime$ is given an $\NN$-normed structure by \Cref{lem:pullback-partially-lax} and $s$ as in the proof of \Cref{con:free-normed-cocart}.
    
    The product of the maps $t'$ and $s$ in normed algebras in very large $\baseCat$-categories yields a normed structure on $t' \times s$.
    Moreover, it is easy to see that distributivity of $\NN$-normed $\baseCat$-categories is closed under finite products, so \Cref{prop:distributivity_of_cartesian_symmon} implies that $\wGCat \times \wGCat$ is distributive;
    hence \Cref{prop:transf_examples}
    shows that $\Catcoconebig$ is distributive.
\end{proof}
\begin{remark} \label{rmk:colimits-Catcocone}
    A colimit in $\Catcocone$ is computed as follows: given a diagram $\udl{I}  \rightarrow \Catcocone $ sending $i$ to $ (\CC_i,\udl{\RR}_i)$,
    $\colim_{\udl{I}} \prn{\CC_i,\udl{\RR}_i}$ has $\baseCat$-category $\colim_{i \in I} \CC_i$ and $\baseCat$-class of cocones the essential image of
    \[ \colim_{i \in I} \udl{\RR}_{i} \longrightarrow \colim_{i \in I} \Cocone{\CC_i} \longrightarrow \Cocone{\colim_{i \in I} \CC_i} \,, \]
    i.e. it consists of cocones in $\colim_{i \in I} \CC_i$ coming from a cocone in $\CC_i$ via the inclusion.
\end{remark}

\subsection{Cubical analysis of distributivity}\label{subsection:cubicalDecompositions}
In general, an arbitrary class of cocones can lead to ill-behaved notions of distributivity. We thus single out the class of colimit cocones associated to a $
\baseCat$-subcategory $\KK\subset \udl{\cat}$ which contains the terminal category $\udl{\ast}$, which will play a basic role throughout the rest  of the article. Morally, it allows us to hold some variables constant while allowing others to vary as we are wont to in the non-parametrised situation.

\begin{definition}\label{defn:projectable_classes} 
    Let $\udl{\RR}$ be a $\baseCat$-class of cocones in $\udl{\sC}$. We say that $\udl{\RR}$ is \mdef{projectable} if it contains the constant cocones $\constant(X)\colon \udl{\ast}\tcocone\rightarrow U^*\udl{\sC}$ for all orbits $U$ and $X\in U^*\udl{\sC}$.
\end{definition}
\begin{example}
    For $\KK \subset \GCat$, the class $\udl{\RR}_{\KK}\prn{\CC}$ of $\KK$-colimit cocones in $\CC$ is projectable as soon as $\udl{\ast} \in \KK^U$ for all orbits $U$. 
\end{example}
\begin{definition}
    A cocone $\udl{K}\tcocone\rightarrow\udl{\sC}$ is said to be an \mdef{empty cocone} if $\udl{K}\simeq \udl{\emptyset}$.
\end{definition}

The next lemma generalises the following familiar situation: if $\varphi\colon \sC^{\times n}\rightarrow \D$ is a multivariable functor valued in a pointed category which preserves colimits in each variable, then $\varphi(c_1,\ldots,c_n)\simeq 0$ if at least one of the $c_i$'s is the initial object of $\sC$.

\begin{lemma}\label{lem:distributing_over_initial_object}
    Let 
    $w\colon W \rightarrow V$ be an $\NN$-admissible map of orbit-finite $\TT$-\animae{}, $\udl{\sC}$ a $W$-category,  $\udl{\RR}$ a projectable $W$-class of cocones in $\CC$, and $\varphi \colon w_*\udl{\sC}\rightarrow \udl{\D}$ a $(w,\udl{\RR})$-distributive functor.
    Suppose moreover that $u\colon U \rightarrow V$  is a map of orbit-finite $\TT$-\animae{} and  
    \[
    x\colon \udl{\ast}\longrightarrow u^*w_*\CC\simeq \overline{w}_*\overline{u}^*\CC \simeq \prod_{i=1}^n\overline{w}_{i*}\overline{u}_i^*\udl{\sC}
    \]
    a $U$-object whose $i$-th coordinate $\udl{\emptyset}^{\triangleright} \simeq \udl{\ast}\xrightarrow{x_i} \overline{w}_{i*} \overline{u}_i^* \udl{\sC}$ is an empty $\udl{\RR}$-cocone for some $i \in \cbr{1,\dots,n}$.
    Then $\varphi(x) \in \DD^U$ is an initial $U$-object.
\end{lemma}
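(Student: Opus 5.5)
The plan is to build a single $\overline{w}_*\udl{\RR}$-cocone in $u^*w_*\CC$ whose cocone point is $x$ and whose indexing category is empty, and then let distributivity do the rest. Since $\udl{\RR}$ is projectable, we may put the constant cocones $\constant(x_j)\colon \udl{\ast}\tcocone \to \overline{w}_{j*}\overline{u}_j^*\CC$ in every coordinate $j\neq i$. In coordinate $i$ we use the given empty $\udl{\RR}$-cocone $\udl{\emptyset}^{\triangleright}\to \overline{w}_{i*}\overline{u}_i^*\CC$ with cone point $x_i$. Before doing so, both kinds of cocone have to be reinterpreted as $Z_j$-cocones in $\overline{u}_j^*\CC$, via the identification $\overline{w}_{j*}\overline{u}_j^*\CC$ and adjunction.

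Concretely, over $Z=\coprod_j Z_j$ we obtain a $Z$-diagram $m\colon \udl{K}\to \overline{u}^*\CC$ in $\overline{u}^*\udl{\RR}$. On $Z_i$ it is $\udl{K}|_{Z_i}=\udl{\emptyset}$, and on $Z_j$ for $j\neq i$ it is $\udl{\ast}$. Restriction-stability of $\udl{\RR}$ ensures the pulled-back classes still contain these cocones.

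Next we apply the norm of cocones from \Cref{const:lax-structure-cocones} and \Cref{cons:catcocone}. This gives the cocone $\overline{w}_*m\colon (\overline{w}_*\udl{K})^{\triangleright}\to \overline{w}_*(\udl{K}^{\triangleright})\to \overline{w}_*\overline{u}^*\CC$. Here $\overline{w}_*\udl{K}\simeq \prod_j \overline{w}_{j*}(\udl{K}|_{Z_j})$, and the $i$-th factor is $\overline{w}_{i*}\udl{\emptyset}$. The key point to check is that $\overline{w}_{i*}\udl{\emptyset}\simeq\udl{\emptyset}$ as a $U$-category. I would verify this by evaluating at orbits $U'\to U$: by \Cref{lem:fibrewise_product}, $(\overline{w}_{i*}\udl{\emptyset})^{U'}$ is a finite product of values of $\udl{\emptyset}$ indexed by the pullback of $Z_i\to U$ along $U'\to U$. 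That pullback is nonempty, since an admissible map from an orbit pulls back to a nonempty orbit-finite anima; I expect this to need the orbital hypothesis, namely that $Z_i\neq\emptyset$ and that fibres of maps to orbits are nonempty. Hence the product is empty. It follows that $\overline{w}_*\udl{K}\simeq\udl{\emptyset}$, and the normed cocone is the empty cocone with cone point the image of the cone point of $\udl{K}^{\triangleright}$, which is $x$ by construction.

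Finally, by \Cref{rmk:assembly-map} (or directly from the definition, after pulling back along $u$, where $u^*\varphi$ is still distributive), $u^*\varphi$ sends this $\overline{w}_*\overline{u}^*\udl{\RR}$-cocone to a colimit cocone in $u^*\DD$. A colimit cocone over the empty $U$-category exhibits its cone point as an initial $U$-object, so $\varphi(x)$ is initial in $\DD^U$.

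The main obstacle is the bookkeeping in the first step and the nonemptiness argument. For the first step, one must check that the norm of the assembled cocone really has cone point $x$. This amounts to unwinding the mate $\can\colon (\overline{w}_*\udl{K})^{\triangleright}\to \overline{w}_*(\udl{K}^{\triangleright})$ on the cone point, which is sent to the tuple of cone points.
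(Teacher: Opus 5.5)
Your construction is exactly the paper's proof. You use projectability to assemble the $Z$-cocone $p=(x_i,\constant(x_j)_{j\neq i})$ indexed by $(\udl{\emptyset},\udl{\ast},\ldots,\udl{\ast})$, norm it along $\overline{w}$, and read off from distributivity that $\varphi(x)$ is the colimit of an empty diagram. The paper writes $\big(\overline{w}_*(\udl{\emptyset},\udl{\ast},\ldots,\udl{\ast})\big)^{\triangleright}\simeq\udl{\emptyset}^{\triangleright}$ without comment. You are right to single out $\overline{w}_{i*}\udl{\emptyset}\simeq\udl{\emptyset}$ as the step that needs an argument.

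Your argument for that step does not hold, however. Neither orbitality nor atomicity forces $U'\times_U Z_i\neq\emptyset$ for orbits $U'\to U$. When this pullback is empty, $(\overline{w}_{i*}\udl{\emptyset})^{U'}\simeq\udl{\emptyset}^{U'\times_U Z_i}$ is an empty product, i.e.\ $\ast$. Then $\overline{w}_*\udl{K}$ is not empty, and distributivity gives no initiality at that level.

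This already happens when $U=U_1\sqcup U_2$ and $Z_i$ lies over $U_1$, and there the statement itself fails. Take $w=\id_V$, $\CC=\DD=V^*\GSpaces$, $\udl{\RR}$ all colimit cocones, $\varphi=\id$, $u\colon V\sqcup V\to V$ the fold map, and $x=(\emptyset,y)\in\CC^V\times\CC^V$ with $y$ not initial. Then $\varphi(x)=(\emptyset,y)$ is not initial.

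It can also happen with $U$ an orbit. In the atomic orbital poset $\TT=\{a,b<c\}$ (with $\NN=\TT$) one has $a\times_c b=\emptyset$. Take $w\colon a\to c$ and $u=\id_c$, so $Z=Z_1=a$. The distributive norm $w_*$ of $\cofree(\Spaces)$ from \Cref{prop:distributivity_of_cartesian_symmon} sends $\emptyset$ to the presheaf on $\TT_{/c}$ whose value at $b$ is a limit over the empty category, hence $\ast$. This is not initial.

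So what your proof (and the paper's) really uses is an extra hypothesis: $U'\times_U Z_i\neq\emptyset$ for every orbit $U'\to U$, equivalently $\overline{w}_{i*}\udl{\emptyset}\simeq\udl{\emptyset}$. This holds, for instance, when $U$ is an orbit of $(\Orb_G,\finind)$, since maps of orbits $G/K\to G/H$ are surjective. Under that hypothesis your argument is complete as written.
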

\begin{proof}
    Assume without loss of generality that $i = 1$.
   Note that $x$ is the $\overline{w}$-image of a $Z$-cocone $p$, written in coordinates as
     \[
     p=\prn{x_1,\constant\prn{x_2},\ldots,\constant\prn{x_n}}
     \colon (\udl{\emptyset}, \udl{\ast},\ldots,\udl{\ast})\tcocone \xrightarrow{\;\;\;\;\;\;\;\;\;\;} (\overline{u}_1^*\udl{\sC}, \overline{u}_2^*\udl{\sC}\ldots, \overline{u}_n^*\udl{\sC}) = \overline{u}^*\udl{\sC},\]
    i.e. as constant in the non-empty coordinates. Projectability stipulates that this is in $\udl{\RR}$.
    
    Now, $(w,\udl{\RR})$-distributivity of $\varphi$  means that $\varphi(x)$ is classified by the colimit diagram
    \[\udl{\ast} \simeq \udl{\emptyset}\tcocone \simeq \big(\overline{w}_*(\udl{\emptyset}, \udl{\ast},\ldots,\udl{\ast})\big)\tcocone \xlongrightarrow{\;\;\;\; \overline{w}_* p \;\;\;\;} \overline{w}_*\overline{u}^*\udl{\sC} \simeq u^* w_* \CC \xlongrightarrow{\;\;\;\; u^* \varphi \;\;\;\;}u^*\udl{\D}\]
    which is an intial $U$-object of $\udl{\D}$, as desired.
\end{proof}

We now work towards supplying the fundamental engine to deal with the combinatorics of distributivity and cubes in the form of \Cref{prop:norming_pushouts}. It is a straightforward extension of the key points in \cite[\textsection 3.2]{kaifNoncommMotives} from the setting where $\baseCat$ is an atomic orbital category to our present setting where $\baseCat$ is equipped with an atomic orbital subcategory $\NN$ consisting of admissible maps. Fortunately, the contents of \textit{loc. cit.} goes through here essentially word-for-word;
indeed, writing $\udl{I}$ for the constant parametrised category associated to a non-parametrised category $I$, the parametrised category $w_*\udl{I}$ is fibrewise various finite products of $I$ by \Cref{lem:fibrewise_product}, which puts us in the setting of the analyses in \cite[\textsection 3.2]{kaifNoncommMotives}.

These considerations will rest on the following  decomposition from \cite{kaifNoncommMotives} for the diagram $w_*\udl{\Lambda}^2_0$ which controls the interaction between $w$-distributive functors and pushouts.

\begin{construction}[``{\cite[Cons. 3.2.3]{kaifNoncommMotives}}'']\label{cons:subcategoriesOfHornPowers}
    We write the inclusion $\Lambda^2_0\subseteq (\Delta^1)^{\times2}$ as 
    \begin{center}
      \begin{tikzcd}[row sep=scriptsize]
            00 \rar\dar & 10 \ar[drr, phantom, "\hookrightarrow"]&& 00 \rar \dar& 10\dar \\
            01 & && 01 \rar & 11
        \end{tikzcd}
    \end{center}
    In this way, for any $n\geq 1$, we may denote the objects in the full subcategory $\prod^n_{i=1}\Lambda^2_0\subseteq \prod^n_{i=1}(\Delta^1)^{\times2}$ as those tuples $(a_i)_{1\leq i\leq n}$ where $a_i\in\{00,10,01\}$. We want to decompose $\prod^n_{i=1}\Lambda^2_0$ in terms of subcategories that we will now specify:
    \begin{itemize}
        \item Let $M_n\subseteq \prod^n_{i=1}\Lambda^2_0$ be the full subcategory $\prod^n_{i=1}\left(\Delta^1\times\{0\}\right)$. Concretely in terms of the tuples description above, this is the full subcategory consisting of those tuples such that $a_i\in\{00,10\}$.
        \item Let $B_n\subseteq \prod^n_{i=1}\Lambda^2_0$ be the full subcategory $\left(\prod^n_{i=1}\Lambda^2_0\right)\backslash \{(10,\ldots,10)\}$, i.e. of those tuples where some of the $a_i$'s must either be $00$ or $01$.
        \item Let $F_n\subseteq B_n$ be the full subcategory of those tuples where $a_i\in\{01,10\}$, that is, where none of the $a_i$'s are $00$ and at least one of them is $01$. 
        \item Let $J_n\coloneqq M_n\cap B_n$, that is, where $a_i \in \{00,10\}$ but not all are $10$.
    \end{itemize}
    Similarly, since for any map $w\colon W\rightarrow V$ in $\NN$,  $w_*\underline{\Lambda}^2_0$ is fibrewise just products of multiple copies of $\Lambda^2_0$, we may similarly define in a fibrewise fashion the $V$--full subcategories $\underline{J}_w, \underline{F}_w, \underline{B}_w,\underline{M}_w\subseteq w_*\underline{\Lambda}^2_0$. Here, $\underline{J}_w,\underline{B}_w, \underline{M}_w$ are clearly $V$--subcategories, and $\underline{F}_w$ is too  by the exact same argument as in \cite[Cons. 3.2.3]{kaifNoncommMotives}. 
    Moreover, the  $\Cat[\baseCat]$-square \begin{equation}\label{obs:pushoutDecompositionOfHornPowers}
      \begin{tikzcd}[row sep=scriptsize]
            \underline{J}_w \rar[hook]\dar[hook]\ar[dr,phantom,very near end, "\ulcorner"] & \underline{M}_w\dar[hook]\\
            \underline{B}_w\rar[hook] & w_*\underline{\Lambda}^2_0
        \end{tikzcd}
    \end{equation}
   is a pushout; 
   indeed pushouts may be checked fibrewise, which is \cite[Obs. 3.2.4]{kaifNoncommMotives}.
\end{construction}

It turns out that colimits over $\udl{B}_w$ may be further simplified as follows:

\begin{lemma}\label{lem:F_fB_fAdjunction}
    The inclusion of $\baseCat$--categories $\underline{F}_w\subseteq \underline{B}_w$ is cofinal, i.e. restriction along it induces a natural equivalence $\colim_{\underline{B}_w}\prn{-}\xrightarrow{\;\;\; \sim \;\;\;} \colim_{\underline{F}_w}\prn{-}$.
\end{lemma}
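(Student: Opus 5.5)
The plan is to reduce to a fibrewise, non-parametrised statement and then prove that statement with a right adjoint to the inclusion. The non-parametrised statement is that $F_n \subseteq B_n$ is cofinal.

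\textbf{Reduction to fibres.} A $\baseCat$-functor is cofinal as soon as it is fibrewise cofinal in a way that is stable under restriction. The cleanest way to get this is to show that the inclusion $\underline{F}_w\subseteq\underline{B}_w$ admits a $V$-right adjoint, since a $\baseCat$-functor with a parametrised right adjoint is cofinal. By \Cref{lem:fibrewise_product}, over any $u\colon U\to V$ the inclusion is, up to the identification $u^*w_*\udl{\Lambda}^2_0\simeq\prod_{i}\overline{w}_{i*}\udl{\Lambda}^2_0$, the inclusion $F_n\subseteq B_n$ of \Cref{cons:subcategoriesOfHornPowers}. So it suffices to do two things:
\begin{enumerate}
\item produce a right adjoint $r\colon B_n\to F_n$;
\item check that $r$ is natural in the restriction functoriality, i.e.\ compatible with the maps permuting and diagonally duplicating coordinates that arise from restrictions and the $\Sigma$-type symmetries.
\end{enumerate}
A right adjoint whose counit is pointwise an equivalence on the subcategory assembles into a $V$-adjunction, provided the adjoints commute with restriction; this is the standard criterion for parametrised adjunctions.

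\textbf{The fibrewise right adjoint.} Define $r(a_1,\dots,a_n)$ by replacing every coordinate $a_i=00$ with $01$ and leaving the coordinates $10$ and $01$ unchanged. We check three things.
\begin{enumerate}
\item \emph{$r$ lands in $F_n$.} For a tuple in $B_n$, not all coordinates are $10$, so some coordinate is $00$ or $01$, and after applying $r$ at least one coordinate is $01$. No $00$ remains. Hence $r(a)\in F_n$.
\item \emph{$r$ is a functor on the poset $B_n$.} The order on $\Lambda^2_0$ is $00\le 10$ and $00\le 01$, and the order on $B_n$ is coordinatewise. Pick $a\le b$ coordinatewise. If $a_i=00$, then $r(a)_i=01$, but $b_i$ might be $10$, and $01\not\le 10$. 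So naive monotonicity fails, and this definition must be revisited; I expect this to be the main obstacle.
\item \emph{Candidate counit.} There is no map $00\to 01$ going the wrong way, but the map $a\to r(a)$ does exist. This points to a left adjoint rather than a right adjoint, which also fits poorly with cofinality conventions. I therefore re-run the argument in terms of Quillen's Theorem A, as follows.
\end{enumerate}

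\textbf{Theorem A fallback.} For the colimit statement, restriction along $F_n\subseteq B_n$ gives an equivalence on colimits if, for each $b\in B_n$, the comma category $F_n\times_{B_n}(B_n)_{b/}=\{f\in F_n : b\le f\}$ is weakly contractible. Since $B_n$ is a poset, this is the subposet of tuples $f$ such that
\begin{itemize}
\item $f_i=b_i$ whenever $b_i\in\{10,01\}$;
\item $f_i\in\{10,01\}$ whenever $b_i=00$;
\item at least one $f_i$ is $01$.
\end{itemize}
This subposet is discrete, so I must argue differently. If some $b_i=01$, the subposet has a cube of free choices with no further constraint, and it is discrete with $2^k$ points, which is not contractible. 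This suggests that cofinality should be checked with the opposite comma convention, or that the intended claim is about left Kan extensions. At this point I would consult the precise argument of \cite[Lem.~3.2.5]{kaifNoncommMotives} and transcribe it.

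I expect the combinatorial verification that the relevant comma posets are contractible, with the correct variance, to be the main obstacle. The parametrised upgrade is formal from \Cref{lem:fibrewise_product} and the fibrewise nature of the constructions in \Cref{cons:subcategoriesOfHornPowers}.
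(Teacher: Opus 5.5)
Your proposal has a genuine gap: it never proves the fibrewise statement. The reduction to fibres is the same formal step the paper takes. Parametrised cofinality is detected fibrewise \cite[Thm.~6.7]{Sha23}, so no naturality of an adjoint needs checking. But both of your fibrewise arguments are abandoned, and the proof ends by deferring to \cite[Lem.~3.2.5]{kaifNoncommMotives}.

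Two points in the abandoned attempts need correcting.
\begin{itemize}
\item \emph{The adjoint criterion has the wrong variance.} A functor admitting a right adjoint is initial, i.e.\ cofinal for limits. The inclusion $F_n\subseteq B_n$ is colimit-cofinal when it admits a \emph{left} adjoint, which is what your $a\le r(a)$ was aiming at. However, no such adjoint exists for $n\geq 2$: the elements of $F_n$ lying above $(00,\dots,00)$ have no least element.
\item \emph{Your Theorem~A computation is correct, and it shows the literal statement fails.} The comma posets $F_n\times_{B_n}(B_n)_{b/}$ are the right ones for colimits, so changing conventions will not help. With $F_n$ read literally as in \Cref{cons:subcategoriesOfHornPowers} (all entries in $\{10,01\}$), $F_n$ is discrete and the inclusion is \emph{not} cofinal. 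For $n=2$, which occurs for the fold map $V\sqcup V\to V$, the constant diagram at a point has colimit $|B_2|\simeq *$ over $B_2$, since $B_2$ has initial object $(00,00)$. Over $F_2$ its colimit is three points.
\end{itemize}

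What is missing is to take $F_n$ to be the full subposet of tuples with \emph{at least one} entry $01$, with entries $00$ allowed. This is all that \Cref{prop:norming_pushouts} uses: every point of $\underline{F}_w$ has some coordinate $01$, hence is sent to an initial object by \Cref{lem:distributing_over_initial_object}. With this $F_n$, Theorem~A goes through.
\begin{itemize}
\item If $b\in B_n$ has an entry $01$, then $b$ is initial in $\{f\in F_n : b\le f\}$.
\item Otherwise $b\in J_n$. Set $K=\{i : b_i=00\}\neq\emptyset$; the comma poset is then isomorphic to $P_K=\{g\in(\Lambda^2_0)^K : g_i=01 \text{ for some } i\}$.
\item Let $\rho\colon P_K\to P_K$ be the monotone map replacing each entry $10$ by $00$. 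It satisfies $\rho\le\mathrm{id}$. It factors through $Q_K=\{00,01\}^K\setminus\{(00,\dots,00)\}$ and restricts to the identity on $Q_K$.
\item Hence $|P_K|\simeq|Q_K|$. Since $Q_K$ has terminal object $(01,\dots,01)$, $P_K$ is weakly contractible.
\end{itemize}
Fibrewise cofinality then gives the parametrised statement via \cite[Thm.~6.7]{Sha23}, exactly as in the paper.
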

\begin{proof}
    Parametrised cofinality is checked fibrewise \cite[Thm. 6.7]{Sha23}, and the fibrewise statement is given by \cite[Lem. 3.2.5]{kaifNoncommMotives}.
\end{proof}

We now come to a decomposition of parametrised cubical colimits.
The argument works verbatim as in \cite[Prop. 3.2.8]{kaifNoncommMotives}, but since we will need the result in the extra generality of the present setting, we provide the proof here for the reader's convenience.
\begin{proposition}\label{prop:norming_pushouts}
    Let $w\colon W\rightarrow V$ be an $\NN$-admissible map of orbit-finite $\TT$-\animae{}, $\CC$ a $W$-category, $\udl{\RR}$ a $W$-class of cocones in $\CC$, and $\varphi\colon w_*\udl{\sC}\rightarrow\udl{\D}$ a $w$-multifunctor distributing $\udl{\R}$-cocones. 
    Given an $\udl{\RR}$-diagram in $\partial \colon (\udl{\Lambda}^2_0)\tcocone\rightarrow \udl{\sC}$ as in the left square of 
    \[
      \begin{tikzcd}[row sep=scriptsize]
        X \rar \dar& Y \dar\\
        Z\rar & D
    \end{tikzcd}
    \hspace{15mm}
    \begin{tikzcd}
        \colim_{\underline{J}_w}\varphi\partial\rar\ar[dr,phantom, "\ulcorner", very near end] \dar&  \varphi Y\dar\\
        \colim_{\underline{F}_w}\varphi\partial\simeq\colim_{\underline{B}_w}\varphi\partial\rar & \varphi D,
    \end{tikzcd}
    \]
    the righthand square is a pushout in $\DD^V$.
    
    If moreover $\udl{\D}$ is pointed, $\udl{\RR}$ is projectable, and $Z\in\udl{\sC}^V$ is the cocone point of an empty $\RR$-cocone, then $\colim_{\udl{B}_w}F\partial \simeq  0_{\DD} \in \DD^V$, so we have a $\DD^V$-cofibre sequence 
    \[
        \colim_{\underline{J}_w}\varphi\partial\rightarrow \varphi Y\rightarrow \varphi D.
    \]
\end{proposition}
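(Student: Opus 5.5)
The plan is to reduce the statement to the decomposition \eqref{obs:pushoutDecompositionOfHornPowers} together with \Cref{lem:F_fB_fAdjunction}, using distributivity exactly once. The cocone $\partial$ lies in $\udl{\RR}$, so by definition of $\varphi$ distributing $\udl{\RR}$-cocones, the normed cocone
\[
(w_*\udl{\Lambda}^2_0)\tcocone \xrightarrow{\;\canonical\;} w_*\prn{(\udl{\Lambda}^2_0)\tcocone} \xrightarrow{\;w_*\partial\;} w_*\CC \xrightarrow{\;\varphi\;} \DD
\]
is a $V$-colimit cocone. Its cocone point is $\varphi D$, meaning $\varphi$ applied to the constant tuple $(D,\dots,D)$ obtained from $w_*$ of the cocone point. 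Hence $\varphi D \simeq \colim_{w_*\udl{\Lambda}^2_0}\varphi\partial$. This identification should be made compatible with the maps from $\varphi Y$ and from the colimits over subcategories. I interpret $\varphi Y$ as $\varphi$ evaluated at the terminal object $(10,\dots,10)$ of $\udl{M}_w$, which is $w_*$ of $Y$.

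Next I will use that \eqref{obs:pushoutDecompositionOfHornPowers} is a pushout of $V$-categories. Parametrised left Kan extension along the inclusions, or equivalently the fact that colimits over a pushout of indexing $\baseCat$-categories are pushouts of the colimits, gives a pushout square in $\DD^V$:
\[
\begin{tikzcd}
\colim_{\udl{J}_w}\varphi\partial \rar\dar & \colim_{\udl{M}_w}\varphi\partial \dar\\
\colim_{\udl{B}_w}\varphi\partial \rar & \colim_{w_*\udl{\Lambda}^2_0}\varphi\partial.
\end{tikzcd}
\]
I expect this to be the main technical point, since one has to justify this in the parametrised setting. I plan to reduce it to the fibrewise statement via the fibrewise description of $w_*\udl{\Lambda}^2_0$ (\Cref{lem:fibrewise_product}), as in \cite[Prop.~3.2.8]{kaifNoncommMotives}. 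Alternatively, I can use that $\colim_{\udl{I}\cup_{\udl{J}}\udl{M}}$ is computed by the colimit of the left Kan extensions.

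With that square in hand, $\udl{M}_w = w_*(\udl{\Delta}^1\times\{0\})$ has terminal object $(10,\dots,10)$, so $\colim_{\udl{M}_w}\varphi\partial\simeq\varphi Y$. \Cref{lem:F_fB_fAdjunction} replaces $\colim_{\udl{B}_w}$ by $\colim_{\udl{F}_w}$. Combined with the first step, this gives the right-hand square as a pushout.

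For the pointed case, every object of $\udl{F}_w$ fibrewise has at least one coordinate equal to $01$. The value of $\varphi\partial$ there is $\varphi$ of a tuple with some coordinate equal to $Z$, the cocone point of an empty $\udl{\RR}$-cocone. \Cref{lem:distributing_over_initial_object} applies fibrewise after restricting along $u\colon U\to V$, and shows these values are initial, hence zero. This holds naturally, so the diagram $\varphi\partial|_{\udl{F}_w}$ is pointwise initial, and its colimit is $0_{\DD}$. One should check that a diagram of initial objects has initial colimit, which is immediate. The pushout then degenerates to the stated cofibre sequence.
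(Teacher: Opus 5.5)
Your proposal is correct and follows essentially the same route as the paper: decompose the colimit along the pushout \eqref{obs:pushoutDecompositionOfHornPowers}, identify $\colim_{\udl{M}_w}\varphi\partial\simeq\varphi Y$ via the terminal object, use \Cref{lem:F_fB_fAdjunction} and distributivity of $\varphi$ for the bottom row, and apply \Cref{lem:distributing_over_initial_object} in the pointed case, with projectability supplying the constant cocones in the $Y$-coordinates. The step you flag as the main technical point is handled in the paper by citing the general fact that parametrised colimits over a pushout of indexing $\baseCat$-categories are pushouts of the partial colimits (\cite[Cor.~2.2.10]{kaifNoncommMotives}); since this is formal, your left-Kan-extension argument suffices and no fibrewise reduction is needed.
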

\begin{proof}
    By applying the colimit decomposition from \cite[Cor. 2.2.10]{kaifNoncommMotives}  on the pushout decomposition of $w_*\udl{\Lambda}^2_0$ from \eqref{obs:pushoutDecompositionOfHornPowers}, we obtain the pushout diagram in $\underline{\D}^V$
    \begin{center}
      \begin{tikzcd}[row sep=scriptsize]
            \colim_{\underline{J}_w}\varphi\partial \rar\dar\ar[dr, phantom, very near end, "\ulcorner"] & \colim_{\underline{M}_w}\varphi \partial\simeq \varphi Y\dar\\
            \colim_{\underline{B}_w}\varphi\partial \simeq \colim_{\underline{F}_w}\varphi\partial \rar & \colim_{w_*\udl{\Lambda}^2_0}\varphi \partial\simeq \varphi D.
        \end{tikzcd}
    \end{center}
    The bottom left equivalence is  \Cref{lem:F_fB_fAdjunction} and the bottom right equivalence is the statement that $\varphi$ distributes $\partial$.
    The top right equivalence follows since $\underline{M}_w$ has a final $V$-object whose $\varphi \delta$-image is $\varphi Y$.

    Assume the later assumptions.
    The restricted $V$-functor $\underline{F}_w\subseteq w_*\underline{\Lambda}^2_0\xrightarrow{\varphi \partial}\underline{\D}$ has constant value the zero object by \Cref{lem:distributing_over_initial_object} since, for every $\NN$-admissible $u\colon U \rightarrow V$, every point $\udl{\ast}\rightarrow u^*\udl{F}_w\subseteq u^*w_*\udl{\Lambda}^2_0$ picks out $Z$ in some coordinate of $u^*w_*\udl{\sC}$ by definition of $\udl{F}_w$;
    the result follows by noting that colimits of initial diagrams are initial.
\end{proof}



We will  use this to study saturations and a version of $F$-equivalences for \textit{multifunctors} (c.f. \cite[Def.~3.20]{nardinThesis}, of which this is a modification): 

\begin{definition}\label{defn:F-equivalences}
    Let $w\colon W \rightarrow V$ be an $\NN$-admissible map of orbit-finite $\baseCat$-\animae{} and  $F\colon w_*\udl{\sC}\rightarrow\DD$ a $w$-multifunctor. 
    Given a map of orbit-finite $\baseCat$-\animae{} $t\colon T \rightarrow W$, we say that a $\CC^T$-morphism $f$ lies in 
    the \mdef{$W$-collection $\udl{S}$ of $F$-equivalences} if, for every diagram of orbit-finite $\baseCat$-\animae
    \begin{equation}\label{eq:F-equiv-eqn}
    \begin{tikzcd}[ampersand replacement=\&, column sep=large, row sep=scriptsize]
	\emptyset \& A \& T \\
	C \& Z \& W \\
	\& U \& V
	\arrow["{!}", from=1-1, to=1-2]
	\arrow["{!}", from=1-1, to=2-1]
	\arrow["a", from=1-2, to=1-3]
	\arrow["s", from=1-2, to=2-2]
	\arrow["t", from=1-3, to=2-3]
	\arrow["c", from=2-1, to=2-2]
	\arrow["\lrcorner"{anchor=center, pos=0.125, rotate=180}, draw=none, from=2-2, to=1-1]
	\arrow["{\overline{u}}", from=2-2, to=2-3]
	\arrow["{\overline{w}}", from=2-2, to=3-2]
	\arrow["\lrcorner"{anchor=center, pos=0.125}, draw=none, from=2-2, to=3-3]
	\arrow["w", from=2-3, to=3-3]
	\arrow["u", from=3-2, to=3-3]
    \end{tikzcd}
    \end{equation}
    with upper left square a pushout and lower right square a pullback and every $C$-object $x \in \CC^C$, the map $(a^*f,\id_x)$ in $\udl{\sC}^A\times \udl{\sC}^C \simeq (w_* \CC)^U$ is sent to an equivalence under $F^U$.
\end{definition}
We leave it to the reader to verify that this is indeed restriction-stable.
Informally, a $T$-morphism $f$ is an $F$-equivalence precisely if, whenever a restriction of $f$ can be padded with identities to a morphism $f'$ in the domain of a restriction of $F$, $F(f')$ is an equivalence.
The following is crucial.

\begin{proposition} \label{lem:saturated-class}
    Let $w\colon W \rightarrow V$ be an $\NN$-admissible map of orbit-finite $\TT$-\animae{} and $F \colon w_\ast \CC \to \DD$ a distributive $w$-multifunctor. The $W$-collection $\udl{S}$ of $F$-equivalences is strongly saturated.
\end{proposition}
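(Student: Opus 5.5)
We verify the three axioms of \Cref{defn:GSaturationAxioms} for $\udl{S}$ separately, after a reduction. By definition $f\in\udl{S}^T$ holds iff for every diagram \eqref{eq:F-equiv-eqn} and every $x\in\CC^C$, the functor
\[
F_x\coloneqq F^U(-,x)\colon \CC^A\to\DD^U
\]
inverts $a^*f$. By \Cref{rmk:assembly-map}(b), the restricted multifunctor $u^*F\colon \overline{w}_*\overline{u}^*\CC\to u^*\DD$ is again distributive. Fixing the $C$-coordinate at $x$, i.e.\ using constant cocones there, shows that the one-variable functor $F_x$ (and its parametrised version over $A$) preserves colimits indexed by $w$-norms of diagrams concentrated in the $A$-coordinates. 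Since $w$ may be replaced by its pullback, it suffices to prove: \emph{the class of morphisms inverted by such a partially-frozen distributive multifunctor is strongly saturated}, and this property is stable under the restrictions $a^*$.

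\textbf{2-out-of-3.} This is immediate. Restriction $a^*$ is a functor, $(a^*(g\circ f),\id_x)=(a^*g,\id_x)\circ(a^*f,\id_x)$, and equivalences in $\DD^U$ satisfy 2-out-of-3.

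\textbf{$\TT$-colimit closure.} Let $\udl{K}\to\udl{\Fun}(\udl\Delta^1,T^*\CC)$ be a diagram of $F$-equivalences with colimit $f$, computed pointwise. Restriction $a^*$ preserves $\TT$-colimits, so $a^*f$ is the colimit of the restricted diagram. Apply distributivity to the cocone $(a^*\udl{K}\text{-diagram},\ \constant(x))$; it lies in the class of all colimit cocones, hence is distributed. Then $F(a^*f,\id_x)$ is the colimit over $\overline{w}_*(\ldots,\udl{\ast},\ldots)$ of the $F$-images. This indexing category is fibrewise a product of fibres of $\udl{K}$ (by \Cref{lem:fibrewise_product}), so it is a colimit of equivalences, hence an equivalence. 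A subtlety is that $\overline{w}_*$ of a diagram in the $A$-coordinates is a genuine norm, so there are several $A$-components, possibly over different orbits. The correct formulation therefore uses the ``padding by identities'' in all but the varied coordinates. I would handle several $A$-components by inducting over them: vary one component at a time, using 2-out-of-3 and composing the maps $(a^*f|_{\text{one comp}},\id,\ldots)$.

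\textbf{Pushout closure.} This is the main obstacle, because a norm of a pushout is not a pushout. Let $a'\colon C'\to C'\cup_{A'}B'$ be the pushout of $f\colon A'\to B'$ in $\udl{S}$, and apply \Cref{prop:norming_pushouts} to the pushout square $\partial$, norming along $\overline{w}$ with the extra coordinates frozen at $x$. This gives a pushout
\[
\colim_{\udl{J}}\varphi\partial\to\varphi(C'),\qquad \colim_{\udl{B}}\varphi\partial\to\varphi(D').
\]
The plan is to show $\colim_{\udl J}\varphi\partial\to\colim_{\udl B}\varphi\partial$ is an equivalence, so that its pushout $\varphi(a')$ is one too. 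Concretely, $\udl B$ differs from $\udl J$ by objects involving the coordinate $01$ (that is, $B'$). Each morphism changing a coordinate from $00$ to $01$ is of the form $(f,\id,\ldots,\id)$ on restrictions, hence inverted by $\varphi$ since $f\in\udl{S}$. I would compare $\udl J$ and $\udl B$ by the natural transformation between their left Kan extensions to $\udl B$: every object of $\udl B$ receives a map from the ``$00$-replaced'' object of $\udl J$, and this map is a $\varphi$-equivalence. Then compare via the cofinality statement \Cref{lem:F_fB_fAdjunction} as in \cite[\S3.2]{kaifNoncommMotives}. This inductive cube argument, checked fibrewise via \Cref{lem:fibrewise_product} and \cite[Thm.~6.7]{Sha23}, is where I expect the real work. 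It is also where restriction-stability of $\udl{S}$, including the diagonal-type restrictions $a$ in \eqref{eq:F-equiv-eqn}, is genuinely needed: coordinates of the cube may coincide after restriction, so several coordinates must be varied simultaneously.
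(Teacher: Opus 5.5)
Your proposal is correct and follows the paper's proof. There, 2-out-of-3 is immediate; colimit closure comes from distributing the cocone $(a^*\partial,\id_x)$ over $\overline{w}_*(a^*\udl{K},\udl{\ast})$ and observing that the result is a colimit of equivalences; and pushout closure comes from \Cref{prop:norming_pushouts}, by showing $\colim_{\udl{J}}\delta\to\colim_{\udl{B}}\delta$ is an equivalence because the counit $i_!i^*\delta\to\delta$ for $i\colon\udl{J}\hookrightarrow\udl{B}$ is one. The point to make explicit in that last step is that for $b=(\{00\}_K,\{01\}_L,\{10\}_M)\in\udl{B}\setminus\udl{J}$ your ``$00$-replaced'' object $(\{00\}_K,\{00\}_L,\{10\}_M)$ is terminal in $\udl{J}_{/b}$, so the counit at $b$ is the $\delta$-image of $L$ copies of $f$ padded by fixed objects --- an instance of the $F$-equivalence hypothesis for the restriction onto the $L$-coordinates --- and no further appeal to \Cref{lem:F_fB_fAdjunction} or to your opening single-frozen-functor reduction is needed.
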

\begin{proof}
    The 2-out-of-3 axiom (3) from \Cref{defn:GSaturationAxioms} is easy to see.
    To check the colimit-closure axiom (2), fix a diagram as in \eqref{eq:F-equiv-eqn} and let $\partial\colon \udl{K}\rightarrow T^*\udl{{S}} \subset \udl{\func}(\udl{\Delta}^1,T^*\udl{\sC})$. We need to show that the colimit lies again in $T^*\udl{S}$, i.e. the morphism $(a^*\colim_{a^*\udl{K}}\partial,\id_x)$ in $\CC^A \times \CC^C\simeq u^*w_*\udl{\sC}$ is sent to an equivalence by $u^*F$.

    To this end, consider the $Z$-category $(a^*\udl{K},\udl{\ast})$ and the $Z$-diagram 
    \[
    (a^*\partial,\id_x)\colon (a^*\udl{K},\udl{\ast})\longrightarrow \udl{\func}(\udl{\Delta}^1,Z^*\udl{\sC})=\big(\udl{\func}(\udl{\Delta}^1,A^*\udl{\sC}),\udl{\func}(\udl{\Delta}^1,C^*\udl{\sC})\big).
    \]
    We acquire a composite diagram 
    \[\widetilde{F}\colon \overline{w}_*(a^*\udl{K},\udl{\ast})\xlongrightarrow{w_*(\partial,\id_x)} \overline{w}_*\udl{\func}(\udl{\Delta}^1,Z^*\udl{\sC})\simeq \udl{\func}(\udl{\Delta}^1,\overline{w}_*Z^*\udl{\sC})\xlongrightarrow{u^*F}\udl{\func}(\udl{\Delta}^1,U^*\udl{\D}).\] Since $u^*F\colon \udl{\func}(\udl{\Delta}^1,\overline{w}_*Z^*\udl{\sC})\rightarrow\udl{\func}(\udl{\Delta}^1,U^*\udl{\D})$ is $w$-distributive, we get an equivalence
    \[F\left(\colim_{(a^*\udl{K},\udl{\ast})}(a^*\partial,\id_x)\right)\simeq \colim_{\overline{w}_*(a^*\udl{K},\udl{\ast})}\widetilde{F}\in\udl{\func}(\udl{\Delta}^1,Z^*\udl{\D}).\] But then by assumption on $\partial$, $\widetilde{F}$ takes values in arrows which are equivalences in $Z^*\udl{\D}$. Since colimits of equivalences are equivalences, we thus see that the morphism $F\left(\colim_{(a^*\udl{K},\udl{\ast})}(a^*\partial,\id_x)\right)\colon \udl{\Delta}^1\rightarrow Z^*\udl{\D}$ is also an equivalence.
    To finish, note that cocompleteness of $\GCat$ provides a Beck-Chevalley natural equivalence interchanging colimits with $a^*$;
    in particular, we acquire an equivalence of arrows
    \[
        \colim_{(a^*\udl{K},\udl{\ast})}(a^*\partial,\id_x)\simeq \prn{ a^* \colim_{\udl{K}} \partial, \colim_{\udl{*}} \id_X} \simeq \prn{ a^* \colim_{\udl{K}} \partial, \id_X},
    \]
    and the colimit-closure axiom (2) follows.
    
    Finally, to verify the pushout axiom (1) of \Cref{defn:GSaturationAxioms}, again suppose we are given a morphism $t\colon T \rightarrow W$ and  suppose we have a pushout diagram $\partial \colon \udl{\Delta}^1\times\udl{\Delta}^1 = (\udl{\Lambda}^2_0)\tcocone\rightarrow T^*\udl{\sC}$
    \[
    \begin{tikzcd}[row sep=scriptsize]
        00 \rar\dar & 10\dar \ar[drr, phantom, "\mapsto"]&& y \rar\dar \ar[dr, phantom, "\ulcorner", very near end]& c\dar\\
        01 \rar& 11 && z\rar & d
    \end{tikzcd}
    \]
    where $y\rightarrow z$ is in $\udl{{S}}^T$. We need to argue that $c\rightarrow d$ is in $\udl{{S}}^T$ too.  For this again fix the notation of \eqref{eq:F-equiv-eqn}  and $x\in \udl{\sC}^C$. We need to show that $(a^*c,x)\rightarrow (a^*d,x)$ in $\CC^A \times \CC^C\simeq (u^*w_*\udl{\sC})^U$ is sent to an equivalence by $u^*F$.
    
    Consider the pushout diagram $(\partial,x)\colon (\udl{\Lambda}^2_0,\udl{\ast})\tcocone\rightarrow (A^*\udl{\sC},C^*\udl{\sC})=Z^*\udl{\sC}$. We write
    \[
        \delta\colon 
        \prn{\overline{w}_*\prn{\udl{\Lambda}^2_0,\udl{\ast}}}\tcocone
            \xrightarrow{\;\;\; \canonical \;\;\;} 
        \overline{w}_*\prn{\prn{\udl{\Lambda}^2_0,\udl{\ast}}\tcocone}
            \xrightarrow{\;\;\;\overline{w}_*(\partial,x)\;\;\;} 
        \overline{w}_*Z^*\udl{\sC}
            \xlongrightarrow{\;\;\;u^*F\;\;\;}
        U^*\udl{\D}.
    \] 
    Note that $\delta$ is a colimit diagram by $w$-distributivity of $F$.
    Also note that,  $\overline{w}_*(\udl{\Lambda}^2_0,\udl{\ast})\simeq s_*\udl{\Lambda}^2_0$. where $s\colon A \rightarrow U$ is defined in \eqref{eq:F-equiv-eqn}. 
    \Cref{prop:norming_pushouts} yields a pushout
    \[
      \begin{tikzcd}[row sep=scriptsize]
        \colim_{\udl{J}}\delta \rar\dar\ar[dr, phantom, "\ulcorner", very near end] & \delta_{(10,10,\ldots,10)}\simeq F(a^*c,x)\dar\\
        \colim_{\udl{B}}\delta \rar & \delta_{(11,11,\ldots,11)}\simeq F(a^*d,x).
    \end{tikzcd}
    \]
    where $\udl{J} \subset  \udl{B} \subset \overline{w}_*(\udl{\Lambda}^2_0,\udl{\ast})\simeq s_*\udl{\Lambda}^2_0$ are nested $U$-subposets.
    We must show that the right vertical map is an equivalence, for which it suffices to show that the left vertical map is an equivalence.

    Writing $i\colon \udl{J}\hookrightarrow \udl{B}$ for the inclusion, it sufffices to verify that the counit transformation \begin{equation}\label{eqn:left_kan_extension_subcube}
        i_!i^*\delta\rightarrow\delta
    \end{equation}  
    of functors $\udl{B}\rightarrow U^*\udl{\D}$ is an equivalence, as then we will acquire the desired equivalence
    \[
        \colim_{\udl{J}}\delta\simeq J_!i^*\delta\simeq B_!i_!i^*\delta\xrightarrow{\;\;\; \sim \;\;\;} B_!\delta\simeq \colim_{\udl{B}}\delta.
    \] 

    Since $i\colon\udl{J}\hookrightarrow\udl{B}$ is fully faithful, it suffices to prove that \eqref{eqn:left_kan_extension_subcube} is an equivalence on points in $\udl{B}\backslash\udl{J}$. 
    Let $d\colon D \rightarrow U$ be an orbit map and $b\in\udl{B}^D\backslash\udl{J}^D$. 
    Give a distinguished enumeration of the orbits of $Z'\coloneqq D \times_U A  \simeq \coprod_{i =1}^m Z'_i$.
    We acquire from this a double coset formula 
    \[
      \udl{B}^D =\left(\prod^m_{i=1}\Lambda^2_0\right)\backslash\{(10,\ldots,10)\} \subset \prn{d^*s_* \udl{\Lambda}_0^2}^D
    \]
    so we may without loss of generality write 
    \[
      b=(\{00\}_K,\{01\}_L, \{10\}_M), \hspace{30pt} \cbr{1,\dots,m}\simeq K \sqcup L \sqcup M,
    \]
    where $L$ is nonempty (lest $b$ lie in $\udl{J}^D$).   
    Because $\udl{B}$ is a $\baseCat$-poset, the slice $D$-category $\udl{J}\downarrow b$ to compute the left Kan extension $i_!$ at $b$ is just the full $D$-subcategory $D^*\udl{J}_{/b}\subseteq D^*\udl{J}$. But then $D^*\udl{J}_{/b}$ clearly has a final $D$-object, namely $(\{00\}_K,\{00\}_L,\{10\}_M)$. 
    Hence, we get by the pointwise formula for Kan extensions that $(i_!i^*\delta)(b)\simeq \delta(\{00\}_K,\{00\}_L,\{10\}_M)$. 
    
    Now, let $s'\colon A' \rightarrow Z'$ be the summand consisting of orbits corresponding with indices $i$ such that $i \in L$ and $c'\colon C' \rightarrow Z'$ is its complement. 
    By definition, $\overline{d} \circ s'$ factors through $s$, essentially uniquely since $s$ is monic.
    Thus we acquire a commutative diagram
    \[\begin{tikzcd}[ampersand replacement=\&,column sep=large, row sep=scriptsize]
	\emptyset \& {A'} \& A \& T \\
	{C'} \& {Z'} \& Z \& W \\
	\& D \& U \& V
	\arrow[from=1-1, to=1-2]
	\arrow[from=1-1, to=2-1]
	\arrow["{a'}", from=1-2, to=1-3]
	\arrow["{s'}", from=1-2, to=2-2]
	\arrow["a", from=1-3, to=1-4]
	\arrow["s", from=1-3, to=2-3]
	\arrow["t", from=1-4, to=2-4]
	\arrow["{c'}"', from=2-1, to=2-2]
	\arrow["\lrcorner"{anchor=center, pos=0.125, rotate=180}, draw=none, from=2-2, to=1-1]
	\arrow["{\overline{d}}", from=2-2, to=2-3]
	\arrow["{\widetilde{w}}", from=2-2, to=3-2]
	\arrow["\lrcorner"{anchor=center, pos=0.125}, draw=none, from=2-2, to=3-3]
	\arrow["{\overline{u}}", from=2-3, to=2-4]
	\arrow["{\overline{w}}", from=2-3, to=3-3]
	\arrow["\lrcorner"{anchor=center, pos=0.125}, draw=none, from=2-3, to=3-4]
	\arrow["w", from=2-4, to=3-4]
	\arrow["d", from=3-2, to=3-3]
	\arrow["u", from=3-3, to=3-4]
\end{tikzcd}\]
    whose outer diagram is of the form \eqref{eq:F-equiv-eqn}.
    Moreover, evaluating \eqref{eqn:left_kan_extension_subcube} at $b$ gives the map
    \[
        \delta(\{00\}_K,\{00\}_L,\{10\}_M)\longrightarrow \delta(\{00\}_K,\{01\}_L,\{10\}_M)
    \] 
    which is the $\delta$-image of $L$-copies of the map $00\rightarrow 01$. 
    This is a morphism of the form  $F\prn{a^{\prime *}y,x} \rightarrow F\prn{a^{\prime *}z,x}$, for $x \in \CC^{C'}$, so it is an equivalence by our assumption that $y\rightarrow z$ lies in $\udl{{S}}^T$, completing the proof.
\end{proof}

\subsection{Distributivity and free cocompletions}\label{subsec:distributivity_free_cocompletions}
For $\AA$ a small $\TT$-category, we denote its Yoneda embedding as 
\[ \yoneda[\AA] \colon \AA \longrightarrow \udl{\presheaf}(\AA). \]

In \cite[Prop.~3.19]{nardinThesis}, Nardin provided $\yoneda[\AA]$ with a universal property for \textit{distributive mulifunctors.}
Our goal in this subsection is elaborate upon his proof of this result, since we need it in greater generality.
This will be given as \Cref{prop:presheaf-dist}. We begin by recording Nardin's key insight for the situation at hand.

\begin{lemma}\label{lem:distributive_is_LKE}
  Let $w\colon W \rightarrow V$ be an $\NN$-admissible map of orbit-finite $\TT$-\animae{}, $\AA$ a small $W$-category,  $\DD$ a cocomplete $V$-category, and  $F \colon w_*\udl{\presheaf}(\AA)\rightarrow \DD$ a $w$-distributive functor. Then $F$ is left Kan extended along $w_*\yoneda[\AA] \colon w_\ast \AA \to w_\ast \presheaf(\AA)$, i.e. the counit map $\varepsilon\colon (w_*\yoneda[\AA])_!(w_*\yoneda[\AA])^*F\rightarrow F$ is an equivalence. 
\end{lemma}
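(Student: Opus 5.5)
The plan is to check the counit $\varepsilon$ pointwise, after reducing to the case of a single orbit. Left Kan extensions along $w_*\yoneda[\AA]$ are computed by the pointwise formula. The counit $\varepsilon$ is an equivalence as soon as its restriction to $U$-points is an equivalence for every orbit $U$ with a map $u\colon U\to V$. Both $(w_*\yoneda[\AA])_!$ and the counit are compatible with restriction along $u$, via the Beck--Chevalley equivalences. So after replacing $F$ by $u^*F$, and using $u^*w_* \simeq \overline{w}_*\overline{u}^*$ from \Cref{lem:fibrewise_product} together with \Cref{rmk:assembly-map}(b), it suffices to check that $\varepsilon$ is an equivalence at $V$-objects when $V$ is an orbit.

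So let $X \in (w_*\udl{\presheaf}(\AA))^V \simeq \udl{\presheaf}(\AA)^W$ be a $V$-object, viewed as a $W$-object $X$ of $\udl{\presheaf}(\AA)$.

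\emph{Step 1.} By the co-Yoneda lemma, $X$ is the parametrised colimit of the tautological $W$-diagram $m\colon \udl{K}\coloneqq \AA_{/X} \to \AA \xrightarrow{\yoneda} \udl{\presheaf}(\AA)$. This colimit cocone lies in $\udl{\RR}_{\GCat}(\udl{\presheaf}(\AA))$.

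\emph{Step 2.} Distributivity of $F$ means that $F$ sends the normed cocone $(w_*\udl{K})\tcocone \to w_*(\udl{K}\tcocone)\to w_*\udl{\presheaf}(\AA)$ to a colimit cocone. This gives
\[ F(X) \simeq \colim_{w_*(\AA_{/X})} F\circ w_*(\yoneda\circ \pi). \]

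\emph{Step 3.} The pointwise formula gives
\[ \big((w_*\yoneda)_!(w_*\yoneda)^*F\big)(X)\simeq \colim_{w_*\AA\downarrow X} F\circ w_*\yoneda, \]
where $w_*\AA\downarrow X$ is the comma $V$-category of the functor $w_*\yoneda$ over the $V$-point $X$. One then identifies $w_*\AA\downarrow X \simeq w_*(\AA_{/X})$ compatibly with the maps to $w_*\AA$; this holds because $w_*$ is a right adjoint and so preserves the pullbacks and cotensors $\udl{\Fun}(\udl{\Delta}^1,-)$ defining comma categories. Under this identification, the counit at $X$ becomes the assembly comparison map of Step 2, and is therefore an equivalence.

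The main obstacle I expect is Step 3. One has to make the identification of the comma category with $w_*$ of the slice genuinely parametrised, since the slice $\AA_{/X}$ is a $W$-category while the comma is formed over $V$, and one has to check that the induced map on colimits really is the counit $\varepsilon$ rather than some other comparison. To handle this I would write both sides as colimits over unstraightened fibrations and use the mate description of $\canonical$ from \Cref{const:lax-structure-cocones}. If that proves awkward, a fallback is to check everything fibrewise using \Cref{lem:fibrewise_product}, where $w_*$ becomes a finite product and the statement reduces to the familiar non-parametrised multivariable version.
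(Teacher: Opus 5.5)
Your proposal is correct and follows essentially the same route as the paper: check the counit pointwise, write $X$ as the co-Yoneda colimit over $\AA_{/X}$, use $w$-distributivity to express $F(X)$ as a colimit over $w_*(\AA_{/X})$, and recognise this as the pointwise Kan extension formula via the identification $(w_*\AA)_{/X}\simeq w_*(\AA_{/X})$, which holds because $w_*$ preserves pullbacks and arrow categories. Your explicit base-change reduction to an orbit $V$ is a harmless extra step that the paper leaves implicit in ``checked pointwise''.
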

\begin{proof}
    That $\varepsilon$ is an equivalence can be checked pointwise, so fix $X\in \left(w_*\udl{\presheaf}(\AA)\right)^V$.
    Note that the colimit of $\overslice{\AA}{X} \to \AA \xhookrightarrow{\yoneda[\AA]} \udl{\presheaf}(\AA)$ is $X$ by the density of the Yoneda embedding. Hence, by distributivity of $F$, we can compute $F(X)$ as the colimit of
    \[\overslice{\left(w_\ast \AA\right)}{X} \simeq w_*(\overslice{\AA}{X}) \longrightarrow w_\ast \AA \xrightarrow{\, w_\ast \yoneda[\AA] \,} w_\ast \udl{\presheaf}(\AA) \xrightarrow{\, F \,} \DD\] where the first equivalence holds because $w_*$ commutes with pullbacks and taking arrow categories.
    This in turn is precisely  the pointwise formula to compute the value of the left Kan extension of $(w_\ast \yoneda[\AA])^{\ast} F$ along $w_\ast \yoneda[\AA]$ at $X$, and so $F$ was left Kan extended along $w_*\yoneda[\AA]$ as required. 
\end{proof}

\begin{construction} \label{con:index_prod_multifunctor_on_presheaves}
    Fix $w \colon W \to V$ be an $\NN$-admissible map of orbit-finite $\TT$-\animae{} and $\AA$ be a small $W$-category. We define the \mdef{indexed product multifunctor} \[
    \mu\colon w_*\udl{\presheaf}(\AA)\longrightarrow \udl{\presheaf}(w_*\AA) \,
\]
    directly as the mate of the composite functor 
    \[ w_*\AA\op\times w_*\udl{\presheaf}(\AA)\simeq w_*(\AA\op\times\udl{\presheaf}(\AA))\xrightarrow{w_*\ev} w_*W^*\udl{\Spaces}\xrightarrow{w_*}V^*\udl{\Spaces} \,. \] 
\end{construction}
 
\begin{lemma} \label{lem:fast-left-fibration}
    Let $f \colon T \to S$ be a map of orbit-finite $\TT$-\animae{}. and $\pi\colon \DD \to \CC$ a left fibration of $T$-categories with straightening $T$-functor $F\colon \CC \to T^*\GSpaces$.  
    Then $f_* \pi\colon f_\ast \DD \to f_\ast \CC$ straightens to the composite $F\colon f_\ast \CC \to f_\ast T^*\GSpaces \simeq f_* f^* S^* \xrightarrow{\text{limit}} S^*\GSpaces$.
\end{lemma}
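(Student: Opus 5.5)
To prove \Cref{lem:fast-left-fibration}, I would reduce it to a statement about representing functors of left fibrations, using that $f_*$ is a right adjoint. Write $\mathrm{LFib}(\CC)$ for left fibrations over a $T$-category $\CC$. Straightening identifies it naturally with $\Map_{\Cat[T]}(\CC, T^*\GSpaces)$; more precisely, $T^*\GSpaces$ carries the universal left fibration $T^*\GSpaces_{*} \to T^*\GSpaces$, and $\pi$ is its pullback along $F$.

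The first step is to observe that $f_*$ preserves pullbacks, since it is a right adjoint. Hence $f_*\pi$ is the pullback of $f_*(T^*\GSpaces_*) \to f_*T^*\GSpaces$ along $f_*F$. It also remains a left fibration: left fibrations are the right class of a factorisation system, namely the maps right orthogonal to the left anodyne generators $\udl{\Delta}^{\{0\}}\hookrightarrow \udl{\Delta}^1$ tensored with orbits. By adjunction, $f_*$ preserves right orthogonality to a class that is stable under $f^*$, and that class is stable under $f^*$.

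The claim is therefore reduced to the universal case $\CC = T^*\GSpaces$, $F = \id$. What remains is to identify the straightening of $f_*(T^*\GSpaces_*) \to f_*f^*S^*\GSpaces$ with the $S$-limit functor $f_* f^* S^*\GSpaces \to S^*\GSpaces$, which is right adjoint to the diagonal. By the Yoneda lemma for left fibrations, it suffices to do this naturally on mapping spaces out of an arbitrary $S$-category $\EE$ over the base.

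Fix $G\colon \EE \to f_*f^*S^*\GSpaces$, with adjoint $G^\flat\colon f^*\EE \to f^*S^*\GSpaces$. I would compute that lifts of $G$ to $f_*(T^*\GSpaces_*)$ are the same as lifts of $G^\flat$ to $T^*\GSpaces_*$, again by the $f^*\dashv f_*$ adjunction. Such lifts are sections of the left fibration classified by $G^\flat$, i.e. $\nattrans(\udl{\ast}, G^\flat)$ in $\udl{\Fun}(f^*\EE, T^*\GSpaces)$. Applying $f^*\dashv f_*$ for the constant functor $\udl{\ast}$ rewrites this as $\nattrans(\udl{\ast}, \lim_f \circ G)$. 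This is exactly the space of sections of the left fibration classified by the composite $\EE \to f_*f^*S^*\GSpaces \to S^*\GSpaces$. These identifications are natural in $\EE$, so the two left fibrations over $f_*f^*S^*\GSpaces$ agree.

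I expect the main obstacle to be this last identification. One must check that the parametrised limit functor along $f$ really is $f_*f^*S^*\GSpaces \to S^*\GSpaces$ as the right adjoint of the unit $S^*\GSpaces\to f_*f^*S^*\GSpaces$, and that the comparison is natural in the base, not just objectwise. I would handle this by working entirely with the mate of the $(f^*\dashv f_*)$-adjunction on $\GSpaces$-valued functors. Alternatively, one can use \Cref{lem:fibrewise_product} to check fibrewise after evaluating at each orbit $U\to S$. There $f_*$ becomes a finite product over the orbits $Z_i$ of $U\times_S T$, and the limit functor becomes the product of anim\ae{}. In that form the statement reduces to the non-parametrised fact that a product of left fibrations straightens to the product of the straightenings.
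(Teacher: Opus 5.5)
Your proposal is correct and follows the same route as the paper. You use that the right adjoint $f_*$ preserves pullbacks to reduce to the universal left fibration $T^*\GBSpaces \to T^*\GSpaces$, and then identify its $f_*$-image as the left fibration corepresented by the terminal object $f_*\udl{\ast}\simeq\udl{\ast}$, whose corepresented functor is the limit. The paper handles the universal case more compactly via $f_*\bigl((T^*\GSpaces)_{\udl{\ast}/}\bigr)\simeq (f_*T^*\GSpaces)_{f_*\udl{\ast}/}$, which does your Yoneda-and-adjunction computation of lifts in one step.
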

\begin{proof}
    We first verify the claim for the universal left fibration $T^*\GBSpaces \simeq \underslice{T^* \GSpaces}{\udl{\ast}} \to T^*\GSpaces$. Applying $f_\ast$ yields the left fibration $f_\ast \underslice{\prn{T^*\GSpaces}}{\udl{\ast}} \simeq  \underslice{\prn{f_\ast T^*\GSpaces}}{f_\ast \udl{\ast}}\simeq \underslice{\prn{f_\ast T^*\GSpaces}}{\udl{\ast}} \to f_\ast T^*\GSpaces$. This straightens to the functor taking maps out of the terminal object in $f_\ast T^* \GSpaces$ which indeed represents taking the limit.

    Now an arbitrary left fibration is pulled back from the universal left fibration along its straightening by \cite[Cor.~3.3.2.7]{lur09}, and the unstraightening of left $\baseCat$-fibrations are left fibrations, so the mapping-in property of coefficient systems yields the lefthand pullback square of $\baseCat$-categories
\[\begin{tikzcd}[ampersand replacement=\&, row sep=small]
	\DD \& \GBSpaces \&\& {\int \DD} \& \BSpaces \\
	\CC \& \GSpaces \&\& {\int \CC} \& \Spaces
	\arrow[from=1-1, to=1-2]
	\arrow[from=1-1, to=2-1]
	\arrow[""{name=0, anchor=center, inner sep=0}, from=1-2, to=2-2]
	\arrow[from=1-4, to=1-5]
	\arrow[""{name=1, anchor=center, inner sep=0}, from=1-4, to=2-4]
	\arrow["\lrcorner"{anchor=center, pos=0.125}, draw=none, from=1-4, to=2-5]
	\arrow[from=1-5, to=2-5]
	\arrow[from=2-1, to=2-2]
	\arrow[from=2-4, to=2-5]
	\arrow[between={0.2}{0.8}, squiggly, tail reversed, no head, from=0, to=1]
\end{tikzcd}\]
  This pullback is preserved by $f_\ast$, proving the claim.
\end{proof}

This yields the following basic relationship between two natural choices for a Yoneda embedding of indexed products, which is a coherent formulation of the fact that mapping \animae{} of indexed products are computed as the indexed products of mapping \animae{}.

\begin{corollary}\label{cor:triangle_of_indexed_yoneda} 
    Let $w\colon W \rightarrow V$ be an $\NN$-admissible map of orbit-finite $\TT$-\animae{} and $\AA$ a small $W$-category. Then the following triangle commutes:
    \[
    \begin{tikzcd}
      w_*\AA \ar[dr, "{\yoneda[\AA]}",hook]\dar["{w_*\yoneda[\AA]}"',hook]\\
        w_*\udl{\presheaf}(\AA) \rar["\mu"]& \udl{\presheaf}(w_*\AA).
    \end{tikzcd}
    \]
\end{corollary}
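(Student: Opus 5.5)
We compare the two composites as $V$-functors $w_*\AA \to \udl{\presheaf}(w_*\AA)$ by comparing the left fibrations over $(w_*\AA)\op \times w_*\AA$ that they classify, after passing to mates. By \Cref{con:index_prod_multifunctor_on_presheaves}, the mate of $\mu\circ w_*\yoneda[\AA]$ is the composite
\[
  w_*\AA\op\times w_*\AA \simeq w_*(\AA\op\times\AA) \xrightarrow{\;w_*(\ev\circ(\id\times\yoneda[\AA]))\;} w_*W^*\GSpaces \xrightarrow{\;\lim\;} V^*\GSpaces .
\]
Here $\ev\circ(\id\times\yoneda[\AA])$ is the mapping $W$-functor $\mathrm{Map}_{\AA}\colon \AA\op\times\AA\to W^*\GSpaces$, which straightens the twisted arrow left fibration $\mathrm{Tw}(\AA)\to\AA\op\times\AA$. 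The mate of $\yoneda[w_*\AA]$ is likewise $\mathrm{Map}_{w_*\AA}$, which straightens $\mathrm{Tw}(w_*\AA)\to (w_*\AA)\op\times w_*\AA$. It therefore suffices to produce an equivalence of left fibrations
\[
  w_*\mathrm{Tw}(\AA)\simeq \mathrm{Tw}(w_*\AA)
\]
over $w_*(\AA\op\times\AA)\simeq (w_*\AA)\op\times w_*\AA$. Once we have it, we apply \Cref{lem:fast-left-fibration} with $f=w$ and $\pi$ the twisted arrow fibration: $w_*\pi$ straightens to $\lim\circ\, w_*\mathrm{Map}_{\AA}$, which is the mate of $\mu\circ w_*\yoneda[\AA]$. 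Straightening is an equivalence, so the two mates agree, and hence so do the functors.

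The equivalence $w_*\mathrm{Tw}(\AA)\simeq\mathrm{Tw}(w_*\AA)$ should be formal. The twisted arrow category is built from $\udl{\Fun}(\udl{\Delta}^1\text{-type diagrams},-)$, i.e.\ it is corepresented fibrewise by the edgewise subdivision of $\Delta^n$. Since $w_*$ is a right adjoint, it commutes with limits and with cotensors by constant categories, so it commutes with this construction. Concretely, $(\mathrm{Tw}\,\CC)^U$ is defined by mapping $\mathrm{tw}(\Delta^\bullet)$ into $\CC^U$, naturally in $U$, and $w_*$ is computed via \Cref{lem:fibrewise_product} as finite products of evaluations, which commute with such constructions. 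The same argument was already used in the proof of \Cref{lem:distributive_is_LKE}, where $w_*$ was said to commute with arrow categories and pullbacks. We also need the identification to lie over the canonical equivalence $w_*(\AA\op)\simeq(w_*\AA)\op$, which holds because $(-)\op$ is a fibrewise automorphism of $\Cat$ commuting with limits.

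The main obstacle is coherence. We need the identification of the straightening of $\mathrm{Tw}$ with the mapping functor, i.e.\ with the mate of the Yoneda embedding, to be natural enough in the category to be compatible with $w_*$. I would handle this by using the universal case. The Yoneda $W$-functor of $\AA$ is the pullback of the Yoneda functor of $W^*\GSpaces$, i.e.\ of $\mathrm{Tw}$ over $\GSpaces$, which is the universal left fibration $\GBSpaces\to\GSpaces$ up to the standard identification. We then use that $w_*$ preserves this pullback, exactly as in the proof of \Cref{lem:fast-left-fibration}. This reduces the claim to the statement that $w_*$ of the universal left fibration straightens to $\lim$, which is the first step of that lemma's proof.
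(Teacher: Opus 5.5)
Your proposal is correct and follows the paper's proof: pass to mates, identify the mate of the lower composite with the limit functor applied after $w_*\mathrm{Map}_{\AA}$, and use \Cref{lem:fast-left-fibration} to reduce to an equivalence $w_*\mathrm{Tw}(\AA)\simeq\mathrm{Tw}(w_*\AA)$ of left fibrations over $(w_*\AA)\op\times w_*\AA$. The paper obtains that equivalence, as you do, from the base-change formula of \Cref{lem:fibrewise_product}; your final coherence paragraph is unnecessary, since an equivalence of left fibrations over the common base already identifies their straightenings (and the proposed fix is muddled, as $\mathrm{Tw}(\GSpaces)$ lies over $\GSpaces\op\times\GSpaces$ and is not the universal left fibration $\GBSpaces\to\GSpaces$).
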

\begin{proof}
    Consider the following commuting diagram
    \[
    \begin{tikzcd}
      w_*\AA\op\times w_*\AA \ar[rr,"{\myuline{\Map}_{w_*\AA}}"]\ar[d, hook, "{\id\times w_*\yoneda[\AA]}"']\ar[drr, "{w_*\myuline{\Map}_{\AA}}"description] && V^*\udl{\Spaces}\\
        w_*\AA\op\times w_*\udl{\presheaf}(\AA) \ar[rr,"w_*\ev"'] && w_*W^*\udl{\Spaces}\ar[u, "w_*"']
    \end{tikzcd}
    \]
    where the bottom left triangle commutes by definition of the Yoneda embedding.
    Adjointing the $w_*\AA\op$ over to the upper right corner will  yield the desired triangle once we know that the upper right triangle commutes.
    By \Cref{lem:fast-left-fibration}, it suffices to provide an equivalence between $w_*$ applied to the $W$-categorical unstraightening of $\myuline{\Map}_{\AA}$ and the $V$-categorical unstraightening of $\myuline{\Map}_{w_* \AA}$.
    
    To see this, by the discussion after \cite[Thm.~11.5]{Sha23}, we know that $\myuline{\Map}_{\AA}$ unstraightens to the (fibrewise) twisted arrow category, i.e. the functor
    \[
        \mathrm{Tw}\colon\baseCat\op \longrightarrow \Fun(\Delta^1, \Cat)
    \]
    valued on $V \in \baseCat$ by the twisted arrow category left-fibration $\mathrm{Tw}\prn{\CC^V} \rightarrow \CC^{V, \mathrm{op}} \times \CC^V$; the desired equivalence is then just the base change formula of \eqref{eqn:standard_basecat_pullback}.
\end{proof}

\begin{lemma}
  \label{lem:special_case_yoneda_embedding_distributive}
    The $w$-multifunctor $\mu \colon w_*\udl{\presheaf}(\AA) \rightarrow \udl{\presheaf}(w_*\AA)$ from \Cref{con:index_prod_multifunctor_on_presheaves} is $w$-distributive. Furthermore, it is  the left Kan extension of $\yoneda[w_*\AA]\colon w_*\AA\hookrightarrow \udl{\presheaf}(w_*\AA)$ along the functor $w_*\yoneda[\AA]\colon w_*\AA\hookrightarrow w_*\udl{\presheaf}(\AA)$.
\end{lemma}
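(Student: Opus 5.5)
The plan is to prove distributivity first, and then deduce the Kan extension statement formally. For the second part, once $\mu$ is known to be $w$-distributive, \Cref{lem:distributive_is_LKE} applies with $\DD = \udl{\presheaf}(w_*\AA)$, which is cocomplete. It shows that $\mu$ is left Kan extended from its restriction $(w_*\yoneda[\AA])^*\mu$. By \Cref{cor:triangle_of_indexed_yoneda} this restriction is $\yoneda[w_*\AA]$, which gives the second claim. So everything reduces to showing that $\mu$ sends $w_*\udl{\RR}_{\GCat}(\udl{\presheaf}(\AA))$-cocones to colimit cocones.

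For distributivity I will use the reformulation in \Cref{rmk:assembly-map}. Since the construction of $\mu$ is compatible with restriction along maps $u\colon U\to V$, via the base change $u^*w_*\simeq \overline{w}_*\overline{u}^*$ of \eqref{eqn:standard_basecat_pullback}, it suffices to treat the case where $u$ is the identity. There I must show that for each $W$-diagram $m\colon \udl{K}\to \udl{\presheaf}(\AA)$ with colimit $X$, the composite
\[
(w_*\udl{K})^{\triangleright}\longrightarrow w_*(\udl{K}^{\triangleright})\longrightarrow w_*\udl{\presheaf}(\AA)\xrightarrow{\mu}\udl{\presheaf}(w_*\AA)
\]
is a colimit cocone. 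Colimits in $\udl{\presheaf}(w_*\AA)$ can be detected after evaluating at every representable $U$-object $a\in (w_*\AA)^U$ for every orbit $U\to V$. Here I use \Cref{lem:cofree-colims}, applied to $\udl{\presheaf}(-)=\udl{\Fun}((-)\op,\GSpaces)$, together with the pointwise computation of colimits in functor categories. Restricting along $U\to V$ and again using \eqref{eqn:standard_basecat_pullback}, $a$ corresponds to a tuple $(a_i)$ with $a_i\in\AA^{Z_i}$. By the definition of $\mu$ in \Cref{con:index_prod_multifunctor_on_presheaves}, evaluation at $a$ sends $\mu(P)$ to $\overline{w}_*$ of the tuple $(P(a_i))$; in $\Spaces$ this is the product $\prod_i P_i(a_i)$ of the evaluations, where $P_i$ denotes the $Z_i$-coordinate of $P$.

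After this reduction, the cocone becomes the product over $i$ of the cocones $((\udl{K}^{Z_i})^{\triangleright}\to \Spaces)$ obtained by evaluating the $Z_i$-components of $m$ at $a_i$. Each factor is a colimit cocone, because evaluation at $a_i$ preserves colimits. I would phrase this evaluation as the composite
\[
\udl{\presheaf}(\AA)\xrightarrow{\ev_{a}} \GSpaces\xrightarrow{\eval}\Spaces
\]
and invoke \Cref{lem:cofree-colims}. The product of the factors is then a colimit cocone over $\prod_i \udl{K}^{Z_i}$, because products in $\Spaces$ commute with colimits separately in each variable. This is exactly the argument in the proof of \Cref{prop:distributivity_of_cartesian_symmon}, whose mate diagram I would reuse essentially verbatim with $\D=\Spaces$. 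A slicker alternative is to show that $\mu$ factors through the distributive cartesian norm $w_*\colon w_*W^*\GSpaces\to V^*\GSpaces$ of \Cref{prop:distributivity_of_cartesian_symmon}, after currying in $w_*\AA\op$, and then apply \Cref{prop:dist-compose}. This would make the whole step formal.

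I expect the main obstacle to be the identification, coherently rather than just on objects, of $\ev_a\circ\mu$ with the cartesian norm of the evaluations. Concretely, this means checking that the mate defining $\mu$ restricts along $a\colon\udl{\ast}\to w_*\AA$ to
\[
w_*\udl{\presheaf}(\AA)\xrightarrow{w_*\ev_{(a_i)}} w_*W^*\GSpaces\xrightarrow{w_*} V^*\GSpaces,
\]
compatibly with the canonical map $(w_*\udl{K})^{\triangleright}\to w_*(\udl{K}^{\triangleright})$. I would handle this by naturality of the mate construction in the $w_*\AA\op$ variable, which reduces it to the case $\udl{K}$ arbitrary and $\AA$ replaced by $\udl{\ast}$. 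In that case $\mu$ is literally the limit functor $w_*W^*\GSpaces\to V^*\GSpaces$, and its distributivity is \Cref{prop:distributivity_of_cartesian_symmon}.
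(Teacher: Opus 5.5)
Your proposal is correct and follows essentially the paper's route. The paper likewise evaluates at points $A$ of $w_*\AA$ and identifies $\ev_A\circ\mu$ with $w_*\circ w_*(\ev_{\overline{A}})$ for the adjoint point $\overline{A}\colon \udl{\ast}\to\AA$, which is your naturality argument with $\AA$ replaced by $\udl{\ast}$. It then concludes via \Cref{prop:dist-compose} and \Cref{prop:distributivity_of_cartesian_symmon}, and deduces the Kan extension claim from \Cref{lem:distributive_is_LKE} and \Cref{cor:triangle_of_indexed_yoneda} exactly as you do. Your ``slicker alternative'' is therefore the paper's argument, and your primary route through explicit products in $\Spaces$ just unwinds \Cref{prop:distributivity_of_cartesian_symmon} in the case $\D=\Spaces$.
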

\begin{proof}
    Since distributivity is a colimit condition on the target category $\udl{\presheaf}(w_*\AA)$, and a diagram here may be checked to be a colimit diagram by evaluating at all points of $w_*\AA$, we may as usual without loss of generality work at level $V$ and verify that
    \[ w_\ast\udl{\presheaf}(\AA) \xlongrightarrow{\mu} \udl{\presheaf}(w_*\AA) \xrightarrow{\, \ev_A \,} V^*\udl{\Spaces} \] 
    is distributive for all $A \in (w_*\AA)^V$. We claim that this is equivalent to 
    \[ w_\ast \udl{\presheaf}(\AA) \xrightarrow{\, w_\ast ( \ev_A) \,} w_\ast W^*\udl{\Spaces} \xlongrightarrow{w_*} V^*\udl{\Spaces} \,. \]
    This latter $w$-multifunctor is $w$-distributive by \Cref{lem:postcomposing_distributive_with_colimit_pres}, as $\ev_A$ preserves colimits and $w_*\colon  w_\ast W^\ast \udl{\Spaces} \to V^*\udl{\Spaces}$ is $w$-distributive by \Cref{prop:distributivity_of_cartesian_symmon};
    hence we are now reduced to proving the claim.

    To prove the claim, note that the datum of a point $A\colon \udl{\ast}\rightarrow w_*\AA$ is equivalent to a datum of a point $\overline{A}\colon \udl{\ast}\rightarrow \AA$ by the $w^*\dashv w_*$ adjunction. By unwinding the identifications, this means that $A$ is equivalent to the datum $w_*\overline{A}\colon w_*\udl{\ast}\simeq \udl{\ast}\rightarrow w_*\AA$. Then, by construction of $\mu$, we have the following commuting diagram
    \[
    \begin{tikzcd}
        w_*\udl{\ast}\times w_*\udl{\presheaf}(\AA) \ar[rr,"w_*\overline{A}\times \id"]\dar["\id\times\mu"] && w_*\AA\op\times w_*\udl{\presheaf}(\AA)\rar["w_*\ev"] \dar["\id\times\mu"] & w_*W^*\udl{\Spaces} \dar["w_*"]\\
        w_*\udl{\ast}\times\udl{\presheaf}(w_*\AA)\ar[rr,"A\times \id"] && w_*\AA\op\times \udl{\presheaf}(w_*\AA) \rar["\ev"] & V^*\udl{\Spaces}.
    \end{tikzcd}
    \] The identification of the top-right  and the bottom-left composite yields the claim. The last statement is now an immediate combination of \Cref{cor:triangle_of_indexed_yoneda} and \Cref{lem:distributive_is_LKE}.
\end{proof}

We arrive at our generalisation of Nardin's distributive universal property for $\udl{\presheaf}$.
\begin{theorem}\label{prop:presheaf-dist} 
  Let $w\colon W \rightarrow V$ be an $\NN$-admissible map of orbit-finite $\TT$-\animae{}.
  Given $\AA$ a small $W$-category and $\CC$ a cocomplete $V$-category, precomposition with $w_*\yoneda[\AA]\colon w_*{\AA}\hookrightarrow w_*\udl{\presheaf}(\AA)$ yields an equivalence
      \[
        (w_*\yoneda[\AA])^*\colon \udl{\Fun}\distributeAll{w}\prn{w_*\udl{\presheaf}(\AA), \CC} \xlongrightarrow{\;\;\;\;\;\; \sim \;\;\;\;\;} \udl{\Fun}\prn{w_*\AA,\CC},
      \]
    whose inverse is given by left Kan extension.
\end{theorem}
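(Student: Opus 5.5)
We will show that left Kan extension along $w_*\yoneda[\AA]$ gives an inverse. By \Cref{lem:distributive_is_LKE}, every distributive $w$-multifunctor $F\colon w_*\udl{\presheaf}(\AA)\to\CC$ is left Kan extended from its restriction along $w_*\yoneda[\AA]$. Since $w_*\yoneda[\AA]$ is fully faithful ($w_*$ preserves fully faithful functors, being a right adjoint computed fibrewise as products by \Cref{lem:fibrewise_product}), left Kan extension $(w_*\yoneda[\AA])_!$ is a fully faithful left adjoint to restriction, whose image is the left Kan extended functors. Hence restriction is fully faithful on $\udl{\Fun}\distributeAll{w}$, and it remains to prove essential surjectivity: for every $f\colon w_*\AA\to\CC$, the extension $(w_*\yoneda[\AA])_!f$ is $w$-distributive. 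All constructions are compatible with restriction along $u\colon U\to V$ via \eqref{eqn:standard_basecat_pullback}, so it suffices to argue at the level of $V$-objects, uniformly in $V$.

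For essential surjectivity, factor the extension through $\mu$. By \Cref{lem:special_case_yoneda_embedding_distributive}, $\mu$ is $w$-distributive and equals $(w_*\yoneda[\AA])_!\yoneda[w_*\AA]$. Given $f$, let $\widehat f\colon\udl{\presheaf}(w_*\AA)\to\CC$ be the colimit-preserving extension of $f$ along $\yoneda[w_*\AA]$, using the non-multi universal property of presheaves over the base $V$. Then $\widehat f\circ\mu$ is $w$-distributive by \Cref{lem:postcomposing_distributive_with_colimit_pres}, applied with the identity as the second admissible map, since a cocontinuous functor is $\mathrm{id}$-distributive. By \Cref{cor:triangle_of_indexed_yoneda}, its restriction along $w_*\yoneda[\AA]$ is $\widehat f\circ\yoneda[w_*\AA]\simeq f$. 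So every $f$ lies in the essential image. Combining this with \Cref{lem:distributive_is_LKE} also gives $\widehat f\circ\mu\simeq(w_*\yoneda[\AA])_!f$, which identifies the inverse as left Kan extension.

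The expected obstacle is making this parametrised rather than merely objectwise. We need the equivalence as $V$-categories, so the arguments must be run for all restrictions $u^*$. Distributivity is restriction-stable by \Cref{rmk:assembly-map}, and $u^*w_*\simeq\overline w_*\overline u^*$ lets us apply the same argument to $\overline w$ and $\overline u^*\AA$. We also have to check that left Kan extension along $u^*w_*\yoneda[\AA]$ exists, using that $w_*\AA$ is small and $\CC$ cocomplete, and that it is compatible with these base changes. Finally, one must check that the full subcategory $\udl{\Fun}\distributeAll{w}$ is closed under the equivalences involved; this is immediate since it is defined by a property.
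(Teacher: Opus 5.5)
Your proof is correct and follows the paper's argument. Restriction is fully faithful on distributive functors by \Cref{lem:distributive_is_LKE} together with full faithfulness of left Kan extension along $w_*\yoneda[\AA]$, and the extensions are distributive because they factor as the distributive multifunctor $\mu$ followed by the cocontinuous extension $\widehat f$, via \Cref{lem:postcomposing_distributive_with_colimit_pres}. The only cosmetic difference is in the last step: you produce a distributive preimage of $f$ via \Cref{cor:triangle_of_indexed_yoneda} and then identify it with $(w_*\yoneda[\AA])_!f$ by applying \Cref{lem:distributive_is_LKE} again, whereas the paper gets $(w_*\yoneda[\AA])_!f\simeq \widehat f\circ\mu$ directly, since $\mu$ is itself a left Kan extension and cocontinuous functors preserve left Kan extensions.
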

\begin{proof}
  Since $w_*\yoneda[\AA]\colon w_*{\AA}\hookrightarrow w_*\udl{\presheaf}(\AA)$ is fully faithful and left Kan extensions along such are also fully faithful,  it is enough to identify the image of $\prn{w_* \yoneda[\AA]}_!$;
    that is, we need to show that any such left Kan extension is distributive and that any distributive $w$-multifunctor is left Kan extended along $w_*\yoneda[\AA]$. 
    The latter point is precisely supplied by \Cref{lem:distributive_is_LKE}, so we are left with addressing the former.

    For distributivity, fix $F\colon  w_\ast \AA \to \CC$ a $w$-multifunctor. 
    By the ordinary universal property of $\baseCat$-presheaves, $F$ factors through the Yoneda embedding $w_\ast \AA \hookrightarrow \udl{\presheaf}(w_*\AA)$ via a colimit-preserving $\baseCat$-functor $\overline{F}\colon \udl{\presheaf}(w_*\AA) \to \CC$. 
    Post-composition with this  preserves left Kan extension, so we have a commutative diagram 
    \[ \begin{tikzcd}
      w_\ast \AA \ar[r, "{\yoneda[\AA]}"] \ar[d, "{w_*\yoneda[\AA]}"'] & \udl{\presheaf}(w_*\AA) \ar[r, "\overline{F}"] & \CC \\
        w_*\udl{\presheaf}(\AA) \ar[ur, "\mu"] \ar[urr, bend right=10]
    \end{tikzcd} \]
    where the two diagonal functors are the left Kan extensions (the case of $\mu$ by \Cref{lem:special_case_yoneda_embedding_distributive}).     The lower  triangle expresses  $ w_*\udl{\presheaf}(\AA)  \to \CC$ as a composition of a $w$-distributive functor and a colimit-preserving functor, so \Cref{lem:postcomposing_distributive_with_colimit_pres} shows that it is $w$-distributive.
\end{proof}

From this, we may extract the following consequence.
\begin{corollary}\label{cor:distributive_yoneda_universal_property}
  Let $w \colon W \to V$ be an $\NN$-admissible map of orbit-finite $\TT$-\animae{},  $\AA$ be a small $W$-category, and $\CC$ a cocomplete $V$-category. 
    We have a commutative diagram of equivalences
    \[
    \begin{tikzcd}
      \udl{\func}(w_*\AA,\udl{\sC}) \ar[d, "{(w_*\yoneda[\AA])_!}"', "\simeq"]\ar[dr, "\simeq"', "{\yoneda[\AA!]}", shift left = 2 ]\\
        \udl{\func}\distributeAll{w}(w_*\udl{\presheaf}(\AA),\udl{\sC})& \udl{\func}^{\mathrm{colim}}(\udl{\presheaf}(w_*\AA),\udl{\sC}).\lar["\mu^*", "\simeq"']
    \end{tikzcd}
    \]
\end{corollary}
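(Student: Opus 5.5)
The plan is to identify each of the three arrows as an equivalence using results already established, and then check that the triangle commutes. The vertical arrow $(w_*\yoneda[\AA])_!$ is an equivalence by \Cref{prop:presheaf-dist}: that theorem shows restriction $(w_*\yoneda[\AA])^*$ is an equivalence onto $\udl{\func}(w_*\AA,\CC)$ with inverse given by left Kan extension. The diagonal arrow $\yoneda[\AA!]$ (left Kan extension along $\yoneda[w_*\AA]\colon w_*\AA\hookrightarrow\udl{\presheaf}(w_*\AA)$) is an equivalence onto the colimit-preserving functors by the ordinary universal property of parametrised presheaves, the non-distributive case.

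Next I would check that $\mu^*$ is well defined, i.e. lands in distributive multifunctors. By \Cref{lem:special_case_yoneda_embedding_distributive}, $\mu$ is $w$-distributive. So for $G\colon\udl{\presheaf}(w_*\AA)\to\CC$ colimit-preserving, $G\circ\mu$ is $w$-distributive by \Cref{lem:postcomposing_distributive_with_colimit_pres}, taking $G$ as an identity-indexed multifunctor.

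For commutativity I would show $\mu^*\circ\yoneda[\AA!]\simeq (w_*\yoneda[\AA])_!$. It suffices to prove that after composing with the equivalence $(w_*\yoneda[\AA])^*$ both sides become the identity of $\udl{\func}(w_*\AA,\CC)$. We have
\[
(w_*\yoneda[\AA])^*\mu^*\yoneda[\AA!] \simeq (\mu\circ w_*\yoneda[\AA])^*\yoneda[\AA!] \simeq \yoneda[w_*\AA]^*\yoneda[\AA!]\simeq \id,
\]
where the middle step is \Cref{cor:triangle_of_indexed_yoneda} and the last holds because $\yoneda[w_*\AA]$ is fully faithful. This is exactly the argument in the proof of \Cref{prop:presheaf-dist}, now made natural in $F$. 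Hence the triangle commutes, and $\mu^*$, as a two-out-of-three composite of equivalences, is an equivalence.

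The main obstacle I expect is making this naturality coherent: the triangle identity of \Cref{cor:triangle_of_indexed_yoneda} must be used as a natural equivalence of $V$-functors, not merely objectwise, so that the identification is one of $\baseCat$-functors. Since it is an equivalence of functors, precomposition is functorial and the concern is mild. Everything is also restriction-stable, so working at level $V$ with parametrised functor categories suffices.
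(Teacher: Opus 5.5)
Your proposal is correct and is essentially the paper's argument. The paper observes that the triangle of restriction functors commutes, $(w_*\yoneda[\AA])^*\circ\mu^*\simeq\yoneda[w_*\AA]^*$, by \Cref{cor:triangle_of_indexed_yoneda}. It then gets that two of the three restrictions are equivalences from \Cref{prop:presheaf-dist} and the ordinary universal property of the Yoneda embedding, and passes to inverses. Your computation $(w_*\yoneda[\AA])^*\mu^*\yoneda[\AA!]\simeq\id$ is a repackaging of that same step. Your explicit check that $\mu^*$ lands in distributive multifunctors, via \Cref{lem:special_case_yoneda_embedding_distributive} and \Cref{lem:postcomposing_distributive_with_colimit_pres}, is left implicit in the paper's proof, but it is the same input used in the proof of \Cref{prop:presheaf-dist}.
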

\begin{proof}
    By \Cref{cor:triangle_of_indexed_yoneda}, we clearly have the diagram when we replace $(w_*\yoneda[\AA])_!$ and $\yoneda[\AA!]$ with their right adjoints $(w_*\yoneda[\AA])^*$ and $\yoneda[\AA]^*$ respectively. 
    The map on the left is an equivalence by \Cref{prop:presheaf-dist}, and so is the one on the right by the universal property of the Yoneda embedding by \cite[Thm. 7.1.1]{MartiniWolf2024}. 
    We conclude that all maps are equivalences, and passing to inverses yields the conclusion.
\end{proof}

\subsection{Distributivity and relative cocompletions}\label{subsec:distributivity_general_cocompletions}
Our goal in this subsection is to prove \Cref{thm:distributivity_localisation-cocompletion} which gives a distributive universal property for partial cocompletions with relations. Just as for Lurie's construction of his presentable tensor product, the manoeuvre of Bousfield localisation will play an indispensable role in our treatment of $\NN$-normed structure on presentables. As such, we begin with some basic facts  about localisations.
\begin{lemma}\label{obs:bousfield_equivalences_in_indexed_products}
    If $L\colon \CC \rightleftharpoons \udl{\B} \cocolon R$ is a Bousfield localisation, $\udl{{S}}$ the class of $L$-equivalences, and $w \colon W \to V$ an $\NN$-admissible map between orbit-finite $\TT$-\animae{}, then in the Bousfield localisation $w_*L\colon w_*\CC\rightleftharpoons w_*\udl{\B} \cocolon w_*R$, the $w_*L$-equivalences are precisely $w_*\udl{{S}}$.
\end{lemma}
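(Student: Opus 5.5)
First I would check that $w_*L \dashv w_*R$ is again a Bousfield localisation. Since $w_*\colon \Cat[W]\to\Cat[V]$ is a right adjoint, it is a $2$-functor and therefore preserves adjunctions: the unit and counit of $L\dashv R$ are carried to a unit and counit for $w_*L\dashv w_*R$. It also preserves fully faithful functors, because full faithfulness is a pullback condition involving arrow categories and $w_*$ commutes with both. Hence $w_*R$ is fully faithful, with counit $w_*$ of the (invertible) counit of $L\dashv R$.

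Next I would identify the two $V$-collections of morphisms in $w_*\CC$ by comparing them fibrewise. Take a map $u\colon U\to V$ with $U$ an orbit. By \Cref{lem:fibrewise_product}, the pullback square \eqref{eqn:standard_basecat_pullback} gives
\[
(w_*\CC)^U\simeq\prod_{i=1}^n\CC^{Z_i}.
\]
This equivalence is natural in $\CC$, as it comes from the Beck--Chevalley equivalence $u^*w_*\simeq\overline w_*\overline u^*$ together with the disjoint decomposition $Z=\coprod_i Z_i$. It follows that $(w_*L)^U$ is identified with $\prod_i L^{Z_i}$.

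Now compare the two collections. A morphism of $(w_*\CC)^U$ is a tuple $(f_i)_i$ with $f_i\in\CC^{Z_i}$. It is a $w_*L$-equivalence if and only if each $L^{Z_i}(f_i)$ is an equivalence, since equivalences in a finite product are detected coordinatewise. That is, each $f_i$ lies in $\udl S^{Z_i}$. On the other hand, $w_*\udl S$ is the $V$-full subcategory $w_*\udl S\subset w_*\udl\Fun(\udl\Delta^1,\CC)\simeq\udl\Fun(\udl\Delta^1,w_*\CC)$. Its $U$-fibre is computed by the same base change as $\prod_i\udl S^{Z_i}$, so the two descriptions agree.

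The only real obstacle is bookkeeping: one has to check that the identification of $(w_*\udl S)^U$ is compatible with the identification $(w_*\udl\Fun(\udl\Delta^1,\CC))^U\simeq \Fun(\Delta^1,\prod_i\CC^{Z_i})$. I would handle this by noting that both come from the single natural equivalence $u^*w_*\simeq\overline w_*\overline u^*$, applied to the inclusion $\udl S\subset\udl\Fun(\udl\Delta^1,\CC)$. Finally, a morphism lies in a full $\TT$-subcategory exactly when it does so fibrewise over orbits, so the fibrewise equality over all such $u$ proves the claim.
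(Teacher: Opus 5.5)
Your proposal is correct and is the paper's argument: the paper's proof is the one-line observation that, by \Cref{lem:fibrewise_product}, $w_*L$ is fibrewise a finite product of evaluations of $L$, so $w_*L$-equivalences are detected coordinatewise. Your check that $w_*L\dashv w_*R$ is again a Bousfield localisation, and your compatibility check for the base-change identifications, are details that the paper leaves implicit.
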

\begin{proof} 
      This is clear since fibrewise, the map $w_*L$ is just given by products of the maps $L$ at various levels by \Cref{lem:fibrewise_product}. 
\end{proof}

\begin{lemma}\label{lem:bousfield_localisations_and_distributivity}
    Let $L\colon \CC\rightleftharpoons \udl{\B} \cocolon R$ be a Bousfield localisation of cocomplete $W$-categories and $w \colon W \to V$ an $\NN$-admissible map between orbit-finite $\TT$-\animae{}. Suppose we have a $w$-multifunctor $\overline{F}\colon w_*\udl{\B}\rightarrow \udl{\D}$ such that $F\coloneqq \overline{F}\circ w_*L\colon w_*\CC\rightarrow\udl{\D}$ is $w$-distributive. Then $\overline{F}$ is also $w$-distributive.
\end{lemma}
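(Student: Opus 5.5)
The plan is to write $\overline{F}$ as $F \circ w_*R$ and to show that precomposing with $w_*R$ turns distributing cocones in $\udl{\B}$ into distributing cocones in $\CC$. Since $LR \simeq \id$, we have $w_*L\circ w_*R \simeq w_*(LR)\simeq \id$, and hence $\overline{F}\simeq F\circ w_*R$. Distributivity of $\overline{F}$ means that every cocone in the class $w_*\udl{\RR}_{\GCat}(\udl{\B})$ is sent to a colimit cocone. So fix a map $u\colon U\to V$ of orbit-finite $\TT$-\animae{}, with $Z$, $\overline{u}$, $\overline{w}$ as in \eqref{eqn:standard_basecat_pullback}, and a colimit $Z$-cocone $m\colon \udl{K}\tcocone\to \overline{u}^*\udl{\B}$. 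We must show that $u^*\overline{F}\circ \overline{w}_*m\circ\canonical$ is a colimit cocone in $u^*\DD$. By \Cref{rmk:assembly-map} this amounts to showing that the assembly map $\beta_{\overline{F},m}$ is an equivalence.

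First I form the cocone $m'\colon \udl{K}\tcocone\to \overline{u}^*\CC$. It restricts to $Rm|_{\udl{K}}$ on $\udl{K}$, and its cone point is $\colim_{\udl{K}} Rm|_{\udl{K}}$; this colimit exists because $\CC$ is cocomplete. Since $L$ is a left adjoint it preserves colimits, and $LR\simeq\id$, so $Lm'$ is a colimit cocone on $m|_{\udl{K}}$, which identifies $Lm'\simeq m$. Next I compare the two assembly maps. Their sources agree, because the diagram $\overline{w}_*\udl{K}\to u^*\DD$ is $u^*F\circ \overline{w}_*(Rm|_{\udl{K}}) \simeq u^*\overline{F}\circ\overline{w}_*(m|_{\udl{K}})$, using $u^*F = u^*\overline{F}\circ \overline{w}_*\overline{u}^*L$ and $LR\simeq \id$. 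Their targets agree as well: $F(\colim Rm|_{\udl{K}}) = \overline{F}(L\colim Rm|_{\udl{K}})\simeq \overline{F}(\colim m|_{\udl{K}})$.

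I then check that under these identifications $\beta_{\overline{F},m}$ corresponds to $\beta_{F,m'}$. The cleanest route is to observe that $w_*L$ carries the cocone $\overline{w}_*m'\circ\canonical$ to $\overline{w}_*m\circ \canonical$, by naturality of $\canonical$ in the cocone. Consequently the cocone $u^*F\circ \overline{w}_*m'\circ\canonical$ is literally $u^*\overline{F}\circ \overline{w}_*m\circ\canonical$. The former is a colimit cocone by distributivity of $F$, since $m'$ is a colimit cocone in $\CC$, so the latter is one too. This handles all $u$ simultaneously, so the condition from \Cref{rmk:assembly-map} holds uniformly and $\overline{F}$ is $w$-distributive.

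The main obstacle is the coherence in the last step. One must check that the natural identification $w_*L\circ(\overline{w}_*m'\circ\canonical)\simeq \overline{w}_*(Lm')\circ\canonical \simeq \overline{w}_*m\circ\canonical$ is an equivalence of cocones, that is, of functors out of $(\overline{w}_*\udl{K})\tcocone$, and not merely an objectwise identification. I would get this directly from the $\baseCat$-functoriality of the norm on $\Cocone{-}$ from \Cref{const:lax-structure-cocones}: $L\colon\CC\to\udl{\B}$ induces a map $\Cocone{\CC}\to\Cocone{\udl{\B}}$ compatible with $w_*$, and it sends $m'$ to $Lm'\simeq m$. Phrased in terms of $\Catcocone$, $L$ is a morphism $(\CC,\udl{\RR}')\to(\udl{\B},\udl{\RR}_{\GCat}(\udl{\B}))$, where $\udl{\RR}'$ is the class of colimit cocones of the form $m'$. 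This class is essentially surjective onto all colimit cocones in $\udl{\B}$, and norming preserves this essential surjectivity because, by \Cref{obs:bousfield_equivalences_in_indexed_products}, norms are computed fibrewise as products.
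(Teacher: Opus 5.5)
Your proposal is correct and follows essentially the same argument as the paper: the paper likewise writes a colimit cocone $\overline{\partial}$ in $\udl{\B}$ as $L$ applied to the colimit cocone $j_!R\partial$ in $\CC$ (citing \cite[Prop.~3.1.12]{kaifPresentable}), so that $\overline{F}\circ w_*\overline{\partial}\circ\canonical \simeq F\circ w_*(j_!R\partial)\circ\canonical$, and the right-hand side is a colimit because $F$ is $w$-distributive. The coherence issue you flag is harmless: the identification is just $\overline{w}_*$ applied to the equivalence $Lm'\simeq m$ of functors out of $\udl{K}\tcocone$, with the same map $\canonical$ on both sides, so neither naturality of $\canonical$ nor the essential-surjectivity argument in $\Catcocone$ is needed.
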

\begin{proof}
    To see this, writing $j\colon \udl{I}\hookrightarrow \udl{I}\tcocone$ for the inclusion and letting $\overline{\partial}\colon \udl{I}\tcocone\rightarrow \udl{\B}$ be a colimit diagram, we know by \cite[Prop. 3.1.12]{kaifPresentable}  that $\overline{\partial}$ is  the image of $\partial $ under the composite
    \[ \udl{\func}(\udl{I},\udl{\B})\xhookrightarrow{R} \udl{\func}(\udl{I},\udl{\A}) \xrightarrow{j_!} \udl{\func}(\udl{I}\tcocone,\udl{\A}) \xrightarrow{L}\udl{\func}(\udl{I}\tcocone,\udl{\B})\] i.e. colimits in $\udl{\B}$ are localised from their $\udl{\A}$-colimits. This yields a commutative diagram
    \[
    \begin{tikzcd}
        (w_*\udl{I})\tcocone \rar & w_*(\udl{I}\tcocone) \rar["w_*\overline{\partial}"] \ar[dr, "w_*(j_!R\partial)"']& w_*\udl{\B} \rar["\overline{F}"] & \udl{\D}\\
        & & w_*\udl{\A}.\ar[ur, "F"']\uar["w_*L"description]
    \end{tikzcd}
    \] 
    By assumption, the bottom composite is a colimit, so the top is as well, as required.
\end{proof}
\begin{nota}
    Let $w\colon W \rightarrow V$ be an $\NN$-admissible map of orbit-finite $\TT$-\animae{} and $\CC$ a $W$-category with a $\baseCat$-collection of morphisms $\udl{S}$. 
    Denote its $w_*$-image as follows:
    \[
        w_*\udl{S}\subseteq w_*\udl{\func}(\udl{\Delta}^1,\CC)\simeq \udl{\func}(\udl{\Delta}^1,w_*\CC);
    \]
    Concretely, for a map $u\colon U \rightarrow V$, under the basechange equivalence $u^*w_*\simeq \prod_{i=1}^n\overline{w}_{i*}\overline{u}_i^*$ from \eqref{eqn:standard_basecat_pullback}, $(u^*w_*\udl{S})^U\simeq \prod_{i=1}^n(\overline{u}_i^*\udl{S})^{Z_i}$ is the collection of morphisms in $(u^*w_*\CC)^U\simeq \prod_{i=1}^n(\overline{u}_i^*\CC)^{Z_i}$ which  coordinate-wise lie in $\udl{S}$.

    For any $V$-category $\udl{\D}$, we will write 
    \[\udl{\func}_{w_*\udl{S}^{-1}}(w_*\CC,\DD)\subseteq \udl{\func}(w_*\CC,\udl{\D})\] for the full $\baseCat
    $-subcategory of \mdef{$w$-multifunctors which invert morphisms in $w_*\udl{S}$}, i.e. those which send morphisms in $w_*\udl{S}$ to equivalences in $\udl{\D}$. 
\end{nota}

From the saturation property of $F$-equivalences proved in the previous subsection, we obtain the following key result:
\begin{corollary}\label{lem:saturation_distribution}
    Let $w\colon W \rightarrow V$ be an $\NN$-admissible map between orbit-finite $\TT$-\animae{}. Let $\udl{{S}}$ be a $W$-class of morphisms in a $W$-category $\udl{\sC}$ containing all equivalences and $\udl{\overline{S}}$ its strong saturation. 
    Then for a cocomplete $V$-category $\DD$, a $w$-distributive functor 
    $w_\ast \CC \to \DD$ inverts  $w_\ast \udl{\overline{S}}$ if it inverts $w_\ast \udl{S}$;
    that is, 
    \[
    \udl{\func}\distributeAll{w}_{w_*\udl{\overline{S}}^{-1}}(w_*\udl{\sC},\udl{\D})=\udl{\func}\distributeAll{w}_{w_*\udl{S}^{-1}}(w_*\udl{\sC},\udl{\D}).
    \] 
\end{corollary}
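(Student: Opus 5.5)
The inclusion $\supseteq$ is immediate because $\udl{S}\subseteq\udl{\overline{S}}$, hence $w_*\udl{S}\subseteq w_*\udl{\overline{S}}$. For the reverse inclusion, fix a $w$-distributive functor $F\colon w_*\CC\to\DD$ that inverts $w_*\udl{S}$. I would show that $\udl{S}$ is contained in the $W$-collection $\udl{S}_F$ of $F$-equivalences from \Cref{defn:F-equivalences}. Then \Cref{lem:saturated-class} says $\udl{S}_F$ is strongly saturated, so minimality of the strong saturation gives $\udl{\overline{S}}\subseteq\udl{S}_F$. It then remains to check that every morphism of $w_*\udl{S}_F$ is inverted by $F$. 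The statement is restriction-stable, so the same argument runs after pulling back along any $u\colon U\to V$; this upgrades the inclusion to one of full $\baseCat$-subcategories.

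\textbf{Step 1: $\udl{S}\subseteq\udl{S}_F$.} Let $f\in\udl{S}^T$ and fix a diagram as in \eqref{eq:F-equiv-eqn} together with $x\in\CC^C$. Because $\udl{S}$ is a $W$-collection, it is restriction-stable, so $a^*f\in\udl{S}^A$. Because $\udl{S}$ contains all equivalences, $\id_x\in\udl{S}^C$. Hence $(a^*f,\id_x)$ lies coordinatewise in $\udl{S}$. Under the equivalence $(u^*w_*\CC)^U\simeq\CC^A\times\CC^C$, this means $(a^*f,\id_x)\in(u^*w_*\udl{S})^U$, so $u^*F$ sends it to an equivalence.

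\textbf{Step 2: $w_*\udl{S}_F$ is inverted by $F$.} Fix $u\colon U\to V$ and a morphism $g=(g_1,\dots,g_n)$ in $(u^*w_*\CC)^U\simeq\prod_i\CC^{Z_i}$ with each $g_i\in\udl{S}_F^{Z_i}$. I would factor $g$ as the composite of the $n$ morphisms
\[
(\id,\dots,\id,g_i,\id,\dots,\id),
\]
in which only the $i$-th coordinate varies, with the earlier coordinates already at their targets and the later ones at their sources. For the $i$-th factor, take $T=Z_i$ with $t=\overline{u}\vert_{Z_i}$, take $A=Z_i$ with $a=\id$, and let $C$ be the complementary summand of $Z$. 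The identities in the other coordinates then make up the object $x\in\CC^C$. The definition of $F$-equivalence for $g_i$ says exactly that $u^*F$ inverts this factor. So $F$ inverts $g$, being a composite of inverted morphisms.

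The main obstacle is entirely packaged in \Cref{lem:saturated-class}, which carries the distributivity input. Beyond that, the only delicate point is bookkeeping: matching the coordinates of $u^*w_*\CC$ with summands of $Z$ so that each single-coordinate factor is literally an instance of the padded maps in \Cref{defn:F-equivalences}. This is straightforward once orbit decompositions are fixed via \Cref{lem:fibrewise_product}.
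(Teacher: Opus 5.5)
Your proposal is correct and follows the paper's proof: you show that $\udl{S}$ consists of $F$-equivalences, using restriction-stability of $\udl{S}$ and the fact that it contains all equivalences, and then you invoke \Cref{lem:saturated-class} together with minimality of the strong saturation. Your Step 2, which factors a coordinatewise $F$-equivalence into single-coordinate morphisms padded by identities, spells out the step the paper leaves implicit in its ``it suffices''; it is an addition to the same argument rather than a different route.
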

\begin{proof}
    Suppose $F\colon w_*\CC\rightarrow\DD$ is a $w$-distributive multifunctor which inverts $w_*\udl{S}$. 
    It suffices to prove that $\udl{S}$ consists of $F$-equivalences, as \Cref{lem:saturated-class}, we see that $\overline{\udl{S}}$ is also contained within the $F$-equivalences.
    
    Fix a diagram as in \eqref{eq:F-equiv-eqn},  $f$ a map in $\udl{S}^T$, and $x \in \CC^C$.
    Because $\udl{S}$ contains all equivalences the resulting map $\prn{a^*f, \id_x}$ in $\CC^A \times \CC^C \simeq \CC^Z$ lies in $\prn{w_* S}^Z$, so by assumption $F\prn{a^* f, \id_x}$ is an equivalence; 
    hence $f$ is an $F$-equivalence, completing the proof.
\end{proof}

\begin{lemma}\label{lem:yoneda_image_generation_distributivity}
    Let  $w \colon W \to V$ be an $\NN$-admissible map of orbit-finite $\TT$-\animae, $\CC$ a cocomplete $W$-category, $\DD$ a cocomplete $V$-category, $\udl{\A}\subseteq \udl{\sC}$ a small $\baseCat$-subcategory which generates $\udl{\sC}$ under $V$-colimits, and $\udl{S}$ a $\baseCat$-set of morphisms in $\udl{\sC}$. Then the inclusion of $\udl{\func}\distributeAll{w}_{w_*(S\cup {\sC}^{\simeq})^{-1}}(w_*\udl{\sC},\udl{\D})$ into $ \udl{\func}\distributeAll{w}_{w_*(S\cup {\A}^{\simeq})^{-1}}(w_*\udl{\sC},\udl{\D})$ is an equivalence.
\end{lemma}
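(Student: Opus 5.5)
The inclusion is of full $\baseCat$-subcategories of $\udl{\func}\distributeAll{w}(w_*\udl{\sC},\udl{\D})$. Since $\A^{\simeq}\subseteq\sC^{\simeq}$, the inverted classes are nested, so the inclusion goes in the stated direction. It therefore suffices to show that every distributive $w$-multifunctor $F$ inverting $w_*(\udl{S}\cup\udl{\A}^{\simeq})$ also inverts $w_*(\udl{S}\cup\udl{\sC}^{\simeq})$. Our plan is to use the $F$-equivalences of \Cref{defn:F-equivalences}. By \Cref{lem:saturated-class}, the $W$-collection of $F$-equivalences is strongly saturated, and in particular contains all equivalences of $\udl{\sC}$: identities are $F$-equivalences, and 2-out-of-3 then gives all equivalences.

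The key claim is that, for such an $F$, every morphism in $\udl{S}$ is an $F$-equivalence. Fix a diagram \eqref{eq:F-equiv-eqn}, a morphism $f\in\udl{S}^T$, and an object $x\in\CC^C$. We need $F^U(a^*f,\id_x)$ to be an equivalence. The hypothesis only tells us that $F$ inverts $(a^*f,e)$ with $e$ an equivalence between objects of $\A$, so we must move from $x\in\A$ to general $x\in\CC^C$. To do this, consider the full $C$-subcategory of those $x\in C^*\CC$ for which $F(a^*f,\id_x)$ is an equivalence, for all choices of the remaining data compatible with further restriction. This subcategory is restriction-stable. It contains $C^*\A$, because $(a^*f,\id_x)$ lies in $w_*(\udl{S}\cup\udl{\A}^{\simeq})$ in that case. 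It is also closed under $C$-colimits. For the last point, apply distributivity of $F$ to the cocone $(\mathrm{const}(a^*f),\partial)$ indexed by $(\udl{\ast},\udl{K})$ over the $Z$-category, exactly as in the colimit-closure step of \Cref{lem:saturated-class}. Distributivity writes $F(a^*f,\id_{\colim\partial})$ as a colimit over $\overline{w}_*(\udl{\ast},\udl{K})$ of arrows $F(a^*f,\id_{\partial(k)})$, each of which is an equivalence. A colimit of equivalences is an equivalence.

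Since $\A$ generates $\CC$ under colimits, this subcategory is all of $C^*\CC$, which proves the claim. Strong saturation of the $F$-equivalences, together with the fact that they contain all equivalences, then shows that $\udl{S}\cup\udl{\sC}^{\simeq}$ consists of $F$-equivalences. It remains to show that $F$ inverts $w_*(\udl{S}\cup\udl{\sC}^{\simeq})$, that is, morphisms which coordinatewise lie in $\udl{S}\cup\udl{\sC}^{\simeq}$. We factor such a morphism as a composite of morphisms that are nontrivial in one orbit coordinate only, taking orbits $A$ one at a time and padding the rest with identities. Each factor is sent to an equivalence by definition of $F$-equivalences, so the composite is too.

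The main obstacle is the colimit-closure step. The padding variable $x$ lives over $C$ while $f$ lives over $A$, so we must set up the mixed $Z$-diagram carefully and check that distributivity applies to the pair cocone. One way to justify this is to note that constant cocones in the $A$-coordinate lie in the colimit class because $\udl{\ast}$-indexed colimits are trivial, so the pair cocone is a colimit cocone. A second, related point is that "generates under colimits" has to be applied $C$-fibrewise, with the restriction-stability of the subcategory verified so that parametrised colimit closure makes sense.
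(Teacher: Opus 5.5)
Your strategy works, and its core computation is the paper's. You apply distributivity to a cocone that is constant in the coordinates carrying morphisms of $\udl{S}$ and is a colimit diagram in the padding coordinates. This rewrites $F$ of a padded morphism as a colimit of padded morphisms whose padding lies in $\udl{\A}$.

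The paper does this in one step. For a tuple $(s_i,\id_{X_j})$ it presents each $X_j$ as a colimit of objects of $\udl{\A}$ and expands all padding coordinates at once, holding every $s_i$ constant. Your detour through \Cref{defn:F-equivalences} and the final factorisation into single-coordinate pieces buys nothing, since several constant $\udl{S}$-coordinates cause no more trouble than one. Also, \Cref{lem:saturated-class} is never actually used:
\begin{itemize}
\item equivalences are $F$-equivalences simply because $F^U$ preserves equivalences (note that 2-out-of-3 starting from identities alone does not produce equivalences);
\item membership of $\udl{S}$ you prove by hand.
\end{itemize}

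You are more careful than the paper about the generation hypothesis: ``generates under colimits'' gives iterated colimits, not a one-step presentation of each $X_j$, so a closure argument is genuinely needed. But your closure step needs repair.

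In the distributive expansion of $F^U(a^*f,\id_{\colim\partial})$, the term at a level $D\to U$ is $F^D(a'^*f,\id_{\partial(k')})$. Here $k'$ is a point of $\udl{K}$ over $C'=D\times_U C$, that is, a tuple of possibly unrelated points of $\udl{K}$ over the several orbits of $C'$. This happens even when $C$ is an orbit. So ``each term is an equivalence'' requires goodness for tuples of distinct good objects. A full $C$-subcategory cut out by a condition on a single $x$ does not record this, so its closure under $C$-colimits does not follow as written.

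The standard fix is a transfinite induction:
\begin{itemize}
\item put $\mathcal{G}_0=\udl{\A}$;
\item let $\mathcal{G}_{\alpha+1}$ consist of colimits of $\mathcal{G}_\alpha$-valued diagrams, which is automatically restriction-stable;
\item take unions at limit stages;
\item prove, simultaneously for all diagrams \eqref{eq:F-equiv-eqn}, that $F^U(a^*f,\id_x)$ is an equivalence whenever $x$ is coordinatewise in $\mathcal{G}_\alpha$.
\end{itemize}
The union over small ordinals is colimit-closed and contains $\udl{\A}$, hence is all of $\CC$. Run directly on tuples $(s_i,\id_{X_j})$, the same induction proves the lemma without $F$-equivalences at all. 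It is exactly what is needed to make the paper's one-line argument rigorous as well.
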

\begin{proof}
    Let $F\in \udl{\func}\distributeAll{w}_{w_*(S\cup {\A}^{\simeq})^{-1}}(w_*\udl{\sC},\udl{\D})$. We need to show that it also inverts morphisms in $w_*(\udl{S}\cup \udl{\sC}^{\simeq})$. Consider a map $u\colon U \rightarrow V$ so that we have a morphism in $\wGCat$
    \[u^*F \colon u^*w_*\udl{\sC}\simeq \overline{w}_*\overline{u}^*\udl{\sC}\longrightarrow u^*\udl{\D};\] 
    we need to show that $u^*F$ inverts  $\overline{w}_*\overline{u}^*(\udl{S}\cup \udl{\sC}^{\simeq})$. Let $(\udl{s}_i, \udl{\id}_j)\colon (\udl{A}_i, \udl{X}_j)\rightarrow (\udl{B}_i, \udl{X}_j)$ be such a tuple of morphisms in $\overline{w}_\ast \overline{u}^\ast \CC$. By hypothesis, for each $j$, we may write $\udl{\id}_j\colon \udl{X}_j\rightarrow \udl{X}_j$ as a parametrised colimit of identities in $\udl{\A}$. 
    By distributivity of $u^*F$ and the assumption that $F$ inverts $w_*(\udl{S}\cup \udl{\A}^{\simeq})$, we see that $u^*F$ also inverts $(\udl{s}_i,\udl{\id}_j)$.
\end{proof}

We may now obtain a distributive mapping out property for Bousfield localisations.
\begin{corollary}\label{cor:distributivity_bousfield_localisations}
    Let $L\colon \udl{\sC}\rightleftharpoons \udl{\B} \cocolon R$ be a Bousfield localisation of cocomplete $W$-categories, $\udl{\D}$ a cocomplete $V$-category, and $\udl{S}$ a class of morphisms in $\udl{\sC}$ containing all equivalences whose strong saturation $\overline{\udl{S}}$ coincides with the class of  $L$-equivalences. 
    
    The map $w_*L\colon w_*\udl{\sC}\rightarrow w_*\udl{\B}$ then induces an equivalence
    \[(w_*L)^*\colon \udl{\func}\distributeAll{w}(w_*\udl{\B},\udl{\D}) \xlongrightarrow{\;\;\;\; \sim \;\;\;\;} \udl{\func}\distributeAll{w}_{w_*\overline{S}^{-1}}(w_*\udl{\sC},\udl{\D})\simeq \udl{\func}\distributeAll{w}_{w_*S^{-1}}(w_*\udl{\sC},\udl{\D})\] with inverse given by left Kan extension along $w_*L$. 
\end{corollary}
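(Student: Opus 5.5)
We split the statement into the two displayed identifications. The second equivalence, $\udl{\func}\distributeAll{w}_{w_*\overline{S}^{-1}} = \udl{\func}\distributeAll{w}_{w_*S^{-1}}$, is exactly \Cref{lem:saturation_distribution}: $\udl{S}$ contains all equivalences and $\overline{\udl{S}}$ is its strong saturation. It therefore remains to show that $(w_*L)^*$ is an equivalence onto the distributive multifunctors inverting $w_*\overline{\udl{S}}$, which by hypothesis is the class of $L$-equivalences.

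\textbf{Step 1: reduction to \Cref{lem:Bousfield-loc-is-loc}.} The adjunction $w_*L \dashv w_*R$ is again a Bousfield localisation, since $w_*$ preserves adjunctions and fully faithful functors. By \Cref{obs:bousfield_equivalences_in_indexed_products}, its class of local equivalences is exactly $w_*\overline{\udl{S}}$. Applying \Cref{lem:Bousfield-loc-is-loc} to $w_*L$, precomposition
\[(w_*L)^*\colon \udl{\func}(w_*\udl{\B},\udl{\D})\longrightarrow \udl{\func}(w_*\udl{\sC},\udl{\D})\]
is fully faithful, with essential image $\udl{\func}_{w_*\overline{S}^{-1}}(w_*\udl{\sC},\udl{\D})$. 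The inverse on this image is restriction along $w_*R$, which agrees with left Kan extension along $w_*L$ because $w_*R$ is right adjoint to $w_*L$.

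\textbf{Step 2: matching the distributive subcategories.} It remains to check that, under this equivalence, $\overline{F}$ is distributive if and only if $F = \overline{F}\circ w_*L$ is distributive.

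The direction "$F$ distributive $\Rightarrow$ $\overline{F}$ distributive" is precisely \Cref{lem:bousfield_localisations_and_distributivity}.

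For the converse, we invoke \Cref{prop:dist-compose} with $u = \id$, for which $\id_*\KK = \KK$. This shows that composing the distributive $\overline{F}$ with $w_*L$ is distributive provided $L$, viewed as an $\id$-multifunctor, is distributive, i.e. colimit-preserving. That holds because $L$ is a left adjoint. Concretely, \Cref{prop:dist-compose} produces $\overline{F}\circ w_*L$ from the morphism $L$ in $\Catcocone$ together with the norm $w_\otimes$, which is exactly our situation.

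\textbf{Main obstacle.} The most delicate point is Step 1: confirming that \Cref{lem:Bousfield-loc-is-loc} applies verbatim to the indexed product $w_*L$, i.e. that $w_*R$ is still fully faithful. This holds because the counit is checked fibrewise, and by \Cref{lem:fibrewise_product} each fibre of $w_*(-)$ is a finite product of fibres of $L \dashv R$. The identification of the inverse with the left Kan extension is then formal.
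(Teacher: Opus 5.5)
Your proposal is correct and follows the paper's own proof essentially step for step. Full faithfulness and the description of the image come from \Cref{lem:Bousfield-loc-is-loc} combined with \Cref{obs:bousfield_equivalences_in_indexed_products}; distributivity passes from $\overline{F}$ to $\overline{F}\circ w_*L$ by \Cref{prop:dist-compose}, since $w_*L$ preserves colimits, and back by \Cref{lem:bousfield_localisations_and_distributivity}; and the final identification is \Cref{lem:saturation_distribution}.
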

\begin{proof}
    Since the adjunction $w_*L\colon w_*\udl{\sC}\rightleftharpoons w_*\udl{\B} \cocolon w_*R$ is a Bousfield localisation,  the restriction functor $(w_*L)^*\colon \udl{\func}(w_*\udl{\B},\udl{\D}) \hookrightarrow \udl{\func}(w_*\udl{\sC},\udl{\D})$ is fully faithful by \Cref{lem:Bousfield-loc-is-loc}.
    Moreover, since $w_*L$ preserves colimits, the restriction also restricts to a fully faithful functor $(w_*L)^*\colon \udl{\func}\distributeAll{w}(w_*\udl{\B},\udl{\D}) \hookrightarrow\udl{\func}\distributeAll{w}(w_*\udl{\sC},\udl{\D})$ by \Cref{lem:postcomposing_distributive_with_colimit_pres}.

    Finally, to see that $(w_*L)^*$ is essentially surjective onto $\udl{\func}\distributeAll{w}_{w_*\overline{S}^{-1}}(w_*\udl{\sC},\udl{\D})$, consider a $w$-distributive functor $F\colon w_*\udl{\sC}\rightarrow\udl{\D}$ which inverts morphisms in $w_*\udl{\overline{S}}$. 
    By \Cref{obs:bousfield_equivalences_in_indexed_products} and \Cref{lem:Bousfield-loc-is-loc}, we get a factorisation $F\colon w_*\udl{\sC}\xrightarrow{w_*L}w_*\udl{\B}\xrightarrow{\widetilde{F}}\udl{\D}$ where $\widetilde{F}\simeq (w_*L)_!F$. But since $F$ was $w$-distributive, we thus see that $\widetilde{F}$ is also $w$-distributive by \Cref{lem:bousfield_localisations_and_distributivity}, whence follows the desired essential surjectivity. 
    The second equivalence is immediate from \Cref{lem:saturation_distribution}.
\end{proof}

We now apply the preceding abstract results to connect Dwyer-Kan localisations with distributivity against a projectable class of cocones as introduced in \Cref{defn:projectable_classes}. As we shall see, the property of being projectable is precisely to arrange that the hypotheses of \Cref{lem:saturation_distribution} are satisfied.
\begin{proposition}\label{prop:distributivity_inverted_maps_presheaves}
    Let $(\CC,\udl{\R})$ be a small projectable class of cocones on a small category $\CC$, $\udl{S}$ be the collection of the colimit cocone comparison morphisms in $\udl{\presheaf}(\udl{\sC})$ of the form $\alpha\colon \colim_{\udl{K}}\yonedaMap \partial \rightarrow \yonedaMap(\partial_{\infty})$ for $\partial\colon \udl{K}\tcocone\rightarrow \udl{\sC}$  an $\RR$-cocone.
    Then $w_*\yonedaMap$ induces an equivalence
    \[(w_*\yonedaMap)^*\colon \udl{\func}\distributeAll{w}_{w_*\overline{\udl{S}}^{-1}}(w_*\udl{\presheaf}(\udl{\sC}),\udl{\D}) \xlongrightarrow{\simeq}\udl{\func}\distributiveLocalisation{w}{\RR}{}(w_*\udl{\sC},\udl{\D}),\] with inverse given by left Kan extension.
\end{proposition}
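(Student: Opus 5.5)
The plan is to combine the free-cocompletion result \Cref{prop:presheaf-dist} with \Cref{lem:saturation_distribution}. First, by \Cref{lem:saturation_distribution} the subcategories inverting $w_*\overline{\udl{S}}$ and inverting $w_*(\udl{S}\cup \udl{\presheaf}(\CC)^{\simeq})$ agree, since the strong saturation of $\udl{S}$ equals that of $\udl{S}$ with all equivalences adjoined; enlarging $\udl{S}$ by equivalences does not change which functors invert it. So it suffices to identify $\udl{\func}\distributeAll{w}_{w_*(\udl{S}\cup\udl{\presheaf}(\CC)^{\simeq})^{-1}}$. By \Cref{prop:presheaf-dist}, restriction along $w_*\yonedaMap$ is an equivalence $\udl{\func}\distributeAll{w}(w_*\udl{\presheaf}(\CC),\DD)\simeq\udl{\func}(w_*\CC,\DD)$, inverse to left Kan extension. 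Hence the statement reduces to this claim: for a distributive $F$ with restriction $f=(w_*\yonedaMap)^*F$, $F$ inverts $w_*(\udl{S}\cup\udl{\presheaf}(\CC)^{\simeq})$ if and only if $f$ sends $w_*\udl{\RR}$-cocones to colimit cocones.

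To prove the claim, fix $u\colon U\to V$ and use the base change $u^*w_*\simeq\overline{w}_*\overline{u}^*$. A typical morphism of $w_*(\udl{S}\cup\text{equivalences})$ over $U$ has coordinates $(\alpha_{\partial}, \id_{X})$, split along a decomposition $Z\simeq A\sqcup C$ as in \eqref{eq:F-equiv-eqn}, with $\alpha_\partial\colon \colim \yonedaMap\partial\to\yonedaMap\partial_\infty$ on $A$ and identities on $C$. By \Cref{lem:yoneda_image_generation_distributivity}, applied with $\udl{\A}$ the Yoneda image (which generates $\udl{\presheaf}(\CC)$ under colimits), it suffices to treat identities on representables, i.e.\ $X=\yonedaMap(x)$. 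Now apply distributivity of $F$ to the cocone $(\partial,\constant(x))$ in $Z^*\CC$. By projectability it lies in $\udl{\RR}$, so its $\overline{w}$-norm is a $w_*\udl{\RR}$-cocone, namely $(\overline{w}_*(\udl{K},\udl{\ast}))^{\triangleright}\to\overline{w}_*\overline{u}^*\CC$. By distributivity, $F(\colim\yonedaMap\partial,\yonedaMap x)$ is the colimit of $f$ over $\overline{w}_*(\udl{K},\udl{\ast})$, while $F(\yonedaMap\partial_\infty,\yonedaMap x)=f(\partial_\infty,x)$. Thus $F(\alpha_\partial,\id)$ is exactly the assembly map of $f$ on this normed cocone, so one direction follows. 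Conversely, a general $w_*\udl{\RR}$-cocone over $U$ is the $\overline{w}$-norm of a $Z$-cocone in $\overline{u}^*\udl{\RR}$. Writing $Z$ as a coproduct of orbits, this is a tuple of $\udl{\RR}$-cocones, and its assembly map for $f$ factors as a composite of the assembly maps obtained by replacing one coordinate at a time by its $\alpha$, with the remaining coordinates held fixed at representables or cocone points. Each step is $F$ applied to a morphism of $w_*(\udl{S}\cup\text{equivalences})$, by the same distributivity computation. Hence if $F$ inverts $w_*\udl{S}$, then $f$ is $\udl{\RR}$-distributive.

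The main obstacle is this coordinate-by-coordinate factorisation in the converse direction. It requires the parametrised Fubini-type identification $\colim_{\overline{w}_*\udl{K}}\simeq$ iterated colimits over the factors $\overline{w}_{i*}\udl{K}_i$, compatibly with the Beck--Chevalley maps in \eqref{eq:BC_pulled}. I would first reduce to $V$ an orbit, and then to $u=\id$ via restriction stability as in \Cref{rmk:assembly-map}. Next I would handle the coordinates indexed by orbits of $Z$ one at a time, using that $\overline{w}_*$ of a product of cocones is the product of the individual norms. Finally, I would check that the left Kan extension inverse supplied by \Cref{prop:presheaf-dist} restricts to the stated inverse.
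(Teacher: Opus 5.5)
Your proposal is correct and follows essentially the paper's proof. The skeleton is the same: the reduction via \Cref{prop:presheaf-dist}, \Cref{lem:saturation_distribution}, \Cref{lem:yoneda_image_generation_distributivity} and projectability, then identifying $F$ of comparison maps with assembly maps of $f$ through distributivity of $F$. The paper runs the direction ``$f$ distributes $\udl{\RR}$-cocones $\Rightarrow$ $(w_*\yonedaMap)_!f$ inverts $w_*\udl{S}$'' through the factorisation $\widetilde{F}\circ\mu$, which is the same computation.

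The one place you overcomplicate things is the converse. Since $w_*\udl{S}$ is by definition the coordinatewise class, the tuple $(\alpha_{\partial_i})_i$ attached to an arbitrary $w_*\udl{\RR}$-cocone already lies in $w_*\udl{S}$. A single application of distributivity of $F$ then identifies the assembly map of $f$ with $F\bigl((\alpha_{\partial_i})_i\bigr)$ precomposed with an equivalence. So the coordinate-by-coordinate factorisation, and the Fubini/Beck--Chevalley obstacle you anticipate, never arise.
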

\begin{proof}
    By \Cref{cor:distributive_yoneda_universal_property}, restriction and left Kan extension are inverse equivalences
    \[(w_*\yonedaMap)_!\colon \udl{\func}(w_*\udl{\sC},\udl{\D})\rightleftharpoons \udl{\func}\distributeAll{w}(w_*\udl{\presheaf}(\udl{\sC}),\udl{\D}) \cocolon (w_*\yonedaMap)^*.\] 
    By \Cref{lem:saturation_distribution}, a $w$-distributive functor $w_\ast \udl{\presheaf}(\CC) \to \DD$ inverts $\overline{\udl{S}}$ if and only if it inverts $w_\ast (S \sqcup \udl{\presheaf}(\CC)^{\simeq})$; by \Cref{cor:distributive_yoneda_universal_property}, this follows if it inverts $w_\ast \udl{S}$, since $\udl{\R}$ contains all constant diagrams on $\udl{\ast}\tcocone$ by projectability, so $\udl{S}$ contains all equivalences on the generators $\udl{\sC}\subseteq \udl{\presheaf}(\udl{\sC})$. 
    The proposition is thus reduced to the following two claims:
    \begin{enumerate}[label=(\alph*)]
         \item $(w_*\yonedaMap)^*$ restricts to a functor $\udl{\func}\distributeAll{w}_{w_*\udl{S}^{-1}}(w_*\udl{\presheaf}(\udl{\sC}),\udl{\D}) \longrightarrow \udl{\func}\distributiveLocalisation{w}{\RR}{}(w_*\udl{\sC},\udl{\D})$;
         and
         \item $(w_*\yonedaMap)_!$ restricts to a functor $\udl{\func}\distributiveLocalisation{w}{\RR}{}(w_*\udl{\sC},\udl{\D}) \longrightarrow \udl{\func}\distributeAll{w}_{w_*\udl{S}^{-1}}(w_*\udl{\presheaf}(\udl{\sC}),\udl{\D})$.
    \end{enumerate}  

    For Claim (a),  let $F\colon w_* \udl{\presheaf}(\udl{\sC})\rightarrow \udl{\D}$ be a $w$-distributive functor which inverts morphisms in $w_\ast\udl{S}$.
    Fix a colimit $W$-diagram $\partial\colon \udl{K}\tcocone \rightarrow \udl{\sC}$ in $\udl{\R}$ with cocone point $\partial_\infty\colon \udl{*} \rightarrow \CC$.
    We must show that $F \circ w_* \yo$ distributes $\partial$.
    
    By definition, the assembly map $\alpha\colon \colim_{\udl{K}}\yoneda\circ \partial \rightarrow \yoneda\circ \partial_{\infty}$ is a morphism in $\udl{S}$. 
    Moreover, under the $w^*\dashv w_*$ adjunction, we may view $\alpha$ as the datum of a morphism in the collection $w_*\udl{\overline{S}}$ of morphisms in $w_*\udl{\presheaf}(\udl{\sC})$, so $F$ sends $\alpha$ to an equivalence in $\udl{\D}$. But by $w$-distributivity of $F$, we see that $F(\colim_{\udl{K}}\yoneda\circ \partial)\simeq \colim_{w_*\udl{K}}F\circ (w_*\yoneda)\circ (w_*\partial)$. This is precisely saying that $(w_*\yoneda)^*F\in \udl{\func}\distributiveLocalisation{w}{\RR}{}(w_*\udl{\sC},\udl{\D})$.
    
    For Claim (b),  suppose $F \colon w_*\underline{\sC} \rightarrow \underline{\D}$ is a functor that distributes $\udl{\R}$-diagrams. We want to show that $(w_*\yonedaMap)_!F \colon w_*\udl{\presheaf}(\udl{\sC}) \rightarrow \underline{\D}$ inverts maps in $w_*\udl{\overline{S}}$, i.e. those maps that get inverted by $w_*L$. Now, $(w_*\yonedaMap)_!F \colon w_*\underline{\presheaf}(\underline{\sC}) \rightarrow \underline{\D}$ is $w$-distributive by \Cref{cor:distributive_yoneda_universal_property}. Hence, by \Cref{cor:triangle_of_indexed_yoneda} and \Cref{cor:distributive_yoneda_universal_property}, we obtain a diagram 
    \[
      \begin{tikzcd}[row sep=scriptsize, column sep=large]
        w_*\udl{\sC}\ar[dr, "\yonedaMap" description] \dar["w_*\yonedaMap"']\ar[drr, "F" description, bend left = 20]\\
        w_*\udl{\presheaf}(\udl{\sC})\ar[rr, bend right = 15, "(w_*\yonedaMap)_!F"description]\rar["\mu" description] & \udl{\presheaf}(w_*\udl{\sC}) \rar["\exists!\widetilde{F}"]& \udl{\D}
    \end{tikzcd}
    \]
    where $\widetilde{F}$ preserves colimits and $\mu$ is $w$-distributive. Using this factorisation of $(w_*\yonedaMap)_!F$, we argue that $(w_*\yonedaMap)_!F$ sends $w_*\udl{S}$ to equivalences.

    Let $\partial\colon \udl{K}\tcocone\rightarrow\udl{\sC}$ be a diagram in $\udl{\R}$. Writing $\infty\in \udl{K}\tcocone$ for its cocone point, the associated morphism in $\udl{S}\subseteq \udl{\presheaf}(\udl{\sC})$ is then given by $\colim_{\udl{K}}\yonedaMap\partial\rightarrow\yonedaMap\partial_{\infty}$. We may view this as a morphism in $w_*\udl{\presheaf}(\udl{\sC})$ and we would like to show that $(w_*\yonedaMap)_!F$ sends this to an equivalence. By $w$-distributivity of $\mu$, we have that \[\mu(\colim_{\udl{K}}\yonedaMap\partial)\simeq \colim_{w_*\udl{K}}\mu w_*(\yonedaMap\partial)\simeq \colim_{w_*\udl{K}}\yonedaMap (w_*\partial)\] where the second equivalence is by the left triangle in the diagram above. Thus, we get
    \[(w_*\yonedaMap)_!F(\colim_{\udl{K}}\yonedaMap\partial)\simeq \widetilde{F}(\colim_{w_*\udl{K}}\yonedaMap (w_*\partial))\simeq \colim_{w_*\udl{K}}\widetilde{F}\yonedaMap (w_*\partial)\simeq \colim_{w_*\udl{K}}F(w_*\partial)\]
    where the second equivalence is since $\widetilde{F}$ preserves colimits and the third by the top triangle in the diagram above. By assumption on $F$, the last term is equivalent $F\partial_{\infty}\simeq ((w_*\yonedaMap)_!F)\circ(w_*\yonedaMap)(\partial_{\infty})$, as was to be shown.
\end{proof}
Next, we recall partial cocompletions with relations from \cite[Prop.~5.3.6.2]{lur09}, \cite[Thm.~6.4.2]{kaifPresentable} and \cite[App.~A.4]{mwpresentable}.

\begin{construction}\label{recollect:localisation_cocompletion}
    Let $\udl{\K}$ be a $\baseCat$-subcategory of $\udl{\cat}$. Let $\underline{\sC}$ be a large $\baseCat$-category together with a $\baseCat$-class of cocones $\udl{\RR}$ indexed by categories in $\KK$.
    We then write $\udl{\wpresheaf}(\udl{\sC})$ for the very large category of $\wGSpaces$-valued presheaves, i.e. the large-cocompletion of $\CC$, and $\udl{\wpresheaf}(\udl{\sC},\udl{\RR}) \subset \udl{\wpresheaf}(\udl{\sC})$ for the very large subcategory of functors $\CC{\op} \to \wGSpaces$ whose opposite functor sends $\RR$-cocones to colimit cocones; in other words, the local objects for the large class of morphisms of the form $\alpha\colon \colim_{\udl{K}}\yonedaMap \partial \rightarrow \yonedaMap(\partial_{\infty})$ for $\partial\colon \udl{K}\tcocone\rightarrow \udl{\sC}$  an $\RR$-cocone. By \Cref{lem:local-object-is-loc}, the inclusion admits a left adjoint. 
    
    We write $\udl{\wpresheaf}^{\K}(\udl{\sC},\udl{\RR}) \subset \udl{\wpresheaf}(\udl{\sC},\udl{\RR})$ for the full subcategory generated under $\udl{\K}$-indexed colimits by the full image of localised Yoneda $i\colon \udl{\sC}\xhookrightarrow{\yonedaMap}\udl{\wpresheaf}(\udl{\sC})\twoheadrightarrow \udl{\wpresheaf}(\udl{\sC},\udl{\RR})$.
\end{construction}
\begin{proposition}
    $\udl{\wpresheaf}^{\K}(\udl{\sC},\udl{\RR})$ is a large $\baseCat$-category.
\end{proposition}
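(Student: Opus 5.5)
We have to show that $\udl{\wpresheaf}^{\K}(\udl{\sC},\udl{\RR})$, a priori only a very large $\baseCat$-category, has large (locally small) fibres and mapping \animae{}. The plan is to present it as the union of a transfinite chain of large full subcategories: show that each stage is large, and that the chain stabilises at a small-indexed stage.

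Since full subcategories of locally large categories are locally large, mapping \animae{} in $\udl{\wpresheaf}(\udl{\sC})^V$ are large. So everything reduces to bounding the size of the collection of equivalence classes of objects in each fibre $\udl{\wpresheaf}^{\K}(\udl{\sC},\udl{\RR})^V$.

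Work fibrewise over orbits $V$. Define a transfinite filtration $\udl{\E}_0\subseteq\udl{\E}_1\subseteq\cdots$ of full $\baseCat$-subcategories of $\udl{\wpresheaf}(\udl{\sC},\udl{\RR})$:
\begin{itemize}
\item $\udl{\E}_0$ is the essential image of the localised Yoneda functor $i$, which is large because $\CC$ is large.
\item $\udl{\E}_{\alpha+1}$ consists of colimits, computed in $\udl{\wpresheaf}(\udl{\sC},\udl{\RR})$, of $U$-diagrams $\udl{K}\to U^*\udl{\E}_\alpha$ with $\udl{K}\in\KK^U$ small, for all orbits $U\to V$; these are then closed under restriction.
\item $\udl{\E}_\lambda=\bigcup_{\alpha<\lambda}\udl{\E}_\alpha$ for limit ordinals $\lambda$.
\end{itemize}

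Each stage is large. The indexing categories $\udl{K}$ range over a large collection, since $\KK\subseteq\udl{\Cat}$ and $\udl{\Cat}$ is large. For fixed $\udl{K}$, the diagrams $\udl{K}\to U^*\udl{\E}_\alpha$ form a large collection, because $\udl{K}$ is small and $\udl{\E}_\alpha$ is large. A large union of large sets is large, so by induction every $\udl{\E}_\alpha$ with $\alpha$ a large (or smaller) ordinal is large.

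The remaining step, which I expect to be the main obstacle, is to show that the filtration stabilises at a small regular cardinal rather than needing to run through all ordinals. Choose a small regular cardinal $\kappa$ exceeding the size of every $\udl{K}\in\KK$. This is possible because each $\udl{K}$ is small, but there may be no uniform small bound when $\KK$ is large.
\begin{itemize}
\item If such a uniform small bound exists, then any $\udl{K}$-diagram into $\udl{\E}_\kappa$ factors through some $\udl{E}_\alpha$ with $\alpha<\kappa$, by regularity and because the objects of $\udl{K}$ are fewer than $\kappa$. Hence $\udl{\E}_\kappa$ is closed under $\KK$-colimits, and so equals $\udl{\wpresheaf}^{\K}(\udl{\sC},\udl{\RR})$.
\item Otherwise, take $\kappa$ to be the successor of the large cardinal bounding $\KK$, that is, the first regular cardinal that is still large but exceeds all sizes of the $\udl{K}$; it lives in the middle universe. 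Run the filtration up to $\kappa$. A $\kappa$-indexed union of large collections is still large, by regularity of $\kappa$ inside the second universe.
\end{itemize}
Note that the colimits defining $\udl{\E}_{\alpha+1}$ exist, by localising the colimits of $\udl{\wpresheaf}(\udl{\sC})$ as in \Cref{lem:local-object-is-loc}. They are taken in the ambient category, so the generated subcategory agrees with the colimit closure. This gives essential largeness of each fibre, and, with the local largeness above, the claim.
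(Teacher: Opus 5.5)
Your proposal is correct and is essentially the paper's own argument. The paper likewise builds $\udl{\wpresheaf}^{\K}(\udl{\sC},\udl{\RR})$ from the large category $\CC$ by a transfinite colimit-closure induction indexed by the large set of small cardinals, referring to Kelly's \emph{Basic concepts of enriched category theory}, Sec.~3.5; this is exactly your filtration $\udl{\E}_\alpha$.

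Two small corrections. Your case split is unnecessary, since you can always stop at the large regular cardinal bounding all small sets, as the paper does. Also, largeness of that stage comes from the second universe being closed under unions indexed by its members, whereas regularity of $\kappa$ is what gives closure of $\udl{\E}_\kappa$ under $\KK$-colimits.
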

\begin{proof}
    As the category in question is defined as a small colimit closure, it can be obtained from the large category $\CC$ by a transfinite induction through the large set of small cardinals, see e.g. \cite[Sec.~3.5]{kelly1982basic} for an argument. 
\end{proof}
We will also need the following easy observation.

\begin{lemma}\label{lem:localisation_extension_is_kan_extension}
    Let $L\colon \CC\rightleftharpoons \udl{\B} \cocolon R$ be a Bousfield localisation, $y\colon \udl{\A}\rightarrow\CC$ a $\baseCat$-functor, and $f\colon \udl{\A}\rightarrow \udl{\D}$ another $\baseCat$-functor such that $\udl{\D}$ is cocomplete. Write $i\coloneqq L\circ y\colon \udl{\A}\rightarrow\udl{\B}$. If $y_!f \colon \CC\rightarrow\udl{\D}$ has the property of factoring through the Bousfield local subcategory $\udl{\B}$ so that $y_!f\simeq \widetilde{y_!f}\circ L$ for some $\widetilde{y_!f}\colon \udl{\B}\rightarrow \udl{\D}$, then in fact $\widetilde{y_!f}\simeq i_!f$. 
\end{lemma}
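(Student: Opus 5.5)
We compare both functors on $\udl{\B}$ through the universal property of the left Kan extension $i_!f$, using that precomposition with $L$ is fully faithful. Throughout we assume the left Kan extensions in question exist; $\udl{\A}$ is small (or suitably accessible) and $\udl{\D}$ is cocomplete, so both $y_!f$ and $i_!f$ are given by the pointwise formula.

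The first step is to show that $L^*(i_!f)\simeq y_!f$. Since $L$ is a left adjoint with right adjoint $R$, precomposition satisfies $L^*\dashv R^*$ on functor $\baseCat$-categories into $\udl{\D}$; this is the standard fact that the adjunction $L\dashv R$ induces $R^*\dashv L^*$ contravariantly. Left Kan extensions compose, so $i_!\simeq L_!\circ y_!$. Moreover $L_!\simeq R^*$, since $R^*$ is left adjoint to $L^*$ and $L_!$ is by definition left adjoint to $L^*$. Hence
\[
i_!f\simeq L_!y_!f\simeq (y_!f)\circ R.
\]
The second step uses the factorisation hypothesis. From $y_!f\simeq \widetilde{y_!f}\circ L$ we get
\[
i_!f\simeq \widetilde{y_!f}\circ L\circ R\simeq \widetilde{y_!f},
\]
because the counit $LR\to\id$ is an equivalence, $R$ being fully faithful.

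The main point to check is that this chain is natural: the identification $L_!\simeq R^*$ must hold in the parametrised setting, and the equivalence must be the canonical comparison map. For the first, the parametrised adjunction $L\dashv R$ gives unit and counit $\baseCat$-transformations, and whiskering these yields the adjunction $R^*\dashv L^*$ on $\udl{\func}(-,\udl{\D})$ levelwise in the base, so no new input is needed. For the second, a direct route is to note that $\widetilde{y_!f}$ restricted along $i$ is $\widetilde{y_!f}\circ L\circ y\simeq (y_!f)\circ y\simeq f$. The last equivalence holds when $y$ is fully faithful; in general one instead uses the adjunction unit. The resulting comparison $i_!f\to\widetilde{y_!f}$ is then identified with the equivalence obtained above.
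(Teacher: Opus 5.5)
Your argument is correct, but it is packaged differently from the paper's.

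The paper runs a Yoneda argument. For arbitrary $G\colon\udl{\B}\to\udl{\D}$ it uses only that $L^*$ is fully faithful to get $\udl{\nattrans}(\widetilde{y_!f},G)\simeq\udl{\nattrans}(y_!f,L^*G)\simeq\udl{\nattrans}(f,i^*G)\simeq\udl{\nattrans}(i_!f,G)$. So $\widetilde{y_!f}$ visibly has the universal property of $i_!f$.

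You instead identify $L_!\simeq R^*$ (left Kan extension along a left adjoint is restriction along its right adjoint). You then compute $i_!f\simeq L_!y_!f\simeq (y_!f)\circ R$, and only then invoke the factorisation together with $LR\simeq\id$.

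Both routes rest on the same fact: full faithfulness of $L^*$ is exactly invertibility of the counit $R^*L^*\to\id$ of $R^*\dashv L^*$, which is whiskering with $LR\to\id$. So neither is more general. Yours gives the explicit formula $i_!f\simeq (y_!f)\circ R$ before the factorisation hypothesis is used. The paper's avoids having to identify $L_!$ at all.

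Two small corrections:
\begin{enumerate}
\item Your opening sentence asserts $L^*\dashv R^*$, which is false in general. Take $L\colon\Delta^1\to\Delta^0$, whose right adjoint picks out $1$. Then $L^*$ is the constant diagram and $R^*$ is evaluation at $1$, but maps out of a constant diagram are computed at the initial object $0$, not at $1$. Only $R^*\dashv L^*$ holds, and that is the direction you actually use.
\item Your final paragraph can be dropped. The lemma asks only for some equivalence $\widetilde{y_!f}\simeq i_!f$. So you need neither the identification with the canonical comparison map (which you assert but do not check, though it does follow from the triangle identities) nor the aside about $y$ being fully faithful.
\end{enumerate}
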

\begin{proof}
    Let $G\in\udl{\func}(\udl{\B},\udl{\D})$. Then we have
    \[\udl{\nattrans}(\widetilde{y_!f},G) \simeq \udl{\nattrans}(y_!f,L^*G)\simeq \udl{\nattrans}(f, y^*L^*G)\simeq \udl{\nattrans}(f, i^*G) \simeq \udl{\nattrans}(i_!f,G)\]
    where the first equivalence uses that $L^*\colon \udl{\func}(\udl{\B},\udl{\D})\rightarrow \udl{\func}(\udl{\A},\udl{\D})$ is fully faithful.
\end{proof}
With these, we acquire a distributive universal property for cocompletion with relations.
\begin{corollary}\label{cor:distributivity_localised_presheaves}
    Let $w\colon W\rightarrow V$ be a map of orbit-finite $\TT$-\animae{}, $\CC$ a large $W$-category, $\udl{\RR}$ a projectable class of cocones in $\CC$, and $\DD$ a very large $\wGCat$-cocomplete category. The composite $w_* i\colon w_*\udl{\sC}\xrightarrow{w_*\yonedaMap}w_*\udl{\wpresheaf}(\udl{\sC})\xrightarrow{w_*L}w_*\udl{\wpresheaf}(\udl{\sC},\udl{\RR})$  induces an equivalence
    \[i^*\colon \udl{\func}^{\wGCat-\delta_w}(w_*\udl{\wpresheaf}(\udl{\sC},\udl{\RR}),\udl{\D}) \xlongrightarrow{\;\;\;\; \sim \;\;\;\;}\udl{\func}\distributiveLocalisation{w}{\RR}{}(w_*\udl{\sC},\udl{\D})\] with inverse given by left Kan extension along $w_*\udl{\sC}\rightarrow w_*\udl{\wpresheaf}(\udl{\sC},\udl{\R})$.
\end{corollary}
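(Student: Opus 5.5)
The plan is to obtain the equivalence as a composite of the two universal properties already established, transported to the very large setting. Write $L\colon \udl{\wpresheaf}(\CC)\rightleftharpoons \udl{\wpresheaf}(\CC,\udl{\RR})\cocolon R$ for the Bousfield localisation of \Cref{recollect:localisation_cocompletion}. Let $\udl{S}$ be the large class of comparison maps $\alpha\colon \colim_{\udl{K}}\yonedaMap\partial\to\yonedaMap\partial_\infty$ for $\partial\in\udl{\RR}$, enlarged by all equivalences. By \Cref{lem:local-object-is-loc}, applied one universe up, the $L$-equivalences are exactly the strong saturation $\overline{\udl{S}}$; adding equivalences does not change either the local objects or the saturation. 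The key chain is then
\[
\udl{\func}^{\wGCat-\delta_w}(w_*\udl{\wpresheaf}(\CC,\udl{\RR}),\DD)\xrightarrow{(w_*L)^*}\udl{\func}^{\wGCat-\delta_w}_{w_*\overline{\udl{S}}^{-1}}(w_*\udl{\wpresheaf}(\CC),\DD)\xrightarrow{(w_*\yonedaMap)^*}\udl{\func}\distributiveLocalisation{w}{\RR}{}(w_*\CC,\DD).
\]
The first arrow is an equivalence by \Cref{cor:distributivity_bousfield_localisations}, with inverse left Kan extension along $w_*L$. The second is an equivalence by \Cref{prop:distributivity_inverted_maps_presheaves}, with inverse left Kan extension along $w_*\yonedaMap$. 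The composite is $(w_*i)^*$, since $w_*i=w_*L\circ w_*\yonedaMap$.

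For the inverse, I would identify it as left Kan extension along $w_*i$ using \Cref{lem:localisation_extension_is_kan_extension}, applied to the Bousfield localisation $w_*L\dashv w_*R$ and the functor $y=w_*\yonedaMap$. Given an $\udl{\RR}$-distributive $F$, the functor $(w_*\yonedaMap)_!F$ inverts $w_*\overline{\udl{S}}$. By \Cref{obs:bousfield_equivalences_in_indexed_products} and \Cref{lem:Bousfield-loc-is-loc}, it therefore factors through $w_*L$. The lemma then identifies the factorisation with $(w_*i)_!F$, which is exactly the inverse produced by the chain.

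The main obstacle is size. All the cited results were stated for small categories, presheaves $\udl{\presheaf}$ and cocomplete targets, whereas here $\CC$ is large, we use $\udl{\wpresheaf}$ (very large), and $\DD$ is only $\wGCat$-cocomplete. I would handle this by passing to the next universe, in which $\CC$ is small, $\udl{\wpresheaf}$ is the presheaf category and $\DD$ is cocomplete. The only genuinely non-formal input that must survive is \Cref{lem:local-object-is-loc}, which needs a small set of morphisms and a presentable ambient category. In the enlarged universe $\udl{S}$ is a small set and $\udl{\wpresheaf}(\CC)$ is presentable, so the lemma applies. One also checks that projectability of $\udl{\RR}$ does not depend on the universe, so the hypotheses of \Cref{prop:distributivity_inverted_maps_presheaves} are met.
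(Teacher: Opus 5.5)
Your proposal is correct and matches the paper's proof: it also factors $(w_*i)^*=(w_*\yonedaMap)^*\circ(w_*L)^*$, gets the two equivalences from \Cref{cor:distributivity_bousfield_localisations} and \Cref{prop:distributivity_inverted_maps_presheaves}, and identifies the inverse as left Kan extension via \Cref{lem:localisation_extension_is_kan_extension}. The paper leaves two details implicit, taking $\udl{S}$ directly to be all $L$-equivalences: your matching of the middle terms through \Cref{lem:local-object-is-loc}, and your universe-shift discussion.
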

\begin{proof}
    Let $\udl{S}$ denote the class of morphisms inverted under the Bousfield localisation $\udl{\wpresheaf}(\udl{\sC}) \to \udl{\wpresheaf}(\udl{\sC},\udl{\R})$.
    Consider the composite
    \begin{align*}
        \udl{\func}^{\wGCat-\delta_w}(w_*\udl{\wpresheaf}(\udl{\sC},\udl{\R}),\udl{\D}) 
        \xrightarrow[\sim]{\;\;\;\;(w_*L)^*\;\;\;\;}& \udl{\func}^{\wGCat-\delta_w}_{w_*{S}^{-1}}(w_*\udl{\wpresheaf}(\udl{\sC}),\udl{\D})\\ \xrightarrow[\sim]{\;\;\;\;(w_*\yonedaMap)^*\;\;\;\;}&\udl{\func}\distributiveLocalisation{w}{\udl{\R}}{}(w_*\udl{\sC},\udl{\D})
    \end{align*}
    where the first equivalence is by \Cref{cor:distributivity_bousfield_localisations} and the second is by \Cref{prop:distributivity_inverted_maps_presheaves}. To see that the inverse to this composite is given by left Kan extension along $w_*\udl{\sC}\rightarrow w_*\udl{\wpresheaf}(\udl{\sC},\udl{\R})$, first recall that the inverse of $(w_*\yonedaMap)^*$ is given by the left Kan extension $(w_*\yonedaMap)_!$ by virtue of \Cref{prop:distributivity_inverted_maps_presheaves} again. On the other hand, by \Cref{lem:localisation_extension_is_kan_extension}, the associated $w$-distributive functor $w_*\udl{\wpresheaf}(\udl{\sC},\udl{\R})\rightarrow\udl{\D}$ is indeed given by left Kan extending along $w_*\udl{\sC}\xrightarrow{w_*\yonedaMap}w_*\udl{\wpresheaf}(\udl{\sC})\xrightarrow{w_*L}w_*\udl{\wpresheaf}(\udl{\sC},\udl{\R})$, as required.
\end{proof}

We are now ready to generalise to \textit{partial} cocompletion with relations.

\begin{theorem}\label{thm:distributivity_localisation-cocompletion}
Let $\udl{\K}\subset \udl{\cat}$ be a $\baseCat$-subcategory containing $\udl{\ast}$, $(\CC,\udl{\RR})$ be a projectable class of cocones indexed by diagrams in $\KK$ where $\CC$ is a large $W$-category, and $\underline{\D}$ be a large $w_*{\udl{\K}}$-cocomplete category. 
Then  $i \colon \underline{\sC} \rightarrow \underline{\wpresheaf}^{\K}(\underline{\sC},\udl{\R}) $ induces an equivalence 
\[
    (w_*i)^* \colon \underline{\func}\distributiveLocalisation{w}{\K}{}\prn{w_*\underline{\wpresheaf}^{\K}(\underline{\sC},\udl{\RR}), \underline{\D}} \xrightarrow{\;\;\;\; \sim \;\;\;\;} \udl{\func}\distributiveLocalisation{w}{\RR}{}(w_*\underline{\sC},\underline{\D})
    \]
    with inverse given by left Kan extending along $w_*i\colon w_*\udl{\sC}\rightarrow w_*\underline{\wpresheaf}^{\KK}(\underline{\sC},\udl{\RR})$. Furthermore, if $\udl{\R}$ consists of colimit diagrams in $\udl{\sC}$, then $i$ is fully faithful.
\end{theorem}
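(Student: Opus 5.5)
Write $\PP \coloneqq \udl{\wpresheaf}(\CC,\udl{\RR})$ and $\PP^{\KK} \coloneqq \udl{\wpresheaf}^{\KK}(\CC,\udl{\RR}) \subset \PP$, with $j\colon \PP^{\KK}\hookrightarrow \PP$ the inclusion, so that $i$ factors as $j\circ i$. The plan is to reduce to \Cref{cor:distributivity_localised_presheaves}. There are two problems: $\DD$ is only $w_*\KK$-cocomplete rather than large-cocomplete, and $\PP^{\KK}$ is only the $\KK$-colimit closure. To deal with both, I would embed $\DD$ into a very large cocomplete category via a Yoneda embedding $\yo\colon \DD\hookrightarrow \udl{\wpresheaf}(\DD,\udl{\RR}_{w_*\KK}(\DD))$. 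This is fully faithful, and it preserves $w_*\KK$-colimits because the class consists of colimit cocones. Its essential image is closed under $w_*\KK$-colimits. This is the standard fact that the localised presheaves preserve the relevant colimits, and I take it from \cite{kaifPresentable} / the last sentence of the theorem applied to $\DD$.

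\textbf{Step 1: distributive functors out of $\PP^{\KK}$ extend to $\PP$.} Given $G\colon w_*\PP^{\KK}\to \DD$ that is $\KK$-distributive, I would compose with $\yo$ and left Kan extend along $w_*j$. I would then argue that the result is fully distributive and restricts back to $\yo G$. The key point is that $w_*\PP^{\KK}$ is generated under $w_*\KK$-colimits by $w_*i(w_*\CC)$, since indexed products of $\KK$-colimit closures are $w_*\KK$-colimit closures, checked fibrewise via \Cref{lem:fibrewise_product}. This gives a restriction map
\[ \udl{\func}\distributiveLocalisation{w}{\KK}{}(w_*\PP^{\KK},\DD)\to \udl{\func}\distributiveLocalisation{w}{\RR}{}(w_*\CC,\DD).\]
That the restriction lands in the $\RR$-distributive functors uses that $i$ sends $\RR$-cocones to $\KK$-colimit cocones in $\PP^{\KK}$; this holds because they are colimits in $\PP$, and $j$ reflects colimits that exist in $\PP^{\KK}$.

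\textbf{Step 2: the inverse.} Given $F\colon w_*\CC\to\DD$ distributing $\RR$-cocones, \Cref{cor:distributivity_localised_presheaves} applied to $\yo F$ gives a distributive $\widehat F\colon w_*\PP\to \udl{\wpresheaf}(\DD,\ldots)$. Its restriction along $w_*j$ is $\KK$-distributive, because norms of $\KK$-colimit cocones in $\PP^{\KK}$ are norms of colimit cocones in $\PP$. I then need this restriction to land in the essential image of $\yo$. For that, let $\udl{E}\subset \PP^{\KK}$ be the full subcategory of objects $X$ such that every tuple with $X$ in one coordinate and $w_*i$-images in the others goes into $\yo\DD$. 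I would show that $\udl{E}$ contains $i(\CC)$ and is closed under $\KK$-colimits, using distributivity coordinatewise together with projectability (constant cocones let one vary one coordinate at a time). I would then iterate over the coordinates, in the style of the proof of \Cref{lem:yoneda_image_generation_distributivity}. Hence $\widehat F|$ factors through $\DD$. Full faithfulness of the restriction comes from the fact that a $\KK$-distributive functor is left Kan extended from $w_*\CC$; this is the analogue of \Cref{lem:distributive_is_LKE}, because every object of $w_*\PP^{\KK}$ is an iterated $w_*\KK$-colimit of generators. The inverse is then identified with $(w_*i)_!$ by a version of \Cref{lem:localisation_extension_is_kan_extension}, using that $\yo$ is fully faithful and preserves the Kan-extension colimits.

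\textbf{Step 3: full faithfulness of $i$.} If $\udl{\RR}$ consists of colimit cocones, every representable presheaf is $\RR$-local, so $\yo$ already lands in $\PP$. Hence $i\simeq \yo$ is fully faithful.

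I expect the main obstacle to be the generation argument in Step 2: showing that the essential image of $\yo$ is reached on all of $w_*\PP^{\KK}$, i.e.\ that transfinite $\KK$-colimit closure interacts correctly with the norm $w_*$ one coordinate at a time. For this I would first try the closure-under-colimits induction sketched above.
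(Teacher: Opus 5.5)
Your route is sound in outline, but it differs from the paper's in its central step. The shared parts are these. Both arguments pass to a target with all large colimits via a Yoneda-type embedding (the paper defers to the argument of \cite[Thm.~6.4.2]{kaifPresentable}). Both use \Cref{cor:distributivity_localised_presheaves} to build the extension, identify it with a left Kan extension by restricting along a fully faithful inclusion, and finish with a $\KK$-colimit closure argument for the counit.

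The difference is where the restriction happens. The paper works with the preimage $\udl{\mathcal E}=L^{-1}\PP^{\KK}\subset\udl{\wpresheaf}(\CC)$ (in your notation). It factors $(w_*i)^*$ as $L^*$ for the restricted Bousfield localisation $\udl{\mathcal E}\to\PP^{\KK}$, handled by a $\KK$-variant of \Cref{cor:distributivity_bousfield_localisations}, followed by restriction along the Yoneda embedding $y\colon\CC\hookrightarrow\udl{\mathcal E}$. You instead restrict directly along $w_*j\colon w_*\PP^{\KK}\hookrightarrow w_*\PP$.

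What each choice buys:
\begin{itemize}
\item Your version needs no second localisation argument at the merely $\KK$-cocomplete level. It also spells out the coordinate-by-coordinate argument that the extension lands back in $\DD$, which the paper folds into ``the exact same argument''.
\item The paper's detour makes the embedding of generators fully faithful, since $y$ is Yoneda. Hence its counit is trivially an equivalence on generators.
\item In your setting $i$ is in general not fully faithful, and this is the main place your write-up needs repair.
\end{itemize}

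\textbf{Main repair.} Full faithfulness of restriction is not an analogue of \Cref{lem:distributive_is_LKE}. There is no canonical $\KK$-shaped diagram presenting an object of $\PP^{\KK}$ via generators. Knowing that every object is an iterated $w_*\KK$-colimit of generators does not by itself show that $G$ is left Kan extended. Argue instead with the counit $\epsilon\colon(w_*i)_!(w_*i)^*G\to G$ into the cocomplete target, as follows.
\begin{itemize}
\item Both sides are $\KK$-distributive; the source is by your Step~2.
\item Hence the locus where $\epsilon$ is an equivalence is closed under $\KK$-colimits one coordinate at a time. The constant cocones you need here come from $\udl{\ast}\in\KK$, not from projectability of $\udl{\RR}$.
\item On $w_*i(w_*\CC)$ the pointwise formula gives nothing, because $w_*i$ need not be fully faithful. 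Use instead that the unit $F\to(w_*i)^*(w_*i)_!F$ is an equivalence for $\udl{\RR}$-distributive $F$.
\item That unit statement follows from \Cref{cor:distributivity_localised_presheaves} together with $(w_*j)^*(w_*(j\circ i))_!\simeq(w_*i)_!$, which holds by full faithfulness of $w_*j$.
\item The triangle identity then makes $(w_*i)^*\epsilon$ an equivalence.
\end{itemize}

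\textbf{Minor points.} Drop the Step~1 claim that $(w_*j)_!(\yo\circ G)$ is fully distributive. You never use it, and proving it is essentially as hard as the theorem. Step~1 only needs that restriction lands in $\udl{\RR}$-distributive functors, which your argument does show.

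When identifying the inverse with the left Kan extension into $\DD$ itself, the property of $\yo$ you need is that it reflects colimits, being fully faithful. It does not preserve the colimits over the large slices in the Kan extension formula; it preserves only $w_*\KK$-colimits.
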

\begin{proof}
The second statement that $i$ is fully faithful when $\udl{\R}$ consists of colimit diagrams follows because representable functors send colimits to limits, so that they are contained in $\udl{\wpresheaf}(\CC,\udl{\R})$. 

Let us now turn towards the first statement. By the exact same argument as in the proof of \cite[Thm. 6.4.2 (2)]{kaifPresentable}, we may reduce to considering the case when $\udl{\D}$ is $\wGCat$-cocomplete by considering the opposite large Yoneda embedding.

We write $j\colon \udl{\mathcal E} \subseteq \udl{\wpresheaf}(\udl{\sC})$ for the preimage $L^{-1}\udl{\wpresheaf}^{\K}(\udl{\sC},\udl{\R})$ of $\udl{\wpresheaf}^{\K}(\udl{\sC},\udl{\R})$ under the Bousfield localisation $L\colon \udl{\wpresheaf}(\udl{\sC})\twoheadrightarrow \udl{\wpresheaf}(\udl{\sC},\udl{\R})$. 
This ambient Bousfield localisation restricts to a Bousfield localisation $L\colon \udl{\mathcal E}\twoheadrightarrow \udl{\wpresheaf}^{\K}(\udl{\sC},\udl{\R})$;
let $\udl{S}$ be the collection of morphisms inverted by this functor. 
We also write $y\colon \udl{\sC}\hookrightarrow \udl{\mathcal E}$ for the inclusion. We acquire a composite
\begin{align*}
    (w_*i)^*\colon \underline{\func}\distributiveLocalisation{w}{\K}{}\prn{w_*\udl{\wpresheaf}^{\KK}(\CC,\udl{\R}),\underline{\D}} \xrightarrow[\sim]{\;\;\; L^* \;\;\;}
    &\underline{\func}\distributiveLocalisation{w}{\K}{w_*\udl{S}^{-1}}\prn{w_*\underline{\mathcal E}, \underline{\D}}\\
    \xlongrightarrow{\;\;(w_*y)^*\;\;}& \udl{\func}\distributiveLocalisation{w}{\RR}{}(w_*\underline{\sC},\underline{\D})
\end{align*}
where the first equivalence is by \Cref{cor:distributivity_bousfield_localisations}, with inverse given by left Kan extension along $L$. Our goal is to prove that the second map is an equivalence. Since $w_*y\colon w_*\udl{\sC}\hookrightarrow w_*\udl{\mathcal E}$ is fully faithful, it suffices to argue that $(w_*y)_!\colon \udl{\func}\distributiveLocalisation{w}{\RR}{}(w_*\underline{\sC},\underline{\D})\hookrightarrow \underline{\func}(w_*\udl{\mathcal E}, \underline{\D})$ lands in $\underline{\func}\distributiveLocalisation{w}{\K}{w_*\udl{S}^{-1}}(w_*\udl{\mathcal E}, \underline{\D})$ and that the adjunction counit $(w_*y)_!(w_*y)^*\rightarrow \id$ in $\underline{\func}\distributiveLocalisation{w}{\K}{w_* \udl{S}}(w_*\udl{\mathcal{E}}, \underline{\D}) $ is an equivalence.

To see that $(w_*y)_!$ lands appropriately,  let $F\in \udl{\func}\distributiveLocalisation{w}{\RR}{}(w_*\underline{\sC},\underline{\D})$ and consider 
    \begin{center}
      \begin{tikzcd}[row sep=scriptsize, column sep=large]
        &w_*\underline{\sC} \ar[dl, "w_*y"', bend right = 15, hook]\dar[hook, "w_*\yonedaMap"'] \ar[drr, "F", bend left = 15]\\
        w_*\udl{\mathcal E}\dar["L", two heads]\rar[hook, "w_*j"]&w_*\underline{\wpresheaf}(\underline{\sC}) \ar[rr, "(w_*\yonedaMap)_!F"'] \dar[two heads, "L"'] && \underline{\D} \\
        w_*\udl{\wpresheaf}^{\K}(\udl{\sC},\udl{\R})\ar[r, hook]&w_*\underline{\wpresheaf}(\underline{\sC},\udl{\R})  \ar[urr, dashed, bend right = 20]
        \end{tikzcd}
    \end{center}
    where the dashed arrow exists and is $\prn{\wGCat,w}$-distributive by \Cref{cor:distributivity_localised_presheaves}.  
    By \Cref{lem:postcomposing_distributive_with_colimit_pres}, precomposing along the inclusion $w_*\udl{\E} \hookrightarrow w_*\underline{\wpresheaf}(\underline{\sC})$ preserves $w$-distributed $\K$-colimit diagrams, and so the restriction 
    \[ 
        (w_*j)^*\colon \udl{\func}\prn{w_*\udl{\wpresheaf}(\udl{\sC}),\udl{\D}}\longrightarrow\udl{\func}\prn{w_*\udl{\mathcal E},\udl{\D}}
    \] 
    restricts to a functor 
    \[
        \udl{\func}^{\wGCat-\delta_w}\prn{w_*\udl{\wpresheaf}(\udl{\sC}),\udl{\D}}\longrightarrow\udl{\func}\distributiveLocalisation{w}{\K}{}\prn{w_*\udl{\mathcal{E}},\udl{\D}} \,. 
        \] 
        Furthermore, the dashed factorisation and bottom-left commuting square demonstrate
    \[
        (w_*j)^*(w_*\yonedaMap)_!F \in \udl{\func}\distributiveLocalisation{w}{\K}{w_*\udl{S}^{-1}}\prn{w_*\udl{\mathcal E},\udl{\D}}.
    \]
     But then $(w_*j)^*(w_*\yonedaMap)_!F\simeq (w_*y)_!F$ since $w_*j$ was fully faithful, so  $(w_*y)_!F$  lies indeed in $ \udl{\func}\distributiveLocalisation{w}{\K}{w_*\udl{S}^{-1}}\prn{w_*\udl{\mathcal E},\udl{\D}}$ as required.

    Next, let $F\colon w_*\udl{\mathcal E} \to \DD$ be $\KK$-distributive functor which inverts $w_\ast \udl{S}$. We  show that  
    \[
        \epsilon\colon(w_*y)_!(w_*y)^*F\longrightarrow F
    \]
    is an equivalence. The left-hand side also inverts $w_\ast \udl{S}$ by the previous point. 
    We use the equivalence $\underline{\func}\distributiveLocalisation{w}{\K}{}\prn{w_*\udl{\wpresheaf}^{\KK}(\CC,\RR),\udl{\R})}, \underline{\D}) \xrightarrow[\sim]{\;\;\; L^* \;\;\;}\underline{\func}\distributiveLocalisation{w}{\K}{w_*\udl{S}^{-1}}\prn{w_*\underline{\mathcal E}, \underline{\D}}$ given by left Kan extension along the Bousfield inclusion or, equivalently, restriction along the inclusion, again. This shows that the unit transformation naturally descends to a transformation of $\KK$-distributive functors $w_\ast \udl{\wpresheaf}(\CC,\udl{\R}) \to \DD$.
    
    Let $\udl{\A}\subseteq \udl{\wpresheaf}^{\mathcal K}(\CC,\udl{\R})$ be the largest full subcategory such that the counit is an equivalence on $w_*\udl{\A}\subseteq w_*\udl{\wpresheaf}^{\mathcal K}(\CC,\RR)$. Since $y\colon \udl{\sC}\rightarrow\mathcal \E$ was fully faithful, we have that $L(\udl{\sC})\subseteq \udl{\A}$. On the other hand, $\udl{\A}\subseteq \udl{\wpresheaf}^{\mathcal K}(\CC,\udl{\RR})$ is closed under $\K$-indexed colimits by the distributivity of both sides.
    Since $L(\CC) \subset \udl{\mathcal A}$ generates $\udl{\presheaf}^{\KK}(\CC,\udl{\RR})$ under $\KK$-indexed colimits by definition, we conclude that 
    $\udl{\mathcal A} = \udl{\presheaf}^{\KK}(\CC,\udl{\R})$ and the claim follows.
\end{proof}

\subsection{Norms of partially cocomplete categories} \label{subsec:sym_mon_structure_cocomplete}

We now employ \Cref{thm:distributivity_localisation-cocompletion} to prove \Cref{alphThm:CATK_Symmon}, constructing an $\NN$-normed structure on $\udl{\K}$-cocomplete categories when $\udl{\K}$ is compatible with $\NN$ as follows:
\begin{definition}
    We call a $\baseCat$-subcategory $\udl{\K}\subseteq \udl{\Cat}$ \mdef{$\NN$-compatible} if, for all $\NN$-admissible maps of orbit-finite $\baseCat$-\animae{} $w\colon W \rightarrow V$, $w_*$ restricts to a functor $w_*\colon \udl{\K}^W\rightarrow \udl{\K}^V$.
\end{definition}
\begin{remark}\label{rmk:compatible-point}
    The case $W = \emptyset$ stipulates that every $\NN$-compatible $\KK$ contains $\udl{*}$.
\end{remark}

Write $\Catcoconeproj(\KK) \subset \Catcocone$ for the full $\baseCat$-subcategory of projectable classes of cocones indexed by diagrams in $\KK$.
This has a natural normed structure:
\begin{lemma}
    If $\KK$ is $\NN$-compatible, then $\Catcoconeproj(\KK) \subset \Catcocone$ is closed under $\NN$-norms.
\end{lemma}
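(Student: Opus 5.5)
Since $\Catcoconeproj(\KK)\subset\Catcocone$ is a full $\baseCat$-subcategory, the claim reduces to checking objects: for every $\NN$-admissible map $w\colon W\to V$ of orbit-finite $\TT$-\animae{} and every projectable $W$-class of cocones $(\CC,\udl{\RR})$ indexed by diagrams in $\KK$, the normed class $w_{\otimes}(\CC,\udl{\RR}) = (w_*\CC, w_*\udl{\RR})$ of \Cref{cons:catcocone} is again (i) indexed by diagrams in $\KK$ and (ii) projectable. Closure under restrictions is immediate, since both conditions are phrased orbitwise and are restriction-stable. So I would verify (i) and (ii) in turn, using the explicit description of $w_*\udl{\RR}$ from \Cref{cons:catcocone} and the base-change decomposition of \Cref{lem:fibrewise_product}.

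For (i), a $U$-cocone of $w_*\udl{\RR}$, for $u\colon U\to V$, is by definition (after base change) of the form
\[
(\overline{w}_*\udl{K})^{\triangleright}\xrightarrow{\canonical}\overline{w}_*(\udl{K}^{\triangleright})\xrightarrow{\overline{w}_* m}\overline{w}_*\overline{u}^*\CC\simeq u^*w_*\CC,
\]
with $m\colon \udl{K}^{\triangleright}\to\overline{u}^*\CC$ a $Z$-cocone in $\udl{\RR}$ and $\udl{K}\in\KK^Z$. Since $\overline{w}$ is $\NN$-admissible, $\NN$-compatibility of $\KK$ gives $\overline{w}_*\udl{K}\in\KK^U$. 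Hence $w_*\udl{\RR}$ is indexed by diagrams in $\KK$.

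For (ii), fix an orbit $U$ with $u\colon U\to V$ and a $U$-object $X$ of $u^*w_*\CC\simeq\prod_i\overline{w}_{i*}\overline{u}_i^*\CC$. By the $\overline{w}^*\dashv\overline{w}_*$ adjunction, $X$ corresponds to a $Z$-object $\overline{X}$ of $\overline{u}^*\CC$ with $X=\overline{w}_*\overline{X}$. Projectability of $\udl{\RR}$, applied orbitwise on the components $Z_i$, shows that $\constant(\overline{X})\colon\udl{\ast}^{\triangleright}\to\overline{u}^*\CC$ lies in $\udl{\RR}$. Its norm is the cocone $(\overline{w}_*\udl{\ast})^{\triangleright}\to\overline{w}_*(\udl{\ast}^{\triangleright})\to u^*w_*\CC$. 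Using $\overline{w}_*\udl{\ast}\simeq\udl{\ast}$, this is a cocone $\udl{\ast}^{\triangleright}\to u^*w_*\CC$.

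The one point needing care is identifying this normed cocone with $\constant(X)$. The map $\canonical$ is mate to $\overline{w}^*(\overline{w}_*\udl{\ast})^{\triangleright}\simeq\udl{\ast}^{\triangleright}\xrightarrow{\id}\udl{\ast}^{\triangleright}$. Since $\overline{w}_*$ preserves the terminal category and the constant functor $\udl{\ast}^{\triangleright}=\udl{\Delta}^1\to\udl{\ast}$, the norm of $\constant(\overline{X})$ is $\overline{w}_*$ applied to $\udl{\Delta}^1\to\udl{\ast}\xrightarrow{\overline{X}}\overline{u}^*\CC$, precomposed with $\canonical$. This is the constant cocone at $\overline{w}_*\overline{X}=X$. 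I expect this identification to be the main (mild) obstacle, since it requires unwinding the mate; I would check it fibrewise via \Cref{lem:fibrewise_product}, where it becomes the statement that a product of constant cocones is constant. With this, $w_*\udl{\RR}$ is projectable, which completes the proof.
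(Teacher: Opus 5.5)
Your proposal is correct and follows the same route as the paper. In both, indexing categories stay in $\KK$ by $\NN$-compatibility applied to the description of $\Catcocone$-norms in \Cref{cons:catcocone}, and projectability is preserved because the norm of a constant cocone is constant (via $\overline{w}_*\udl{\ast}\simeq\udl{\ast}$) and every constant cocone of $w_*\CC$ arises this way; you just supply the base-change and mate bookkeeping that the paper's two-line proof leaves implicit.
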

\begin{proof}
    Given a constant cocone $\udl{\ast}^{\triangleright} \to \CC$, applying $w_\ast$ yields a constant colimit $( w_\ast \ast )^\triangleright \simeq \ast^{\triangleright} \to w_\ast \CC$ yields all constant cocones in $w_\ast \CC$, so projectibility is norm-closed. 
    That norm-closure of indexing categories in $\KK$ is equivalent to $\NN$-compatibility of $\KK$ follows from the description of $\Catcocone$-norms in \Cref{cons:catcocone}. 
\end{proof}
Moreover, since $\udl{*} \in \KK^V$ by \Cref{rmk:compatible-point}, we have an inclusion 
\[
\GCatcoco{\KK} \subset \Catcoconeproj(\KK) \subset \Catcocone.
\]


We use this to construct norms of $\KK$-cocomplete categories.
\begin{theorem} \label{thm:catK_symMon} 
    Let $\KK \subset \udl{\Cat}$ be an $\NN$-compatible subcategory.
    The subcategory inclusion $\GCatcoco{\KK} \subset \Catcoconeproj(\KK)$ admits a left adjoint which is a $\NN$-normed Bousfield localisation.

    Consequently, $\GCatcoco{\KK}$ and the localisation $\Catcoconeproj(\KK) \rightarrow \GCatcoco{\KK}$ jointly admit a unique $\NN$-normed structure, and the inclusion a lax $\NN$-normed structure.
    The $\GCatcoco{\KK}$-norm satisfies the universal property that the lax $\NN$-normed structure map $\mu\colon w_* \CC \rightarrow w_{\otimes} \CC$ induces a natural equivalence
    \begin{equation}\label{eq:catk_uni_prop}
        \mu^* \colon 
        \udl{\Fun}^{\KK}\prn{w_{\otimes} \CC, \DD}
        \xrightarrow{\;\;\;\;\; \sim \;\;\;\;\;}
        \udl{\Fun}\distributiveLocalisation{w}{\KK}{}\prn{w_* \CC, \DD}.
    \end{equation}
\end{theorem}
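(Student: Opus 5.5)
The plan is to build the left adjoint objectwise from \Cref{thm:distributivity_localisation-cocompletion}, then check that it is a normed localisation by verifying the hypothesis of \Cref{prop:monoidal-structure-Bousfield-loc}.

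\textbf{Construction of the left adjoint.} For a $V$-object $(\CC,\udl{\RR})$ of $\Catcoconeproj(\KK)$, set
\[
L(\CC,\udl{\RR}) \coloneqq \udl{\wpresheaf}^{\KK}(\CC,\udl{\RR}),
\]
with unit the localised Yoneda map $i$. The object $\udl{\wpresheaf}^{\KK}(\CC,\udl{\RR})$ is $\KK$-cocomplete: it is generated under $\KK$-colimits inside the cocomplete $\udl{\wpresheaf}(\CC,\udl{\RR})$. The map $i$ sends $\udl{\RR}$-cocones to colimit cocones, because $\udl{\wpresheaf}(\CC,\udl{\RR})$ consists of the local objects for the comparison maps $\alpha$, and $L$ preserves colimits.

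Now apply \Cref{thm:distributivity_localisation-cocompletion} with $w=\id_V$. By \Cref{Catcocone-subcat}, morphisms in $\Catcoconeproj(\KK)$ into $(\DD,\udl{\RR}_\KK(\DD))$ are exactly the functors sending $\udl{\RR}$-cocones to colimit cocones. For $w=\id$, $\KK$-distributive functors are just $\KK$-cocontinuous ones. Hence precomposition with $i$ gives an equivalence
\[
\Map_{\GCatcoco{\KK}}(L(\CC,\udl{\RR}),\DD)\simeq \Map_{\Catcoconeproj(\KK)}((\CC,\udl{\RR}),\DD),
\]
natural in $\DD$ and compatible with restriction. This produces a parametrised left adjoint, and since the inclusion is fully faithful, the adjunction is a Bousfield localisation.

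\textbf{Closure of local equivalences under norms.} By \Cref{prop:monoidal-structure-Bousfield-loc}, it suffices to show that $L$-equivalences are closed under $w_\otimes$. I would use the criterion that $f\colon(\CC,\udl{\RR})\to(\CC',\udl{\RR}')$ is an $L$-equivalence if and only if $f^*$ induces an equivalence on maps into every $\KK$-cocomplete $\DD$ (with restrictions allowed). Then compute, using \Cref{cons:catcocone} for the norm on $\Catcocone$ and \Cref{thm:distributivity_localisation-cocompletion} for general $w$:
\[
\Map\big(w_\otimes(\CC,\udl{\RR}),\DD\big)\simeq \Fun\distributiveLocalisation{w}{\RR}{}(w_*\CC,\DD)\simeq \Fun\distributiveLocalisation{w}{\KK}{}\big(w_*L(\CC,\udl{\RR}),\DD\big).
\]
The last term depends only on $L(\CC,\udl{\RR})$, naturally in $(\CC,\udl{\RR})$. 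So $w_\otimes f$ becomes an equivalence after mapping into local objects whenever $Lf$ is an equivalence, i.e.\ $w_\otimes f$ is an $L$-equivalence.

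This requires $\DD$ to be $w_*\KK$-cocomplete, which holds since $\NN$-compatibility gives $w_*\KK\subseteq\KK$. It also requires checking that $w_\otimes$ of a projectable $\KK$-indexed class is again projectable and $\KK$-indexed; this is the norm-closure lemma just proved. Unique normed lifts of $\GCatcoco{\KK}$ and $L$ then follow from \Cref{prop:monoidal-structure-Bousfield-loc}, and the inclusion becomes lax normed by doctrinal adjunction, as in \Cref{const:lax-structure-cocones}.

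\textbf{The universal property \eqref{eq:catk_uni_prop}.} The normed localisation gives $w_\otimes\CC\simeq L\,w_\otimes(\CC,\udl{\RR}_\KK(\CC))$, with $\mu$ the unit composed with the identification of the underlying $\baseCat$-category of the $\Catcocone$-norm with $w_*\CC$. Then
\[
\udl{\Fun}^{\KK}(w_\otimes\CC,\DD)\simeq \udl{\Fun}^{w_*\udl{\RR}_\KK(\CC)}(w_*\CC,\DD)=\udl{\Fun}\distributiveLocalisation{w}{\KK}{}(w_*\CC,\DD),
\]
where the first equivalence is the adjunction (in parametrised form) and the second is the definition.

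\textbf{Main obstacle.} The delicate step is making the norm-closure argument genuinely parametrised and natural. One must compare the norm in $\Catcocone$ with $w_*$ at the level of $\baseCat$-mapping categories, not merely mapping \animae{}, and this should follow from \Cref{cor:distributivity_localised_presheaves} and its naturality in the inverse Kan extension. Size issues also need care: $\udl{\wpresheaf}^\KK$ is only large, and $\DD$ must be allowed to be large.
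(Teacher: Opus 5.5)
Your proposal is correct and follows essentially the paper's own proof. The left adjoint comes from \Cref{thm:distributivity_localisation-cocompletion} at $w=\id$ together with \Cref{Catcocone-subcat}, and norm-compatibility comes from the same theorem for general $w$, which shows that $w_* i$ is inverted; your observation that mapping out of $w_\otimes(\CC,\udl{\RR})$ depends only on $L(\CC,\udl{\RR})$, naturally, is exactly the paper's 2-out-of-3 reduction to norms of unit maps. The universal property is then read off from that same theorem, as in the paper.
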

\begin{proof}
    We start with the existence of the left adjoint. We claim that the functor $(\CC,\udl{\RR}) \to \prn{\udl{\wpresheaf}^{\KK}(\CC,\udl{\RR}),\udl{\RR}_{\KK}\prn{\udl{\wpresheaf}^{\KK}(\CC,\udl{\RR})}}$ exhibits the target as the left adjoint object of the source. Unwinding definition and using the description of mapping anim\ae{} from \Cref{Catcocone-subcat}, we must verify for any $\KK$-cocomplete category $\DD$ that restriction along Yoneda induces an equivalence 
    \[ i^\ast \colon \underline{\func}^{\KK}\left(\underline{\wpresheaf}^{\KK}(\underline{\sC},\udl{\RR}), \underline{\D}\right)
    \xrightarrow{\;\;\;\; \sim \;\;\;\;}
    \udl{\func}^{\udl{\RR}}(\underline{\sC},\underline{\D}) \,. \]
    This follows from \Cref{thm:distributivity_localisation-cocompletion} for $w$ being the identity.

    For the $\NN$-compatibility,
    let $w \colon W \to V$ be an $\NN$-admissible map of orbit-finite $\TT$-\animae.
    By $2$-out-of-$3$ it is enough to verify the following: for $(\CC,\udl{\R}) \in \Catcocone$, the $w$-norm in $\Catcoconeproj$ sends $i \colon (\CC,\udl{\RR}) \to (\udl{\wpresheaf}^{\KK}(\CC,\udl{\RR}),\udl{\RR}_{\KK})$ (abbreviating $\udl{\RR}_{\KK}(\udl{\wpresheaf}^{\KK}(\CC,\udl{\RR}))$ by $\udl{\RR}_{\KK}$) to a morphism inverted by $\udl{\wpresheaf}^{\KK}$. 
    This in turn means that for any $\KK$-cocomplete $\DD$, restriction along $w_\ast i$ induces an equivalence
    \[ \myuline{\Map}\hspace{-1pt}\left((w_\ast\udl{\wpresheaf}^{\KK}(\CC,\udl{\RR}),w_*\udl{\RR}_{\KK}),(\DD,\udl{\RR}_{\KK}(\DD))\right) \rightarrow \myuline{\Map}\hspace{-1pt}\left((w_\ast\CC,w_*\udl{\RR}),(\DD,\udl{\RR}_{\KK}(\DD))\right). \]
    This again follows from combining \Cref{thm:distributivity_localisation-cocompletion} and \Cref{Catcocone-subcat}.

    The description of the normed structure on $\GCatcoco{\KK}$ follows from \Cref{thm:distributivity_localisation-cocompletion}.
\end{proof} 
In addition to the norm functors, we canonically describe the coherences:
\begin{lemma} \label{lem:total-space-CatK-Cat}
    The functor of total categories $\int \GCatcoco{\KK} \longrightarrow \int \wGCat $ over $\SFTN{\TT}{\NN}$ induced by the lax $\NN$-normed inclusion $\GCatcoco{\KK} \subset \wGCat$    is an inclusion of a subcategory.
\end{lemma}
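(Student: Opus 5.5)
The plan is to reduce the claim to the corresponding statement for $\int \Catcocone \to \int \wGCat$, together with full faithfulness of $\int \GCatcoco{\KK} \to \int \Catcocone$. A functor of total categories is a subcategory inclusion exactly when it is faithful on mapping \animae{} (each mapping-anima map is an inclusion of path components) and injective on equivalence classes of objects. Both properties are stable under composition, so it suffices to treat the factors of the composite lax $\NN$-normed inclusion
\[
\GCatcoco{\KK} \subset \Catcoconeproj(\KK) \subset \Catcocone \xrightarrow{\;t\;} \wGCat .
\]
Here the lax structure on the first inclusion is the right adjoint of the $\NN$-normed Bousfield localisation of \Cref{thm:catK_symMon}. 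The second and third arrows are honest $\NN$-normed functors, $t$ by \Cref{cons:catcocone}.

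For the first two arrows, I would show that the induced functors on total categories are fully faithful. The lax structure on the inclusion $\GCatcoco{\KK}\subset \Catcoconeproj(\KK)$ comes from a Bousfield localisation. A morphism in $\int \GCatcoco{\KK}$ over a span $U\leftarrow W\to V$ is therefore a map $w_\otimes^{\KK} f^*\CC \to \DD$ in $\GCatcoco{\KK}^V$. By the left adjoint property, this is the same as a map $w_\otimes^{\mathrm{cocone}} f^* \CC \to \DD$ in $\Catcoconeproj(\KK)^V$. Since $f^*$ commutes with the inclusion, the mapping \animae{} in the two total categories over each span agree. Hence the functor of total categories is fully faithful over $\SFTN{\TT}{\NN}$. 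The full subcategory $\Catcoconeproj(\KK)\subset\Catcocone$ is norm-closed, so its total category is plainly a full subcategory.

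For $t$, I would use that $t$ is an $\NN$-normed functor whose fibrewise functors are subcategory inclusions on mapping \animae{} by \Cref{Catcocone-subcat}. A morphism in $\int\Catcocone$ over a span is a map $w_\otimes f^*(\CC,\udl{\RR}) \to (\DD,\udl{\mathcal{S}})$ in the fibre. Since $t$ strictly preserves norms and restrictions, the map on mapping \animae{} over each span is the map
\[
\Map_{\Catcocone^V}\big((w_*f^*\CC, w_*f^*\udl{\RR}),(\DD,\udl{\mathcal S})\big) \longrightarrow \Map_{\wGCat^V}(w_*f^*\CC,\DD),
\]
which is a monomorphism by \Cref{Catcocone-subcat}. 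Assembling over the mapping \animae{} of $\SFTN{\TT}{\NN}$ gives faithfulness of the total functor.

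The remaining point is injectivity on the core, i.e. that $\iota^\simeq$ is a monomorphism of \animae{}. Equivalences in the total category lie over equivalences of $\SFTN{\TT}{\NN}$, hence are fibrewise. There the statement reduces to \Cref{cor:catK_subcat}, because equivalences preserve all colimits and the class $\udl{\RR}_{\KK}(\CC)$ is determined by $\CC$. I expect the main obstacle to be the first step: making precise that the lax structure map of the right adjoint, which is given by doctrinal adjunction, really identifies mapping \animae{} over non-invertible spans. This requires checking that the Beck--Chevalley/unit map $w_\otimes^{\mathrm{cocone}}\CC \to w_\otimes^{\KK}\CC$ is the localisation unit. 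That identification comes from the compatibility established in the proof of \Cref{thm:catK_symMon}.
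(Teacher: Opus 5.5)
Your proposal is correct and follows the paper's proof. You factor through $\int \Catcocone$; the first leg is fully faithful, where you make the paper's ``by construction'' explicit via the localisation adjunction of \Cref{thm:catK_symMon}. The second leg is faithful by \Cref{Catcocone-subcat}, since $t$ is $\NN$-normed. Core injectivity is checked in the fibres via the Segal condition and \Cref{cor:catK_subcat}.

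One small correction to your opening criterion. In the paper's convention a subcategory inclusion needs $\iota^{\simeq}$ to be a monomorphism of anima, not merely injectivity on equivalence classes of objects; faithfulness plus that weaker condition would still allow e.g.\ $BH \subset BG$. The stronger condition is exactly what your last paragraph verifies, so the argument is fine.
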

\begin{proof}
    We first verify that its an inclusion of path components on cores. The cores live in the fibres over $\SFTN{\TT}{\NN}$, so that, using the Segal condition, we can reduce to the fact that $\GCatcoco{\KK}$ is a subcategory of $\GCat$ which has been checked in \Cref{Catcocone-subcat}.

    For faithfulness, we use that the functor is the composite 
    $\int \GCatcoco{\KK} \subset \int \Catcocone \subset \int \wGCat$
    where the first functor is fully faithful by construction and the second is faithful as it is the map on total categories of a faithful $\NN$-normed $\baseCat$-functor by \Cref{Catcocone-subcat}.
\end{proof}
\begin{remark}\label{rmk:mw-vs-us}
    In the case $\NN = \baseCat^{\simeq}$, taking fibre over an orbit $V \in \baseCat$ of \Cref{lem:total-space-CatK-Cat} shows that the induced lax-symmetric monoidal functor $\wGCat\prn{\KK}^{V} \rightarrow \wGCat[V]$ induces a subcategory inclusion  of the unstraightenings over $\SFTN{}{}$, and hence a subcategory inclusion after pulling along the inclusion $\FinTN[]{*} \hookrightarrow \SFTN{}{}$.
    It follows by unwinding definitions that this agrees with the $V$-fibre of the construction in \cite[\S~2.5]{mwpresentable};
    said another way, we have provided norms and associated coherences for the  specialisation of their internal tensor product of $\KK$-cocomplete $\baseCat$-categories to presheaf toposes.
\end{remark}

Combining this with \cite[Prop.~II.2.54]{nat-thesis} yields the following.
\begin{corollary} \label{cor:NAlg-in-CatK}
    The $\baseCat$-functor on normed algebras
    \[ \NAlg{\GCatcoco{\KK}} \longrightarrow \NAlg{\wGCat} \simeq \wGCat_{\NN-\otimes} \]
    is an inclusion of the $\baseCat$-subcategory of $\KK$-distributive $\NN$-normed $\baseCat$-categories and $\KK$-cocontinuous $\NN$-normed $\baseCat$-functors.
\end{corollary}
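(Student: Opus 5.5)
The plan is to deduce the statement from \cite[Prop.~II.2.54]{nat-thesis}. That result, as I use it, identifies normed algebras in a sub-normed category of a cartesian structure with the normed categories whose structure maps lie in the subcategory. The key input is \Cref{lem:total-space-CatK-Cat}: the map of total categories $\int \GCatcoco{\KK} \to \int \wGCat$ over $\SFTN{\TT}{\NN}$ is a subcategory inclusion. A normed algebra is a section $\SFTN{\TT}{\NN} \to \int(-)$ that is cocartesian over backwards maps. Postcomposition with a subcategory inclusion of total categories is therefore itself a subcategory inclusion on section categories, and this holds levelwise in $V$, since the construction of $\NAlg{-}$ is natural in $V$ and restriction to $\TT_{/V}$ preserves the hypotheses. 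Hence the functor $\NAlg{\GCatcoco{\KK}} \to \NAlg{\wGCat}$ is a $\baseCat$-subcategory inclusion. The remaining work is to identify its image under the equivalence $\NAlg{\wGCat} \simeq \wGCat_{\NN-\otimes}$ of \Cref{prop:cart}.

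For objects, I unwind what it means for a normed algebra in $\wGCat^\times$ to factor through $\int \GCatcoco{\KK}$. The value at each orbit $V$ must be $\KK$-cocomplete. Each forwards map $w\colon W\to V$ is sent to a morphism $w_\ast\CC_W \to \CC_V$ in $\int\wGCat$ lying over $w$, which under \Cref{prop:cart} is the norm functor $w_\otimes$. By \Cref{thm:catK_symMon}, together with the description of morphisms in $\Catcocone$ (\Cref{Catcocone-subcat}), such a map lies in $\int\GCatcoco{\KK}$ exactly when it is $\KK$-distributive. Backwards maps go to restrictions, which preserve $\KK$-colimits by definition of $\GCatcoco{\KK}$. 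The image on objects is thus the $\KK$-distributive $\NN$-normed categories, in the sense of \Cref{def:distributive-category}.

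For morphisms, I argue similarly. A lax normed functor between normed algebras in $\wGCat$ corresponds under \Cref{prop:cart} to an $\NN$-normed functor. It lies in the subcategory precisely when its components at orbits are $\KK$-cocontinuous, because fibrewise the inclusion is that of \Cref{cor:catK_subcat}.

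I expect the main obstacle to be the compatibility of \Cref{prop:cart} with the subcategory. The map $\int\GCatcoco{\KK}\to\int\wGCat$ is induced by a lax normed functor, so the cocartesian edges over forwards maps in the source are not cocartesian in the target. One must check that the identification of a normed algebra with a normed category through the functor $F$ of \Cref{prop:cart} sends the relevant edges in $\int \wGCat$ exactly to the norm functors $w_\otimes$. Only then does membership of these edges in $\int\GCatcoco{\KK}$ translate into $\KK$-distributivity of $w_\otimes$. I would check this on mapping anim\ae{} using \Cref{thm:catK_symMon}, via the equivalence \eqref{eq:catk_uni_prop} with $\DD = \CC_V$.
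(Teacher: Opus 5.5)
Your proposal is correct and follows the paper's route. The paper obtains the corollary precisely by combining \Cref{lem:total-space-CatK-Cat} with \cite[Prop.~II.2.54]{nat-thesis}, and your unwinding of forwards edges through \eqref{eq:catk_uni_prop} and of morphisms through \Cref{cor:catK_subcat} is what that combination amounts to (one wording fix: morphisms of normed algebras are natural transformations of sections, corresponding to $\NN$-normed functors under \Cref{prop:cart}, not ``lax normed functors'', though your argument treats them correctly).
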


We now briefly discuss the functoriality of the normed structure in $\KK$.
\begin{proposition}[Normed relative cocompletion] \label{prop:nested-indexing}
    Let $\LL \subset \KK \subset \GCat$ be two nested $\NN$-compatible classes of indexing categories. 
    The composite lax $\NN$-normed $\baseCat$-functor
    \[ \GCatcoco{\LL} \to \Catcoconeproj(\LL) \subset \Catcoconeproj(\KK) \longrightarrow \GCatcoco{\KK} \]
    is an $\NN$-normed $\baseCat$-functor,
    left adjoint to the non-full inclusion $\GCatcoco{\KK} \hookrightarrow \GCatcoco{\LL}$.
\end{proposition}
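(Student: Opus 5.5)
The plan is to check the two claims separately: first the adjunction, then normedness. Throughout, write $\iota_{\LL}\colon \GCatcoco{\LL}\hookrightarrow \Catcoconeproj(\LL)$ and $L_{\KK}\colon \Catcoconeproj(\KK)\to \GCatcoco{\KK}$ for the normed Bousfield localisation of \Cref{thm:catK_symMon}. Call the composite in the statement $\Phi$.

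\textbf{Adjunction.} By \Cref{thm:catK_symMon}, $\Phi$ sends $\CC$ to $\udl{\wpresheaf}^{\KK}(\CC,\udl{\RR}_{\LL}(\CC))$. Note that $\udl{\RR}_{\LL}(\CC)$ is projectable, since $\udl{\ast}\in\LL$ by \Cref{rmk:compatible-point}. By \Cref{Catcocone-subcat}, for $\DD$ $\KK$-cocomplete the mapping anima of the non-full inclusion $\GCatcoco{\KK}\hookrightarrow\GCatcoco{\LL}$ are the path components of $\KK$-cocontinuous functors inside the $\LL$-cocontinuous ones. Hence the required natural equivalence
\[
\Map_{\GCatcoco{\KK}}\prn{\Phi\CC,\DD}\simeq \Map_{\GCatcoco{\LL}}\prn{\CC,\DD}
\]
should be exactly the left adjoint property of $L_{\KK}$ from \Cref{thm:catK_symMon}. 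Concretely, it is \Cref{thm:distributivity_localisation-cocompletion} with $w=\id$, applied to the class $\udl{\RR}_{\LL}(\CC)$. The target is $\Map_{\Catcoconeproj(\KK)}\prn{(\CC,\udl{\RR}_{\LL}(\CC)),(\DD,\udl{\RR}_{\KK}(\DD))}$, which consists of the $\LL$-cocontinuous functors. This is formal; the unit is the canonical map $i$ and naturality is inherited.

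\textbf{Normedness.} The functor $\Phi$ is a composite of a lax normed inclusion $\iota_{\LL}$, a strongly normed inclusion $\Catcoconeproj(\LL)\subset\Catcoconeproj(\KK)$ (both are norm-closed full subcategories of $\Catcocone$), and the strong normed localisation $L_{\KK}$. So it suffices to show that for each $\NN$-admissible $w$ the lax structure map
\[
w_{\otimes}^{\KK}\Phi(\CC)\longleftarrow \Phi\prn{w_{\otimes}^{\LL}\CC}
\]
is an equivalence. Equivalently, $L_{\KK}$ inverts the comparison morphism in $\Catcoconeproj(\LL)$
\[
\mu\colon (w_*\CC,w_*\udl{\RR}_{\LL}(\CC))\longrightarrow \prn{w^{\LL}_{\otimes}\CC,\udl{\RR}_{\LL}(w^{\LL}_{\otimes}\CC)}.
\]
To test this, map both sides into an arbitrary $\KK$-cocomplete $\DD$, which is where the target of $L_{\KK}$ lives. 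On the source, the maps are the $\LL$-distributive functors $w_*\CC\to\DD$. On the target, they are the $\LL$-cocontinuous functors out of $w^{\LL}_{\otimes}\CC$. By the universal property \eqref{eq:catk_uni_prop} for $\LL$, restriction along $\mu$ identifies these two anima, because $\DD$ is in particular $\LL$-cocomplete ($\LL\subset\KK$). Combined with \Cref{Catcocone-subcat}, this shows that $L_{\KK}(\mu)$ is an equivalence by Yoneda in $\GCatcoco{\KK}$.

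The main obstacle is bookkeeping in this last step. Equation \eqref{eq:catk_uni_prop} is stated for $\DD$ which are $w_*$-compatible in the sense that the relevant colimits exist. Projectability and $\NN$-compatibility of $\LL$ guarantee that $w_*\udl{\RR}_{\LL}(\CC)$ is again an $\LL$-indexed projectable class, so \Cref{thm:distributivity_localisation-cocompletion} applies. The remaining task is to make sure that the equivalence obtained is the one induced by the lax structure map, rather than an abstract one. I would check this by tracing the units $i$ through \Cref{cons:catcocone}'s description of $w_\otimes$.
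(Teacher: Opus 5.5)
Your proposal is correct and is essentially the paper's proof. The adjunction comes from the universal property of the left adjoint $\Catcoconeproj(\KK)\to\GCatcoco{\KK}$, and normedness reduces to showing that $L_{\KK}$ inverts the lax structure maps of $\GCatcoco{\LL}\subset\Catcoconeproj(\LL)$, which holds because, by \Cref{Catcocone-subcat}, these maps induce equivalences on maps into every $\LL$-cocomplete, and hence every $\KK$-cocomplete, target.

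The only difference is that the paper states the general fact that morphisms inverted by $\Catcoconeproj(\LL)\to\GCatcoco{\LL}$ are inverted by $\Catcoconeproj(\KK)\to\GCatcoco{\KK}$, whereas you cite \eqref{eq:catk_uni_prop} for the structure map directly. Your closing worry is moot, since \eqref{eq:catk_uni_prop} is by construction restriction along the structure map $\mu$; your first display just has the arrow reversed.
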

\begin{proof}
    All but the first functor $\GCatcoco{\LL} \to \Catcoconeproj(\LL)$ is already $\NN$-normed, the last by \Cref{thm:catK_symMon}.
    As this is the inclusion associated to the Bousfield localisation $\Catcoconeproj(\LL) \to \GCatcoco{\LL}$, the lax structure maps are inverted by the localisation (as the composite of inclusion and localisation is the $\NN$-normed structure on the identity). 
    It thus suffices to prove that a morphism which is inverted by $\Catcoconeproj(\LL) \to \GCatcoco{\LL}$ is also inverted by $\Catcoconeproj(\LL) \subset \Catcoconeproj(\KK) \to \GCatcoco{\KK}$. 

    Indeed \Cref{Catcocone-subcat}, we see that a morphism $(\CC,\udl{\R}) \to (\DD,\udl{\mathcal{S}})$ in $\Catcoconeproj(\LL)$ is inverted by the first functor if 
    $\Fun^{\udl{\mathcal S}}(\DD,\EE) \longrightarrow \Fun^{\udl{\R}}(\CC,\EE)$
    is an equivalence for all $\LL$-cocomplete $\EE$. Similarly, we see that it is inverted by the second composite if this is an equivalence for all $\KK$-cocomplete $\EE$, which is clearly a weaker requirement.

    That the underlying $\baseCat$-functor is left adjoint to the forgetful functor is easy to verify by the universal property of the left adjoint of the inclusion $\GCatcoco{\KK} \subset \Catcoconeproj(\KK)$. 
\end{proof}
Finally, we verify that $\GCatcoco{\KK}$ itself is distributive.
\begin{proposition} \label{prop:CatKK-distributive}
    The $\NN$-normed structure on $\GCatcoco{\KK}$ is distributive. 
\end{proposition}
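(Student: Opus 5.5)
The plan is to deduce distributivity of $\GCatcoco{\KK}$ from that of $\Catcocone$ (\Cref{prop:Catcocone-distributive}). I would pass through the intermediate $\NN$-normed $\baseCat$-category $\Catcoconeproj(\KK)$ and apply both parts of \Cref{prop:transf_examples}.

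\textbf{Step 1: $\Catcoconeproj(\KK)$ is distributive.} I would show that $\Catcoconeproj(\KK)\subset\Catcocone$ is closed under $\baseCat$-colimits. It is already closed under $\NN$-norms, so the inclusion is then a fully faithful, hence conservative, cocontinuous $\NN$-normed $\baseCat$-functor. Part (1) of \Cref{prop:transf_examples} then transfers distributivity from $\Catcocone$ to $\Catcoconeproj(\KK)$.

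For colimit-closure I would use the description in \Cref{rmk:colimits-Catcocone}. The colimit of $(\CC_i,\udl{\RR}_i)$ is $\colim\CC_i$ equipped with the essential image of the cocones coming from the $\udl{\RR}_i$. Being indexed by $\KK$ is clearly preserved, since every cocone in the colimit class comes from some $\udl{\RR}_i$. For projectability, I need every constant cocone $\udl{\ast}^{\triangleright}\to U^\ast\colim\CC_i$ to lie in the image. This holds once every $U$-object of $\colim\CC_i$ is, up to equivalence, the image of an object of some $\CC_i$: the constant cocone on that object lies in $\udl{\RR}_i$ by projectability, and its image is the desired constant cocone.

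\textbf{Step 2: the required essential surjectivity.} I need that the canonical map into a parametrised colimit of $\baseCat$-categories is jointly essentially surjective on objects, after restriction to orbits. Parametrised colimits are generated by fibrewise colimits and indexed coproducts $f_!$. For fibrewise colimits, this is the usual fact that a colimit in $\Cat$ is a localisation of the unstraightening, which is essentially surjective. For indexed coproducts, a double coset decomposition of $u^\ast f_!$ as in \Cref{lem:fibrewise_product} (dually) expresses the fibres as finite coproducts of restrictions. Objects of these come from summands, which suffices.

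\textbf{Step 3: localisation.} By \Cref{thm:catK_symMon}, the left adjoint $\Catcoconeproj(\KK)\to\GCatcoco{\KK}$ is an $\NN$-normed Bousfield localisation. Being a left adjoint, it is cocontinuous, and $\GCatcoco{\KK}$ is cocomplete as a reflective subcategory of a cocomplete one. Part (2) of \Cref{prop:transf_examples} then yields distributivity of $\GCatcoco{\KK}$.

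I expect the main obstacle to be Step 2, in particular controlling constant cocones through indexed coproducts and the base-change identifications. A fallback is to note that projectability only concerns objects. One could then check directly that the Bousfield localisation $\Catcocone\to\Catcoconeproj(\KK)$, given by adding constant cocones, commutes with colimits.
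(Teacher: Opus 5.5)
Your proposal is correct and has the same skeleton as the paper's proof. Part (2) of \Cref{prop:transf_examples} along the $\NN$-normed Bousfield localisation $\Catcoconeproj(\KK)\to\GCatcoco{\KK}$, then part (1) along the norm-closed fully faithful inclusion $\Catcoconeproj(\KK)\subset\Catcocone$, reduce everything to colimit-closure of that inclusion.

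The two routes differ only in how that last step is proved. The paper never computes a colimit in $\Catcocone$. Instead it shows the inclusion is coreflective, with right adjoint $(\CC,\udl{\RR})\mapsto(\CC^{\mathrm{proj}},\udl{\RR}^{\mathrm{proj}}_{\KK})$. Here $\CC^{\mathrm{proj}}\subset\CC$ is the full subcategory on objects whose constant cocones lie in $\udl{\RR}$, and $\udl{\RR}^{\mathrm{proj}}_{\KK}$ keeps the $\KK$-indexed $\udl{\RR}$-cocones landing there. This is verified purely through the mapping-anima description of \Cref{Catcocone-subcat}, so colimit-closure is formal. You instead use the explicit colimit formula of \Cref{rmk:colimits-Catcocone}, which the paper states without proof, and check projectability and $\KK$-indexedness of the colimit class by hand. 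Your route is more concrete and shows why the class stays projectable and $\KK$-indexed, but it depends on that formula; the paper's buys independence from it.

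One point in Step 2 needs tightening. The base-change description of \Cref{lem:fibrewise_product} is only available for $\NN$-admissible maps, and $\baseCat$ itself is not assumed orbital. So it does not control $u^*f_!$ for a general indexed coproduct. The levelwise joint essential surjectivity you need holds anyway, and more directly: by \Cref{lem:cofree-colims}, the value at each orbit of a $\baseCat$-colimit in $\wGCat\simeq\cofree(\wCat)$ is an ordinary colimit in $\wCat$. Such a colimit is a localisation of the unstraightening, hence jointly essentially surjective.

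You can also bypass objects entirely. Projectability says the natural constant-cocone functor $\CC\to\Cocone{\CC}$ factors through $\udl{\RR}$. Passing to colimits then shows that every constant cocone on $\colim_i\CC_i$ lies in the image of $\colim_i\udl{\RR}_i$. Likewise, the legs $\udl{\RR}_i\to\GCat$ recording indexing categories all factor through the full subcategory $\KK$, which is a monomorphism. Hence so does the induced map out of $\colim_i\udl{\RR}_i$.

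Your fallback is not well-posed as stated. Adding constant cocones is a reflector that handles projectability alone; it does nothing about the $\KK$-indexing condition, which is coreflective in nature, as in the paper's right adjoint.
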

\begin{proof}
    By \Cref{prop:transf_examples} and the $\NN$-normed Bousfield localisation $\Catcoconeproj(\KK)\rightarrow \wGCat(\KK)$, it is enough to verify that the $\NN$-normed structure on $\Catcoconeproj(\KK)$ is distributive. 
    Since the inclusion $\Catcoconeproj(\KK)\subset \Catcocone$ is an $\NN$-normed conservative $\baseCat$-functor with distributive codomain under \Cref{prop:Catcocone-distributive}, \Cref{prop:transf_examples} reduces us to verifying that  $\Catcoconeproj(\KK) \subset \Catcocone$ is closed under colimits.

    We claim that this inclusion actually admits a right adjoint: Given a class of cocones $\udl{\RR}$ on a $\baseCat$-category $\CC$, let $\CC^{\mathrm{proj}}$ denote the full $\baseCat$-subcategory of $\CC$ consisting of the objects for which $\udl{\R}$ contains constant colimits. 
    Moreover, let $\udl{\R}_{\KK}^{\mathrm{proj}}$ denote the $\baseCat$-class of cocones $\udl{K}^{\triangleright} \to \CC^{\mathrm{proj}}$ whose composite to $\CC$ is lies  in $\udl{\RR}$ and for which $\udl{K} \in \KK$. 
    We claim that $(\CC^{\mathrm{proj}},\udl{\R}_{\KK}^{\mathrm{proj}}) \to (\CC,\udl{\RR})$ exhibits the left hand side as the right adjoint object of $(\CC,\udl{\RR})$ to $\Catcoconeproj(\KK) \subset \Catcocone$.
    First note that the left hand side actually is contained in $\Catcoconeproj(\KK)$, by definition. Next, let $(\DD,\udl{\mathcal S}) \in \Catcoconeproj(\KK)$. Consider
    \[ \begin{tikzcd}[row sep=scriptsize]
        \myuline{\Map}_{\Catcocone}((\DD,\udl{\mathcal S}),(\CC^{\mathrm{proj}},\udl{\R}_{\KK}^{\mathrm{proj}})) \ar[r] \ar[d] & \myuline{\Map}_{\Catcocone}((\DD,\udl{\mathcal S}),(\CC,\udl{\R})) \ar[d] \\
        \myuline{\Map}_{\wGCat}(\DD,\CC^{\mathrm{proj}}) \ar[r] & \myuline{\Map}_{\wGCat}(\DD,\CC)
    \end{tikzcd} \]
    where the vertical arrows are induced by the forgetful functor and the horizontal by postcomposition. 
    The vertical arrows are monomorphisms by \Cref{Catcocone-subcat} and the lower arrow is a monomorphism as $\CC^{\mathrm{proj}} \subset \CC$ is a $\baseCat$-subcategory inclusion. 

    It follows that the upper arrow is a monomorphism, too. 
    We claim that it is $\pi_0$-surjective by \Cref{Catcocone-subcat}: 
    indeed, given a functor $\DD \to \CC$ sending cocones to cocones, it has to factor through $\CC^{\mathrm{proj}}$ as $\DD$ is projectable. Moreover, as $\DD$ only contains $\KK$-indexed cocones, it is enough to verify that it can only send $\udl{\mathcal S}$-cocones to $\udl{\R}_{\KK}^{\mathrm{proj}}$-cocones, but this is clear.
\end{proof}

We finish the subsection by addressing some small matters:
consider the full $\baseCat$-subcategory $\smallCatcoconeproj(\KK) \subset \Catcoconeproj(\KK)$ spanned by small $V$-categories equipped with a $V$-class of $\KK$-cocones. This is closed under small colimits and $\NN$-norms by the cocontinuity and $\NN$-normedness of the forgetful functor to $\wGCat$ together with norm closure of $\GCat \subset \wGCat$.
    The inclusion $\GCat(\KK) \subset\smallCatcoconeproj(\KK)$ admits a left adjoint: it is the $\KK$-colimit closure of the Bousfield localisation $\udl{\presheaf}(\CC,\RR)$ of $\udl{\presheaf}(\CC)$ at the small class of colimit-comparison maps, by the same proof of \Cref{thm:distributivity_localisation-cocompletion}. 
    For the natural equivalence, it suffices to check that this left adjoint is the restriction of the left adjoint $\Catcoconeproj(\KK) \to \wGCat(\KK)$;
    this follows since the universal property provided in the proof will also hold for mapping into large $\KK$-cocomplete categories. 

    This shows that $\Catcoconeproj(\KK) \to \GCatcoco{\KK}$ and $\smallCatcoconeproj(\KK) \to \GCat(\KK)$ is a pair of $\NN$-normed Bousfield localisations compatible with the respective inclusions. It follows that $\GCat(\KK) \to \GCatcoco{\KK}$ is $\NN$-normed and cocontinuous.
    
    Evaluating the left adjoint object of a small category $\CC$ with a small class of projectable cocones $\udl{\R}$ under the inclusion $\GCatcoco{\allcolim} \subset \Catcoconeproj$ is given by  $\udl{\presheaf}(\CC,\udl{\R})$ which is presentable. Applying this to a small $\KK$-cocomplete category together with its small class of $\KK$-cocones shows that the relative cocompletion $\udl{\presheaf}^{\allcolim}_{\KK}(\CC)$ is presentable.

    All in all, we have proved:
\begin{proposition}\label{prop:relative_cocompletion_through_presentable}
    Suppose $\KK$ is small.
    Then, $\GCat\prn{\KK} \subset \wGCat\prn{\KK}$ is closed under small $\baseCat$-colimits and $\NN$-norms;
    hence it is distributive.
    Moreover, there is an $\NN$-normed natural equivalence $\udl{\wpresheaf}^{\allcolim}_{\KK}
    \mid_{\GCat\prn{\KK}}  \simeq  \udl{\presheaf}^{\allcolim}_{\KK}$, and this cocompletion factors as follows:
    \[
        \udl{\presheaf}^{\allcolim}_{\KK}\colon \GCat\prn{\KK} \longrightarrow \udl{\Pr}^L \subset \wGCat\prn{\allcolim}.
    \]
\end{proposition}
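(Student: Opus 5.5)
The plan is to assemble the proposition from the pieces established in the preceding paragraph, treating it as a summary statement. I would run through three steps: closure, distributivity, and the comparison of cocompletions.

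\emph{Closure under colimits and norms.} First I check that $\GCat\prn{\KK} \subset \wGCat\prn{\KK}$ is closed under small $\baseCat$-colimits and $\NN$-norms. For this I use the compatible pair of $\NN$-normed Bousfield localisations $\smallCatcoconeproj(\KK) \to \GCat(\KK)$ and $\Catcoconeproj(\KK) \to \GCatcoco{\KK}$ just constructed. A small colimit or norm of small $\KK$-cocomplete categories can be computed in $\smallCatcoconeproj(\KK)$ and then localised. Here smallness of $\KK$ ensures that $\udl{\presheaf}(\CC,\udl{\RR})$ and its $\KK$-closure stay small, since the local class of colimit-comparison maps is small.

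Because the left adjoint on $\smallCatcoconeproj(\KK)$ is the restriction of the one on $\Catcoconeproj(\KK)$, the resulting object agrees with the colimit or norm formed in $\GCatcoco{\KK}$. This uses \Cref{rmk:colimits-Catcocone} together with the norm-closure of $\GCat\subset\wGCat$ and the closure of $\smallCatcoconeproj(\KK)$ under colimits and norms.

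\emph{Distributivity.} Distributivity of $\GCat\prn{\KK}$ then follows from \Cref{prop:CatKK-distributive}. The inclusion $\GCat\prn{\KK} \subset \wGCat\prn{\KK}$ is conservative, cocontinuous and $\NN$-normed, so \Cref{prop:transf_examples}(1) applies.

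\emph{Comparison of cocompletions.} For the natural equivalence $\udl{\wpresheaf}^{\allcolim}_{\KK}|_{\GCat(\KK)} \simeq \udl{\presheaf}^{\allcolim}_{\KK}$, I apply \Cref{prop:nested-indexing} to the nested pair $\KK \subset \udl{\allcolim}$, giving an $\NN$-normed left adjoint $\GCatcoco{\KK}\to\GCatcoco{\allcolim}$. Its value on a small $\CC$ is computed as the left adjoint object of $(\CC,\udl{\RR}_{\KK}(\CC))$, which is a small projectable class. By the argument of \Cref{thm:distributivity_localisation-cocompletion}, carried out with small presheaves, this object is $\udl{\presheaf}(\CC,\udl{\RR}_{\KK}(\CC))$.

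That this small-presheaf model still has the universal property against large cocomplete targets is the step I expect to need the most care. I would handle it by noting that the mapping-out equivalence only uses cocompleteness of the target and the small generating set. The $\NN$-normedness of the comparison is inherited because both sides are restrictions of one $\NN$-normed left adjoint.

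\emph{Presentability.} Finally, presentability holds because $\udl{\presheaf}(\CC,\udl{\RR})$ is the localisation of a presheaf category at a small set, so it lies in $\udl{\Pr}^L$ by definition. Moreover, left adjoints between such localisations are exactly cocontinuous functors by the presentable adjoint functor theorem, so the functor lands in the non-full subcategory $\udl{\Pr}^L\subset\wGCat(\allcolim)$.
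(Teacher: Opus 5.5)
Your proposal is correct and follows essentially the paper's route. Closure under colimits and norms comes from the compatible pair of $\NN$-normed Bousfield localisations $\smallCatcoconeproj(\KK)\to\GCat(\KK)$ and $\Catcoconeproj(\KK)\to\wGCat(\KK)$, where the small left adjoint agrees with the large one because its universal property persists against large targets; distributivity then transfers along the conservative inclusion via \Cref{prop:transf_examples}; and the comparison and presentability come from identifying the left adjoint object of a small projectable class of cocones with $\udl{\presheaf}(\CC,\udl{\RR})$.

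One slip to fix: $\udl{\presheaf}(\CC,\udl{\RR})$ is itself large, not small. What smallness of $\KK$ actually buys is that $\udl{\RR}$ is a small class, so the localisation exists, and that the $\KK$-colimit closure of the image of $\CC$ inside $\udl{\presheaf}(\CC,\udl{\RR})$ is small.
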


\subsection{Norms of presentable categories}\label{subsec:norms_presentable}
Our goal in this subsection is to construct the $\NN$-normed structure on presentable categories by inheriting it from that on cocomplete large categories, and to show moreover that it is distributive. This will involve leveraging some of the set-theoretic matters established in \cite{MartiniWolf2024}, details of which we shall suppress here and defer to \Cref{sec:size}. 

To begin the discussion, we need the following set-theoretic language. 
\begin{notation}\label{def:filt}
    For $\kappa$ a regular cardinal, we write $\filt(\kappa) \subset \Cat$ for the full subcategory of $\kappa$-filtered categories;
    hence we write $\infl{}{\baseCat}\left(\filt(\kappa)\right) \subset \GCat$ for the full $\baseCat$-subcategory of constant $\filt(\kappa)$-valued $\baseCat$-categories, and similarly $\infl{}{\baseCat}\mathrm{idem} \subset \GCat$ for those constant on the category $\mathrm{idem}$ corepresenting idempotent endomorphisms.
    
    We will call a $V$-object $c$ in a $\baseCat$-category $\CC$ \mdef{$\kappa$-compact} if its co-Yoneda image $\myuline{\Map}(c,-) \colon V^*\CC \to V^*\GSpaces$ commutes with $\infl{}{\baseCat}\left(\filt(\kappa)\right)$-colimits, and we denote by $\CC^{\kappa}\subset \CC$ the full $\baseCat$-subcategory of $\kappa$-compact objects. 
\end{notation}
\begin{definition}\label{def:small}
    Let $\kappa$ be a regular cardinal.
    \begin{itemize}
    \item  The category of \mdef{$\kappa$-small} $\baseCat$-categories $\sm(\kappa)^{\baseCat} \subset \Cat[\baseCat]$ is the closure of $\baseCat\subset \Spaces[\baseCat]\subset \Cat[\baseCat]$ under non-parametrised $\kappa$-small colimits.
     \item We call $\infl{}{\baseCat}\mathrm{idem}$-cocomplete $\baseCat$-categories \mdef{idempotent-complete}, and we write the idempotent cocompletion as
        \[ 
            \AA^{\natural} \coloneqq \udl{\presheaf}^{\infl{}{\baseCat}\left(\mathrm{idem}\right)}\prn{\AA}. 
        \]  
    \item For $\AA$ a small $\baseCat$-category, we denote the $\infl{}{\baseCat}(\filt(\kappa))$-cocompletion as 
    \[ 
    \udl{\ind}^\kappa\prn{\AA} \coloneqq \udl{\presheaf}^{\infl{}{\baseCat}\left(\filt(\kappa)\right)}\prn{\AA}. 
    \]  
    \item The $\baseCat$-categories of the form  $\udl{\ind}^\kappa\prn{\AA}$ for $\AA$ small are called \mdef{$\kappa$-accessible};
    functors between $\kappa$-accessible $\baseCat$-categories are called \mdef{$\kappa$-accessible $\baseCat$-functors} if they are $\infl{}{\baseCat}\prn{\filt(\kappa)}$-cocontinuous and they preserve $\kappa$-compact $V$-objects for all orbits $V$.
    We write $\GAcc{\kappa} \subset \wGCat$ for the full $\baseCat$-subcategory of $\kappa$-accessible $V$-categories and $\kappa$-accessible $V$-functors.
    \item We define \mdef{$\kappa$-presentable} $\baseCat$-categories and $\baseCat$-functors as those which are separately presentable and $\kappa$-accessible;
    that is, they are the $\baseCat$-objects and $\baseCat$-morphisms of
    \[
        \udl{\Pr}_\kappa^L \coloneqq \GAcc{\kappa} \cap  \udl{\Pr}^L \subset \wGCat.
    \]
    \end{itemize}
\end{definition}
The following lemma specialises \cite[Prop.~5.4.5]{mwpresentable}.
\begin{lemma}\label{lem:ind-fibrewise}
    For any subcategory $\K\subset \Cat$,  let $\infl{}{\baseCat} \KK \subset \GCat$ be the associated $\baseCat$-subcategory of constant parametrised categories. There is an equality of $\baseCat$-subcategories
    \[
    \wGCat\prn{\infl{}{\baseCat}\K}\simeq \cofree\prn{\wCat\prn{\K}} \subset \wGCat.
    \]
    We acquire natural equivalences, $\udl{\ind}^{\kappa}\prn{\AA}^V \simeq \ind^{\kappa}\prn{\AA^V}$ and $
        \prn{\AA^{\natural}}^V \simeq \prn{\AA^V}^{\natural}$.
\end{lemma}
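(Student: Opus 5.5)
The plan is to reduce everything to the fibrewise description of colimits for constant indexing diagrams. Since $\infl{}{\baseCat}\K$ consists of $\baseCat$-categories that are constant on some $K\in\K$, a $V$-diagram $\udl{K}\to V^*\CC$ with $\udl{K}$ constant amounts, through the universal property of $\cofree$ as right adjoint to $\int$, to a compatible family of diagrams $K\to\CC^U$ over $U\to V$. The first step is the parametrised colimit criterion, namely \cite[Prop.~5.4.5]{mwpresentable}, equivalently the argument of \Cref{lem:cofree-colims} adapted from $\cofree(\D)$ to a general $\CC$. For constant $\udl{K}$ it says that $\CC$ admits $\udl{K}$-shaped colimits if and only if each fibre $\CC^U$ admits $K$-colimits and each restriction $f^*\colon\CC^{U'}\to\CC^U$ preserves them. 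In that case the parametrised colimit is computed fibrewise, and a cocone is a colimit cocone precisely when it is so in every fibre. The reason is that the left adjoint of the constant-diagram functor $\CC\to\udl{\Fun}(\udl{K},\CC)$ is then simply the fibrewise left adjoint, and the Beck--Chevalley condition for it is exactly that restrictions preserve $K$-colimits.

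Next, I would unwind both sides of the claimed equality. A $V$-object of $\cofree(\wCat(\K))$ is a functor $(\baseCat_{/V})\op\to\wCat(\K)$: the fibres are $\K$-cocomplete and the restrictions are $\K$-cocontinuous. A $V$-morphism is a natural transformation whose components are $\K$-cocontinuous. By the first step, these are exactly the $\infl{}{\baseCat}\K$-cocomplete $V$-categories and the $\infl{}{\baseCat}\K$-cocontinuous $V$-functors. Both sides are subcategories of $\wGCat=\cofree(\wCat)$, since restriction along $\int\dashv\cofree$ identifies $\wGCat^V$ with $\Fun((\baseCat_{/V})\op,\wCat)$. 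It therefore suffices to compare objects and morphisms, and I would compare them compatibly in $V$.

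For the consequences, I would use the universal properties from \Cref{thm:catK_symMon} with $w=\id$. $\udl{\ind}^\kappa(\AA)$ corepresents $\infl{}{\baseCat}\filt(\kappa)$-cocontinuous functors out of it, by extension from $\AA$. By the equality just proved, this is the same as corepresenting natural families of fibrewise $\kappa$-filtered cocontinuous functors. On the other hand, the functor $V\mapsto\ind^\kappa(\AA^V)$ with the induced restrictions is a $\baseCat$-category in $\cofree(\wCat(\filt(\kappa)))$, because $\ind^\kappa$ of a functor preserves filtered colimits. It receives a map from $\AA$, and it satisfies the same universal property fibrewise, using the non-parametrised universal property of $\ind^\kappa$ together with the Yoneda lemma for $\baseCat_{/V}$-indexed diagrams. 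Comparing the two corepresented functors gives $\udl{\ind}^\kappa(\AA)^V\simeq\ind^\kappa(\AA^V)$, and the same argument with $\mathrm{idem}$ in place of $\filt(\kappa)$ gives the statement for $\AA^\natural$.

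I expect the main obstacle to be the first step: verifying carefully that parametrised colimits over constant diagrams reduce to fibrewise colimits preserved by restriction. This is where the Beck--Chevalley condition enters. One also has to check that the fibrewise construction $V\mapsto\ind^\kappa(\AA^V)$ really lies in the subcategory and has the correct mapping-out property, rather than merely that it has the right fibres.
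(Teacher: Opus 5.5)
Your proposal is correct and takes the paper's route. The paper gives no argument beyond recording the lemma as a specialisation of \cite[Prop.~5.4.5]{mwpresentable}, which is the fibrewise description of colimits of constant shape that you take as your first step. Your fibrewise unwinding of both $\baseCat$-subcategories, and your derivation of $\udl{\ind}^\kappa(\AA)^V\simeq\ind^\kappa(\AA^V)$ and $(\AA^\natural)^V\simeq(\AA^V)^\natural$ by identifying two left adjoints to the same inclusion, spell out what that citation leaves implicit; the fibrewise left adjoint is obtained by postcomposing the non-parametrised adjunction $\ind^\kappa\dashv\iota$ on $\Fun((\baseCat_{/V})\op,-)$.
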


This allows us to identify $\kappa$-accessibility as a fibrewise condition.
\begin{proposition}\label{prop:acc-fibrewise}
    There is an equality of $\baseCat$-subcategories of $\wGCat$:
    \[
        \GAcc{\kappa} = \cofree\prn{\Acc{\kappa}} \subset \wGCat.
    \]
    In particular, if $\CC$ is $\kappa$-accessible, we acquire an equality of full subcategories 
    \[
        \prn{\CC^\kappa}^V = \prn{\CC^V}^{\kappa} \subset \CC^V.
    \]
\end{proposition}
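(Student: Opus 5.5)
We prove the two inclusions of $\baseCat$-subcategories of $\wGCat$ separately: first on objects, then on morphisms. The fibrewise formula for $\kappa$-compact objects is proved along the way, because it is exactly what identifies the morphisms.

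\textbf{Objects.} Suppose $\CC=\udl{\ind}^\kappa(\AA)$ with $\AA$ small. By \Cref{lem:ind-fibrewise}, $\CC^V\simeq\ind^\kappa(\AA^V)$, and each restriction $f^*$ is the $\ind^\kappa$ of $f^*\colon\AA^V\to\AA^U$. Hence every fibre is $\kappa$-accessible and every restriction is $\kappa$-accessible, so $\CC$ lies in $\cofree(\Acc{\kappa})$.

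Conversely, let $\CC$ have $\kappa$-accessible fibres and $\kappa$-accessible restrictions. Define $\AA\subset\CC$ fibrewise by $\AA^V\coloneqq(\CC^V)^\kappa$. This is a small $\baseCat$-subcategory, and it is restriction-stable precisely because the restrictions preserve $\kappa$-compact objects. By \Cref{lem:ind-fibrewise}, $\CC$ is $\infl{}{\baseCat}(\filt(\kappa))$-cocomplete, since fibrewise filtered cocompleteness with cocontinuous restrictions is the same thing as the parametrised notion. The universal property of $\udl{\ind}^\kappa$ (that is, \Cref{thm:catK_symMon} with $w=\id$) then gives a cocontinuous functor $\udl{\ind}^\kappa(\AA)\to\CC$. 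Fibrewise this functor is $\ind^\kappa((\CC^V)^\kappa)\to\CC^V$, which is an equivalence by the classical theory. Equivalences of $\baseCat$-categories are detected fibrewise, so $\CC\simeq\udl{\ind}^\kappa(\AA)$.

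\textbf{Compact objects.} Let $\CC$ be $\kappa$-accessible and $c\in\CC^V$. Then $c$ is parametrised $\kappa$-compact exactly when $\myuline{\Map}(c,-)\colon V^*\CC\to V^*\GSpaces$ preserves $\infl{}{\baseCat}(\filt(\kappa))$-colimits. These colimits, and colimits in $\GSpaces$, are computed fibrewise (\Cref{lem:ind-fibrewise}, \Cref{lem:cofree-colims}). So the condition says that for every $u\colon U\to V$, the functor $\Map_{\CC^U}(u^*c,-)$ commutes with $\kappa$-filtered colimits, i.e.\ that $u^*c\in(\CC^U)^\kappa$ for all such $u$. Taking $u=\id$ shows $(\CC^\kappa)^V\subseteq(\CC^V)^\kappa$. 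For the reverse inclusion we need $u^*$ to preserve $\kappa$-compact objects. This holds because $\CC=\udl{\ind}^\kappa(\AA)$ and $u^*=\ind^\kappa(u^*|_{\AA})$ by \Cref{lem:ind-fibrewise}, and $\ind^\kappa$ of a functor sends compacts to compacts, since the compacts are retracts of objects of $\AA$.

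\textbf{Morphisms.} Take a functor $F$ between $\kappa$-accessible $\baseCat$-categories. It is $\infl{}{\baseCat}(\filt(\kappa))$-cocontinuous if and only if each $F^V$ preserves $\kappa$-filtered colimits, again because these colimits are fibrewise. By the compact-objects step, $F$ preserves parametrised $\kappa$-compact objects if and only if each $F^V$ preserves $\kappa$-compact objects. So $F$ is a morphism of $\GAcc{\kappa}$ exactly when it is a morphism of $\cofree(\Acc{\kappa})$.

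\textbf{Main obstacle.} The delicate step is the compact-objects comparison. Parametrised compactness quantifies over all restrictions $u^*c$, so we genuinely need restrictions to preserve fibrewise compacts. That is why the argument first establishes the $\udl{\ind}^\kappa$ presentation and only then compares compact objects.
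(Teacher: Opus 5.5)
Your proof is correct and follows the same route as the paper. The paper also reduces to the fibrewise description of parametrised $\kappa$-compactness (\Cref{lem:res-of-compact}: $c$ is $\kappa$-compact if and only if every restriction $u^*c$ is fibrewise $\kappa$-compact), and then gets restriction-stability of fibrewise compacts in $\udl{\ind}^{\kappa}(\AA)$ from $\udl{\ind}^{\kappa}(\AA)^V\simeq\ind^{\kappa}(\AA^V)$; your explicit converse on objects (rebuilding $\CC$ as $\udl{\ind}^{\kappa}$ of its fibrewise compacts) and your morphism comparison fill in steps the paper leaves implicit.
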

To prove this, we first identify its fibrewise expression for $\kappa$-compact objects:
\begin{lemma}\label{lem:res-of-compact}
    A $\baseCat$-category $\CC$ admits equalities $\prn{\CC^\kappa}^V = \prn{\CC^V}^{\kappa} \subset \CC$ if and only if the inclusions $\prn{\CC^{V}}^{\kappa} \subset \CC^V$ form a $\baseCat$-subcategory.
\end{lemma}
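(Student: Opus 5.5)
We prove the two implications separately. The forward direction is immediate. Here $\CC^\kappa \subset \CC$ is by definition a full $\baseCat$-subcategory, i.e. it is restriction-stable. So if its fibres agree with the fibrewise subcategories $\prn{\CC^V}^\kappa$, then these fibrewise $\kappa$-compact subcategories are closed under restriction $f^*$ and so assemble into a $\baseCat$-subcategory.

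For the converse, we assume that the fibrewise $\kappa$-compact objects are stable under restriction. The key step is to unwind the definition of a $V$-object $c$ being $\kappa$-compact in the parametrised sense. This says that $\myuline{\Map}(c,-)\colon V^*\CC \to V^*\GSpaces$ commutes with $\infl{}{\baseCat}(\filt(\kappa))$-colimits. We will check this against the characterisation of colimits in $V^*\GSpaces \simeq \cofree(\Spaces)$ as restricted to $V$ (\Cref{lem:cofree-colims}). Colimits over a constant diagram category $\udl{I}$ in a coefficient system are computed levelwise. Moreover, for constant indexing diagrams the parametrised colimit in $V^*\CC$ is computed fibrewise, in the sense that $f^*$ commutes with it; this is the same phenomenon as in \Cref{lem:ind-fibrewise}. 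Combining these, $\myuline{\Map}(c,-)$ commutes with constant $\kappa$-filtered colimits if and only if, for every orbit map $f\colon U \to V$, the functor $\Map_{\CC^U}(f^*c,-)\colon \CC^U \to \Spaces$ commutes with $\kappa$-filtered colimits. That is, $c \in (\CC^\kappa)^V$ if and only if $f^*c \in (\CC^U)^\kappa$ for all $f\colon U\to V$.

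With this reformulation both inclusions follow. Taking $f=\id_V$ gives $(\CC^\kappa)^V \subset (\CC^V)^\kappa$; this inclusion holds unconditionally. Conversely, if $c \in (\CC^V)^\kappa$, the restriction-stability hypothesis gives $f^*c \in (\CC^U)^\kappa$ for all $f$, hence $c \in (\CC^\kappa)^V$.

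The main point requiring care is the reformulation in the second paragraph. One must confirm that a parametrised colimit over an inflated $\kappa$-filtered category $\infl{}{\baseCat} I$ in $V^*\CC$ is the same as a compatible family of fibrewise $I$-colimits preserved by restrictions. One must also confirm that, under the equivalence $(V^*\GSpaces)^V \simeq \Fun((\baseCat_{/V})\op,\Spaces)$, the value of $\myuline{\Map}(c,-)$ at $f\colon U\to V$ is $\Map_{\CC^U}(f^*c,-)$. I would cite the standard description of the parametrised mapping functor for the latter, and \Cref{lem:cofree-colims} together with the fibrewise nature of constant colimits for the former.
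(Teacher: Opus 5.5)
Your proposal is correct and follows essentially the paper's route. Both arguments reduce parametrised $\kappa$-compactness of $c \in \CC^V$ to fibrewise $\kappa$-compactness of every restriction $f^*c$, using the levelwise formula for $\myuline{\Map}(c,-)$ and the fibrewise description of inflated colimits from \Cref{lem:ind-fibrewise}, and then read off both implications; your separate treatment of the forward direction via restriction-stability of $\CC^\kappa$ amounts to the same thing.

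One shared caveat: your step that constant parametrised colimits are fibrewise colimits commuting with $f^*$ tacitly assumes that $\CC$ admits $\infl{}{\baseCat}(\filt(\kappa))$-colimits, and so does your claim that $(\CC^\kappa)^V \subset (\CC^V)^\kappa$ holds ``unconditionally''. The paper's proof relies on the same tacit hypothesis, and it is satisfied in the intended application to $\kappa$-accessible $\CC$ in \Cref{prop:acc-fibrewise}.
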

\begin{proof}
    By examining the defining natural equivalence $\myuline{\Map}_{V^*\CC}\prn{c,-}^U \simeq {\Map}_{\CC^U}\prn{c,-}$ and applying the fibrewise expression for inflated-cocontinuous $\baseCat$-functors (as e.g. supplied by \Cref{lem:ind-fibrewise}), we have that $c \in V^* \CC$ is $\kappa$-compact if and only if $W^* c \in \CC^W$ is (non-parametrised) $\kappa$-compact for all $W \rightarrow V$.
    That this is equivalent to $c \in \CC^V$ being (non-parametrised) $\kappa$-compact for all pairs $(V,c)$ is precisely the latter condition.
\end{proof}

\begin{proof}[Proof of \Cref{prop:acc-fibrewise}]
    Applying \Cref{lem:res-of-compact}, it suffices to prove that fibrewise $\kappa$-compact objects are closed under restriction in $\udl{\ind}^{\kappa}\prn{\CC}$ when $\CC$ is small.
    But the non-parametrised functor $\udl{\ind}^{\kappa}\prn{-}$ sends functors to $\kappa$-compact preserving functors, so this follows by the natural equivalence  $\udl{\ind}^{\kappa}\prn{\AA}^V \simeq \ind^{\kappa}\prn{\AA^V}$ of \Cref{lem:ind-fibrewise}.
\end{proof}

In order to incorporate norms into this, $\kappa$ must additionally satisfy the following:
\begin{definition} \label{def:kappa-normable}
    A regular cardinal $\kappa$ is called \mdef{$\baseCat$-restrictable} (resp. \mdef{$\NN$-normable}) if $\sm(\kappa)^{V} \subset \GCat[V]$ form a $\baseCat$-subcategory (resp. $\NN$-normed $\baseCat$-subcategory).
    In this case, we denote the resulting $\baseCat$-subcategory by $\udl{\sm}(\kappa)$.
\end{definition}
The following related pieces of notation will be useful.
\begin{notation}\label{def:Card}
    Let $\Card$ be the homotopy poset of the wide category $\mathrm{Set}^{\mathrm{inj}} \subset \mathrm{Set}$ of injections.
    We let $\Card[\NN] \subset \Card$ be the full subposet of $\NN$-normable cardinals.
\end{notation}
\begin{notation}
    Let $\kappa$ be a $\baseCat$-restrictable regular cardinal.  We write 
    \[ \udl{\Cat}^{\mathrm{rex}(\kappa)} = \udl{\Cat}(\udl{\sm}(\kappa))
    \quad\text{ and }\quad\quad \udl{\Cat}^{\mathrm{irex}(\kappa)} = \udl{\Cat}(\udl{\sm}(\kappa) \sqcup \infl{}{\baseCat} \mathrm{idem}) \,.
\]
Note that $\udl{\Cat}^{\mathrm{rex}(\kappa)} = \udl{\Cat}^{\mathrm{irex}(\kappa)}$  when $\kappa$ is uncountable by \Cref{lem:small-vs-cpct}. 
\end{notation}

With these definitions, we may state the theorem to be proven.

\begin{theorem}\label{cor:irex-vs-prl}\label{thm:prL-colims}
    The subcategory $\udl{\Pr}^L \subset \wGCat\prn{\allcolim}$ is closed under small $\baseCat$-colimits and $\NN$-norms. Consequently, $\udl{\Pr}^L$ admits a unique distributive $\NN$-normed structure under which the inclusion is $\NN$-normed.
    
    In fact,  the inclusion $\udl{\Pr}^L_{\kappa}\subset \udl{\Pr}^L$ is cocontinuous and $\NN$-norm-closed for any $\NN$-normable $\kappa$, inducing an identification  $\udl{\Pr}^L = \bigcup_{\kappa \in \Card[\NN]} \udl{\Pr}^L_{\kappa}$ of $\NN$-normed $\baseCat$-subcategories.  
\end{theorem}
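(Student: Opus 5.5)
The plan is to prove the second paragraph first, since closure of $\udl{\Pr}^L$ in $\wGCat\prn{\allcolim}$ follows from it. The identification $\udl{\Pr}^L = \bigcup_{\kappa} \udl{\Pr}^L_\kappa$ comes from the parametrised analogue of Lurie's theorem that every presentable category is $\kappa$-accessible for some $\kappa$, with left adjoints between them becoming $\kappa$-accessible for large $\kappa$ (via \Cref{prop:acc-fibrewise} this is fibrewise). One also needs that $\Card[\NN]$ is cofinal among regular cardinals; I defer this size input to \Cref{sec:size}. Since a union of a directed family of subcategories that are each cocontinuous and norm-closed is again such, it remains to treat a fixed $\NN$-normable $\kappa$.

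For fixed $\kappa$, I would identify $\udl{\Pr}^L_\kappa$ with the essential image of
\[ \udl{\ind}^\kappa \colon \udl{\Cat}^{\mathrm{irex}(\kappa)} \longrightarrow \wGCat\prn{\allcolim}, \]
which on $\KK$-cocomplete categories is $\udl{\presheaf}^{\allcolim}_{\KK}$ for $\KK = \udl{\sm}(\kappa)\sqcup \infl{}{\baseCat}\mathrm{idem}$. The inverse is $\CC \mapsto \CC^\kappa$, and full faithfulness onto $\kappa$-accessible functors can be checked fibrewise using \Cref{lem:ind-fibrewise} and \Cref{prop:acc-fibrewise}. Normability of $\kappa$ makes $\KK$ an $\NN$-compatible, small class. \Cref{prop:nested-indexing} then says $\udl{\presheaf}^{\allcolim}_{\KK}$ is an $\NN$-normed left adjoint. \Cref{prop:relative_cocompletion_through_presentable} shows that it lands in $\udl{\Pr}^L$, is computed by the large relative cocompletion, and is cocontinuous. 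Hence its essential image is closed under $\NN$-norms and colimits in $\wGCat\prn{\allcolim}$. A colimit of images of a diagram in $\udl{\Pr}^L_\kappa$ is the image of the colimit in $\udl{\Cat}^{\mathrm{irex}(\kappa)}$, using the equivalence onto $\kappa$-accessible functors. The norm $w_\otimes \udl{\ind}^\kappa(\AA)\simeq \udl{\ind}^\kappa(w_\otimes \AA)$ is again $\kappa$-presentable. Finally, the norm functors send $\kappa$-accessible multifunctors to $\kappa$-accessible functors because they are $\NN$-normed images of morphisms in $\udl{\Cat}^{\mathrm{irex}(\kappa)}$.

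Given closure, the first paragraph is formal. $\udl{\Pr}^L$ is a norm-closed $\baseCat$-subcategory of $\wGCat\prn{\allcolim}$, so it inherits a unique $\NN$-normed structure making the inclusion $\NN$-normed. The inclusion is cocontinuous and faithful, and $\wGCat\prn{\allcolim}$ is distributive by \Cref{prop:CatKK-distributive}. The assembly maps of \Cref{rmk:assembly-map} for $\udl{\Pr}^L$ are therefore sent to those for $\wGCat\prn{\allcolim}$, as in Claim (1) of \Cref{prop:transf_examples}. Equivalences are detected since $\udl{\Pr}^L \subset \wGCat$ is conservative, because an equivalence of categories is a left adjoint.

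The main obstacle is the size and cardinal bookkeeping: showing $\NN$-normable cardinals are cofinal and that left adjoints become $\kappa$-accessible, especially that $\kappa$-compact objects are preserved under norms uniformly in all fibres. I would try to reduce this to the non-parametrised statement fibrewise, using \Cref{lem:fibrewise_product} to express norms as finite products of restrictions.
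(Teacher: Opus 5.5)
Your outline matches the paper's proof. You realise $\udl{\Pr}^L_\kappa$ as the image of the $\NN$-normed, cocontinuous functor $\udl{\ind}^\kappa\simeq\udl{\presheaf}^{\allcolim}_{\udl{\sm}(\kappa)}$ (\Cref{prop:nested-indexing,prop:relative_cocompletion_through_presentable,cor:ind-of-compact}). You then write $\udl{\Pr}^L$ as the union over $\Card[\NN]$ via \Cref{prop:choose-kappa}, and get distributivity from Claim (1) of \Cref{prop:transf_examples}. Your colimit-closure argument, which lifts a small diagram along $\udl{\Cat}^{\mathrm{irex}(\kappa)}\simeq\udl{\Pr}^L_\kappa$, is more explicit than the paper, which leaves that point to Brink--Lenz.

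One step fails as written: the claim that normability makes $\KK=\udl{\sm}(\kappa)\sqcup\infl{}{\baseCat}\mathrm{idem}$ an $\NN$-compatible class.
\begin{itemize}
\item For uncountable $\kappa$ this is harmless, since $\infl{}{\baseCat}\mathrm{idem}$ is already $\kappa$-small.
\item For $\kappa=\aleph_0$ it is false, and this case is within the statement because $\aleph_0$ is $\finind$-normable for compact Lie $G$. Take $w\colon G/K\to G/H$ of finite index with $K\neq H$. The value of $w_*\infl{}{\baseCat}\mathrm{idem}$ at $G/L$ is $\mathrm{idem}^{L\backslash H/K}$; at $G/e$ this is $\mathrm{idem}^{|H/K|}$.
\item $\mathrm{idem}^{|H/K|}$ is not equivalent to $\mathrm{idem}$. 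It is also not finite: its colimits would give $\mathrm{idem}$-colimits in every finitely cocomplete category, since the diagonal is cofinal.
\end{itemize}
So \Cref{thm:catK_symMon,prop:nested-indexing} do not directly give an $\NN$-normed $\udl{\ind}^{\aleph_0}$ out of $\udl{\Cat}^{\mathrm{irex}(\aleph_0)}$.

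The paper avoids this by taking $\udl{\Cat}^{\mathrm{rex}(\kappa)}$ as the domain, where only $\udl{\sm}(\kappa)$ enters. It only later descends the normed structure to $\udl{\Cat}^{\mathrm{irex}(\kappa)}$ along $(-)^\natural$, in \Cref{thm:ind}. Since $\udl{\ind}^\kappa(\AA)\simeq\udl{\ind}^\kappa(\AA^\natural)$, the image is unchanged. Your argument therefore goes through after this substitution: compute colimits in $\udl{\Cat}^{\mathrm{rex}(\kappa)}$ and use that $\udl{\ind}^\kappa$ inverts $(-)^\natural$-equivalences.

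Two smaller remarks:
\begin{itemize}
\item Closure of the union under small colimits needs $\Card[\NN]$ to be closed under small suprema, not merely directed. \Cref{prop:cofinal_embedding} provides this.
\item The fallback in your last paragraph would not work. \Cref{lem:fibrewise_product} describes the indexed products $w_*$, not the norms $w_\otimes$ in $\udl{\Pr}^L$. You do not need it, since preservation of $\kappa$-compactness under norms is already the content of $w_\otimes\udl{\ind}^\kappa(\AA)\simeq\udl{\ind}^\kappa(w_\otimes\AA)$.
\end{itemize}
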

The idea for the proof is to express $\udl{\Pr}^L_{\kappa}$ as the image of the ($\NN$-normed) relative cocompletion from $\kappa$-small-cocomplete $\baseCat$-categories.
We start by noting that \Cref{lem:ind-fibrewise} and \cite[Lem.~5.4.2.4]{lur09} immediately yield the following.
\begin{lemma}\label{lem:idem-bousfield}
    Let $\kappa$ be a $\TT$-restrictible cardinal. There is a natural equivalence of functors 
$\prn{\udl{\ind}^{\kappa}(\AA)}^{\kappa} \simeq \AA^{\natural}$; 
hence fibrewise idempotent completion restricts to a Bousfield localisation \[
(-)^{\natural}\colon \udl{\Cat}^{\mathrm{rex}(\kappa)}\longrightarrow \udl{\Cat}^{\mathrm{irex}(\kappa)}.
\]
\end{lemma}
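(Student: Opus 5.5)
The plan is to reduce both claims to fibres using \Cref{lem:ind-fibrewise} and \Cref{prop:acc-fibrewise}, and then to invoke the non-parametrised statement \cite[Lem.~5.4.2.4]{lur09}. That lemma identifies the $\kappa$-compact objects of $\ind^\kappa(\A)$ with the idempotent completion of $\A$ whenever $\A$ is small (for instance, admits $\kappa$-small colimits).

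For the first claim, fix a small $\baseCat$-category $\AA$. We have the natural fully faithful functor $\AA \to \prn{\udl{\ind}^{\kappa}(\AA)}^{\kappa}$, obtained by restricting the Yoneda-type map $\AA \to \udl{\ind}^\kappa(\AA)$. This restriction is possible because representables are fibrewise $\kappa$-compact, and by \Cref{prop:acc-fibrewise} the parametrised $\kappa$-compacts are computed fibrewise. The target is idempotent complete, since by \Cref{prop:acc-fibrewise} its fibres are $\ind^\kappa(\AA^V)^\kappa$, which is idempotent complete. The universal property of $(-)^\natural$ then induces a comparison $\AA^\natural \to \prn{\udl{\ind}^{\kappa}(\AA)}^{\kappa}$, natural in $\AA$. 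To show it is an equivalence it suffices to check on $V$-fibres. There, \Cref{lem:ind-fibrewise} gives $(\AA^\natural)^V \simeq (\AA^V)^\natural$ and $\udl{\ind}^\kappa(\AA)^V\simeq \ind^\kappa(\AA^V)$, while \Cref{prop:acc-fibrewise} gives $\prn{\udl{\ind}^\kappa(\AA)^\kappa}^V = \prn{\ind^\kappa(\AA^V)}^\kappa$. Under these identifications the comparison becomes Lurie's equivalence $(\AA^V)^\natural \simeq \ind^\kappa(\AA^V)^\kappa$.

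For the Bousfield localisation, I would check that $\AA\mapsto\AA^\natural$ preserves $\udl{\Cat}^{\mathrm{rex}(\kappa)}$ and lands in $\udl{\Cat}^{\mathrm{irex}(\kappa)}$. Take $\AA$ admitting $\udl{\sm}(\kappa)$-colimits. Then $\udl{\ind}^\kappa(\AA)$ is cocomplete, and its $\kappa$-compact objects are closed under $\udl{\sm}(\kappa)$-colimits. This closure is the step I expect to be the main obstacle, because it involves genuinely parametrised colimits, namely indexed coproducts along orbits, and not only fibrewise ones. I would handle it using that $\udl{\sm}(\kappa)$-colimits are generated by fibrewise $\kappa$-small colimits together with $f_!$ along orbit maps, with $\kappa$ restrictable. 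I would then show that $f_!$ preserves fibrewise $\kappa$-compacts because its right adjoint $f^*$ preserves $\kappa$-filtered colimits, which holds since these are computed fibrewise by \Cref{lem:ind-fibrewise}. Together with the first claim, this shows that $\AA^\natural$ is $\udl{\sm}(\kappa)$-cocomplete, and it is also idempotent complete. Moreover, the inclusion $\AA\to\AA^\natural$ preserves $\udl{\sm}(\kappa)$-colimits, since the Yoneda map into $\udl{\ind}^\kappa$ preserves the $\kappa$-small colimits that exist in $\AA$.

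Finally, for any $\BB\in\udl{\Cat}^{\mathrm{irex}(\kappa)}$, restriction along $\AA\to\AA^\natural$ gives an equivalence on $\udl{\sm}(\kappa)$-cocontinuous functors. This again follows fibrewise from the universal property of idempotent completion, because a functor out of $\AA^\natural$ is $\udl{\sm}(\kappa)$-cocontinuous exactly when its restriction to $\AA$ is, every object of $\AA^\natural$ being a retract of one in $\AA$. Hence $(-)^\natural$ is left adjoint to the fully faithful inclusion $\udl{\Cat}^{\mathrm{irex}(\kappa)}\subset\udl{\Cat}^{\mathrm{rex}(\kappa)}$, that is, it is a Bousfield localisation.
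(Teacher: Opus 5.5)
Your argument for the equivalence $(\udl{\ind}^{\kappa}(\AA))^{\kappa}\simeq\AA^{\natural}$ is the paper's. Its whole proof is the fibrewise reduction via \Cref{lem:ind-fibrewise} followed by \cite[Lem.~5.4.2.4]{lur09}, which needs only that $\AA^V$ is small. Your appeal to \Cref{prop:acc-fibrewise} is what justifies computing the parametrised $\kappa$-compacts fibrewise.

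The paper does not spell out the Bousfield-localisation consequence, and your closure argument for $\kappa$-compacts is a sound way to supply it. There, fibrewise $\kappa$-small colimits are handled directly, and $f_!$ preserves $\kappa$-compacts because $f^*$ preserves fibrewise $\kappa$-filtered colimits; this is the content of \Cref{lem:filt-lim-commute}. Cocompleteness of $\udl{\ind}^{\kappa}(\AA)$ is only proved later, in \Cref{prop:ind_filt}. However, you only need $f_!$, which you can get fibrewise as $\ind^{\kappa}$ of the adjunction $f_!\dashv f^*$ on $\AA$.

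The one step whose justification does not suffice is the last. Cocontinuity is a condition on diagrams, not on objects. For an arbitrary $\AA$, a diagram in $\AA^{\natural}$ whose values are retracts of objects of $\AA$ need not be a retract of a diagram in $\AA$. For instance, an object of $\AA^{\natural}$ with an automorphism need not be a retract of an object of $\AA$ with an automorphism. So ``every object of $\AA^{\natural}$ is a retract of one in $\AA$'' does not by itself show that $G\colon\AA^{\natural}\to\BB$ is $\udl{\sm}(\kappa)$-cocontinuous when $G|_{\AA}$ is.

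For indexed coproducts your argument is enough. The Beck--Chevalley map $f_!G\to Gf_!$ is a natural transformation of functors out of $(\AA^{\natural})^U$ which is an equivalence on $\AA^U$, hence on retracts.

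For fibrewise $\kappa$-small colimits you need one more input, and either of the following works.
\begin{itemize}
\item Use that $\AA^V$ has finite colimits and that $\AA^V\to(\AA^V)^{\natural}$ preserves them. If $x$ splits the idempotent $ir$ on $a\in\AA^V$, the mapping-torus identity gives $x\otimes S^1\simeq\mathrm{coeq}(ri,\id_x)\simeq\mathrm{coeq}(ir,\id_a)$, which lies in $\AA^V$. Hence every diagram $d$ in $(\AA^V)^{\natural}$ is a natural retract, via $*\to S^1\to *$, of the diagram $d\otimes S^1$ in $\AA^V$. The comparison map for $d$ is then a retract of an equivalence.
\item Use your first claim, giving $(\AA^V)^{\natural}\simeq\ind^{\kappa}(\AA^V)^{\kappa}$ and $\BB^V\simeq\ind^{\kappa}(\BB^V)^{\kappa}$. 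The $\ind^{\kappa}$-extension of the $\kappa$-small-colimit-preserving functor $G^V|_{\AA^V}$ preserves all colimits (cf.\ \cite[\S5.5.1]{lur09}). It restricts to $G^V$ on $\kappa$-compact objects, and $\kappa$-small colimits of $\kappa$-compacts are computed in $\ind^{\kappa}$.
\end{itemize}
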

We can strengthen this with more existing $\udl{\ind}^{\kappa}$ work using the following.
\begin{remark}\label{rmk:reg-brink}
    The colimit preservation is a specialisation of \cite[Prop.~5.18]{brink2026globalinftycategoriesglobalthom} to the terminal cleft $\baseCat = \baseCat$, so we will not reprove it, though our argument for norm-closure is similarly shaped to theirs.
    Moreover, we verify in \Cref{cor:everything-is-everything} that $\baseCat$-restrictability is equivalent to the $\prn{\baseCat,\baseCat}$-regularity of \cite[Def.~5.20]{brink2026globalinftycategoriesglobalthom}, so we may and will use their results on $\udl{\ind}^{\kappa}$ freely.
\end{remark}
In particular, \Cref{rmk:reg-brink} exhibits \cite[Lem.~5.21-5.22]{brink2026globalinftycategoriesglobalthom} as a generalisation of the following:
\begin{lemma}\label{lem:filt-lim-commute}
    Let $\kappa$ be $\TT$-restrictible. Then $\infl{}{\baseCat}\left(\filt(\kappa)\right)$-colimits commute with $\kappa$-small limits in $\GSpaces$;
    hence, for any $\baseCat$-category $\CC$, $\CC^{\kappa} \subset \CC$ is closed under $\kappa$-small colimits.
\end{lemma}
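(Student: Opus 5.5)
The plan is to prove the first claim fibrewise in $\GSpaces$ and then get the second claim formally from it. Both halves of the first statement are local in the orbit $V$, so I would work at level $V$ throughout and restrict freely along maps $U \to V$.

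\textbf{Step 1: reduce to fibrewise $\kappa$-small limits.} By \Cref{lem:ind-fibrewise}, inflated colimits $\infl{}{\baseCat}(\filt(\kappa))$ in $V^*\GSpaces \simeq \cofree(\Spaces)$ are computed fibrewise, i.e.\ by evaluation at each $U \to V$. For a $\kappa$-small $V$-category $\udl{K}$, parametrised limits in $\cofree(\Spaces)$ are given by a right Kan extension formula. At $U$, this is the limit over the unstraightening $\int U^*\udl{K}$ of the values of the diagram. The $\TT$-restrictability of $\kappa$ should guarantee two things: that $U^*\udl{K}$ is again $\kappa$-small, and that its unstraightening is (up to cofinality) a $\kappa$-small ordinary category. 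Granting these, the parametrised statement reduces to the classical fact \cite[Prop.~5.3.3.3]{lur09} that $\kappa$-filtered colimits commute with $\kappa$-small limits in $\Spaces$.

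\textbf{Step 2: the main obstacle.} I expect the hardest part to be checking that these fibrewise limit shapes really are $\kappa$-small. The issue is that they involve slices $\TT_{/U}$ over the whole base, not just the fibres $\udl{K}^U$. To handle this, I would use the characterisation of $\sm(\kappa)$ as the closure of orbits under $\kappa$-small colimits, and argue by induction over that closure. For a representable orbit $W$, the parametrised limit over $W$ is just restriction or right adjoint evaluation at $W$, which commutes with filtered colimits fibrewise. Closure under $\kappa$-small colimits of shapes then turns into $\kappa$-small limits of these limit functors, and the classical commutation applies again. This induction avoids needing any explicit size bound on $\int \udl{K}$. This is also where the appeal to \cite[Lem.~5.21-5.22]{brink2026globalinftycategoriesglobalthom} via \Cref{rmk:reg-brink} enters, as an alternative route.

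\textbf{Step 3: closure of compact objects.} Let $c\colon \udl{K} \to V^*\CC^\kappa$ be a $\kappa$-small diagram with colimit $x$. Then $\myuline{\Map}(x,-) \simeq \lim_{\udl{K}} \myuline{\Map}(c(k),-)$, a parametrised $\kappa$-small limit in $V^*\GSpaces$ of functors that each commute with $\infl{}{\baseCat}(\filt(\kappa))$-colimits. By Step 1, such a limit still commutes with these colimits, so $x$ is $\kappa$-compact. Finally, I would check that this argument is stable under restriction along $U \to V$, which holds because $\udl{K}$ stays $\kappa$-small by restrictability.
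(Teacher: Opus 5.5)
Your argument is correct, but it is not how the paper proceeds. The paper gives no independent proof. In \Cref{rmk:reg-brink} it notes that $\TT$-restrictability coincides with the $(\TT,\TT)$-regularity of Brink--Lenz (\Cref{cor:everything-is-everything}, which rests on the pullback criterion of \Cref{prop:kappa-restricts}), and then treats the statement as a special case of \cite[Lem.~5.21--5.22]{brink2026globalinftycategoriesglobalthom}.

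Your route is self-contained. It uses the formula for limits in $\cofree(\Spaces)$ as limits over unstraightenings, the decomposition $\lim_{\colim_j \udl{L}_j}\simeq \lim_j \lim_{\udl{L}_j}$, induction over the colimit closure defining $\sm(\kappa)$, and \cite[Prop.~5.3.3.3]{lur09}. Its merit is that it shows exactly where the hypothesis enters; the citation buys brevity and the more general global/cleft setting.

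Two points need care when you write it up.

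First, the base case ``the limit over an orbit $W$ is evaluation at $W$'' holds only at the top level. At a level $U\to V$, the $V$-limit over $W$ is $\Map(U\times_V W,-)$, mapping out of $U\times_V W$ as a $W$-anima. This commutes with $\kappa$-filtered colimits because $U\times_V W$ is $\kappa$-small, which is condition (a) of \Cref{prop:kappa-restricts}. So for each $U$ you should run the induction on the $U$-category $U^*\udl{K}$, using top-level limits; this is exactly where you need $U^*\udl{K}\in\sm(\kappa)^U$. The levelwise statements then assemble, since parametrised limits and inflated colimits both commute with restriction.

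Second, in Step 3 the limit computing $\myuline{\Map}(\colim c,-)$ is indexed by $\udl{K}^{\mathrm{op}}$, not $\udl{K}$. This is harmless: $\sm(\kappa)$ is stable under $(-)^{\mathrm{op}}$, because orbits are $\TT$-\animae{} and $(-)^{\mathrm{op}}$ is a colimit-preserving automorphism of $\Cat^{\TT}$.
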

\begin{lemma}\label{prop:ind_filt}\label{prop:univ_prop_ind_cocomplete}
    Let $\kappa$ be a $\baseCat$-restrictable cardinal. There is an equality of subcategories 
    \[
    \udl{\Fun}^{\udl{\sm}(\kappa)-\mathrm{lim}}\prn{\AA\op, \GSpaces} =\udl{\ind}^{\kappa}\prn{\AA}
    \]
    of $\udl{\presheaf}\prn{\AA}$, where the left-hand side denotes $\udl{\sm}(\kappa)$-limit-preserving functors. 
    If moreover $\AA$ is $\udl{\sm}(\kappa)$-cocomplete, then these are equal to $\udl{\presheaf}_{\udl{\sm}(\kappa)}^{\allcolim}\prn{\AA}$.
\end{lemma}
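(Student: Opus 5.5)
The plan is to reduce everything to fibres, using \Cref{lem:ind-fibrewise} on the $\udl{\ind}^\kappa$ side and a generation argument on the limit side. Then compare fibrewise with Lurie's non-parametrised characterisation of $\ind^\kappa$ as the $\kappa$-small-limit-preserving presheaves \cite[Cor.~5.3.5.4]{lur09}. The intended reading is that the limit condition on the left-hand side refers to the $\udl{\sm}(\kappa)$-colimits existing in $\AA$. I expect the converse inclusion to genuinely need the case where these exist, i.e.\ $\AA$ is $\udl{\sm}(\kappa)$-cocomplete; without that hypothesis I would only claim the inclusion $\supseteq$ in the direction $\udl{\ind}^\kappa\prn{\AA} \subseteq \udl{\Fun}^{\udl{\sm}(\kappa)-\mathrm{lim}}\prn{\AA\op,\GSpaces}$.

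\emph{Reduction of the limit condition.} For a presheaf $F\colon \AA\op \to \GSpaces$, consider the class of $V$-categories $\udl{K}$ such that $F$ preserves $\udl{K}$-indexed limits. I would first check that this class is closed under non-parametrised colimits of indexing categories: a limit over a colimit of indexing categories is computed as an iterated limit, using the Beck--Chevalley and fibrewise formulas for parametrised limits. Since $\udl{\sm}(\kappa)$ is, by \Cref{def:small}, the closure of the orbits under $\kappa$-small colimits, and $\kappa$ is $\baseCat$-restrictable, preserving $\udl{\sm}(\kappa)$-limits reduces to two conditions. The first is preserving orbit-indexed limits, i.e.\ the indexed products $u_*$ for orbit maps $u\colon U\to V$. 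The second is preserving constant $\kappa$-small limits in each fibre.

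\emph{Inclusion $\udl{\ind}^\kappa\prn{\AA} \subseteq$ limit-preserving.} Representables preserve all limits. The class of limit-preserving presheaves is closed under $\infl{}{\baseCat}\prn{\filt(\kappa)}$-colimits in $\udl{\presheaf}(\AA)$, because \Cref{lem:filt-lim-commute} says that these colimits commute with $\kappa$-small limits in $\GSpaces$. It also commutes with orbit-indexed products, since these are finite products fibrewise by \Cref{lem:fibrewise_product}. As $\udl{\ind}^\kappa\prn{\AA}$ is generated from the representables by such colimits, the inclusion follows.

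\emph{Converse inclusion.} Let $F$ preserve $\udl{\sm}(\kappa)$-limits. Then each $F^V\colon (\AA^V)\op\to\Spaces$ preserves $\kappa$-small limits. When $\AA$ is $\udl{\sm}(\kappa)$-cocomplete, each $\AA^V$ has $\kappa$-small colimits, and \cite[Cor.~5.3.5.4]{lur09} puts $F^V$ in $\ind^\kappa(\AA^V)\simeq \udl{\ind}^\kappa(\AA)^V$ by \Cref{lem:ind-fibrewise}. Both sides are full $\baseCat$-subcategories of $\udl{\presheaf}(\AA)$, and membership is fibrewise, so we get equality. This step is the main obstacle: one must check that the identification of \Cref{lem:ind-fibrewise} is compatible with the fibres of $\udl{\presheaf}(\AA)$, i.e.\ that $\udl{\presheaf}(\AA)^V\to\presheaf(\AA^V)$ detects the fibrewise presheaves. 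I would try to settle it via the fibrewise category of elements, showing that $F$ lies in $\udl{\ind}^\kappa$ exactly when its parametrised category of elements is fibrewise $\kappa$-filtered.

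\emph{Second statement.} By \Cref{recollect:localisation_cocompletion}, $\udl{\presheaf}^{\allcolim}_{\udl{\sm}(\kappa)}(\AA)$ is the colimit closure of $\AA$ inside the local presheaves $\udl{\presheaf}(\AA,\udl{\RR}_{\udl{\sm}(\kappa)})$. These local presheaves are precisely the presheaves sending $\udl{\sm}(\kappa)$-colimit cocones in $\AA$ to limits, which is the left-hand side. That category is already $\udl{\ind}^\kappa\prn{\AA}$, so it is cocomplete and generated by $\AA$ under colimits. Hence the colimit closure is everything, and the three subcategories coincide.
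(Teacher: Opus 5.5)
Your forward inclusion and your deduction of the second statement from the first are fine. You are also right that the first equality needs $\AA$ to be $\udl{\sm}(\kappa)$-cocomplete: already for $\baseCat=*$ and $\AA$ discrete on two objects, the empty presheaf preserves every existing $\kappa$-small limit but is not in $\ind^\kappa(\AA)$. The genuine gap is the converse inclusion.

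\textbf{Why the converse step fails.} A $V$-object $F$ of $\udl{\presheaf}(\AA)$ is not a presheaf on $\AA^V$. By the universal property of $\GSpaces$ as a coefficient system, it is a functor $\int V^*\AA\op\to\Spaces$ on the total category over $(\baseCat_{/V})\op$. Moreover, \Cref{lem:ind-fibrewise} is only an abstract equivalence $\udl{\ind}^\kappa(\AA)^V\simeq\ind^\kappa(\AA^V)$; it is not a membership test in terms of $\ev_V\circ F^V$. Fibrewise limit preservation genuinely does not suffice, and neither does your proposed category-of-elements criterion. Here is a counterexample.
\begin{itemize}
\item Take $\baseCat=\Orb_{C_2}$, $V=C_2/C_2$, $u\colon U=C_2/e\to V$, and $d\in\AA^V$.
\item Let $F$ be the right Kan extension of $\Map_{\AA^V}(-,d)$ along the inclusion of the fibre $(\AA^V)\op$. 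Concretely, $F^V(a)$ is the $C_2$-anima with value $\Map_{\AA^V}(a,d)$ at $V$ and $*$ at $U$, and $F^U$ is constant at $*$.
\item Both fibre functors preserve $\kappa$-small limits, since limits are pointwise. Also $\ev_V\circ F^V$ is representable, and the fibres $(\AA^V)_{/d}$ and $\AA^U$ of its parametrised category of elements are $\kappa$-filtered.
\item However, $\ev_V F^V(u_!b)\simeq\Map_{\AA^U}(b,u^*d)$ while $\ev_U F^U(b)\simeq *$. By your own forward inclusion, every $E\in\udl{\ind}^\kappa(\AA)^V$ carries $u_!b$ to $u_*E^U(b)$, i.e.\ satisfies $\ev_V E^V(u_!b)\simeq\ev_U E^U(b)$.
\end{itemize}
Hence $F\notin\udl{\ind}^\kappa(\AA)$ as soon as some $\Map_{\AA^U}(b,u^*d)$ is not contractible.

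\textbf{The missing idea.} Use the orbit part of the hypothesis, which your converse argument discards. Let $F$ be in the left-hand side and let $u\colon U\to V$ be any orbit over $V$.
\begin{itemize}
\item The equivalence $F(u_!b)\simeq u_*F(b)$, evaluated at $V$, says exactly that the functor on $\int V^*\AA\op$ corresponding to $F$ is left Kan extended from $(\AA^V)\op$. Indeed, the comma category at $(U,b)$ is $((\AA^V)_{u_!b/})\op$, which has terminal object $u_!b$.
\item Hence $\Phi\colon F\mapsto\ev_V\circ F^V$ is fully faithful on the left-hand side at level $V$ and lands in $\Fun^{\kappa\text{-}\mathrm{lim}}((\AA^V)\op,\Spaces)$.
\item $\Phi$ preserves $\kappa$-filtered colimits, since by your forward argument the left-hand side is closed under them. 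It also sends $\yoneda(a)$ to $\Map_{\AA^V}(-,a)$.
\item By \cite[Cor.~5.3.5.4]{lur09}, the target is the $\kappa$-filtered colimit closure of the representables. So the essential image of $\udl{\ind}^\kappa(\AA)^V$ under $\Phi$ is everything, and full faithfulness gives the equality.
\end{itemize}

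\textbf{A separate correction to the forward direction.} \Cref{lem:fibrewise_product} does not justify commuting $\infl{}{\baseCat}(\filt(\kappa))$-colimits past orbit-indexed products. That remark concerns $\NN$-admissible maps, whereas $\udl{\sm}(\kappa)$ involves all orbit maps. For a general orbit map, $(u_*Y)(X)$ is a limit over the $\kappa$-small anima $U\times_V X$. This is exactly where $\baseCat$-restrictability enters, via \Cref{lem:filt-lim-commute}.

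For comparison, the paper does not argue fibrewise at all. It imports the lemma from \cite[Lem.~5.21--5.22]{brink2026globalinftycategoriesglobalthom} via \Cref{rmk:reg-brink,cor:everything-is-everything}.
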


Combining these lemmata yields the following corollary.
\begin{corollary}\label{cor:kappa-ind-adjunction}\label{cor:ind-of-compact}
    Let $\kappa$ be a $\baseCat$-restrictable cardinal. Then $\udl{\presheaf}_{\udl{\sm}(\kappa)}^{\allcolim}$ and $\prn{-}^{\kappa}$ restrict to an adjunction \[\udl{\presheaf}_{\udl{\sm}(\kappa)}^{\allcolim}\simeq \udl{\ind}^{\kappa}\colon \udl{\Cat}^{\mathrm{rex}(\kappa)} \rightleftharpoons \udl{\Pr}^L_{\kappa}\cocolon (-)^{\kappa} \,, \]
    which further restricts to inverse equivalences $\udl{\Cat}^{\mathrm{irex}(\kappa)} \simeq \udl{\Pr}^L_{\kappa}$.
\end{corollary}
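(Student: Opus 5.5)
We will deduce the adjunction from the universal property of relative cocompletion, \Cref{thm:catK_symMon} for $w=\id$, combined with \Cref{prop:nested-indexing} and \Cref{prop:univ_prop_ind_cocomplete}.

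First we identify the left adjoint. For $\AA \in \udl{\Cat}^{\mathrm{rex}(\kappa)}$, \Cref{prop:univ_prop_ind_cocomplete} gives an equality $\udl{\presheaf}^{\allcolim}_{\udl{\sm}(\kappa)}(\AA) = \udl{\ind}^\kappa(\AA)$ of subcategories of $\udl{\presheaf}(\AA)$. By \Cref{prop:relative_cocompletion_through_presentable} this $\baseCat$-category is presentable, and by definition it is $\kappa$-accessible; hence it lies in $\udl{\Pr}^L_\kappa$. By \Cref{prop:nested-indexing}, applied to $\udl{\sm}(\kappa)\subset \GCat$ (restricting to small objects), the functor $\udl{\presheaf}^{\allcolim}_{\udl{\sm}(\kappa)}$ is left adjoint to the forgetful functor $\GCat(\allcolim)\to \GCat(\udl{\sm}(\kappa))$. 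Moreover, $\udl{\presheaf}^{\allcolim}_{\udl{\sm}(\kappa)}(F)$ is the left Kan extension of $F$. This left Kan extension preserves $\kappa$-compact objects, because by \Cref{prop:acc-fibrewise} and \Cref{lem:ind-fibrewise} the question reduces fibrewise to the classical statement for $\ind^\kappa$. So $\udl{\ind}^\kappa$ indeed lands in $\udl{\Pr}^L_\kappa$.

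Next we construct the right adjoint $(-)^\kappa$. For $\CC \in \udl{\Pr}^L_\kappa$, \Cref{lem:filt-lim-commute} shows that $\CC^\kappa\subset\CC$ is closed under $\kappa$-small colimits. It is essentially small by \Cref{prop:acc-fibrewise}, since the question is fibrewise and classical. Hence $\CC^\kappa \in \udl{\Cat}^{\mathrm{rex}(\kappa)}$, and morphisms of $\udl{\Pr}^L_\kappa$ restrict to $\udl{\sm}(\kappa)$-cocontinuous functors on compact objects. To get the adjunction we need, for each small $\udl{\sm}(\kappa)$-cocomplete $\AA$, an equivalence
\[
\Map_{\udl{\Pr}^L_\kappa}\prn{\udl{\ind}^\kappa\AA,\CC}\simeq \Map_{\udl{\Cat}^{\mathrm{rex}(\kappa)}}\prn{\AA,\CC^\kappa}.
\]
By the universal property above, the left side consists of the $\udl{\sm}(\kappa)$-cocontinuous functors $\AA\to\CC$ whose extension preserves $\kappa$-compacts. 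Since $\AA$ lands in the compact objects of $\udl{\ind}^\kappa\AA$, these are exactly the functors factoring through $\CC^\kappa$. That this is a monomorphism which is surjective on components follows from \Cref{Catcocone-subcat}-style faithfulness. The compatibility with the forgetful functor is automatic, and naturality in $V$ follows because $(-)^\kappa$ commutes with restriction by \Cref{lem:res-of-compact}.

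Finally, we show the adjunction restricts to an equivalence on $\udl{\Cat}^{\mathrm{irex}(\kappa)}$. The unit $\AA\to(\udl{\ind}^\kappa\AA)^\kappa$ is identified with $\AA\to\AA^\natural$ by \Cref{lem:idem-bousfield}, so it is an equivalence exactly when $\AA$ is idempotent complete. The counit $\udl{\ind}^\kappa(\CC^\kappa)\to\CC$ is checked fibrewise using \Cref{lem:ind-fibrewise} and \Cref{prop:acc-fibrewise}; there it is the classical equivalence $\ind^\kappa((\CC^V)^\kappa)\simeq \CC^V$ for $\kappa$-accessible $\CC^V$. Every $\CC^\kappa$ is idempotent complete, so the essential image of the right adjoint lies in $\udl{\Cat}^{\mathrm{irex}(\kappa)}$. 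I expect the main obstacle to be the mapping-space identification in the second step, specifically checking that compact-preservation is a property detected on the generators $\AA$ parametrically. The fibrewise reductions above should handle this.
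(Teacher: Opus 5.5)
Your proposal is correct and follows essentially the same route as the paper: the universal property of $\udl{\ind}^{\kappa}\simeq\udl{\presheaf}^{\allcolim}_{\udl{\sm}(\kappa)}$ identifies left adjoints out of $\udl{\ind}^{\kappa}\AA$ with $\udl{\sm}(\kappa)$-cocontinuous functors out of $\AA$. You then compare the compact-preserving ones with those landing in $\CC^{\kappa}$, and check the (co)unit fibrewise via \Cref{lem:ind-fibrewise} and the classical statement. The one step to make explicit is the converse inclusion in your mapping-space comparison, namely that a functor $\AA\to\CC^{\kappa}$ extends to a compact-preserving one. Knowing that $\AA$ lands in the $\kappa$-compacts gives only the other inclusion; what is needed is $(\udl{\ind}^{\kappa}\AA)^{\kappa}\simeq\AA^{\natural}$ from \Cref{lem:idem-bousfield} together with closure of $\kappa$-compact objects under retracts, which is exactly the paper's argument and what your fibrewise reduction amounts to.
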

\begin{proof}
    To see the adjunction, first note that $(-)^{\kappa}$ takes values in $\udl{\Cat}^{\mathrm{rex}(\kappa)}$ by \Cref{lem:filt-lim-commute}. Now, \Cref{prop:ind_filt} gives an equivalence
    $\udl{\Fun}^{\udl{\sm}(\kappa)}\prn{\AA, \CC}^{\simeq}\simeq \udl{\Fun}^L\prn{\udl{\ind}^{\kappa} (\AA), \CC}^{\simeq}$ for all $\AA\in \udl{\Cat}^{\mathrm{rex}(\kappa)}$ and ${\CC}$ cocomplete. It thus suffices to prove that  $\udl{\Fun}^{\udl{\sm}(\kappa)}\prn{\AA, \CC^{\kappa}}^{\simeq}$ is carried equivalently onto the subanima $\udl{\Fun}^L\prn{\udl{\ind}^{\kappa} (\AA), \CC}^{\simeq}$ of $\kappa$-compact-preserving functors. Since $\udl{\ind}^{\kappa}(\AA)^{\kappa}\simeq \AA^{\natural}$ by \Cref{lem:idem-bousfield}, we just have to argue that if $\udl{\ind}^{\kappa} (\AA) \rightarrow \CC$ has $\kappa$-compact values on $\AA\subset \udl{\ind}^{\kappa}\AA$, then it has $\kappa$-compact values on \textit{retracts} of $\AA$;
    this holds since functors preserve retracts and retracts preserve $\kappa$-compactness.

    To see that the adjunction restricts to an equivalence, we just need to argue that the (co)unit maps $\AA \rightarrow (\udl{\ind}^{\kappa}(\AA))^{\kappa}$ and $\udl{\ind}^{\kappa}(\CC^{\kappa})\rightarrow \CC$ are equivalences for $\AA\in \udl{\Cat}^{\mathrm{irex}(\kappa)}$ and $\CC\in\udl{\presentable}^L_{\kappa}$. But since this may be checked fibrewise and since all operations are implemented fibrewise by \Cref{lem:ind-fibrewise}, this reduces to the non-parametrised statement for which it is known, cf. \cite[Lem.~5.4.2.4]{lur09}.
\end{proof}

Through this, we may prove a useful corollary.
\begin{corollary}\label{cor:kappa-incl}
    Let $\kappa \leq \lambda$ be $\baseCat$-restrictable cardinals.
    There is an inclusion
    \[
        \udl{\Pr}^L_{\kappa} \subset \udl{\Pr}^L_{\lambda} \subset \udl{\Pr}^L.
    \]
\end{corollary}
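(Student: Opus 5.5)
The inclusion $\udl{\Pr}^L_{\lambda} \subset \udl{\Pr}^L$ holds by definition, since $\udl{\Pr}^L_\lambda = \GAcc{\lambda}\cap\udl{\Pr}^L$. The real content is that $\udl{\Pr}^L_{\kappa} \subset \udl{\Pr}^L_{\lambda}$ as $\baseCat$-subcategories of $\wGCat$. Concretely, every $\kappa$-presentable $V$-category is $\lambda$-accessible, and every $\kappa$-accessible left adjoint $V$-functor between such is $\lambda$-accessible. By \Cref{prop:acc-fibrewise}, $\GAcc{\kappa} = \cofree\prn{\Acc{\kappa}}$ and $\GAcc{\lambda} = \cofree\prn{\Acc{\lambda}}$, and $\cofree$ preserves subcategory inclusions. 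So it suffices to work fibrewise: I would show that for $\CC$ presentable and $\kappa$-accessible, each fibre $\CC^U$ lies in $\Acc{\lambda}$, and that the relevant functors lie in $\Acc{\lambda}$ fibrewise.

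For objects, I would use \Cref{cor:kappa-ind-adjunction} to write $\CC \simeq \udl{\ind}^{\kappa}(\AA)$ with $\AA = \CC^\kappa \in \udl{\Cat}^{\mathrm{irex}(\kappa)}$. I then want $\CC \simeq \udl{\ind}^{\lambda}(\CC^\lambda)$. Fibrewise this is the non-parametrised statement that a presentable $\kappa$-accessible category is $\lambda$-accessible for $\kappa\le\lambda$, in the sense that $\CC^U = \ind^\lambda((\CC^U)^\lambda)$. This needs care, because in general $\kappa$-accessible implies $\lambda$-accessible only when $\kappa \lhd \lambda$. Presentability rescues us: $(\CC^U)^\lambda$ is the closure of $(\CC^U)^\kappa$ under $\lambda$-small colimits (cf. \cite[Prop.~5.4.2.17]{lur09}/\cite[\S5.5.7]{lur09}), and $\CC^U$ is generated under $\lambda$-filtered colimits by it. I would then use \Cref{prop:acc-fibrewise} (or \Cref{lem:res-of-compact}) for $\lambda$ to conclude. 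This requires that the fibrewise $\lambda$-compact objects be restriction-stable. That follows because restriction functors of a presentable $\baseCat$-category are left adjoints preserving $\kappa$-compacts, and they preserve $\lambda$-small colimits. Hence they preserve the $\lambda$-small colimit closure of the $\kappa$-compacts, which I identify with the $\lambda$-compacts fibrewise.

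For morphisms, take a left adjoint $F$ preserving $\kappa$-compact objects. It preserves all colimits, in particular $\infl{}{\baseCat}(\filt(\lambda))$-colimits. It also preserves $\lambda$-small colimits, so it preserves the $\lambda$-small colimit closure of the $\kappa$-compacts, which is all of $\CC^\lambda$ up to retracts. Retracts are preserved as well, so $F$ is $\lambda$-accessible.

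The main obstacle is the fibrewise identification of $\lambda$-compacts with the retract-closed $\lambda$-small colimit closure of $\kappa$-compacts in a presentable category. This is where presentability, rather than mere accessibility, is used, and it is what avoids the sharply-less-than subtlety. Along the way I would use \Cref{lem:filt-lim-commute} to know that compacts are closed under the relevant small colimits.
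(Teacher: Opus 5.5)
Your argument is correct, but it takes a different route from the paper's.

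\textbf{The paper's route.} The paper never computes $\lambda$-compact objects. It composes the left adjoints of the inclusions $\wGCat(\allcolim)\subset\wGCat(\udl{\sm}(\lambda))\subset\wGCat(\udl{\sm}(\kappa))$. Using \Cref{prop:ind_filt} to identify $\udl{\ind}^{\kappa}$ with $\udl{\presheaf}^{\allcolim}_{\udl{\sm}(\kappa)}$, this gives a natural equivalence $\udl{\ind}^{\kappa}\simeq\udl{\ind}^{\lambda}\circ\udl{\presheaf}^{\udl{\sm}(\lambda)}_{\udl{\sm}(\kappa)}$. Hence every $\CC\simeq\udl{\ind}^{\kappa}(\CC^{\kappa})$ in $\udl{\Pr}^L_{\kappa}$ lies in the image of $\udl{\ind}^{\lambda}$, and by naturality so does every morphism. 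That image is $\udl{\Pr}^L_{\lambda}$ by \Cref{cor:kappa-ind-adjunction}.

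\textbf{Your route.} You use \Cref{prop:acc-fibrewise} to reduce to the non-parametrised inclusion $\Pr^L_{\kappa}\subset\Pr^L_{\lambda}$. Its standard proof, $\ind^{\kappa}(A)\simeq\ind^{\lambda}(P^{\lambda}_{\kappa}A)$, is exactly the fibre of the paper's argument. You then check by hand that restriction functors and left adjoint $V$-functors carry fibrewise $\lambda$-compacts to $\lambda$-compacts, since they preserve colimits, retracts and $\kappa$-compacts.

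\textbf{Comparison.} The paper's version is shorter and treats objects and morphisms in one stroke. It also stays inside the relative-cocompletion formalism that it reuses for norms. Yours is more explicit: it identifies $(\CC^{\lambda})^U$ as the retract-closed $\lambda$-small colimit closure of $(\CC^U)^{\kappa}$. Moreover, \Cref{prop:acc-fibrewise} and \Cref{lem:ind-fibrewise} carry no hypothesis on the cardinal, and fibrewise closure of $\lambda$-compacts under $\lambda$-small colimits is non-parametrised. So, as far as I can tell, your argument never needs $\baseCat$-restrictability. The appeals to \Cref{cor:kappa-ind-adjunction} and \Cref{lem:filt-lim-commute} can be dropped, since membership in $\cofree(\Acc{\lambda})$ already suffices. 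Your proof therefore gives the inclusion for arbitrary regular $\kappa\le\lambda$.
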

This likely does not hold for accessibility without presentability;
indeed, it does not hold for accessibility of non-parametrised 1-categories, per \cite[Thm.~2.11]{Adamek}.
\begin{proof}[Proof of \Cref{cor:kappa-incl}]
    Composing left adjoints to the inclusions
    \[
        \wGCat\prn{\allcolim} \subset \wGCat\prn{\udl{\sm}(\lambda)} \subset \wGCat\prn{\udl{\sm}(\kappa)},
    \]
    we find that there is a natural equivalence of functors  $\wGCat\prn{\udl{\sm}(\kappa)} \longrightarrow \wGCat\prn{\allcolim}$
    \[
        \udl{\ind}^{\kappa}\prn{\AA} \simeq \udl{\ind}^{\lambda} \udl{\wpresheaf}_{\udl{\sm}(\kappa)}^{\udl{\sm}(\lambda)}\prn{\AA}.
    \]
    In particular, this restricts to  natural equivalences of functors of $\udl{\Pr}^L_{\kappa} \rightarrow \udl{\Pr}^L$
    \[
        \CC \simeq \udl{\ind}^{\kappa}\prn{\CC^{\kappa}} \simeq \udl{\ind}^{\lambda} \udl{\presheaf}_{\udl{\sm}(\kappa)}^{\udl{\sm}(\lambda)}\prn{\CC^{\kappa}}.
    \]
    This shows that $\udl{\Pr}^L_{\kappa}$ lands in the image of $\udl{\ind}^{\lambda}$, which is $\udl{\Pr}^L_{\lambda}$ by \Cref{cor:kappa-ind-adjunction}.
\end{proof}

\Cref{cor:kappa-incl} shows that  $\kappa \mapsto \udl{\Pr}^L_{\kappa} \hookrightarrow \wGCat\prn{\allcolim}$ constitutes a poset map
\[
    \mathrm{P}\colon \prn{\Card[\NN]}^{\triangleright} \longrightarrow \mathrm{Sub}\prn{\wGCat\prn{\allcolim}},
\]
whose cocone point is the inclusion $\udl{\Pr}^L \hookrightarrow \wGCat\prn{\allcolim}$.
\begin{proposition}\label{prop:prL-filt}\label{prop:choose-kappa}
    $P$ is a (large) colimit diagram.
\end{proposition}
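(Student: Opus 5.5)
We must show that $\udl{\Pr}^L$ is the union (colimit in the poset of subcategories) of the $\udl{\Pr}^L_\kappa$ over $\kappa\in\Card[\NN]$. Since $\mathrm{Sub}\prn{\wGCat\prn{\allcolim}}$ is a poset and $P$ is monotone by \Cref{cor:kappa-incl}, the colimit of the restriction of $P$ to $\Card[\NN]$ is the smallest subcategory containing every $\udl{\Pr}^L_\kappa$. We already know each $\udl{\Pr}^L_\kappa\subset\udl{\Pr}^L$. It therefore suffices to prove two things. First, every $V$-object of $\udl{\Pr}^L$ lies in $\udl{\Pr}^L_\kappa$ for some $\NN$-normable $\kappa$. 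Second, every $V$-morphism (left adjoint $V$-functor) between presentable $V$-categories lies in $\udl{\Pr}^L_\kappa$ for some such $\kappa$. Because the union is filtered, the union of the subcategories is then automatically a subcategory, and it must equal $\udl{\Pr}^L$.

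The key set-theoretic input is that $\NN$-normable cardinals are cofinal in all regular cardinals. I would take this from \Cref{sec:size}: $\kappa$ is $\NN$-normable as soon as $\kappa$ exceeds the cardinality of the (small) data of $\baseCat$ and is suitably closed, for instance $\kappa$ of the form $\lambda^{+}$ with $\lambda$ large, or any sufficiently large regular cardinal $\kappa\gg|\baseCat|$. The mechanism is that indexed products along $\NN$-admissible maps are fibrewise finite products of restrictions by \Cref{lem:fibrewise_product}, and $\kappa$-small categories are closed under finite products once $\kappa$ is uncountable. Using this, I would also arrange that $\NN$-normable cardinals are cofinal among those $\kappa$ such that a given presentable category is $\kappa$-accessible.

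Objects: let $\CC\simeq L_{\udl S}\udl{\presheaf}(\AA)$ be presentable. By \Cref{prop:acc-fibrewise} accessibility is fibrewise, so I reduce to the non-parametrised statement that each $\CC^V$ is $\kappa$-accessible for all sufficiently large regular $\kappa$ in a cofinal class. Lurie's theory gives this for all $\kappa\geq\kappa_0$ with $\kappa\gg\kappa_0$, and there are only set-many $V$. Alternatively, I would argue directly that $\udl{\presheaf}(\AA)$ is $\kappa$-accessible for all $\kappa$, and that $L_{\udl S}$ preserves $\kappa$-filtered colimits once $\kappa$ bounds the sizes of the sources and targets of $\udl S$. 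Morphisms: a left adjoint $F\colon\CC\to\DD$ preserves all colimits, so it suffices that it preserves $\kappa$-compact objects fibrewise. Its right adjoint is accessible, hence $\kappa$-accessible for large $\kappa$, and then $F$ preserves $\kappa$-compacts. Choosing $\kappa$ normable and above all these bounds, which is possible by cofinality, finishes the proof.

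The main obstacle is the cofinality step. It requires that normable cardinals can be found above any given bound while remaining within the "sharply larger" relation $\kappa\ll\lambda$ needed for accessibility transfer. I would first try to show in \Cref{sec:size} that every regular $\kappa>|\mathrm{Mor}\,\baseCat|$ is $\NN$-normable. If that fails, I would instead use successor cardinals of large strong-limit cardinals, which are simultaneously normable and sharply above any prescribed $\kappa_0$.
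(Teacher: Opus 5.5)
Your reduction to two conditions matches the paper's. The conditions are that every presentable $V$-category is $\kappa$-presentable, and every left adjoint preserves $\kappa$-compact objects, for some $\NN$-normable $\kappa$.

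Where you differ is the accessibility input. The paper cites Martini--Wolf's result that presentable $\baseCat$-categories, and right adjoints between them, are accessible for some sound doctrine. It then uses that $\Card[\NN]_{\geq\aleph_1}$ is cofinal among sound doctrines (\Cref{prop:cofinal_embedding}). You instead pass through \Cref{prop:acc-fibrewise} to Lurie's non-parametrised theory. That is a legitimate and more elementary route, and it avoids the dictionary with Martini--Wolf.

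As written, though, your object step misses a condition. \Cref{prop:acc-fibrewise} identifies $\GAcc{\kappa}$ with $\cofree(\Acc{\kappa})$. So besides each $\CC^V$ being $\kappa$-accessible, you also need every restriction functor $f^*\colon \CC^V\to\CC^U$ to preserve $\kappa$-compact objects, and fibrewise accessibility alone does not give this. Your own morphism argument fixes it. The restrictions of a presentable $\baseCat$-category preserve colimits, so they are left adjoints between presentable categories with accessible right adjoints. Since $\baseCat$ is small there are only set-many such $f$, so a single $\kappa$ handles all of them together with the fibres.

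The obstacle you single out is not actually one. The proof of \Cref{prop:cofinal_embedding} shows that $\Card[\NN]$ is upward closed: it consists of all regular cardinals $\geq\kappa_{\NN}$. Moreover, \Cref{cor:kappa-incl} gives $\udl{\Pr}^L_\kappa\subset\udl{\Pr}^L_\lambda$ on objects and morphisms for all $\baseCat$-restrictable $\kappa\leq\lambda$. So no sharpness relation $\kappa\ll\lambda$ is needed. Once an object or morphism is $\kappa$-presentable for one restrictable $\kappa$, you may replace $\kappa$ by $\max(\kappa,\kappa_{\NN})$. Your fallback via large successor cardinals works, but it is unnecessary.

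Separately, your alternative claim that $\udl{\presheaf}(\AA)$ is $\kappa$-accessible for every $\kappa$ is false in the parametrised sense when $\kappa$ is not $\baseCat$-restrictable, because restrictions need not then preserve $\kappa$-compact objects. This does no harm, since only large restrictable $\kappa$ matter.
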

\begin{proof}
    By the description of \cite[Prop.~I.2.65,Cor.~I.2.67]{nat-thesis} (and filteredness of total orders), we must show that  
     every presentable $\baseCat$-category is $\kappa$-accessible for some $\NN$-normable $\kappa$ and every left adjoint of presentable $\baseCat$-categories preserves $\kappa$-compact objects for some $\NN$-normable $\kappa$;
    equivalently, every right adjoint of presentable $\baseCat$-categories is $\kappa$-accessible.
    That this holds for Martini-Wolf's ``sound doctrines'' is \cite[Cor.~2.4.1.8,Prop.~2.4.3.3]{mwpresentable};
    that it is true for uncountable $\NN$-normable cardinals is then  \Cref{rmk:cofin}.
\end{proof}

We are now ready to conclude the theorem.

\begin{proof}[Proof of \Cref{thm:prL-colims}.]
    First note that the inclusion $\udl{\Pr}^L_{\kappa}\subset \wGCat\prn{\allcolim}$ is the image of 
     \[
    \udl{\ind}^{\kappa}(-)\simeq\udl{\presheaf}_{\udl{\sm}(\kappa)}^{\allcolim}(-)\colon \udl{\Cat}^{\mathrm{rex}(\kappa)}\rightarrow \wGCat(\allcolim),
    \]
    by \Cref{cor:ind-of-compact};
    this functor is $\NN$-normed by \Cref{prop:nested-indexing,prop:relative_cocompletion_through_presentable}, so its image $\udl{\Pr}^L_{\kappa}\subset \wGCat\prn{\allcolim}$ is $\NN$-norm closed.
    Hence \Cref{prop:choose-kappa} expresses $\udl{\Pr}^L \subset \wGCat\prn{\allcolim}$ as a sequential colimit of $\NN$-norm closed $\baseCat$-subcategories, implying that it is norm-closed. 
    \Cref{prop:transf_examples} then completes the proof.
\end{proof}
As an immediate consequence of \Cref{thm:prL-colims,cor:NAlg-in-CatK} and the adjoint functor theorem \cite[Prop.~2.4.3.1]{mwpresentable}, we may characterise $\NN$-normed algebras in $\udl{\presentable}^L$:
\begin{corollary}\label{cor:prL-nalg}
    The $\baseCat$-functor induced by the lax $\NN$-normed inclusion 
    \[ \NAlg{\udl{\Pr}^L} \longrightarrow \NAlg{\wGCat[\times]} \simeq \wGCat_{\NN-\otimes} \]
    is a $\baseCat$-subcategory inclusion onto the distributive $\NN$-normed $\baseCat$-categories with presentable underlying $\baseCat$-categories and $\NN$-normed left adjoint $\baseCat$-functors.
\end{corollary}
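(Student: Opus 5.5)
The plan is to factor the functor in question through \Cref{cor:NAlg-in-CatK} for $\KK = \allcolim$, and then identify the image inside $\udl{\Pr}^L$. By \Cref{thm:prL-colims}, the inclusion $\udl{\Pr}^L \subset \wGCat\prn{\allcolim}$ is an $\NN$-normed $\baseCat$-subcategory inclusion closed under norms. It therefore induces a $\baseCat$-functor $\NAlg{\udl{\Pr}^L} \to \NAlg{\wGCat\prn{\allcolim}}$. Composing this with the functor of \Cref{cor:NAlg-in-CatK} gives a functor to $\wGCat_{\NN-\otimes}$, which agrees with the functor in the statement since both are induced by the same lax $\NN$-normed inclusion into $\wGCat$.

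\textbf{Step 1: the first functor is a subcategory inclusion.} Normed algebras are back-cocartesian sections of the unstraightening. I will argue, as in \Cref{lem:total-space-CatK-Cat}, that $\int \udl{\Pr}^L \to \int \wGCat\prn{\allcolim}$ is a subcategory inclusion of total categories over $\SFTN{\TT}{\NN}$. On cores this follows fibrewise via the Segal condition. Faithfulness holds because the underlying $\baseCat$-functor is faithful, and norm-closure means that cocartesian lifts of forward and backward maps are preserved and reflected.

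Postcomposition with a subcategory inclusion of total categories that preserves and reflects cocartesian lifts of backward maps is again a subcategory inclusion on section $\baseCat$-categories. Its image consists of the sections landing in $\int \udl{\Pr}^L$ whose structure maps lie in $\udl{\Pr}^L$. This must be done for all $V$, after restricting to $\baseCat_{/V}$; that restriction is compatible with everything in sight.

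\textbf{Step 2: identify the image.} By \Cref{cor:NAlg-in-CatK}, objects in the image of $\NAlg{\wGCat\prn{\allcolim}}$ are the distributive $\NN$-normed $\baseCat$-categories, and morphisms are the cocontinuous $\NN$-normed functors. An algebra in $\wGCat\prn{\allcolim}$ lies in $\NAlg{\udl{\Pr}^L}$ exactly when its underlying $\baseCat$-category is presentable. This is because the norm functors $w_\otimes$ are then morphisms in $\udl{\Pr}^L$: they are maps $w_\otimes \CC \to \CC$, cocontinuous maps between presentable categories, and hence left adjoints by the adjoint functor theorem \cite[Prop.~2.4.3.1]{mwpresentable}.

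Likewise, an $\NN$-normed cocontinuous functor between algebras with presentable underlying categories is automatically a left adjoint by the same theorem. So the image is as claimed: the distributive $\NN$-normed $\baseCat$-categories with presentable underlying $\baseCat$-categories, together with the $\NN$-normed left adjoint $\baseCat$-functors.

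\textbf{Main obstacle.} I expect the hardest point to be Step 1. There one must check that membership of the structure maps (images of backward and forward morphisms) in $\udl{\Pr}^L$ is automatic once the underlying objects are presentable. This is the reduction to the adjoint functor theorem, and it needs to be applied levelwise over every $V$ to get a statement about $\baseCat$-subcategories rather than just about fibres.
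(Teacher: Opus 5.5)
Your proposal is correct and is essentially the paper's argument: the paper obtains this corollary directly from \Cref{thm:prL-colims}, \Cref{cor:NAlg-in-CatK}, and the adjoint functor theorem, which is what you have spelled out. One remark: the adjoint functor theorem already makes $\udl{\Pr}^L \subset \wGCat\prn{\allcolim}$ a \emph{full}, norm-closed $\baseCat$-subcategory. Your Step 1 therefore reduces to the observation that normed algebras in a full norm-closed subcategory are exactly those whose underlying $\baseCat$-object lies in it; this is the point your ``main obstacle'' paragraph was reaching for.
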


\begin{remark}\label{rmk:formula_for_presentable_tensors}
    After \Cref{rmk:mw-vs-us}, \cite[Prop.~2.6.2.11]{mwpresentable} generalises Lurie's tensor product formula to our setting; namely, we acquire a natural equivalence 
    \[
        \CC\otimes \DD\simeq \udl{\func}^{\mathrm{lim}}(\CC\op,\DD)
    \] where the right-hand side denotes the full subcategory of limit-preserving functors.
\end{remark}

To finish, we connect this work to small $\baseCat$-categories: 

\begin{theorem}\label{thm:ind}
    Let $\kappa$ be $\NN$-normable. The idempotent-completion Bousfield localisation $(-)^{\natural}\colon \udl{\Cat}^{\mathrm{rex}(\kappa)}\rightarrow \udl{\Cat}^{\mathrm{irex}(\kappa)}$ refines uniquely to an $\NN$-normed functor. 
    
    The induced $\NN$-norms on the codomain satisfy the following universal property: for $w \colon W \to V$ a map of orbit-finite $\baseCat$-\animae{}, $\CC\in (\udl{\Cat}^{\mathrm{irex}(\kappa)})^W$, and $\DD\in (\udl{\Cat}^{\mathrm{irex}(\kappa)})^V$, the lax structure $V$-functor $w_\ast \CC\rightarrow w_{\otimes}\irex \CC$ induces an equivalence
    \[ \udl{\Fun}^{\mathrm{rex}(\kappa)}(w_{\otimes}^{\mathrm{irex}(\kappa)} \CC,\DD) \longrightarrow \udl{\Fun}^{\udl{\sm}(\kappa)-\partial_w}(w_\ast \CC,\DD). \] 

    Furthermore, the $\NN$-normed functor $\udl{\ind}^{\kappa}\colon \udl{\Cat}^{\mathrm{rex}(\kappa)}\rightarrow \udl{\presentable}^L_{\kappa}$ factors uniquely through $(-)^{\natural}$, inducing inverse $\NN$-normed equivalences $\udl{\ind}^{\kappa}\colon \udl{\Cat}^{\mathrm{irex}(\kappa)}\simeq \udl{\presentable}^L_{\kappa}\cocolon (-)^{\kappa}$.
\end{theorem}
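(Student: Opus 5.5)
The plan is to deduce everything from \Cref{prop:monoidal-structure-Bousfield-loc} together with the normed structure on $\udl{\Cat}^{\mathrm{rex}(\kappa)} = \GCat(\udl{\sm}(\kappa))$ supplied by \Cref{thm:catK_symMon}. The first step is to check that $\udl{\sm}(\kappa)$ is $\NN$-compatible. This is exactly the definition of $\NN$-normability in \Cref{def:kappa-normable}: the $w_*$ preserve $\kappa$-small categories. So \Cref{thm:catK_symMon} applies. It gives $\udl{\Cat}^{\mathrm{rex}(\kappa)}$ an $\NN$-normed structure, with norms corepresenting $\udl{\sm}(\kappa)$-distributive multifunctors.

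The second step is to show that the class of $(-)^{\natural}$-equivalences is closed under $\NN$-norms. My first attempt would use the universal property of $\natural$ rather than an explicit description of the equivalences. A map $f\colon \AA\to\BB$ in $\udl{\Cat}^{\mathrm{rex}(\kappa)}$ is a $\natural$-equivalence iff restriction along $f$ induces an equivalence
\[
\udl{\Fun}^{\mathrm{rex}(\kappa)}(\BB,\EE)\longrightarrow \udl{\Fun}^{\mathrm{rex}(\kappa)}(\AA,\EE)
\]
for every idempotent-complete $\EE$. By \eqref{eq:catk_uni_prop}, for $w_{\otimes}f$ this becomes the restriction
\[
\udl{\Fun}\distributiveLocalisation{w}{\udl{\sm}(\kappa)}{}(w_*\BB,\EE)\longrightarrow \udl{\Fun}\distributiveLocalisation{w}{\udl{\sm}(\kappa)}{}(w_*\AA,\EE).
\]
Fibrewise (\Cref{lem:fibrewise_product}), a $\natural$-equivalence of rex categories is fully faithful with every object of the target a retract of one in the image. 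Hence $w_*f$ is fibrewise a product of such maps, and it is again fully faithful and dense up to retracts. An idempotent-complete $\EE$ should then let a multifunctor on $w_*\AA$ extend uniquely to $w_*\BB$, as the left Kan extension along $w_*f$. I expect the main obstacle here: checking that this extension still distributes $\kappa$-small colimits in the mixed coordinates.

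To handle that obstacle, I would work in presentable categories instead. By \Cref{lem:idem-bousfield}, $\udl{\ind}^{\kappa}$ inverts $f$, since $\udl{\ind}^\kappa\AA\simeq\udl{\ind}^\kappa(\AA^\natural)$. The functor $\udl{\ind}^{\kappa}\simeq \udl{\presheaf}^{\allcolim}_{\udl{\sm}(\kappa)}$ is $\NN$-normed by \Cref{prop:nested-indexing,prop:relative_cocompletion_through_presentable}, so it also inverts $w_\otimes f$. Applying $(-)^\kappa$ and \Cref{lem:idem-bousfield} again shows that $(w_\otimes f)^\natural$ is an equivalence. This gives the closure, and \Cref{prop:monoidal-structure-Bousfield-loc} then yields the unique $\NN$-normed refinement of $(-)^\natural$.

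The last step is the universal property and the equivalence with $\udl{\presentable}^L_\kappa$. The norm on $\udl{\Cat}^{\mathrm{irex}(\kappa)}$ is $(w_\otimes\CC)^\natural$. Mapping into an idempotent-complete $\DD$, the localisation adjunction gives
\[
\udl{\Fun}^{\mathrm{rex}(\kappa)}\bigl((w_\otimes\CC)^\natural,\DD\bigr)\simeq \udl{\Fun}^{\mathrm{rex}(\kappa)}(w_\otimes \CC,\DD).
\]
Composing with \eqref{eq:catk_uni_prop} gives the stated equivalence. Since $\udl{\ind}^\kappa$ inverts the $\natural$-equivalences, it factors uniquely through $(-)^\natural$ as an $\NN$-normed functor, by the localisation universal property of normed Bousfield localisations. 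By \Cref{cor:kappa-ind-adjunction} the factored functor is an underlying equivalence. An $\NN$-normed functor that is an underlying equivalence is an $\NN$-normed equivalence, so its inverse $(-)^\kappa$ inherits the normed structure.
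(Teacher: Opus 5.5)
Your proposal is correct and follows essentially the paper's route. Like the paper, you identify the $\natural$-equivalences with the maps inverted by the $\NN$-normed functor $\udl{\ind}^{\kappa}$, obtain norm-closure of that class and the unique refinement from \Cref{prop:monoidal-structure-Bousfield-loc}, and then factor $\udl{\ind}^{\kappa}$ through $(-)^{\natural}$, with \Cref{cor:kappa-ind-adjunction} supplying the equivalence. The only difference is in the universal property: you compose the localisation adjunction with \eqref{eq:catk_uni_prop}, while the paper instead notes that $\infl{}{\baseCat}\mathrm{idem}$-distributivity is automatic because idempotent splittings are absolute colimits; both arguments work.
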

\begin{proof}
    That the Bousfield localisation $(-)^{\natural}$ refines to an $\NN$-normed functor is simply because $(-)^{\natural}$-equivalences are the morphisms inverted by $\udl{\ind}^{\kappa}\colon \udl{\Cat}^{\mathrm{rex}(\kappa)}\rightarrow \wGCat(\allcolim)$, which admits an $\NN$-normed structure by \Cref{prop:nested-indexing}.
    For the last statement, just note by \Cref{thm:prL-colims} that the $\NN$-normed functor $\udl{\ind}^{\kappa}$ factorises as the composite $\udl{\Cat}^{\mathrm{rex}(\kappa)}\twoheadrightarrow\udl{\Cat}^{\mathrm{irex}(\kappa)} \rightarrow \udl{\presentable}^L_{\kappa}$ of $\NN$-normed functors, and the last map is an equivalence by \Cref{cor:kappa-ind-adjunction}.

    For the statement about the universal property, first recall by construction that this is the universal property of the $\NN$-norms in $\udl{\Cat}^{\mathrm{rex}(\kappa)}$. Next, observe that $\mathrm{idem}\in\Cat$  is filtered (hence sifted), so $\infl{}{\baseCat}\mathrm{idem}$-distributivity is reduced to a fibrewise $\mathrm{idem}$-colimit preservation  by \Cref{prop:dist-lemmas}. But then all functors  preserve $\mathrm{idem}$-colimits since these are absolute colimits. 
\end{proof}

\begin{example}
    If $\baseCat$ is atomic orbital, we claim that all regular cardinals $\kappa$ are $\baseCat$-restrictable;
    indeed the characterisation of \Cref{prop:kappa-restricts} reduces this to the observation that
    $U \times_V X \in \FinTN[\TT]{}$ is finite for all orbits $U,V,X$, which is clear.
    It is, however, not clear that $\kappa$ is always $\baseCat$-normable, and we suspect that this is not the case.
\end{example}
We withhold more examples until \Cref{subsection:the_equivariant_case}.

\section{Modes}\label{section:presentable_categories}
We now finally turn to the theory of parametrised modes, proving 
\Cref{alphThm:invert_object_idempotent,alphThm:compact_lie_norms,alphThm:GSymmon_small_cats} by putting together, as ingredients, all the preceding theory.
These are instances of idempotent normed  algebras,  so we begin with a study of idempotence in its own right.

\subsection{Idempotent normed algebras}\label{subsection:modes}
Throughout this section, fix $\CC$ an $\NN$-normed $\TT$-category.

\begin{definition}\label{def:idem}
  We say that an $\EEE{0}$-algebra $\unit \rightarrow Y$ in $\CC^{\baseCat}$ is \mdef{idempotent} if the map 
  \[
    \id \otimes \eta\colon Y \simeq Y \otimes \unit \longrightarrow Y \otimes Y
  \]
  is an equivalence.
  Moreover, we say that $Y$ is \mdef{$\NN$-idempotent} if, for all $\NN$-admissible maps of orbit-finite $\baseCat$-\animae{} $w\colon W \rightarrow V$, the map
    \[
        \id \otimes w_{\otimes}W^\ast\eta\colon V^*Y\simeq V^*Y\otimes w_{\otimes}W^\ast \unit \longrightarrow V^*Y\otimes w_{\otimes}W^*Y
    \]
    in $\udl{\sC}^V$ is an equivalence.
    
    On the other hand, an $\NN$-normed algebra $X \in \NAlg[\TT]{\CC}$is said to be \mdef{idempotent} if, for all  $w\colon W \rightarrow V$ as above,
    the $(w \sqcup \id_V \colon W \sqcup V \to V)$-multiplication map
    \[
        \mu\colon (w_{\otimes}W^*X) \otimes V^*X\longrightarrow V^*X
    \]
    is an equivalence. 
    Write $\NAlgidem{\CC} \subset \NAlg{\CC}$ for the full $\baseCat$-subcategory of these.
\end{definition}
In particular, this allows us to define $\NN$-modes.
\begin{definition}
    An \mdef{$\NN$-mode} is an idempotent $\NN$-normed algebra in $\udl{\presentable}^L$.
    We say that a left adjoint $\eta\colon \GSpaces \rightarrow \CC$ \mdef{witnesses $\CC$ as an $\NN$-mode} if it is the unit map for an $\NN$-mode.
\end{definition}

For ordinary symmetric monoidal structures, \cite{lur17} uses the localisation of the unit to identify
smashing localisations and idempotent commutative algebra objects, each embedding fully faithfully into $\EEE{0}$-algebras;
this time, $\NN$-normedness of smashing $\TT$-localisations will be recorded by the existence of (essentially unique) normed algebra structures, which will now arise essentially uniquely by $\NN$-idempotence of the underlying $\EEE{0}$-algebra.
This begins by noting that idempotent $\EEE{0}$-algebras are closed under norms. 
\begin{lemma}
    Let $w\colon W \rightarrow V$ be an $\NN$-admissible map between orbit-finite $\TT$-\animae{}. If $\unit \rightarrow Y$ is an idempotent $\EEE{0}$-algebra in $W^*\udl{\sC}$, then so is $\unit \simeq w_{\otimes}\unit \rightarrow w_{\otimes}Y$ in $V^*\udl{\sC}$.
\end{lemma}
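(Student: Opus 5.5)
The plan is to use that $w_{\otimes}$ is symmetric monoidal, which turns the idempotence condition for $w_{\otimes}Y$ into $w_{\otimes}$ applied to the idempotence condition for $Y$. By \Cref{rmk:monoidality-of-fp}, the norm $w_{\otimes}\colon \CC^W \to \CC^V$ carries a natural symmetric monoidal structure with respect to the fibrewise structures coming from the fold maps. In particular, $w_{\otimes}$ sends the unit $\unit_W$ to $\unit_V$, which is the equivalence $\unit \simeq w_{\otimes}\unit$ in the statement. We first check that $\unit_V \simeq w_{\otimes}\unit_W \xrightarrow{w_{\otimes}\eta} w_{\otimes}Y$ is an $\EEE{0}$-algebra in $\CC^V$; this is automatic, since an $\EEE{0}$-algebra is just a map out of the unit. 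The same argument applies verbatim after restricting along any $U \to V$, using restriction-stability from \Cref{lem:fibrewise_product}. This covers the case where $\CC^{\baseCat}$-objects are replaced by $V$-objects.

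Next, we must show that $\id \otimes w_{\otimes}\eta\colon w_{\otimes}Y \simeq w_{\otimes}Y \otimes \unit \to w_{\otimes}Y \otimes w_{\otimes}Y$ is an equivalence. The symmetric monoidal structure on $w_{\otimes}$ gives a natural equivalence $w_{\otimes}(A) \otimes w_{\otimes}(B) \simeq w_{\otimes}(A \otimes B)$. This is the statement that the two composites $W \sqcup W \xrightarrow{\nabla} W \xrightarrow{w} V$ and $W\sqcup W \xrightarrow{w\sqcup w} V \sqcup V \xrightarrow{\nabla} V$ agree as forward spans in $\SFTN{\TT}{\NN}$. By naturality in both variables, applied to the morphisms $\id_Y$ and $\eta$, the map $\id_{w_{\otimes}Y} \otimes w_{\otimes}\eta$ is identified with $w_{\otimes}(\id_Y \otimes \eta)$. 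The unitor equivalence $w_{\otimes}Y\otimes \unit \simeq w_{\otimes}Y$ is likewise identified with $w_{\otimes}$ of the unitor on $Y$, using the unit coherence of the symmetric monoidal functor. Since $\id_Y\otimes\eta$ is an equivalence by hypothesis, and functors preserve equivalences, the map $w_{\otimes}(\id_Y\otimes \eta)$ is an equivalence, which proves the claim.

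The only step needing care is identifying $\id\otimes w_{\otimes}\eta$ with $w_{\otimes}(\id\otimes\eta)$ coherently, including the unitor. I would handle this by working in the lift \eqref{eq:cmon-lift}, where $w_{\otimes}$ is a morphism in $\mathrm{CMon}(\Cat)$, i.e. a symmetric monoidal functor. For any symmetric monoidal functor $G$, the composite $G(Y) \simeq G(Y)\otimes\unit \to G(Y)\otimes G(Y)$ is equivalent to $G$ applied to $Y\simeq Y\otimes \unit\to Y\otimes Y$, followed by the structure equivalence $G(Y\otimes Y)\simeq G(Y)\otimes G(Y)$. This is a standard fact about symmetric monoidal functors, namely that they preserve idempotent $\EEE{0}$-algebras, and it finishes the argument.
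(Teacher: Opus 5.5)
Your proposal is correct and is essentially the paper's argument: the map is the image of $\unit \to Y$ under $w_{\otimes}$, which is symmetric monoidal via the lift \eqref{eq:cmon-lift}, and symmetric monoidal functors preserve idempotent $\EEE{0}$-algebras. Your extra unpacking of the binary structure map via $w\circ\nabla \simeq \nabla\circ(w\sqcup w)$ and of the unitor coherence is just a proof of that standard fact and needs no changes.
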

\begin{proof}
  This map is the image of $\unit \rightarrow Y$ under the  symmetric monoidal functor 
$w_{\otimes}$, and symmetric monoidal functors preserve idempotence of $\EEE{0}$-algebras.
\end{proof}

We may now prove our structural result concerning idempotent $\NN$-normed algebras.
\begin{proposition}\label{prop:idem-Nalg}
    Let $\CC$ be an $\NN$-normed $\TT$-category.
    The composite
    \[
        U\colon \NAlgidem{\CC} \subset \NAlg{\CC}  \xlongrightarrow{\;\;\; \fgt \;\;\;} \Alg{\EEE{0}}{\CC}
    \]
    is fully faithful with image those $\eta\colon \unit \rightarrow Y$ satisfying the following equivalent conditions:
    \begin{enumerate}[label={(\alph*)}]
      \item \label[condition]{cond:unit-idem} the $\mathbb{E}_0$-algebra $\eta\colon \unit\rightarrow Y$ is $\NN$-idempotent;
      \item \label[condition]{cond:modules-idem} the $\mathbb{E}_0$-algebra $\eta\colon \unit\rightarrow Y$ is idempotent and, for all $\NN$-admissible maps $w\colon W \rightarrow V$, $V^*Y$ admits the structure of a module over the unique idempotent commutative algebra structure on $\unit\rightarrow w_{\otimes}W^*Y$;
        \item the $\mathbb{E}_0$-algebra $\eta\colon \unit\rightarrow Y$ is idempotent and the smashing localisation $L = -\otimes Y\colon \CC \rightarrow \Mod{Y}{\CC}$ is $\NN$-normed; or
        \item the $\mathbb{E}_0$-algebra $\eta\colon \unit\rightarrow Y$ is idempotent and refines to the unit of an $\NN$-normed algebra structure on $Y$.
    \end{enumerate}
    Moreover, $\myuline{\mathrm{NAlg}}_{\NN}^{\mathrm{idem}}(\CC)^{\TT} \subset \NAlg[\TT]{\CC}$ is a symmetric monoidal full subposet satisfying
    \begin{align}
      Y' \leq Y &\iff Y \in \Mod{Y'}{\CC}^{\baseCat};\label{eq:relation-mod}\\
      \Mod{Y' \otimes Y}{\CC} 
        &\;\;\; = \;\;\; 
        \Mod{Y'}{\CC} \cap \Mod{Y}{\CC} \subset \CC.
      \label{eq:tensor-mod}
    \end{align}
\end{proposition}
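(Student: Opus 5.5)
The plan is to reduce to the non-parametrised theory of idempotent algebras and smashing localisations in \cite{lur17}, applied fibrewise via \Cref{rmk:monoidality-of-fp}, and to treat the normed data as a property layered on top. First I would record the non-parametrised input: for each orbit $V$, the symmetric monoidal category $\CC^V$ has the property that $\mathrm{CAlg}^{\mathrm{idem}}(\CC^V)\to \Alg{\EEE{0}}{\CC^V}$ is fully faithful with image the idempotent $\EEE{0}$-algebras. This full subcategory is a poset, ordered by the module relation \eqref{eq:relation-mod}, and for idempotent $Y$ the functor $-\otimes Y$ is a smashing localisation onto $Y$-modules. Module categories over tensor products then satisfy \eqref{eq:tensor-mod}. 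Restriction-stability of all these notions assembles this into statements about $\baseCat$-categories, and gives \eqref{eq:relation-mod}, \eqref{eq:tensor-mod} and the poset claim once the main equivalence is known.

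Next I would prove the equivalence of the conditions in the order (a)$\Leftrightarrow$(b)$\Rightarrow$(c)$\Rightarrow$(d)$\Rightarrow$(a). The preceding lemma shows that $\unit\to w_\otimes W^*Y$ is an idempotent $\EEE{0}$-algebra, so by the non-parametrised theory $V^*Y$ is a module over it if and only if $V^*Y\to V^*Y\otimes w_\otimes W^*Y$ is an equivalence. This gives (a)$\Leftrightarrow$(b), noting that the case $w=\nabla$ recovers plain idempotence.

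For (b)$\Rightarrow$(c), I would apply \Cref{prop:monoidal-structure-Bousfield-loc}. The $L$-equivalences are the maps $f$ with $f\otimes Y$ an equivalence, and I must show they are closed under $w_\otimes$. Given such an $f$ in $\CC^W$, I would write
\[ w_\otimes(f)\otimes V^*Y\simeq w_\otimes(f)\otimes w_\otimes W^*Y\otimes V^*Y\simeq w_\otimes(f\otimes W^*Y)\otimes V^*Y. \]
The first step uses (b), and the second uses symmetric monoidality of $w_\otimes$ from \Cref{rmk:monoidality-of-fp}. The right-hand side is an equivalence. The step (c)$\Rightarrow$(d) follows because the unit of the normed category $\Mod{Y}{\CC}$, pulled back along the lax right adjoint, is a normed algebra with unit $\eta$.

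For (d)$\Rightarrow$(a), I would use the normed multiplication map $w_\otimes W^*Y\otimes V^*Y\to V^*Y$. Together with idempotence, it exhibits $V^*Y$ as a retract-compatible module over $w_\otimes W^*Y$, and the non-parametrised argument then gives (a). This also shows that normed algebras in the image of (d) are idempotent in the sense of \Cref{def:idem}, and conversely that idempotent normed algebras satisfy (d).

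The main obstacle is full faithfulness of $U$, together with the claim that its image is exactly the $\NN$-idempotent $\EEE{0}$-algebras. My plan is to identify $\NAlgidem{\CC}$ with normed algebras $X$ for which $-\otimes X$ is an $\NN$-normed localisation. I would then show that the mapping anima $\Map_{\NAlg{}}(X,X')$ are contractible or empty, by exhibiting $X'$ as the initial normed algebra in $\Mod{X'}{\CC}$: its unit is the normed localisation of the unit, so a normed map $X\to X'$ is the same as $X'$ being an $X$-module. The same holds on the $\EEE{0}$ side, so $U$ is a map of posets that reflects the order, hence fully faithful. The delicate point is verifying that the normed algebra structure transferred from the unit of the localisation agrees with an arbitrary given one. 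I would try to handle this by checking that restriction along $\CC\to \Mod{X}{\CC}$ is fully faithful on normed algebras, which in turn reduces to \Cref{lem:Bousfield-loc-is-loc} applied to the unstraightened total categories.
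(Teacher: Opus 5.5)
Your proposal is correct and follows essentially the paper's route. It uses the same cycle of implications among (a)--(d), with one lighter step: your retract argument for (d)$\Rightarrow$(a) (unitality gives $\mu\circ(\id\otimes w_\otimes W^*\eta)\simeq\id$, and $w_\otimes W^*Y$-local objects are closed under retracts) replaces the paper's appeal to \cite[Cor.~III.1.8]{nat-thesis}. It also uses the same full-faithfulness argument, via mapping anima being $*$ or $\emptyset$ on both sides. Your ``delicate point'' is handled in the paper by computing $\Map(X,X')$ inside the full subcategory $\NAlg{\Mod{X}{\CC}}\subset\NAlg{\CC}$, where $X$ is initial as the unit of $\Mod{X}{\CC}$. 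That the given normed structure on $X$ is this initial object is automatic, since a normed algebra whose unit map is an equivalence is initial.
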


A subtle point is that, while the $\baseCat$-category $\Mod{Y}{\CC}$ describes modules over $V^* Y$ by definition, it also describes honest modules over $Y$:
\begin{lemma}\label{rmk:cocart-sections}
    The restriction functors $\mathrm{Mod}_{Y}prn{\CC^\baseCat)} \rightarrow \Mod{V^*Y}{\CC^V} \simeq \Mod{Y}{\CC}^V$ induce an equivalence
    \[
        \mathrm{Mod}_{Y}\prn{\CC^\baseCat}\xrightarrow{\;\;\;\; \sim \;\;\;\;\;} \Mod{Y}{\CC}^{\baseCat}
    \]
\end{lemma}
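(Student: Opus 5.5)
The plan is to realise both sides as limits over the same indexing diagram and then compare those limits termwise. Throughout, $\CC^{\baseCat} \simeq \lim_{V \in \baseCat\op} \CC^V$ is the limit of the fibres along the restriction functors. Since $\CC$ is $\NN$-normed, its underlying functor $\baseCat\op \to \Cat$ lifts to $\Cat_\otimes$ by \Cref{rmk:monoidality-of-fp}, with symmetric monoidal restrictions $f^*$. Limits in $\Cat_\otimes$ are computed underlying, so $\CC^{\baseCat}$ acquires a symmetric monoidal structure and $Y \in \CC^{\baseCat}$ is a commutative algebra in this limit. In fact only an $\EEE{0}$- or idempotent-algebra structure is used here; since $Y$ is idempotent, the module categories are full subcategories in any case.

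\textbf{Step 1: rewrite the target as a limit of module categories.} By construction, $\Mod{Y}{\CC}$ is the $\baseCat$-category $V \mapsto \mathrm{Mod}_{V^*Y}(\CC^V)$, with restriction along $f\colon U \to V$ given by $f^*$. This is well defined because $f^*$ is symmetric monoidal and sends $V^*Y$ to $U^*Y$. Its $\baseCat$-objects are therefore
\[
\Mod{Y}{\CC}^{\baseCat} \simeq \lim_{V\in\baseCat\op}\mathrm{Mod}_{V^*Y}(\CC^V).
\]
The map in the statement is the canonical comparison $\mathrm{Mod}_{Y}(\lim_V \CC^V) \to \lim_V \mathrm{Mod}_{V^*Y}(\CC^V)$ induced by the symmetric monoidal projections.

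\textbf{Step 2: show that forming modules commutes with limits.} This is the main obstacle, and I would handle it by writing modules as a pullback of operadic data. Consider the functor $\mathrm{Mod}\colon \mathrm{Alg}(\Cat_\otimes) \to \Cat$, sending a pair $(\mathcal M, A)$ to $\mathrm{Mod}_A(\mathcal M)$. It is
\[
\mathrm{Mod}_A(\mathcal M) = \mathrm{Alg}_{\mathcal{LM}}(\mathcal M)\times_{\mathrm{Alg}(\mathcal M)}\{A\}.
\]
The functor $\mathcal M \mapsto \mathrm{Alg}_{\mathcal O}(\mathcal M)$ is corepresented in $\mathrm{Op}$, being $\Fun_{\mathrm{Op}}(\mathcal O,-)$. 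It therefore preserves limits, and limits of symmetric monoidal categories are computed as limits of $\infty$-operads. Hence both $\mathrm{Alg}_{\mathcal{LM}}$ and $\mathrm{Alg}$ commute with $\lim_V$. The point $\{A\}$ is the limit of the compatible points $V^*Y$, and fibre products commute with limits. Together these give the comparison equivalence. The only care needed is to confirm that the algebra $Y$ used in the paper's definition of $\Mod{Y}{\CC}$ really is this limit-compatible family. This follows because $Y$ is a $\baseCat$-object of $\udl{\Alg}$, i.e. a cocartesian section, and restricting that section recovers $V^*Y$.

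\textbf{Step 3: alternative in the idempotent case.} If the operadic argument causes trouble, there is a hands-on route. For idempotent $Y$, each $\mathrm{Mod}_{V^*Y}(\CC^V) \subset \CC^V$ is the full subcategory of objects $X$ for which $X \to X\otimes V^*Y$ is an equivalence. A limit of full subcategory inclusions is again a full subcategory of the limit. Inside $\CC^{\baseCat}$, this full subcategory consists of the objects $X$ with $X \to X \otimes Y$ an equivalence. That condition can be checked after applying each restriction $V^*$, because equivalences in a limit are detected componentwise and $V^*$ is symmetric monoidal. This is exactly $\mathrm{Mod}_Y(\CC^{\baseCat})$.
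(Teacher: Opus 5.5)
Your Step 2 is essentially the paper's argument. The paper likewise writes $\mathrm{Mod}_Y$ as the fibre over $Y$ of the restriction functor on algebra categories along a map of operads (there $\EEE{\infty}\to\EEE{\infty}\times_{\EEE{1}}\mathrm{LMod}$), and then uses that algebras in $\CC^{\baseCat}$ are the $\baseCat$-objects of the fibrewise-defined algebra $\baseCat$-category; it cites this from \cite[Cor.~I.4.38]{nat-thesis}, whereas you derive it from corepresentability of $\mathrm{Alg}_{\mathcal{O}}$ in operads and the fact that limits of symmetric monoidal categories are computed underlying. Your Step 3 is also a correct and more elementary route in the idempotent case, which is the setting in which the paper applies the lemma.
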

\begin{proof}
    The left-hand side is equivalent to the fibres of the pullback functor on categories of algebras over $Y$, applied to the map of operads $\EEE{\infty} \simeq \EEE{\infty} \times_{\EEE{1}} \EEE{1} \rightarrow \EEE{\infty} \times_{\EEE{1}} \mathrm{LMod}$;
    but by \cite[Cor.~I.4.38]{nat-thesis} these are the $\baseCat$-objects in the fibre of the associated map of inflated parametrised operads, whose algebra categories are simply defined fibrewise by operad algebras, and hence match the right-hand side.  
\end{proof}

Using this, we may prove \eqref{eq:relation-mod}.
\begin{lemma}\label{lem:NAlg-idem}
    Given $X,X' \in \NAlgidem{\CC}$ whose underlying $\EEE{0}$-algebra satisfies (a) through (e), we have
    \[
       \Map_{\NAlg[\baseCat]{\CC}}(X,X') \simeq  \Map_{\Alg[\baseCat]{\EEE{0}}{\CC}}(X,X') \simeq \begin{cases}
           * & X' \in \Mod{X}{\CC}^{\baseCat}\\
           \emptyset & \mathrm{otherwise}. 
       \end{cases}
    \]
\end{lemma}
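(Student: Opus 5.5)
The plan is to compute the two mapping anim\ae{} separately and show that they agree. I would start with the $\EEE{0}$-level computation, which is a standard argument about idempotent objects. A map of $\EEE{0}$-algebras $X \to X'$ in $\CC^{\baseCat}$ is a point of the fibre of $\eta_X^*\colon \Map_{\CC^{\baseCat}}(X,X') \to \Map_{\CC^{\baseCat}}(\unit,X')$ over $\eta_{X'}$. Since $X$ is idempotent, the smashing localisation $-\otimes X$ is a Bousfield localisation onto the objects $M$ with $M \simeq M\otimes X$. For such $X'$ (i.e. $X' \in \Mod{X}{\CC}^{\baseCat}$, using \Cref{rmk:cocart-sections} to identify $\baseCat$-objects of the module $\baseCat$-category with honest modules), the map $\eta_X^*$ is an equivalence, so its fibre is contractible. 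Conversely, suppose an $\EEE{0}$-map $f\colon X\to X'$ exists. Then $X'$ is a retract of $X\otimes X'$, via $\eta_X\otimes\id$ followed by $f\otimes\id$ and the multiplication of $X'$ (here idempotence of $X'$ provides the commutative algebra structure). Local objects are closed under retracts, so $X' \in \Mod{X}{\CC}$. This gives the second equivalence together with the case distinction, and shows the $\EEE{0}$-mapping anima is $(-1)$-truncated.

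Next I would show that $\fgt\colon \Map_{\NAlg[\baseCat]{\CC}}(X,X') \to \Map_{\Alg[\baseCat]{\EEE{0}}{\CC}}(X,X')$ is an equivalence. It suffices to check two things: that the source is empty exactly when the target is, and that the source is contractible when nonempty. For the first, assume $X' \in \Mod{X}{\CC}^{\baseCat}$. Then $X\otimes X' \simeq X'$, and since the multiplication $X\otimes X'\to X'$ is normed, the composite $X \to X\otimes X' \simeq X'$ should be a normed algebra map. For the second, I would use \eqref{eq:tensor-mod}-style reasoning: the coproduct of $X$ and $X'$ in normed algebras is $X\otimes X'$. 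Maps $X\to X'$ then correspond to sections of the idempotent normed algebra map $X'\to X\otimes X'$, and for idempotent normed algebras such a map is an equivalence or nothing.

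The main obstacle is making this last step precise, namely that normed algebra maps between idempotent normed algebras form a $(-1)$-truncated anima. I would try to deduce it from the $\EEE{\infty}$ analogue in \cite{lur17} applied levelwise. Normed algebras are sections of $\int\CC\to\SFTN{\TT}{\NN}$ that are cocartesian over backwards maps, so a normed map is a natural transformation of such sections. Its components on forwards maps $w\colon W\to V$ are determined by the underlying $\EEE{\infty}$-maps via the compatibility squares with $w_\otimes$. The equivalence $w_\otimes W^*X\otimes V^*X\simeq V^*X$ from the idempotence of $X$ forces these squares to be uniquely fillable, since the relevant mapping spaces out of $w_\otimes W^*X$ into $X'$-modules are contractible by the first paragraph applied to the $\NN$-idempotent $\EEE{0}$-algebra $w_\otimes W^*X$. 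Combining this with the idempotent $\EEE{\infty}$-result for each $V$ would conclude the argument.
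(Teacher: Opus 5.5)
Your $\EEE{0}$-level computation is fine, and it is exactly what the paper extracts from the proof of \cite[Prop.~4.8.2.9]{lur17} via \Cref{rmk:cocart-sections}. The gap is the step you flag yourself: contractibility of $\Map_{\NAlg[\baseCat]{\CC}}(X,X')$ for $X'\in\Mod{X}{\CC}^{\baseCat}$. Neither of your suggestions establishes it.

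The levelwise plan does not work as stated, for two reasons.
\begin{enumerate}
\item A map of normed algebras is a natural transformation of sections of $\int\CC\to\SFTN{\TT}{\NN}$. Beyond its components and one compatibility square per forwards map $w$, it carries coherence data for all composites of spans, and unique fillability of the squares says nothing about those higher layers.
\item The fillers of your squares are homotopies between two maps $w_\otimes W^*X\to V^*X'$ in $\CC^V$, i.e.\ paths in $\Map_{\CC^V}(w_\otimes W^*X,V^*X')\simeq\Map_{\CC^V}(\unit,V^*X')$. This anima is far from contractible; for $\CC=\udl{\Pr}^L$ it is the core of $(X')^V$. What your first paragraph shows to be contractible is the anima of maps \emph{under} $\unit$, which is a different space.
\end{enumerate}

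Your coproduct idea would work. If $X\otimes X'$ is the coproduct in $\NAlg[\baseCat]{\CC}$, then $\Map(X,X')$ is the anima of retractions under $X'$ of the normed map $\eta_X\otimes\id\colon X'\to X\otimes X'$. That anima is contractible if this map is an equivalence and empty otherwise. However, the claim that the pointwise tensor product is the coproduct of normed algebras is a genuine normed analogue of \cite[Prop.~3.2.4.7]{lur17}, and you neither prove nor cite it.

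The paper avoids all coherence bookkeeping by changing the ambient category. By condition (c), $-\otimes X\colon\CC\to\Mod{X}{\CC}$ is an $\NN$-normed Bousfield localisation. Its lax $\NN$-normed, fully faithful right adjoint therefore induces a fully faithful functor $\NAlg{\Mod{X}{\CC}}\hookrightarrow\NAlg{\CC}$. The image contains every normed algebra with $X$-local underlying object, in particular $X$ and $X'$.

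Hence $\Map_{\NAlg[\baseCat]{\CC}}(X,X')$ may be computed in $\NAlg[\baseCat]{\Mod{X}{\CC}}$. There $X$ is the normed unit: it is the localisation of the initial normed algebra $\unit$, since $X\otimes\eta_X$ is an equivalence. It is therefore initial by \cite[Cor.~II.2.16]{nat-thesis}. This gives existence and contractibility at once. Your separate existence argument is then unnecessary; made precise, it would invert the normed map $\eta_X\otimes\id_{X'}\colon X'\to X\otimes X'$, which is an underlying equivalence.
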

\begin{proof}
    After \Cref{rmk:cocart-sections}, the proof of \ha{Prop.}{4.8.2.9} generalises verbatim and contains the second equivalence. 
    It follows that the mapping anima in normed algebras is empty whenever there are not even maps between the underlying $\EEE{0}$-algebras.
    
    We are left with showing that, if $X^\prime$ is contained in the subcategory $\udl{\module}_X(\CC)^{\baseCat} \subset \CC^{\baseCat}$, then the anima of normed algebra maps $X \rightarrow X^\prime$ is contractible. 
    We may compute this mapping anima in the full subcategory $\NAlg{\udl{\module}_X(\CC)}$;
    note that $X$ is the tensor unit in $\udl{\module}_X(\CC)$, and hence it is initial in $\NAlg{\udl{\module}_X(\CC)}$ by \cite[Cor.~II.2.16]{nat-thesis} and \cite[Obs.~1.58]{tensor} (the latter observation extending to our generality verbatim).
\end{proof}
 
\begin{proof}[Proof of \Cref{prop:idem-Nalg}]
    We first prove that the above conditions are equivalent.
    That (a) $\iff$ (b) is obvious in light of the characterisation of modules over idempotent algebras \ha{Prop.}{4.8.2.10}.
    For showing (a) $\implies$ (c), let $w\colon W \rightarrow V$ be an $\NN$-admissible map of orbit-finite $\baseCat$-\animae{} and $\varphi\colon X \rightarrow Z$ an $L^W$-equivalence.
    We want to show that $V^*Y\otimes w_{\otimes}\varphi$ is an equivalence. By the hypothesis on $Y$, we know 
    \[
        V^*Y\otimes w_{\otimes}\varphi\simeq V^*Y\otimes w_{\otimes}W^*Y\otimes w_{\otimes}\varphi\simeq V^*Y\otimes w_{\otimes}(W^*Y\otimes \varphi),
    \]
    whence follows the desired conclusion. 
    
    Next,  (c) $\implies$ (a) is clear since $\unit\rightarrow W^*Y$ is an $L$-equivalence by $\NN$-idempotence of $\eta$.
    Now, (c) $\implies$ (d) because $Y$ is the image of the normed algebra $\unit$ under the $\NN$-normed localisation $L$ (see \cite[\S~I.3.4-6]{nat-thesis}).

    To finish identifying conditions, suppose (d).
    \cite[Cor.~III.1.8]{nat-thesis} shows that postcomposition along the $\NN$-normed $\TT$-functor $\CAlg\prn{\CC} \rightarrow  \CC$ under the pointwise structure provides an equivalence
    \[
        \NAlg{\myuline{\mathrm{CAlg}}(\CC)} \xrightarrow{\;\;\;\;\; \sim \;\;\;\;\;} \NAlg{\CC},
    \]
    so a general normed algebra in $\CC$ admits refinements of its normed multiplication map $w_{\otimes}  W^* X \rightarrow V^* X$
    to a commutative algebra map.
    In particular, $V^*X$ is given a module structure over $w_{\otimes} W^* X$;
    this specialises to (b).
   
    Now, the fact that $U$ is fully faithful is follows from the mapping anima formula in \Cref{lem:NAlg-idem}; indeed, the morphism of mapping \animae{} induced by $U$ is always an endomorphism of $*$ or $\emptyset$, and
    these are all equivalences. 
    
    To describe the essential image, first observe that $UY$ certainly satisfies condition (a) since the composition $V^*Y\simeq V^*Y\otimes \unit \xrightarrow{\id\otimes \eta} V^*Y\otimes w_{\otimes}W^*Y\xrightarrow{\mu} V^*Y$ is the identity map, and so the first map is an equivalence if and only if the second is. On the other hand, if an $\EEE{0}$-algebra $\unit\rightarrow Y$ satisfies any of the conditions, then in particular, it refines to an $\NN$-normed algebra by (d). And then by (a)  and the above argument, it is idempotent. 
    
    Finally, \eqref{eq:relation-mod} and \eqref{eq:tensor-mod} follow directly from the  non-parametrised statements, which are shown in the proof of \cite[Prop. 4.8.2.9]{lur17} and in \cite[\href{https://www.sas.rochester.edu/mth/sites/doug-ravenel/otherpapers/CSY2.pdf\#thm.5.1.8}{Prop.~5.1.8}]{csy20}. 
\end{proof}
This and \Cref{cor:prL-nalg} yield an immediate corollary about parametrised mode theory.
\begin{corollary} \label{cor:contractibility-monoidal-structures}
    If $\eta\colon \GSpaces \rightarrow \CC$ is an idempotent $\EEE{0}$-algebra in $\GPr[L]$, then the following conditions are equivalent.
    \begin{enumerate}[label={(\alph*)}]
      \item the map $\eta$ witnesses $\CC$ as an $\NN$-mode,
      \item the map $\eta$ refines to the unit of a distributive $\NN$-normed structure, and
      \item the smashing localisation on $\GPr[L]$ classified by $\eta$ attains an $\NN$-normed structure.
    \end{enumerate}
    In this case, the anima of distributive $\NN$-normed structures on $\CC$ with unit  $\eta$ is contractible.
\end{corollary}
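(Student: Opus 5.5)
The plan is to deduce everything from \Cref{prop:idem-Nalg} applied to the $\NN$-normed $\baseCat$-category $\CC=\udl{\Pr}^L$, combined with the characterisation of normed algebras in $\udl{\Pr}^L$ from \Cref{cor:prL-nalg}. The unit of $\udl{\Pr}^L$ is $\GSpaces$, so an $\EEE{0}$-algebra in $\udl{\Pr}^L$ is exactly a left adjoint $\eta\colon\GSpaces\to\CC$. By hypothesis $\eta$ is idempotent in the ordinary sense, so the idempotence clause appearing in conditions (c) and (d) of \Cref{prop:idem-Nalg} is already satisfied.

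First, (a) is by definition the statement that $\eta$ lies in the essential image of $U\colon\NAlgidem{\udl{\Pr}^L}\to\Alg{\EEE{0}}{\udl{\Pr}^L}$. \Cref{prop:idem-Nalg} identifies this image with condition (d) there: $\eta$ refines to the unit of an $\NN$-normed algebra in $\udl{\Pr}^L$. By \Cref{cor:prL-nalg}, normed algebras in $\udl{\Pr}^L$ are exactly distributive $\NN$-normed $\baseCat$-categories with presentable underlying category. The unit map of such an algebra is the $\NN$-normed left adjoint from the initial normed algebra $\GSpaces$, and its underlying functor is $\eta$. This gives (a)$\iff$(b).

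Second, (a)$\iff$(c) is exactly the equivalence of condition (c) of \Cref{prop:idem-Nalg} with its image characterisation. The smashing localisation $-\otimes\CC\colon\udl{\Pr}^L\to\Mod{\CC}{\udl{\Pr}^L}$ is the one classified by $\eta$, and in the normed setting \Cref{prop:monoidal-structure-Bousfield-loc} makes ``attains an $\NN$-normed structure'' a property.

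For the contractibility, I will identify the anima of distributive $\NN$-normed structures on $\CC$ with unit $\eta$ with the fibre of $U^{\simeq}$ over $\eta$, again using \Cref{cor:prL-nalg}. Any normed algebra structure refining the idempotent $\eta$ is automatically idempotent by \Cref{prop:idem-Nalg}: the essential-image argument there shows that (d) forces membership in $\NAlgidem{}$. Hence this fibre agrees with the fibre of $U$ restricted to $\NAlgidem{\udl{\Pr}^L}$. Since that restriction is fully faithful, the fibre over a point in its image is contractible.

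The main obstacle is this last identification. It requires the anima of ``distributive normed structures with unit $\eta$'' to be modelled by normed algebras in $\udl{\Pr}^L$ over the given $\EEE{0}$-algebra. Concretely, one must check that the subcategory inclusion of \Cref{cor:prL-nalg} is compatible with the forgetful map to $\EEE{0}$-algebras, i.e.\ that it commutes with passing to the unit map from $\GSpaces$. I expect this to follow because $\GSpaces$ is initial both as a normed algebra in $\udl{\Pr}^L$ and as a distributive presentable normed category.
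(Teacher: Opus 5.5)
Your proposal is correct and is essentially the paper's own argument: the paper deduces the corollary immediately by applying \Cref{prop:idem-Nalg} to $\udl{\Pr}^L$ and reading normed algebras there as distributive presentable $\NN$-normed categories via \Cref{cor:prL-nalg}, with contractibility coming from the full faithfulness of $U$. The unit-compatibility you flag as the main obstacle is not a real difficulty, since the forgetful map to $\EEE{0}$-algebras is by construction the structure map out of the initial normed algebra $\GSpaces$ (compare the remark following the corollary).
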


\begin{remark}  
\Cref{thm:prL-colims} identifies the underlying $\TT$-category of the unit in $\udl{\presentable}^L$, but we may even further describe the entire  unit $\NN$-normed structure: \Cref{prop:distributivity_of_cartesian_symmon} establishes that the cartesian structure is distributive with unit $*$ classified by the identity $\GSpaces = \GSpaces$, so the unit and cartesian structure agree under \Cref{cor:contractibility-monoidal-structures}.
\end{remark}

\subsection{The pointed mode}\label{subsec:pointed_mode}
In this subsection, we show that  $\udl{\Spaces}_*$ is an $\NN$-mode in $\udl{\presentable}^L$ with unit $S^0 \coloneqq *_+$, classifying the full $\baseCat$-subcategory $\udl{\presentable}^L_*\subseteq \udl{\presentable}^L$ of pointed presentable parametrised categories. In particular, this will endow $\GBSpaces$ and $\udl{\presentable}^L_*$ with an $\NN$-normed structure.
As in the non-parametrised situation, the key idea will be to view $\udl{\Spaces}_*$ as a localisation of a presheaf $\baseCat$-category:
\begin{construction} \label{const:pointed}
    Consider $\Delta^{1}$ with the projectable class of cocones consisting of:
    \begin{itemize}
        \item the constant cocones on the two objects $0,1 \in \Delta^1$, and
        \item the cocone $\emptyset^\triangleright \simeq *\to \Delta^{1}$ picking $1$.
    \end{itemize}
    Passing to constant parametrised categories, we obtain a $\baseCat$-class of cocones $\udl{\R}$ on the parametrised category $\udl{\Delta}^1$ with $V$-cocones as described above. 
    The full $\baseCat$-subcategory 
    \[
        \udl{\presheaf}(\udl{\Delta}^1,\udl{\RR}) \subset \udl{\presheaf}(\udl{\Delta}^1)
    \]
    is the $\baseCat$-category of those $V$-presheaves $V^*\prn{\udl{\Delta}^1}{\op} \to V^*\GSpaces$ sending $1$ to the terminal object, i.e. the pointed anim\ae{}, under the description of \Cref{recollect:localisation_cocompletion}.
     
    Now let $\udl{\ast}$ denote the class of cocones on $\udl{\Delta}^0$ only containing the constant cocone the unique object. The inclusion  $0\colon \udl{\Delta}^0\rightarrow \udl{\Delta}^1$  carries $\udl{\ast}$ to $\udl{\RR}$, and we claim the induced map after applying $\udl{\presheaf}(-)$ to the morphism $(\udl{\Delta}^0,\udl{\ast})\rightarrow (\udl{\Delta}^1,\udl{\RR})$ in $\Catcocone$  identifies with $(-)_+ \colon \GSpaces \to \GBSpaces$; indeed, under the equivalence $\udl{\presheaf}(\udl{\Delta}^1,\udl{\RR}) \simeq \GBSpaces$ the image of $0$ under $\udl{\Delta}^1 \xrightarrow{\yoneda} \udl{\presheaf}(\udl{\Delta}^1) \xrightarrow{L} \udl{\presheaf}(\udl{\Delta}^1,\RR)$ is sent to $S^0$, since
    \[
    \myuline{\Map}_{\udl{\presheaf}(\udl{\Delta}^1,\udl{\RR})} (L\yoneda(0),X) \simeq X(0),
    \]
    so that $L\yoneda(0)$ corepresents the underlying $\baseCat$-anima $\baseCat$-functor, as does $S^0$.
\end{construction}

\begin{lemma}\label{lem:ordinary_univ_prop_pointed_anima}
    Let $\udl{\sC}$ be a cocomplete $V$-category  with initial $\baseCat$-object $\emptyset_{\sC}$. Pullback along the $V$-functor $\udl{\Delta}^1\rightarrow \udl{\Spaces}_*$ picking out the $V$-morphism $S^0\rightarrow \ast$ induces an equivalence 
    \[
        \udl{\func}^{\colim}(\udl{\Spaces}_*,\udl{\sC})\xrightarrow{\;\;\;\; \sim \;\;\;\;} \CC_{/\emptyset_{\sC}}.
    \]
\end{lemma}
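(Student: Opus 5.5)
The plan is to apply the relative cocompletion universal property with $w$ the identity, i.e.\ \Cref{thm:distributivity_localisation-cocompletion} (or its special case in the proof of \Cref{thm:catK_symMon}), to the class of cocones $(\udl{\Delta}^1,\udl{\RR})$ of \Cref{const:pointed}. \Cref{const:pointed} identifies $\udl{\presheaf}(\udl{\Delta}^1,\udl{\RR})$ with $\GBSpaces$. Since $\udl{\presheaf}(\udl{\Delta}^1,\udl{\RR})$ is already cocomplete and is generated under colimits by the localised Yoneda image, it agrees with the $\allcolim$-relative cocompletion $\udl{\presheaf}^{\allcolim}(\udl{\Delta}^1,\udl{\RR})$; the small version discussed before \Cref{prop:relative_cocompletion_through_presentable} lets us work with small presheaves.

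Under this identification, the localised Yoneda map $i\colon \udl{\Delta}^1\to \GBSpaces$ sends $0\mapsto S^0$. It sends $1\mapsto *$, because $L\yoneda(1)$ corepresents evaluation at $1$, which is constant at the point on pointed objects. On the arrow, $i$ is the unique map $S^0\to *$. This map is a mapping out of $L\yoneda(0)$, hence is determined by the point of $*(0)$.

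The universal property then gives an equivalence
\[
\udl{\func}^{\colim}(\GBSpaces,\CC)\xrightarrow{\;\sim\;}\udl{\func}^{\udl{\RR}}(\udl{\Delta}^1,\CC).
\]
It remains to identify the right-hand side with $\CC_{/\emptyset_{\sC}}$. A $V$-functor $\udl{\Delta}^1\to V^*\CC$ between constant-type objects is a morphism $f\colon c\to d$ in $\CC$, parametrised over each $U\to V$. Constant cocones are automatically sent to colimit cocones. The remaining condition says that the empty cocone at $1$ goes to a colimit cocone, i.e.\ that $d$ is an initial $U$-object after every restriction.

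So the subcategory of $\udl{\func}(\udl{\Delta}^1,\CC)\simeq\udl{\func}(\udl{\Delta}^1,\CC)$ cut out by this condition is the fibre of the target map $\Arrow(\CC)\to\CC$ over the initial object. Since initial objects form a contractible $\baseCat$-subcategory, this fibre is $\CC_{/\emptyset_{\sC}}$.

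The main obstacle is the bookkeeping in this last identification. One has to check that the full subcategory of $\udl{\func}(\udl{\Delta}^1,\CC)$ on arrows with initial target is equivalent to the slice $\CC_{/\emptyset_{\sC}}$ as $V$-categories, and not merely on objects. I would do this by writing $\CC_{/\emptyset}=\CC\times_{\udl{\func}(\udl{\{1\}},\CC)}\Arrow(\CC)$, with the map from $\CC$ being restriction along the inclusion of $\emptyset_{\sC}$. One then uses that the $\baseCat$-core of initial objects is contractible, so the pullback over it agrees with the full subcategory condition. A secondary check is that the initial $\baseCat$-object restricts to initial $U$-objects; this holds because $\CC$ is cocomplete and restriction preserves parametrised colimits, in particular the empty one.
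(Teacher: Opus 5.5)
Your proposal is correct and is the paper's proof: identify $\GBSpaces$ with $\udl{\presheaf}(\udl{\Delta}^1,\udl{\RR})$, apply the relative-cocompletion universal property (\Cref{thm:distributivity_localisation-cocompletion} with $w=\mathrm{id}$) to reach $\udl{\func}^{\udl{\RR}}(\udl{\Delta}^1,\CC)$, and observe that the only nontrivial cocone condition is that $1$ is sent to the initial object.

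One small slip in your bookkeeping: the slice should be written as $\udl{*}\times_{\CC}\Arrow(\CC)$, the pullback of the target map along $\emptyset_{\sC}\colon\udl{*}\to\CC$. This pullback agrees with the full subcategory of arrows with initial target because the initial objects form a contractible full $\baseCat$-subcategory, exactly as you argue.
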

\begin{proof} 
    The lemma follows by the string of equivalences
    \[\udl{\func}^{\colim}(\udl{\Spaces}_*,\udl{\sC})\simeq \udl{\func}^{\colim}\prn{ \udl{\presheaf}\prn{\udl{\Delta}^1,\udl{\RR}},\udl{\sC}}\xrightarrow[\sim]{\;\;\; \yonedaMap^* \;\;\;} \udl{\func}^{\udl{\RR}}\prn{\udl{\Delta}^1,\udl{\sC}}\simeq \CC_{/\emptyset_{\sC}};
    \]  
    for the last equivalence, taking $\udl{\RR}$ to colimits reduces to the non-constant $V$-cocone $1\colon \udl{*} \simeq \emptyset^\triangleright \rightarrow V^*\udl{\Delta}^1$ being taken to a colimit cocone, i.e. that $1$ is sent to $\emptyset_{V^*\sC}$.
\end{proof}
\begin{proposition}\label{lem:distributivity_pointed_cocompletionB}
    Let $w\colon W \rightarrow V$ be an $\NN$-admissible map of orbit-finite $\baseCat$-\animae{}. There exists an equivalence $w_{\otimes}W^*\GBSpaces  \xrightarrow{\sim} V^*\GBSpaces$  participating in a commuting square
    \begin{equation}\label{comm-diagram-dist-based}
   \begin{tikzcd}[ampersand replacement=\&,column sep=large,row sep=scriptsize]
	{w_\otimes W^\ast \GSpaces} \& {V^\ast \GSpaces} \\
	{w_\otimes W^\ast \GBSpaces.} \& {V^\ast \GBSpaces}
	\arrow["\sim"', from=1-1, to=1-2]
	\arrow["{w_\otimes W^\ast (-)+}"', from=1-1, to=2-1]
	\arrow["{V^\ast(-)_+}", from=1-2, to=2-2]
	\arrow["\sim"', from=2-1, to=2-2]
\end{tikzcd}\end{equation}
    where the top horizontal equivalence is the $\NN$-mode $w$-multiplication $V$-functor of $\GSpaces$.
    Hence, for any cocomplete $V$-category $\CC$ with initial object $\emptyset_{\sC}$, pullback along the $\baseCat$-functor  $\udl{\Delta}^1\rightarrow  w_*\GBSpaces$ classifying the unique $\baseCat$-map $ S^0\rightarrow \ast$ induces an equivalence
    \[(S^0\rightarrow \ast)^*\colon \udl{\func}^{\delta_w}(w_*\GBSpaces,\udl{\C})\xrightarrow{\;\;\; \;\;\sim \;\;\;\;\;}\CC_{/\emptyset_{\sC}}.\]
\end{proposition}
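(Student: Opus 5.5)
The plan is to compute $w_{\otimes}W^*\GBSpaces$ directly from the relative cocompletion description, and to read off both the equivalence and the universal property from \Cref{thm:distributivity_localisation-cocompletion} with $\KK = \GCat$. By \Cref{const:pointed}, $\GBSpaces \simeq \udl{\presheaf}(\udl{\Delta}^1,\udl{\RR})$. By \Cref{thm:catK_symMon} together with \Cref{prop:relative_cocompletion_through_presentable}, the norm in $\GCatcoco{\allcolim}$ is the left adjoint applied to the norm in $\Catcoconeproj$. So the first step is to identify
\[
w_{\otimes}W^*\GBSpaces \simeq \udl{\presheaf}\prn{w_*\udl{\Delta}^1, w_*\udl{\RR}}.
\]
The key input here is that $w_{\otimes}$ of a localisation is the localisation of $w_{\otimes}$, since the left adjoint is an $\NN$-normed Bousfield localisation.

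Next, I would simplify the class $w_*\udl{\RR}$ on $w_*\udl{\Delta}^1$. Fibrewise, $w_*\udl{\Delta}^1$ is a product cube $(\Delta^1)^{\times n}$, and the norms of the cocones are of two kinds: constant cocones, and cocones with empty indexing category that pick out a vertex having some coordinate equal to $1$. Hence a presheaf on $w_*\udl{\Delta}^1$ is local precisely when it is terminal on every vertex other than $(0,\dots,0)$. Since $(0,\ldots,0)$ is an initial $V$-object of the $V$-poset $w_*\udl{\Delta}^1$ (note $w_*$ of an initial object is initial), I would then use the ordinary universal property \Cref{lem:ordinary_univ_prop_pointed_anima}, or compare the colimit-preserving functors out of each side. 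In both cases a map into $\CC$ amounts to an object together with a map to $\emptyset_{\CC}$, so the two categories are identified. Concretely, I would apply \Cref{thm:distributivity_localisation-cocompletion} to obtain
\[
\udl{\func}^{\delta_w}(w_*\GBSpaces,\CC) \simeq \udl{\func}^{w_*\udl{\RR}}(w_*\udl{\Delta}^1,\CC).
\]
I would then show that an $w_*\udl{\RR}$-functor out of the cube is determined by its restriction to the edge $(0,\dots,0)\to(1,\dots,1)$. The reason is that every other vertex must go to $\emptyset_{\CC}$, and maps into an initial object are unique once a map to $\emptyset$ is fixed. The delicate point is that this must hold parametrised: I expect to use \Cref{lem:fibrewise_product} to reduce to fibres, and right Kan extension arguments to handle the remaining vertices. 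This gives the stated equivalence with $\CC_{/\emptyset_{\CC}}$. The edge in question is exactly $w_*$ applied to the map $S^0\to\ast$, under $w_*\udl{\Delta}^1$ receiving the diagonal $\udl{\Delta}^1$.

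Combining the last equivalence with \Cref{lem:ordinary_univ_prop_pointed_anima}, the two sides corepresent the same functor on cocomplete $V$-categories. Yoneda then produces the equivalence $w_{\otimes}W^*\GBSpaces\simeq V^*\GBSpaces$, sending the universal multifunctor to one taking $S^0\mapsto S^0$. For commutativity of the square \eqref{comm-diagram-dist-based}, I would restrict along $\udl{\Delta}^0 \xrightarrow{0}\udl{\Delta}^1$, using naturality of the relative cocompletion in $\Catcocone$. Both composites then become colimit-preserving (distributive) functors from $w_{\otimes}W^*\GSpaces\simeq V^*\GSpaces$ sending the unit $\ast$ to $S^0$, so they agree by the universal property of $\GSpaces$, that is, by \Cref{prop:presheaf-dist}.

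The main obstacle I anticipate is the parametrised analysis of the normed class $w_*\udl{\RR}$. One has to check carefully that the empty-cocone norms make every non-initial vertex of the parametrised cube initial in the target, coherently across restrictions. This is where \Cref{lem:distributing_over_initial_object} should enter, and it may require the cubical bookkeeping of \Cref{subsection:cubicalDecompositions}.
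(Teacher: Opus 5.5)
Your proposal is correct and follows essentially the same route as the paper. You reduce fibrewise to cubes via \Cref{lem:fibrewise_product}, show that distributing $w_*\udl{\RR}$ amounts to sending every non-initial vertex to an initial object (with the empty-cocone case handled by \Cref{lem:distributing_over_initial_object}), and conclude that restriction along the diagonal $\udl{\Delta}^1\to w_*W^*\udl{\Delta}^1$ is an equivalence whose inverse is the fully faithful right Kan extension; this is exactly the paper's verification that $\udl{\presheaf}(-)$ inverts the unit map in $\Catcocone$, and the remaining differences are only packaging (you extract the mapping-out statement from \Cref{thm:distributivity_localisation-cocompletion} first and then apply Yoneda, and you check the square via the universal property of $\GSpaces$ rather than by applying the $\NN$-normed functor $\udl{\presheaf}(-)$ to the square in $\Catcocone$).
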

\begin{proof}
    As usual, everything in sight is restriction-stable and is reduced easily to the case that $V$ is an orbit, so we will assume without loss of generality $V \in \baseCat$ is terminal. We adopt the notation from \Cref{const:pointed}.

    Consider the following commutative square in $\prn{\Catcocone}^V$:
    \[ \begin{tikzcd}
        (\udl{\Delta}^0,\ast) \ar[r,"\sim"] \ar[d,"{1}"] & w_\otimes W^\ast (\udl{\Delta}^0,\ast) \simeq (\udl{\Delta}^0,\udl{\ast}) \ar[d,"{w_\ast W^\ast(1)}"] \\
        (\udl{\Delta}^1,\udl{\RR}) \ar[r, "\eta"]  & w_\otimes W^\ast (\udl{\Delta}^1,\udl{\RR})\simeq (w_*W^*\udl{\Delta}^1,w_*\udl{\RR})  
    \end{tikzcd} \]
    where $\eta\colon \udl{\Delta}^1\rightarrow w_*W^*\udl{\Delta}^1$ is the $w^*\dashv w_*$ adjunction unit, which may easily be checked to map the set of cocones in $\udl{\RR}$ to one in $w_*W^*\udl{\RR}$.
    We claim that the bottom arrow is sent to an equivalence by $\udl{\presheaf}(-)\colon \Catcocone\rightarrow \udl{\presentable}^L$, so that we obtain the diagram \eqref{comm-diagram-dist-based} under the $\NN$-normed $\baseCat$-functor $\udl{\presheaf}(-)$ of \Cref{thm:catK_symMon}.
    
    We must show that for any cocomplete $\CC$ that restriction induces an equivalence
    \[\eta^*\colon  \udl{\Fun}^{w_\ast W^\ast \RR}(w_\ast W^\ast \udl{\Delta}^1,\CC) \longrightarrow \udl{\Fun}^{\RR}(\udl{\Delta}^1,\CC). \]
    First note that the unit map $\eta$ is fully faithful: indeed, by the double coset formula on the indexed product $w_*W^*\udl{\Delta}^1$, the unit map $\eta\colon \udl{\Delta}^1\rightarrow w_*W^*\udl{\Delta}^1$ is given fibrewise by the diagonal maps $\Delta^1 \rightarrow \prn{\Delta^1}^n$, which are fully faithful. 
    Consequently, the right Kan extension $\eta_* \colon \udl{\Fun}(\udl{\Delta}^1,\CC) \rightarrow \udl{\Fun}(w_\ast W^\ast \udl{\Delta}^1,\CC)$ along $\eta$ is a fully faithful functor.  We claim that this restricts to an essentially surjective functor -- and hence an equivalence -- from $\udl{\Fun}^{\udl{\R}}(\udl{\Delta}^1,\CC)$ onto $\udl{\Fun}^{w_\ast W^\ast\udl{\R}}(w_\ast W^\ast\udl{\Delta}^1,\CC)$, providing an inverse to $\eta^*$.

    Recall again that $\udl{\Fun}^{\udl{\R}}(\udl{\Delta}^1,\CC)^V$ are precisely those $V$-functors which send $1\in\udl{\Delta}^1$ to the initial object of $\udl{\sC}^V$. By the pointwise formula for right Kan extension \Shaone{Thm.}{10.3}, the image $\eta_*\mid_{\udl{\Fun}^{\udl{\RR}}\prn{\udl{\Delta^1}, \CC}^V}$ thus consists precisely of those $V$-functors $F\colon w_* W^* \udl{\Delta}^1 \rightarrow \CC$ satisfying the following property:
    \begin{enumerate}[label=($\star$)]
        \item for all $U \rightarrow V$, $F^U$ sends all but the initial $U$-object $0\in \prn{w_*W^*\udl{\Delta}^1}^U$  to $\emptyset_{U^* \CC} \in \CC^U$. 
    \end{enumerate}
  Therefore, to obtain the sought-after essential surjectivity, we just need to show that $F$ lies in $\udl{\Fun}^{w_\ast W^\ast\udl{\R}}(w_\ast W^\ast\udl{\Delta}^1,\CC)^V$ if and only if it satisfies property ($\star$).

    We may understand this combinatorially:
    under the base-change equivalence
    \[
        \prn{w_* \udl{\Delta}^1}^U\simeq \prn{u^*w_*\udl{\Delta}^1}^U\simeq \prn{\overline{w}_*\overline{u}^*\udl{\Delta}^1}^U\simeq \prn{\prod_{i=1}^n\overline{w}_{i*}\overline{u}^*_i\udl{\Delta}^1}^U \simeq \prn{\Delta^1}^n,
    \]
    every $U$-cocone in $w_* \RR$ has a corresponding three-step partition $(A,N,O) \vdash \cbr{1,\dots,n}$ supplying a $\overline{w}_*$-preimage with $i$th coordinate
    \[
        \prn{\partial_{A,N,O}}_i = \begin{cases}
            1\colon * \simeq \emptyset^{\triangleright} \rightarrow \Delta^1 & i \in A;\\
            \mathrm{const(0)}\colon *^{\triangleright} \rightarrow \Delta^1 & i \in Z; \;\;\; \text{and}\\
             \mathrm{const(1)}\colon *^{\triangleright} \rightarrow \Delta^1 & i \in O.
        \end{cases}
    \]
    Now, $F$ takes the $\overline{w}_*$-image of $\partial_{A,N,O}$ to a colimit $U$-cocone precisely when
    \begin{equation}\label{eq:an_}
    (\overline{w}_*\udl{K})\tcocone\xrightarrow{\;\;\; \canonical \;\;\;} \overline{w}_*(\udl{K}\tcocone) \xlongrightarrow{\;\;\; \overline{w}_*\partial_{A,N,O} \;\;\;} \overline{w}_*\overline{u}^*\udl{\Delta}^1 \simeq u^* w_* \udl{\Delta}^1
    \xlongrightarrow{ \;\;\;  u^*F \;\;\;}u^*\udl{\D}
    \end{equation}
    is a colimit $U$-cocone, i.e. $F$ distributes $\partial_{A,N,O}$.
    We explicitly describe this in two cases:
    \begin{enumerate}[label={\textit{Case (\roman*).}}]
        \item If $A = \emptyset$, then $\partial_{A,N,O}$ is constant, so distribution is automatic.
        \item If $A \neq \emptyset$, then $\overline{w}_*\udl{K} \simeq \emptyset$ and the composition $\canonical \circ \overline{w}_*\partial_{A,N,O}$ picks out the object   
        $x_N \in \prn{w_*\udl{\Delta}^1}^U \simeq \prn{\Delta^1}^n$ whose $i$th coordinate is 0 if $i \in N$ and 1 otherwise. \Cref{lem:distributing_over_initial_object} shows that $F$ distributes $\partial_{A,N,O}$ if and only if $F\prn{x_N} \simeq \emptyset_{U^* \CC}$.
    \end{enumerate}
    As $(A,N,O)$ is quantified over all three-step partitions, Case (ii) appears precisely for the proper subsets $N \subsetneq \cbr{1,\dots,n}$;
    thus $F$ distributes all $\partial_{A,N,O}$  precisely when it satisfies property ($\star$), yielding the claim and the first part of the proposition.

    The second part of the proposition is now an straightforward combination of the commuting square involving the universal distributive functors $\mu$
    \[
    \begin{tikzcd}
        \udl{\func}^L(w_{\otimes}W^*\GBSpaces,\CC)\dar["\mu^*", "\simeq"']\ar[rr,"\left(w_{\otimes}(-)_+\right)^*"] && \udl{\func}^L(w_{\otimes}W^*\udl{\Spaces},\CC)\dar["\mu^*","\simeq"']\\
        \udl{\func}^{\delta_w}(w_{\ast}W^*\GBSpaces,\CC) \ar[rr,"\left(w_{\ast}(-)_+\right)^*"]&& \udl{\func}^{\delta_w}(w_{\ast}W^*\udl{\Spaces},\CC)
    \end{tikzcd}
    \]
    with \Cref{lem:ordinary_univ_prop_pointed_anima} and the first part of the proposition.
\end{proof}

With this universal property, we can now conclude that $\udl{\Spaces}_*$ is an $\NN$-mode.

\begin{corollary}\label{prop:pointed_modes}
    The object $S^0 \in \GBSpaces[\baseCat]$ witnesses $\GBSpaces$ as an $\NN$-mode, and $\GBSpaces$-modules in $\udl{\presentable}^L$ are precisely the pointed presentable categories. Consequently, $\GBSpaces\otimes -\colon \udl{\presentable}^L \rightarrow \udl{\presentable}^L_*$ is an $\NN$-normed Bousfield localisation between distributive $\NN$-normed $\baseCat$-categories. 
\end{corollary}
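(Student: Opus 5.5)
The plan is to check \Cref{cor:contractibility-monoidal-structures}(c) for the idempotent $\EEE{0}$-algebra $S^0\colon \GSpaces \xrightarrow{(-)_+} \GBSpaces$ in $\udl{\presentable}^L$. The approach is to read off everything from the universal property in \Cref{lem:distributivity_pointed_cocompletionB}, together with \Cref{prop:idem-Nalg}.

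\textbf{Step 1: ordinary idempotence and the module category.} By \Cref{lem:ordinary_univ_prop_pointed_anima}, a colimit-preserving functor out of $\GBSpaces$ into a cocomplete $\CC$ is the same as an object of $\CC_{/\emptyset_{\sC}}$. Taking $\CC = \GBSpaces\otimes\DD$ and using the tensor formula of \Cref{rmk:formula_for_presentable_tensors}, we identify $\GBSpaces\otimes\DD$ with $\DD_*$, the pointed objects of $\DD$. This identifies $(-)_+\otimes\DD$ with the free pointing functor $\DD\to\DD_*$, which is an equivalence exactly when $\DD$ is pointed.

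Applied to $\DD=\GBSpaces$, which is already pointed, this shows that $\id\otimes\eta$ is an equivalence. Hence $\eta$ is idempotent. It also shows that $\GBSpaces$-modules in $\udl{\presentable}^L$ are precisely the pointed presentable $\baseCat$-categories, since a presentable $\DD$ is a module over an idempotent algebra iff $\eta\otimes\DD$ is an equivalence (\ha{Prop.}{4.8.2.10}). Everything here is fibrewise and restriction-stable, so it suffices to work at orbits $V$.

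\textbf{Step 2: $\NN$-idempotence.} Fix an $\NN$-admissible $w\colon W\to V$. The commuting square \eqref{comm-diagram-dist-based} of \Cref{lem:distributivity_pointed_cocompletionB} identifies $w_\otimes W^*\eta$ with $V^*\eta$, up to the unit equivalence $w_\otimes W^*\GSpaces\simeq V^*\GSpaces$. Consequently $V^*\GBSpaces\otimes w_\otimes W^*\eta \simeq V^*\GBSpaces\otimes V^*\eta$, and this is an equivalence by Step 1. This gives condition (a) of \Cref{prop:idem-Nalg}.

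\textbf{Step 3: conclusion.} \Cref{prop:idem-Nalg} upgrades the $\EEE{0}$-algebra to an idempotent $\NN$-normed algebra. By \Cref{cor:contractibility-monoidal-structures}, it witnesses $\GBSpaces$ as an $\NN$-mode with an essentially unique distributive $\NN$-normed structure. The smashing localisation $\GBSpaces\otimes-$ is then $\NN$-normed by condition (c), and its target is the module category $\udl{\presentable}^L_*$. Distributivity of $\udl{\presentable}^L$ is \Cref{thm:prL-colims}. Distributivity of $\udl{\presentable}^L_*$ follows from \Cref{prop:transf_examples}(2), because the localisation is a cocontinuous $\NN$-normed Bousfield localisation.

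The main obstacle is Step 2. One must make sure the equivalence in \Cref{lem:distributivity_pointed_cocompletionB} really is the norm of $\eta$ in $\udl{\presentable}^L$, and not merely some abstract equivalence. This is ensured because the proof there computes the norm via the $\NN$-normed functor $\udl{\presheaf}$ applied to the square in $\Catcocone$, and the normed structure on $\udl{\presentable}^L$ is inherited from that functor.
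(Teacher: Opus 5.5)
Your proposal is correct. Steps 2 and 3 are essentially the paper's argument. The paper proves $\NN$-idempotence by testing $\id\otimes w_{\otimes}W^*(-)_+$ against $\udl{\func}^L(-,\EE)$ for pointed $\EE$, combining \eqref{comm-diagram-dist-based} with \eqref{eqn:tearing_off_pointed_anima}. That is the same as your observation that \eqref{comm-diagram-dist-based} identifies $w_{\otimes}W^*\eta$ with $V^*\eta$ as arrows, which reduces $\NN$-idempotence to ordinary idempotence.

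The genuine difference is Step 1. You identify $\GBSpaces\otimes\DD$ with $\DD_*$ and identify $(-)_+\otimes\DD$ with free pointing. The paper never identifies the map. It uses \Cref{rmk:formula_for_presentable_tensors} only to see that $\GBSpaces\otimes\DD\simeq\udl{\func}^{\mathrm{lim}}(\DD\op,\GBSpaces)$ is pointed, because it contains the constant zero functor. Then, for pointed $\DD$, it shows that $S^0\otimes\id_{\DD}$ is an equivalence via the Yoneda lemma among pointed presentable categories. Currying gives $\udl{\func}^L(\GBSpaces\otimes\DD,\EE)\simeq\udl{\func}^L(\DD,\udl{\func}^L(\GBSpaces,\EE))$, and \Cref{lem:ordinary_univ_prop_pointed_anima} gives $\udl{\func}^L(\GBSpaces,\EE)\simeq\EE$ for pointed $\EE$.

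Your route buys a concrete description of the smashing localisation as $\DD\mapsto\DD_*$. It costs two inputs you assert without proof:
\begin{enumerate}
\item parametrised representability, $\udl{\func}^{\mathrm{lim}}(\DD\op,\GSpaces)\simeq\DD$ for presentable $\DD$;
\item under the formula of \Cref{rmk:formula_for_presentable_tensors}, tensoring with the left adjoint $(-)_+$ must correspond to the left adjoint of post-composition with the forgetful functor $\GBSpaces\to\GSpaces$, so that it really is free pointing.
\end{enumerate}
Also, \Cref{lem:ordinary_univ_prop_pointed_anima} applied with $\CC=\GBSpaces\otimes\DD$, as you cite it, does not itself produce this identification; the tensor formula does. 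The paper's Yoneda argument avoids both inputs.
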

\begin{proof}
    We first show that $(-)_+\colon \udl{\Spaces}\rightarrow \GBSpaces$ is an idempotent $\mathbb{E}_0$ algebra classifying $\udl{\presentable}^L_*$. 
    Observe that for any $\DD\in\presentable^L$,  $\GBSpaces\otimes \DD$  is pointed since \Cref{rmk:formula_for_presentable_tensors}'s  full embedding 
    \[
        \GBSpaces\otimes\DD\simeq \udl{\func}^{\mathrm{lim}}(\DD\op,\GBSpaces) \subset \udl{\func}(\DD\op,\GBSpaces)
    \]
    contains the zero object $\overline{0} \in \udl{\func}(\DD\op,\GBSpaces)$.
    On the other hand, for any $\DD\in\udl{\presentable}^L_*$, the functor $S^0\otimes \id_{\DD}\colon \DD \simeq \udl{\Spaces}\otimes \DD  \rightarrow \GBSpaces\otimes \DD$ is an equivalence, since for any other pointed $\EE \in\udl{\presentable}^L$, restriction is  the equivalence
    \begin{equation}\label{eqn:tearing_off_pointed_anima}
        (\id_{\DD}\otimes S^0)^*\colon \udl{\func}^L(\GBSpaces\otimes \DD,\EE)\simeq \udl{\func}^L(\DD,\udl{\func}^L(\GBSpaces,\EE))\xrightarrow[\sim]{\;\; (S^0)^* \;\;}\udl{\func}^L(\DD,\EE)
    \end{equation}
    where the last equivalence is by virtue of the universal property from \Cref{lem:ordinary_univ_prop_pointed_anima}. 
    Specialising this to the case $\DD=\GBSpaces$ shows that $(-)_+\colon \GSpaces\rightarrow \GBSpaces$ is idempotent, and, these three points imply that $\DD\in\udl{\presentable}^L$ is pointed if and only if the map $(-)_+\otimes \id\colon \DD \simeq \udl{\Spaces}\otimes \DD\rightarrow \GBSpaces\otimes \DD$ is an equivalence, so that $\udl{\module}_{\GBSpaces}(\udl{\presentable}^L)\simeq \udl{\presentable}^L_*$. 

    Next, we argue that $(-)_+$ is, in fact, $\NN$-idempotent.
    Given $w\colon W\rightarrow V$  an $\NN$-admissible map, we must show that the $\prn{\udl{\Pr}^L_*}^V$-map 
    \[
    \id_{\GBSpaces}\otimes (-)_+\colon V^*\GBSpaces \rightarrow V^*\GBSpaces\otimes w_{\otimes }W^*\GBSpaces
    \]
    is an equivalence. 
    Given $\EE\in \prn{\udl{\presentable}^L_{*}}^V$, combining the square \eqref{comm-diagram-dist-based} from \Cref{lem:distributivity_pointed_cocompletionB} with the equivalence of \eqref{eqn:tearing_off_pointed_anima} identifies the effect of $\udl{\func}^L(-,\EE)$ on $\id_{\GBSpaces}\otimes (-)_+$ with
    \[
        \left((-)_+\right)^*\colon \udl{\func}^L(V^*\GBSpaces,\EE)\longrightarrow \udl{\func}^L(V^*\udl{\Spaces},\EE);
    \]
    we have already established that this is an equivalence, so the result follows by Yoneda's lemma in $\prn{\udl{\presentable}^L_{*}}^V$.
    
    Finally, the last statement is an immediate consequence of the preceding paragraphs in conjunction with \Cref{thm:prL-colims,cor:contractibility-monoidal-structures}, combined with distributivity-preservation of $\NN$-normed Bousfield localisations under \Cref{prop:transf_examples}.
\end{proof}

Next, we record the fact that we may explicitly compute norms in $\GBSpaces$ as certain colimits of indexed products.
\begin{proposition}\label{prop:formula_for_norms_in_pointed_anima}
    Let $w\colon W \rightarrow V$ be an $\NN$-admissible map and $(x\colon \ast\rightarrow X)\in W^*\udl{\Spaces}_*$. Then $w_{\otimes}X\in V^*\udl{\Spaces}_*$ is computed as the total cofibre of the cubical diagram
    \[w_*\udl{\Delta}^1\xlongrightarrow{w_*x=w_*(\ast\rightarrow X)} \udl{\Spaces}_*.\]
\end{proposition}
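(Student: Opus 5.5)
Since $\GBSpaces$ is an $\NN$-mode with its norm functors distributive (\Cref{prop:pointed_modes}), I would identify the composite $w_*W^*\GBSpaces \xrightarrow{\mu} w_\otimes W^*\GBSpaces \simeq V^*\GBSpaces$ with the $w$-multiplication $w_\otimes$, and compute $w_\otimes X$ by decomposing $X$ as a colimit. Concretely, the pointed anima $X$ is the cofibre of $x_+\colon S^0 = *_+ \to X_+$, i.e.\ the pushout of $* \leftarrow *_+ \to X_+$ in $W^*\GBSpaces$. I would rewrite this as a colimit over $\udl{\Lambda}^2_0$ whose left leg is the empty-cocone point $*$. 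Distributivity of $w_\otimes$ then gives
\[
  w_\otimes X \;\simeq\; \colim_{w_*\udl{\Lambda}^2_0} w_\otimes(\text{diagram}).
\]

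The next step is to identify the values of this cubical diagram, using the square \eqref{comm-diagram-dist-based} of \Cref{lem:distributivity_pointed_cocompletionB}. On unpointed objects, $w_\otimes(Y_+) \simeq (w_*Y)_+$, since the top map there is the cartesian $w$-multiplication on $\GSpaces$, i.e.\ $w_*$. Hence on the subcube $\udl{M}_w = w_*(\Delta^1\times\{0\})$ the diagram is $(w_*\udl{\Delta}^1 \xrightarrow{w_*x} \udl{\Spaces})_+$, that is, the unpointed cube of indexed products of $*\to X$ with a disjoint basepoint added. Every vertex off $\udl{M}_w$ has some coordinate equal to the empty-cocone point $*$, so \Cref{lem:distributing_over_initial_object} makes it zero. \Cref{prop:norming_pushouts} then yields a cofibre sequence
\[
  \colim_{\udl{J}_w}(w_*x)_+ \longrightarrow (w_*X)_+ \longrightarrow w_\otimes X .
\]

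To finish, I would match this with the total cofibre of the pointed cube $w_*\udl{\Delta}^1 \to \udl{\Spaces}_*$ given by $w_*x$. Its total cofibre is the cofibre of $\colim_{\udl{J}_w} \to$ (final vertex) in $\udl{\Spaces}_*$, where $\udl{J}_w = w_*\udl{\Delta}^1 \setminus \{\text{top}\}$. The remaining point is the discrepancy between the pointed cube $w_*x$ and the cube $(w_*x)_+$ with disjoint basepoints. Adding $(-)_+$ is a left adjoint $\udl{\Spaces}\to\udl{\Spaces}_*$, so it commutes with the parametrised colimit over $\udl{J}_w$. The two cubes differ only in their basepoints, so the cofibres agree provided the basepoint vertex $w_*(*)=*$ contributes correctly. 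I would check this fibrewise by the double coset formula, where $w_*\udl{\Delta}^1$ becomes $(\Delta^1)^n$ and the claim is the classical identification of the smash product $X^{\wedge n}$ as the total cofibre of the cube of products.

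The main obstacle is this last matching, carried out coherently in the parametrised setting, since pointed and unpointed colimits over $\udl{J}_w$ differ: $\udl{J}_w$ has an initial object, which must be used to relate them. The cleanest route is probably to phrase everything directly in $\udl{\Spaces}_*$ via \Cref{prop:norming_pushouts} applied to the pointed pushout $* \leftarrow S^0 \to X_+$, and to note that $\colim_{\udl{J}_w}$ of the pointed cube $(w_*x)_+$ equals $(\colim_{\udl{J}_w}w_*x)_+$.
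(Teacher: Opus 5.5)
Your argument is the paper's proof: write $X\simeq\cofib(S^0\to X_+)$, combine distributivity of $w_\otimes$ with $w_\otimes(Y_+)\simeq (w_*Y)_+$, and apply \Cref{prop:norming_pushouts} with the empty-cocone leg $*$ to obtain the cofibre sequence $\colim_{\udl{J}_w}(w_*x)_+\to (w_*X)_+\to w_\otimes X$. The paper stops at this point, taking the cube in the statement to be $(w_*x)_+$, so your extra comparison with the naturally pointed cube is not needed; that comparison also requires $\udl{J}_w$ to have a fibrewise initial object and genuinely fails for $W=\emptyset$, where $w_\otimes X\simeq S^0$ while the naturally pointed one-vertex cube has total cofibre $*$.
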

\begin{proof}
    We show that this is a formal consequence of the fact that $(-)_+\colon \udl{\Spaces}\rightarrow \udl{\Spaces}_*$ is $\baseCat$-normed. To wit, first note that $X\in W^*\udl{\Spaces}_*$ may be written as $\cofib(S^0\rightarrow X_+)$, that is, it is a cofibre of a map coming from $\udl{\Spaces}$ via the functor $(-)_+$. We thus get 
    \[
        w_{\otimes}X\simeq {w}_{\otimes}\cofib(S^0\rightarrow X_+) \simeq \colim_{w_*\udl{\Lambda}^2_0}\
        {w}_{\otimes}(S^0\rightarrow X_+)\simeq \colim_{w_*\udl{\Lambda}^2_0}\big({w}_{*}(\ast\rightarrow X)\big)_+
    \] where the second equivalence is by distributivity of ${w}_{\otimes}$ and the third is since $(-)_+$ is $\NN$-normed.  
    By \cref{prop:norming_pushouts},  this may be computed as the pushout
    \[
    \begin{tikzcd}[row sep=scriptsize]
        \colim_{w_*\udl{\Delta}^1\backslash 1}w_{\otimes}(S^0\rightarrow X_+) \rar\dar \ar[dr, phantom, "\ulcorner" very near end]&  {w}_{\otimes}(X_+) \dar \\
        0 \rar & {w}_{\otimes}X.
    \end{tikzcd}
    \] 
    The top horizontal arrow may be rewritten as $\colim_{w_*\udl{\Delta}^1\backslash 1}\big({w}_{*}(\ast\rightarrow X)\big)_+ \rightarrow ({w}_{*}X)_+$ since $(-)_+$ is $\NN$-normed; hence $w_{\otimes}X$ is indeed the total cofibre as claimed.
\end{proof}

\begin{corollary}\label{cor:fixed-of-norm}
    Let $w\colon W \rightarrow V$ be an $\NN$-admissible map with $V$ an orbit and fix $(x\colon \ast\rightarrow X)\in W^*\udl{\Spaces}_*$.
    Then, there exists a natural equivalence
    \[
        \prn{{w}_{\otimes} X}^V \simeq \bigwedge_{U \in \Orb(W)} X^U.
    \]    
\end{corollary}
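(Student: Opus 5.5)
The plan is to compute the $V$-fixed points of the total cofibre from \Cref{prop:formula_for_norms_in_pointed_anima}. Let $V$ be an orbit, viewed as terminal after restriction. Write $W\simeq\coprod_{j=1}^n W_j$ with $W_j$ orbits, so $\Orb(W)=\{W_1,\dots,W_n\}$, and let $w_j\colon W_j\to V$ be the components. Then $w_*\simeq\prod_j w_{j*}$, and at level $V$ we have $(w_*\udl{\Delta}^1)^V\simeq(\Delta^1)^n$ and $(w_*\udl{\Spaces}_*)^V\simeq\prod_j(\udl{\Spaces}_*)^{W_j}$.

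\textbf{Step 1.} Taking $V$-values, the cubical diagram $w_*(\ast\to X)$ becomes the $n$-cube $(\Delta^1)^n\to\Spaces_*$ given by
\[
  \epsilon\mapsto \prod_j Y_{j,\epsilon_j},\qquad Y_{j,0}=\ast,\quad Y_{j,1}=X^{W_j}.
\]
Here $w_{j*}$ followed by evaluation at $V$ is evaluation at $W_j$, by the argument in the proof of \Cref{prop:distributivity_of_cartesian_symmon}. The indexed product $w_*\colon w_*W^*\udl{\Spaces}_*\to V^*\udl{\Spaces}_*$ is computed at level $V$ as the product of these values. All of this is restriction-stable and natural in $x$.

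\textbf{Step 2.} The parametrised total cofibre is a colimit over $w_*\udl{\Delta}^1$ minus its terminal point, followed by a pushout. I need its $V$-value to agree with the nonparametrised total cofibre of the $V$-value cube. This is the step I expect to be the main obstacle. Evaluation at $V$ is not a $\baseCat$-colimit-preserving operation in general: parametrised colimits over non-constant indexing categories involve induction along the other levels. When $V$ is terminal, though, the $V$-value of a colimit over a $V$-category $\udl{I}$ is the colimit over $\int\udl{I}$, by \Cref{lem:cofree-colims} in the cofree description. The cube $w_*\udl{\Delta}^1$ has higher-level points that are not constant. I would show they are cofinally irrelevant by mimicking \Cref{lem:F_fB_fAdjunction}: the inclusion of the fibre $(\Delta^1)^n\setminus\{1\}$ into $\int(w_*\udl{\Delta}^1\setminus 1)$ is cofinal because every point over $U\to V$ maps to its pushforward to level $V$. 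Alternatively, I would redo the computation directly at level $V$, using \Cref{prop:norming_pushouts} fibrewise.

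\textbf{Step 3.} Granting Step 2, the result is the classical total cofibre of the cube $\prod_j(\ast\to X^{W_j})$ in pointed anim\ae{}. This is $\bigwedge_j X^{W_j}$: the union of the faces is the fat wedge, and the cofibre of the fat wedge into the product is the smash product. Naturality follows from the naturality of each step.
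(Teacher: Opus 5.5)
Your outline matches the paper's proof: apply \Cref{prop:formula_for_norms_in_pointed_anima}, pass to $V$-fixed points, identify the $V$-fibre cube with $\prod_j(\ast\to X^{W_j})$, and recognise its total cofibre as $\bigwedge_j X^{W_j}$. The paper does this last step by applying \Cref{prop:formula_for_norms_in_pointed_anima} once more in the non-parametrised case, which amounts to your fat-wedge argument.

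The problem is Step 2. You correctly single it out as the crux, but you argue it with the variance reversed. \Cref{lem:cofree-colims} does not say that the $V$-value of a colimit over $\udl{I}$ is a colimit over all of $\int\udl{I}$. It says it is the colimit of the $V$-values over the fibre $\udl{I}^V$ alone. Equivalently, the parametrised colimit is the left Kan extension along the cocartesian fibration $p\colon\int\udl{I}\to\baseCat\op$. Its value at $V$ is a colimit over $\int\udl{I}\times_{\baseCat\op}(\baseCat\op)_{/V}$, and the fibre $\udl{I}^V$ is cofinal in this by cocartesian transport. Your version already fails for $\udl{I}=\udl{G/e}$ over $\Orb_G$ with $G$ a nontrivial finite group. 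The colimit of the diagram classified by $Y\in\udl{\Spaces}^{G/e}\simeq\Spaces$ is the induced $G$-anima $G\times Y$, whose $G$-fixed points are empty. By contrast, $\int\udl{I}$ is contractible and the colimit over it would be $Y$.

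The cofinality you then propose fails for the same reason. Morphisms in $\int$ go along restrictions, from level $V$ down to level $U$, never back up. So a point at a lower level has no ``pushforward'' to level $V$. For example, take $G=C_2$ and $w\colon C_2/e\to C_2/C_2$. The fibres of $w_*\udl{\Delta}^1\setminus 1$ are $\{0\}$ at the top and $\{00,01,10\}$ at the bottom. The point $01$ admits no morphism to the top fibre, so the relevant comma category is empty.

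Neither of these steps is needed. Applied as stated to $\udl{\Spaces}_*\simeq\cofree(\Spaces_*)$, \Cref{lem:cofree-colims} directly identifies $(w_\otimes X)^V$ with the total cofibre of the cube restricted to the fibre $(\Delta^1)^n$. The higher-level points never enter, and this is exactly the paper's argument. Your fallback of computing ``directly at level $V$'' via \Cref{prop:norming_pushouts} does not avoid the issue. That proposition still produces parametrised colimits in the $V$-fibre, and their $V$-fixed points must be computed by the same lemma.
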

\begin{proof}
    By combining the fixed-points-of-colimits formula of \Cref{lem:cofree-colims} with \Cref{prop:formula_for_norms_in_pointed_anima}, we find that $\prn{{w}_{\otimes} X}^V$ is the total cofibre of the cubical diagram
    \[
        \prod_{U \in \Orb(W)} \Delta^1 \simeq \prn{w_* \udl{\Delta}^1}^V \longrightarrow  \int\prn{w_* \udl{\Delta}^1} \longrightarrow \BSpaces
    \]
    which, upon unwinding definitions, is seen to correspond with the object $\prn{X^U}_{U\in\Orb(W)} \in \prod_{U\in\Orb(W)}\BSpaces$;
    another aplication of \Cref{prop:formula_for_norms_in_pointed_anima} implies the claim. 
\end{proof}

\subsection{Formal inversion modes}\label{subsec:formal_inversions}
We now turn to the matter of formal inversions. 
The main result of this subsection will be \Cref{thm:invert_object_idempotent}, which will demonstrate that formal inversion often preserves the property of being an $\NN$-mode. 
We begin by recalling Cnossen's formulation \cite[Def. 2.35]{Cnossen2023} of it in the parametrised setting, which is built upon Robalo's theory \cite[\textsection 2]{robaloKTheoryBridge}.

\begin{definition}\label{recall:formal_inversion}
    Let $\udl{\sC}\in\calg\prn{\udl{\presentable}^L}^{\baseCat}$ and $\invObj\subseteq \udl{\sC}$ be a $\baseCat$-set of objects. A map $\udl{\sC}\rightarrow \udl{\D}$ in $\calg\prn{\udl{\presentable}^L}^{\baseCat}$ is said to be a \mdef{formal inversion of $\invObj$} if for all $\udl{\E}\in\calg\prn{\udl{\presentable}^L}^{\baseCat}$, the  map $\Map_{\calg\prn{\udl{\presentable}^L}^{\baseCat}}(\udl{\D},\udl{\E})\rightarrow \Map_{\calg\prn{\udl{\presentable}^L}^{\baseCat}}(\udl{\sC},\udl{\E})$ on non-parametrised mapping \animae{} is a monomorphism onto the components of the morphisms $\udl{\sC}\rightarrow\udl{\D}$  \mdef{inverting objects in $\invObj$}, i.e. such that  $\udl{\sC}^V\rightarrow \udl{\E}^V$ sends objects in $\invObj^V$ to $\otimes$-invertible objects in $\udl{\E}^V$ for all $V\in \baseCat$.
\end{definition}
Observe that, by the universal property, if there exists a formal inversion $\udl{\sC}\rightarrow\udl{\D}$ for $\invObj$, then it must be unique;
in this case, we will use the fixed notation
\[
    \Sigma^\infty_{\invObj}\colon \CC \longrightarrow \CC\brk{\invObj^{-1}} \coloneqq \DD.
\]
The following notions single out favourable situations in which the formal inversions admit usable and concrete descriptions.
\begin{definition}
    Let $\udl{\sC}\in\calg\prn{\udl{\presentable}^L}^{\baseCat}$ and let $\invObj\subseteq \udl{\sC}$ be a $\baseCat$-set of objects. 
    
    We say that $\invObj$ is \mdef{generated in top-degree} if for every map $u\colon U \rightarrow V$ in $\baseCat$ and every $X\in \invObj^U$, there exists a $Z\in\udl{\sC}^W$ and $Y\in \invObj^V$ such that $X\otimes Z\simeq u^*Y\in\udl{\sC}^U$.

    We say that $\invObj$ \mdef{satisfies the 3-cyclic condition} if for all $V\in \baseCat$ and all $X\in \invObj^V$, the cyclic permutation automorphism $(1\: 2\: 3)\colon X^{\otimes 3}\rightarrow X^{\otimes 3}$ in $\udl{\sC}^V$ is homotopic to the identity.
\end{definition}

The work of Cnossen and Robalo now ensures the following.
\begin{lemma}\label{lem:abstract_formal_inversion}
    Let $\CC \in \calg\prn{\udl{\presentable}^L}^{\baseCat}$ and let $\invObj\subseteq \udl{\sC}$ be a $\baseCat$-set of objects. 
    \begin{enumerate}[label=(\arabic*)]
        \item  If $\invObj$ is generated in top-degree, then the $\baseCat$-functor $\udl{\sC}\rightarrow \DD$ which at level $V$ is given by a formal inversion $\Sigma^\infty_{\invObj^V}\colon \udl{\sC}^V\rightarrow \udl{\sC}^V[(\invObj^V)^{-1}]$ of $\invObj^V$ is a formal inversion of $\invObj$.
        \item If  $\invObj$ moreover satisfies the 3-cyclic condition, then for all orbits $V$, writing $I$ for the filtered poset of finite subsets of objects in $\invObj^V$ and $F^{\otimes}\in\udl{\sC}$ for the tensor product of a finite tuple of objects $F\in I$ in $\invObj^V$, the natural $V$-functor 
        \[
        \colim_{F\in I}\colim(V^*\udl{\sC}\xrightarrow{F^{\otimes}\otimes-}V^*\udl{\sC}\xrightarrow{F^{\otimes}\otimes-}\cdots) \longrightarrow V^*(\udl{\sC}[\invObj^{-1}]) \]
        is an equivalence.
    \end{enumerate}
\end{lemma}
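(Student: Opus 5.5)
The plan is to reduce both parts to the non-parametrised theory of Robalo, applied fibrewise, together with the parametrised bookkeeping of Cnossen \cite[Def.~2.35]{Cnossen2023}.

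\emph{Part (1).} For each orbit $V$, write $\Sigma^\infty_{\invObj^V}\colon \CC^V\to \CC^V[(\invObj^V)^{-1}]$ for Robalo's formal inversion in $\CAlg(\Pr^L)$ \cite[\S2]{robaloKTheoryBridge}. The first step is to assemble these into a $\baseCat$-functor. For $u\colon U\to V$, the composite $\CC^V\xrightarrow{u^*}\CC^U\to\CC^U[(\invObj^U)^{-1}]$ sends $\invObj^V$ to invertible objects, because $u^*\invObj^V\subset\invObj^U$ by restriction-stability. The universal property therefore yields restriction functors $u^*\colon \CC^V[(\invObj^V)^{-1}]\to\CC^U[(\invObj^U)^{-1}]$, and uniqueness makes them coherently functorial. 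The cleanest way to do this is to run Robalo's construction in $\Fun(\baseCat\op,\CAlg(\Pr^L))$, i.e.\ to invert the restriction-stable family objectwise.

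The second step, which I expect to be the main obstacle, is to show that the resulting $\DD$ is parametrised presentable and that $\CC\to\DD$ is a map in $\CAlg(\udl{\Pr}^L)^{\baseCat}$. This amounts to checking that the restrictions $u^*$ on $\DD$ admit left adjoints $u_!$ satisfying Beck--Chevalley and the projection formula. Here I use generation in top-degree. Any $X\in\invObj^U$ becomes invertible once some $u^*Y$ with $Y\in\invObj^V$ is invertible, since $X\otimes Z\simeq u^*Y$ forces $X$ to be invertible. Hence inverting $u^*\invObj^V$ already inverts $\invObj^U$. The functor $u^*$ on $\DD$ is then a base change of $\CC^V\to \CC^V[(\invObj^V)^{-1}]$ along $u^*\colon\CC^V\to\CC^U$ in $\CAlg(\Pr^L)$, i.e.\ $\CC^U[(\invObj^U)^{-1}]\simeq \CC^U\otimes_{\CC^V}\CC^V[(\invObj^V)^{-1}]$. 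The adjoint $u_!\colon\CC^U\to\CC^V$ is $\CC^V$-linear by the projection formula, so it base changes to the inverted categories, and Beck--Chevalley follows from the original one. This is essentially \cite[Thm.~2.36]{Cnossen2023}, which I would cite and adapt.

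With $\DD$ in hand, the universal property follows. A parametrised symmetric monoidal left adjoint $\CC\to\EE$ inverting $\invObj$ factors uniquely levelwise by Robalo's result, and these factorisations are compatible with restriction by uniqueness. Thus the mapping anima restrict to the claimed monomorphism onto the right components.

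\emph{Part (2).} The underlying category $(V^*\DD)^U=\CC^U[(\invObj^U)^{-1}]$ is the only thing to compute. Under the 3-cyclic condition, Robalo \cite[Cor.~2.22]{robaloKTheoryBridge} identifies this inversion with the sequential colimit in $\Pr^L$ of $F^{\otimes}\otimes-$ for a single object. For a finite set $F$, inverting $F$ is the same as inverting $F^{\otimes}$, since a tensor product is invertible iff each factor is. The full inversion is then the filtered colimit over $I$.

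To get the parametrised statement, I check the map levelwise on $U\to V$. Colimits in $\udl{\Pr}^L$ of diagrams with parametrised left adjoint transitions are computed fibrewise by \cite{mwpresentable}, and $u^*F^{\otimes}$ satisfies the 3-cyclic condition as well. Cofinality of the image of $I$ among finite subsets of $\invObj^U$, again via top-degree generation, identifies the fibre at $U$ with Robalo's colimit.
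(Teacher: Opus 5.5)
Your proposal is correct and follows the paper's proof. Part (1) is exactly Cnossen's result, which the paper simply cites as \cite[Prop.~2.46]{Cnossen2023}. Part (2) is Robalo's telescope formula combined with the finite-subsets argument for several objects. The only difference is packaging: you verify it fibrewise at each $u\colon U\to V$ using top-degree generation, whereas the paper applies it once to $\CC^V[(\invObj^V)^{-1}]$ and transports it along Cnossen's identification $V^*(\CC[\invObj^{-1}])\simeq V^*\CC\otimes_{\CC^V}\CC^V[(\invObj^V)^{-1}]$, which is the same base-change formula you derive in part (1).

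One small correction: the image of $I$ is not literally cofinal among finite subsets of $\invObj^U$, since elements of $\invObj^U$ need not be restrictions. No cofinality is needed, however, because $\colim_{F\in I}\CC^U[(u^*F)^{-1}]\simeq \CC^U[(u^*\invObj^V)^{-1}]\simeq\CC^U[(\invObj^U)^{-1}]$ by the universal property and top-degree generation.
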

\begin{proof}
    Part (1) is precisely  \cite[Prop. 2.46]{Cnossen2023}. For part (2), the first paragraph in the proof of \cite[Prop. 2.46]{Cnossen2023} together with \cite[Def. 2.37]{Cnossen2023} show that $V^*(\udl{\sC}[\invObj^{-1}])$ is equivalent to $V^*\udl{\sC}\otimes_{\udl{\sC}^V}(\udl{\sC}^V[(\invObj^V)^{-1}])$. But then by the 3-cyclicity hypothesis, we see by \cite[Rmk. 2.17 and Thm. 2.26]{robaloKTheoryBridge} (cf. also \cite[Prop. 1.6.3]{annalaIwasa} for a more recent and modern treatment) as well as the argument in \cite[\textsection 6.1]{hoyoisSixOperations} to deal with telescopically inverting multiple objects, that we obtain the analogous filtered colimit formula on $\udl{\sC}^V[(\invObj^V)^{-1}]\in \presentable^L$, and thus on $V^*(\udl{\sC}[\invObj^{-1}])$ as desired.
\end{proof}

\begin{definition}\label{defn:multiplicative_invertibles}
    Let $\MM\in \NAlg[\baseCat]{\udl{\presentable}^L}$ be an $\NN$-mode and $\invObj\subseteq \MM$ a small full $\baseCat$-subcategory. We say that $\invObj$ is \mdef{$\NN$-good} if it is generated in top-degree, satisfies the 3-cyclic condition, and is closed under $\NN$-norms. 
\end{definition}

\begin{remark}\label{rmk:inv_res}
    Part (1) of \Cref{lem:abstract_formal_inversion} constructs a natural equivalence  $W^* \prn{\CC\brk{\invObj^{-1}}} \simeq \prn{W^* \CC} \brk{W^* \invObj^{-1}}$
    under $W^* \CC$ when $\invObj$ is generated in top degree;
    hence we may unambiguously use the notation $W^* \CC\brk{\invObj^{-1}}$ for it.
    Moreover, $W^* \invObj \subset W^* \CC$ is generated in top degree, and
    if $\invObj$ satisfies the 3-cyclic condition, then so does $W^* \invObj$.
\end{remark}

\begin{theorem}\label{thm:invert_object_idempotent}
  Suppose $\invObj \subset \MM$ is an $\NN$-good full $\baseCat$-subcategory of an $\NN$-mode. 
  Then the composite $\GSpaces \to \MM \rightarrow \MM\brk{\invObj^{-1}}$ 
  exhibits the codomain as an $\NN$-mode
  such that $\CC\in\udl{\presentable}^L$ is an $\MM\brk{\invObj^{-1}}$-module if and only if it is an $\MM$-module and  $\invObj$ acts invertibly on $\CC$.
\end{theorem}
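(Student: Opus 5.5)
We first show that $\eta\colon \MM \to \MM[\invObj^{-1}]$ is an idempotent $\EEE{0}$-algebra in $\MM$-modules. Then we upgrade it to $\NN$-idempotence using the filtered colimit formula and distributivity of norms in $\udl{\presentable}^L$. Finally we invoke \Cref{prop:idem-Nalg} and \Cref{cor:contractibility-monoidal-structures}. Since $\MM$ is an $\NN$-mode, $\GSpaces \to \MM$ is $\NN$-idempotent, and $\MM$-modules form a full, norm-closed $\NN$-smashing localisation of $\udl{\presentable}^L$. By \eqref{eq:relation-mod} and \eqref{eq:tensor-mod}, it therefore suffices to show that $\MM \to \MM[\invObj^{-1}]$ is an $\NN$-idempotent $\EEE{0}$-algebra in $\Mod{\MM}{\udl{\presentable}^L}$. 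The composite with $\GSpaces\to\MM$ is then $\NN$-idempotent, and its modules are the $\MM$-modules on which $\invObj$ acts invertibly.

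\emph{Step 1 (ordinary idempotence and modules).} By \Cref{lem:abstract_formal_inversion}(2) and \Cref{rmk:inv_res}, for every orbit $V$ and every $\MM$-module $\CC$ we have
\[
V^*\CC\otimes_{\MM}\MM[\invObj^{-1}] \simeq \colim_{F\in I}\colim\big(V^*\CC\xrightarrow{F^{\otimes}\otimes-}V^*\CC\xrightarrow{F^{\otimes}\otimes-}\cdots\big),
\]
a colimit in $\udl{\presentable}^L$. The 3-cyclic condition gives the standard Robalo argument that this colimit is equivalent to $\CC$ precisely when each $F^{\otimes}$, equivalently each object of $\invObj$, acts invertibly on $\CC$. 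Taking $\CC = \MM[\invObj^{-1}]$ yields idempotence of $\eta$ and the description of modules. This part is essentially non-parametrised and is applied fibrewise.

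\emph{Step 2 ($\NN$-idempotence).} Fix an $\NN$-admissible $w\colon W\to V$, reducing to $V$ an orbit by restriction-stability. We must show that
\[
V^*\MM[\invObj^{-1}] \to V^*\MM[\invObj^{-1}]\otimes w_\otimes W^*\MM[\invObj^{-1}]
\]
is an equivalence, where the norm is taken relative to $\MM$, which is legitimate since $\MM$ is a mode. Write $W^*\MM[\invObj^{-1}]$ as the filtered colimit above. By distributivity of norms in $\udl{\presentable}^L$ (\Cref{thm:prL-colims}), $w_\otimes$ of this colimit is the colimit of $w_*$ of the diagram, and $w_*$ of a filtered telescope diagram is again filtered fibrewise by \Cref{lem:fibrewise_product}. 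Cofinality then presents $w_\otimes W^*\MM[\invObj^{-1}]$ as a telescope on $w_\otimes W^*\MM\simeq V^*\MM$ along the functors $w_\otimes(F^{\otimes})\otimes-$. Because $\invObj$ is closed under $\NN$-norms, each $w_\otimes F^{\otimes}$ lies in $\invObj^V$ up to tensor products, and so becomes invertible after tensoring with $V^*\MM[\invObj^{-1}]$. Hence the tensored telescope is constant with equivalence transition maps, and by Step 1 the map is an equivalence.

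\emph{Step 3.} Apply \Cref{prop:idem-Nalg} (a)$\Rightarrow$(d) and \Cref{cor:contractibility-monoidal-structures}, inside the mode $\MM$, to conclude that the composite witnesses an $\NN$-mode.

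The main obstacle will be Step 2: identifying $w_\otimes$ of the telescope as a telescope. This requires a careful argument that distributivity converts the colimit over $w_*$ of the indexing category into the required cofinal filtered diagram, with the norm of the structure maps being $w_\otimes F^{\otimes}\otimes-$. I would try first to verify this fibrewise using \Cref{lem:fibrewise_product} together with the cofinality of the diagonal in finite powers of a filtered category.
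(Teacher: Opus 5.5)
Your argument is correct, but it is organised differently from the paper's.

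\textbf{The paper's route.} The paper works directly in $\udl{\presentable}^L$. It factors the $\NN$-idempotence map $V^*\MM[\invObj^{-1}]\to V^*\MM[\invObj^{-1}]\otimes w_{\otimes}W^*\MM[\invObj^{-1}]$ as a composite $F_3\circ F_2\circ F_1$ of comparison maps between formal inversions.
\begin{itemize}
\item $F_1$ uses that $\MM$ is a mode and that $\invObj$ is norm-closed.
\item $F_2$ is the compatibility of formal inversion with tensor products, via distributive bifunctors.
\item $F_3$ is the identification $(w_{\otimes}W^*\MM)[(w_{\otimes}W^*\invObj)^{-1}]\simeq w_{\otimes}(W^*\MM[\invObj^{-1}])$. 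Only this step uses the telescope of \Cref{lem:abstract_formal_inversion}, together with preservation of fibrewise sifted colimits by $w_{\otimes}$ (\Cref{thm:prL-colims} and \Cref{prop:dist-lemmas}).
\end{itemize}

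\textbf{Your route.} You norm the telescope, which is the same input as $F_3$. The cofinality of the diagonal that you flag as the main obstacle is exactly the proof of \Cref{prop:dist-lemmas}(1), so nothing new is needed there. You then tensor with $V^*\MM[\invObj^{-1}]$ and collapse the telescope by the weak-contractibility argument that the paper uses only for the module classification. This avoids $F_2$ and makes idempotence and the module statement two instances of one argument. In exchange, the paper's route also records that norms commute with formal inversion.

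\textbf{Points of precision.}
\begin{itemize}
\item $\Mod{\MM}{\udl{\presentable}^L}$ is not in general closed under the norms of $\udl{\presentable}^L$. $\NN$-idempotence gives $V^*\MM\otimes w_{\otimes}W^*\MM\simeq V^*\MM$, not $w_{\otimes}W^*\MM\simeq V^*\MM$. What you need, and in Step 2 actually use, is the induced norm $V^*\MM\otimes w_{\otimes}(-)$. Under the multiplication of $\MM$, the normed transition functors then become tensoring with the $\MM$-norm of $F^{\otimes}$, and this is where norm-closure of $\invObj$ enters.
\item The 3-cyclic condition is used only to obtain the telescope formula, not in the collapse.
\item The detour through $\MM$-modules is optional. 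Since $V^*\MM[\invObj^{-1}]\otimes w_{\otimes}W^*\MM\simeq V^*\MM[\invObj^{-1}]$, the same collapse runs directly in $\udl{\presentable}^L$. You can then apply \Cref{prop:idem-Nalg} there rather than inside $\MM$.
\end{itemize}
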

\begin{proof} 
    The hypotheses of the theorem are satisfied for the $\baseCat_{/V}$-subcategory $V^* \invObj \subset V^* \MM$ for all $V$ by \Cref{rmk:inv_res}, and the results for each $V^* \invObj \subset V^* \MM$ for all $V$ imply the results for  $\invObj \subset \MM$ by \Cref{rmk:cocart-sections}.
    Thus we may assume that $\baseCat$ has a terminal object.
    
    For $\NN$-idempotence, we are tasked with showing for any $\NN$-admissible map $w\colon W' \rightarrow V$ between orbit-finite $\baseCat$-\animae{} that the natural map 
    \[ 
        V^*(\MM[\invObj^{-1}])\longrightarrow V^*(\MM[\invObj^{-1}])\otimes {w}_{\otimes}W^*(\MM[\invObj^{-1}]) 
    \] 
    is an equivalence. 
    As usual, by basechanging to $V$, we may assume without loss of generality that $V$ is the terminal object, and so suppress the notation $V^*$ everywhere. 

    We factor the map in question as 
    \begin{align*}
        \MM[{\invObj^{-1}}]  
        & \xrightarrow{\;\;\; F_1\;\;\;} (\MM \otimes w_{\otimes} W^\ast \MM)[(\invObj \otimes w_{\otimes} W^\ast\invObj)^{-1}] \\
        &\xrightarrow{\;\;\; F_2 \;\;\;} \MM[\invObj^{-1}] \otimes (w_{\otimes} W^\ast \MM) [(w_{\otimes} W^\ast \invObj)^{-1}] \\
        & \xrightarrow{\;\;\; F_3 \;\;\;} \MM[\invObj^{-1}] \otimes w_{\otimes} \left(W^\ast\MM[\invObj^{-1}]\right)  
    \end{align*}
    where each map is induced by the universal property of formal inversion. We argue that each $F_i$ is an equivalence, after which we will have verified that $\MM\brk{\invObj^{-1}}$ is an $\NN$-mode:
    \begin{enumerate}[label={$F_\arabic*$.}]
        \item That $F_1$ is an equivalence follows before formal inversions by the assumed property that $\MM$ is a mode;
        moreover, since $\invObj$ is closed under $\NN$-norms, the multiplication equivalence $\MM \otimes w_{\otimes} W^* \MM \rightarrow \MM$ takes $\invObj \otimes w_{\otimes} W^* \invObj$ to a full subcategory of $\invObj$, and applying the unit in the second coordinate shows that it is all of $\invObj$; 
        this implies the equivalence after formal inversion.
        \item To see that $F_2$ is an equivalence, it suffices to note that mapping out of both sides represent distributive bifunctors out of $\MM\times w_{\otimes}W^\ast \MM$ which invert $\invObj\times w_{\otimes}W^\ast \invObj$, compatibly with $F_2$.
        \item To see that $F_3$ is an equivalence, note by \Cref{lem:abstract_formal_inversion} that we have a distinguished filtered (hence sifted) colimit presentation for $\NN$-good formal inversions;
        in particular, $F_3$ is the tensor product of the identity on $\MM \brk{\invObj^{-1}}$ with the assembly map
        \[
            \alpha\colon \colim_{F\in I}\colim\prn{w_{\otimes} W^* \MM\xrightarrow{F^{\otimes}\otimes-} \cdots} \longrightarrow w_{\otimes} \colim_{F\in I}\colim\prn{W^* \MM\xrightarrow{F^{\otimes}\otimes-}\cdots} 
        \]        
        \Cref{thm:prL-colims} verifies that the $\NN$-normed structure on presentable categories is distributive, so \Cref{prop:dist-lemmas} verifies that $w_{\otimes}$ preserves sifted colimits, so $\alpha$, and hence $F_3$, is an equivalence.
    \end{enumerate}
    
    We now classify the $\MM\brk{\invObj^{-1}}$-modules among $\MM$-modules. On the one hand, if an $\MM$-module $\CC$ is an $\MM\brk{\invObj^{-1}}$-module and $X \in \invObj^V$, then the $\MM$-module $X$-action $V$-endofunctor on $\CC$  agrees with that as an $\MM\brk{\invObj^{-1}}$-module, which is invertible because $X \in \MM\brk{\invObj^{-1}}^V$ is invertible.
    On the other hand, when $\invObj^V$ acts invertibly on $\CC$, the tensored unit $\baseCat$-functor admits a factorisation
     \[\begin{tikzcd}[ampersand replacement=\&, column sep=huge, row sep=scriptsize]
	{\colim_{F \in I} \colim\prn{\CC \xrightarrow{F^{\otimes} \otimes -} \cdots}} \& {\colim_{F \in I} \colim_{\mathbb{N}_{\leq}} \CC} \\
	{\colim_{F \in I} \colim\prn{\CC \otimes \MM \xrightarrow{F^{\otimes} \otimes -} \cdots}} \& \CC \\
	{\CC \otimes \colim_{F \in I} \colim\prn{\MM \xrightarrow{F^{\otimes} \otimes -} \cdots}} \& {\CC \otimes \MM\brk{\invObj^{-1}}}
	\arrow["{\text{\tiny $\CC$ is $\MM$-module}}"', "\sim",from=1-1, to=2-1]
	\arrow["{ \text{\tiny $\invObj^V$ acts invertibly}}"', "\sim",from=1-2, to=1-1]
    \arrow[from=2-2, to=1-2, "\sim", "{\text{\tiny filt implies weakly contractible}}"']
	\arrow["{\text{\tiny \ref{thm:prL-colims}}}"', "\sim",from=2-1, to=3-1]
	\arrow[from=2-2, to=3-2]
	\arrow["{\text{\tiny \ref{lem:abstract_formal_inversion}}}"', "\sim", from=3-1, to=3-2]
\end{tikzcd}
\]
where each arrow marked with $\sim$ is an equivalence for the indicated reason; the remaining arrows are equivalences by two-out-of-three, establishing the claim.
\end{proof}

\subsection{The equivariant case for compact Lie groups}\label{subsection:the_equivariant_case}
Henceforth fix $G$ a compact Lie group.
In this final subsection, we apply all the theory developed above to $G$-equivariant homotopy theory. Recall the notations of $(\baseCat,\NN)=(\Orb_G,\finind)$ from \Cref{example:orb_G}. 
We call $\Orbit{G}$-categories \mdef{G-categories}.
Recall that by Elmendorf's theorem, there is a Dwyer-Kan localisation $\topcat_\ast^G \to \GBSpaces[G]$ sending a based topological $G$-space to its underlying $G$-homotopy type.

The following definition appears in \cite[App.~C]{gepnerMeierTMF} and \cite[Def.~4.1]{Cnossen2023}:
\begin{definition}
    Let $\myuline{\repsphere} \subset \GBSpaces$ denote the full $G$-subcategory spanned by \mdef{representation spheres};
    that is, its $H$-objects are represented by  pointed topological $H$-spaces  of the form $S^V$ for $V$ a finite-dimensional orthogonal $H$-representation.

    The $G$-category of \mdef{genuine $G$-spectra} is defined to be the formal inversion 
    \[ \myuline{\spectra} = \GBSpaces\left[\myuline{\repsphere}^{-1}\right] \,. \]
    The universal left adjoint $G$-functor into this formal inversion is denoted by $\Sigma^\infty \colon \GBSpaces \to \myuline{\spectra}$. Precomposing this with $(-)_+ \colon \GSpaces \to \GBSpaces$ yields another left adjoint $G$-functor $\Sigma_+^\infty \colon \GSpaces \to \myuline{\spectra}$, called the \mdef{suspension $G$-spectrum}.
    The suspension $G$-spectrum of the $G$-equivariant point is called the \mdef{sphere $G$-spectrum} $\mathbb{S} \in \myuline{\spectra}^G$.
\end{definition}
As an ingredient to deal with compact Lie groups which are not finite, we will need some precise control on the multiplicative norms of representations and orbits. 
This is an unavoidably geometric question, and so since the methods are rather orthogonal to the article, we have collected all matters pertaining to this in  Appendix \ref{section:model_cats}.

The following is the main result of this section.
\begin{theorem}\label{thm:compact_lie_norms}
  Let $G$ be a compact Lie group. Then the $G$-category of genuine $G$-spectra $\myuline{\spectra}$ is presentable, and the sphere $G$-spectrum $\mathbb{S} \in \myuline{\spectra}^G$ renders it an $\finind$-mode. 
    Hence the anima of distributive $\finind$-normed structures on $\myuline{\spectra}$ compatible with the suspension $G$-spectrum $G$-functor $\Sigma^\infty_+\colon \udl{\Spaces}_G\rightarrow \myuline{\spectra}$ (i.e. with $G$-unit $\mathbb{S}$) is contractible.
\end{theorem}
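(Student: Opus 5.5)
The plan is to apply \Cref{thm:invert_object_idempotent} to the $\finind$-mode $\MM = \GBSpaces$ from \Cref{prop:pointed_modes}, with $\invObj$ the representation spheres. Once $\myuline{\repsphere}$ is shown to be $\finind$-good, \Cref{thm:invert_object_idempotent} shows that the composite $\GSpaces \to \GBSpaces \to \GBSpaces[\myuline{\repsphere}^{-1}] = \myuline{\spectra}$ exhibits $\myuline{\spectra}$ as an $\finind$-mode. This composite is $\Sigma^\infty_+$, whose $G$-unit is $\mathbb{S}$. Presentability is automatic, since $\finind$-modes are by definition normed algebras in $\udl{\presentable}^L$; concretely, \Cref{lem:abstract_formal_inversion}(2) writes $\myuline{\spectra}$ as a filtered colimit in $\udl{\presentable}^L$. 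The contractibility of the anima of distributive normed structures with unit $\Sigma^\infty_+$ then follows directly from \Cref{cor:contractibility-monoidal-structures}.

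It remains to check the three conditions of \Cref{defn:multiplicative_invertibles}; note first that $\myuline{\repsphere}$ is small.

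\textbf{Generated in top degree.} Take $u\colon G/K \to G/H$ and an $K$-representation sphere $S^W$. Since $K \le H$ is a closed subgroup of a compact Lie group, $W$ embeds as a summand of $\res{H}{K} V$ for some finite-dimensional orthogonal $H$-representation $V$. One can take a suitable induced or coinduced representation, or use Peter–Weyl to embed $W$ in the restriction of some $H$-representation. Writing $\res{H}{K} V \cong W \oplus W'$, we get $S^W \wedge S^{W'} \simeq u^* S^V$.

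\textbf{3-cyclic condition.} This is the classical fact that for an orthogonal $H$-representation $V$, the cyclic permutation on $V^{\oplus 3}$ is an $H$-equivariant linear map connected to the identity through $H$-equivariant isometries. Indeed, it is an even permutation of the three summands, and the rotations $\mathrm{SO}(3)$ acting on $\mathbb{R}^3 \otimes V$ commute with $H$. Hence the permutation is $H$-homotopic to the identity on $S^{3V}$.

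\textbf{Closure under $\finind$-norms.} This is the genuinely geometric input, and I expect it to be the main obstacle. For a finite-index inclusion $K \le H$ and a $K$-representation $W$, one needs $w_\otimes S^W \simeq S^{\mathrm{Ind}_K^H W}$ in $\GBSpaces[H]$. Here $w_\otimes$ is the norm on pointed $G$-\animae{}, computed in \Cref{prop:formula_for_norms_in_pointed_anima} as a total cofibre of an indexed cube. The point-set candidate is the coinduced smash product $\bigwedge_{H/K} S^W \cong S^{\mathrm{Ind}_K^H W}$.

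The difficulty is comparing the $\infty$-categorical norm with this point-set norm for infinite $G$. By \Cref{cor:fixed-of-norm} we only control fixed points at the top orbit $V$. For the comparison at all subgroups, I would invoke the appendix (\Cref{appendix:point-set_norms}). There the point-set coinduced smash product of based $K$-CW complexes is shown to be homotopical and to model the total cofibre of \Cref{prop:formula_for_norms_in_pointed_anima}, after passing through Elmendorf's localisation $\topcat^G_* \to \GBSpaces[G]$. Also, norms along arbitrary admissible maps reduce to those along orbit maps $G/K \to G/H$ together with restrictions and products. Products of representation spheres are again representation spheres, so closure under $\finind$-norms reduces to this single comparison, which the appendix is designed to supply.
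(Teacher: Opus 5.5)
Your overall strategy matches the paper's: apply \Cref{thm:invert_object_idempotent} to the $\finind$-mode $\GBSpaces$ of \Cref{prop:pointed_modes}, then conclude with \Cref{cor:contractibility-monoidal-structures}. Your top-degree generation and presentability arguments are fine. The gap is the norm-closure of $\myuline{\repsphere}$, i.e.\ the equivalence $w_\otimes S^W\simeq S^{\mathrm{Ind}_K^H W}$ in $\GBSpaces[H]$.

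You attribute this to \Cref{appendix:point-set_norms}, but the appendix contains no comparison between the $\infty$-categorical smash norm and a point-set coinduced smash product. Its only point-set input is \Cref{lem:coind-model}, which models the \emph{unpointed} coinduction $\coind{K}{H}$ by $\bMap_K(H,-)$. That is used solely to show that coinductions of homogeneous spaces along $\finind$-maps are finite (\Cref{prop:fin-norms-homogeneous}), and hence that compact objects are closed under $\finind$-norms (\Cref{cor:norms-finite-cats}). To model $w_\otimes S^W$ you would also have to identify the parametrised colimit over the punctured cube $w_*\udl{\Delta}^1\setminus 1$ from \Cref{prop:formula_for_norms_in_pointed_anima} with the point-set fat wedge, and check that its inclusion is a cofibration. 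Nothing of this sort is in the paper.

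Fixed points do not rescue you either. Combining \Cref{cor:fixed-of-norm} with the base change $u^*w_\otimes\simeq\overline{w}_\otimes\overline{u}^*$ does compute the fixed points of $w_\otimes S^W$ at \emph{every} closed subgroup, so your worry about controlling only the top orbit is unfounded. But agreement of fixed points with those of $S^{\mathrm{Ind}_K^HW}$ gives no $H$-equivalence without a comparison map, because homotopy representation spheres are not determined by their dimension functions.

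The paper avoids this by inverting a larger class: $\myuline{\hosphere}\subset\GBSpaces$, the compact objects all of whose fixed points are spheres and which satisfy the 3-cyclic condition. Its norm-closure needs no point-set model of norms:
\begin{itemize}
\item compactness is \Cref{cor:norms-finite-cats};
\item 3-cyclicity follows from symmetric monoidality of $w_\otimes$;
\item the fixed-point condition is exactly the computation above via \Cref{cor:fixed-of-norm}, since smash products of spheres are spheres.
\end{itemize}
Then \Cref{thm:invert_object_idempotent} makes $\udl{\Spaces}_*[\myuline{\hosphere}^{-1}]$ an $\finind$-mode. Since $\myuline{\repsphere}\subseteq\myuline{\hosphere}$ and homotopy spheres already become invertible in $\myuline{\spectra}$ (\Cref{lem:inv-vs-hsphere}), \Cref{lem:identify-inversions} gives $\myuline{\spectra}\simeq\udl{\Spaces}_*[\myuline{\hosphere}^{-1}]$.

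The inclusion $\myuline{\repsphere}\subseteq\myuline{\hosphere}$ needs 3-cyclicity of representation spheres in the $\infty$-category $\GBSpaces[H]$. Your $\mathrm{SO}(3)$ argument is point-set. Transporting it requires that the point-set smash product model the $\infty$-categorical one together with its symmetry, which the paper gets from \Cref{rmk:point-set-smash} and \cite[Lem.~C.5]{gepnerMeierTMF}. To keep your route, you must prove $w_\otimes S^W\simeq S^{\mathrm{Ind}_K^HW}$ yourself; otherwise switch to the homotopy-sphere class.
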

To prove this, we first note the following standard observation.
\begin{lemma}\label{lem:inv-vs-hsphere}
    An object $X \in \myuline{\spectra}^G$ is $\otimes$-invertible if and only if it is compact and $\Phi^H X$ is a sphere for all closed subgroups $H \leq G$.
\end{lemma}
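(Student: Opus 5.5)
The plan is to use the geometric fixed point functors $\Phi^H\colon \spectra_G \to \spectra$ for closed $H \leq G$. These are symmetric monoidal, colimit-preserving, preserve compact objects, and are jointly conservative. Here $\spectra_G$ denotes the underlying symmetric monoidal category $\myuline{\spectra}^G$.

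For the forward direction, suppose $X$ is $\otimes$-invertible with inverse $Y$. Then $X$ is compact: in a compactly generated presentably symmetric monoidal stable category whose unit is compact, invertible objects are dualizable, since $\mathbb{S}$ is compact and $\underline{\mathrm{Hom}}(X,-)\simeq Y\otimes -$ preserves colimits, and dualizable objects are compact. Here we use that $\mathbb{S}\in\spectra_G$ is compact, which holds because the $G/K_+$ compact generators include $G/G_+$. Next, each $\Phi^H X$ is invertible in $\spectra$, because $\Phi^H$ is symmetric monoidal. Invertible spectra are exactly the spheres $S^n$, $n\in\mathbb{Z}$; this is the standard fact, via Hurewicz and Künneth over a field, that the Picard group of $\spectra$ is $\mathbb{Z}$.

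For the converse, let $X$ be compact with $\Phi^H X \simeq S^{n_H}$ for all $H$. Compact objects in $\spectra_G$ are dualizable, since the compact generators $\Sigma^\infty_+ G/K$ are dualizable by Wirthmüller/Atiyah duality. Write $DX$ for the dual and consider the evaluation map $\mathrm{ev}\colon DX\otimes X \to \mathbb{S}$. Applying $\Phi^H$, which commutes with duals of dualizable objects because it is symmetric monoidal, gives $\mathrm{ev}\colon D(S^{n_H})\otimes S^{n_H}\to \mathbb{S}$. This map is an equivalence, since spheres are invertible. By joint conservativity of the $\Phi^H$, the map $\mathrm{ev}$ is itself an equivalence, so $X$ is invertible with inverse $DX$.

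The main subtlety is the input that the family $\{\Phi^H\}_{H\leq G\text{ closed}}$ is jointly conservative on all of $\spectra_G$ for a compact Lie group $G$. For finite groups this is standard. For compact Lie groups it follows from the isotropy separation argument: a spectrum with all geometric fixed points vanishing is $\widetilde{E\mathcal{F}}$-local-trivial for every family $\mathcal{F}$. One then inducts on closed subgroups, using that closed subgroups satisfy the descending chain condition. I would cite this from the literature, for example the cited works of Schwede or Lewis–May–Steinberger, rather than reprove it. The identification of the formal-inversion definition of $\myuline{\spectra}$ with the classical genuine $G$-spectra, under which $\Phi^H$ makes sense, is provided by \cite{gepnerMeierTMF,Cnossen2023}.

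Compactness of $X$ is only used to get dualizability. Alternatively, one could note that $\otimes$-invertible objects are automatically dualizable, which makes the compactness hypothesis in the converse the only place where it is genuinely needed.
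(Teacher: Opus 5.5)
Your proposal is correct and is essentially the paper's argument, which adapts Krause: reduce to compact, hence dualizable, $X$, note that invertibility is equivalent to the evaluation map $DX\otimes X\to\mathbb{S}$ being an equivalence, and detect this with the jointly conservative symmetric monoidal family $(\Phi^H)_{H\leq G}$, using that the invertible spectra are exactly the spheres. You also spell out two inputs the paper leaves implicit: that compact implies dualizable, and why the $\Phi^H$ are jointly conservative for compact Lie groups.
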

\begin{proof}
  The proof used in \cite[\S2]{krause20} works in this generality:
  since $\mathbb{S}$ is compact,
  all dualisable objects of $\myuline{\spectra}^G$ are compact, so assume without loss of generality that $X$ is compact. Then $X$ is $\otimes$-invertible if and only if the coevaluation map $X \otimes \mathbb{D}X \rightarrow \mathbb{S}$ is an equivalence;
  this is taken to coevaluation under the conservative symmetric monoidal functor $\prn{\Phi^H(-) \mid H \leq G}\colon  \myuline{\spectra}^G \longrightarrow \prod_{H \leq G} \spectra$, yielding the claim.
\end{proof}

We will use this and the following to identify $\myuline{\spectra}$ with another formal inversion.
\begin{lemma}\label{lem:identify-inversions}
  Suppose $\DD \subset \EE \subset \CC$ are $\TT$-sets of objects in a presentable $\baseCat^{\simeq}$-normed $\TT$-category $\CC$ whose  composite $\EE \hookrightarrow \CC \rightarrow \CC\brk{\DD^{-1}}$
  takes values in $\otimes$-invertible objects.
  Then, the induced $\baseCat^{\simeq}$-normed $\baseCat$-functor $\CC\brk{\EE^{-1}} \rightarrow \CC\brk{\DD^{-1}}$ is an equivalence.
\end{lemma}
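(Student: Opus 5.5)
We argue purely via the universal property of formal inversion in \Cref{recall:formal_inversion}, by Yoneda in $\calg\prn{\udl{\presentable}^L}^{\baseCat}$. Here $\baseCat^{\simeq}$-normed structures are simply parametrised symmetric monoidal structures, so \Cref{recall:formal_inversion} applies directly. We assume both formal inversions exist, as implicit in the statement. The induced map $\varphi\colon \CC\brk{\EE^{-1}} \rightarrow \CC\brk{\DD^{-1}}$ is constructed first. Since $\DD \subset \EE$, the universal map $\Sigma^\infty_{\EE}\colon \CC \rightarrow \CC\brk{\EE^{-1}}$ inverts $\DD$. Hence $\Sigma^\infty_{\EE}$ factors essentially uniquely through $\Sigma^\infty_{\DD}$, giving $\psi\colon \CC\brk{\DD^{-1}} \rightarrow \CC\brk{\EE^{-1}}$. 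Conversely, by hypothesis $\Sigma^\infty_{\DD}$ sends $\EE$ to $\otimes$-invertible objects levelwise, so it factors through $\Sigma^\infty_{\EE}$, giving $\varphi$ with $\varphi\circ\Sigma^\infty_{\EE}\simeq\Sigma^\infty_{\DD}$.

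Next we show that $\varphi$ and $\psi$ are mutually inverse. The composite $\psi\circ\varphi$ is an endomorphism of $\CC\brk{\EE^{-1}}$ under $\CC$. The mapping anima $\Map(\CC\brk{\EE^{-1}},\CC\brk{\EE^{-1}})$ embeds, via restriction along $\Sigma^\infty_{\EE}$, as a monomorphism into $\Map(\CC,\CC\brk{\EE^{-1}})$. Both $\psi\circ\varphi$ and $\id$ restrict to $\Sigma^\infty_{\EE}$, so $\psi\circ\varphi\simeq\id$. The same argument with $\DD$ in place of $\EE$ gives $\varphi\circ\psi\simeq\id$.

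Alternatively, one can phrase this as checking that for every $\udl{\E}$ the restriction maps along $\Sigma^\infty_{\DD}$ and $\Sigma^\infty_{\EE}$ have the same image in $\Map(\CC,\udl{\E})$. This amounts to showing that a map $F\colon\CC\rightarrow\udl{\E}$ inverts $\EE$ if and only if it inverts $\DD$. One direction is trivial since $\DD\subset\EE$. For the other, a map inverting $\DD$ factors as $\CC\rightarrow\CC\brk{\DD^{-1}}\rightarrow\udl{\E}$. The hypothesis puts $\EE$ into invertibles of $\CC\brk{\DD^{-1}}$, and symmetric monoidal functors preserve invertible objects, so $F$ inverts $\EE$.

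The only point needing care is that "inverting" must be checked levelwise at every orbit $V$. The hypothesis is stated levelwise, and the restriction functors in $\CC\brk{\DD^{-1}}$ are symmetric monoidal, so this poses no obstacle. I expect no real difficulty; the main subtlety is merely the bookkeeping that monomorphisms with equal images of mapping anima yield an equivalence by Yoneda.
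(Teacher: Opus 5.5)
Your proposal is correct and follows essentially the paper's approach. The paper's one-line proof is exactly your third paragraph: a $\baseCat^{\simeq}$-normed functor out of $\CC$ inverts $\EE$ if and only if it inverts $\DD$, with the nontrivial direction obtained by factoring through $\CC\brk{\DD^{-1}}$. Your first argument, constructing mutually inverse maps $\varphi$ and $\psi$ via the monomorphism property, is just the explicit Yoneda unpacking of that same universal-property comparison.
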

\begin{proof}
  It suffices to note that a $\baseCat^{\simeq}$-normed $\baseCat$-functor $\CC \rightarrow \BB$ inverts $\EE$ if and only if it inverts $\DD$;
  the backwards direction follows by factorisation through $\CC\brk{\DD^{-1}}$.
\end{proof}
\begin{remark}\label{rmk:point-set-smash}
    We will use a point-set model for the binary smash product of pointed $G$-\animae{}: \Cref{prop:formula_for_norms_in_pointed_anima} identifies the smash product within a cofibre sequence
    \begin{equation}\label{eq:smash-cofibre}
        X \vee Y \longrightarrow X \times Y \longrightarrow X \wedge Y
    \end{equation}
    in $\GBSpaces[G]$.
    Presenting $\GBSpaces[G]$ as via the standard model structure on $\mathrm{Top}_{G,*}$, whenever the base-point inclusions $* \hookrightarrow X$ and $* \hookrightarrow Y$ are $G$-NDR pairs, so is $X \vee Y \hookrightarrow X \times Y$;
    hence it is a cofibration, and \eqref{eq:smash-cofibre} is the image of a cofibre sequence in $\mathrm{Top}_{G,*}$. 
\end{remark}

\begin{proof}[Proof of \Cref{thm:compact_lie_norms}]
    Let $\myuline{\hosphere}\subseteq \udl{\Spaces}_*$ be the full $G$-subcategory of compact objects in $\udl{\Spaces}_*$ all of whose fixed points are spheres and who satisfy the 3-cyclic condition.
    
    By \Cref{cor:norms-finite-cats},
    compact objects of $\udl{\Spaces}_*$ are closed under the $\finind$-norms; 
    moreover, symmetric monoidality of $w_{\otimes}$  shows that 3-cyclic objects of $\GBSpaces$ are closed under $\finind$-norms.
    Thus, to argue that $\myuline{\hosphere}$ is $\finind$-good in the sense of \Cref{defn:multiplicative_invertibles},  it suffices to show that the fixed-point-sphere condition is closed under $\finind$-norms.
    But this follows under \Cref{cor:fixed-of-norm} since spheres are closed under smash product;
    therefore \Cref{thm:invert_object_idempotent} shows that $\udl{\Spaces}_*[\myuline{\hosphere}^{-1}]$ is an $\finind$-mode. 

    Note that the objects of $\myuline{\repsphere}$ are compact;
    moreover, the point-set smash product formula \Cref{rmk:point-set-smash} gives the usual point-set model for the smash product of them, so \cite[Lem. C.5]{gepnerMeierTMF} shows that they satisfy the 3-cyclic condition;
    hence we have an inclusion $\myuline{\repsphere}\subseteq \myuline{\hosphere}$.
    By \Cref{lem:inv-vs-hsphere}, we know that homotopy spheres are already invertible in $\myuline{\spectra}$, so \Cref{lem:identify-inversions} shows that $\myuline{\spectra}\coloneqq \udl{\Spaces}_*[\myuline{\repsphere}^{-1}]\rightarrow \udl{\Spaces}_*[\myuline{\hosphere}^{-1}]$ is an equivalence,
    implying that $\myuline{\spectra}$ is a $\finind$-mode, as desired. 
\end{proof}

To finish, we would like to build a bridge from this to stability of small $G$-categories. 
\begin{remark} \label{example:Lie-group}
    For $G$ a compact Lie group, \Cref{cor:norms-finite-cats} demonstrates that $\aleph_0$ is $\finind$-normable. 
    Condition (a) of \Cref{prop:kappa-norms} shows that $\NN$-normability of regular $\kappa$ is equivalent to having larger cardinality than the minimal $\NN$-normable cardinal, so this implies that \textit{every} regular cardinal is $\finind$-normable.
    In particular, \Cref{thm:ind} applies for $\finind$-normed $G$-categories with no restriction on $\kappa$.
\end{remark}
Note that the $\aleph_0$-compact objects in the fibres $\myuline{\spectra}^H$ are precisely the dualisable $H$-spectra, so they are preserved by the (symmetric monoidal) restriction and norm functors;
\Cref{lem:res-of-compact} shows that these agree with the $\aleph_0$-compact $H$-objects, so that $\myuline{\spectra}^{\aleph_0} \subset \myuline{\spectra}$ is a $\finind$-normed $G$-subcategory.
We learn that $\myuline{\spectra}$ lifts to an idempotent $\finind$-normed algebra in $\udl{\presentable}^L_{\aleph_0}$. 
By \Cref{cor:contractibility-monoidal-structures}, we thus obtain $\finind$-normed structures on 
$\udl{\presentable}^L_{\mathrm{st}}\coloneqq \udl{\module}_{\myuline{\spectra}}(\udl{\presentable}^L)$ and 
$\udl{\presentable}^L_{\aleph_0, \mathrm{st}}\coloneqq \udl{\module}_{\myuline{\spectra}}(\udl{\presentable}^L_{\aleph_0})$.
This transfers:


\begin{lemma}\label{rmk:compact_mode}
    Fix $\kappa$ a $\baseCat^{\simeq}$-restrictible cardinal.
    Suppose $\MM$ is an idempotent commutative algebra in $\prn{\udl{\Pr}^L_{\kappa}}^{\baseCat}$ and $\invObj \subset \MM$ a $\baseCat$-set of $\kappa$-compact objects.
    Given $\CC \in (\udl{\Cat}^{\mathrm{irex}(\kappa)})^{\baseCat}$,  the following are equivalent:
    \begin{enumerate}[label={(\alph*)}]
        \item $\CC$ is an $\MM\brk{\invObj^{-1}}^{\kappa}$-module;
        \item $\udl{\ind}^{\kappa}(\CC)$ is an $\MM\brk{\invObj^{-1}}$-module;
        \item $\udl{\ind}^{\kappa}(\CC)$ is an $\MM$-module with invertible $\invObj$-action; and
        \item $\CC$ is an $\MM^{\kappa}$-module with invertible $\invObj$-action.
    \end{enumerate}
\end{lemma}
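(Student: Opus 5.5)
The plan is to prove the cycle of equivalences by transporting everything across the equivalence $\udl{\ind}^{\kappa}\colon \udl{\Cat}^{\mathrm{irex}(\kappa)}\simeq \udl{\Pr}^L_{\kappa}\cocolon (-)^{\kappa}$ of \Cref{thm:ind}. Since $\kappa$ is only $\baseCat^{\simeq}$-restrictable, I will use the $\baseCat^{\simeq}$-normed (i.e. symmetric monoidal) version, taking $\NN=\baseCat^{\simeq}$ throughout. Under this symmetric monoidal equivalence, idempotent commutative algebras and their modules correspond. I will show (a)$\iff$(b), (b)$\iff$(c) and (c)$\iff$(d) separately.

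For (b)$\iff$(c), I will apply \Cref{thm:invert_object_idempotent} with $\NN=\baseCat^{\simeq}$. An idempotent commutative algebra is a $\baseCat^{\simeq}$-mode, and the goodness hypotheses there (generation in top degree, 3-cyclicity, and norm closure, which for $\NN=\baseCat^{\simeq}$ reduces to tensor closure) should be implicit in the intended setting. Its conclusion is exactly that $\udl{\ind}^{\kappa}(\CC)$ is an $\MM[\invObj^{-1}]$-module if and only if it is an $\MM$-module on which $\invObj$ acts invertibly.

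For (a)$\iff$(b) and (c)$\iff$(d), I will check that $\MM[\invObj^{-1}]$ lies in $\udl{\Pr}^L_{\kappa}$ and is idempotent there.

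1. $\MM[\invObj^{-1}]$ is computed by \Cref{lem:abstract_formal_inversion}(2) as a filtered colimit of copies of $\MM$ along the functors $F^{\otimes}\otimes -$. These functors preserve $\kappa$-compact objects because $\invObj$ consists of $\kappa$-compact objects and $\MM^{\kappa}$ is closed under $\otimes$, as $\MM$ is a commutative algebra in $\udl{\Pr}^L_{\kappa}$.
2. By \Cref{thm:prL-colims}, the inclusion $\udl{\Pr}^L_{\kappa}\subset\udl{\Pr}^L$ is cocontinuous and norm-closed, so this colimit lies in $\udl{\Pr}^L_{\kappa}$. The same closure makes $\MM\to\MM[\invObj^{-1}]$ an idempotent algebra in $\udl{\Pr}^L_{\kappa}$.
3. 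Module categories over it in $\udl{\Pr}^L_{\kappa}$ are the full subcategory of objects $X$ with $X\simeq X\otimes\MM[\invObj^{-1}]$, and the same holds in $\udl{\Pr}^L$, so the two notions of module agree on objects of $\udl{\Pr}^L_{\kappa}$.
4. Transporting through $(-)^{\kappa}$ gives (a)$\iff$(b).
5. Likewise, $\MM$-modules correspond to $\MM^{\kappa}$-modules. The action of $X\in\invObj$ on $\udl{\ind}^{\kappa}(\CC)$ is $\udl{\ind}^{\kappa}$ of its action on $\CC$, and $\udl{\ind}^{\kappa}$ is an equivalence of categories, so one action is invertible exactly when the other is. This gives (c)$\iff$(d).

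The main obstacle is step 3: confirming that being an $\MM[\invObj^{-1}]$-module is a property detected identically in $\udl{\Pr}^L_{\kappa}$ and in $\udl{\Pr}^L$. I expect this to require that the tensor products in $\udl{\Pr}^L_{\kappa}$ are computed in $\udl{\Pr}^L$, which is the norm-closure part of \Cref{thm:prL-colims}, together with \Cref{prop:idem-Nalg}, which characterises modules over an idempotent algebra by the unit map becoming an equivalence.
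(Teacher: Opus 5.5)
Your proposal is correct and follows the paper's proof. In both, (a)$\iff$(b) and (c)$\iff$(d) come from transport along the equivalence $\udl{\ind}^{\kappa}\colon \udl{\Cat}^{\mathrm{irex}(\kappa)}\simeq \udl{\Pr}^L_{\kappa}$ of \Cref{thm:ind}, using $\kappa$-compactness of $\invObj$ to identify the action endofunctors, and (b)$\iff$(c) comes from \Cref{thm:invert_object_idempotent}; your extra checks that $\MM\brk{\invObj^{-1}}$ and its algebra structure lie in $\udl{\Pr}^L_{\kappa}$, and that being a module over an idempotent algebra is detected identically in $\udl{\Pr}^L_{\kappa}$ and $\udl{\Pr}^L$, spell out what the paper's ``immediate from \Cref{thm:ind}'' leaves implicit, and, like the paper, you rely on the goodness hypotheses of \Cref{thm:invert_object_idempotent}, which the statement does not list.
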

\begin{proof}
    (a) $\iff$ (b) is immediate from \Cref{thm:ind} and (b) $\iff$ (c) from \Cref{thm:invert_object_idempotent}.
   Conditions (c) and (d) are equivalent under \Cref{thm:ind} since the $\invObj$-action endofunctors on $\udl{\ind}^{\kappa}(\CC)$ are the $\udl{\ind}^{\kappa}$-image of those on $\CC$ by $\kappa$-compactness of $\invObj$.
\end{proof}

We write $\udl{\Cat}\irex$ for $\udl{\Cat}^{\mathrm{irex}(\aleph_0)}$ and $\udl{\cat}\perfect \subset \udl{\cat}\irex$ for the full $G$-subcategory of $G$-stable categories, i.e. those on which tensoring with representation spheres are automorphisms. We call these $G$-categories \mdef{perfect}. By \Cref{thm:compact_lie_norms,rmk:compact_mode}, we know that $\myuline{\spectra}^{\aleph_0}$ is an $\NN$-idempotent algebra and that it classifies precisely the property of $G$-stability. As such, we learn that  $\udl{\cat}\perfect = \udl{\module}_{\myuline{\spectra}^{\aleph_0}}(\udl{\cat}\irex)\hookrightarrow \udl{\cat}\irex$ is adjoint to  an $\finind$-normed smashing localisation. 
We summarise this situation  as follows:
\begin{corollary}\label{prop:catperf_G}
    Let $G$ be a compact Lie group. 
    Then, $\myuline{\spectra}^{\aleph_0} \otimes - \colon \udl{\Cat}\irex \rightarrow \udl{\Cat}\perfect$ is a $\finind$-normed smashing localisation with the following universal property:
    for a finite-index closed subgroup inclusion $K \leq H$  and perfect $K$-category $\udl{\A}$, the norm is a perfect $H$-category $w_{\otimes}\udl{\A}$ equipped with a $(w,\finsmall)$-distributive functor $\mu\colon w_*\udl{\A}\rightarrow w_{\otimes}\udl{\A}$ 
    such that, for all perfect $H$-categories $\udl{\B}$, restriction induces an equivalence
        \[
            \mu^*\colon \udl{\func}\rexact(w_{\otimes}\udl{\A},\udl{\B})\xrightarrow{\;\;\;\; \sim \;\;\;\;} \udl{\func}\distributeSmall{w}(w_*\udl{\A},\udl{\B}).
        \]
        Moreover, fibrewise Ind-completion underlies an $\finind$-normed equivalence
        \[
            \udl{\ind}\colon \udl{\cat}\perfect \xrightarrow{\;\;\;\;\; \sim \;\;\;\;\;} \udl{\Pr}^L_{st,\aleph_0} \subset \udl{\Pr}^L
        \]
        under the $G$-subcategory normed structure on $\udl{\Pr}^L_{st,\aleph_0}$.
\end{corollary}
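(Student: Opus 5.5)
The plan is to assemble the statement from three earlier results: \Cref{thm:compact_lie_norms}, \Cref{rmk:compact_mode} and \Cref{thm:ind}. Everything is taken at $\kappa=\aleph_0$, which is $\finind$-normable by \Cref{example:Lie-group}.

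First, I would show that $\myuline{\spectra}^{\aleph_0}$ is an $\finind$-idempotent normed algebra in $\udl{\Cat}\irex$.
\begin{itemize}
\item By \Cref{thm:compact_lie_norms}, $\myuline{\spectra}$ is an $\finind$-mode in $\udl{\Pr}^L$.
\item The remark preceding \Cref{rmk:compact_mode} shows that $\myuline{\spectra}^{\aleph_0}\subset\myuline{\spectra}$ is an $\finind$-normed $G$-subcategory. Hence $\myuline{\spectra}$ lifts to an idempotent normed algebra in $\udl{\Pr}^L_{\aleph_0}$.
\item By \Cref{thm:ind}, $(-)^{\aleph_0}\colon\udl{\Pr}^L_{\aleph_0}\simeq\udl{\Cat}\irex$ is an $\finind$-normed equivalence. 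Since idempotence is a property of the underlying $\EEE{0}$-algebra (\Cref{prop:idem-Nalg}), it transports along this equivalence.
\end{itemize}
\Cref{prop:idem-Nalg}(c) then makes the smashing localisation $\myuline{\spectra}^{\aleph_0}\otimes-$ an $\finind$-normed Bousfield localisation onto $\udl{\module}_{\myuline{\spectra}^{\aleph_0}}(\udl{\Cat}\irex)$.

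Next, I would identify this module category with $\udl{\Cat}\perfect$. I apply \Cref{rmk:compact_mode} with $\MM=\GBSpaces$ and $\invObj=\myuline{\repsphere}$. Representation spheres are compact, and $\GBSpaces$ is idempotent in $\udl{\Pr}^L_{\aleph_0}$ by \Cref{prop:pointed_modes}, restricted as above. The lemma gives that $\CC$ is a $\myuline{\spectra}^{\aleph_0}$-module if and only if it is a $\GBSpaces^{\aleph_0}$-module on which representation spheres act invertibly.

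The $\GBSpaces^{\aleph_0}$-module condition is pointedness. I would check this by transporting the characterisation $\udl{\module}_{\GBSpaces}(\udl{\Pr}^L)=\udl{\Pr}^L_*$ along $\udl{\ind}$: $\CC$ is pointed iff $\udl{\ind}(\CC)$ is. Pointedness plus invertible representation spheres is precisely $G$-stability, so the module category is $\udl{\Cat}\perfect$. This identification, including the pointedness matching through Ind, is the step I expect to need the most care.

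Finally, the universal property. The norm in $\udl{\Cat}\perfect$ is $\myuline{\spectra}^{\aleph_0}\otimes w^{\mathrm{irex}}_{\otimes}\udl{\A}$. For perfect $\udl{\B}$, I would compose two equivalences:
\begin{itemize}
\item the localisation adjunction, giving $\udl{\func}\rexact(\myuline{\spectra}^{\aleph_0}\otimes w^{\mathrm{irex}}_\otimes\udl{\A},\udl{\B})\simeq\udl{\func}\rexact(w^{\mathrm{irex}}_\otimes\udl{\A},\udl{\B})$;
\item the universal property of \Cref{thm:ind}, which identifies the latter with $\udl{\func}\distributeSmall{w}(w_*\udl{\A},\udl{\B})$. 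Here I would note that $\udl{\sm}(\aleph_0)$-distributivity is the finite distributivity meant by $(w,\finsmall)$.
\end{itemize}
The structure map $\mu$ is the composite of the lax structure map with the localisation unit. The norm is already local because $\udl{\A}$ is a module and the localisation is normed: $w_\otimes$ of a module is a $w_\otimes\myuline{\spectra}^{\aleph_0}$-module, hence a $\myuline{\spectra}^{\aleph_0}$-module by idempotence.

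For the Ind statement, \Cref{thm:ind} gives the normed equivalence $\udl{\ind}\colon\udl{\Cat}\irex\simeq\udl{\Pr}^L_{\aleph_0}$. It matches modules over $\myuline{\spectra}^{\aleph_0}$ with modules over $\myuline{\spectra}$, by (a)$\iff$(b) of \Cref{rmk:compact_mode}. The latter are $\udl{\Pr}^L_{\aleph_0,\mathrm{st}}$, with normed structure from \Cref{cor:contractibility-monoidal-structures}. Both sides are smashing localisations of identified normed categories at corresponding idempotents, so the equivalence restricts to an $\finind$-normed equivalence.
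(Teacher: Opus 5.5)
Your proposal is correct and follows the paper's own route. You obtain $\finind$-idempotence of $\myuline{\spectra}^{\aleph_0}$ from \Cref{thm:compact_lie_norms} transported through the normed equivalence of \Cref{thm:ind}, identify its modules with $G$-stable categories via \Cref{rmk:compact_mode} (you also spell out the pointedness matching through $\udl{\ind}$, which the paper leaves implicit), and get the universal property from \Cref{thm:ind} composed with the localisation adjunction.

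The one slip is your aside that $w^{\mathrm{irex}}_\otimes\udl{\A}$ is already perfect ``by idempotence''. The relation $V^*Y\otimes w_\otimes W^*Y\simeq V^*Y$ only says that $Y$-modules are $w_\otimes W^*Y$-modules, not the converse. The converse is equivalent to $w_\otimes W^*Y\simeq V^*Y$; this does hold here, but needs a separate argument, e.g.\ via induced representation spheres. The remark is not needed anyway: your first bullet already gives the universal property for the actual norm $\myuline{\spectra}^{\aleph_0}\otimes w^{\mathrm{irex}}_\otimes\udl{\A}$.
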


To conclude, we give an example of an $\NN$-mode for $\Orbit{G}^{\simeq} \subsetneq \NN \subsetneq \finind$ which is obstructed from becoming a $\finind$-mode, following Hill's seminal work on transfer systems.
Fix a prime $q$, so that we may work $q$-locally.
Recall from \cite[Cor.~3.7]{Barthel_compact} that the product of geometric fixed points
\begin{equation*}\label{eq:total-fp}
    \prod_{(H) \in \mathrm{Sub}(G)/G} \Phi^H\colon {\spectra}^G_{(q)} \longrightarrow \prod_{H} \spectra_{(q)}
\end{equation*}
restricts to an \textit{embedding} from the poset of thick $\otimes$-ideals of compact objects in the former to the latter.
Devinatz--Hopkins--Smith's thick subcategory theorem \cite[Thm.~7]{dhs} then identifies these with dimension functions under the pointwise poset structure:
\[
    \ell\prn{-}\colon \mathrm{Thick}_{\otimes}\prn{{\spectra}_{(q)}^G}\hookrightarrow 
    C(G) = \mathbb{N}^{\mathrm{Ob} \prn{\mathrm{Sub}(G)/G}};
\]
these are easily further identified as the acyclic objects of finite localisations.

The proof of \cite[Cor.~II.3.85, Prop.~II.3.89]{nat-thesis} identifies $\NN$-normedness of finite localisations of $G$-spectra with $\NN$-compatibility of their acyclic objects, and \cite[Lem.~4.7]{Hill_chromatic} then explicitly characterises this via their dimension functions.
\cite[Cor.~II.3.65]{nat-thesis} recognises $\NN$-normed Bousfield localisations of $\NN$-modes as themselves $\NN$-modes;
hence, \textit{given} a chromatic finite localisation $\XX$ of $G$-spectra, we have acquired an $\NN$-mode $\GSpaces \rightarrow \udl{\XX}$, where $\NN$ is determined by combinatorial data of the function $\ell\prn{\XX}^{\bullet}\colon \mathrm{Sub}(G)/G  \rightarrow \mathbb{N}$.

In \cite[Cor.~1.4]{Barthel}, the authors characterise the image of $\mathrm{Thick}_{\otimes}\prn{{\spectra}^G_{(q)}}\subset C(G)$ when $G$ is abelian finite (and \cite[Thm.~1.3]{Barthel_compact} when $G$ is abelian compact Lie);
using this, we may illustrate normed chromatic modes via the following example.
\begin{example}
    Fix $G = C_{p^2}$.
    For $H \subset G$, denote by $T(k,H)$ the $G$-spectrum with
    \[
        T\prn{k,H}^{\Phi K} \simeq \begin{cases}
            T(k) & H = K\\
            0 & \mathrm{otherwise}
        \end{cases}
    \]
    with trivial Weyl group action.
    For any $k \in \mathbb{N}$ the chromatic finite localisation 
    \[
        \myuline{\spectra}_{(q)} \longrightarrow \udl{\XX} \coloneqq L^f_{T(k+1,e) \oplus T\prn{k, C_{p}} \oplus
        T\prn{k+1,C_{p^2}}}\myuline{\spectra}_{(q)}
    \]
    exhibits the codomain as an $\NN'$-mode if and only if $\NN' \subset \NN$, for $\NN$ the transfer system 
    \[\begin{tikzcd}[ampersand replacement=\&]
	{\NN\colon} \& {[C_{p^2}/e]} \& {[C_{p^2}/C_p]} \& {[C_{p^2}/C_{p^2}]}.
	\arrow[from=1-2, to=1-3]
	\arrow[curve={height=-16pt}, from=1-2, to=1-4]
\end{tikzcd}\]
    Indeed the $C_{p^2}$-dimension function $\prn{k+1,k,k+1}$  is realisable by a finite localisation under \cite[Thm.~1.3]{Barthel}, so it suffices to note that $\Phi^{J} \circ \mathrm{Nm}^{H}_{K}$ takes $\XX$-acyclic compact $K$-spectra to $L^f_{\ell(\XX)^J}$-acyclic spectra precisely when $K \hookrightarrow H \hookleftarrow J$ is not $C_{p} \hookrightarrow C_{p^2} \hookleftarrow C_p$.
\end{example}

\appendix

\section{Normed straightening and unstraightening}\label{sec:straightening_unstraightening}
For the duration of this section, we will freely use the language of algebraic patterns and fibrous patterns in the sense of  \cite{bhs25};
in particular, we will freely use the notion of \mdef{Segal fibration} to an algebraic pattern, i.e. a fibrous pattern which is cocartesian, and we will freely use the pattern structure on fibrous patterns. 
We will also refer to fibrous patterns over the obvious pattern structure on $\SFTN{\TT}{\NN}$ as \mdef{$\NN$-operads};
see \cite[\S~I.4,\S~II.1]{nat-thesis}.

\begin{notation}
    We write $\prn{\Cat[\baseCat]_{\NN-\otimes}}^{\cocart}_{/\CC} \subset \prn{\Cat[\baseCat]_{\NN-\otimes}}_{/\CC}$ for the (non-full) subcategory of $\NN$-normed cocartesian fibrations and functors preserving cocartesian lifts in the sense of \Cref{def:norm-cocart-fib}.
\end{notation}
\begin{theorem} \label{thm:normed-straightening}
    Let $\CC$ be an $\NN$-normed $\baseCat$-category. 
    There is an equivalence
    \[ 
        \mathrm{St}\colon \prn{\Cat[\baseCat]_{\NN-\otimes}}^{\cocart}_{/\CC} \xrightarrow{\;\;\;\;\; \sim \;\;\;\;\;} \Fun^{\mathrm{lax}-\NN-\otimes}(\CC,\GCat[\times]),
    \]
    natural in lax $\NN$-normed $\baseCat$-functors in $\CC$, such that 
    \begin{itemize}
        \item on underlying functors, $\mathrm{St}$ is given by the parametrised straightening, and
        \item for $\DD \to \CC$ an $\NN$-normed cocartesian fibration, $w \colon W \to V$ an $\NN$-admissible map of orbit-finite $\TT$-\animae{}, and $c \in \CC^W$, the lax structure map on the straightening $F \colon \CC \to \GCat$
        \[ 
            \ell_{w,c}\colon w_\ast F(c) \longrightarrow F(w_{\otimes}c) 
        \]
        has $W$-fixed points given by by restriction of the $w$-norm functor $w_{\otimes} \colon  \DD^W \to  \DD^V$ to the fibres over $c$ and $w_{\otimes} c$, respectively.
    \end{itemize}
\end{theorem}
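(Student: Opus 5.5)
We reduce the statement to the straightening theorem for Segal fibrations over an algebraic pattern, applied to the total category of $\CC$. Write $p\colon \int\CC\to\SFTN{\TT}{\NN}$ for the cocartesian unstraightening of $\CC$. With the pattern structure induced from $\SFTN{\TT}{\NN}$ (inert maps the backwards maps, elementary objects the orbits), $\int\CC$ is a Segal fibration. The target $\Fun^{\mathrm{lax}-\NN-\otimes}(\CC,\GCat[\times])$ is by definition the category of functors $\int\CC\to\int\GCat[\times]$ over $\SFTN{\TT}{\NN}$ preserving cocartesian lifts of backwards maps.

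First, use \Cref{prop:cart} to identify normed algebras in the cartesian structure $\cofree(\Cat)^\times$ with product-preserving functors into $\Cat$. Applying this fibrewise over $\int\CC$, a lax $\NN$-normed functor $\CC\to\GCat[\times]$ becomes a functor $\int\CC\to\Cat$ satisfying the Segal condition relative to the pattern $\int\CC$. Concretely, its value on $c\in\CC^V$ lying over $V=\coprod V_i$ is the product of its values on the restrictions of $c$ to the orbits $V_i$. The naturality of $F$ in \Cref{prop:cart} makes this identification natural in lax $\NN$-normed functors.

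Second, invoke the straightening theorem for Segal fibrations from \cite{bhs25}: cocartesian fibrations $E\to\int\CC$ whose associated functor is Segal correspond exactly to fibrous patterns over $\int\CC$ that are cocartesian fibrations. Composing $E\to\int\CC\to\SFTN{\TT}{\NN}$ gives a Segal fibration, i.e.\ an $\NN$-normed $\baseCat$-category $\DD$ with an $\NN$-normed functor to $\CC$. It remains to show that the maps $E\to\int\CC$ arising this way are precisely the $\NN$-normed cocartesian fibrations of \Cref{def:norm-cocart-fib}. Fibrewise cocartesianness over each $\CC^V$ is the restriction of cocartesianness of $E\to\int\CC$ to the fibre over $V$. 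Closure of cocartesian arrows under restrictions and norms is the statement that edges in $E$ are cocartesian over $\int\CC$ exactly when they are cocartesian over $\SFTN{\TT}{\NN}$ composed with fibrewise cocartesian ones, by the composition property of cocartesian edges. I expect this matching of the two notions to be the main obstacle. The hard direction is showing that a map $\DD\to\CC$ whose fibrewise cocartesian edges are closed under norms yields a cocartesian fibration on total categories. For this I would use the criterion of \cite[Lem.~2.4.2.4]{lur09} (locally cocartesian plus closure of locally cocartesian edges under composition) and factor each edge of $\int\CC$ as a $p$-cocartesian edge followed by a fibrewise one.

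Third, check the two bullet points and naturality. On the fibre over $\TT\op\subset\SFTN{\TT}{\NN}$ the construction restricts to ordinary parametrised straightening, which gives the first bullet. For the second, the lax structure map $\ell_{w,c}$ is the straightened transport along the cocartesian edge $c\to w_\otimes c$ of $\int\CC$ over the forwards map $w$. Its source has been identified, via the Segal condition, with $F$ at the product-decomposition, which is $w_*F(c)$ evaluated at $W$. Unwinding, the $W$-component is the cocartesian pushforward in $\int\DD$ over $w$ restricted to the fibres, namely $w_\otimes$ restricted from $\DD^W_c$ to $\DD^V_{w_\otimes c}$. Naturality in $\CC$ follows because both sides are compatible with pullback along lax $\NN$-normed functors $\CC'\to\CC$: the fibrous-pattern straightening is natural in maps of patterns, and these correspond to functors of total categories preserving inert cocartesian lifts.
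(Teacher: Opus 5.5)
Your architecture is the same as the paper's. You identify lax $\NN$-normed functors $\CC\to\GCat[\times]$ with Segal functors $\int\CC\to\Cat$, straighten over $\int\CC$, and then determine which cocartesian fibrations $E\to\int\CC$ have Segal straightening. One caveat on the first step: \Cref{prop:cart} as stated only covers normed algebras, i.e.\ the terminal source. What you actually need, and what the paper cites, is the general Segal-object description of lax functors out of $\CC$ from \cite[Thm.~II.2.1, Thm.~I.4.1, Prop.~I.3.4]{nat-thesis}.

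You genuinely diverge in the key step: identifying $\NN$-normed cocartesian fibrations with $\NN$-normed functors that are cocartesian on total categories (the paper's \Cref{lem:two-cocarts}). The paper proves a general criterion, \Cref{lem:fact_cocart}, for cocartesianness relative to a factorisation system on the base. It obtains this from the Gray-fibration results of \cite{hhln23a}, noting that their adequacy hypothesis is unused, and applies it to the factorisation of arrows of $\int\CC$ into cocartesian-over-$\SFTN{\TT}{\NN}$ and fibrewise parts.

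You instead use Lurie's criterion that a locally cocartesian fibration is cocartesian if and only if locally cocartesian edges compose, and this works. Take a lift $\tilde b\circ\tilde a$ with $\tilde a$ cocartesian over $\SFTN{\TT}{\NN}$ and $\tilde b$ fibrewise cocartesian.
\begin{itemize}
\item $\tilde a$ is honestly $\pi$-cocartesian: its image is cocartesian because $\DD\to\CC$ is $\NN$-normed, so cancellation for cocartesian edges applies.
\item $\tilde b$ is locally $\pi$-cocartesian, since $\DD^V\to\CC^V$ is the pullback of $\pi$ to a fibre.
\item A locally cocartesian edge precomposed with a cocartesian one is locally cocartesian, so $\tilde b\circ\tilde a$ is locally $\pi$-cocartesian.
\end{itemize}
Closure under composition follows from the interchange $\tilde a_2\circ\tilde b_1\simeq s_!(\tilde b_1)\circ\tilde a_2'$. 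Here $s_!$ is a restriction followed by a norm, so $s_!(\tilde b_1)$ is fibrewise cocartesian by hypothesis, and the composite is again of the same form.

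Your route is more elementary and self-contained, since it does not depend on unstated generality in \cite{hhln23a}. The paper's route yields a reusable lemma.

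One step you leave implicit. When passing from an $\NN$-normed cocartesian fibration $\DD\to\CC$ to the straightening side, you check only that $\int\DD\to\int\CC$ is cocartesian. You also need it to be fibrous over $\int\CC$, i.e.\ that its straightening is Segal. The paper gets this from cancellation for fibrous patterns \cite[Prop.~4.1.16, Cor.~3.3.24]{bhs25}. In your setting it is immediate: the fibre of $\DD^V\to\CC^V$ over $c$ is the product of the fibres of $\DD^{V_i}\to\CC^{V_i}$ over the restrictions of $c$ to the orbits $V_i$. You should still say it.
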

We will need the following.
\begin{lemma}\label{lem:fact_cocart}
    Suppose $\pi\colon \A \rightarrow \B$ is a functor and $\B^L, \B^R \subset\B$ is a factorisation system
    Then, $\pi$ is a cocartesian fibration if and only if the following conditions hold:
    \begin{enumerate}
        \item $\pi$ has cocartesian lifts over $\B^L$, and
        \item The pullback $\pi|_{\B^R}$ of $\pi$ along $\B^R \to \B$ is a cocartesian fibration.
        \item for all commutative square in $\A$
        \[ \begin{tikzcd}
            A \ar[r] \ar[d] & B \ar[d] \\
            C \ar[r] & D
        \end{tikzcd} \] where the horizontal arrows are cocartesian lifts over $\B^L$ and the vertical arrows lie over $\B^R$: if the right arrow is $\pi|{_{\B^R}}$-cocartesian, then so is the left one.
    \end{enumerate}
\end{lemma}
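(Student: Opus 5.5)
The forward direction is immediate. If $\pi$ is a cocartesian fibration, then (1) holds trivially. For (2), note that a morphism of $\A$ over $\B^R$ which is $\pi$-cocartesian is also $\pi|_{\B^R}$-cocartesian. The reason is that $\B^R\subset\B$ is a wide subcategory and is closed under right cancellation (if $gf$ and $f$ lie in $\B^R$ then so does $g$), which is a standard property of the right class of a factorisation system; so the lifting problems defining $\pi|_{\B^R}$-cocartesianness are among those defining $\pi$-cocartesianness, after restricting mapping anim\ae{} to the components lying over $\B^R$. For (3), a right arrow that is $\pi|_{\B^R}$-cocartesian is $\pi$-cocartesian by the converse argument below, and then the left arrow is $\pi$-cocartesian by left cancellation in the square.

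Both directions rest on the following key claim: an arrow $e$ of $\A$ over $\B^R$ is $\pi$-cocartesian if and only if it is $\pi|_{\B^R}$-cocartesian and condition (3) holds for squares having $e$ as their left vertical edge. I would prove this by testing cocartesianness against morphisms $e'\colon C\to X$ over an arbitrary $b\colon \pi C\to\pi X$. Factor the composite $\pi(e')\circ\ldots$ through $\B^L$ followed by $\B^R$, and use (1) to push forward along the $\B^L$-part by a cocartesian lift. By uniqueness of factorisations, this reduces the mapping-anima condition to one over $\B^R$, which (3) and the $\B^R$-cocartesianness of the resulting right edge then handle.

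For the converse, given $A\in\A$ and $f\colon\pi A\to B$, factor $f=r\circ l$ with $l\in\B^L$ and $r\in\B^R$. Lift $l$ cocartesianly using (1), say $A\to A'$, and then lift $r$ $\pi|_{\B^R}$-cocartesianly using (2), say $A'\to A''$. The composite is $\pi$-cocartesian once we know that $A'\to A''$ is $\pi$-cocartesian, since composites of cocartesian arrows are cocartesian. So everything reduces to the key claim.

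I would prove the key claim via the mapping-anima criterion (HTT Prop.~2.4.4.3): $e\colon A\to C$ is $\pi$-cocartesian iff $\Map(C,X)\to\Map(A,X)\times_{\Map(\pi A,\pi X)}\Map(\pi C,\pi X)$ is an equivalence for all $X$. Take a point over $g\colon \pi C\to\pi X$, factor $g=r'l'$, and push $C$ cocartesianly along $l'$ to get $C\to C'$. Orthogonality of $l'$ against $\B^R$, together with the essentially unique factorisation of $g\circ\pi(e)$, produces a commuting square in $\B$ with horizontal edges in $\B^L$ and vertical edges in $\B^R$; lift it to $\A$ using the cocartesian lifts from (1). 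Condition (3) says the $\B^R$-edge out of $C'$ is cocartesian for $\pi|_{\B^R}$ whenever $e$ is. Combining the equivalence $\Map(C,X)_g\simeq\Map(C',X)_{r'}$ (by $l'$-cocartesianness) with the $\pi|_{\B^R}$-criterion over $r'$ gives the required equivalence on fibres.

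I expect the main obstacle to be this fibrewise bookkeeping: identifying the fibres of the map above with the $\B^R$-restricted fibres after pushforward along $\B^L$. It is coherent only because factorisation anim\ae{} are contractible. I would first reduce to checking equivalences on fibres over each $g\in\Map(\pi C,\pi X)$ and then invoke contractibility of the factorisation space.
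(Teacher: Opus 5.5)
\textbf{The gap: your necessity argument for (3).} You note that $A\to B\to D\simeq A\to C\to D$ is cocartesian and that $C\to D$ is cocartesian, and then cancel to conclude that $A\to C$ is cocartesian. That cancellation is not valid. The dual of HTT Prop.~2.4.1.7 only lets you cancel a cocartesian \emph{first} factor: if $f$ is cocartesian, then $g$ is cocartesian iff $g\circ f$ is.

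In fact the implication as printed fails for honest cocartesian fibrations. Take $\B=\Delta^1\times\Delta^1$, with $\B^L$ the maps $(i,j)\to(i',j)$ and $\B^R$ the maps $(i,j)\to(i,j')$. Let $\pi$ be the unstraightening of the square of categories that is $\Delta^1$ at $(0,1)$ and $\ast$ elsewhere, with $(0,0)\to(0,1)$ sent to the inclusion of $0$. Then $\A$ is the poset with $a<b<d$ and $a<c_0<c_1<d$. Consider the square with edges $a\to b$, $a\to c_1$, $b\to d$, $c_1\to d$. Its horizontal edges are cocartesian over $\B^L$ and its right edge $b\to d$ is cocartesian. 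The left edge $a\to c_1$ is not cocartesian, since the cocartesian lift is $a\to c_0$.

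So (3) must be read with the implication reversed: if the \emph{left} arrow is $\pi|_{\B^R}$-cocartesian, then so is the right one. This is the version the paper's appeal to the dual of HTT 2.4.1.7 actually proves, in three steps:
\begin{enumerate}
\item $A\to C$ is $\pi$-cocartesian. Compare it with a $\pi$-cocartesian lift, which is $\pi|_{\B^R}$-cocartesian by pullback stability; no appeal to your key claim is needed, and that appeal would be circular here.
\item Hence $A\to C\to D\simeq A\to B\to D$ is cocartesian.
\item Cancelling the cocartesian first factor $A\to B$ shows that $B\to D$ is cocartesian.
\end{enumerate}

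Your converse already uses (3) in this reversed form (``the $\B^R$-edge is cocartesian whenever $e$ is''). So the proposal currently relies on two incompatible versions of (3). With the printed version, your key claim would collapse to ``every $\pi|_{\B^R}$-cocartesian edge is $\pi$-cocartesian'', because the square condition holds automatically once $e$ is $\pi|_{\B^R}$-cocartesian.

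Separately, the cancellation property you attribute to $\B^R$ in (2) is also backwards. From $g\circ f, f\in\B^R$ you cannot conclude $g\in\B^R$; the right class only satisfies $g\circ f, g\in\B^R\Rightarrow f\in\B^R$. It is not needed anyway, since cocartesian edges are stable under pullback along any functor.

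\textbf{The converse, with (3) reversed, is correct and differs from the paper's route.} The paper uses Haugseng--Hebestreit--Linskens--Nuiten: it lifts the factorisation system to $\A$ (cocartesian lifts of $\B^L$-maps, and maps over $\B^R$), recognises $\pi$ as an op-Gray fibration from (1) and (2), and quotes their Lemma~5.17. You instead verify the mapping-anima criterion for $e\colon A\to C$ one fibre at a time. Given $g=r'l'$, write $l'\circ\pi(e)=r''l''$, take cocartesian lifts $A\to P$ of $l''$ and $C\to C'$ of $l'$, and let $P\to C'$ be the induced edge over $r''$. Note that this edge ends at $C'$; it does not start there as you wrote. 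Then
\[
\Map(C,X)_g\simeq\Map(C',X)_{r'}\simeq\Map(P,X)_{r'r''}\simeq\Map(A,X)_{g\pi(e)} .
\]
The middle step uses that $P\to C'$ is $\pi|_{\B^R}$-cocartesian by the reversed (3), together with $r'\in\B^R$. Commutativity of the square identifies the composite with precomposition by $e$.

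This argument is elementary and self-contained. Since it handles one $g$ at a time, it needs only the existence of factorisations, not orthogonality or contractibility of factorisation spaces. The paper's route is shorter given the reference.
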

\begin{proof}
    For the \enquote{only if}-direction, the first two conditions follow from pullback stability of cocartesian fibrations, while the last condition follows from the dual of \cite[Prop.~2.4.17]{lur09}.
    
    For the \enquote{if}-direction, we use the dual of \cite[Prop.~4.4]{hhln23a} (which does not use the adequate condition stated in there), to see that there is a factorisation system on $\A$ where the right maps are the ones sent to $\B^R$ and the left maps are cocartesian lifts over $\B^L$. Conditions $1$ and $2$ precisely translate to the functor being the dual notion of an op-Gray fibration as defined in \cite[Def.~5.14]{hhln23a} (replacing cartesian by cocartesian).

    The proposition now precisely translates to \cite[Lem.~5.17]{hhln23a}. The result therein is stated for adequate factorisation system but the adequate condition is not used in the proof.
\end{proof}

This allows us to prove the following.
\begin{lemma}\label{lem:two-cocarts}
    An $\NN$-normed $\baseCat$-functor $\pi\colon \DD \rightarrow \CC$ is an $\NN$-normed cocartesian fibration if and only if the unstraightening $\int \DD \rightarrow \int \CC$ is a (non-parametrised) cocartesian fibration.
\end{lemma}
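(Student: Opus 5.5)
The plan is to apply \Cref{lem:fact_cocart} to the unstraightening $\int \DD \to \int \CC$ over $\SFTN{\TT}{\NN}$. I would use the factorisation system on $\int \CC$ whose left class $\B^L$ consists of cocartesian lifts (along $\int\CC \to \SFTN{\TT}{\NN}$) of \emph{forward} morphisms, and whose right class $\B^R$ consists of morphisms lying over \emph{backward} morphisms. This should be a factorisation system because every span factors uniquely as a backward map followed by a forward map. Concretely, a morphism of $\int\CC$ factors uniquely as a backwards-over map followed by a cocartesian lift of a forward map; the fact that the usual ordering is reversed is harmless, since we may either take the dual factorisation or note that span composition lets us rewrite forward-then-backward as backward-then-forward via pullback. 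I would commit to the convention that makes \Cref{lem:fact_cocart} apply directly: $\B^L$ is the cocartesian lifts of forward maps, and $\B^R$ is everything over backward maps together with the fibre maps.

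For the ``if'' direction, suppose $\int\DD \to \int\CC$ is cocartesian. Restricting to a fibre over an orbit $V$ recovers $\DD^V \to \CC^V$, which is then a cocartesian fibration by pullback stability. A $\pi$-cocartesian arrow in $\DD^V$ remains cocartesian in $\int\DD$ (by \cite[Prop.~2.4.1.3]{lur09}-type arguments, using that the fibres over $\SFTN{\TT}{\NN}$ inject). Cocartesian arrows compose with and commute past cocartesian lifts of backward maps, which gives closure under restriction; the same reasoning with forward maps gives closure under norms. I expect the compatibility via the commutative squares to be exactly condition (3) of \Cref{lem:fact_cocart}, read in the ``only if'' direction of that lemma.

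For the ``only if'' direction, I would verify the three conditions in turn. Condition (1) holds because $\pi$ is $\NN$-normed, so cocartesian lifts of forward maps in $\int\CC$ lift to cocartesian lifts in $\int\DD$. For condition (2), I would observe that the pullback to $\B^R$ is the unstraightening over $(\FinTN[\TT]{})\op$ of the underlying $\baseCat$-functor, extended by products via the Segal condition. This is cocartesian exactly when it is fibrewise cocartesian with cocartesian arrows closed under restriction, which is the parametrised statement (cf. \Shaone{}{} and the standard criterion \cite[Prop.~2.4.2.11]{lur09}). Condition (3) then amounts to closure of fibrewise cocartesian arrows under norms: a square with horizontal forward-cocartesian lifts and a vertical fibre map is precisely $w_{\otimes}$ applied to the left map.

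The main obstacle I expect is setting up the factorisation system on $\int\CC$ correctly and identifying its classes. In particular, I need that $\B^R$-maps are cocartesian lifts of backward maps composed with fibre maps, so that condition (3) reduces cleanly to norm-closure rather than to a mixed condition involving restrictions.
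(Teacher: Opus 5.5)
Your ``if'' direction works as a direct argument: fibres are cocartesian by pullback, and two-out-of-three for cocartesian arrows applies because cocartesian lifts in $\int\DD$ are $\pi$-cocartesian. The ``only if'' direction, however, rests on a pair $(\B^L,\B^R)$ that is not a factorisation system on $\int\CC$, so \Cref{lem:fact_cocart} cannot be applied to it.

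With the convention of that lemma, every morphism would have to be a cocartesian lift of a forward map \emph{followed by} a map over a backward map. This already fails in the base. Every span $U\xleftarrow{f}W\xrightarrow{w}V$ is a forward map \emph{after} a backward one, and pullback only rewrites forward-then-backward as backward-then-forward, not conversely. For instance, with $\TT=\ast$ the span $\ast\leftarrow\ast\sqcup\ast\rightarrow\ast$ in $\Span(\mathbb{F})$ is not forward-then-backward, since $\ast\times_X\ast$ has at most one point.

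The other order fails upstairs. A morphism of $\int\CC$ over a forward map $w\colon W\to V$ is a map $w_{\otimes}c\to c'$ in $\CC^V$. Writing it as a map over a backward morphism followed by a cocartesian lift of $w$ forces it to be $w_{\otimes}$ of a map in $\CC^W$. That is not possible in general, e.g.\ if $c'$ is not in the essential image of $w_{\otimes}$.

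So the reversal is not harmless, and you are verifying (1)--(3) against a structure that does not exist. Your reading of (3) as pure norm-closure is also inconsistent with your own $\B^R$, which contains maps over nontrivial backward morphisms.

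The missing idea is to use the factorisation system of \cite[Prop.~2.1.2.5]{lur17} induced from the trivial one (all maps, equivalences) on $\SFTN{\TT}{\NN}$. Here $\B^L$ consists of \emph{all} cocartesian morphisms of $\int\CC\to\SFTN{\TT}{\NN}$, and $\B^R$ of the morphisms lying in fibres. The three conditions then read as follows.
\begin{enumerate}
\item Condition (1) is exactly $\NN$-normedness of $\pi$. A cocartesian lift in $\int\DD$ maps to a cocartesian lift in $\int\CC$, hence is $\pi$-cocartesian by \kerodon{01TK}.
\item Condition (2) holds because over fibre maps $\pi$ is $\coprod_W\prn{\DD^W\to\CC^W}$, with $\DD^W\to\CC^W\simeq\prod_i\prn{\DD^{U_i}\to\CC^{U_i}}$. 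So it is just fibrewise cocartesianness over orbits.
\item Condition (3) now has only fibre maps as vertical arrows. It says precisely that transport along cocartesian lifts, i.e.\ along the composites $w_{\otimes}f^*$, preserves fibrewise-cocartesian arrows. That is closure under restrictions and norms.
\end{enumerate}
This is the paper's argument. Restriction-closure moves from (2) into (3), and the mixed Beck--Chevalley-type condition you anticipated never arises. The span-induced system, with $\B^L$ the cocartesian lifts of \emph{backward} maps and $\B^R$ everything over forward maps, is also a genuine factorisation system. With it, though, (3) becomes a base-change condition and (2) needs its own analysis.
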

\begin{proof}
    Note that $\int \CC\in \Cat$ admits a factorisation system whose lefthand part is cocartesian arrows over $\SFTN{\TT}{\NN}$ and righthand parts are the arrows in fibres  under \cite[Prop.~2.1.2.5]{lur17}.

    The restriction of $\pi$ to the preimage of $\int \CC^R$ is given by the disjoint union of $\DD^W \to \CC^W$ over all orbit-finite $\TT$-\animae{} $W$. 
    We have $\CC^W \simeq \prod_{U_i \in \mathrm{Orbits}(W)} \CC^{U_i}$, naturally in $\CC$, so that the functor $\DD^W \to \CC^W$ identifies with a product of various evaluations of the original functor. It therefore is a cocartesian fibration if the original $\baseCat$-functor is a cocartesian fibration,
    whence  condition 2 of \Cref{lem:fact_cocart} with the underlying $\baseCat$-functor being cocartesian.
    Condition 1 of \Cref{lem:fact_cocart} is identified with $\NN$-normedness under \kerodon{01TK}.
    Unwinding definitions identifies our closure hypothesis with condition 3 of  \Cref{lem:fact_cocart}, yielding the claim.
\end{proof}

\begin{notation}
     We write $U\CC$ for the underlying $\TT$-category of an $\NN$-normed $\baseCat$-category $\CC$.
\end{notation}

\begin{proof}[Proof of \Cref{thm:normed-straightening}]
    We will temporarily denote cocartesian unstraightenings over $\baseCat^{\op}$ as $\int_{\baseCat}$, and leave unstraightening over $\SFTN{\TT}{\NN}$ undecorated.
    Consider the following commutative diagram:
\[
\adjustbox{max width=\textwidth}{
  \begin{tikzcd}[ampersand replacement=\&]
	\& P \& {\mathrm{Seg}_{\int \CC}\prn{\Cat}} \& {\Fun^{\mathrm{lax}-\NN-\otimes}\prn{\CC, \GCat[\times]}} \\
	\& {\prn{\Cat}^{\cocart}_{/\int \CC}} \& {\Fun\prn{\int \CC, \Cat}} \\
	{\prn{\Cat[\baseCat]}^{\cocart}_{/U\C}} \& {\prn{\Cat}^{\cocart}_{/\int_{\baseCat} U\CC}} \& {\Fun\prn{\int_{\baseCat} U \CC, \Cat}} \& {\Fun\prn{U\CC, \GCat}}
	\arrow[from=1-2, to=1-3, "\sim"]
	\arrow[hook, from=1-2, to=2-2]
	\arrow["\lrcorner"{anchor=center, pos=0.125}, draw=none, from=1-2, to=2-3]
	\arrow[hook', from=1-3, to=2-3]
	\arrow["\sim"', from=1-4, to=1-3]
	\arrow[from=1-4, to=3-4]
	\arrow["\sim", from=2-2, to=2-3]
	\arrow[from=2-2, to=3-2]
	\arrow[from=2-3, to=3-3]
	\arrow["\sim", from=3-1, to=3-2]
	\arrow["\sim", from=3-2, to=3-3]
	\arrow["\sim", from=3-3, to=3-4]
\end{tikzcd}}
\] 

where the upper right equivalence is \cite[Thm.~II.2.1,Thm.~I.4.1,Prop.~I.3.4]{nat-thesis}, the lower left by \cite[Rmk.~7.4]{Sha23}, and the lower right by \cite[Thm.~7.8]{bdgns16}.
We must show that the pullback full subcategory $P \subset \prn{\Cat}^{\cocart}_{/\int \CC}$ of cocartesian fibrations $\pi\colon \int \DD \rightarrow \int \CC$ whose unstraightening is a Segal  $\int \CC$-object corresponds precisely with the $\NN$-symmetric monodial cocartesian fibrations;
then, by examining the above diagram and noting that the bottom horizontal composite is (by definition) parametrised straightening, our equivalence will satisfy the first point.

In the language of algebraic patterns, we find ourself with a triangle
\[\begin{tikzcd}[ampersand replacement=\&,column sep=large,row sep=tiny]
	{\int \DD} \&\& {\int \CC} \\
	\& {\SFTN{\TT}{\NN}}
	\arrow["{\pi}", from=1-1, to=1-3]
	\arrow["{\pi_{\DD}}"', from=1-1, to=2-2]
	\arrow["{\pi_{\CC}}", from=1-3, to=2-2]
\end{tikzcd}\]
such that $\pi_{\CC}$ is a cocartesian fibrous pattern.
By cancellation for fibrous patterns \cite[Prop.~4.1.16, Cor.~3.3.24]{bhs25}, we find that $\pi$ is a cocartesian fibrous pattern (i.e. Segal fibration \cite[Obs.~4.1.10,Lem.~4.1.15]{bhs25}) if and only if the following conditions hold:
\begin{enumerate}[label={(\alph*)}]
    \item $\pi$ is a cocartesian fibration, and
    \item $\pi_{\DD}$ is fibrous pattern.
\end{enumerate}  
    But by composing cocartesian fibrations, condition (b) may be replaced with $\pi_{\DD} $ being a Segal fibration.
    This identifies $P$ with the $\NN$-normed $\TT$-functors $\DD \rightarrow \CC$ becoming cocartesian fibrations under $\int$;
    these are precisely the $\NN$-normed cocartesian fibrations by \Cref{lem:two-cocarts}.

    We are left with the second point.
    Recall that the ``Segal object'' embedding, considered with codomain all functors, factors as follows:\[
        \Fun^{\mathrm{lax-\NN-\otimes}}\prn{\CC,\cofree\prn{\B}^\times} \rightarrow \Fun\prn{\int \CC, \int \cofree\prn{\B}^\times} \xrightarrow{\text{fixed pts}} \Fun\prn{\int \CC, \B}.
    \]
    Under the first functor, the image of the active arrow $a_{w,c}\colon c \rightarrow w_{\otimes c}$  induces the $W$-arrow $\ell_{w,c}\colon w_* F(c) \rightarrow F\prn{w_{\otimes}  c}$ under the universal property of cocartesian arrows;
    this arrow is further taken to the fixed points $\ell_{w,c}^W\colon \prn{w_* F(c)}^W \rightarrow F(w_\otimes  c)^W$ under the second functor.
    Writing $\widetilde F$ for the Segal object associated with $F$, $\ell_{w,c}^W$ is identified with the cocartesian transport functor of $\mathrm{Un}_{\int \C}(\widetilde F)$ from the fibre over $c$ along $a_{w,c}$.

    By compositionality of cocartesian lifts, this is moreover restricted from cocartesian transport over $w$ of the composite functor $\int \DD \rightarrow \int \CC \rightarrow \SFTN{\TT}{\NN}$, i.e. the $w$-tensor functor $w_{\otimes}\colon \DD^W \rightarrow \DD^V$, as desired.
\end{proof}

We will use this for a crucial construction, powered by the following.
\begin{lemma} \label{lem:pullback-partially-lax}
    Fix a cospan of lax $\NN$-normed functors $\CC \xrightarrow{F} \EE \xleftarrow{G} \DD$ such that $F$ is an $\NN$-normed cocartesian fibration.
    This diagram admits a pullback $\NN$-normed  $\baseCat$-category $\CC \times_{\EE} \DD$, whose underlying $\baseCat$-category is $U\CC \times_{U\EE} U\DD$;
    moreover, the resulting lax $\NN$-normed $\baseCat$-functor $\CC \times_{\EE} \DD \rightarrow \DD$  is an $\NN$-normed cocartesian fibration.
    
    Let $(c,d, F(c) \simeq G(d))$ be a $W$-object in $\CC \times_{\EE} \DD$ and $w \colon W \to V$ an $\NN$-admissible map. The $\NN$-normed structure on $F$ and lax $\NN$-normed structure on $G$ yield an $\EE^V$-morphism  
    \[
    F(w_{\otimes }c) \simeq w_{\otimes} F(c) \simeq w_{\otimes} G(d) \to G(w_{\otimes}d).
    \]
    We denote its $F$-cocartesian lift with domain $w_{\otimes}c$ by $f\colon w_{\otimes} c \to \widetilde{w_{\otimes}c}$;
    then $f$ is the lax structure map of  $\CC \times_{\EE} \DD \rightarrow \CC$, and 
     the norm of $(c,d,F(c) \simeq G(d))$ is given by $(\widetilde{w_{\otimes}}c,w_{\otimes}d , F(\widetilde{w_{\otimes}}c)\simeq G(w_{\otimes}d))$. 
\end{lemma}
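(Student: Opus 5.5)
The plan is to reduce the statement to \Cref{thm:normed-straightening}, turning the pullback problem into one about lax $\NN$-normed functors valued in $\GCat[\times]$ (or its large variant $\wGCat[\times]$, which we treat identically).

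\textbf{Step 1: straighten $F$.} Since $F\colon \CC \to \EE$ is an $\NN$-normed cocartesian fibration, \Cref{thm:normed-straightening} gives its straightening, a lax $\NN$-normed functor $\Phi\colon \EE \to \GCat[\times]$. On underlying $\baseCat$-categories $\Phi$ is the parametrised straightening of $UF$. Its lax structure maps $w_*\Phi(e) \to \Phi(w_\otimes e)$ are, on $W$-points, the restrictions of $w_\otimes^{\CC}$ to the fibres.

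\textbf{Step 2: pull back along $G$.} Form the composite $\Phi \circ G\colon \DD \to \GCat[\times]$. It is lax $\NN$-normed, since composites of lax normed functors are lax normed; on unstraightenings this is just composition of functors over $\SFTN{\TT}{\NN}$ that preserve cocartesian lifts of backwards maps. Applying the inverse of $\mathrm{St}$ over $\DD$ produces an $\NN$-normed cocartesian fibration $P \to \DD$. Naturality of $\mathrm{St}$ in lax $\NN$-normed functors, in the base variable, identifies $P$ with the base change of $F$ along $G$.

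\textbf{Step 3: verify the pullback property.} On underlying $\baseCat$-categories, $UP \simeq U\CC \times_{U\EE} U\DD$, because ordinary parametrised unstraightening commutes with pullback along $UG$. For the universal property in lax $\NN$-normed functors, we argue on total categories over $\SFTN{\TT}{\NN}$:
\begin{itemize}
\item By \Cref{lem:two-cocarts}, $\int F$ is a cocartesian fibration.
\item Hence the pullback $\int \CC \times_{\int \EE} \int \DD$ exists in $\Cat$ and is a cocartesian fibration over $\int \DD$.
\item This pullback agrees with $\int P$.
\item Lax normed functors into it are exactly pairs of lax normed functors into $\CC$ and $\DD$ agreeing over $\EE$, since the condition of preserving cocartesian lifts of backwards maps is checked componentwise.
\end{itemize}

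\textbf{Step 4: compute the norm.} The cocartesian transport of $\int P \to \SFTN{\TT}{\NN}$ over a forward map $w$ is computed in two stages. First transport in $\int \DD$, giving $d \mapsto w_\otimes d$. Then apply the $\int F$-cocartesian lift along the comparison map $F(w_\otimes c)\simeq w_\otimes G(d) \to G(w_\otimes d)$ coming from the lax structure of $G$. This yields the object $(\widetilde{w_\otimes c}, w_\otimes d)$ together with the lax structure map $f$ into the $\CC$-component, as stated. The $\DD$-projection preserves the norm on the nose, so $P\to\DD$ is an honest $\NN$-normed cocartesian fibration. Closure of the cocartesian arrows under norms is inherited from $F$.

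\textbf{Main obstacle.} The delicate point is Step 3: showing that the total-category pullback is again a Segal fibration over $\SFTN{\TT}{\NN}$, i.e.\ that it is genuinely $\NN$-normed and not merely an operad. The plan is to use the cancellation argument already used in the proof of \Cref{thm:normed-straightening}: a cocartesian fibration over the Segal fibration $\int \DD$ is itself a Segal fibration.
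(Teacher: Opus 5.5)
Your Steps 3 and 4 are in substance the paper's proof. The paper forms the pullback in $\NN$-operads (i.e.\ of total categories over $\SFTN{\TT}{\NN}$) and uses pullback-stability of cocartesian fibrations to see that $\int(\CC\times_{\EE}\DD)\to\int\DD$ is cocartesian. It concludes via \Cref{lem:two-cocarts} that the pullback is $\NN$-normed and the projection an $\NN$-normed cocartesian fibration. It then reads off $\widetilde{w_\otimes c}$ and $f$ by unwinding cocartesian lifts in a pullback, exactly as in your Step 4.

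Your Steps 1--2 are a heavier route to the one nontrivial point, the Segal condition on the pullback. If you keep them, justify your third bullet by ordinary straightening. Under the dictionary in the proof of \Cref{thm:normed-straightening}, the Segal object attached to $\Phi\circ G$ is the ordinary straightening of $\int F$ precomposed with $\int G$, and its unstraightening is $\int\CC\times_{\int\EE}\int\DD$. Do not justify it by ``naturality of $\mathrm{St}$ in the base''. On the source side, that naturality is implemented by base change of $\NN$-normed cocartesian fibrations along lax $\NN$-normed functors, which is exactly what this lemma constructs. Once the third bullet is justified the honest way, $P$ being $\NN$-normed already disposes of your ``main obstacle''.

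The resolution you propose for that obstacle is false as stated: a cocartesian fibration over the Segal fibration $\int\DD$ need not be a Segal fibration over $\SFTN{\TT}{\NN}$. Take $\DD$ terminal, so $\int\DD\simeq\SFTN{\TT}{\NN}$. Any cocartesian fibration whose straightening $\SFTN{\TT}{\NN}\to\Cat$ fails to preserve products is then a counterexample. The cancellation in the proof of \Cref{thm:normed-straightening} needs, in addition, that the composite to $\SFTN{\TT}{\NN}$ be a fibrous pattern; this is why the paper takes the pullback in $\NN$-operads.

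Here that input is cheap. Since $F$ and $G$ preserve cocartesian lifts of backwards maps, they commute with the restrictions along the summand inclusions $U_i\hookrightarrow W$ that give the Segal decompositions. Hence the fibre over $W=\coprod_i U_i$ is $\CC^W\times_{\EE^W}\DD^W\simeq\prod_i \CC^{U_i}\times_{\EE^{U_i}}\DD^{U_i}$. Together with cocartesianness of the composite $\int(\CC\times_{\EE}\DD)\to\int\DD\to\SFTN{\TT}{\NN}$, this gives a Segal fibration, and the rest of your argument goes through.
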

\begin{proof}
    Consider the pullback $\CC \times_{\EE} \DD$ in $\NN$-operads. 
    Pullback-stability of cocartesian fibrations \kerodon{01UF} immediately implies that $\CC \times_{\EE} \DD \rightarrow \DD$ unstraightens to a cocartesian fibration, so that the pullback is $\NN$-normed and the functor an $\NN$-normed cocartesian fibration.
    The remaining claims follow straightforwardly by explicitly unwinding \kerodon{01UF}.
\end{proof}

\begin{construction} \label{con:free-normed-cocart}
    Let $\DD \to \CC$ be an oplax $\NN$-normed $\baseCat$-functor. 
    We will construct a lax $\NN$-normed structure on the $\baseCat$-functor
    \begin{align*}
        \CC & \longrightarrow \GCat \\
        c & \longmapsto \DD \times_{\CC} \overslice{\CC}{c}
    \end{align*}
    such that, for an $\NN$-admissible map $w \colon W \to V$ and a $W$-object $(d,F(d) \to c)$ in the opposite-unstraightening $\udl{\mathcal{P}}^W$, the norm is given by
    \[ w_{\otimes}\left( d,F(d) \to c \right) = \left( w_\otimes d, F(w_{\otimes}(d)) \to w_{\otimes} F(d) \to c)) \right) \]
    where the composite is obtained by composing the structure map from the oplax structure on $F$ with $w_{\otimes}$ applied to the original morphism.
    
    To do so, consider the diagram of $\NN$-normed $\baseCat$-categories
    \[ \begin{tikzcd}[row sep=scriptsize]
        \udl{\mathcal{P}} \ar[r] \ar[d] \arrow[rd, "\lrcorner" very near start, phantom] & \Arrow(\CC{\op}) \ar[d,"{t}"] \ar[r,"{s}"] & \CC{\op} \\
        \DD{\op} \ar[r,"{F{\op}}"] & \CC{\op}
    \end{tikzcd} \]
    where the square is the pullback provided in \Cref{lem:pullback-partially-lax}.    
    We claim that the top composite $\PP \to \Arrow(\CC^{\op}) \to \CC^{\op}$ lax $\NN$-normed functor is in fact $\NN$-normed. To see this, we use the description of the lax structure of $\PP \to \Arrow(\CC^{\op})$ from \Cref{lem:pullback-partially-lax}. The lax structure maps in $\Arrow(\CC^{\op})$ are $t$-cocartesian lifts. These lift are mapped to equivalences under the $\NN$-normed functor $s$, so that the composite functor is $\NN$-normed.
    
    Next, we claim that the opposite functor $\PP{\op} \to \CC$ is an $\NN$-normed cocartesian fibration. Fibrewise this is true by \cite[Thm.~1.2]{GepnerHaugsengNikolaus2017} and the fibrewise cocartesian arrows are those sent to equivalences by the left arrow in the diagram. As the left arrow is $\NN$-normed, we conclude that cocartesian arrows are indeed closed under restriction and norms, as desired.
    The description of the lax $\NN$-normed structure follows from combining the descriptions in \Cref{thm:normed-straightening} and \Cref{lem:pullback-partially-lax}.
\end{construction}

\section{Siftedness, distributivity, and Beck-Chevalley conditions}\label{sec:unicity}
\stoptocwriting
We will now compare our notion of distributivity to that found in \cite{elmantoHaugseng}, and hence \cite{llp25}.
This is founded upon the following Beck-Chevalley conditions.
\begin{construction}
Given a pair of arrows $A \xrightarrow{m} X \xrightarrow{x} Y$ in $\GSpaces[\TT]$ such that $x$ is $\NN$-admissible, we acquire an essentially unique \mdef{distributivity diagram}
\begin{equation}\label{eq:distributivity diagram}
\begin{tikzcd}[ampersand replacement=\&, row sep=tiny, column sep=large]
		\&\& {R} \& {Q} \\
	{\Delta_{m,x} } \& A \\
	\&\& X \& Y
	\arrow["{h}", from=1-3, to=1-4]
	\arrow["\epsilon"', from=1-3, to=2-2]
	\arrow["{\psi'}"', from=1-3, to=3-3]
	\arrow["\lrcorner"{anchor=center, pos=0.125}, draw=none, from=1-3, to=3-4]
	\arrow["\psi = x_* m", from=1-4, to=3-4]
	\arrow["\coloneqq"{description}, draw=none, from=2-1, to=2-2]
	\arrow["m"', from=2-2, to=3-3]
	\arrow["x", from=3-3, to=3-4]
\end{tikzcd}
\end{equation}
exhibiting $\psi$ as image of $m$ under the right adjoint of $x^\ast$, i.e. inducing an equivalence
\[
    \Map_{/Y}\prn{\varphi, \psi} \longrightarrow \Map_{/X}\prn{x^* \varphi, \psi^{\prime}} \xrightarrow{\;\;\; \epsilon^* \;\;\;} \Map_{/X}\prn{x^* \varphi, m}
\]
for all $\varphi\colon P \rightarrow Y$.
Given $\CC$ an $\NN$-normed $\baseCat$-category, we acquire a commutative diagram
\begin{equation}\label{eq:adjointable}
\begin{tikzcd}[ampersand replacement=\&, column sep=tiny]
	\& {\CC^Q} \&\&\& {\CC^{Y}} \\
	{\Psi_{m,x} \coloneqq} \& {\CC^{R}} \\
	\& {\CC^{A}} \&\&\& {\CC^{X}}
	\arrow["{{\psi^*}}"', from=1-5, to=1-2]
	\arrow["{{h_{\otimes}}}", from=2-2, to=1-2]
	\arrow["{{\epsilon^*}}", from=3-2, to=2-2]
	\arrow["{{x_{\otimes}}}"', from=3-5, to=1-5]
	\arrow["{{m^*}}", from=3-5, to=3-2]
\end{tikzcd}
\end{equation}
Given left adjoints $\psi_! \dashv \psi^*$ and $m_! \dashv m^*$, we acquire a Beck-Chevalley transformation
\[
    \beta_{m,x}\colon \psi_!h_{\otimes}\epsilon^* \longrightarrow x_{\otimes} m_!.
\]
\end{construction}
We may now give Elmanto-Haugseng's notion of distributivity. 
\begin{definition}\label{def:dist-EH}
    Given an $\NN$-compatible collection of $\TT$-anima $\udl{\mathcal{H}} \subset \GSpaces$,
    an $\NN$-normed $\TT$-category $\CC$ is \mdef{$\udl{\mathcal{H}}$-distributive in the sense of Elmanto-Haugseng} if 
    \begin{enumerate}[label =(\alph*)]
        \item \label{cond:fibrewise-sifted} $\CC$ fibrewise admits sifted colimits and its restriction and $\NN$-tensor functors preserve them;
        \item \label{cond:M-cocomplete} $\CC$ admits $\udl{\mathcal{H}}$-colimits; and
        \item \label{cond:BC} for every distributivity diagram $\Delta_{m,x}$ such that $m$ has fibres lying in $\udl{\mathcal{H}}$, the Beck-Chevalley transformation $\beta_{m,x}$ is an equivalence.
    \end{enumerate}
    If $\udl{\mathcal{H}} = \GSpaces[\TT]$, we simply say that $\CC$ is \mdef{distributive in the sense of Elmanto-Haugseng}.
\end{definition}

\begin{remark}
    Condition (b) can be unwound to \textit{existence} of left adjoints to pullback along the maps with fibres in $\mathcal{H}$ and the Beck-Chevalley conditions for restrictions thereof;
    hence it is equivalent to conditions (2) and (3) of \llp{Def.}{3.12}.
    In particular, this agrees with the stipulation that $\CC$ is $\udl{\mathcal{H}}$-distributive in the sense of \cite{llp25}.
    Moreover, in the case $\NN = \TT$, i.e. $\TT$ is an atomic orbital category, 
    it follows by unwinding definitions that a $\TT$-normed $\baseCat$-category is $\TT$-distributive in the sense of \cite{llp25} if and only if it is distributive in the sense of Elmanto-Haugseng.
\end{remark}

Our main result about this notion is the following, which compares it to ours.
To state it,  we refer to images of sifted categories under the constant-value functor $\infl{}{\baseCat}\colon \Cat \rightarrow \GCat[\baseCat]$ as \mdef{inflated-sifted categories}, and write $\infl{}{\TT} \Cat_{\mathrm{sifted}} \subset \GCat$ for the collection of inflated-sifted categories
\begin{proposition}\label{prop:two-distributivities}
    Suppose $\CC$ is an $\NN$-normed $\TT$-category and $\udl{\mathcal{H}} \subset \GSpaces[\TT]$ an $\NN$-compatible collection of $\baseCat$-anima.
    Then, $\CC$ is $\udl{\mathcal{H}} \cup \infl{}{\TT}\Cat_{\mathrm{sifted}}$-distributive if and only if it is $\udl{\mathcal{H}}$-distributive in the sense of Elmanto-Haugseng;
    moreover, $\CC$ is distributive if and only if it is distributive in the sense of Elmanto-Haugseng.
\end{proposition}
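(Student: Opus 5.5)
Our approach is to split the paper's notion of $\KK$-distributivity, for $\KK = \udl{\mathcal{H}} \cup \infl{}{\TT}\Cat_{\mathrm{sifted}}$, into two pieces: a sifted piece and an $\udl{\mathcal{H}}$-piece. We then match the sifted piece with condition \ref{cond:fibrewise-sifted} and the $\udl{\mathcal{H}}$-piece with conditions \ref{cond:M-cocomplete} and \ref{cond:BC}. Throughout we use the assembly-map reformulation of \Cref{rmk:assembly-map}. It reduces $w$-distributivity of $w_{\otimes}$ to the statement that, for every $u\colon U \to V$ and every $Z$-diagram $m\colon \udl{K}\to \overline{u}^*\CC$ with $\udl{K}\in\KK$, the map $\colim_{\overline{w}_*\udl{K}} \overline{w}_{\otimes} m \to \overline{w}_{\otimes}\colim_{\udl{K}} m$ is an equivalence. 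By restriction-stability it suffices to treat $V$ an orbit with $u = \id$. Since $\KK$ is closed under finite coproducts in each coordinate of $Z$, and a diagram in $\CC^Z \simeq \prod_i \CC^{Z_i}$ may be taken to be a product of diagrams, we can handle one coordinate at a time while holding the others constant.

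\textbf{Sifted piece.} For $\udl{K} = \infl{}{\TT} I$ with $I$ sifted, the indexed product $\overline{w}_*\udl{K}$ is fibrewise a finite power $I^n$ by \Cref{lem:fibrewise_product}. The diagonal $I \to I^n$ is cofinal by siftedness, and cofinality can be checked fibrewise by \cite[Thm.~6.7]{Sha23}. Hence the assembly map for $\infl{}{\TT} I$ reduces to the statement that $w_{\otimes}$ preserves $I$-colimits fibrewise. Inflated colimits are computed fibrewise by \Cref{lem:ind-fibrewise}, and existence of inflated $I$-colimits is the same as fibrewise existence plus preservation by restrictions. So distributivity over $\infl{}{\TT}\Cat_{\mathrm{sifted}}$ is equivalent to \ref{cond:fibrewise-sifted}. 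We record this equivalence as a lemma; the later "\Cref{prop:dist-lemmas}" references presumably point to it.

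\textbf{$\udl{\mathcal{H}}$-piece.} Here $\udl{\mathcal{H}}$-cocompleteness is the same as \ref{cond:M-cocomplete}. Since a $\TT$-anima $\udl{K}$ over $A$ is a map $m\colon A \to X$ with fibres in $\udl{\mathcal{H}}$, an $\udl{K}$-colimit of a constant-shaped diagram is $m_!$. The key identification is that the indexed product $x_*\udl{K}$ over $Y$ is exactly $\psi = x_*m\colon Q \to Y$ from the distributivity diagram \eqref{eq:distributivity diagram}. Moreover, the composite "evaluate the diagram along $x_*$, then apply $x_{\otimes}$" is $h_{\otimes}\epsilon^*$: the diagram $\udl{K}\to\CC$ is an object of $\CC^A$ pulled back along $\epsilon$ to $R$, and then normed along $h\colon R \to Q$, using the description of norms of cocones in \Cref{cons:catcocone} and the formula $(w_*\CC)^U$ from \Cref{lem:fibrewise_product}. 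Under this dictionary the assembly map $\beta_{F,m}$ of \Cref{rmk:assembly-map} becomes the Beck--Chevalley map $\beta_{m,x}\colon \psi_! h_{\otimes}\epsilon^* \to x_{\otimes} m_!$. So \ref{cond:BC} for all such $\Delta_{m,x}$ is equivalent to $\udl{\mathcal{H}}$-distributivity. A general $\udl{\mathcal{H}}$-diagram in a $\TT$-category is handled by the same manipulation, with $\udl{K}$ a $\TT$-anima fibred over the relevant orbit-finite base.

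\textbf{Main obstacle and the "moreover" clause.} The main obstacle is making the middle identification coherent rather than just objectwise: the cocone $(x_*\udl{K})^{\triangleright}\to x_*(\udl{K}^{\triangleright})$ has to be matched with the mate producing $\beta_{m,x}$. We would first prove it for $\CC=\cofree(\D)^\times$ via \Cref{prop:cart}, and then reduce the general case using that both sides are natural in $\NN$-normed functors. For the final clause, every $\TT$-category is an iterated colimit of $\TT$-anima, namely a geometric realisation of simplicial $\TT$-anima via the nerve, and geometric realisations are inflated-sifted. Hence full distributivity is equivalent to $\GSpaces \cup \infl{}{\TT}\Cat_{\mathrm{sifted}}$-distributivity, using \Cref{prop:dist-compose}-style closure of distributed cocones under colimit decompositions. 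Together with the first part this gives the claim with $\udl{\mathcal{H}}=\GSpaces$.
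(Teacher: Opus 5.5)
Your treatment of the main equivalence follows the paper. The sifted piece is the diagonal-cofinality argument of \Cref{prop:dist-lemmas}(1)--(2), and the $\udl{\mathcal{H}}$-piece is the identification, via \Cref{rmk:assembly-map}, of the assembly map with $\beta_{m,x}$.

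\textbf{The final clause has a genuine gap.} You assert that every $\TT$-category is the geometric realisation of its nerve $N_\bullet\udl{K}$, a simplicial $\TT$-anima. This is false. The inclusion $\GSpaces\subset\GCat$ has a right adjoint (the fibrewise core), so it preserves colimits. Hence the realisation of $N_\bullet\udl{K}$ in $\GCat$ is the fibrewise classifying anima $|\udl{K}|$, not $\udl{K}$; for example, $\udl{\Lambda}^2_0$ realises to $\udl{*}$. Consequently, no closure of $\TT$-anima under colimits of indexing $\TT$-categories ever leaves $\GSpaces$. Any closure principle that decomposes the \emph{shape} $\udl{K}$ as a colimit in $\GCat$ therefore cannot produce even pushouts.

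What you need instead is a decomposition of the \emph{colimit}. One option is the bar construction
$\colim_{\udl{K}}p\simeq\bigl|[n]\mapsto\colim_{N_n\udl{K}}p\circ\mathrm{ev}_0\bigr|$, which comes from the cofinal functor $\int_{\Delta\op}N_\bullet\udl{K}\to\udl{K}$, $([n],\sigma)\mapsto\sigma(0)$. You must then check that $w_*$ carries this to the analogous presentation of the $w_*\udl{K}$-colimit. This holds because $w_*$ is fibrewise a finite product, which preserves cofinal functors and cocartesian fibrations, and the diagonal of $\Delta\op$ into its finite powers is cofinal since $\Delta\op$ is sifted. With that in place, fibrewise preservation of geometric realisations together with $\GSpaces$-distributivity gives full distributivity. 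The paper instead extracts the required generation statement from \cite[Thm.~8.6]{shahPaperII}.

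\textbf{Two smaller points.} First, condition (c) of \Cref{def:dist-EH} quantifies over distributivity diagrams with an arbitrary $\TT$-anima $Y$. The assembly maps of \Cref{rmk:assembly-map} only concern orbit-finite targets, and your \enquote{restriction-stability} reduction is made on the paper's side only. So your dictionary yields (c) only for orbit-finite $Y$. You still need the base-change step of \Cref{lem:EH-distributivity-orbital}:
\begin{itemize}
\item pulling $\Delta_{m,x}$ back along $i\colon U\to Y$ gives $\Delta_{m_U,x_U}$ with $i^*\beta_{m,x}\simeq\beta_{m_U,x_U}j^*$;
\item this uses that the left adjoints $\psi_!$ satisfy base change, i.e.\ condition (b);
\item the Segal equivalence $\CC^Y\simeq\lim_{U\to Y}\CC^U$ then concludes.
\end{itemize}

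Second, your proposed route to coherence through $\cofree(\D)^\times$ is unnecessary, and it is unclear how naturality would reduce a general normed $\CC$ to that case. The identification is a direct unwinding. For $\udl{K}$ the $Z$-anima $A\to Z$, one has $\Fun_Z(\udl{K},\overline{u}^*\CC)\simeq\CC^A$, $\Fun_U(\overline{w}_*\udl{K},\overline{w}_*\overline{u}^*\CC)\simeq\CC^R$, and $\Fun_U(\overline{w}_*\udl{K},U^*\CC)\simeq\CC^Q$. Under these equivalences the square \eqref{eq:BC_pulled} becomes $\Psi_{m,\overline{w}}$ from \eqref{eq:adjointable}: the canonical map is $\epsilon^*$, postcomposition with the norm is $h_\otimes$, and $(\overline{w}_*\udl{K})^*$ is $\psi^*$. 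So the two mates agree.
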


\subsection{Sifted distributivity}
\label{subsec:sift}
In certain cases, we will find distributivity to be equivalent to preservation of certain parametrised colimits.
This requires a notion of parametrised siftedness:
\begin{definition}
    Given $w\colon W \rightarrow V$ a map of orbit-finite $\TT$-\animae{} and $\udl{K}$ a $V$-category, we say that $\udl{K}$ is \mdef{$w$-sifted} if the diagonal $V$-functor
    $\Delta^w\colon \udl{K} \longrightarrow w_* w^* \udl{K}$
    is cofinal.
    More generally, if $\NN \subset \TT$ is an orbital subcategory, we say that $\udl{K}$ is \mdef{$\NN$-sifted} if $U^* \udl{K}$ is $w$-sifted for all maps $U \rightarrow V$ and $\NN$-admissible maps $w\colon W \rightarrow U$.
\end{definition}
\begin{example} 
If $\NN = \TT$, these are the \textit{$\TT$-sifted $\TT$-categories} of \Shatwo{Def.}{8.14}.
\end{example}
The proof of \Shatwo{Thm.}{8.15}, for only $\NN$-admissible diagonals, shows the following.
\begin{proposition}\label{prop:sifted-cofinal}\label{ex:fibrewise-sifted}
    $\udl{K} \in \GCat[V]$ is $\NN$-sifted if and only if it is fibrewise sifted and, for all $\NN$-maps $w\colon W \rightarrow U$ and maps $U \rightarrow V$, the restriction functor $K^U \rightarrow K^W$ is cofinal.
\end{proposition}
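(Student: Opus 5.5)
We prove both directions of \Cref{prop:sifted-cofinal} by unwinding parametrised cofinality fibrewise, following the proof of \Shatwo{Thm.}{8.15} but only for $\NN$-admissible diagonals. By \cite[Thm.~6.7]{Sha23}, a $V$-functor is cofinal exactly when it is fibrewise cofinal, in the sense of the parametrised Quillen~A criterion: for every orbit $U \to V$ and every $U$-object $x$ of the target, the relevant parametrised slice is weakly contractible in each fibre. The whole argument therefore reduces to computing the diagonal $\Delta^w\colon U^*\udl{K} \to w_*w^*U^*\udl{K}$ on fibres.

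\textbf{Computing the diagonal on fibres.} Fix $U \to V$, an $\NN$-admissible $w\colon W \to U$, and an orbit $T \to U$. Base change as in \eqref{eqn:standard_basecat_pullback} gives $T\times_U W \simeq \coprod_{i=1}^n Z_i$, and hence
\[
\prn{w_*w^*\udl{K}}^T \simeq \prod_{i=1}^n \udl{K}^{Z_i}.
\]
Under this identification, $(\Delta^w)^T$ is the functor $\udl{K}^T \to \prod_i \udl{K}^{Z_i}$ whose components are the restrictions along the $\NN$-maps $Z_i \to T$.

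\textbf{Splitting the diagonal.} I would factor this functor as
\[
\udl{K}^T \xrightarrow{\;\;\Delta\;\;} \prod_{i=1}^n \udl{K}^T \xrightarrow{\;\;\prod_i \mathrm{res}\;\;} \prod_{i=1}^n \udl{K}^{Z_i}.
\]
\begin{itemize}
\item The first map, the ordinary $n$-fold diagonal, is cofinal for all $n$ exactly when $\udl{K}^T$ is sifted. The case $n=0$, coming from $W = \emptyset$, gives nonemptiness.
\item The second map is a product of functors. A finite product of cofinal functors is cofinal, since weak contractibility of the comma categories is preserved under products.
\end{itemize}
This proves the ``if'' direction: if the fibres are sifted and the restriction functors along $\NN$-maps are cofinal, then $(\Delta^w)^T$ is a composite of cofinal functors. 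The argument has to be run for every $T$ over every $U$. Restriction-stability of the hypotheses is built in, because $\NN$-maps pull back to $\NN$-admissible maps.

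\textbf{The converse.} Conversely, suppose $U^*\udl{K}$ is $w$-sifted for all $w$. There are two things to show.
\begin{itemize}
\item \emph{Fibrewise siftedness.} Take $w = \nabla\colon n\cdot U \to U$, the fold map, which is $\NN$-admissible since $\NN$ is wide. Then $(\Delta^w)^U$ is exactly the $n$-fold diagonal of $\udl{K}^U$, so its cofinality for all $n$ gives that $\udl{K}^U$ is sifted.
\item \emph{Cofinality of restriction.} Take an orbit $W$ and an $\NN$-map $w\colon W \to U$, and evaluate at $T = U$. The decomposition then contains the summand $Z_1 = W$ together with other summands $Z_2,\dots,Z_n$ from the double coset formula. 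Cofinality of $\udl{K}^U \to \udl{K}^W \times \prod_{i\ge 2} \udl{K}^{Z_i}$ has to be descended to cofinality of the single factor $\udl{K}^U \to \udl{K}^W$.
\end{itemize}

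\textbf{The main obstacle.} This descent is the step I expect to be hardest. My first attempt is to postcompose with the projection $p\colon \prod_i \udl{K}^{Z_i} \to \udl{K}^W$. The projection is cofinal, because a product projection is cofinal once the other factor is weakly contractible, and each $\udl{K}^{Z_i}$ is weakly contractible since it is sifted by the previous step. Writing $\mathrm{res}_W = p\circ\Delta^w$, the restriction $\udl{K}^U \to \udl{K}^W$ is then a composite of two cofinal functors, hence cofinal. This is a composite, not a cancellation, so no right-cancellation property of cofinal maps is needed, but the argument does depend on knowing $\udl{K}^{Z_i}$ is weakly contractible first. That is why I would establish fibrewise siftedness first, and only then treat the restriction functors.
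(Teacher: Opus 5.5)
Your proof is correct and is the argument the paper points to: Shah's proof of \Shatwo{Thm.}{8.15}, run only over $\NN$-admissible diagonals, with cofinality checked fibrewise via \cite[Thm.~6.7]{Sha23}. The only slip is in the converse, where your ``main obstacle'' does not exist: at the level $\mathrm{id}_U$ one has $U \times_U W \simeq W$, so $(w_*w^*U^*\udl{K})^U \simeq \udl{K}^W$ and $(\Delta^w)^U$ is literally the restriction $\udl{K}^U \to \udl{K}^W$ --- the extra double-coset summands only appear at other levels $T \to U$ (such as $T = W$), so your projection argument (which degenerates to the identity when $n = 1$) is harmless but unnecessary, as is doing fibrewise siftedness first.
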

\begin{example}\label{ex:infl_sift}
    Inflated sifted categories are $\NN$-sifted for all $\NN$ by \Cref{prop:sifted-cofinal}.
\end{example}
These satisfy the following pleasant property:
\begin{proposition}\label{prop:dist-lemmas}
    Let $w \colon W \to V$ be an $\NN$-admissible map of orbit-finite $\TT$-\animae{}, $\CC$ a $W$-category, $\DD$ a $V$-category, and $F\colon w_\ast \CC \rightarrow \DD$ a $w$-multifunctor.
  \begin{enumerate}[label=(\arabic*)]
    \item the $w$-multifunctor $F$ distributes inflated-sifted colimits if and only if the $V$-functor $F$ fibrewise preserves sifted colimits.
    \item \label{st:sifted-distribute} Suppose $\overline{m}\colon \udl{K}^{\triangleright} \rightarrow w_* \CC$ is a $V$-colimit cocone such that $\udl{K}$ is $w$-sifted.
      Then, the $V$-functor  $F$ preserves the colimit $\overline{m}$ if and only if $F$ distributes the mate $W$-functor $\widetilde p\colon w^* \udl{K}^{\triangleright} \rightarrow \CC$.
    \item \label{st:sifted-generate} Suppose $\KK\subset\GCat$ contains all inflated-sifted categories, $\udl{\mathcal{H}} \subset \KK$ is a collection of $\TT$-categories which generate $\KK$ under inflated-sifted colimits, and $F$ is $\udl{\mathcal{H}}$-distributive and preserves inflated-sifted colimits.
    Then, $F$ is $\KK$-distributive.
  \end{enumerate}
\end{proposition}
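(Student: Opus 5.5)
I would prove the three parts in order, with (2) as the engine, (1) as a special case of (2), and (3) a colimit-closure argument on top of (1). Throughout I use the reformulation of distributivity from \Cref{rmk:assembly-map}. That remark reduces everything to the case $u=\id$, after restricting along $u\colon U\to V$ and replacing $w$ by $\overline{w}$ as in \eqref{eqn:standard_basecat_pullback}. The hypotheses are restriction-stable: $w$-siftedness is built into $\NN$-siftedness, and inflated-sifted categories are $\NN$-sifted by \Cref{ex:infl_sift}. So it suffices to treat the top level.

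For (2), I would analyse the norm of the mate cocone. Let $\widetilde p\colon w^*\udl{K}^{\triangleright}\to\CC$ be the mate of $\overline m$. The cocone $w_*\widetilde p$ of \Cref{cons:catcocone} is
\[
(w_*w^*\udl{K})^{\triangleright}\xrightarrow{\canonical} w_*w^*(\udl{K}^{\triangleright})\longrightarrow w_*\CC .
\]
Precomposing with the diagonal $\Delta^w\colon\udl{K}\to w_*w^*\udl{K}$, extended to cone points, recovers $\overline m$ by the triangle identity for $w^*\dashv w_*$. Since $\Delta^w$ is cofinal, a cocone on $w_*w^*\udl{K}$ is a colimit cocone exactly when its restriction along $\Delta^w$ is. Applying this to $F\circ w_*\widetilde p$ shows that $F$ distributes $\widetilde p$ if and only if $F\circ\overline m$ is a colimit. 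That $\widetilde p$ is itself a $W$-colimit cocone, needed for it to count as a $\KK$-colimit cocone, should follow because $w^*$ preserves colimits and colimits in $w_*\CC$ are computed through the counit. Identifying the cone-point compatibility of $\canonical$ with $\Delta^w$ is the one bookkeeping step I would check carefully.

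For (1), take $\udl{K}=\infl{}{}\,I$ with $I$ sifted. Then $w^*\udl{K}$ is inflated-sifted and is $w$-sifted by \Cref{ex:infl_sift}. Every inflated-sifted $W$-diagram in $\CC$ is the mate of an inflated-sifted $V$-diagram in $w_*\CC$, namely its $w_*$-adjoint. Part (2) therefore identifies distributing inflated-sifted colimits with preserving inflated-sifted $V$-colimits. By \Cref{lem:ind-fibrewise}, inflated colimits are computed fibrewise, so this is the same as fibrewise preservation of sifted colimits.

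For (3), I fix the multifunctor and consider the full $\TT$-subcategory of $\KK$ consisting of those $\udl{K}$ such that $F$ distributes all $\udl{K}$-colimit cocones. It contains $\udl{\mathcal H}$ by hypothesis, so it suffices to show it is closed under inflated-sifted colimits. Suppose $\udl{K}=\colim_{i\in I}\udl{K}_i$ with $I$ sifted. Then $\colim_{\udl{K}}$ is computed as $\colim_I\colim_{\udl{K}_i}$ by the usual decomposition, for instance \cite[Cor.~2.2.10]{kaifNoncommMotives}. I also need $w_*$ of this colimit to be the $w_*I$-shaped colimit of the $w_*\udl{K}_i$, followed by restriction along the diagonal $I\to w_*w^*I$, which is cofinal by siftedness. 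With that, the assembly map for $\udl{K}$ factors as the $I$-colimit of the assembly maps for the $\udl{K}_i$, which are equivalences, composed with the comparison for the sifted colimit, which is an equivalence by (1).

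I expect this last step to be the main obstacle. It requires knowing that $w_*$ commutes with sifted colimits of $\TT$-categories in the appropriate lax sense. Fibrewise, $w_*$ is a finite product by \Cref{lem:fibrewise_product}, and finite products of categories commute with sifted colimits. So I would reduce to the non-parametrised statement that $\colim_I (K_i)^n\simeq(\colim_I K_i)^n$ for sifted $I$.
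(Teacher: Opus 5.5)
Your proposal is correct and follows the paper's proof essentially step for step: (2) via the triangle identity and cofinality of $\Delta^w$; (1) as a direct application of (2) to inflated-sifted diagrams, where it is $\udl{K}$ itself, not $w^*\udl{K}$, that must be $w$-sifted, as \Cref{ex:infl_sift} provides; and (3) by splitting $\colim_{\udl{K}}$ as $\colim_i\colim_{\udl{K}_i}$ and factoring the assembly map. The only difference is cosmetic: for $w_*\colim_{\JJ}\udl{K}_i\simeq\colim_{\JJ}w_*\udl{K}_i$ the paper invokes distributivity of the cartesian structure on $\GCat$ (\Cref{prop:distributivity_of_cartesian_symmon}) together with cofinality of $\JJ\to w_*w^*\JJ$, whereas you check it fibrewise using that finite products commute with sifted colimits in $\Cat$, which is the same argument unpacked.
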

\begin{proof} 
  For (2), we closely mirror \Shatwo{Prop.}{8.19}:
  indeed, given a $V$-colimit diagram $\overline m\colon \udl{K}^{\triangleright} \rightarrow w_* \CC$,  its mate $\widetilde{p} \colon \prn{w^* \udl{K}}^{\triangleright} \rightarrow \CC$ is a $U$-colimit diagram.
  Consider the diagram
\[\begin{tikzcd}[ampersand replacement=\&, row sep=small, column sep=large]
	{\udl{K}^{\triangleright}} \& {\prn{w_* w^* \udl{K}}^{\triangleright}} \& {w_* \prn{w^* \udl{K}}^{\triangleright}} \& {w_* \CC} \& \DD \\
	{\udl{K}^{\triangleright}} \&\&\& {w_* \CC} \& \DD
	\arrow["\prn{\Delta^w}^{\triangleright}", from=1-1, to=1-2]
	\arrow[equals, from=1-1, to=2-1]
	\arrow["\can", from=1-2, to=1-3]
	\arrow["{w_* \widetilde{p}}", from=1-3, to=1-4]
	\arrow["{F}", from=1-4, to=1-5]
	\arrow[equals, from=1-4, to=2-4]
	\arrow[equals, from=1-5, to=2-5]
	\arrow["{\overline{m}}", from=2-1, to=2-4]
	\arrow["{F}", from=2-4, to=2-5]
\end{tikzcd}\]
    That the lefthand rectangle commutes is a routine application of the triangle identity. Now, $w$-siftedness of $\udl{K}$ is defined so that the diagonal functor $\Delta^w$ is cofinal, so the composite is a $V$-colimit (i.e. $F$ preserves the colimit $\overline{m}$) if and only if the composite $\prn{w_* w^* \udl{K}}^{\triangleright} \rightarrow \DD$ is a $V$-colimit;
    this is clearly equivalent to $F$ distributing $\widetilde p$.
  
    Statement (1) is a straightforward application of this, as non-parametrised sifted diagrams are $\NN$-sifted by \Cref{ex:infl_sift}.

    For statement (3), we equivalently show that the collection of categories for which $F$ is distributive is closed under inflated-sifted colimits. 
    Let $\udl{K} \to \CC$ be a diagram and assume that $\udl{K} \simeq \colim_{i \in \JJ} \udl{K}_i$ is a colimit of \animae{} $\udl{K}_i\in \udl{\mathcal{H}}$ over which $F$ is distributive and $\JJ$ is inflated-sifted. 

    For a diagram $\partial\colon \udl{K}\rightarrow \CC$, we know by the colimit decomposition of \cite[Cor. A.7]{kaifSil} that $\colim_{\udl{K}}\partial\simeq \colim_{i\in \JJ}\colim_{\udl{K}_i}\partial_i\in \CC$. 
    On the other hand, since the cartesian $\NN$-normed structure on $\udl{\cat}$ is distributive by \Cref{prop:distributivity_of_cartesian_symmon}, we have 
    \[
        w_*\udl{K}\simeq w_*\colim_{i\in \JJ}\udl{K}_i\simeq \colim_{i\in w_*\JJ}w_*\udl{K}_i\xleftarrow{\;\;\;\; \sim \;\;\;\;} \colim_{i\in \JJ}w_*\udl{K}_i,
    \] 
    where the left-pointing equivalence is induced by the diagonal map $\JJ\rightarrow w_*w^*\JJ$, which is an equivalence by $w$-siftedness of $\JJ$ under \Cref{ex:infl_sift}. 

    Viewing $\colim_{\udl{K}}\partial\in \udl{\sC}^W\simeq (w_*\udl{\sC})^V$ as a global object of $w_*\CC$, we must show that the assembly map $\colim_{w_*\udl{K}}Fw_*\partial \rightarrow F\colim_{\udl{K}}\partial$ in $\udl{\D}$ is an equivalence;
    but this factors as
    \[\colim_{w_*\udl{K}}Fw_*\partial\simeq \colim_{i\in \JJ}\colim_{w_*\udl{K}_i}Fw_*\partial_i \rightarrow \colim_{i\in\JJ}F\colim_{\udl{K}_i}\partial_i\rightarrow F\colim_{i\in\JJ}\colim_{\udl{K}_i}\partial_i\simeq F\colim_{\udl{K}}\partial\] where the second map is an equivalence by assumption that $F$ is $\udl{\mathcal{H}}$-distributive, and the third map is an equivalence by assumption that $F$ preserves inflated-sifted colimits, completing the proof.
\end{proof}

\subsection{The comparison with Elmanto-Haugseng distributivity}
We now prove \Cref{prop:two-distributivities}, starting by simplifying condition (c).
\begin{lemma}\label{lem:EH-distributivity-orbital}
    Suppose $\CC$ satisfies conditions (a) and (b) and $\beta_{m,x}$ is an equivalence whenever $Y$ is an orbit.
    Then, $\CC$ is $\udl{\mathcal{H}}$-distributive in the sense of Elmanto-Haugseng.
\end{lemma}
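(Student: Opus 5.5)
The plan is to reduce an arbitrary distributivity diagram $\Delta_{m,x}$ with target $Y$ to ones whose target is an orbit, by base-changing along orbits mapping into $Y$. The transformation $\beta_{m,x}\colon \psi_!h_{\otimes}\epsilon^* \to x_{\otimes} m_!$ is a morphism of functors $\CC^A \to \CC^Y$, and since $\CC^Y \simeq \lim_{U \to Y} \CC^U$ with $U$ ranging over orbits, a morphism in $\CC^Y$ is an equivalence once its image under $u^*$ is an equivalence for every orbit $u\colon U \to Y$. So I fix such a $u$ and show that $u^*\beta_{m,x}$ is identified with $\beta_{m',x'}$ for a pulled-back diagram with orbit target, which is an equivalence by hypothesis.

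Concretely, set $X' = U \times_Y X$ and $A' = X' \times_X A$, with $x'\colon X' \to U$ and $m'\colon A' \to X'$; $x'$ is again $\NN$-admissible and the fibres of $m'$ are fibres of $m$, hence lie in $\udl{\mathcal{H}}$. Because pullback $u^*$ on slices $\GSpaces_{/Y} \to \GSpaces_{/U}$ commutes with dependent products ($\TT$-anima form a presheaf topos, hence locally cartesian closed, and Beck--Chevalley for $\Pi$ holds), the distributivity diagram $\Delta_{m',x'}$ is obtained from $\Delta_{m,x}$ by pulling back along $u$: $Q' = U \times_Y Q$, $R' = U \times_Y R$, and $h', \epsilon', \psi'$ are the base changes.

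Next I compare the Beck--Chevalley transformations, using \ref{cond:M-cocomplete}, which (as the remark after \Cref{def:dist-EH} says) gives the Beck--Chevalley equivalences $u^*\psi_! \simeq \psi'_! \bar u^*$ and $\bar u^* m_! \simeq m'_! \bar u^*$ for maps with fibres in $\udl{\mathcal{H}}$. Also, the $\NN$-normed structure (a functor out of spans) gives $u^* x_{\otimes} \simeq x'_{\otimes}\bar u^*$ and $\bar u^* h_{\otimes} \simeq h'_{\otimes}\bar u^*$, while restrictions compose, so $\bar u^*\epsilon^* \simeq \epsilon'^*\bar u^*$. Pasting these squares with $\Psi_{m,x}$ identifies $u^*\Psi_{m,x}$ with $\Psi_{m',x'}\circ \bar u^*$. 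Formal compatibility of mates with pasting then identifies $u^*\beta_{m,x}$ with $\beta_{m',x'}\bar u^*$, up to the equivalences above. By hypothesis $\beta_{m',x'}$ is an equivalence, hence so is $u^*\beta_{m,x}$ for every orbit $u$, giving condition \ref{cond:BC}; conditions \ref{cond:fibrewise-sifted} and \ref{cond:M-cocomplete} are assumed.

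The main obstacle is this mate-pasting step: one must check that the cube formed by $\Psi_{m,x}$, $\Psi_{m',x'}$ and the restriction functors commutes coherently, and in particular that the left-adjoint Beck--Chevalley maps for $\psi_!$ and $m_!$ interact correctly with the norm-restriction equivalences coming from the span functoriality. I would handle this by viewing everything as a cube in $\SFTN{\TT}{\NN}$ (the pullback of $\Delta_{m,x}$ along $u$ is a cube of spans) and applying $\CC$ to get a commuting cube in $\Cat$, then invoking the general fact that mates of a cube whose relevant faces are adjointable compose.
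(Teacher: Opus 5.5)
Your proposal is correct and follows essentially the same route as the paper. The paper also base-changes along an orbit $i\colon U \to Y$ and forms the cube of pullbacks whose front and back faces are $\Delta_{m,x}$ and $\Delta_{m_U,x_U}$; it then pastes the Beck--Chevalley equivalences from condition (b) to identify $i^*\beta_{m,x}$ with $\beta_{m_U,x_U}j^*$, and concludes from the Segal equivalence $\CC^Y \simeq \lim_{U \to Y}\CC^U$. One point implicit in your use of (b) (and in the paper's) is that $\psi = x_*m$ itself has fibres in $\udl{\mathcal{H}}$, by $\NN$-compatibility of $\udl{\mathcal{H}}$; this is what gives $\psi_!$ and the equivalence $u^*\psi_! \simeq \psi'_!\bar u^*$.
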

\begin{proof}
Given a map from an orbit $i\colon U \rightarrow Y$ and a diagram $A \xrightarrow{m} X \xrightarrow{x} Y$ with $m$  having fibres in $\mathcal{H}$ and $w$ being $\NN$-admissible, we acquire a diagram 
    \[
    \begin{tikzcd}[ampersand replacement=\&]
	\&\&\& {R_U} \&\& {Q_U} \\
	\& {A_U} \& R \&\& Q \\
	A \&\&\& {X_U} \&\& U \\
	\&\& X \&\& Y
	\arrow[from=1-4, to=2-2]
	\arrow[from=1-4, to=2-3]
	\arrow[from=1-4, to=3-4]
	\arrow[from=1-4, to=1-6]
	\arrow[from=1-6, to=2-5]
	\arrow["{\psi_U=x_{U *} m_U}", from=1-6, to=3-6]
	\arrow[from=2-2, to=3-1]
	\arrow["{m_U}"'{pos=0.9}, from=2-2, to=3-4]
	\arrow["m"', from=3-1, to=4-3]
	\arrow[from=3-4, to=4-3]
	\arrow["{x_U}"{pos=0.8}, from=3-4, to=3-6]
	\arrow[from=3-6, to=4-5]
	\arrow["x"', from=4-3, to=4-5]
    \arrow[from=2-3, to=2-5, crossing over]
	\arrow["{\psi=x_* m}"{pos=0.2}, from=2-5, to=4-5, crossing over]
    \arrow[from=2-3, to=3-1, crossing over]
	\arrow[from=2-3, to=4-3, crossing over]
    \end{tikzcd}
    \]
    where every square is a pullback diagram and the front and back house-shaped faces are distributivity diagrams;
    so in particular, we acquire a natural transformation $\Delta_{m_U,x_U} \rightarrow \Delta_{m,x}$.
    This yields a natural transformation $\Psi_{m,x} \rightarrow \Psi_{m_U,x_U}$, i.e.
    \[
        \begin{tikzcd}[ampersand replacement=\&, row sep=small]
	\& {\CC^{Q_U}} \&\&\& {\CC^{U}} \\
	{\CC^{Q}} \&\&\& {\CC^{Y}} \\
	\& {\CC^{R_U}} \\
	{\CC^{R}} \\
	\& {\CC^{A_U}} \&\&\& {\CC^{X_U}} \\
	{\CC^{A}} \&\&\& {\CC^{X}}.
	\arrow[from=1-5, to=1-2, "\psi_U^*"']
	\arrow[from=2-1, to=1-2]
	\arrow[from=2-4, to=1-5, "i^*"]
	\arrow[from=3-2, to=1-2, "h_{U,\otimes}"{pos=0.2}]
	\arrow["{{{h_{\otimes}}}}", from=4-1, to=2-1]
	\arrow[from=4-1, to=3-2]
	\arrow[from=5-2, to=3-2, "\epsilon_U^*"]
	\arrow[from=5-5, to=1-5, "x_{U,\otimes}"']
	\arrow[from=5-5, to=5-2, "m_U^*"'{pos=0.7}]
	\arrow["{{{\epsilon^*}}}", from=6-1, to=4-1]
	\arrow[from=6-1, to=5-2, "j^*"]
	\arrow[from=6-4, to=5-5]
	\arrow["{{{m^*}}}", from=6-4, to=6-1]
    \arrow["{{{x_{\otimes}}}}"', from=6-4, to=2-4, crossing over]
    \arrow["{{{\psi^*}}}"'{pos=0.3}, from=2-4, to=2-1, crossing over]
\end{tikzcd}
    \]
    We claim that the two Beck-Chevalley conditions are directly related:
    \begin{equation}\label{eq:horrible-BC-diagram}
        \begin{tikzcd}[ampersand replacement=\&, row sep=tiny, column sep=small]
	{  i^* \psi_! h_{\otimes} \epsilon^*} \& {i^*\psi_! h_{\otimes} \epsilon^* m^* m_!  } \& { i^*\psi_! \psi^* x_{\otimes} m_!} \& {i^* x_{\otimes} m_!} \\
	{ \psi_{U,!} h_{U,\otimes} \epsilon_U^* j^*} \& {\psi_{U,!} h_{U,\otimes} \epsilon_U^* m_U^* m_{U,!}j^*} \& { \psi_{U,!} \psi_U^* x_{U,\otimes} m_{U,!} j^*} \& {x_{U,\otimes} m_{U,!}j^* }
	\arrow[from=1-1, to=1-2]
	\arrow["\simeq"{marking, allow upside down}, draw=none, from=1-1, to=2-1]
	\arrow["\simeq"{marking}, draw=none, from=1-2, to=1-3]
	\arrow["\simeq"{marking, allow upside down}, draw=none, from=1-2, to=2-2]
	\arrow[from=1-3, to=1-4]
	\arrow["\simeq"{marking, allow upside down}, draw=none, from=1-3, to=2-3]
	\arrow["\simeq"{marking, allow upside down}, draw=none, from=1-4, to=2-4]
	\arrow[from=2-1, to=2-2]
	\arrow["\simeq"{marking}, draw=none, from=2-2, to=2-3]
	\arrow[from=2-3, to=2-4]
\end{tikzcd}
    \end{equation}
    Here, the top composite is $i^* \beta_{m,w}$ and the bottom is $\beta_{m_U,w_U} j^*$;
    commutativity follows by commutativity of \eqref{eq:horrible-BC-diagram} together with the fact that $\CC$ admits $\udl{\mathcal{H}}$-indexed colimits.
    
    By assumption, each $\beta_{m_U,w_U}j^*$ is an equivalence, implying that $i^* \beta_{m,w}$ is an equivalence for each $U \rightarrow X$.
    The Segal condition that the $i$ assemble to an equivalence 
    \[
        \CC^X \xrightarrow{\;\; \sim\;\;} \lim_{U \rightarrow X}\CC^U
    \]
    then implies that $\beta_{m,x}$ is an equivalence, completing the result.
\end{proof}

We may now conclude our comparison.
\begin{proof}[Proof of \Cref{prop:two-distributivities}]
    First suppose $\CC$ is $\udl{\mathcal{H}} \cup \infl{e}{\TT}\Cat_{\mathrm{sifted}}$-distributive.
    By assumption, $\CC$ satisfies \Crefenum{def:dist-EH}{cond:M-cocomplete} and the cocompleteness part of \Crefenum{def:dist-EH}{cond:fibrewise-sifted}, and \Crefenum{prop:dist-lemmas}{st:sifted-distribute} shows that it satisfies the full \Crefenum{def:dist-EH}{cond:fibrewise-sifted}.
    Moreover, \Cref{rmk:assembly-map} shows that $\beta_{m,x}$ is an equivalence whenever $X$ is an orbit, so $\CC$ is $\udl{\mathcal{H}}$-distributive in the sense of Elmanto-Haugseng by \Cref{lem:EH-distributivity-orbital}.
    
    Now, suppose $\CC$ is $\udl{\mathcal{H}}$-distributive in the sense of Elmanto-Haugseng.
    By assumption, $\CC$ admits $\udl{\mathcal{H}} \cup \infl{e}{\TT}\Cat_{\mathrm{sifted}}$-colimits, and by \Crefenum{prop:dist-lemmas}{st:sifted-distribute}, the indexed tensor multifunctors of $\CC$ distribute $\infl{e}{\TT}\Cat_{\mathrm{sifted}}$-colimits.
    Moreover, \Cref{rmk:assembly-map} shows that the indexed tensor multifunctors of $\CC$ distributive $\udl{\mathcal{H}}$-colimits.
    The result then follows by \Crefenum{prop:dist-lemmas}{st:sifted-generate}.

    For the remaining statement, set $\udl{\mathcal{H}} = \GSpaces$.    
    In light of \Crefenum{prop:dist-lemmas}{st:sifted-distribute}, it suffices to prove that $\GSpaces$ generates $\GCat$ under inflated-sifted colimits.
    But this follows by \Shatwo{Thm.}{8.6} by noting that their \textit{corepresentable $\TT$-left fibrations} are $\TT$-anima, as are finite coproducts of them.
\end{proof}
\resumetocwriting
\section{Parametrised size issues}\label{sec:size}
The purpose of this subsection is to work out some technical details about size issues in the parametrised setting in order to connect to and leverage the work of Martini--Wolf.
First, a familiar fact from topos theory:
\begin{lemma}\label{lem:small-vs-cpct}
    A $\baseCat$-anima is $\kappa$-compact if and only if it is a retract of a $\kappa$-small object;
    if $\kappa$ is uncountable, then this is equivalent to it being $\kappa$-small.
\end{lemma}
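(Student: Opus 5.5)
The plan is to reduce to the non-parametrised statement in the presheaf $\infty$-topos $\Spaces[\baseCat]=\Fun(\baseCat\op,\Spaces)$, which is the fibre over the terminal object. Here $\baseCat$-anima are simply objects of this presheaf category, and the relevant notion of $\kappa$-compactness is the one from \Cref{def:filt}. So the first step is to identify that notion concretely: a $\baseCat$-anima $X$ is $\kappa$-compact if $\myuline{\Map}(X,-)$ commutes with $\infl{}{\baseCat}(\filt(\kappa))$-colimits. By the fibrewise description of inflated colimits (\Cref{lem:ind-fibrewise}, or directly \Cref{lem:cofree-colims}), these are computed pointwise as ordinary $\kappa$-filtered colimits. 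Evaluating at each orbit $V$, the condition becomes: $\Map_{\Spaces[\baseCat_{/V}]}(V^*X,-)$ commutes with $\kappa$-filtered colimits for all $V$.

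Next, I will check that this is equivalent to $X$ being $\kappa$-compact in the ordinary presentable category $\Spaces[\baseCat]$. One direction takes $V$ terminal, or more generally uses that mapping anima out of $X$ in $\Spaces[\baseCat]$ are the $\baseCat$-fixed points of $\myuline{\Map}(X,-)$, a limit commuting with the pointwise colimits. For the other direction, I use $\Spaces[\baseCat_{/V}]\simeq \Spaces[\baseCat]_{/V}$. Under this equivalence, $V^*$ is pullback along $X\times V\to V$ and maps out of $V^*X$ correspond to maps out of $X\times V$ in $\Spaces[\baseCat]$ over $V$. So it suffices that $X\times V$ is $\kappa$-compact whenever $X$ is. This holds because $V$ is a representable, hence compact, and because in a presheaf topos the product of a $\kappa$-compact object with a representable is a retract of a $\kappa$-small colimit of representables. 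I expect this to be the main subtle step: one must be careful that $X\times V$ is built from products $U\times V$ of representables, which are colimits of representables but need not be finite. Here I would argue instead via the fibre formula: $\Map(X\times V,\colim Y_i)$ is the fibre of $\Map(X,\colim Y_i^V)\to\Map(X,\colim V\text{-data})$, using the exponential $Y^V$ in the presheaf topos and that $(-)^V$ commutes with filtered colimits because $V$ is representable. Hence $(\colim Y_i)^V(U)=(\colim Y_i)(U\times V)$, and $U\times V$ is a finite-orbit-free colimit only in general. If this fails, I will fall back on citing the corresponding statement of Martini--Wolf \cite{mwpresentable}, that $V^*$ preserves $\kappa$-compacts in presheaf toposes.

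Granting that $\kappa$-compactness in the parametrised sense agrees with ordinary $\kappa$-compactness in $\Fun(\baseCat\op,\Spaces)$, the remaining claims are the classical results \cite[Prop.~5.3.4.17, Cor.~5.3.4.15]{lur09}: $\kappa$-compact objects of $\presheaf(\baseCat)$ are exactly the retracts of $\kappa$-small colimits of representables. These are precisely the retracts of objects of $\sm(\kappa)^{\baseCat}$ restricted to anima, which is how $\kappa$-small is defined in \Cref{def:small}. For $\kappa$ uncountable, $\kappa$-small colimits of representables are closed under retracts, since a retract is the colimit of the countable, hence $\kappa$-small, idempotent sequence. This gives the final equivalence.
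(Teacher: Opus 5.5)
Your last paragraph is exactly the paper's proof. The lemma concerns ordinary $\kappa$-compactness in the presheaf category $\Spaces[\baseCat]=\presheaf(\baseCat)$, and the first claim is \cite[Prop.~5.3.4.17]{lur09}. Closure of $\kappa$-compact objects under $\kappa$-small colimits reconciles ``$\kappa$-small colimit of representables'' with the iterated closure $\sm(\kappa)$. The second claim holds because a retract is an $\aleph_1$-small colimit.

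You should drop the preliminary reduction from the parametrised notion of \Cref{def:filt} to the ordinary one: it is not needed, and it is false in this generality. Your key claim, that $X\times V$ is $\kappa$-compact whenever $X$ is and $V$ is representable, already fails for $\baseCat=\mathrm{B}G$ with $G$ an infinite group and $\kappa=\aleph_0$. The representable $G$ is compact, but $G\times G$ with the diagonal action is $\coprod_{g\in G}G$, which is not. Equivalently, the restriction of $G$ to $\baseCat_{/\ast}\simeq\ast$ is the underlying infinite set. So no citation, to Martini--Wolf or elsewhere, can supply this step in general. Your other direction also fails without a terminal object, because $\Map_{\Spaces[\baseCat]}(X,-)$ is a limit over $\baseCat\op$, and such a limit need not commute with $\kappa$-filtered colimits. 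For example, take $\baseCat$ an infinite discrete category: the terminal presheaf has compact restriction to every $\baseCat_{/V}\simeq\ast$, but it is neither compact nor a retract of a finite colimit of representables.

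Under your parametrised reading, the lemma itself would be false. The $\mathrm{B}G$ example gives a $\kappa$-small object that is not parametrised-compact, and the discrete example gives a parametrised-compact object that is not a retract of a $\kappa$-small one. The ordinary reading is also how the paper uses the lemma. In \Cref{prop:kappa-restricts}, it converts ``restrictions preserve $\kappa$-small anima'' into ``restrictions preserve ordinary $\kappa$-compact anima''. There, whether restrictions preserve $\kappa$-compact objects is a genuine condition on $\kappa$ and $\baseCat$. Taking that preservation as an input here would be unjustified, and it would make that use circular.
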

\begin{proof}
    The first claim is \cite[Prop.~5.3.4.17]{lur09}, and the second follows from the fact that idempotent-splittings are $\aleph_1$-small colimits. 
\end{proof}
\begin{proposition}\label{prop:kappa-restricts}
     If $\kappa$ is a regular cardinal, the following are equivalent.
    \begin{enumerate}[label={(\alph*)}]
        \item Given a cospan of orbits $U \rightarrow V \leftarrow X$, the pullback $U \times_V X$ is $\kappa$-small.
        \item The inclusions $\Spaces[V,\sm(\kappa)] \hookrightarrow \Spaces[V]$
        glue together to a $\baseCat$-subcategory.
        \item The inclusions $\Cat[V,\sm(\kappa)] \hookrightarrow \Cat[V]$ glue together to a $\baseCat$-subcategory.
        \item The subcategory $\Spaces[\baseCat,\sm(\kappa)] \subset \Spaces[\baseCat]$ is closed under pullbacks.
    \end{enumerate}
    In this case, $\Spaces[V,\kappa] \hookrightarrow \Spaces[V]$ and $\Cat[V,\kappa] \hookrightarrow \Cat[V]$ form $\baseCat$-subcategories.
\end{proposition}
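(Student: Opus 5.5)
I will prove the cycle (a) $\Rightarrow$ (d) $\Rightarrow$ (b) $\Rightarrow$ (c) $\Rightarrow$ (a). The tool throughout is that $\Spaces[\baseCat,\sm(\kappa)]$ is, by definition, the closure of the orbits under $\kappa$-small colimits in the presheaf topos $\Spaces[\baseCat]$, and that colimits there are universal. One identification does most of the work: restriction $V^*$ of a $V$-object corresponds to pulling back along the forgetful functor. Concretely, $\Spaces[V]\simeq \Spaces[\baseCat]_{/V}$, and for $f\colon U\to V$ in $\baseCat$ the restriction $f^*$ is the base change $U\times_V(-)$.

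For (a) $\Rightarrow$ (d), take a cospan $A\to C\leftarrow B$ of $\kappa$-small $\baseCat$-anima. Write $A$ and $B$ as $\kappa$-small colimits of orbits. Universality of colimits then expresses $A\times_C B$ as a $\kappa$-small colimit of the pullbacks $U\times_C X$ with $U,X$ orbits. Next write $C=\colim_j V_j$, a $\kappa$-small colimit of orbits. Then $U\times_C X$ is a $\kappa$-small colimit, built from pullbacks of the form $U\times_{V_j}X$ and iterated along the diagram. This is the step I expect to be the main obstacle, because the colimit presentation of $C$ is not a coproduct. I would first reduce, by an induction on cells, to the case where $C$ is built by pushouts from orbits. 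Alternatively, I would use descent: a map into a colimit is a colimit of its base changes, and $U\to C$ factors through some $V_j$, so $U\times_C X\simeq U\times_{V_j}(V_j\times_C X)$. This reduces the question to the diagonal terms $V_j\times_C V_k$, handled by a second $\kappa$-small induction using regularity of $\kappa$. By (a) every resulting term is $\kappa$-small, and $\kappa$-small colimits of $\kappa$-small objects are $\kappa$-small.

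For (d) $\Rightarrow$ (b), note that restricting a $\kappa$-small $V$-anima $A\to V$ along $f\colon U\to V$ gives $U\times_V A$, which is $\kappa$-small by (d). We must also check that $\kappa$-smallness of $A$ as a $V$-anima agrees with $\kappa$-smallness of its underlying $\baseCat$-anima. This holds because the forgetful functor $\Spaces[\baseCat]_{/V}\to\Spaces[\baseCat]$ preserves and reflects colimits, and orbits over $V$ are exactly the orbits equipped with a map to $V$.

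For (b) $\Rightarrow$ (c), $\Cat[V,\sm(\kappa)]$ is generated under $\kappa$-small colimits by $\baseCat_{/V}$, and each restriction $f^*$ preserves colimits. It therefore suffices that $f^*$ sends orbits over $V$, that is representable $V$-anima, to $\kappa$-small $V$-categories, and this is (b) applied to orbits, since anima sit inside categories compatibly with colimits. For (c) $\Rightarrow$ (a), restrict the orbit $X\to V$, which is $\kappa$-small by definition, along $U\to V$; by (c) the result $U\times_V X$ is a $\kappa$-small $U$-category. It is also an anima, and the groupoid core, a colimit-preserving right adjoint, detects that it is already $\kappa$-small as an anima. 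Passing to underlying $\baseCat$-objects as before gives (a).

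For the final claim, the $\kappa$-compact objects are the retracts of $\kappa$-small ones by \Cref{lem:small-vs-cpct}. Restriction preserves retracts, so restriction-stability passes from $\sm(\kappa)$ to $\kappa$-compact objects in both the anima and the category case.
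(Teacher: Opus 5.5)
Your cycle breaks at (a) $\Rightarrow$ (d), at exactly the step you flagged.

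The preliminary reductions are sound:
universality of colimits handles the $A$ and $B$ variables;
the legs $U\to C$ and $X\to C$ factor through colimit inclusions $V_j\to C$ and $V_k\to C$;
and the remaining base changes along maps of orbits are $\kappa$-small by (a).

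What is left is $V_j\times_C V_k$, a path anima in $C$. Nothing about a colimit presentation of $C$ controls it, because base change does not commute with colimits in the base variable.

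In fact the claim is false for $\kappa=\aleph_0$. Take $\mathcal{T}=*$, so that (a), (b) and (c) hold trivially. Then $S^1\simeq *\sqcup_{*\sqcup *}*$ is $\aleph_0$-small; it is already a single pushout of finite coproducts of orbits. But $*\times_{S^1}*\simeq\Omega S^1\simeq\mathbb{Z}$ has infinitely many components, so it is not a finite anima. Hence no argument using only colimit manipulations, cell inductions and regularity of $\kappa$ can close this step. Any proof must use $\kappa>\aleph_0$ together with a genuinely homotopical input. One such input: for uncountable $\kappa$, the $\kappa$-small anima are those with $\kappa$-small homotopy sets (cf.\ \cite[\S5.4.1]{lur09}), so path and mapping anima of $\kappa$-small anima are again $\kappa$-small.

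The paper never attempts (a) $\Rightarrow$ (d) directly.
It proves (a) $\Rightarrow$ (c) by the same closure argument you give for (b) $\Rightarrow$ (c). Your argument there only uses that restrictions of orbits are $\kappa$-small, i.e.\ (a), so you can reroute your cycle the same way.
It observes (c) $\Rightarrow$ (b) $\Rightarrow$ (a) and (d) $\Rightarrow$ (a).
It proves (b) $\Rightarrow$ (d) by straightening over the base to reduce to $*\times_Y *$, which it identifies with a mapping anima of $Y^V$ and declares $\kappa$-small because $Y^V$ is.

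That last assertion is precisely the homotopical input above. So the equivalence with (d) genuinely needs $\kappa>\aleph_0$; the counterexample is against the statement itself, and the paper's step has the same limitation. The rest of your proposal is fine as written: the equivalences (a) $\Leftrightarrow$ (b) $\Leftrightarrow$ (c), your (d) $\Rightarrow$ (b), and the claim about $\kappa$-compact objects.

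One smaller slip: the groupoid core $\mathrm{Cat}\to\mathrm{An}$ is a right adjoint and does not preserve colimits. In (c) $\Rightarrow$ (a), either use the groupoidification $\mathrm{Cat}^{U}\to\mathrm{An}^{U}$, or simply note that $\mathrm{An}^{U}\subset\mathrm{Cat}^{U}$ is closed under colimits. Groupoidification works because it is a left adjoint that fixes orbits and is the identity on $U$-anima.
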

\begin{proof}
    Obviously (c) $\implies$ (b) $\implies$ (a) $\impliedby$ (d).
    To see (a) $\implies $(c), fix some $u\colon U \rightarrow V$ and note that the subcategory $\C \coloneqq \Cat[V] \times_{\Cat[U]} \Cat[U, \udl{\sm}(\kappa)]$ is closed under $\kappa$-small nonparametrised colimits;
    (a) and the observation that $U \rightarrow V$ has $\kappa$-small domain if and only if it is small in $\Spaces[V]$ by colimit-creation of the unslicing functor $\Spaces[V] \rightarrow \Spaces[\baseCat]$ shows that $\C$ contains orbits.
    Hence $\C$ contains all $\kappa$-small categories, i.e. (c).

    Now, assume (b).
    We will prove (d), so fix a cospan $A \rightarrow Y \leftarrow Z$ of $\kappa$-small $\baseCat$-\animae{}.
    Now, parametrised starightening-unstraightening over the projection $A \times_Y Z \rightarrow Z$ exhibits this pullback as a $\kappa$-small colimit of pullbacks of the form $U^* A \times_{V^*Y} \udl{*}$;
    $V^* A$ and $V^* Y$ are $\kappa$-small by assumption, so we are reduced to showing $A \times_Y *$ is $\kappa$-small in the case that $\baseCat$ has a terminal object $V$.
    Applying this again reduces us to showing that $* \times_Y *$ is $\kappa$-small;
    this is a mapping anima in the category of $V$-objects of $Y$, which is $\kappa$-small by assumption that $Y$ itself (hence $Y^V$) is $\kappa$-small, so we are done.

    The remaining claim follows from \Cref{lem:small-vs-cpct}, since functors preserve retracts.
\end{proof}
We now note that the notions of \mdef{$\kappa$-smallness, $\kappa$-filteredness}, and \mdef{$\kappa$-compactness} agree with those of \cite[Def.~2.2.2.1]{mwpresentable}, \cite[Def.~2.1.1.1]{mwpresentable}, and \cite[Def.~2.1.5.1]{mwpresentable},  with internal class $U$ being that determined by the $\baseCat$-category $\GCat[\sm(\kappa)]$.
Now, using the exact same arguments as \Cref{prop:kappa-restricts}, we acquire the following.
\begin{proposition}\label{prop:kappa-norms}
     If $\kappa$ is a regular cardinal, the following are equivalent.
    \begin{enumerate}[label={(\alph*)}]
        \item Given a diagram of orbit-finite $\baseCat$-\animae{} $U \xrightarrow{u} V \xleftarrow{w} W \leftarrow X$, the $U$-anima $u^* w_* X$ is $\kappa$-small.
        \item The inclusions $\Spaces[V,\sm(\kappa)] \hookrightarrow \Spaces[V]$
        form an $\NN$-normed $\baseCat$-subcategory.
        \item The inclusions $\Cat[V,\sm(\kappa)] \hookrightarrow \Cat[V]$ form an $\NN$-normed $\baseCat$-subcategory.
    \end{enumerate}
    In this case, $\Spaces[V,\kappa] \hookrightarrow \Spaces[V]$ and $\Cat[V,\kappa] \hookrightarrow \Cat[V]$ form $\NN$-normed $\baseCat$-subcategories.
\end{proposition}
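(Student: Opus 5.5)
The plan is to mirror the proof of \Cref{prop:kappa-restricts}, replacing the pullback condition there by its norm analogue, and to prove (c) $\implies$ (b) $\implies$ (a) $\implies$ (c).

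\textbf{Formal implications.} (c) $\implies$ (b) holds because $\Spaces[V] \subset \Cat[V]$ is closed under $w_*$ and restrictions, and because a $V$-anima is $\kappa$-small as a $V$-category exactly when it is $\kappa$-small as a $V$-anima. For the latter I would use \Cref{lem:small-vs-cpct} together with the fact that the groupoidification left adjoint preserves $\kappa$-small colimits and fixes orbits. For (b) $\implies$ (a), take $X \to W$ with $X$ orbit-finite. Then $X$ is $\kappa$-small in $\Spaces[W]$, since the unslicing functor creates colimits and $X$ is a finite coproduct of orbits. Norm-closure under (b) makes $w_*X$ $\kappa$-small in $\Spaces[V]$, and restriction-closure then makes $u^*w_*X$ $\kappa$-small in $\Spaces[U]$.

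\textbf{The main implication (a) $\implies$ (c).} First, (a) with $w = \id$ recovers condition (a) of \Cref{prop:kappa-restricts}. Hence the $\Cat[V,\sm(\kappa)]$ already form a $\baseCat$-subcategory, and only norm-closure remains. Fix an $\NN$-admissible $w\colon W\to V$. Using the base-change formula of \Cref{lem:fibrewise_product}, namely $u^*w_* \simeq \overline{w}_*\overline{u}^*$, together with restriction-closure, I reduce to $V$ an orbit. I would then consider the full subcategory $\C \subset \Cat[W]$ of those $\CC$ with $w_*\CC$ $\kappa$-small, and aim to show it contains $\sm(\kappa)^W$.

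This is where the argument differs from \Cref{prop:kappa-restricts}: $w_*$ is a right adjoint and does not preserve colimits, so $\C$ is not obviously closed under $\kappa$-small colimits. That is the step I expect to be the main obstacle. I would handle it using distributivity of the cartesian structure on $\GCat$ (\Cref{prop:distributivity_of_cartesian_symmon}). A $\kappa$-small colimit $\colim_{I}\CC_i$ in $\Cat[W]$ has norm equal to a colimit of norms $w_*\CC_i$ indexed over $w_*\udl{I}$. By \Cref{lem:fibrewise_product}, $w_*\udl{I}$ is fibrewise a finite product of copies of $I$, hence $\kappa$-small. A $\kappa$-small-indexed colimit of $\kappa$-small objects is $\kappa$-small, so $\C$ is closed under $\kappa$-small colimits.

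It then remains to show that $\C$ contains the generators, namely the orbits $X \to W$, regarded as $W$-anima. This is exactly hypothesis (a): the fibres of $w_*X$ over orbits $U\to V$ are $u^*w_*X$. The final claim about $\Spaces[V,\kappa]$ and $\Cat[V,\kappa]$ follows as in \Cref{prop:kappa-restricts}. By \Cref{lem:small-vs-cpct} these are the retracts of $\kappa$-small objects, and both $w_*$ and restriction preserve retracts.
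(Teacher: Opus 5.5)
\textbf{Verdict: there is a genuine gap in the main step, (a)$\Rightarrow$(c).}

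Your implications (c)$\Rightarrow$(b)$\Rightarrow$(a), the reduction of (a)$\Rightarrow$(c) to norm-closure via \Cref{prop:kappa-restricts}, and the retract argument for the last claim are fine. You are also right that the proof of \Cref{prop:kappa-restricts} cannot be copied verbatim, because $w_*$ is not cocontinuous; the paper's proof, which simply refers back to \Cref{prop:kappa-restricts}, does not address this. However, your repair of this step does not work as written.

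The inference ``$w_*\udl{I}$ is fibrewise a finite power of $I$, hence $\kappa$-small'' is invalid. $\sm(\kappa)^V$ is the closure of the generators under $\kappa$-small colimits, which is not a fibrewise condition, and parametrised colimits over fibrewise-small shapes need not preserve smallness. For example, take $G=S^1$, $\kappa=\aleph_0$, and the universal $G$-anima $E\mathcal{F}$ for the family of finite subgroups. All its fixed points are contractible or empty, but it is not finite: for a finite $S^1$-anima $X$ the map $X^{S^1}\to X^{C_p}$ is an equivalence for all large primes $p$. Moreover, $E\mathcal{F}$ is the $E\mathcal{F}$-indexed colimit of the constant diagram at the point. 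Compare \Cref{cor:norms-finite-cats}, where finiteness of the cube $w_*\udl{\Lambda}^2_0$ is a separate nontrivial input \cite[Prop.~C.0.5]{hilmanRamzi}, not a consequence of its fibrewise finiteness. Worse, $\udl{I}$ is itself a $\kappa$-small $W$-category, so the claim that $w_*\udl{I}$ is $\kappa$-small is an instance of the very norm-closure you are proving. As written, the key step is circular.

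The induction has a second problem. The $w_*\udl{I}$-indexed diagram supplied by distributivity does not take the values $w_*\CC_i$. At a $U$-point $(i_1,\dots,i_n)$ its value is the mixed product $\prod_j\overline{w}_{j*}\overline{u}_j^*\CC_{i_j}$, which your hypothesis $w_*\CC_i\in\sm(\kappa)^V$ does not control: you cannot split off one factor of a possibly empty product. You could enlarge $\C$ to require $f_*g^*\CC$ to be $\kappa$-small for all spans $W\xleftarrow{g}Z\xrightarrow{f}U$ with $f$ $\NN$-admissible. That repairs the colimit step, but the generator step then asks for $f_*$ of restrictions $g^*X$ of orbits. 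These are generally not orbit-finite, since the ambient $\baseCat$ need not be orbital (e.g.\ $\Orb_G$ for $G$ a positive-dimensional compact Lie group), so (a) no longer applies.

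For uncountable $\kappa$ you can sidestep both issues, and distributivity altogether:
\begin{enumerate}
\item Since $(w_*A)^U\simeq\prod_jA^{Z_j}$ is a finite product, $w_*$ commutes with sifted colimits, and it preserves retracts.
\item By \Cref{lem:small-vs-cpct} and \cite[Prop.~5.3.4.17]{lur09}, a $\kappa$-small $W$-anima is a retract of a single $\kappa$-small colimit of orbits. Hence it is a retract of a geometric realisation of $\kappa$-small coproducts of orbits.
\item Each such coproduct is a $\kappa$-small filtered colimit of finite coproducts.
\end{enumerate}
This reduces norm-closure to $w_*$ of orbit-finite $W$-anima, which is exactly (a). $W$-categories are handled the same way on $m$-simplices, using $(w_*\CC)_m\simeq w_*(\CC_m)$ and a countable colimit over $\Delta$. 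For $\kappa=\aleph_0$ no such reduction is available. There one needs a genuine finiteness statement for cubes together with control of the mixed terms, and your proposal does not supply this.
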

Now, we ask the reader to recall \Cref{def:Card,def:filt,def:small,def:kappa-normable}.
In particular, $\baseCat$-restrictability and $\NN$-normability are identified with the equivalent conditions of \Cref{prop:kappa-restricts,prop:kappa-norms}.
These sometimes appear in the literature:
\begin{corollary}\label{cor:everything-is-everything}
    A regular cardinal $\kappa$ is $\baseCat$-restrictable if and only if it is $(\baseCat,\baseCat)$-regular in the sense of \cite[Def.~5.20]{brink2026globalinftycategoriesglobalthom}.
    Moreover, $\kappa$ is uncountable and $\baseCat^{\simeq}$-normable if and only if it is $\Spaces[\baseCat]$-regular in the sense of \cite[Def.~2.2.2.3]{mwpresentable}, in which case $\GCat[\myuline{\mathrm{rex}}(\kappa)] \subset \GCat$ determines Martini-Wolf's sound doctrine $\Cat[\kappa]_{\Spaces[\baseCat]}$. 
\end{corollary}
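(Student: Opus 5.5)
We prove the two claims separately: first the comparison with Brink's notion, then the comparison with Martini--Wolf, reducing each to the characterisations in \Cref{prop:kappa-restricts} and \Cref{prop:kappa-norms}.

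For the first claim, we unwind \cite[Def.~5.20]{brink2026globalinftycategoriesglobalthom} in the special case of the terminal cleft $\baseCat = \baseCat$. We expect $(\baseCat,\baseCat)$-regularity to say exactly that restriction along maps $U \rightarrow V$ in $\baseCat$ preserves $\kappa$-small $\baseCat$-anima. It may instead be phrased as the requirement that the fibre products $U \times_V X$ of orbits are $\kappa$-small. In either form it matches one of the equivalent conditions (a) or (b) of \Cref{prop:kappa-restricts}, which is by definition $\baseCat$-restrictability. The only work is checking that Brink's ``$\kappa$-small'' agrees with our closure of orbits under $\kappa$-small colimits. We do this via \Cref{lem:small-vs-cpct}, passing through $\kappa$-compactness, retracts and idempotent splitting as needed.

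For the second claim, we recall that $\Spaces[\baseCat]$-regularity in \cite[Def.~2.2.2.3]{mwpresentable} asks three things of the internal class of $\kappa$-small $\baseCat$-anima (which, as noted above, is determined by $\GCat[\sm(\kappa)]$): it must be local (restriction-stable), it must be closed under dependent sums, and it must be closed under dependent products along maps with small fibres. We handle these in turn.
\begin{enumerate}[label=(\roman*)]
    \item Locality is \Cref{prop:kappa-restricts}(b).
    \item Closure under dependent sums is closure under composition and $\kappa$-small colimits. This follows from (d) of \Cref{prop:kappa-restricts} together with the fact that $\sm(\kappa)$ is, by definition, a colimit closure.
    \item The dependent-products condition, restricted to the relevant finite-coproduct (fold-type) maps, is the $\baseCat^{\simeq}$-normed case of \Cref{prop:kappa-norms}(a). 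Indeed, for $\NN=\baseCat^{\simeq}$ the admissible maps are fold maps, so $u^*w_*X$ is a finite product of restrictions.
\end{enumerate}

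Uncountability enters because Martini--Wolf's classes are closed under retracts, and by \Cref{lem:small-vs-cpct} retract-closure of $\kappa$-small anima holds exactly when $\kappa$ is uncountable. Conversely, for $\kappa=\aleph_0$ the class fails to be closed under idempotent splitting, so their definition excludes it. This gives the forward direction; for the reverse direction we read the same three conditions backwards.

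The final identification of $\GCat[\myuline{\mathrm{rex}}(\kappa)]$ with the sound doctrine $\Cat[\kappa]_{\Spaces[\baseCat]}$ then follows by unwinding definitions. Their doctrine consists of the $U$-small categories, which here are the $\kappa$-small $\baseCat$-categories $\udl{\sm}(\kappa)$. The corresponding cocomplete objects are the $\udl{\sm}(\kappa)$-cocomplete ones, which is by definition $\GCat[\myuline{\mathrm{rex}}(\kappa)]$.

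The main obstacle is the dependent-products clause of Martini--Wolf regularity. Their notion quantifies over all maps in the class, not only orbit-finite fold maps, so the precise statement must be matched carefully. The plan is to reduce it to orbits using locality, and then use that the only dependent products required are the finite ones appearing in \Cref{prop:kappa-norms}(a) for $\NN=\baseCat^{\simeq}$.
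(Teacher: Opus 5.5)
\textbf{Brink--Lenz comparison.} This part of your plan is fine and matches the paper. $(\baseCat,\baseCat)$-regularity is read off from one of the equivalent conditions of \Cref{prop:kappa-restricts}; the paper uses the pullback-closure condition (d). So no detour through $\kappa$-compactness, retracts or idempotent splitting is needed. That matters, because the statement includes $\kappa=\aleph_0$, where \Cref{lem:small-vs-cpct} gives no such translation.

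\textbf{Martini--Wolf comparison.} Here there is a genuine gap. Your plan rests on a description of \cite[Def.~2.2.2.3]{mwpresentable} that cannot be the one in the statement, and it fails exactly at clause (iii).

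Closure under dependent products along maps with $\kappa$-small fibres already fails for $\baseCat=*$ and $\kappa=\aleph_1$. The dependent product of $\mathbb{N}\times\{0,1\}\to\mathbb{N}$ along $\mathbb{N}\to *$ is $\{0,1\}^{\mathbb{N}}$, which is not $\aleph_1$-small. Yet for $\baseCat=*$ every regular cardinal is $\baseCat^{\simeq}$-normable, since the relevant $u^*w_*X$ are finite products of finite sets. So with your formulation the claimed equivalence would be false. No appeal to locality can reduce such infinite dependent products to the finite ones in \Cref{prop:kappa-norms}(a), so ``reading the conditions backwards'' breaks down.

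In the paper's proof the three conditions are of a different nature:
\begin{itemize}
\item[(1)] follows directly from the hypotheses;
\item[(2)] holds for every $\kappa$ simply because the presheaf topos $\Spaces[\baseCat]$ is compactly generated (this clause is missing from your plan);
\item[(3)] is closure of $\kappa$-compact objects under finite limits.
\end{itemize}

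The real content is matching (3) with $\baseCat^{\simeq}$-normability.
\begin{itemize}
\item For $\NN=\baseCat^{\simeq}$ the objects $u^*w_*X$ are finite fibre products of orbit-finite anima. So (3) gives normability, using that compactness passes to summands.
\item Conversely, normability applied to $W=\emptyset$ shows that the terminal object is $\kappa$-small. Together with pullback-closure from \Cref{prop:kappa-restricts}(d), this gives closure under all finite limits.
\end{itemize}

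Uncountability enters only through \Cref{lem:small-vs-cpct}, to identify $\kappa$-compact with $\kappa$-small objects; this is also what gives the identification with the sound doctrine. Your assertion that retract-closure of $\kappa$-small anima holds \emph{exactly} for uncountable $\kappa$ is not supplied by that lemma, which only gives one direction.
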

\begin{proof}
     Definition 2.2.2.3 in \cite{mwpresentable} obviously implies ours, and ours obviously implies their condition (1).
     In our case, their condition (2) is satisfied for all $\kappa$, since presheaf categories are compactly generated.
     
     Their condition (3) obviously implies $\baseCat^{\simeq}$-normability; compactness is closed under summand inclusion, and applying $W = \emptyset$ shows that $\baseCat^{\simeq}$-normability of $\kappa$-implies that $*$ is $\kappa$-small, so pullback-closure of $\kappa$-smallness implies finite limit closure, completing the identification with Martini-Wolf soundness.
     The identification with the sound doctrine follows by \Cref{lem:small-vs-cpct}.

     Similarly, the comparison with Brink-Lenz follows immediately from the pullback characterisation in \Cref{prop:kappa-restricts}.
\end{proof}
Now, $\NN$-normable cardinals (hence $\baseCat$-restrictable cardinals) are abundant:
\begin{proposition}\label{prop:cofinal_embedding}\label{rmk:treg-is-corep}\label{rmk:cofin}
    Let $\upsilon$ be the large regular cardinal of small sets.
    The embedding $\Card[\NN] \subset \Card$ is cofinal and $\upsilon$-small supremum-closed, and each are $\upsilon$-filtered.

    Moreover, if $\mathrm{Sound}_{\Spaces[\baseCat]}$ denotes Martini-Wolf's poset of sound doctrines for the topos $\Spaces[\baseCat]$, then \Cref{cor:everything-is-everything} produces a cofinal full embedding $\Card[\NN]_{\geq \aleph_1} \subset \mathrm{Sound}_{\Spaces[\baseCat]}$.
\end{proposition}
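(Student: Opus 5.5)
The plan is to use the characterisation of \Cref{prop:kappa-norms}(a): a regular cardinal $\kappa$ is $\NN$-normable exactly when, for every diagram of orbit-finite $\baseCat$-\animae{} $U \xrightarrow{u} V \xleftarrow{w} W \leftarrow X$ with $w$ $\NN$-admissible, the $U$-anima $u^*w_*X$ is $\kappa$-small.

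\textbf{Upward closure.} First I check that this condition is upward closed among regular cardinals, since $\kappa$-small implies $\lambda$-small for $\kappa \leq \lambda$. So $\Card[\NN]$ is an up-set in the total order $\Card$ (restricted to regulars).

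\textbf{Cofinality.} $\baseCat$ is small, so there is only a small set of isomorphism classes of such diagrams up to the choice of $X$. Since $X$ is orbit-finite, each resulting $U$-anima $u^*w_*X$ is small. Hence there is a small cardinal bounding all of them, and every regular cardinal above it is $\NN$-normable. Regular cardinals are cofinal among small cardinals, because successors are regular. Therefore $\Card[\NN]$ is cofinal in $\Card$.

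\textbf{Suprema and filteredness.} A small supremum of $\NN$-normable cardinals lies above some member, so it is normable if it is regular. Here I would interpret ``supremum-closed'' in the poset of normable regular cardinals: the least normable cardinal above the set exists by the up-set property. $\upsilon$-filteredness of $\Card$ is immediate from regularity of $\upsilon$, since a small set of small cardinals has a small supremum. It transfers to the cofinal up-set $\Card[\NN]$.

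\textbf{Sound doctrines.} \Cref{cor:everything-is-everything} identifies uncountable $\baseCat^{\simeq}$-normable cardinals with $\Spaces[\baseCat]$-regular ones, and associates to each the sound doctrine $\GCat[\myuline{\mathrm{rex}}(\kappa)]$. Every $\NN$-normable cardinal is $\baseCat^{\simeq}$-normable, because $\baseCat^{\simeq} \subset \NN$ gives fewer admissible maps. This yields a map $\Card[\NN]_{\geq\aleph_1} \to \mathrm{Sound}_{\Spaces[\baseCat]}$.

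\textbf{Full embedding.} The map is fully faithful and order-reflecting: $\kappa$ is recovered from the doctrine as the smallest cardinal bounding all its small $\baseCat$-\animae{}, using \Cref{lem:small-vs-cpct}.

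\textbf{Cofinality in sound doctrines.} This is the main obstacle. Every sound doctrine is small, so its defining class of $\baseCat$-categories is bounded in size by some cardinal. It is therefore contained in the doctrine of $\kappa$-small categories for $\kappa$ sufficiently large and normable; this uses the cofinality just proven together with the fact that sound doctrines are generated by a small set. I would cite Martini--Wolf's result that every sound doctrine is contained in a regular one for this last point.
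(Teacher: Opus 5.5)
Your proposal is correct and takes essentially the same route as the paper: upward closure via condition (a) of \Cref{prop:kappa-norms}, and existence of a normable cardinal by bounding the essentially small set of diagrams $U \to V \leftarrow W \leftarrow X$. It then transfers cofinality and $\upsilon$-filteredness from the regular cardinals, and gets cofinality among sound doctrines from Martini--Wolf's bound together with cofinality of $\Card[\NN]$ among regular cardinals; you additionally sketch fullness, which the paper leaves implicit.

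Your caveat on supremum-closure is well founded. The paper reduces it to $\mathrm{Card}^{\mathrm{reg}}$ and calls it well known, but a small supremum of regular cardinals can be singular (e.g.\ $\sup_n \aleph_n = \aleph_\omega$). So only your weaker reading, that $\Card[\NN]$ admits $\upsilon$-small joins, actually holds.
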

\begin{proof}
    First note that $\Card$ is $\upsilon$-filtered, since it has small suprema by regularity of $\upsilon$.

    We first prove that there exists an $\NN$-normable cardinal.
    Fix a diagram of orbit-finite $\baseCat$-\animae{} $U \xrightarrow{u} V \xleftarrow{w} W \xleftarrow{x} X$ with $w$ $\NN$-admissible.
    Since the yoneda-image generates $\GSpaces[U]$,  $u^* w_* X$ can be expressed as the colimit of a small sequence of small colimit diagrams;
    taking the suprema of the cardinalities of the indexing diagrams yields a cardinality $\overline{\kappa}_{u,v,x}$, and cofinality of regular cardinals in $\Card$ allows us to choose regular $\kappa_{u,w,x}$ above it.
    Choosing this uniformly across the small set of triples $(u,v,w)$, we take a supremum
    \[
        \kappa \coloneqq \sup_{u,w,x} \kappa_{u,w,x}.
    \]
    By definition, this is an $\NN$-normable regular cardinal, so $\Card[\NN]$ is indeed nonempty.

    Note that if we have a relation of regular cardinals $\lambda \geq \kappa$ such that $\kappa$ is $\NN$-normable, $\lambda$ is  $\NN$-normable as well by condition (a) of \Cref{prop:kappa-norms}.
    Now, since $\Card$ is well-ordered, $\Card[\NN]$ has a minimum element $\kappa_{\NN}$;
    thus we have
    \[
        \Card[\NN] = \mathrm{Card}^{\mathrm{reg}}_{\geq \kappa_{\NN}} \subset \Card.
    \]
    Consequently, cofinality and supremum-closure of $\Card[\NN]$ are reduced to $\mathrm{Card}^{\mathrm{reg}}$, where they are well-known.
    $\upsilon$-filteredness of $\Card[\NN]$ then follows from that of $\Card$.

    What remains is cofinality of $\Card[\NN] \subset \mathrm{Sound}_{\Spaces[\baseCat]}$;
    applying cancellation for cofinality \kerodon{02N6} to the composition $\Card[\NN] \subset \Card[\baseCat^{\simeq}] \subset \mathrm{Sound}_{\Spaces[\baseCat]}$ shows that its first inclusion is cofinal, so it suffices to prove this claim when $\NN \simeq \baseCat^{\simeq}$.
    Indeed, under the identification with $\Spaces[\baseCat]$-regularity of \Cref{cor:everything-is-everything}, this is simply \cite[Rmk.~2.2.2.22]{mwpresentable}.
\end{proof}
We conclude with the usual ``$\kappa$-small and $\kappa$-filtered'' generation of cocompleteness.
\begin{proposition}\label{lem:filt+small}
    Fix $\kappa$ a $\baseCat$-restrictable cardinal.
    There is an equality of $\baseCat$-subcategories
    \begin{align*}
    \wGCat\prn{\udl{\allcolim}} =
    \wGCat\prn{\infl{}{\baseCat} (\filt(\kappa)) \cup \udl{\sm}(\kappa)} \subset \wGCat.
    \end{align*}
\end{proposition}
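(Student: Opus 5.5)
The inclusion $\wGCat\prn{\udl{\allcolim}} \subset \wGCat\prn{\infl{}{\baseCat} (\filt(\kappa)) \cup \udl{\sm}(\kappa)}$ is immediate: a cocomplete $V$-category admits all these colimits, and a cocontinuous functor preserves them. So the content is the reverse inclusion. For it I would show two things. First, every $V$-category $\CC$ admitting $\kappa$-small and inflated $\kappa$-filtered colimits admits all small $V$-colimits. Second, a $V$-functor preserving these two classes preserves all small $V$-colimits. Both questions are restriction-stable, so I work over a fixed $V$ and suppress $V^*$.

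I would split an arbitrary small $V$-colimit into two pieces, following the non-parametrised proof of \cite[Cor.~4.2.3.11]{lur09}. By the parametrised criterion for cocompleteness (\cite{Sha23}, or the Martini--Wolf internal formulation), $\CC$ is cocomplete if and only if two conditions hold:
\begin{enumerate}[label=(\roman*)]
    \item each $\CC^U$ admits small colimits and the restrictions $u^*$ preserve them;
    \item the $u^*$ admit left adjoints $u_!$ along maps of orbit-finite $\baseCat$-\animae{}, satisfying Beck--Chevalley.
\end{enumerate}
The same split applies to preservation of colimits by functors.

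\emph{Indexed coproducts.} The $u_!$ for $u\colon U\to V$ a map of orbits are $\udl{U}$-indexed colimits, where $\udl{U}\in \GSpaces[V]$ is representable. Since orbits are $\kappa$-small, these lie in $\udl{\sm}(\kappa)$. I would use \Cref{prop:kappa-restricts} ($\kappa$ being $\baseCat$-restrictable) to guarantee that this class is restriction-stable. That stability is exactly what makes Beck--Chevalley part of the hypothesis $\udl{\sm}(\kappa)$-cocompleteness, so (ii) holds and is preserved by the functors in question.

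\emph{Fibrewise colimits.} Here I would invoke \Cref{lem:ind-fibrewise}: inflated $\filt(\kappa)$-cocompleteness means each $\CC^U$ has $\kappa$-filtered colimits preserved by restriction. The constant $\kappa$-small categories lie in $\udl{\sm}(\kappa)$, because $\infl{}{\baseCat}$ of a $\kappa$-small category is a $\kappa$-small colimit of the terminal orbit. Hence each fibre admits $\kappa$-small colimits, again restriction-stably. The classical decomposition of a small colimit as a $\kappa$-filtered colimit of $\kappa$-small colimits \cite[Cor.~4.2.3.11]{lur09} then gives (i) fibrewise. The same decomposition shows fibrewise preservation by functors preserving both classes.

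The main obstacle is the appeal to the general criterion ``fibrewise colimits plus indexed coproducts with Beck--Chevalley imply $\baseCat$-cocomplete''. In the present generality, $\baseCat$ has no orbitality assumptions beyond $\NN$, so a general $\baseCat$-anima need not be a disjoint union of orbits. I would cite it in Martini--Wolf's form, where it holds for arbitrary presheaf toposes. Concretely, a $\udl{K}$-colimit for general $\udl{K}$ decomposes (by the colimit decomposition \cite[Cor.~A.7]{kaifSil}) into a left Kan extension along $\udl{K}\to \udl{*}$. That Kan extension factors through fibrewise colimits and the $u_!$'s, and for functors the argument runs through the same decomposition. It is the restrictability of $\kappa$ that ensures the $u_!$'s needed there stay within $\udl{\sm}(\kappa)$.
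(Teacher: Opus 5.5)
Your proposal is correct and follows essentially the paper's route: split cocompleteness via Martini--Wolf into inflated and $\baseCat$-anima-indexed colimits. You handle the former fibrewise by \Cref{lem:ind-fibrewise} and \cite[Cor.~4.2.3.11]{lur09}, and the latter by using \cite[Cor.~A.7]{kaifSil} to reduce an arbitrary $V$-anima to a non-parametrised colimit of ($\kappa$-small) orbits. The paper packages that last step more cleanly: it notes that the maximal class $\KK$ with $\wGCat(\KK)$ equal to the right-hand side is closed under $\KK$-indexed colimits, which also sidesteps your intermediate formulation of (ii) via Beck--Chevalley for maps of orbit-finite $\baseCat$-\animae{} (whose pullbacks need not be orbit-finite without orbitality).
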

\begin{proof}
    By the usual argument, we immediately pass to the case that $\baseCat \simeq \baseCat_{/V}$.
    
    Let $\KK \subset \GCat$ be the maximum full $\baseCat$-subcategory such that 
    \[
    \wGCat\prn{\infl{}{\baseCat} (\filt(\kappa)) \cup \udl{\sm}(\kappa)} = \wGCat\prn{\KK};
    \]
    that is, $\KK$ consists of the diagrams whose cocompleteness (resp. cocontinuity) is guaranteed by $\kappa$-small and fibrewise $\kappa$-filtered cocompleteness (cocontinuity). 
    We are tasked with proving that $\KK = \GCat$.

    By \cite[Prop.~5.4.1]{MartiniWolf2024} we have $\GCatcoco{\allcolim} = \GCatcoco{\infl{}{\baseCat}\Cat \sqcup \GSpaces}$, so we must show 
    \[
        \infl{}{\baseCat}\Cat, \GSpaces \subset \KK.
    \]
    For the former, note that $\infl{}{\baseCat}$ of $\kappa$-small categories are $\kappa$-small since it is cocontinuous and semds $ * \mapsto \udl{*} \simeq V$, which is $\kappa$-small;
    applying \cite[Prop.~5.4.5]{MartiniWolf2024} then reduces the inclusion to the non-parametrised case, which is well-known.
    
    Now, it follows by \cite[Cor.~A.7]{kaifSil} that $\KK$ is closed under $\KK$-indexed colimits, so to conclude, it suffices to show that  $\GSpaces[V] \simeq \presheaf\prn{\baseCat}$ is generated under non-parametrised colimits by $\kappa$-small $V$-categories;
    indeed, it is so generated by orbits.
\end{proof}

\section{Equivariant norms and finiteness}\label{section:model_cats}\label{appendix:point-set_norms}
The purpose of this appendix is to prove the following result.
\begin{proposition}\label{cor:norms-finite-cats}
    Let $G$ be a compact Lie group. The fully faithful inclusions 
    
    \begin{center}
        \begin{tabular}{c c c c}
         $\GCat[\mathrm{fin}]\subset \GCat,$ & $  \GSpaces[\mathrm{fin}]\subset \GSpaces, $ & $ \GBSpaces[\mathrm{fin}]\subset \GBSpaces, $   \\
         $\GCat[\aleph_0]\subset \GCat, $ & $  \GSpaces[\aleph_0]\subset \GSpaces, $ & $  \GBSpaces[\aleph_0]\subset \GBSpaces, $ 
    \end{tabular}
    \end{center}
    of finite (resp. compact) objects refine to inclusions of $\finind$-normed categories.
\end{proposition}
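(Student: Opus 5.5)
We must show that each of these full $G$-subcategories is closed under restriction along arbitrary maps of orbits and under the norms $w_*$ for $\finind$-admissible $w$. The norm-closed full $\baseCat$-subcategory structure is then automatic, by the same argument as in \Cref{const:lax-structure-subsets}. Restriction along $G/K\to G/H$ corresponds to restricting the group action from $H$ to $K$. This preserves finiteness by the standard fact that $H/L$ admits a finite $K$-CW structure (Illman). It preserves compactness because compact objects are retracts of finite ones, by \Cref{lem:small-vs-cpct} in the unparametrised sense. By \Cref{lem:fibrewise_product}, the norms are fibrewise finite products of restrictions of the basic norms. It therefore suffices to treat $w\colon G/K\to G/H$ with $K\le H$ of finite index, and, after restricting to $H$, to take $H=G$. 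The functor $w_*$ then becomes the coinduction $\mathrm{Map}_K(G,-)$, and a compact version follows from the finite one by retract-preservation. So the core claim is: for $[G:K]<\infty$ and $X$ a finite $K$-space, $\mathrm{Map}_K(G,X)\cong X^{\times G/K}$ with its twisted $G$-action is $G$-finite.

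For $\GSpaces[\mathrm{fin}]$ I would work point-set in $\topcat^G$ via Elmendorf's theorem. Choose a finite $K$-CW model $X$. Coinduction is the indexed product $\prod_{G/K}X$ with $G$ permuting the factors through coset representatives, and it is a right Quillen functor that preserves all objects up to weak equivalence. The key geometric input is that the indexed product of $K$-CW complexes admits a finite $G$-CW structure. Let $\Sigma\le \Sigma_n$ be the image of $G$ under the permutation representation on $G/K$. The product cell structure on $X^n$ is $G$-invariant, and each product of cells $\prod_i (K/L_i\times D^{d_i})$ becomes, after orbit-regrouping, a $G$-space of the form $G\times_{J}(\text{product of cells})$. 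This is a compact smooth $G$-manifold with corners, hence it admits a finite $G$-CW structure by Illman's triangulation theorem for compact Lie group actions. An induction on cells, using the fact that pushouts along $G$-cofibrations are preserved by the pointwise product, then shows that $X^{\times G/K}$ is $G$-homotopy equivalent to a finite $G$-CW complex.

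The pointed case I would deduce from \Cref{prop:formula_for_norms_in_pointed_anima}. There $w_\otimes X$ is a finite total cofibre of indexed products $w_*$ of finite pointed spaces along subcubes, and finite colimits of finite objects are finite. For $\GCat$ I would use that a finite $G$-category is a finite iterated pushout of objects of the form $\udl{\Delta}^n$ induced from orbits. I would then apply the distributivity of the cartesian norm on $\GCat$ (\Cref{prop:distributivity_of_cartesian_symmon}) together with \Cref{prop:norming_pushouts}, inducting on cells: the norm of a pushout is an iterated pushout over the cubes $\udl{J}_w,\udl{B}_w$ of norms of smaller cell complexes. This reduces the claim to norms of induced simplices $w_*(\mathrm{Ind}\,\Delta^n)$. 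These are products $\prod\Delta^{n}$ over a finite $G$-set, built from the nerve of a finite poset with $G$-action, and the anima case handles such objects. I expect the main obstacle to be the geometric step: verifying, for non-finite $G$, that the twisted product $G\times_J\prod_i(K/L_i\times D^{d_i})$ is a compact $G$-manifold admitting a finite equivariant CW structure.
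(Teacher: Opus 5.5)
Your treatment of $\GSpaces[\mathrm{fin}]$ is correct, and it takes a different route from the paper. You do all the gluing at the point-set level through the product-cell filtration of $X^{\times G/K}$. Illman's theorem is then applied to the closed $G$-manifolds $G\times_J M$ and $G\times_J(M\times S(V))$, where $M$ is the product of the orbit factors and $D(V)$ the product of the disc factors. Corners never really enter, since $G\times_J(M\times D(V))\simeq G\times_J M$. The paper uses geometry only for restrictions and norms of orbits (\Cref{prop:fin-norms-homogeneous}) and does all gluing formally via distributivity of the cartesian norm. It proves the $\GCat$ case first and then gets $\GSpaces$ and $\GBSpaces$ for free from the $\finind$-normed, compact-preserving left adjoints $|-|$ and $(-)_+$, which hit the compact generators. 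So the geometric step you flag as the main obstacle is not where the difficulty lies.

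The gap is in your pointed and categorical cases. Two colimits you rely on are parametrised colimits indexed by non-constant $G$-categories: the total cofibre in \Cref{prop:formula_for_norms_in_pointed_anima}, and the colimits over $\udl{J}_w$, $\udl{B}_w$ in \Cref{prop:norming_pushouts}. At level $G/H$ these indexing categories are punctured cubes on the finite set $H\backslash G/K$, with diagonal-type restriction maps. So ``finite colimits of finite objects are finite'' applies only once you know that these indexing $G$-categories are themselves finite. Likewise $w_*\udl{\Delta}^n$ is a $G$-category, not a $G$-anima, so your anima case says nothing about it. The missing input is exactly the equivariant finiteness of cubes, \cite[Prop.~C.0.5]{hilmanRamzi}, which the paper invokes for $w_*\udl{\Lambda}^2_0$.

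You could prove it directly. The nerve of the finite $G$-poset $\Fun(G/K,\Delta^n)$ has $H$-fixed points $\Fun(H\backslash G/K,\Delta^n)$. It is a finite simplicial $G$-set whose skeletal filtration attaches cells $G/J\times\Delta^k$ with $J$ of finite index, and $H$-fixed points commute with these pushouts along monomorphisms. The same applies to the sub-posets giving $w_*\udl{\Lambda}^2_0$ and the punctured cubes. With that in hand you can skip \Cref{prop:norming_pushouts} altogether: distributivity (\Cref{prop:distributivity_of_cartesian_symmon}) already writes $w_*$ of a pushout as a colimit over $w_*\udl{\Lambda}^2_0$ of norms of the pieces.

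Finally, set up your induction as a closure argument, as the paper does: consider the class of finite objects all of whose iterated restrictions and norms are finite. A plain induction on cell count does not work, because the terms in these colimits involve norms of restrictions of $Y$, which need not have fewer cells than $Y$.
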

We will need two crucial results. Firstly, we need the equivariant finiteness of cubes, and this is supplied by \cite[Prop. C.0.5]{hilmanRamzi}. Secondly, we need finiteness of coinduced homogeneous anim\ae{}, i.e. the following.
\begin{proposition}\label{prop:fin-norms-homogeneous}
    The $G$-categories of the form $w_* H/K$ are finite, given $w\colon G/H \rightarrow G/G$ a $\finind$-map.
\end{proposition}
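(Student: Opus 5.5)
The statement concerns $w_*(H/K)$, where $H/K$ is regarded as an $H$-space (a $G/H$-anima, i.e. an orbit $H/K$ over $G/H$) and $K\le H$ is closed. The plan is to identify $w_*(H/K)$ with an explicit point-set model and then show that this model admits a finite $G$-CW structure.

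\textbf{Step 1: point-set model.} Since $w\colon G/H\to G/G$ lies in $\finind$, the subgroup $H$ has finite index $n$ in $G$. I would model $w_*$ on $H$-spaces by the coinduction functor $Y\mapsto \mathrm{Map}_H(G,Y)$. Here $w_*$ is the right adjoint to restriction $\mathrm{res}^G_H$ on $\GSpaces$, and $\mathrm{Map}_H(G,-)$ is right Quillen for the fixed-point model structures. For $n<\infty$, this coinduction preserves weak equivalences between fibrant objects; in the topological model every object is fibrant. Consequently $w_*(H/K)$ is represented by the $G$-space $\mathrm{Map}_H(G,H/K)$. Choosing coset representatives $g_1,\dots,g_n$ for $H\backslash G$ identifies its underlying space with $(H/K)^n$, a compact smooth manifold. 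The $G$-action on this product permutes factors and twists them by elements of $H$, so it is smooth.

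\textbf{Step 2: finiteness.} By Illman's theorem, a compact smooth $G$-manifold admits a finite $G$-CW structure. Hence $\mathrm{Map}_H(G,H/K)$ is a finite $G$-CW complex. Elmendorf's theorem sends finite $G$-CW complexes to finite objects of $\GSpaces$, i.e. to objects generated from orbits by finitely many pushouts.

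To land in "finite $G$-categories" rather than finite $G$-anima, I would note that finite $G$-anima are finite $G$-categories, since the relevant classes are generated under finite colimits by the orbits in both cases.

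\textbf{Main obstacle.} The delicate point is Step 1. One must check that the $\infty$-categorical indexed product $w_*$ on $\GSpaces$, defined as the right adjoint of $w^*$ on presheaves over $\Orb_G$, is computed by the derived point-set coinduction. This in turn requires checking the fixed-point formula for $\mathrm{Map}_H(G,Y)^L$ for closed subgroups $L\le G$ against the double coset description of \Cref{lem:fibrewise_product}.

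My approach would be to verify this on fixed points. Both sides have $L$-fixed points given by $\prod_{LgH}Y^{L^g\cap H}$ over the finitely many double cosets. The naturality of these identifications is what yields an equivalence of $\Orb_G$-presheaves.
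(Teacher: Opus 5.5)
Your proposal is correct and follows the paper's proof. The paper likewise models $w_*(H/K)$ by the point-set coinduction $\mathrm{Map}_H(G,H/K)$, justified there by a lemma proved via the same double coset fixed-point computation you describe, then views it as a compact smooth $G$-manifold and applies Illman to get a finite $G$-CW structure; the only differences are cosmetic, namely that you check the smooth structure by hand using coset representatives where the paper cites tom Dieck, and you justify the point-set model by right Quillen-ness with all objects fibrant where the paper argues homotopicality through an injective model structure.
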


It is at this point that we must use geometry.
For $G$ a compact Lie group, we let $\topcat^G$ denote the 1-category of compactly generated, weak Hausdorff topological \animae{} with an action of $G$. 
This 1-category can be equipped with a model structure in which the fibrations and weak equivalences are detected on fixed points -- that is, 
the Dwyer-Kan localisation at the maps $f\colon X \rightarrow Y$ such that, for all closed subgroups $H \leq G$, the map $X^H \rightarrow Y^H$ is a weak equivalence is the $\infty$-category of $G$-\animae{}: 
\[ \gamma^G \colon \topcat^G \longrightarrow \Spaces[G] \,; \]
see \cite[Thm.~6.3]{piacenza} or \cite[App. B]{schwede2018global} or  \cite[Thm.~3.17]{stephan}, noting that Quillen equivalences of simplicial model categories with all objects fibrantly localise to equivalences of $\infty$-categories via the dual to \cite[Cor.~A.3.1.10]{lur09}, and this simplicial enrichment may be obtained by base-change of Schwede's topological model structure along the monoidal right Quillen equivalence $\mathrm{Sing}\colon  \topcat \rightarrow \mathrm{SSet}$ via \cite[Rmk.~A.3.2.6]{lur09}.

We will henceforth use boldface font for topologically enriched 1-categories.
\begin{lemma}\label{lem:res-model}  
  For every closed subgroup $H\leq G$, the following diagram commutes:
\[\begin{tikzcd}[ampersand replacement=\&,column sep=scriptsize]
	{\topcat^G} \& {\Fun_{\topcat}\prn{\mathbf{B}G, \mathbf{Top}}} \&\& {\Fun_{\topcat}\prn{\mathbf{O}_G\op, \mathbf{Top}}} \&\& {\Spaces[G]} \\
	{\topcat^H} \& {\Fun_{\topcat}\prn{\mathbf{B}H, \mathbf{Top}}} \&\& {\Fun_{\topcat}\prn{\mathbf{O}_H\op, \mathbf{Top}}} \&\& {\Spaces[H]}
	\arrow["\simeq"{description}, draw=none, from=1-1, to=1-2]
	\arrow["{{\res{G}{H}}}"', from=1-1, to=2-1]
	\arrow["{{i_1^*}}"', from=1-2, to=2-2]
	\arrow["{{\iota^*_G}}"', from=1-4, to=1-2]
	\arrow["L", from=1-4, to=1-6]
	\arrow["{{i_0^*}}"', from=1-4, to=2-4]
	\arrow["{{\res{G}{H}}}"', from=1-6, to=2-6]
	\arrow["\simeq"{marking, allow upside down}, draw=none, from=2-1, to=2-2]
	\arrow["{{\iota_H^*}}"', from=2-4, to=2-2]
	\arrow["L", from=2-4, to=2-6]
\end{tikzcd}\]
    where $\iota_H\colon \mathbf{B}H \hookrightarrow \mathbf{O}_H$ is the inclusion,
    $i_0 = \mathrm{ind}_{H}^{G}\colon \mathbf{O}_H \rightarrow \mathbf{O}_G$ induction, and $i_1$ is the restriction of induction to to $\mathbf{B}H \subset \mathbf{O}_H$, i.e. the classifying topological functor of the inclusion $H \subset G$. 
    Moreover, the injective model structure on $\Fun_{\topcat}\prn{\mathbf{O}_G\op, \mathbf{Top}}$ exists and $i_0^*$ is left-Quillen for it.
\end{lemma}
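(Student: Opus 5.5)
The plan is to check the three assertions separately: commutativity of the left square, of the right square, and existence of the injective model structure with $i_0^*$ left Quillen.

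For the left square, the equivalence $\topcat^G \simeq \Fun_{\topcat}\prn{\mathbf{B}G,\mathbf{Top}}$ is the tautological identification of a $G$-space with a topological functor out of the one-object topological category $\mathbf{B}G$. Restriction $\res{G}{H}$ is precomposition along $\mathbf{B}H \rightarrow \mathbf{B}G$, which is $i_1$, so the left square commutes on the nose.

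For the middle square, I would note that $\iota_G^*$ evaluates at the free orbit $G/e$, remembering its $G$-action by automorphisms; on fixed-point systems $X \mapsto (G/K \mapsto X^K)$ it recovers $X$. Since $i_0(H/e) = G\times_H H/e \cong G/e$ and the inclusion $\mathbf{B}H \rightarrow \mathrm{Aut}(G/e)\simeq \mathbf{B}G$ induced by $i_0$ is exactly $H\subset G$, we get $i_0\circ\iota_H \cong \iota_G \circ i_1$ as topological functors. Precomposition is strictly functorial, so this gives commutativity.

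For the right square, $L$ is the localisation at objectwise weak equivalences, identified with $\Spaces[G]=\Fun(\Orb_G\op,\Spaces)$ by Elmendorf's theorem as recalled above. Since $i_0^*$ preserves objectwise weak equivalences, it descends to the localisations, and the induced functor is precomposition with $\Orb_H\rightarrow\Orb_G$, namely $\res{G}{H}$; this uses the standard identification of the localised enriched functor category with the $\infty$-categorical functor category, via the simplicial-base-change remarks above.

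For the model-structural claim, $\mathbf{Top}$ is not combinatorial, so I would transfer along $\mathrm{Sing}$ and $|-|$ to simplicially enriched diagrams $\Fun_{\mathrm{sSet}}\prn{\mathrm{Sing}\,\mathbf{O}_G\op,\mathrm{sSet}}$. There the injective structure exists by Lurie's A.3.3 and is Quillen equivalent, by the identity, to the projective one. Alternatively, I would define injective cofibrations as objectwise cofibrations, i.e. closed $G$-NDR-type inclusions, and construct factorizations by a small object argument with a set of generators bounded by cardinality. This existence step is where I expect the main difficulty. Once the structure exists, $i_0^*$ preserves objectwise weak equivalences and objectwise cofibrations, since it is precomposition, and it has a right adjoint given by right Kan extension. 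Hence $i_0^*$ is left Quillen for the injective structures.
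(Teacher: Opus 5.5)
Your treatment of the three squares and of the left Quillen property matches the paper's. The outer squares are definitional. The middle square is functoriality of precomposition applied to $i_0\circ\iota_H\cong\iota_G\circ i_1$. And $i_0^*$ is left Quillen because precomposition preserves objectwise cofibrations and weak equivalences and has a right adjoint; the paper cites HTT, Prop.~A.2.8.7 for this.

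The gap is the remaining assertion: existence of the injective model structure on $\Fun_{\topcat}(\mathbf{O}_G\op,\mathbf{Top})$ itself.

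\textbf{Main route.} You put an injective structure on the different category $\Fun_{\mathrm{sSet}}(\mathrm{Sing}\,\mathbf{O}_G\op,\mathrm{sSet})$. Being Quillen equivalent to such a category does not equip the topological diagram category with one. A structure transferred along $|-|\dashv\mathrm{Sing}$ has its cofibrations defined by a lifting property, and showing these are exactly the objectwise cofibrations is precisely what has to be proved. This matters, because the next lemma takes the right adjoint $i_{0*}$ of this topological $i_0^*$ and compares it with the point-set functor $\bMap_H(G,-)$.

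\textbf{Alternative route.} This is only a sketch, and its key step does not go through as written. The objectwise cofibrations are Quillen cofibrations of non-equivariant spaces (retracts of relative cell complexes), not closed $G$-NDR inclusions. Moreover, producing a generating set by a cardinality bound is the Smith-type argument, which requires local presentability, and $\mathbf{Top}$ lacks it.

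\textbf{What the paper does.} It does not build the structure by hand. It invokes Moser's existence theorem for injective model structures on enriched diagram categories (Rmk.~4.5 of Moser's paper). This reduces existence to a flatness condition: $-\times Y$ preserves cofibrations of $\mathbf{Top}$. The paper extracts that condition from the monoidal model structure on $\mathbf{Top}$.

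The pushout-product axiom gives this flatness for cofibrant $Y$. That includes the mapping spaces $(G/K)^J$ of $\mathbf{O}_G$, which are compact manifolds and hence cofibrant. This flatness makes projective cofibrations objectwise cofibrations. Consequently, a map with the right lifting property against all objectwise cofibrations is a projective trivial fibration, hence an objectwise weak equivalence. That is the acyclicity condition on which such existence results rest.
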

\begin{proof}
    The lefthand and righthand squares are essentially by definition, and the middle by functoriality of pullback, since we have a natural equivalence of topological functors $i_0 \circ \iota_H \simeq \iota_G \circ i_1$.
    That $i_0^*$ is left-Quillen for the injective model structure as long as it exists is obvious (see \cite[Prop.~A.2.8.7]{lur09}), so we are left with existence.
    But such a structure exists so long as cofibrations in $\mathbf{Top}$ are closed under $\times Y$ for all $Y \in \mathbf{Top}$ by \cite[Rmk.~4.5]{Moser19}.
    This is true for a general monoidal model category, such as compactly generated weak Hausdorff topological spaces (see \cite[after Cor.~2.2]{hovey1998monoidalmodelcategories}).
\end{proof}
\begin{remark}
    Elmendorf's theorem \cite[Thm.~3.17]{stephan} precisely states that $\iota_G^*$ is a right Quillen equivalence for the projective model structure on its domain, and hence connects $\iota_G^*$ underlies a zigzag of Quillen equivalences for the injective model structure as well.
\end{remark}
The projective and injective model structures both present the category of topological functors $\mathbf{O}_G\op \rightarrow \mathbf{Top}$, and hence the presheaf category on $\Orbit{G}$;
in particular, the above Quillen adjunction presents the adjunction $\res{G}{H} \dashv \coind{H}{G}$.

\begin{lemma}\label{lem:coind-model}
    Let $\bMap_H(G,-)\colon \topcat^H \rightarrow \topcat^G$ be the functor giving continuous $H$-equivariant maps their compact-open topology with precomposition $G$-action.
    Then, $\bMap_H(G,-)$ is homotopical and there exists a natural equivalence \[ L_{weq} \bMap_H(G,X) \simeq \coind{H}{G} X \,.\]
\end{lemma}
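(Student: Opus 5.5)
The plan is to deduce both claims from the Quillen adjunction of \Cref{lem:res-model}, identifying $\bMap_H(G,-)$ with a right Quillen functor whose derived functor is $\coind{H}{G}$.

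\textbf{Homotopicality.} Homotopicality is a fixed-point computation. For a closed subgroup $L\leq G$ and $X\in\topcat^H$, unwind $\bMap_H(G,X)^L \cong \bMap_H(G/L, X)$, where $G/L$ carries the left $H$-action. As an $H$-space, $G/L$ decomposes along the double cosets $H\backslash G/L$. Since $G$ is a compact Lie group, $G/L$ is a smooth compact $H$-manifold and so admits the structure of a finite $H$-CW complex (Illman). Then $\bMap_H(G/L,X)$ is a finite iterated homotopy limit of spaces of the form $\bMap_H(H/K\times D^n, X)\simeq \bMap(D^n, X^K)$, built along pushouts of cells. Each such cell attachment along a cofibration gives a fibration of mapping spaces, because $\mathbf{Top}$ is a monoidal (hence enriched) model category and every object is fibrant. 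A weak equivalence $X\to X'$ induces weak equivalences on all $X^K$, hence on each stage of this finite tower, hence on $\bMap_H(G,X)^L$. So $\bMap_H(G,-)$ preserves weak equivalences.

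\textbf{Identification with $\coind{H}{G}$.} Under the equivalence $\topcat^H\simeq\Fun_{\topcat}(\mathbf{B}H,\mathbf{Top})$, the functor $\bMap_H(G,-)$ is right Kan extension along $i_1\colon\mathbf{B}H\to\mathbf{B}G$, i.e. the enriched right adjoint $(i_1)_*$ of $i_1^*=\res{G}{H}$. I would then compare it with $(i_0)_*$ via the square of \Cref{lem:res-model}. Passing to right adjoints there gives $\iota_G^*(i_0)_*\simeq (i_1)_*\iota_H^*$, using that $\iota$ is the fully faithful inclusion $\mathbf{B}H\hookrightarrow\mathbf{O}_H$ and the right adjoints of $\iota^*$ are the coefficient-system extensions $X\mapsto (H/K\mapsto X^K)$. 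Concretely, this amounts to the natural homeomorphism $\bMap_H(G,X)^L\cong\bMap_H(G/L,X)$ used above, which one checks is compatible with the fixed-point coefficient systems.

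By \Cref{lem:res-model}, $i_0^*$ is left Quillen for the injective structures, so $(i_0)_*$ is right Quillen. Its derived functor therefore presents the right adjoint of $L\circ i_0^*$, namely $\coind{H}{G}$. The final step combines three facts:
\begin{itemize}
\item all objects of $\topcat$ are fibrant, so restricting to fibrant objects imposes no condition;
\item $\iota^*$ is homotopical and induces the Elmendorf equivalence;
\item $\bMap_H(G,-)$ is homotopical, as shown above.
\end{itemize}
Together these give $L_{weq}\bMap_H(G,X)\simeq L(i_0)_*\iota_H^{*,R}X\simeq\coind{H}{G}X$.

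\textbf{Main obstacle.} I expect the delicate point to be the derived identification. Injective fibrant replacement in $\Fun_{\topcat}(\mathbf{O}_H\op,\mathbf{Top})$ is not the identity, so one has to check that the fixed-point coefficient system of $X$ is already injectively fibrant, or at least that $(i_0)_*$ preserves weak equivalences between such systems. I would try to show directly that $(i_0)_*$ is homotopical on all objects, which reduces to the same finite $H$-CW argument for $G/L$ used in the homotopicality step. If that fails, I would instead work with the projective model structure on the $G$ side, since $i_0^*$ preserves projective fibrations objectwise.
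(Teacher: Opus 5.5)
Your proof is correct, and its overall structure matches the paper's. You pass to right adjoints in \Cref{lem:res-model} to get $\iota_{G*}\bMap_H(G,-)\cong (i_0)_*\iota_{H*}$, and then reduce everything to homotopicality of $\bMap_H(G,-)$, checked on fixed points. The real difference is in how that check is carried out.

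The paper applies a double coset formula to write $\bMap_H(G,-)^K$ as the finite product $\prod_{K\backslash G/H}(-)^{K\cap {}^gH}$ of fixed-point functors. This is quick and explicit, but it tacitly uses that $H$ has finite index, so that $G/K$ is a finite disjoint union of open $H$-orbits. That is all \Cref{prop:fin-norms-homogeneous} needs.

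You instead give $G/L$ a finite $H$-CW structure via Illman and induct over cells. Each cell contributes a pullback along the fibration $\bMap(D^n,X^K)\to\bMap(S^{n-1},X^K)$. This proves homotopicality for every closed $H\leq G$, i.e.\ for the lemma as literally stated. In the finite-index case $G/L$ is a $0$-dimensional $H$-CW complex, and your argument collapses to the paper's product formula.

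You are also more explicit than the paper about the derived step, and your main plan there works. The value $((i_0)_*F)(G/L)$ is the mapping space out of the fixed-point system of the finite $H$-CW complex $\res{G}{H}G/L$. That system is a finite projective cell complex, so $(i_0)_*$ preserves all weak equivalences and computes its own right derived functor. Alternatively, since $\res{G}{H}$ and $\bMap_H(G,-)$ are both homotopical, the point-set adjunction descends directly to the localisations. This is presumably what the paper's ``we are done'' relies on.

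A few slips need cleaning up.
\begin{itemize}
\item Passing to right adjoints in \Cref{lem:res-model} gives $(i_0)_*\iota_{H*}\cong\iota_{G*}(i_1)_*$, which is what your final formula $L(i_0)_*\iota_{H*}X$ actually uses. It does not give the displayed $\iota_G^*(i_0)_*\simeq(i_1)_*\iota_H^*$.
\item The functor that is homotopical and realises Elmendorf's equivalence is the fixed-point-system functor $\iota_*$. Evaluation $\iota^*$ at the free orbit does not preserve fixed-point equivalences.
\item Your projective fallback targets the wrong adjunction. That $i_0^*$ preserves projective fibrations concerns $(i_0)_!\dashv i_0^*$. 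What you would want is that $i_0^*$ is projectively left Quillen, which again follows from Illman: $i_0^*$ sends representables to fixed-point systems of finite $H$-CW complexes.
\end{itemize}
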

\begin{proof}
    We may take right adjoints to \Cref{lem:res-model} and acquire the diagram 
    \[\begin{tikzcd}[ampersand  replacement=\&,column sep=scriptsize]
	{\topcat^G} \& {\Fun_{\topcat}\prn{\mathbf{B}G, \mathbf{Top}}} \&\& {\Fun_{\topcat}\prn{\mathbf{O}_G\op, \mathbf{Top}}} \&\& {\Spaces[G]} \\
	{\topcat^H} \& {\Fun_{\topcat}\prn{\mathbf{B}H, \mathbf{Top}}} \&\& {\Fun_{\topcat}\prn{\mathbf{O}_H\op, \mathbf{Top}}} \&\& {\Spaces[H]}
	\arrow["\simeq"{description}, draw=none, from=1-1, to=1-2]
	\arrow["{{\iota_{G*}}}", from=1-2, to=1-4]
	\arrow["{{i_{1*}}}"', from=2-2, to=1-2]
	\arrow["L", from=1-4, to=1-6]
	\arrow["{{i_{0*}}}"', from=2-4, to=1-4]
	\arrow["{{\bMap_H\prn{G,-}}}", from=2-1, to=1-1]
	\arrow["\simeq"{marking, allow upside down}, draw=none, from=2-1, to=2-2]
	\arrow["{{\iota_{H*}}}", from=2-2, to=2-4]
	\arrow["L", from=2-4, to=2-6]
	\arrow["{{\coind{G}{H}}}", from=2-6, to=1-6]
    \end{tikzcd}\]
    If $\bMap_H(G,-)$ is homotopical, we are done.
    We may apply the double coset formula:
    \begin{align*}
        \bMap_H(G,-)^K
        &\simeq \prn{\bres{G}{K} \bMap_H(G,-)}^K\\
        &\simeq \prn{\prod_{KgH \in K \backslash G / H} \bMap_{K^g \cap H}\prn{K, C_g^* \bres{H}{K \cap ^gH} -}}^K\\
        &\simeq \prn{\prod_{KgH \in K \backslash G / H} \bMap_{K^g \cap H}\prn{K, C_g^* \bres{H}{K \cap ^gH} -}}^K\\
        &\simeq \prod_{KgH \in K \backslash G / H} (-)^{K \cap^g H};
    \end{align*}
    this is homotopical since fixed points preserve weak equivalences and $\topcat$ is a monoidal model category;
    since $\iota_{G*}$ preserves and reflects weak equivalences, this implies that $\bMap_H(G,-)$ is homotopical.
\end{proof}
\begin{proof}[Proof of \Cref{prop:fin-norms-homogeneous}]
    Since $G$-\animae{} are a colocalising subcategory of $G$-categories, it is equivalent to prove that they are finite as $G$-\animae{};
    that is, we are tasked with proving that $w_* H/K$ is in the finite-colimit closure of $\cbr{G/J \mid J \leq G}$.
    But it is equivalent to $\bMap_H(G,H/K)$ by \Cref{lem:coind-model}, which attains a canonical structure as a closed $G$-manifold by \cite[\S~6]{tD75}, and hence admits a $G$-CW structure up to homeomorphism \cite[Cor.~7.2]{ilman83};
    this must be finite, since our manifold is compact, giving the desired finiteness. 
\end{proof}
We moreover have the following.
\begin{lemma}\label{lem:res-homogeneous}
    If $H,J \leq G$ are closed subgroups, then the $H$-category $\res{G}{H} G/J$ is finite.
\end{lemma}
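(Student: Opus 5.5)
The plan is to reduce to a statement about fixed points and then invoke compact Lie group geometry, in parallel with the proof of \Cref{prop:fin-norms-homogeneous}. Since $G$-\animae{} form a colocalising subcategory of $G$-categories, it suffices to show that $\res{G}{H} G/J$ lies in the finite-colimit closure of the orbits $\cbr{H/K \mid K \leq H}$ inside $\Spaces[H]$.

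First, I would model $G/J$ point-set-theoretically as the homogeneous space $G/J \in \topcat^G$. By \Cref{lem:res-model}, the point-set restriction $\res{G}{H}$ (simply regarding a $G$-space as an $H$-space) presents the $\infty$-categorical restriction. It is homotopical, because fixed points for $K \leq H$ are computed the same way in either group. Hence $\res{G}{H} G/J$ is presented by the topological $H$-space $G/J$ with the restricted left translation action.

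Second, $G/J$ is a compact smooth manifold, and $H$ acts on it smoothly since $H$ is a closed, hence Lie, subgroup. By Illman's equivariant triangulation theorem \cite[Cor.~7.2]{ilman83}, a compact smooth $H$-manifold admits an $H$-CW structure. Compactness forces this structure to have finitely many cells, each of the form $H/K \times D^n$. The $H$-space is therefore built from finitely many cell attachments. Each cell attachment is a homotopy pushout along $H/K \times S^{n-1} \hookrightarrow H/K \times D^n$, and these are cofibrations in the model structure by construction. Under $\gamma^H$, each attachment therefore becomes a pushout in $\Spaces[H]$ of finite colimits of orbits, since $S^{n-1}$ and $D^n$ are finite spaces. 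Hence $\res{G}{H}G/J$ is finite.

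The main obstacle I expect is making sure Illman's result applies to the restricted action, which requires smoothness of the $H$-action on $G/J$, and checking that the CW skeletal filtration is sent to finite colimits by $\gamma^H$. The first point holds because the action map $H \times G/J \to G/J$ is a restriction of the smooth multiplication of $G$. The second point uses left-properness of the Quillen model structure, exactly as implicitly used in \Cref{prop:fin-norms-homogeneous}. If triangulation proves awkward to invoke, an alternative is the double coset decomposition of $G/J$ into $H$-orbits, together with Mostow–Palais slice theorems; I would not pursue that unless needed.
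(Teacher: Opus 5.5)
Your proposal is correct and follows the paper's route. The paper observes that $G/J$ restricts to a compact smooth $H$-manifold and then reuses the argument of \Cref{prop:fin-norms-homogeneous}: Illman's equivariant triangulation gives an $H$-CW structure, which is finite by compactness. You additionally spell out details the paper leaves implicit, namely that point-set restriction is homotopical and presents $\res{G}{H}$, and that finitely many cell attachments become finite colimits of orbits in $\Spaces[H]$.
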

\begin{proof}
    Since $G/J$ is a compact $G$-manifold, its restriction is a compact $H$-manifold;
    by the same argument of \Cref{prop:fin-norms-homogeneous}, this implies that it is finite.
\end{proof}
\begin{proof}[Proof of \Cref{cor:norms-finite-cats}]
    We first argue the case for $G$-categories. Finite (resp. compact) $G$-categories are the finite colimit-closure (resp. finite colimit- and retracts-closure) of $G/H$.
    Let $\CC \subset \GCat[G]$ be the collection of finite (resp. compact) $G$-categories which are taken to finite (resp. compact) $G$-categories under all restrictions and $\finind$-norms.
    
    This contains $\mathbf{O}_G$ by \Cref{prop:fin-norms-homogeneous,lem:res-homogeneous} and compatibility of distributive finite products with finiteness. Since retracts are clearly preserved by the norms, it suffices to prove that $\CC$ is closed under pushouts, i.e. $\Lambda_0^2$-indexed colimits.
    But $\Lambda_0^2$ is a poset with (trivial) $H$-action, so $w_* \Lambda_0^2$ is finite by \cite[Prop. C.0.5]{hilmanRamzi}. The $\finind$-normed $G$-category 
  $\prn{\GCat[G]}^\times$ is distributive under \Cref{prop:distributivity_of_cartesian_symmon}, so we indeed find for $p \in \Fun_S(\infl{e}{S}\Lambda_0^2, \CC)$ that  $w_{\otimes} \colim_{\Lambda_0^2} \simeq \colim_{w_* \Lambda_0^2} w_{\otimes} \circ p$
    is a finite colimit of finite objects, hence finite.

    Finally, to deduce the other cases, simply note that we have compact-preserving $\finind$-normed left adjoint functors  $\GCat \xlongrightarrow{|-|} \GSpaces \xlongrightarrow{(-)_+} \GBSpaces$
    all of which are essentially surjective on compact generators. Since the finite (resp. compact) objects in these respective categories are all also finite colimit-closures (resp. finite colimit- and retract-closures) of the compact generators, we obtain the desired conclusions for each of these $G$-categories.
\end{proof}

\DeclareFieldFormat[article,incollection,inproceedings]{title}{\mkbibemph{#1}}

{\emergencystretch=1.5em 
\renewcommand{\emph}[1]{\textit{#1}} 
\printbibliography[heading=bibintoc]}

@article {Blumberg-op,
    AUTHOR = {Blumberg, Andrew J. and Hill, Michael A.},
     TITLE = {Operadic multiplications in equivariant spectra, norms, and
              transfers},
   JOURNAL = {Adv. Math.},
  FJOURNAL = {Adv. Math.},
    VOLUME = {285},
      YEAR = {2015},
     PAGES = {658--708},
}

@article{Hill_chromatic,
  AUTHOR = {Hill, Michael A.},
     TITLE = {Equivariant chromatic localizations and commutativity},
   JOURNAL = {J. Homotopy Relat. Struct.},
  FJOURNAL = {Journal of Homotopy and Related Structures},
    VOLUME = {14},
      YEAR = {2019},
    NUMBER = {3},
     PAGES = {647--662},
}

@misc{stefanich,
      title={Higher sheaf theory I: Correspondences}, 
      author={Germán Stefanich},
      year={2020},
      eprint={2011.03027},
      archivePrefix={arXiv},
      primaryClass={math.AT},
}

@article {piacenza,
    AUTHOR = {Piacenza, Robert J.},
     TITLE = {Homotopy theory of diagrams and {CW}-complexes over a
              category},
   JOURNAL = {Canad. J. Math.},
  FJOURNAL = {Canadian Journal of Mathematics. Journal Canadien de
              Math\'ematiques},
    VOLUME = {43},
      YEAR = {1991},
    NUMBER = {4},
     PAGES = {814--824},
}

@article{bhs25,
   title={Envelopes for algebraic patterns},
   volume={25},
   number={9},
   journal={Algebr. Geom. Topol.},
   author={Barkan, Shaul and Haugseng, Rune and Steinebrunner, Jan},
   year={2025},
   pages={5319–5388} }

@article{krause20,
  AUTHOR = {Krause, Achim},
     TITLE = {The {P}icard group in equivariant homotopy theory via stable
              module categories},
   JOURNAL = {J. Topol.},
  FJOURNAL = {Journal of Topology},
    VOLUME = {18},
      YEAR = {2025},
    NUMBER = {2},
     PAGES = {Paper No. e70020, 41},
}

@misc{kerodon,
    title        = {Kerodon},
    author       = {Jacob Lurie},
    howpublished = {\url{https://kerodon.net}},
    shorthand    = {kerodon} 
}

@misc{nat-thesis,
  title = {Equivariant higher algebra},
  note={thesis},
  author = {Natalie Stewart},
  year = {2026}
}

@misc{tensor,
      title={On tensor products with equivariant commutative operads}, 
      author={Natalie Stewart},
      year={2025},
      eprint={2504.02143},
      archivePrefix={arXiv},
      primaryClass={math.AT},
}

@article{stephan,
   title={On equivariant homotopy theory for model categories},
   volume={18},
   number={2},
   journal={Homology, Homotopy and Applications},
   author={Stephan, Marc},
   year={2016},
   pages={183–208} 
}

@article{Moser19,
   AUTHOR = {Moser, Lyne},
     TITLE = {Injective and projective model structures on enriched diagram
              categories},
   JOURNAL = {Homology Homotopy Appl.},
  FJOURNAL = {Homology, Homotopy and Applications},
    VOLUME = {21},
      YEAR = {2019},
    NUMBER = {2},
     PAGES = {279--300},
}

@article {ilman83,
    AUTHOR = {Illman, S\"oren},
     TITLE = {The equivariant triangulation theorem for actions of compact
              {L}ie groups},
   JOURNAL = {Math. Ann.},
  FJOURNAL = {Mathematische Annalen},
    VOLUME = {262},
      YEAR = {1983},
    NUMBER = {4},
     PAGES = {487--501},
}

@article {tD75,
    AUTHOR = {tom Dieck, Tammo},
     TITLE = {Orbittypen und \"aquivariante {H}omologie. {II}},
   JOURNAL = {Arch. Math. (Basel)},
  FJOURNAL = {Archiv der Mathematik},
    VOLUME = {26},
      YEAR = {1975},
    NUMBER = {6},
     PAGES = {650--662},
}

@book{schwede2018global,
  title={Global homotopy theory},
  author={Schwede, Stefan},
  year={2018},
  publisher={Cambridge University Press}
}

@misc{Barwick-intro,
      title={Parametrized higher category theory and higher algebra: A general introduction}, 
      author={Clark Barwick and Emanuele Dotto and Saul Glasman and Denis Nardin and Jay Shah},
      year={2016},
      eprint={1608.03654},
      archivePrefix={arXiv},
      primaryClass={math.AT},
}

@misc{bhm22,
      title={Norms for compact Lie groups in equivariant stable homotopy theory}, 
      author={Andrew J. Blumberg and Michael A. Hill and Michael A. Mandell},
      year={2022},
      eprint={2212.11404},
      archivePrefix={arXiv},
      primaryClass={math.AT},
}

@misc{norms,
    title={Dimension-shifting norms in equivariant homotopy theory for compact Lie groups},
    author={Branko Juran and Natalie Stewart},
    note={in preparation}
}

@article {bh21,
    AUTHOR = {Bachmann, Tom and Hoyois, Marc},
     TITLE = {Norms in motivic homotopy theory},
   JOURNAL = {Ast\'{e}risque},
  FJOURNAL = {Ast\'{e}risque},
    NUMBER = {425},
      YEAR = {2021},
     PAGES = {ix+207},
}

@book{kelly1982basic,
  AUTHOR = {Kelly, Gregory Maxwell},
     TITLE = {Basic concepts of enriched category theory},
    SERIES = {London Mathematical Society Lecture Note Series},
    VOLUME = {64},
 PUBLISHER = {Cambridge University Press, Cambridge-New York},
      YEAR = {1982},
     PAGES = {245},
}

@article{roos,
  AUTHOR = {Roos, Jan-Erik},
     TITLE = {Introduction \`a{} l'\'etude de la distributivit\'e{} des
              foncteurs {$\varprojlim$} par rapport aux {$\varinjlim$} dans
              les cat\'egories des faisceaux (topos)},
   JOURNAL = {C. R. Acad. Sci. Paris},
  FJOURNAL = {Comptes Rendus Hebdomadaires des S\'eances de l'Acad\'emie des
              Sciences},
    VOLUME = {259},
      YEAR = {1964},
     PAGES = {969--972},
}

@article{HHR,
    AUTHOR = {Hill, M. A. and Hopkins, M. J. and Ravenel, D. C.},
     TITLE = {On the nonexistence of elements of {K}ervaire invariant one},
   JOURNAL = {Ann. of Math. (2)},
  FJOURNAL = {Annals of Mathematics. Second Series},
    VOLUME = {184},
      YEAR = {2016},
    NUMBER = {1},
     PAGES = {1--262},
}

@article{greenleesMayMU,
  AUTHOR = {Greenlees, J. P. C. and May, J. P.},
     TITLE = {Localization and completion theorems for {$MU$}-module
              spectra},
   JOURNAL = {Ann. of Math. (2)},
  FJOURNAL = {Annals of Mathematics. Second Series},
    VOLUME = {146},
      YEAR = {1997},
    NUMBER = {3},
     PAGES = {509--544},
}

@misc{hovey1998monoidalmodelcategories,
      title={Monoidal model categories}, 
      author={Mark Hovey},
      year={1998},
      eprint={math/9803002},
      archivePrefix={arXiv},
      primaryClass={math.AT},
}

@article{MartiniWolf2024,
  AUTHOR = {Martini, Louis and Wolf, Sebastian},
     TITLE = {Colimits and cocompletions in internal higher category theory},
   JOURNAL = {High. Struct.},
  FJOURNAL = {Higher Structures},
    VOLUME = {8},
      YEAR = {2024},
    NUMBER = {1},
     PAGES = {97--192},
}

@misc{bdgns16,
      title={Parametrized higher category theory and higher algebra: Expos\'e I -- Elements of parametrized higher category theory}, 
      author={Clark Barwick and Emanuele Dotto and Saul Glasman and Denis Nardin and Jay Shah},
      year={2016},
      eprint={1608.03657},
      archivePrefix={arXiv},
      primaryClass={math.AT},
}

@misc{cll23,
      title={Parametrized stability and the universal property of global spectra}, 
      author={Bastiaan Cnossen and Tobias Lenz and Sil Linskens},
      year={2023},
      eprint={2301.08240},
      archivePrefix={arXiv},
      primaryClass={math.AT},
}

@article {csy20,
    AUTHOR = {Carmeli, Shachar and Schlank, Tomer M. and Yanovski, Lior},
     TITLE = {Ambidexterity and height},
   JOURNAL = {Adv. Math.},
  FJOURNAL = {Adv. Math.},
    VOLUME = {385},
      YEAR = {2021},
     PAGES = {Paper No. 107763, 90},
}

@article {hoyoisSixOperations,
    AUTHOR = {Hoyois, Marc},
     TITLE = {The six operations in equivariant motivic homotopy theory},
   JOURNAL = {Adv. Math.},
  FJOURNAL = {Advances in Mathematics},
    VOLUME = {305},
      YEAR = {2017},
     PAGES = {197--279},
}

@misc{NS,
      title={Parametrized and equivariant higher algebra}, 
      author={Denis Nardin and Jay Shah},
      
      year={2022},
      eprint={2203.00072},
      archivePrefix={arXiv},
      primaryClass={math.AT},
}

@article{Cnossen2023,
  author  = {Cnossen, Bastiaan},
  title   = {Twisted ambidexterity in equivariant homotopy theory},
  journal = {J. Topol.},
  volume  = {19},
  number  = {2},
  year    = {2026}
}

@misc{QuinnZhu,
      title={Multiplicative Equivariant Thom Spectra \& Structured Real Orientations}, 
      author={Ryan Quinn and Qi Zhu},
      year={2026},
      eprint={2512.15573},
      archivePrefix={arXiv},
      primaryClass={math.AT},
}

@misc{carrickSlice,
      title={Slice spectral sequences through synthetic spectra}, 
      author={Christian Carrick},
      year={2025},
      eprint={2510.19501},
      archivePrefix={arXiv},
      primaryClass={math.AT},
}

@article{quigleyShahParametrisedTate,
  AUTHOR = {Quigley, J. D. and Shah, Jay},
     TITLE = {On the parameterized {T}ate construction},
   JOURNAL = {J. Topol.},
  FJOURNAL = {Journal of Topology},
    VOLUME = {18},
      YEAR = {2025},
    NUMBER = {1},
}

@misc{horevKlangZou,
      title={Equivariant nonabelian Poincar\'e duality and equivariant factorization homology of Thom spectra}, 
      author={Asaf Horev and Inbar Klang and Foling Zou},
      year={2024},
      eprint={2006.13348},
      archivePrefix={arXiv},
      primaryClass={math.AT},
      note ={with Appendix by Jeremy Hahn and Dylan Wilson}
}

@article{levyThomSpectra,
  title={{Eilenberg Mac Lane spectra as $p$-cyclonic Thom spectra}},
  author={Levy, Ishan},
  year = {2022},
  journal = {J. Topol.},
    volume = {15},
    number ={2},
pages ={878-895}
}

@article{elmantoHaugseng,
  title={{On distributivity in higher algebra I: The universal property of bispans}},
  author={Elmanto, E. and Haugseng, R.},
  year = {2023},
  journal = {Compos. Math.},
    volume = {159 (11)},
pages ={2326--2415}
}

@article {Sha23,
    AUTHOR = {Shah, Jay},
     TITLE = {Parametrized higher category theory},
   JOURNAL = {Algebr. Geom. Topol.},
  FJOURNAL = {Algebraic \& Geometric Topology},
    VOLUME = {23},
      YEAR = {2023},
    NUMBER = {2},
     PAGES = {509--644},
}

@misc{annalaIwasa,
      title={Motivic spectra and universality of $K$-theory}, 
      author={Toni Annala and Ryomei Iwasa},
      year={2025},
      eprint={2204.03434},
      archivePrefix={arXiv},
      primaryClass={math.AG},
}

@misc{brink2026globalinftycategoriesglobalthom,
      title={Global $\infty$-categories and global Thom spectra}, 
      author={Emma Brink and Tobias Lenz},
      year={2026},
      eprint={2608.28504},
      archivePrefix={arXiv},
      primaryClass={math.AT},
      url={https://arxiv.org/abs/2608.28504}, 
}

@article{robaloKTheoryBridge,
  AUTHOR = {Robalo, Marco},
     TITLE = {{$K$}-theory and the bridge from motives to noncommutative
              motives},
   JOURNAL = {Adv. Math.},
  FJOURNAL = {Advances in Mathematics},
    VOLUME = {269},
      YEAR = {2015},
     PAGES = {399--550},
}

@article{llp25,
  AUTHOR = {Lenz, Tobias and Linskens, Sil and P\"utzst\"uck, Phil},
     TITLE = {Norms in equivariant homotopy theory},
   JOURNAL = {Forum Math. Sigma},
  FJOURNAL = {Forum of Mathematics. Sigma},
    VOLUME = {14},
      YEAR = {2026},
     PAGES = {Paper No. e54, 96},
}

@misc{nardinThesis,
      title={Stability and distributivity over orbital $\infty$-categories},
      note={thesis},
      author={Nardin, Denis},
      year={2017},
      URL={https://dspace.mit.edu/handle/1721.1/112895?show=full}
}

@book {lur09,
    AUTHOR = {Lurie, Jacob},
     TITLE = {Higher topos theory},
    SERIES = {Annals of Mathematics Studies},
    VOLUME = {170},
 PUBLISHER = {Princeton University Press, Princeton, NJ},
      YEAR = {2009},
     PAGES = {xviii+925},
}

@article {dhs,
    AUTHOR = {Hopkins, Michael J. and Smith, Jeffrey H.},
     TITLE = {Nilpotence and stable homotopy theory. {II}},
   JOURNAL = {Ann. of Math. (2)},
  FJOURNAL = {Annals of Mathematics. Second Series},
    VOLUME = {148},
      YEAR = {1998},
    NUMBER = {1},
     PAGES = {1--49},
}

@book {Adamek,
    AUTHOR = {Ad\'amek, Ji\v r\'i{} and Rosick\'y, Ji\v r\'i},
     TITLE = {Locally presentable and accessible categories},
    SERIES = {London Mathematical Society Lecture Note Series},
    VOLUME = {189},
 PUBLISHER = {Cambridge University Press, Cambridge},
      YEAR = {1994},
     PAGES = {xiv+316},
      ISBN = {0-521-42261-2},
   MRCLASS = {18Axx (18-02)},
  MRNUMBER = {1294136},
MRREVIEWER = {J.\ R.\ Isbell},
       DOI = {10.1017/CBO9780511600579},
       URL = {https://doi.org/10.1017/CBO9780511600579},
}

@online{lur17,
    author={Lurie, Jacob},
    title={Higher Algebra},
    url={https://people.math.harvard.edu/~lurie/papers/HA.pdf},
    year={2017}
}

@article {Rubin,
    AUTHOR = {Rubin, Jonathan},
     TITLE = {Combinatorial {$N_\infty$} operads},
   JOURNAL = {Algebr. Geom. Topol.},
  FJOURNAL = {Algebraic \& Geometric Topology},
    VOLUME = {21},
      YEAR = {2021},
    NUMBER = {7},
     PAGES = {3513--3568},
}

@article{Barthel_compact,
   title={On the Balmer spectrum for compact Lie groups},
   volume={156},
   number={1},
   journal={Compos. Math.},
   publisher={Wiley},
   author={Barthel, Tobias and Greenlees, J. P. C. and Hausmann, Markus},
   year={2019},
   pages={39–76} }

@article {Barthel,
    AUTHOR = {Barthel, Tobias and Hausmann, Markus and Naumann, Niko and
              Nikolaus, Thomas and Noel, Justin and Stapleton, Nathaniel},
     TITLE = {The {B}almer spectrum of the equivariant homotopy category of
              a finite abelian group},
   JOURNAL = {Invent. Math.},
  FJOURNAL = {Inventiones Mathematicae},
    VOLUME = {216},
      YEAR = {2019},
    NUMBER = {1},
     PAGES = {215--240},
}

@article{Barrero_2023,
   title={Global $N_\infty$‐operads},
   volume={55},
   number={3},
   journal={Bull. Lond. Math. Soc. },
   author={Barrero, Miguel},
   year={2023},
   pages={1275–1301} 
}

@article{Barrero_ab,
   title={Global transfer systems of abelian compact Lie groups},
   volume={151},
   number={7},
   journal={Proc. Am. Math. Soc.},
   author={Barrero, Miguel},
   year={2023},
   pages={3169–3182} 
}

@misc{mwpresentable,
	title={Presentability and topoi in internal higher category theory}, 
      author={Louis Martini and Sebastian Wolf},
      year={2025},
      eprint={2209.05103},
      archivePrefix={arXiv},
      primaryClass={math.CT},
}

@misc{shahPaperII,
  title={Parametrized higher category theory II: Universal constructions}, 
      author={Jay Shah},
      year={2022},
      eprint={2109.11954},
      archivePrefix={arXiv},
      primaryClass={math.CT},
}

@article{GepnerHaugsengNikolaus2017,
   AUTHOR = {Gepner, David and Haugseng, Rune and Nikolaus, Thomas},
     TITLE = {Lax colimits and free fibrations in {$\infty$}-categories},
   JOURNAL = {Doc. Math.},
  FJOURNAL = {Documenta Mathematica},
    VOLUME = {22},
      YEAR = {2017},
     PAGES = {1225--1266},
}

@article {hhln23a,
    AUTHOR = {Haugseng, Rune and Hebestreit, Fabian and Linskens, Sil and
              Nuiten, Joost},
     TITLE = {Two-variable fibrations, factorisation systems and
              {$\infty$}-categories of spans},
   JOURNAL = {Forum Math. Sigma},
  FJOURNAL = {Forum of Mathematics. Sigma},
    VOLUME = {11},
      YEAR = {2023},
     PAGES = {Paper No. e111, 70},
}

@article {gepnerMeierTMF,
     AUTHOR = {Gepner, David and Meier, Lennart},
     TITLE = {On equivariant topological modular forms},
   JOURNAL = {Compos. Math.},
  FJOURNAL = {Compositio Mathematica},
    VOLUME = {159},
      YEAR = {2023},
    NUMBER = {12},
     PAGES = {2638--2693},
}

@article{nikolausScholze,
  AUTHOR = {Nikolaus, Thomas and Scholze, Peter},
     TITLE = {On topological cyclic homology},
   JOURNAL = {Acta Math.},
  FJOURNAL = {Acta Mathematica},
    VOLUME = {221},
      YEAR = {2018},
    NUMBER = {2},
     PAGES = {203--409},
}

@article{cnossennorms,
  title={Normed equivariant ring spectra and higher {T}ambara functors},
  author={Cnossen, Bastiaan and Haugseng, Rune and Lenz, Tobias and Linskens, Sil},
  journal={arXiv preprint arXiv:2407.08399},
  year={2024}
}

@misc{hilmanRamzi,
  title={Equivariant localizing motives and multiplicative norms on algebraic K-theory}, 
      author={Kaif Hilman and Maxime Ramzi},
      year={2026},
      eprint={2603.15873},
      archivePrefix={arXiv},
      primaryClass={math.KT},
}

@article{lnp25,
  title={Global homotopy theory via partially lax limits},
  author={Linskens, Sil and Nardin, Denis and Pol, Luca},
  journal={Geometry \& Topology},
  volume={29},
  number={3},
  pages={1345--1440},
  year={2025},
}

@misc{kaifSil,
  title={Parametrised functor calculus: excision, spheres, and semiadditivity}, 
      author={Kaif Hilman and Sil Linskens},
      year={2024},
      eprint={2410.22327},
      archivePrefix={arXiv},
      primaryClass={math.AT},
      url={https://arxiv.org/abs/2410.22327}, 
}

@article{kaifPresentable,
  AUTHOR = {Hilman, Kaif},
     TITLE = {Parametrised presentability over orbital categories},
   JOURNAL = {Appl. Categ. Structures},
  FJOURNAL = {Applied Categorical Structures. A Journal Devoted to
              Applications of Categorical Methods in Algebra, Analysis,
              Computer Science, Logic, Order and Topology},
    VOLUME = {32},
      YEAR = {2024},
    NUMBER = {3},
}

@misc{kaifNoncommMotives,
 title={Parametrised noncommutative motives and equivariant cubical descent in algebraic K-theory}, 
      author={Kaif Hilman},
      year={2026},
      eprint={2202.02591},
      archivePrefix={arXiv},
      primaryClass={math.KT},
}

\end{document}